\title{Balanced Steinhaus triangles}
\author{Jonathan CHAPPELON\footnote{E-mail address: \href{mailto:jonathan.chappelon@umontpellier.fr}{jonathan.chappelon@umontpellier.fr}}}
\affil{IMAG, Université de Montpellier, CNRS, Montpellier, France}
\date{August 7, 2025}
\begin{document}
\maketitle
\begin{abstract}
A Steinhaus triangle modulo $m$ is a finite down-pointing triangle of elements in the finite cyclic group $\mathbb{Z}/m\mathbb{Z}$ satisfying the same local rule as the standard Pascal triangle modulo $m$. A Steinhaus triangle modulo $m$ is said to be balanced if it contains all the elements of $\mathbb{Z}/m\mathbb{Z}$ with the same multiplicity. In this paper, the existence of infinitely many balanced Steinhaus triangles modulo $m$, for any positive integer $m$, is shown. This is achieved by considering periodic triangles generated from interlaced arithmetic progressions. This positively answers a weak version of a problem, due to John C. Molluzzo in 1978, that has remained unsolved to date for the even values of $m\geqslant 12$.
\end{abstract}
\msc{05B30, 11B75, 11B25, 11B65, 11B50}
\keywords{Steinhaus triangles, balanced triangles, Molluzzo Problem, binomial coefficients, interlaced arithmetic progressions, Pascal triangle}\\[2ex]
\noindent\fbox
  {%
    \begin{minipage}{\dimexpr\linewidth-2\fboxsep-2\fboxrule}%
      \vspace{-15pt}\tableofcontents
    \end{minipage}%
  }%

\section{Introduction}

All along this paper, $m$ and $n$ are non-negative integers and we consider the finite cyclic group $\Zn{m}$, where $\Zn{0}$ is isomorphic to $\Z$. The set of non-negative integers is denoted by $\N$. Let $t_{n}$ denote the $n$\textsuperscript{th} triangular number $t_{n}=\sum_{i=0}^{n}i=\frac{n(n+1)}{2}$ and let $\Tn{n}$ be the triangle of integers
$$
\Tn{n} = \left\{ (i,j)\in\N^2\ \middle|\ i+j<n\right\}.
$$

\begin{defn}[Steinhaus triangles modulo $m$]
A {\em Steinhaus triangle modulo $m$} of {\em size} $n$ is a down-pointing triangle $\left(a_{i,j}\right)_{(i,j)\in\Tn{n}}$ of $t_{n}$ elements in $\Zn{m}$ satisfying the same local rule as the standard Pascal triangle modulo $m$, that is,
\begin{equation}\label{eqP}
a_{i,j} = a_{i-1,j} + a_{i-1,j+1},
\end{equation}
for all $(i,j)\in\Tn{n}$ such that $i\ge1$, where the sum is the sum in $\Zn{m}$. Note that a Steinhaus triangle is completely determined by its first row $\left(a_{0,j}\right)_{j=0}^{n-1}$.
\end{defn}

\begin{figure}[htbp]
\centering{
\includegraphics{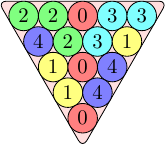}
}
\caption{A Steinhaus triangle modulo $5$ of size $5$}\label{fig*01}
\end{figure}

An example of Steinhaus triangle modulo $5$ of size $5$ is depicted in Figure~\ref{fig*01}. This kind of triangles have been introduced by Hugo Steinhaus himself, for the binary case ($m=2$), in \cite{Steinhaus:1964aa} and by John C. Molluzzo, for $m\ge3$, in \cite{Molluzzo:1978aa}, where he posed the following

\begin{prob}[Molluzzo Problem]\label{prob1}
Does there exist, for any positive integers $m$ and $n$ such that $t_n$ is divisible by $m$, a Steinhaus triangle modulo $m$ of size $n$ containing all the elements of $\Zn{m}$ with the same multiplicity?
\end{prob}

A Steinhaus triangle containing all the elements of $\Zn{m}$ with the same multiplicity is said to be {\em balanced}. For instance, the Steinhaus triangle depicted in Figure~\ref{fig*01} is balanced and it positively answers Molluzzo problem for $m=5$ and $n=5$, since each element of $\Zn{5}$ occurs three times in it.

The Molluzzo problem is still largely open. Since then, it has been positively solved for small values of $m$: for $m=2$ in \cite{Harborth:1972aa,Eliahou:2004aa,Eliahou:2005aa,Eliahou:2007aa,Chappelon2017a}, for $m\in\{3,5\}$ in \cite{Bartsch1985}, for $m\in\{3,5,7\}$ in \cite{Chappelon2008a} and for $m=4$ in \cite{Chappelon2012a}. First counter-examples appeared in \cite{Chappelon2008a}, where it is proved that there does not exist balanced Steinhaus triangles of size $5$ in $\Zn{15}$ and of size $6$ in $\Zn{21}$. Nevertheless, this problem can be positively answered for an infinite number of values of $m$. Indeed, as showed in \cite{Chappelon2008,Chappelon2008a}, there exist balanced Steinhaus triangles, for all the possible sizes, in the case where $m$ is a power of $3$. This result was obtained by studying Steinhaus triangles whose rows are arithmetic progressions. Even if the Molluzzo problem is not completely solved for the other odd values of $m$, we know that there exist infinitely many balanced Steinhaus triangles in every $\Zn{m}$ with $m$ odd \cite{Chappelon2008,Chappelon2011}. This weak version of the Molluzzo problem was posed in \cite{Chappelon2012a}.

\begin{prob}[Weak Molluzzo Problem]\label{prob2}
Does there exist, for any positive integer $m$, infinitely many balanced Steinhaus triangles modulo $m$?
\end{prob}

Problem~\ref{prob2} is thus solved for the odd numbers $m$. For the even values, the cases $m=2$ and $m=4$ come from the solutions of Problem~\ref{prob1} and a solution is known from \cite{Eliahou:aa} for $m\in\{6,8,10\}$. So far, this problem was completely open for the even numbers $m\ge12$. In this paper, Problem~\ref{prob2} is positively solved for all positive integers $m$, even if $m$ is even. For every positive integer $m$, an explicit construction of balanced Steinhaus triangles modulo $m$ of size $12\lambda m$, for all non-negative integers $\lambda$, is given. More precisely, the main result of this paper if the following


\begin{thm}\label{thm*2}
For every positive integer $m$, there exists a $12m$-tuple $A_m$ of $\Zn{m}$ such that the Steinhaus triangle whose first row is ${A_m}^\lambda$, i.e., the tuple $A_m$ repeated $\lambda$ times, is a balanced Steinhaus triangle of size $12\lambda m$ in $\Zn{m}$, for all non-negative integers $\lambda$.
\end{thm}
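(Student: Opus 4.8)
The plan is to write down the $12m$-tuple $A_m$ explicitly as a $12$-fold interlaced arithmetic progression — that is, a sequence built from $12$ interleaved arithmetic progressions, each of length $m$ — and then to exploit the fact that such sequences evolve transparently under the local rule \eqref{eqP}. Write $b=(b_j)_{j\ge 0}$ for the $12m$-periodic first row, $b_j=(A_m)_{j\bmod 12m}$, and recall the binomial formula $a_{i,j}=\sum_{k=0}^{i}\binom{i}{k}b_{j+k}$, valid in $\Zn{m}$; this lets us regard the Steinhaus triangle of size $12\lambda m$ as a window cut out of the doubly infinite $12m$-periodic Steinhaus array $\bigl(a_{i,j}\bigr)_{i\ge 0,\,j\in\Z}$. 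The first step is a structure lemma: when $A_m$ is a $12$-fold interlaced arithmetic progression, every row $(a_{i,j})_{j\in\Z}$ of the array is again a $12m$-periodic $12$-fold interlaced arithmetic progression, so that $a_{i,j}$ is, within each residue class of $j$ modulo $12$, an affine function of $j$ whose two coefficients are explicit combinations of $i$, of binomial coefficients $\binom{i}{k}$, and of the finitely many parameters defining $A_m$. This holds because the shift-and-add map $b\mapsto(b_j+b_{j+1})_j$ preserves the finite-dimensional space of interlaced arithmetic progressions, so one only needs to follow its linear action on the parameter vector, which is easily triangularised.

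The second step is a combinatorial decomposition of the triangle into finitely many building blocks whose balance can be settled once and for all, independently of $\lambda$. As every row of the array is $12m$-periodic in $j$, cutting $\Tn{12\lambda m}$ along the vertical lines $j=12\mu m$ yields, as multisets,
\[
\Tn{12\lambda m}=S_1\uplus S_2\uplus\cdots\uplus S_\lambda,\qquad S_\mu=\bigl\{\,a_{i,r}\ :\ 0\le r<12m,\ 0\le i\le 12\mu m-1-r\,\bigr\},
\]
so $S_1=\Tn{12m}$ is the Steinhaus triangle generated by $A_m$, while for $\mu\ge 2$ the layer $S_\mu\setminus S_{\mu-1}=\{a_{i,r}:0\le r<12m,\ 12(\mu-1)m-r\le i<12\mu m-r\}$ is a sheared $12m\times 12m$ parallelogram. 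Since a disjoint union of balanced multisets is balanced, it suffices to prove that $\Tn{12m}$ is balanced and that each of these parallelograms is balanced. Here the eventual periodicity of the array in the $i$-direction is used: the map $i\mapsto(a_{i,j})_{j\in\Z}$ takes values in the finite set $(\Zn{m})^{12m}$ and is therefore eventually periodic, and one of the constraints built into the choice of $A_m$ is that its pre-period be smaller than $12m$ and its period divide $12m$. Then all layers $S_\mu\setminus S_{\mu-1}$ with $\mu\ge 3$ carry one and the same multiset, and only three balance identities survive: one for $\Tn{12m}$, one for the exceptional layer at $\mu=2$, and one for the stable layer.

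It remains to choose the parameters of $A_m$ — the initial terms and common differences of the $12$ interleaved progressions — so that these three multisets are balanced, and this is the main obstacle. Unwinding the closed form of the structure lemma, each requirement turns into a system of congruences modulo $m$ between sums of binomial coefficients $\binom{i}{k}$ — grouped by the residue of $k$ modulo $12$ and accumulated over ranges of $i$ of length $12m$. Controlling these sums is delicate precisely at the primes $2$ and $3$: by Kummer's theorem the $2$-adic and $3$-adic valuations of $\binom{i}{k}$ count carries in base $2$ and in base $3$, which is exactly why the period $12=2^{2}\cdot 3$ is what is needed — interlacing with $12$ classes provides just enough room along the rows to average out these irregularities of Pascal's triangle. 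One therefore splits according to the exact powers of $2$ and of $3$ dividing $m$, fixes the parameters case by case, and reduces the balance identities to explicit congruences for sums of binomial coefficients. Once these hold, every $S_\mu$ — and hence $\Tn{12\lambda m}$ itself — is a disjoint union of balanced multisets, which proves Theorem~\ref{thm*2}; the case $\lambda=0$ is the empty triangle.
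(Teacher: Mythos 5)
Your overall architecture --- take the first row to be an interlaced arithmetic progression, use the fact that derivation preserves this class to get explicit affine formulas for every row, exploit periodicity in both directions to decompose the triangle of size $12\lambda m$ into boundedly many block types, and thereby reduce balance for all $\lambda$ to finitely many balance identities --- is essentially the strategy of the paper (compare Propositions~\ref{prop1}, \ref{prop2} and \ref{prop4}), and your column-strip decomposition is sound. The genuine gap is that everything after ``It remains to choose the parameters'' is asserted rather than proved, and that assertion \emph{is} the theorem. You never exhibit the parameters, never write down the ``explicit congruences for sums of binomial coefficients'', and never verify that they admit a simultaneous solution for every $m$; the appeal to Kummer's theorem is a heuristic for why a period divisible by $12$ is plausible, not an argument that the resulting system is solvable. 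The paper's Sections~5--13 are devoted precisely to this step: one must compute the arithmetic binomial sums $\C{24}{3.2^u}$ and $\T{24}{3.2^u}$ modulo $2^u$ by a delicate induction, identify an explicit rank-$16$ lattice of integer solutions, prove that the resulting orbits are interlaced \emph{doubly} arithmetic, and then induct on the $2$-adic valuation of $m$ via the Projection Theorem --- this last detour is unavoidable because for even $m$ no arithmetic block is ever balanced outright, and one can only show that the multiplicity function of each block is invariant under translation by $2^{u-1}$.

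Moreover, your specific ansatz is very likely too small. You interlace $12$ arithmetic progressions, whereas the paper's computer searches (Table~\ref{tab1}) show that $\B{12}{2^u}=\emptyset$ for $u\ge 6$: within the framework of antisymmetric first rows with doubly periodic orbits, $12$-fold interlacing admits \emph{no} solution modulo $64$ or any higher power of two, which is exactly why the paper is forced up to $24$-fold interlacing (whose projection modulo $2^u$ happens to have period $12\cdot 2^u$, so the first period is still a $12m$-tuple). Your setup is not literally the same (you impose no antisymmetry and you permit a pre-period in the vertical direction), so this does not formally refute your plan, but it is strong evidence that ``splitting according to the powers of $2$ and $3$ dividing $m$ and fixing the parameters case by case'' will break down at $m=64$ with only $12$ interlaces. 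As written, the proposal is a programme whose decisive step is missing, and whose parameter space probably does not contain the required solutions.
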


This paper is organized as follows. After some preliminary results in Section~2, the interlaced arithmetic progressions (IAP for short) are studied in Section~3. A characterization of periodic orbits of IAP is obtained in Section~4. This leads to the determination, in Section~5, of a particular set of integer sequences whose projection into $\Zn{2^u}$, for all non-negative integers $u$, gives solutions of Theorem~\ref{thm*2} for the powers of two. After studying arithmetic sums of binomials modulo powers of two in Section~6, it is shown, in Section~7, that the orbits of solutions given in Section~5 are periodic. In order to complete the proof, the interlaced doubly arithmetic structure of orbits is introduced in Section~8 and it is proved, in Section~9, that the orbits of solutions given in Section~5 have this particular structure. With elementary results on arithmetic triangles obtained in Section~10, we complete the proof of Theorem~\ref{thm*2} for the powers of two in Section~11. In Section~12, the case where $m$ is odd is studied. It is rediscovered that there exist integer sequences whose projection into $\Zn{m}$ gives solutions of Theorem~\ref{thm*2} for any odd number $m$. This result was already obtained in \cite{Chappelon2008,Chappelon2011}. Finally, in Section~13, using solutions of Theorem~\ref{thm*2} obtained in Section~5 for the powers of two and in Section~12 for the odd numbers, solutions of Theorem~\ref{thm*2} for even values of $m$ are given. This completes the proof of Theorem~\ref{thm*2} and positively answers Problem~\ref{prob2} (Weak Molluzzo Problem) for any positive integer $m$, even if $m$ is even.

\section{Preliminaries}

Other local rules as well as the problem of the existence of balanced structures can be considered. Generalizations in higher dimensions can be found in \cite{Chappelon2015}. The local rule considered in this paper is slightly different from that of the classical Pascal triangle. Moreover, the triangles considered here are viewed as finite subparts of doubly indexed sequences.

\begin{defn}[Derived sequences]
Let $S=\left(u_j\right)_{j\in\Z}$ be a sequence of elements in $\Zn{m}$. The {\em derived sequence} $\der{S}$ of $S$ is the sequence
$$
\der{S} = \left(-u_j-u_{j+1}\right)_{j\in\Z}.
$$
Similarly, for a finite sequence $S=\left(u_j\right)_{j=0}^{n-1}$ of length $n\ge 2$, the {\em derived sequence} $\der{S}$ of $S$ is the $(n-1)$-length sequence defined by
$$
\der{S} = \left(-u_j-u_{j+1}\right)_{j=0}^{n-2}.
$$
When $n\le 1$, the derived sequence $\der{S}$ is the empty sequence $\emptyset$.
\end{defn}

\begin{rem}
When $m\le 10$ and there is no disambiguation, the tuple or the sequence $(u_0,u_1,u_2,\ldots\ldots)$ of $\Zn{m}$ will be simply denoted by $u_0u_1u_2\cdots\cdots$ in the sequel.
\end{rem}

For instance, the derived sequence of $S=22033$ in $\Zn{5}$ is the sequence $\der{S}=1324$. This process of derivation can be repeated and the sequence of iterated derived sequences of $S$ is called the {\em orbit} of $S$.

\begin{defn}[Iterated derived sequences]
For any positive integer $i$, the {\em $i$\textsuperscript{th} derived sequence} $\ider{i}{S}$ of $S$ is recursively defined by
$$
\ider{i}{S} = \der{\ider{i-1}{S}},
$$
where $\ider{0}{S}=S$.
\end{defn}

\begin{defn}[Orbits and triangles]
For any sequence $S$ of elements in $\Zn{m}$, the {\em orbit} $\orb{S}$ of $S$ is the sequence of all its iterated derived sequences
$$
\orb{S} = \left(\ider{i}{S}\right)_{i\in\N}.
$$
When $S$ is an infinite sequence, the orbit of $S$ can also be seen as the doubly indexed sequence
$$
\orb{S}=\left(a_{i,j}\right)_{(i,j)\in\N\times\Z}
$$
recursively defined by
$$
\left(a_{0,j}\right)_{j\in\Z} = S
$$
and
\begin{equation}\label{eqLR}
a_{i,j} = -a_{i-1,j}-a_{i-1,j+1}
\end{equation}
for all $(i,j)\in\N^\ast\times\Z$. When $S$ is a finite sequence of length $n$, the orbit of $S$ can also be seen as the triangle of $\Tn{n}$ numbers
$$
\orb{S} = \left(a_{i,j}\right)_{(i,j)\in\Tn{n}}
$$
recursively defined by
$$
\left(a_{0,j}\right)_{j=0}^{n-1} = S
$$
and
$$
a_{i,j} = -a_{i-1,j}-a_{i-1,j+1},
$$
for all $(i,j)\in \Tn{n}$ such that $i\ge1$. The orbit $\orb{S}$ is also called the {\em triangle} $\ST{S}$ associated to the finite sequence $S$. The {\em size} of $\ST{S}$ is $n$, the number of elements of $S$ or the number of rows and columns of $\ST{S}$.
\end{defn}

An example of an orbit of a sequence of $\Zn{5}$ and several triangles appearing in it is depicted in Figure~\ref{fig*05}.

\begin{figure}[htbp]
\centering{
\includegraphics{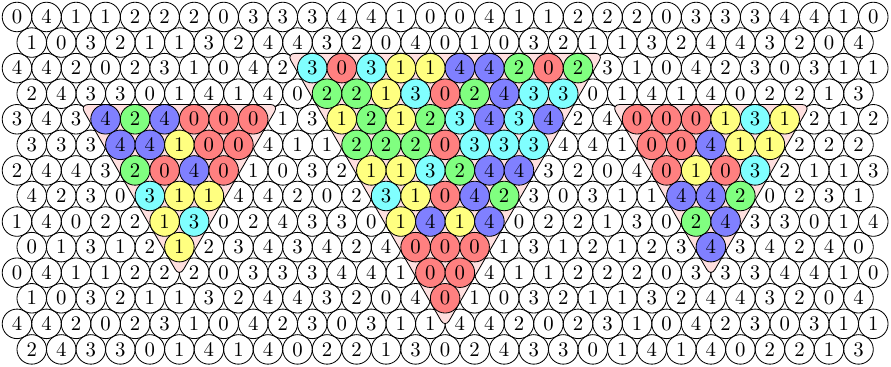}
}
\caption{Orbit of a sequence of $\Zn{5}$ and triangles}\label{fig*05}
\end{figure}

\begin{defn}[Operations on tuples, sequences or triangles]
Let $A=(a_0,a_1,\ldots,a_{n-1})$ and $B=(b_0,b_1,\ldots,b_{n-1})$ be two $n$-tuples (or finite sequences of length $n$) of $\Zn{m}$ and let $\lambda$ be an integer or in $\Zn{m}$. The $n$-tuple $A+B$ is defined by
$$
A+B = (a_0+b_0,a_1+b_1,\ldots\ldots,a_{n-1}+b_{n-1}).
$$
The $n$-tuple $\lambda A$ is defined by
$$
\lambda A = (\lambda a_0,\lambda a_1,\ldots\ldots,\lambda a_{n-1}).
$$
Similarly, for two sequences $A=(a_j)_{j\in\Z}$ and $B=(b_j)_{j\in\Z}$, the sequences $A+B$ and $\lambda A$ are defined by
$$
A+B=(a_j+b_j)_{j\in\Z} \quad\text{and}\quad \lambda A = (\lambda a_j)_{j\in\Z}.
$$
Finally, for two triangles $\ST{A}$ and $\ST{B}$ of size $n$, the triangles $\ST{A}+\ST{B}$ and $\lambda\ST{A}$ are defined by
$$
\ST{A}+\ST{B} = \ST{(A+B)}\quad\text{and}\quad\lambda\ST{A}=\ST{(\lambda A)}.
$$
\end{defn}

The set $\setT{m}{n}$ of triangles of size $n$ of $\Zn{m}$ has thus a natural structure of free $\Zn{m}$-module of rank $n$, for which the set of triangles $\{\ST{E_1},\ST{E_2},\ldots,\ST{E_n}\}$ is a basis, where $E_i$ is the {\em elementary $n$-tuple}
$$
E_i = (\delta_{i,j})_{j=1}^{n} = \underbrace{0\cdots0}_{i-1}1\underbrace{0\cdots0}_{n-i},
$$
for all $i\in\{1,2,\ldots,n\}$, with $\delta_{i,j}$ the Kronecker delta function ($1$ if $j=i$, $0$ otherwise). There is a natural isomorphism between $\setT{m}{n}$ and the module of $n$-tuples of $\Zn{m}$ and they both have $m^n$ elements.

For any non-negative integers $a$ and $b$ such that $b\le a$, the {\em binomial coefficient} $\binom{a}{b}$ is the coefficient of the monomial $X^b$ in the polynomial expansion of the binomial power $(1+X)^a$. It corresponds to the number of ways to choose $b$ elements in a set of $a$ elements. Here, we extend this notation by supposing that $\binom{a}{b}=0$, for all integers $b$ such that $b<0$ or $b>a$. For this generalization, the Pascal identity $\binom{a}{b}=\binom{a-1}{b-1}+\binom{a-1}{b}$ holds, for all positive integers $a$ and all integers $b$.

From the definition of the local rule \eqref{eqLR}, it is straightforward to express each element of the orbit $\orb{S}=\left(a_{i,j}\right)_{(i,j)\in\N\times\Z}$ as a function of the elements of the $i_0$\textsuperscript{th} row $\mathrm{R}_{i_0}=\left(a_{i_0,j}\right)_{j\in\Z}$, the $j_0$\textsuperscript{th} column $\mathrm{C}_{j_0}=\left(a_{i,j_0}\right)_{i\in\N}$ or the $k_0$\textsuperscript{th} antidiagonal $\mathrm{AD}_{k_0}=\left(a_{i,k_0-i}\right)_{i\in\N}$, when $i_0\le i$, $j_0\le j$ or $k_0\ge i+j$.

\begin{prop}
Let $S$ be a doubly infinite sequence of elements in $\Zn{m}$. In the orbit $\orb{S}=\left(a_{i,j}\right)_{(i,j)\in\N\times\Z}$, we have
\begin{equation*}
\resizebox{\textwidth}{!}{$
a_{i,j} = {(-1)}^{i-i_0}\displaystyle\sum_{k\in\Z}\binom{i-i_0}{k-j}a_{i_0,k} = {(-1)}^{j-j_0}\sum_{k\in\N}\binom{j-j_0}{k-i}a_{k,j_0} = {(-1)}^{k_0-(i+j)}\sum_{k\in\N}\binom{k_0-(i+j)}{k-i}a_{k,k_0-k},
$}
\end{equation*}
for all $(i,j)\in\N\times\Z$, where $i\ge i_0$, $j\ge j_0$ and $i+j\le k_0$.
\end{prop}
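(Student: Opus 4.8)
The plan is to establish all three formulas by induction on the relevant distance parameter, exploiting the Pascal identity that the paper has just noted holds for the extended binomial coefficients. I would begin with the first identity, expressing $a_{i,j}$ in terms of the $i_0$th row, and fix $i_0$; the induction is on $i-i_0\geqslant 0$. The base case $i=i_0$ is immediate, since $\binom{0}{k-j}=\delta_{j,k}$, so the sum collapses to $a_{i_0,j}$. For the inductive step, I would apply the local rule \eqref{eqLR} in the form $a_{i,j}=-a_{i-1,j}-a_{i-1,j+1}$, substitute the induction hypothesis for both $a_{i-1,j}$ and $a_{i-1,j+1}$, factor out $(-1)^{i-i_0}={-1}\cdot(-1)^{i-1-i_0}$, and collect the coefficient of each $a_{i_0,k}$; it will be $\binom{i-1-i_0}{k-j}+\binom{i-1-i_0}{k-(j+1)}=\binom{i-1-i_0}{k-j}+\binom{i-1-i_0}{(k-j)-1}$, which by the Pascal identity equals $\binom{i-i_0}{k-j}$, exactly as required. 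Note all sums here are finite because the binomial coefficients vanish outside a bounded range of $k$, so the rearrangement is legitimate.

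Next I would handle the second identity, which runs along columns; here one fixes $j_0$ and inducts on $j-j_0\geqslant 0$, but the recurrence must be rewritten to step in the column direction. From \eqref{eqLR} one has $a_{i-1,j+1}=-a_{i-1,j}-a_{i,j}$, i.e. $a_{i,j+1}=-a_{i,j}-a_{i+1,j}$, so that $a_{i,j}$ is expressed via the two entries $a_{i,j-1}$ and $a_{i+1,j-1}$ one column to the left: $a_{i,j}=-a_{i,j-1}-a_{i+1,j-1}$. Wait — more directly, the identity $a_{i,j+1} = -a_{i,j} - a_{i+1,j}$ rearranges \eqref{eqLR}, and iterating this gives the column expansion. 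The base case $j=j_0$ again collapses the sum via $\binom{0}{k-i}=\delta_{i,k}$. The inductive step substitutes the hypothesis for $a_{i,j-1}$ and $a_{i+1,j-1}$ and again invokes Pascal's identity on the index $k-i$; the sign bookkeeping works because each column step introduces one factor of $-1$. The third identity, along antidiagonals, is entirely analogous: fixing $k_0$ and inducting on $k_0-(i+j)\geqslant 0$, one uses that moving one antidiagonal inward ($k_0 \mapsto k_0-1$) corresponds to applying \eqref{eqLR} once, and the binomial recurrence appears on the index $k-i$.

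Rather than proving all three separately, a cleaner route I would actually prefer is to prove the first identity in full, then deduce the other two by a change of coordinates. The local rule \eqref{eqLR} is symmetric enough that reflecting the index set — sending $(i,j)$ to a reindexing along columns or along antidiagonals — maps orbits to orbits of a suitably transformed sequence; under such a reflection the ``row'' formula becomes the ``column'' or ``antidiagonal'' formula. This avoids repeating near-identical inductions. Either way, the only genuinely delicate points are (i) checking that all the infinite sums are really finite, so that splitting a sum and reindexing is valid — this follows from the convention $\binom{a}{b}=0$ for $b<0$ or $b>a$, which confines the support of $k\mapsto\binom{i-i_0}{k-j}$ to $j\leqslant k\leqslant j+(i-i_0)$ — and (ii) getting the signs right, which is a matter of noting that each unit step of the induction parameter contributes exactly one factor $-1$ matching the $(-1)^{i-i_0}$, $(-1)^{j-j_0}$, $(-1)^{k_0-(i+j)}$ prefactors. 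The main obstacle is thus purely bookkeeping; there is no conceptual difficulty, only the need to organize the Pascal-identity collection of coefficients carefully in each of the three directions.
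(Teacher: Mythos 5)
Your proposal is correct and is essentially the paper's own argument: the paper proves this proposition in one line, "by induction on $(i,j)$ using the local rule \eqref{eqLR}", and your write-up simply fills in the details of that induction (base cases collapsing via $\binom{0}{\cdot}=\delta$, inductive steps via the rearranged local rule and the Pascal identity in each of the three directions). No gaps; the rearrangements $a_{i,j}=-a_{i,j-1}-a_{i+1,j-1}$ and $a_{i,j}=-a_{i,j+1}-a_{i+1,j}$ you identify for the column and antidiagonal cases are exactly what is needed.
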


\begin{proof}
By induction on $(i,j)$ using the local rule \eqref{eqLR}.
\end{proof}

Let $r$ and $h$ be the $120$ degrees rotation and the horizontal reflection, respectively, of a triangle of $\setT{m}{n}$, that are the automorphisms of $\setT{m}{n}$ defined by
$$
r :
\begin{array}[t]{ccc}
\setT{m}{n} & \longrightarrow & \setT{m}{n} \\
\left(a_{i,j}\right)_{(i,j)\in\Tn{n}} & \longmapsto & \left(a_{j,n-i-j-1}\right)_{(i,j)\in\Tn{n}} \\
\end{array}
\ \text{and}\ \ 
h :
\begin{array}[t]{ccc}
\setT{m}{n} & \longrightarrow & \setT{m}{n} \\
\left(a_{i,j}\right)_{(i,j)\in\Tn{n}} & \longmapsto & \left(a_{i,n-i-j-1}\right)_{(i,j)\in\Tn{n}} \\
\end{array}
$$
for all non-negative integers $m$ and $n$. These automorphims verify the following identities
$$
r^3=h^2=hrhr=\mathrm{id}_{\setT{m}{n}},
$$
where $\mathrm{id}_{\setT{m}{n}}$ is the identity map on $\setT{m}{n}$. Therefore, the subgroup $\left\langle r,h\right\rangle$ of the automorphisms group of $\setT{n}{m}$ is isomorphic to the dual group $\mathrm{D}_3$. Unlike the set of Steinhaus triangles, the $\Zn{m}$--module $\setT{m}{n}$ of triangles built with the local rule \eqref{eqLR} is preserved under the action of the dihedral group $\mathrm{D}_3=\left\langle r,h\right\rangle$. For instance, for $S=03114$ in $\Zn{5}$ and for all $g\in \mathrm{D}_3$, the six triangles $g(\ST{S})$ are depicted in Figure~\ref{fig*02}.

\begin{figure}[htbp]
\centering{
\includegraphics{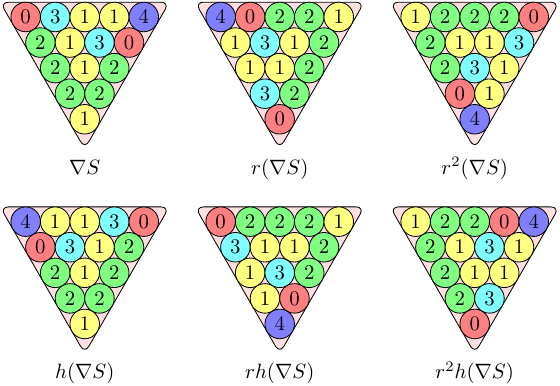}
}
\caption{Action of $\mathrm{D}_3=\left\langle r,h\right\rangle$ on $\ST{(03114)}$ in $\Zn{5}$}\label{fig*02}
\end{figure}

A triangle $\ST{S}$ is said to be {\em horizontally symmetric}, {\em rotationally symmetric} or {\em dihedrally symmetric} if $h(\ST{S})=\ST{S}$, $r(\ST{S})=\ST{S}$ or $r(\ST{S})=h(\ST{S})=\ST{S}$, respectively. Examples of such symmetric triangles appear in Figure~\ref{fig*03}. Symmetric triangles are studied in \cite{Barbe:2000aa,Brunat:2011aa,Chappelon2019} in the binary case ($m=2$) and in \cite{Barbe:2001aa} for $m\ge3$.

\begin{figure}[htbp]
\centering{
\includegraphics{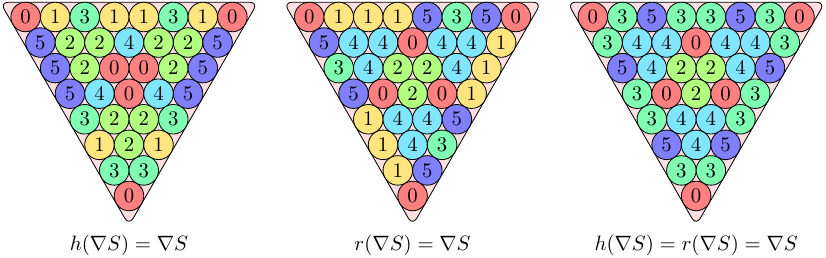}
}
\caption{Symmetric triangles of $\Zn{6}$}\label{fig*03}
\end{figure}

\begin{defn}[Multisets and multiplicity functions]
A finite {\em multiset} of elements of $\Zn{m}$ is a set of $\Zn{m}$ for which repeated elements are allowed. A finite multiset $M$ of $\Zn{m}$ corresponds to a function $\mf{M} : \Zn{m} \longrightarrow \N$, the {\em multiplicity function} associated to $M$, which assigns its multiplicity in $M$ to each element of $\Zn{m}$. The {\em cardinality} of $M$, denoted by $|M|$, is the number of elements of $M$, counted with multiplicity, that is, the non-negative integer $|M|=\sum_{x\in\Zn{m}}\mf{M}(x)$.
\end{defn}

\begin{defn}[Balanced multisets of $\Zn{m}$]
A finite multiset of $\Zn{m}$ is said to be {\em balanced} if its multiplicity function is constant, that is, if
$$
\mf{M}(x)=\mf{M}(y),\ \forall x,y\in\Zn{m}.
$$
The cardinality $|M|$ of a balanced multiset $M$ of $\Zn{m}$ is a multiple of $m$. When $|M|$ is not divisible by $m$, say $|M|=qm+r$ with $r\in\left\{1,\ldots,m-1\right\}$, the multiset $M$ is said to be {\em almost balanced} if $\mf{M}(x)\in\left\{q,q+1\right\}$, for all $x\in\Zn{m}$.
\end{defn}

The method presented in this paper permits us to obtain balanced triangles for both local rules~\eqref{eqP} and \eqref{eqLR} by considering antisymmetric sequences. In the sequel, the Steinhaus triangle whose first row is $S$ and built with local rule \eqref{eqP} is denoted by $\SteinT{S}$ and the one built with the local rule \eqref{eqLR} is simply denoted by $\ST{S}$. Note that, when $m=2$, the both local rules~\eqref{eqP} and \eqref{eqLR} correspond and $\SteinT{S}=\ST{S}$, for any finite binary sequence $S$ of $\Zn{2}$.

\begin{defn}[Antisymmetric sequences]
A finite sequence $\left(u_j\right)_{j=0}^{n-1}$ of length $n$ in $\Zn{m}$ is said to be {\em antisymmetric} if
$$
u_{n-1-j}=-u_j,
$$
for all $j\in\{0,1,\ldots,n-1\}$.
\end{defn} 

For instance, the sequence $1023405$ is antisymmetric in $\Zn{6}$. The antisymmetric structure is preserved under the derivation process. A proof can be found in \cite{Chappelon2008} for the local rule \eqref{eqP} and in Proposition~\ref{prop*1} for the local rule \eqref{eqLR}. Therefore, when $S$ is antisymmetric, all the rows of $\SteinT{S}$ and $\ST{S}$ are also antisymmetric and they both have the same multiplicity function.

\begin{prop}
For any antisymmetric sequence $S$ of $\Zn{m}$, we have $\mf{\SteinT{S}} = \mf{\ST{S}}$.
\end{prop}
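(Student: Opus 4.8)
The plan is to exhibit an explicit bijection between the multiset of entries of $\SteinT{S}$ and that of $\ST{S}$ that respects the value carried by each cell, using the horizontal-reflection symmetry $h$ together with the antisymmetry of $S$. First I would observe that both triangles have the same first row $S$ and are supported on the same index set $\Tn{n}$, so it suffices to compare entries cell by cell up to a bijection of $\Tn{n}$. The natural candidate is the map $\sigma : (i,j)\mapsto(i,n-i-j-1)$, which is the index permutation underlying $h$; it is an involution of $\Tn{n}$. I would then prove, by induction on $i$, that for every $(i,j)\in\Tn{n}$ one has $b_{i,j} = (-1)^{i}\,a_{i,\,n-i-j-1}$, where $(a_{i,j})$ denotes $\SteinT{S}$ (built with rule \eqref{eqP}) and $(b_{i,j})$ denotes $\ST{S}$ (built with rule \eqref{eqLR}). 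The base case $i=0$ is exactly the antisymmetry hypothesis $u_{n-1-j} = -u_j$, i.e. $b_{0,j} = a_{0,\,n-1-j}\cdot(-1)^0$ fails sign-wise — so more carefully the base case reads $b_{0,j} = a_{0,j} = u_j = -u_{n-1-j} = (-1)^1 a_{0,n-1-j}$, which already forces me to track the sign as $(-1)^{i+1}$ rather than $(-1)^i$; I would fix the exact exponent while writing the induction, the point being only that it has a clean closed form in $i$.

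For the inductive step I would use that rule \eqref{eqLR} gives $b_{i,j} = -b_{i-1,j} - b_{i-1,j+1}$, substitute the inductive hypothesis for $b_{i-1,\cdot}$ in terms of $a_{i-1,\cdot}$, and recognize $a_{i-1,n-i-j} + a_{i-1,n-i-j+1} = a_{i-1,(n-i+1)-j-1}+a_{i-1,(n-i+1)-j}$ as exactly the pair of parent cells that rule \eqref{eqP} combines to produce $a_{i,\,n-i-j-1}$ — note $n-i-j-1$ together with its right neighbour at level $i-1$ are indices $n-i-j-1$ and $n-i-j$, and $a_{i,n-i-j-1} = a_{i-1,n-i-j-1}+a_{i-1,n-i-j}$; a short index bookkeeping confirms the two expressions match up to the sign bookkeeping. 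Since multiplication by $(-1)^{i+1}$ is a bijection of $\Zn{m}$ for each fixed $i$, the map $(i,j)\mapsto(i,n-i-j-1)$ induces, row by row, a value-preserving bijection between the cells of $\SteinT{S}$ carrying value $x$ and those of $\ST{S}$ carrying value $\pm x$; to land exactly on a multiplicity-preserving statement I would pair this with the elementary fact that the multiset of entries of each row of $\SteinT{S}$ is symmetric under $x\mapsto -x$ (again by antisymmetry of that row, which is preserved under \eqref{eqP} as cited), so composing with negation on the rows where $i+1$ is odd cleans up the sign. Concretely: on each row $i$, the multiset $\{a_{i,j}\}_j$ equals its own negation, hence equals $\{(-1)^{i+1}a_{i,j}\}_j = \{b_{i,\sigma(j)}\}_j = \{b_{i,j'}\}_{j'}$; summing the identity $\mf{(\text{row }i\text{ of }\SteinT{S})} = \mf{(\text{row }i\text{ of }\ST{S})}$ over $i$ yields $\mf{\SteinT{S}} = \mf{\ST{S}}$.

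The only delicate point is the sign bookkeeping — getting the exponent of $-1$ exactly right as a function of $i$ (and possibly of $n$) in the relation between $b_{i,j}$ and $a_{i,n-i-j-1}$, and then verifying that the residual sign is absorbed by the self-negation symmetry of each row. There is no real obstacle beyond this: everything is a finite induction on the triangle, and the facts that "antisymmetry is preserved by \eqref{eqP}" and "\,$\ST{}$ and $\SteinT{}$ agree in size and index set" are already available (the former is cited to \cite{Chappelon2008}, the latter is immediate from the definitions). I would also remark that the case $m=2$ is trivial since the two local rules coincide there, which is a useful consistency check on the sign computation (all signs collapse mod $2$).
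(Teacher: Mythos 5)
Your proof is correct and is essentially the paper's argument: both proceed row by row, establish that the $i$\textsuperscript{th} rows of $\SteinT{S}$ and $\ST{S}$ agree up to a sign $(-1)^i$ and a permutation of indices, and then absorb the sign via the fact that each row is antisymmetric (antisymmetry being preserved by the local rule), so its multiset is invariant under negation. The only difference is that you thread the horizontal reflection $j\mapsto n-i-j-1$ through the induction, which forces the antisymmetry hypothesis into the base case and makes the exponent come out as $(-1)^{i+1}$; this is harmless but unnecessary, since the direct relation $R_i\left(\SteinT{S}\right)={(-1)}^{i}R_i\left(\ST{S}\right)$ holds for arbitrary $S$ without any reflection, leaving antisymmetry to do only the multiset-negation step.
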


\begin{proof}
Let $n$ be the length of $S$. For any $i\in\{1,\ldots,n\}$, the $i$\textsuperscript{th} row of $\SteinT{S}$ and $\ST{S}$ is denoted by $R_i\left(\SteinT{S}\right)$ and $R_i\left(\ST{S}\right)$, respectively. There exists a natural correspondence between them:
$$
R_i\left(\SteinT{S}\right) = {(-1)}^iR_i\left(\ST{S}\right),
$$
for all $i\in\{1,\ldots,n\}$. Moreover, for any antisymmetric sequence $T$, it is easy to see that $\mf{-T}=\mf{T}$. Since the rows $R_i\left(\SteinT{S}\right)$ and $R_i\left(\ST{S}\right)$ are antisymmetric, we deduce that
$$
\mf{R_i\left(\SteinT{S}\right)} = \mf{R_i\left(\ST{S}\right)},
$$
for all $i\in\{1,\ldots,n\}$. This leads to $\mf{\SteinT{S}} = \mf{\ST{S}}$.
\end{proof}

\begin{cor}\label{cor*7}
Let $S$ be an antisymmetric sequence of $\Zn{m}$. Then, the Steinhaus triangle $\SteinT{S}$ is balanced if and only if the triangle $\ST{S}$ is balanced.
\end{cor}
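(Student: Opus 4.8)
The plan is to derive this immediately from the Proposition just established, namely that $\mf{\SteinT{S}} = \mf{\ST{S}}$ for every antisymmetric sequence $S$ of $\Zn{m}$. Since both triangles $\SteinT{S}$ and $\ST{S}$ carry exactly the same multiplicity function, and since being \emph{balanced} is by definition a property of the multiplicity function alone --- it must be constant on $\Zn{m}$ --- the two statements ``$\SteinT{S}$ is balanced'' and ``$\ST{S}$ is balanced'' are literally the same statement about that common function. Hence the equivalence is immediate, and both directions are obtained at once.

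Concretely, I would first recall the definition of a balanced triangle: $\SteinT{S}$ is balanced precisely when $\mf{\SteinT{S}}(x) = \mf{\SteinT{S}}(y)$ for all $x,y\in\Zn{m}$, and likewise $\ST{S}$ is balanced precisely when $\mf{\ST{S}}(x) = \mf{\ST{S}}(y)$ for all $x,y\in\Zn{m}$. Then I would invoke the preceding Proposition to substitute $\mf{\ST{S}}$ for $\mf{\SteinT{S}}$ in the first condition, which turns it verbatim into the second condition. This closes the proof.

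There is no genuine obstacle here: the substantive work was already carried out in the Proposition, which tracked the sign flips between the two local rules via the identity $R_i(\SteinT{S}) = (-1)^i R_i(\ST{S})$ and used that negation preserves the multiplicity of an antisymmetric sequence, applied row by row. The corollary is merely the translation of that equality of multiplicity functions into the vocabulary of balancedness. The one point I would stress in a sentence is that the antisymmetry hypothesis on $S$ is essential, since it is exactly what guarantees each row of both triangles is antisymmetric, and hence what licenses the row-wise use of $\mf{-T} = \mf{T}$ underlying the Proposition.
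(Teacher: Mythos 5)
Your proposal is correct and matches the paper's intent exactly: the corollary is stated as an immediate consequence of the preceding Proposition, which establishes $\mf{\SteinT{S}} = \mf{\ST{S}}$ for antisymmetric $S$, and since balancedness is defined purely as constancy of the multiplicity function, the equivalence follows at once. Your remark on the role of the antisymmetry hypothesis is accurate and consistent with the paper's argument.
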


For instance, for the sequence $S=22033$ of Figure~\ref{fig*01}, the Steinhaus triangle $\SteinT{S}$ and the triangle $\ST{S}$ are balanced in $\Zn{5}$. The both triangles are depicted in Figure~\ref{fig*04}.

\begin{figure}[htbp]
\centering{
\includegraphics{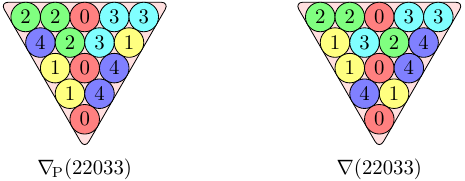}
}
\caption{The balanced triangles $\SteinT{(22033)}$ and $\ST{(22033)}$ modulo $5$}\label{fig*04}
\end{figure}

\begin{defn}[Concatenation and power of tuples]
Let $A=(a_0,a_1,\ldots,a_{r-1})$ and $B=(b_0,b_1,\ldots,b_{s-1})$ be two tuples of $r$ and $s$ elements of $\Zn{m}$, respectively. The {\em concatenation} of $A$ and $B$ is the $(r+s)$-tuple of $\Zn{m}$ defined by
$$
A\cdot B = \left(a_0,a_1,\ldots,a_{r-1},b_0,b_1,\ldots,b_{s-1}\right).
$$
For any non-negative integer $\lambda$, the $\lambda$\textsuperscript{th} power of $A$ is $(\lambda r)$-tuple of $\Zn{m}$ recursively defined by
$$
A^\lambda = A\cdot A^{\lambda-1},
$$
for all positive integers $\lambda$, where $A^0=\emptyset$ is the empty tuple. The tuple $A^\lambda$ then corresponds to the tuple $A$ repeated $\lambda$ times.
\end{defn}

The main result of this paper is the explicit construction of antisymmetric $(12m)$-tuples $A_m$ of $\Zn{m}$ such that the triangles $\ST{{A_m}^\lambda}$, of size $12\lambda m$, are balanced in $\Zn{m}$, for all non-negative integers $\lambda$, and this for every positive integer $m$. Since the sequences ${A_m}^\lambda$ are clearly antisymmetric, we know from Corollary~\ref{cor*7} that these sequences also give balanced Steinhaus triangles modulo $m$ of size $12\lambda m$, for all positive integers $\lambda$ and $m$. This leads to Theorem~\ref{thm*2}, that is a complete positive answer to Problem~\ref{prob2} (Weak Molluzzo Problem), that was previously open for all even numbers $m\ge 12$.

It is known that Molluzzo Problem and its weak version are natural problems, in the sense that the average number of every element of $\Zn{m}$ in the set of Steinhaus triangles modulo $m$ of size $n$ is exactly $\frac{1}{m}\Tn{n}$ (see \cite{Chappelon2017a} for the binary case). This is also the case for triangles built with the local rule \eqref{eqLR}.

\begin{prop}
For any non-negative integer $n$, the average number of elements $x\in\Zn{m}$ in a triangle of $\setT{m}{n}$ is exactly $\frac{1}{m}\Tn{n}$, i.e.,
$$
\frac{1}{m^n}\sum_{\nabla\in\setT{m}{n}}\mf{\nabla}(x) = \frac{1}{m}\Tn{n} = \frac{1}{m}\binom{n+1}{2},
$$
for all $x\in\Zn{m}$.
\end{prop}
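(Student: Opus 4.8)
The plan is to prove the identity by a position-by-position double count over the whole module $\setT{m}{n}$ (we assume $m\geqslant1$, so that $\setT{m}{n}$ is finite). Fix $x\in\Zn{m}$ and set
$$
\Sigma(x) := \sum_{\nabla\in\setT{m}{n}}\mf{\nabla}(x).
$$
Since $\mf{\nabla}(x)$ is by definition the number of positions $(i,j)\in\Tn{n}$ at which the triangle $\nabla=\left(a_{i,j}\right)_{(i,j)\in\Tn{n}}$ takes the value $x$, exchanging the order of summation gives
$$
\Sigma(x) = \sum_{(i,j)\in\Tn{n}} N_{i,j}(x),\qquad\text{where}\quad N_{i,j}(x):=\#\left\{\nabla\in\setT{m}{n}\ \middle|\ a_{i,j}(\nabla)=x\right\}.
$$
So everything reduces to computing the single-position count $N_{i,j}(x)$.

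The key claim is that $N_{i,j}(x)=m^{n-1}$ for \emph{every} $(i,j)\in\Tn{n}$ and \emph{every} $x\in\Zn{m}$. To see this, identify $\setT{m}{n}$ with the $\Zn{m}$-module $(\Zn{m})^n$ of first rows $\left(a_{0,0},\ldots,a_{0,n-1}\right)$ via the isomorphism noted after the definition of $\setT{m}{n}$. Applying the Proposition expressing the entries of an orbit in terms of a row, with $i_0=0$, we have
$$
a_{i,j} = {(-1)}^{i}\sum_{k\in\Z}\binom{i}{k-j}a_{0,k}.
$$
Because $(i,j)\in\Tn{n}$ forces $0\leqslant j\leqslant n-1$, the index $j$ is a genuine first-row position, and the coefficient of $a_{0,j}$ in the above expression is ${(-1)}^{i}\binom{i}{0}={(-1)}^{i}$, which is a unit of $\Zn{m}$. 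Hence, once the $n-1$ entries $a_{0,k}$ with $k\neq j$ are chosen arbitrarily, there is exactly one value of $a_{0,j}$ for which $a_{i,j}=x$. Therefore $N_{i,j}(x)=m^{n-1}$, independently of $(i,j)$ and of $x$.

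Substituting this back, and using $\#\Tn{n}=t_n=\binom{n+1}{2}$, we obtain $\Sigma(x)=t_n\, m^{n-1}$, which does not depend on $x$. Dividing by $\left|\setT{m}{n}\right|=m^n$ yields
$$
\frac{1}{m^n}\sum_{\nabla\in\setT{m}{n}}\mf{\nabla}(x) = \frac{t_n\, m^{n-1}}{m^n} = \frac{t_n}{m} = \frac{1}{m}\binom{n+1}{2},
$$
as required. There is no serious obstacle here; the only point deserving attention is the appearance of the unit coefficient ${(-1)}^{i}$, since it is precisely this that makes the ``slice'' map $\nabla\mapsto a_{i,j}$ a surjective homomorphism of $\Zn{m}$-modules, forcing all of its fibers to have the same cardinality $m^{n-1}$. (If instead the coefficients of the linear form $a_{i,j}$ in the first-row coordinates happened to share a common factor with $m$, surjectivity could fail; here it cannot.)
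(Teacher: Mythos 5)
Your proof is correct, but it takes a genuinely different route from the paper's. The paper argues by induction on $n$: it peels off the first row, observes that each triangle of $\setT{m}{n-1}$ is the subtriangle of exactly $m$ triangles of $\setT{m}{n}$ (parametrized by the top-left entry $x$, which determines the whole first row by an affine bijection), and that the resulting first rows sweep out all $m^{n}$ tuples of $(\Zn{m})^n$; this yields the recursion $\sum_{\nabla\in\setT{m}{n}}\mf{\nabla}(x) = m\sum_{\nabla\in\setT{m}{n-1}}\mf{\nabla}(x) + m^{n-1}n$, from which the formula follows. You instead perform a direct double count over positions $(i,j)\in\Tn{n}$ and prove the stronger, purely local fact that each fixed position is exactly equidistributed: $N_{i,j}(x)=m^{n-1}$ for all $(i,j)$ and all $x$. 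Your key observation --- that in the binomial expansion $a_{i,j}={(-1)}^{i}\sum_{k}\binom{i}{k-j}a_{0,k}$ the coefficient of $a_{0,j}$ is the unit ${(-1)}^{i}$, so the slice map $\nabla\mapsto a_{i,j}$ is a surjection with equal fibers --- is essentially the same mechanism the paper exploits implicitly (its $m$ lifts $S'$ are obtained by varying the leading entry, which enters each coordinate of $S'$ with coefficient $\pm1$), but you make it explicit and avoid the induction entirely. Your version is slightly more informative (positionwise equidistribution rather than just the aggregate average) at the cost of invoking the earlier binomial-expansion Proposition; note only that that Proposition is stated for doubly infinite sequences, though, as you implicitly use, the indices $k$ with $\binom{i}{k-j}\neq 0$ all lie in $\{0,\ldots,n-1\}$ when $(i,j)\in\Tn{n}$, so the formula applies verbatim to finite triangles. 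Your standing assumption $m\geqslant 1$ is also the right one, since for $m=0$ the set $\setT{m}{n}$ is infinite and the statement is vacuous.
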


\begin{proof}
First, it is clear that $\setT{m}{n}$ contains $m^n$ triangles and there are $m^n$ tuples of $n$ elements of $\Zn{m}$. We proceed by induction on $n\ge0$. The result is trivial for $n=0$ and $n=1$. Suppose now that $n\ge2$ and that the result is true for any triangle of $\setT{m}{n-1}$. Let $\ST{S}\in\setT{m}{n-1}$ be the triangle of size $n-1$ generated from the sequence $S=(a_0,a_1,\ldots,a_{n-2})$. There exist exactly $m$ sequences $S'$ of length $n$ such that $\der{S'}=S$. Then, we retrieve the triangle $\ST{S}$ as the subtriangle $\ST{S'}\setminus S'$, that is, the last $n-1$ rows of the triangle $\ST{S'}$ of size $n$. These $m$ sequences $S'$ are of the form
$$
S'
\begin{array}[t]{l}
= \displaystyle\left((-1)^{i}\left(x+\sum_{j=0}^{i-1}(-1)^ja_j\right)\right)_{i=0}^{n-1} \\ \ \\
= \displaystyle\left( x , -x-a_0 , x+a_0-a_1, \ldots\ldots, (-1)^{n-1}(x+a_0-a_1+a_2+\cdots+(-1)^{n-2}a_{n-2}) \right), \\
\end{array}
$$
with $x\in\Zn{m}$. It follows that, for all $x\in\Zn{m}$, the total number of $x$ in $\setT{m}{n}$ is the sum of $m$ times the total number of $x$ in $\setT{m}{n-1}$ and the total number of $x$ in the set of $n$-tuples of $\Zn{m}$. This leads to
$$
\sum_{\nabla\in\setT{m}{n}}\mf{\nabla}(x) = m\hspace{-15pt}\sum_{\nabla\in\setT{m}{n-1}}\mf{\nabla}(x) + m^{n-1}n = m^{n-1}\binom{n}{2} + m^{n-1}n = m^{n-1}\binom{n+1}{2}
$$
and thus
$$
\frac{1}{m^n}\sum_{\nabla\in\setT{m}{n}}\mf{\nabla}(x) = \frac{1}{m}\binom{n+1}{2},
$$
for all $x\in\Zn{m}$. This completes the proof.
\end{proof}

The Molluzzo Problem and the similar problem for the local rule \eqref{eqLR} then correspond to determination of the existence of triangles modulo $m$ with an average number of each element of $\Zn{m}$, that are balanced triangles modulo $m$.

For any non-negative integer $n$, the set $\setBT{m}{n}$ of balanced triangles of size $n$ modulo $m$ is preserved under the action of the automorphism group of $\Zn{m}$.

\begin{prop}
For any triangle $\nabla\in\setT{m}{n}$, we have
$$
\nabla\in\setBT{m}{n}\ \Longleftrightarrow\ \lambda\nabla\in\setBT{m}{n},\ \forall \lambda\in\left(\Zn{m}\right)^*,
$$
where $\left(\Zn{m}\right)^*$ is the multiplicative group of invertible elements of $\Zn{m}$.
\end{prop}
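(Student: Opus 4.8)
The plan is to exploit the fact that multiplication by an invertible element $\lambda\in\left(\Zn{m}\right)^*$ is a bijection of $\Zn{m}$, so that replacing $\nabla$ by $\lambda\nabla$ only relabels the value classes of the entries; the balanced condition, being ``all classes have the same size'', is insensitive to such a relabeling. Concretely, first I would record how the scalar action affects multiplicity functions. Writing $\nabla=\left(a_{i,j}\right)_{(i,j)\in\Tn{n}}$, the local rule \eqref{eqLR} is $\Zn{m}$-linear, so the entries of $\lambda\nabla$ are exactly the $\lambda a_{i,j}$. Hence, for every $x\in\Zn{m}$,
$$
\mf{\lambda\nabla}(x)=\#\left\{(i,j)\in\Tn{n}\ \middle|\ \lambda a_{i,j}=x\right\}=\#\left\{(i,j)\in\Tn{n}\ \middle|\ a_{i,j}=\lambda^{-1}x\right\}=\mf{\nabla}\!\left(\lambda^{-1}x\right),
$$
where the middle equality uses that $\lambda$ is invertible in $\Zn{m}$. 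In other words, $\mf{\lambda\nabla}=\mf{\nabla}\circ\varphi_{\lambda^{-1}}$, where $\varphi_{\mu}\colon x\mapsto\mu x$ is a permutation of $\Zn{m}$ for every $\mu\in\left(\Zn{m}\right)^*$.

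From here both implications are immediate. For the forward direction, if $\nabla\in\setBT{m}{n}$ then $\mf{\nabla}$ is constant, so $\mf{\lambda\nabla}=\mf{\nabla}\circ\varphi_{\lambda^{-1}}$ is again constant, and therefore $\lambda\nabla\in\setBT{m}{n}$ for every $\lambda\in\left(\Zn{m}\right)^*$. For the converse, if $\lambda\nabla\in\setBT{m}{n}$ for all $\lambda\in\left(\Zn{m}\right)^*$, then taking $\lambda=1$ gives $\nabla=1\cdot\nabla\in\setBT{m}{n}$.

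I do not expect any real obstacle: the single substantive observation is that a unit of $\Zn{m}$ acts on $\Zn{m}$ as a permutation of the value set, and being balanced is by definition invariant under all permutations of that set. Equivalently, the computation $\mf{\lambda\nabla}=\mf{\nabla}\circ\varphi_{\lambda^{-1}}$ shows that the set $\setBT{m}{n}$ is stable under the natural action of the automorphism group $\mathrm{Aut}\!\left(\Zn{m}\right)\cong\left(\Zn{m}\right)^*$ on $\setT{m}{n}$, which is precisely the content of the proposition.
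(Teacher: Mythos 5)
Your proof is correct and follows the same route as the paper: both rest on the single identity $\mf{\lambda\nabla}(x)=\mf{\nabla}(\lambda^{-1}x)$, which you justify in slightly more detail (via the $\Zn{m}$-linearity of the local rule and the permutation action of units) before drawing the two immediate implications.
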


\begin{proof}
Let $\lambda$ be an invertible element of $\Zn{m}$. Since
$$
\mf{\lambda\nabla}(x) = \mf{\nabla}(\lambda^{-1}x),
$$
for all $x\in\Zn{m}$, the result follows.
\end{proof}

\begin{rem}
Since the triangles $\lambda\nabla$ are pairwise distinct whenever $n$ is positive, it is thus clear that the number $|\setBT{m}{n}|$ of balanced triangles modulo $m$ of size $n$ is always divisible by $\varphi(m)$, for all positive integers $n$ and where $\varphi(m)$ is the Euler totient's function of $m$. Therefore, if $\setBT{m}{n}\neq\emptyset$, then $|\setBT{m}{n}|\ge\varphi(m)$, for all positive integers $n$.
\end{rem}

The study of the values taken by the multiplicity function of such a triangle is generally complex. A few results exist in the literature, mainly concerning the binary case. The possible number of ones in binary (Steinhaus) triangles was explored in \cite{Harborth:1972aa,Chang:1983aa,Harborth:2005aa}. The minimum number of ones is obviously $0$ since the triangle of zeroes of size $n$ is always a binary triangle. The maximum number of ones in a binary (Steinhaus) triangle of size $n$ is $\left\lceil\frac{2}{3}\binom{n+1}{2}\right\rceil$. As shown in \cite{Harborth:1972aa,Chang:1983aa}, this maximum number of ones is obtained, for instance, for the (Steinhaus) triangle associated with the initial segment of length $n$ of the $3$-periodic sequence $(110)^k$, where $3k\ge n$, for all positive integers $n$. Determining  all possible values of the multiplicity function of a binary triangle is a very difficult problem. However, Chang managed to obtain in \cite{Chang:1983aa} the four smallest and three largest possible numbers of ones in binary (Steinhaus) triangles.

A binary Steinhaus triangle can also be considered as the upper triangular part of the adjacency matrix of a finite graph. These undirected graphs are called Steinhaus graphs in \cite{Molluzzo:1978aa}. A classical problem on Steinhaus graphs is to study those having certain graphical properties such as bipartition \cite{Dymacek:1986aa,Dymacek:1995aa,Chang:1999aa}, planarity \cite{Dymacek:2000aa} or regularity \cite{Dymacek:1979aa,Bailey:1988aa,Augier:2008aa,Chappelon2008a,Chappelon2009}. A survey on Steinhaus graphs can be found in \cite{Dymacek:1999aa}.

We end this section with the notion of projective maps and the Projection Theorem that will be useful for inductive proofs in the sequel.

\begin{defn}[Projective maps]
For any divisor $d$ of $m$, let $\pi_d$ denote the canonical projective map
$$
\pi_d : \Zn{m} \longrightarrow \Zn{d}.
$$
For any finite multiset $M$ of elements in $\Zn{m}$, its projection $\pi_d(M)$ into $\Zn{d}$ is the multiset of $\Zn{d}$ defined by
$$
\mf{\pi_d(M)}(y) = \sum_{x\in\pi_d^{-1}(\{y\})}\mf{M}(x),
$$
for all $y\in\Zn{d}$. For any finite or infinite sequence $S=\left(u_j\right)$ of elements in $\Zn{m}$, its projection $\pi_d(S)$ is the sequence
$$
\pi_d(S) = \left(\pi_d(u_j)\right)
$$
of $\Zn{d}$.
\end{defn}

\begin{thm}[Projection Theorem]\label{thm6}
Let $d$ be a divisor of $m$ and let $M$ be a finite multiset of $\Zn{m}$. Then, the multiset $M$ is balanced in $\Zn{m}$ if and only if its projection $\pi_d(M)$ is balanced in $\Zn{d}$ and $\mf{M}(x+d)=\mf{M}(x)$, for all $x\in\Zn{m}$.
\end{thm}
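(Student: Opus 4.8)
The plan is to reduce everything to the elementary group structure of the canonical projection. Since $d$ divides $m$, the kernel of $\pi_d\colon\Zn{m}\to\Zn{d}$ is the cyclic subgroup $\langle d\rangle=\{0,d,2d,\dots,m-d\}$, of order $m/d$. Hence each fiber $\pi_d^{-1}(\{y\})$ is a coset of $\langle d\rangle$ and has exactly $m/d$ elements, and two elements $x,x'\in\Zn{m}$ satisfy $\pi_d(x)=\pi_d(x')$ if and only if $x'=x+kd$ for some integer $k$. The first thing I would record is the resulting reformulation: the hypothesis $\mf{M}(x+d)=\mf{M}(x)$ for all $x\in\Zn{m}$ holds if and only if $\mf{M}$ is constant on every fiber of $\pi_d$, in which case $\mf{\pi_d(M)}(y)=(m/d)\,\mf{M}(x)$ for any $x\in\pi_d^{-1}(\{y\})$.

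For the forward implication, assume $M$ is balanced, so $\mf{M}\equiv c$ for some $c\in\N$. Then $\mf{M}(x+d)=c=\mf{M}(x)$ is immediate, and summing the multiplicity over a fiber gives $\mf{\pi_d(M)}(y)=(m/d)\,c$, independent of $y$, so $\pi_d(M)$ is balanced in $\Zn{d}$.

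For the converse, suppose $\pi_d(M)$ is balanced in $\Zn{d}$, with constant value $c'$, and that $\mf{M}(x+d)=\mf{M}(x)$ for all $x$. By the reformulation above, $\mf{M}$ is constant on each fiber, and for any $x\in\Zn{m}$ we get $c'=\mf{\pi_d(M)}(\pi_d(x))=(m/d)\,\mf{M}(x)$, hence $\mf{M}(x)=(d/m)\,c'$. The right-hand side does not depend on $x$, so $\mf{M}$ is constant and $M$ is balanced in $\Zn{m}$.

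There is no real obstacle here; the only mild point of care is the bookkeeping of the common fiber size $m/d$, and the observation that it is precisely the divisibility $d\mid m$ which makes the fibers equinumerous, so that being constant on fibers transfers faithfully between $\mf{M}$ and $\mf{\pi_d(M)}$. One may alternatively phrase the whole argument by noting that $\mf{M}$ factors through $\pi_d$ exactly when $\mf{M}(x+d)=\mf{M}(x)$, and that a balanced function on $\Zn{d}$ pulls back, along the $(m/d)$-to-one homomorphism $\pi_d$, to a balanced function on $\Zn{m}$.
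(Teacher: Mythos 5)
Your proof is correct and follows essentially the same route as the paper: both identify the fibers of $\pi_d$ as cosets of $\langle d\rangle$ of size $m/d$ and use the identity $\mf{\pi_d(M)}(\pi_d(x))=\sum_{\lambda=0}^{m/d-1}\mf{M}(x+\lambda d)$; you simply spell out the two implications that the paper leaves as "the result follows."
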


\begin{proof}
Since
$$
\pi_d^{-1}\left(\left\{\pi_d(x)\right\}\right) = \left\{ x+\lambda d\ \middle|\ \lambda\in\{0,1,\ldots,m/d-1\} \right\},
$$
we obtain that
$$
\mf{\pi_d(M)}\left(\pi_d(x)\right) = \sum_{\lambda=0}^{\frac{m}{d}-1}\mf{M}\left(x+\lambda d\right),
$$
for all $x\in\Zn{m}$, and the result follows.
\end{proof}

\section{Interlaced arithmetic progressions}

In this section, interlaced arithmetic progressions (IAP for short) are introduced and the orbit of such sequences is studied in details.

\begin{defn}[IAP]
Let $k$ be a positive integer and let $A=(a_0,\ldots,a_{k-1})$ and $D=(d_0,\ldots,d_{k-1})$ be two $k$-tuples of elements in $\Zn{m}$. The {\em interlaced arithmetic progression} $\IAP{A}{D}$ is the sequence $\left(u_j\right)_{j\in\Z}$ defined by
$$
u_{qk+r} = a_r+qd_r,
$$
for all integers $q$ and all $r\in\{0,1,\ldots,k-1\}$. When $k=1$, the interlaced arithmetic progression $\IAP{(a_0)}{(d_0)}$ is a simple arithmetic progression denoted by $\AP{a_0}{d_0}$.
\end{defn}

For instance, the $3$-interlaced arithmetic progression $\IAP{061}{151}$ in $\Zn{7}$ is the sequence whose first elements are
\begin{center}
\includegraphics{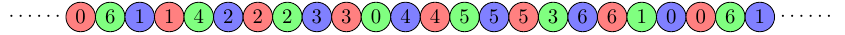}
\end{center}

\begin{nota}[Subsequences]
Let $S=\left(u_j\right)_{j\in\Z}$ be a sequence of elements in $\Zn{m}$. For any integers $i_1$ and $i_2$ such that $i_1\leq i_2$, let $S[i_1,i_2]$ denote the subsequence of $S$ for indices between $i_1$ and $i_2$, i.e., the $(i_2-i_1+1)$-tuple
$$
S[i_1,i_2] = \left(u_j\right)_{j=i_1}^{i_2} = \left( u_{i_1},u_{i_1+1},\ldots,u_{i_2}\right).
$$
For any non-negative integer $n$, let $S[n]$ denote the subsequence of the first $n$ elements of $S$, i.e., the $n$-tuple
$$
S[n] = S[0,n-1] = \left(u_j\right)_{j=0}^{n-1} = \left(u_0,u_1,\ldots,u_{n-1}\right).
$$
\end{nota}

It is known that arithmetic progressions and interlaced arithmetic progressions generate balanced Steinhaus triangles modulo $m$ odd. The Steinhaus triangles associated to arithmetic progressions are studied in \cite{Chappelon2008}, where the following result is obtained.

\begin{thm}[Chappelon \cite{Chappelon2008}]
Let $m$ be an odd number and let $a,d\in\Zn{m}$, with $d$ invertible. Then, the Steinhaus triangle $\SteinT{\AP{a}{d}[n]}$ is balanced, for all positive integers $n\equiv 0$ or $-1\bmod{\ord{2^m}{m}m}$, where $\ord{2^m}{m}$ is the multiplicative order of $2^m$ modulo $m$.
\end{thm}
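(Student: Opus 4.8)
The plan is to reduce everything to a statement about the multiplicity function of the rows of the triangle $\SteinT{\AP{a}{d}[n]}$, then to exploit the fact that, row by row, a Steinhaus triangle generated by an arithmetic progression is again (essentially) an arithmetic progression. Indeed, from the local rule \eqref{eqP}, if the first row is the arithmetic progression $(a, a+d, a+2d, \ldots)$, then the $i$\textsuperscript{th} row of $\SteinT{\AP{a}{d}[n]}$ has entries $2^i a + \binom{i}{1} 2^{i-1} d + \dots$, i.e.\ it is itself an arithmetic progression with first term $c_i := 2^i a + i 2^{i-1} d$ and common difference $2^i d$. Since $m$ is odd, $2$ and hence $2^i$ is invertible in $\Zn{m}$, so the common difference $2^i d$ of the $i$\textsuperscript{th} row is invertible for every $i$. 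First I would record this explicitly: row $i$ of length $n-i$ is the initial segment of the arithmetic progression $\AP{c_i}{2^i d}$.

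Next I would analyze the multiplicity function of an initial segment of an arithmetic progression with invertible common difference $\delta$ in $\Zn{m}$: such a segment of length $\ell$ visits the residues $c, c+\delta, \ldots, c+(\ell-1)\delta$, which (after multiplying by $\delta^{-1}$) is an interval of $\ell$ consecutive integers mod $m$; hence each element of $\Zn{m}$ is hit either $\lfloor \ell/m\rfloor$ or $\lceil \ell/m\rceil$ times, and exactly uniformly $\ell/m$ times when $m \mid \ell$. So each row is almost balanced, and is balanced precisely when its length is divisible by $m$. The triangle $\SteinT{\AP{a}{d}[n]}$ has rows of lengths $n, n-1, \ldots, 1$. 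The whole triangle is balanced iff the sum over rows of the (row) multiplicity functions is constant. The key point: the "defect" of row $i$ from being perfectly balanced depends only on $n-i \bmod m$ and on the value $c_i \bmod m$ — more precisely, the excess set (which residues get the extra $+1$) is a block of $(n-i \bmod m)$ consecutive residues in the $\delta_i$-scaled coordinate, starting at $\delta_i^{-1} c_i$. Summing these blocks over all rows, one must show the total is a constant function on $\Zn{m}$.

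The heart of the argument is then a counting/averaging identity. When $n \equiv 0$ or $-1 \pmod{m}$, the lengths $n, n-1, \ldots, 1$ taken mod $m$ run through each residue class in $\{1, \ldots, m-1\}$ (and possibly $0$) the same number of times — essentially $\lceil n/m \rceil$ complete "sweeps" $0,1,\ldots,m-1$ of residues, up to the boundary, which is exactly why the hypothesis $n \equiv 0, -1 \pmod{\ord{2^m}{m} \, m}$ forces clean periodicity. The role of $\ord{2^m}{m}$ is more subtle: it controls the period, in $i$, of the sequence of starting points $\delta_i^{-1} c_i = 2^{-i}(2^i a + i 2^{i-1} d) = a + i \, 2^{-1} d \pmod m$ — wait, this is linear in $i$, so the starting point in scaled coordinates is itself an arithmetic progression $a + i\, (d/2)$ in $i$! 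So as $i$ ranges over a block of $m$ consecutive values, the starting residues $a + i(d/2)$ themselves sweep all of $\Zn{m}$ once (since $d/2$ is invertible). Pairing the block-length pattern (rows of lengths $\equiv 0,1,\ldots,m-1$) with the starting-point pattern (which also cycles through all residues), the sum of indicator functions of the excess blocks becomes $\sum_{j=0}^{m-1} \sum_{t=0}^{j-1} \mathbf{1}[\text{residue} = (\text{base} + j) + t]$ summed against a full translation orbit, which telescopes to a constant. I would make this precise by writing the total excess count at residue $x$ as a double sum over (length-class $j$, offset $t$) and a free shift parameter running over all of $\Zn{m}$, and observe it is independent of $x$.

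The main obstacle I expect is bookkeeping the boundary: $n \equiv 0$ versus $n \equiv -1 \pmod m$ gives slightly different row-length multisets, and one must check the excess-block sum is constant in both cases rather than merely "constant plus a boundary term." I would handle this by treating $n \equiv 0 \pmod{\ord{2^m}{m} m}$ first — where the row lengths $1, \ldots, n$ partition into $n/m$ full residue sweeps and the starting points into full residue sweeps of length $\ord{2^m}{m} m$, making the double sum manifestly uniform — and then deduce the $n \equiv -1$ case by the symmetry $\SteinT{S}$ for antisymmetric/reflected $S$, or by directly removing one almost-balanced corner row and re-checking; the condition $n+1 \equiv 0$ makes the removed-row length $n \equiv -1 \equiv m-1$, whose defect block has size $m-1$, which is compensated exactly by the complementary structure. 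Throughout, Corollary~\ref{cor*7} and the Projection Theorem are not needed here since $m$ is odd and we work directly with $\SteinT{\cdot}$, but the arithmetic-progression-of-rows observation and the invertibility of $2$ mod odd $m$ are the two facts doing all the work.
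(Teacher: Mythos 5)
Your opening observations are correct and are the right starting point: row $i$ of $\SteinT{\AP{a}{d}[n]}$ is the length-$(n-i)$ initial segment of $\AP{2^ia+i2^{i-1}d}{2^id}$, each such segment is almost balanced, and the problem reduces to showing that the total excess count $N(x)=\#\{i : x\in E_i\}$ is independent of $x$, where $E_i$ is the set of $r_i=((n-i)\bmod m)$ residues hit one extra time by row $i$. The gap is in how you evaluate this count. You describe $E_i$ as a block of consecutive residues ``in the $\delta_i$-scaled coordinate'', which is true, but the scaling $\delta_i^{-1}=2^{-i}d^{-1}$ changes with $i$: concretely $x\in E_i$ iff $2^{-i}d^{-1}x-d^{-1}a-i2^{-1}\in\{0,\ldots,r_i-1\}\pmod m$ (note also $\delta_i^{-1}c_i=d^{-1}a+i2^{-1}$, not $a+i2^{-1}d$). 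The test point $2^{-i}d^{-1}x$ moves geometrically with $i$, so $\sum_i\mathbf{1}[x\in E_i]$ is not a sum of translated interval indicators evaluated at a fixed point, and the ``pair the length pattern with the starting-point pattern and telescope'' step does not go through. Your own computation betrays this: the row-length pattern and the scaled starting points you exhibit are both $m$-periodic in $i$, so the argument as written would prove balance for every $n\equiv 0$ or $-1\pmod m$ --- which is false. For $m=5$, $a=0$, $d=1$, $n=5$ the rows are $(0,1,2,3,4)$, $(1,3,0,2)$, $(4,3,2)$, $(2,0)$, $(2)$, with multiplicities $(3,2,5,3,2)$. The factor $\ord{2^m}{m}$, i.e.\ the period $\ord{2^m}{m}\,m=\lcm(\ord{2}{m},m)$ of the pair $(2^{-i}\bmod m,\ i\bmod m)$, enters exactly through the $2^{-i}$ that your indicator omits, and handling that double periodicity is the actual content of the proof in \cite{Chappelon2008}.

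Two further points. The reduction from $n+1\equiv0$ to $n\equiv-1$ is also not free: removing the last antidiagonal of $\SteinT{\AP{a}{d}[n+1]}$ does yield $\SteinT{\AP{a}{d}[n]}$, but that antidiagonal, $\bigl(2^{i}(a+nd)-i2^{i-1}d\bigr)_{i}$, is not an arithmetic progression, so proving it is balanced is a claim of the same nature as the theorem itself. Finally, for calibration: the present paper does not prove this statement (it imports it from \cite{Chappelon2008}); where it needs analogous balance results it sidesteps exactly the difficulty above by decomposing triangles into genuinely doubly arithmetic subtriangles via Proposition~\ref{prop28}, whose rows and diagonals have \emph{constant} common differences, so that Theorem~\ref{thm5} and Theorem~\ref{thm7} apply piece by piece --- precisely because the global ``rows are arithmetic progressions with ratio $2^id$ varying in $i$'' picture is not enough on its own.
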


By refining this result for antisymmetric sequences, a positive answer to Problem~\ref{prob1} (Molluzzo Problem) is obtained when $m=3^k$, for all positive integers $k$. This also positively answers Problem~\ref{prob2} (Weak Molluzzo Problem) for all odd numbers $m$. The Steinhaus triangles associated to interlaced arithmetic progressions are studied in \cite{Chappelon2011}, where the following result is proved.

\begin{thm}[Chappelon \cite{Chappelon2011}]
Let $U$ be the integer sequence defined by
$$
U=\IAP{(0,-1,1)}{(1,-2,1)}.
$$
For any odd number $m$, we consider its projection $S=\pi_m(U)$ in $\Zn{m}$. Then, the Steinhaus triangles
\begin{itemize}
\item
$\SteinT{S[\lambda m,2\lambda m-1]}$ of size $\lambda m$,
\item
$\SteinT{\partial S[3\lambda m-1]}$ of size $3\lambda m-1$,
\end{itemize}
are balanced modulo $m$, for all positive integers $\lambda$.
\end{thm}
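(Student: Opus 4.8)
The plan is to pass from the classical rule \eqref{eqP} to the rule \eqref{eqLR}, then exploit a doubly periodic picture, and finally reduce to a couple of base triangles that are settled by an explicit multiplicity count. First I would record two elementary facts about $U=\IAP{(0,-1,1)}{(1,-2,1)}$. Since the common differences $1,-2,1$ are coprime to the odd number $m$, each of the three interlaced progressions has period $m$ after projection, so $S=\pi_m(U)$ is periodic of period $3m$; and a direct check on the defining formula gives the glide-antisymmetry $u_{-1-j}=-u_j$ for all $j\in\Z$. Combining these with $3\lambda m$-periodicity, and reflecting the index windows $[\lambda m,2\lambda m-1]$ and $[0,3\lambda m-2]$ about their centres, one obtains that the tuples $S[\lambda m,2\lambda m-1]$ and $\partial S[3\lambda m-1]$ are antisymmetric modulo $m$. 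By Corollary~\ref{cor*7} it therefore suffices to prove that the triangles $\ST{S[\lambda m,2\lambda m-1]}$ and $\ST{\partial S[3\lambda m-1]}$, built with the local rule \eqref{eqLR}, are balanced modulo $m$.

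Next I would analyse the orbit $\orb{U}=(a_{i,j})$, where $a_{i,j}=\left(\ider{i}{U}\right)_j$. A short computation shows that $\partial$ sends a $3$-IAP to a $3$-IAP through an affine-linear transformation of the parameter pair $(A,D)\in(\Zn{m})^3\times(\Zn{m})^3$; iterating it one gets the explicit rows $\ider{3k}{U}=\IAP{(-k,-k-1,2k+1)}{(1,-2,1)}$, $\ider{3k+1}{U}=\IAP{(2k+1,-k,-k-2)}{(1,1,-2)}$ and $\ider{3k+2}{U}=\IAP{(-k-1,2k+2,-k)}{(-2,1,1)}$. In particular every common difference occurring in a row, and every slope in $k$ occurring in the $A$-parts (namely $-1,-1,2$ up to a cyclic shift), lies in $\{\pm1,\pm2\}$ and is invertible modulo $m$. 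Two consequences follow: the projected orbit $\orb{S}$ is doubly periodic of period $3m$, i.e.\ $a_{i+3m,j}\equiv a_{i,j}\equiv a_{i,j+3m}\pmod{m}$; and any $3m$ consecutive entries of any row --- indeed of any column, which by the formulas above is again, in the index $i$, a projected $3$-IAP with invertible parameters --- form a balanced multiset of $\Zn{m}$.

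For the second family, the first row $S[3\lambda m]$ consists of $\lambda$ full periods of $S$, hence is a balanced multiset, so $\ST{\partial S[3\lambda m-1]}$ is balanced if and only if $\ST{S[3\lambda m]}$ is. I would then decompose $\Tn{3\lambda m}$ in the standard way into $\binom{\lambda}{2}$ rhombi of side $3m$ (the blocks $(a,b)$ with $a+b\le\lambda-2$) and $\lambda$ down-triangles of side $3m$ (those with $a+b=\lambda-1$). By double $3m$-periodicity each rhombus carries a translate of the fixed block $B=(a_{i,j})_{0\le i,j\le 3m-1}$, each of whose rows is $3m$ consecutive entries of a $3$-IAP and hence balanced, so $B$ is balanced; each down-triangle carries a translate of $\ST{S[3m]}$. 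Thus $\ST{S[3\lambda m]}$ is balanced for every $\lambda$ if and only if the single triangle $\ST{S[3m]}$ is. The first family is handled analogously: using $3m$-periodicity in the column index one replaces $\ST{S[\lambda m,2\lambda m-1]}$ by $\ST{T[\lambda m]}$ for the projection $T$ of a shift of $U$ (still a $3$-IAP of the same type), and a finer decomposition of $\Tn{\lambda m}$ into blocks of side $m$, each a translate of one of nine type-blocks, together with leftover triangular corners, reduces this too --- using that the row-sums and column-sums of the $3\times 3$ array of type-blocks are balanced --- to base triangles of bounded size.

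The remaining task, which I expect to be the main obstacle, is the explicit multiplicity count for a base triangle $\ST{T[N]}$ of a $3$-IAP $T$ with invertible common differences. Writing the entry of row $i$ in a column $j\equiv r\pmod 3$ as $A^{(i)}_r+\lfloor j/3\rfloor D^{(i)}_r$, row $i$ contributes, for each $r\in\{0,1,2\}$, a run of $L_{i,r}$ consecutive terms of an arithmetic progression with invertible common difference, so it meets each element of $\Zn{m}$ either $\lfloor L_{i,r}/m\rfloor$ or $\lceil L_{i,r}/m\rceil$ times, the surplus hit falling on an explicit arithmetic arc of $L_{i,r}\bmod m$ elements determined by $A^{(i)}_r$ and $D^{(i)}_r$. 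Summing over $i$ and $r$, the bulk term is visibly independent of the target value $x$, so balancedness is equivalent to the assertion that the multiset of these arcs covers $\Zn{m}$ with constant multiplicity. Verifying this is an elementary but delicate accounting: one tracks how the lengths $L_{i,r}$ evolve with $i$ for the admissible sizes $N$ at hand, and how the arithmetic endpoints $A^{(i)}_r$ run --- affinely in $\lfloor i/3\rfloor$ with invertible slopes, hence through complete residue systems --- and shows that the ``entering'' and ``leaving'' contributions cancel at every point of $\Zn{m}$. It is precisely here that the specific data $(0,-1,1)$, $(1,-2,1)$, the increment $(-1,-1,2)$, and the particular admissible sizes enter in an essential way.
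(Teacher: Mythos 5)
Your reductions are sound as far as they go: the glide--antisymmetry $u_{-1-j}=-u_j$, the antisymmetry of the two windows, the passage to the local rule \eqref{eqLR} via Corollary~\ref{cor*7}, the explicit formulas for $\ider{3k+r}{U}$ (I checked them; they are correct and the induction closes), the $(3m,3m)$-double periodicity of $\orb{S}$, and the decomposition of $\ST{S[3\lambda m]}$ into $\binom{\lambda}{2}$ copies of a balanced $3m\times 3m$ block plus $\lambda$ copies of $\ST{S[3m]}$ all hold. But the proposal stops exactly where the content of the theorem begins. The balancedness of the base triangles is never established: you describe the row-by-row run-length count only as a programme (``delicate accounting'', ``entering and leaving contributions cancel at every point of $\Zn{m}$'') and explicitly flag it as the main obstacle without carrying it out. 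As written, this is a correct reduction followed by a statement of what remains to be proved, not a proof.

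For the second bullet the gap can be closed cleanly with the machinery of this paper. Your formulas for $\ider{3k+r}{U}$ say precisely that $\orb{S}$ is $(3,3)$-interlaced doubly arithmetic; Proposition~\ref{prop28} with $k=3$, $n=3m$ then splits $\ST{S[3m]}$ into nine arithmetic subtriangles $\AT{a}{d_1}{d_2}{n'}$ with $n'\in\{m-1,m\}\equiv 0$ or $-1\pmod{m}$ and with $d_1$, $d_2$, $d_2-d_1$ all lying in $\{\pm1,\pm2\}$, hence invertible modulo the odd number $m$, so Theorem~\ref{thm7} makes each subtriangle balanced. (This is exactly how Theorem~\ref{thm8} is proved here; your $U$ is $\IAP{(a_1,a_2,-a_1-a_2)}{(-a_2,2a_2,-a_2)}$ with $(a_1,a_2)=(0,-1)$.) For the first bullet, however, the deferred work is substantial and your sketch underestimates it: the triangle has side $\lambda m$ while the period is $3m$, so the same device produces subtriangles of size roughly $\lambda m/3$, which is not $\equiv 0$ or $-1\pmod{m}$ in general, and the leftover pieces of your $m\times m$ block decomposition are $\lambda$ triangles of side $m$ sitting at three different phases of the orbit, none of which is individually an arithmetic triangle or individually balanced; moreover the multiplicities of the nine rhombus types in $\Tn{\lambda m}$ depend on $\lambda\bmod 3$ and do not obviously split into full rows and columns of the $3\times 3$ type array plus a negligible remainder. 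Showing that the union of these leftovers is balanced is the real difficulty of that half of the statement, and nothing in the proposal addresses it.
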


The construction presented here is also based on this kind of sequences. For the convenience of the reader, we recall some basic results that could be retrieved in \cite{Chappelon2011}. First, it is straightforward to obtain that the interlaced arithmetic structure is preserved under the derivation process.

\begin{prop}\label{prop1}
Let $A=(a_0,\ldots,a_{k-1})$ and $D=(d_0,\ldots,d_{k-1})$ be two $k$-tuples of elements in $\Zn{m}$. Then,
\begin{equation*}
\resizebox{\textwidth}{!}{$
\der{\,\IAP{A}{D}} = \IAP{-(a_0+a_1,\ldots,a_{k-2}+a_{k-1},a_{k-1}+a_0+d_0)}{-(d_0+d_1,\ldots,d_{k-2}+d_{k-1},d_{k-1}+d_0)}
$}
\end{equation*}
\end{prop}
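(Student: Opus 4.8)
The plan is simply to compute the derived sequence directly from the defining relation of an IAP, splitting into cases according to the residue of the index modulo $k$. Write $S=\IAP{A}{D}=(u_j)_{j\in\Z}$, so that $u_{qk+r}=a_r+qd_r$ for every integer $q$ and every $r\in\{0,\ldots,k-1\}$, and set $\der{S}=(v_j)_{j\in\Z}$ with $v_j=-u_j-u_{j+1}$. The goal is to exhibit two $k$-tuples $A'=(a'_0,\ldots,a'_{k-1})$ and $D'=(d'_0,\ldots,d'_{k-1})$ such that $v_{qk+r}=a'_r+qd'_r$ for all $q\in\Z$ and $r\in\{0,\ldots,k-1\}$; this is exactly the assertion that $\der{S}$ is again an IAP, and the explicit values of $A'$ and $D'$ will then be read off.

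Fix $q\in\Z$ and $r\in\{0,\ldots,k-1\}$ and consider the index $j=qk+r$. First suppose $r\le k-2$. Then $j+1=qk+(r+1)$ with $r+1\in\{1,\ldots,k-1\}$ lying in the same block, so $u_j=a_r+qd_r$ and $u_{j+1}=a_{r+1}+qd_{r+1}$, whence $v_j=-(a_r+a_{r+1})-q(d_r+d_{r+1})$. Next suppose $r=k-1$. Then $j+1=qk+k=(q+1)k+0$ lies in the next block, so $u_j=a_{k-1}+qd_{k-1}$ while $u_{j+1}=a_0+(q+1)d_0=(a_0+d_0)+qd_0$, whence $v_j=-(a_{k-1}+a_0+d_0)-q(d_{k-1}+d_0)$. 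In both cases $v_{qk+r}$ is affine in $q$, with the coefficient of $q$ depending only on $r$, which is precisely the IAP shape we wanted.

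Reading off the constant terms and the coefficients of $q$ from these two computations gives
$$
A' = -(a_0+a_1,\ldots,a_{k-2}+a_{k-1},\,a_{k-1}+a_0+d_0),\qquad
D' = -(d_0+d_1,\ldots,d_{k-2}+d_{k-1},\,d_{k-1}+d_0),
$$
which is exactly the claimed identity $\der{\IAP{A}{D}}=\IAP{A'}{D'}$. As the whole argument is an elementary manipulation of the defining relation, I do not expect any genuine obstacle; the only point that deserves a moment's care is the wrap-around at the end of a block, i.e. the case $r=k-1$, where moving from index $j$ to $j+1$ increments $q$ and therefore produces the extra summand $d_0$ appearing in the last component of $A'$ (while the last component of $D'$, being the coefficient of $q$, is unaffected). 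Everything else is the plain Pascal-type recursion $v_j=-u_j-u_{j+1}$ applied termwise.
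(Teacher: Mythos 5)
Your proposal is correct and follows essentially the same route as the paper: compute $v_{qk+r}=-u_{qk+r}-u_{qk+r+1}$ directly from $u_{qk+r}=a_r+qd_r$, treating the cases $r\le k-2$ and the wrap-around case $r=k-1$ separately, and read off the new base tuple and common differences. No issues.
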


\begin{proof}
Let $S=\IAP{A}{D}=\left(u_j\right)_{j\in\Z}$ and $\der{S}=\left(v_j\right)_{j\in\Z}$. By definition of $S$, we know that
$$
u_{qk+r} = a_r+qd_r
$$
for all integers $q$ and all $r\in\{0,1,\ldots,k-1\}$. For any $r\in\{0,1,\ldots,k-2\}$, since $1\le r+1\le k-1$, we obtain that
$$
v_{qk+r} = -u_{qk+r}-u_{qk+r+1} = -(a_r+qd_r)-(a_{r+1}+qd_{r+1}) = -(a_r+a_{r+1})-q(d_r+d_{r+1}),
$$
for all integers $q$. Moreover, when $r=k-1$, we obtain that
\begin{equation*}
\resizebox{\textwidth}{!}{$
v_{qk+k-1} = -u_{qk+k-1}-u_{(q+1)k} = -(a_{k-1}+qd_{k-1})-(a_0+(q+1)d_0) = -(a_{k-1}+a_0+d_0)-q(d_{k-1}+d_0),
$}
\end{equation*}
for all integers $q$. This completes the proof.
\end{proof}

It follows that the orbit of an interlaced arithmetic progression only contains interlaced arithmetic progressions.

\begin{nota}
When $X$ is a $k$-tuple of elements of $\Zn{m}$ and $\matM$ is a square matrix of integers of size $k$, the product $X\pi_m\!\left(\matM\right)$ is simply denoted by $X \matM$.
\end{nota}

\begin{nota}[Entries of a matrix]
Let $\matM$ be a matrix of size $n_1\times n_2$ over $\Zn{m}$. The entry in the $i$th row and the $j$th column of $\matM$ is denoted by $(\matM)_{i,j}$, for all $i\in\{1,\ldots,n_1\}$ and all $j\in\{1,\ldots,n_2\}$.
\end{nota}

\begin{prop}\label{prop2}
Let $A$ and $D$ be two $k$-tuples of elements in $\Zn{m}$. Then, for every non-negative integer $i$, we have
$$
\ider{i}{\IAP{A}{D}} = {(-1)}^{i}\,\IAP{A\C{k}{i}+D\T{k}{i}}{D\C{k}{i}},
$$
where $\C{k}{i}$ is the circulant matrix of size $k$ defined by
$$
\C{k}{i} = \left(\sum_{\alpha\in\Z}\binom{i}{\alpha k+r-s}\right)_{1\le r,s\le k}
$$
and where $\T{k}{i}$ is the Toeplitz matrix of size $k$ defined by
$$
\T{k}{i} = \left(\sum_{\alpha\in\Z}\alpha\binom{i}{\alpha k+r-s}\right)_{1\le r,s\le k}.
$$
\end{prop}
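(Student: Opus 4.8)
The plan is an induction on $i$ that bootstraps from Proposition~\ref{prop1}, which already describes one application of $\der{\,\cdot\,}$ to an interlaced arithmetic progression. Since $\C{k}{i}$ and $\T{k}{i}$ are integer matrices (finite sums of binomial coefficients) and $X\matM$ stands for $X\pi_m(\matM)$, it is enough to prove the asserted equality with $\C{k}{i}$ and $\T{k}{i}$ read over $\Z$, the reduction modulo $m$ then being automatic. For $i=0$ the defining formulas give directly $\C{k}{0}=I_k$ and $\T{k}{0}=0$: the only surviving summand has $\alpha=0$ and $r=s$, since $|r-s|<k$, and the extra factor $\alpha$ annihilates that term in $\T{k}{0}$. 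Hence $\ider{0}{\IAP{A}{D}}=\IAP{A}{D}$ is the claimed value.

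For the inductive step I would first record the case $i=1$. Evaluating the defining formulas one finds $A\C{k}{1}=(a_0+a_1,\dots,a_{k-2}+a_{k-1},a_{k-1}+a_0)$, $D\C{k}{1}=(d_0+d_1,\dots,d_{k-2}+d_{k-1},d_{k-1}+d_0)$ and $D\T{k}{1}=(0,\dots,0,d_0)$, the matrix $\T{k}{1}$ having a single nonzero entry, equal to $1$, in position $(1,k)$; comparing with Proposition~\ref{prop1} this says precisely that $\der{\IAP{A}{D}}=(-1)\,\IAP{A\C{k}{1}+D\T{k}{1}}{D\C{k}{1}}$. Now $\der{\,\cdot\,}$ is $\Zn{m}$-linear and $\lambda\,\IAP{X}{Y}=\IAP{\lambda X}{\lambda Y}$, so, assuming $\ider{i}{\IAP{A}{D}}=(-1)^i\IAP{A\C{k}{i}+D\T{k}{i}}{D\C{k}{i}}$, applying the case $i=1$ to the parameter tuples $A'=A\C{k}{i}+D\T{k}{i}$ and $D'=D\C{k}{i}$ yields $\ider{i+1}{\IAP{A}{D}}=(-1)^{i+1}\IAP{A'\C{k}{1}+D'\T{k}{1}}{D'\C{k}{1}}$. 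Expanding $A'$ and $D'$ and comparing with the claimed value at $i+1$, the step reduces to the two integer matrix identities $\C{k}{i}\C{k}{1}=\C{k}{i+1}$ and $\T{k}{i}\C{k}{1}+\C{k}{i}\T{k}{1}=\T{k}{i+1}$.

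To prove these I would set $c_i(j)=\sum_{\alpha\in\Z}\binom{i}{\alpha k+j}$ and $\tau_i(j)=\sum_{\alpha\in\Z}\alpha\binom{i}{\alpha k+j}$, so that $(\C{k}{i})_{r,s}=c_i(r-s)$ and $(\T{k}{i})_{r,s}=\tau_i(r-s)$. Pascal's identity gives $c_{i+1}(j)=c_i(j)+c_i(j-1)$ and $\tau_{i+1}(j)=\tau_i(j)+\tau_i(j-1)$, while the substitution $\alpha\mapsto\alpha+1$ gives $c_i(j+k)=c_i(j)$ and $\tau_i(j+k)=\tau_i(j)-c_i(j)$. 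Because of the simple shapes of $\C{k}{1}$ and $\T{k}{1}$, for a column index $s<k$ the $(r,s)$ entry of $\C{k}{i}\C{k}{1}$ is $c_i(r-s)+c_i(r-s-1)$ and that of $\T{k}{i}\C{k}{1}+\C{k}{i}\T{k}{1}$ is $\tau_i(r-s)+\tau_i(r-s-1)$, which are the required Pascal recurrences, while $\C{k}{i}\T{k}{1}$ contributes only to the last column. The delicate point — and the one I expect to be the main obstacle — is exactly the last column $s=k$: there the cyclic wraparound of $\C{k}{1}$ introduces the terms $c_i(r-k)$, $\tau_i(r-k)$, and since $\tau_i$ is not $k$-periodic the relation $\tau_i(j+k)=\tau_i(j)-c_i(j)$ produces extra $c_i$-terms; one must check that these are exactly compensated by the lone nonzero column of $\C{k}{i}\T{k}{1}$, so that $\T{k}{i+1}=\T{k}{i}\C{k}{1}+\C{k}{i}\T{k}{1}$ holds on the nose. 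Carrying out this bookkeeping closes the induction and proves the proposition.
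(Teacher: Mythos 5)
Your proposal is correct and follows essentially the same route as the paper: induction on $i$, with the $i=1$ case matched against Proposition~\ref{prop1}, reduction of the inductive step to the identities $\C{k}{i+1}=\C{k}{i}\C{k}{1}$ and $\T{k}{i+1}=\T{k}{i}\C{k}{1}+\C{k}{i}\T{k}{1}$, and an entrywise verification splitting off the last column. The ``delicate point'' you flag does close exactly as you anticipate: the $(r,k)$ entry of $\C{k}{i}\T{k}{1}$ equals $c_i(r-1)$, and $\tau_i(r-1)+c_i(r-1)=\tau_i(r-1-k)$ by the shift relation, after which Pascal's identity gives $\tau_{i+1}(r-k)$, which is precisely the computation in the paper.
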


\begin{lem}\label{lem*3}
Let $k$ and $i$ be non-negative integers. Then,
$$
\sum_{\alpha\in\Z}\binom{i}{\alpha k + (j + k)} = \sum_{\alpha\in\Z}\binom{i}{\alpha k+j},
$$
for all integers $j$.
\end{lem}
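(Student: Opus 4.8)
The plan is to observe that this identity is nothing more than a reindexing of the summation, so the proof amounts to a single substitution together with a short remark guaranteeing that the manipulation is legitimate.

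First I would recall the convention fixed in Section~2: $\binom{i}{n}=0$ whenever $n<0$ or $n>i$. Consequently, for fixed $i$, $k$ and $j$, the family $\bigl(\binom{i}{\alpha k+j}\bigr)_{\alpha\in\Z}$ has only finitely many nonzero terms, namely those $\alpha$ with $0\le\alpha k+j\le i$, so $\sum_{\alpha\in\Z}\binom{i}{\alpha k+j}$ is a genuine finite sum and any rearrangement of its terms is unconditionally valid. (When $k=0$ the statement is vacuous since then $j+k=j$, so one may freely assume $k\ge1$, though this is not needed.)

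The key step is then the change of index $\beta=\alpha+1$ in the left-hand side:
$$
\sum_{\alpha\in\Z}\binom{i}{\alpha k+(j+k)}=\sum_{\alpha\in\Z}\binom{i}{(\alpha+1)k+j}=\sum_{\beta\in\Z}\binom{i}{\beta k+j},
$$
the last equality holding because $\alpha\mapsto\alpha+1$ is a bijection of $\Z$ onto itself. The right-hand side is exactly $\sum_{\alpha\in\Z}\binom{i}{\alpha k+j}$, which completes the argument. I do not expect any genuine obstacle here: the only point worth stating explicitly is the finiteness of the sum, which is what makes the reindexing rigorous, and this is immediate from the extended definition of the binomial coefficient. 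An equivalent way to see the statement at a glance is that $\sum_{\alpha\in\Z}\binom{i}{\alpha k+j}$ is the sum of the coefficients of $X^n$ in $(1+X)^i$ over all exponents $n\equiv j\pmod{k}$, a quantity that visibly depends only on $j$ modulo $k$; the role of the lemma in the sequel is precisely to certify that $\C{k}{i}$ in Proposition~\ref{prop2} is a well-defined circulant matrix.
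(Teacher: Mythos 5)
Your proof is correct and uses exactly the same substitution $\beta=\alpha+1$ as the paper's own argument; the extra remarks on the finiteness of the sum and the $k=0$ case are harmless elaborations of the same idea. Nothing to change.
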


\begin{proof}
With the new variable $\beta=\alpha+1$, we obtain
$$
\sum_{\alpha\in\Z}\binom{i}{\alpha k + (j + k)} = \sum_{\alpha\in\Z}\binom{i}{(\alpha+1)k + j} = \sum_{\beta\in\Z}\binom{i}{\beta k + j},
$$
as announced.
\end{proof}

\begin{rem}
Using Lemma~\ref{lem*3}, we obtain that
$$
\left(\sum_{\alpha\in\Z}\binom{i}{\alpha k+r-s}\right)_{1\le r,s\le k} = \left(\sum_{\alpha\in\Z}\binom{i}{\alpha k+(r-s\bmod{k})}\right)_{1\le r,s\le k}
$$
and thus this matrix is circulant.
\end{rem}

\begin{proof}[Proof of Proposition~\ref{prop2}]
By induction on $i$. For $i=0$, the result is clear. For $i=1$, we obtain from Proposition~\ref{prop1} that
$$
\C{k}{1} = 
\begin{pmatrix}
1 & 0 & \cdots & 0 & 1 \\
1 & 1 & & & 0 \\
0 & \ddots & \ddots & & \vdots \\
\vdots & & \ddots & \ddots & 0 \\
0 & \cdots & 0 & 1 & 1 \\
\end{pmatrix}
\quad\text{and}\quad
\T{k}{1} = 
\begin{pmatrix}
0 & \cdots & \cdots & 0 & 1 \\
\vdots & & & & 0 \\
\vdots & & 0 & & \vdots \\
\vdots & & & & \vdots \\
0 & \cdots & \cdots & \cdots & 0 \\
\end{pmatrix},
$$
as announced. Suppose that the result is true for a certain positive integer $i$. Since $\ider{i+1}{}=\der{\ider{i}{}}$, then the $(i+1)$\textsuperscript{th} derived sequence of $S=\IAP{A}{D}$ is equal to
\begin{equation*}
\resizebox{\textwidth}{!}{$
\ider{i+1}{S} \begin{array}[t]{l}
= \displaystyle\der{\ider{i}{S}} = {(-1)}^{i}\,\der{\,\IAP{A\C{k}{i}+D\T{k}{i}}{D\C{k}{i}}} \\
= \displaystyle{(-1)}^{i+1}\IAP{A\C{k}{i}\C{k}{1}+D\left(\T{k}{i}\C{k}{1}+\C{k}{i}\T{k}{1}\right)}{D\C{k}{i}\C{k}{1}}. \\
\end{array}
$}
\end{equation*}
First, for $\C{k}{i+1}=\C{k}{i}\C{k}{1}$, we have
$$
\left(\C{k}{i+1}\right)_{r,s} = \sum_{u=1}^{k}\left(\C{k}{i}\right)_{r,u}\left(\C{k}{1}\right)_{u,s},
$$
for all $r,s\in\{1,\ldots,k\}$. Hence, for $s<k$, we obtain
$$
\left(\C{k}{i+1}\right)_{r,s} \begin{array}[t]{l}
 = \displaystyle\left(\C{k}{i}\right)_{r,s} + \left(\C{k}{i}\right)_{r,s+1}\\ \ \\
 = \displaystyle\sum_{\alpha\in\Z}\binom{i}{\alpha k + r-s} + \sum_{\alpha\in\Z}\binom{i}{\alpha k + r-s-1} = \sum_{\alpha\in\Z}\binom{i+1}{\alpha k + r-s}.
\end{array}
$$
For $s=k$, we obtain
$$
\left(\C{k}{i+1}\right)_{r,k} = \left(\C{k}{i}\right)_{r,1} + \left(\C{k}{i}\right)_{r,k} = \sum_{\alpha\in\Z}\binom{i}{\alpha k + r-1}+\sum_{\alpha\in\Z}\binom{i}{\alpha k + r-k}.
$$
Since
$$
\sum_{\alpha\in\Z}\binom{i}{\alpha k + r-1} = \sum_{\alpha\in\Z}\binom{i}{\alpha k + r-k-1}
$$
by Lemma~\ref{lem*3}, it follows that
$$
\left(\C{k}{i+1}\right)_{r,k} = \sum_{\alpha\in\Z}\binom{i}{\alpha k + r-k-1}+\sum_{\alpha\in\Z}\binom{i}{\alpha k + r-k} = \sum_{\alpha\in\Z}\binom{i+1}{\alpha k +r-k}.
$$
Therefore,
$$
\C{k}{i+1} = \left(\sum_{\alpha\in\Z}\binom{i+1}{\alpha k +r-s}\right)_{1\le r,s\le k},
$$
as announced.
Moreover, for $\T{k}{i+1}=\T{k}{i}\C{k}{1}+\C{k}{i}\T{k}{1}$, we have
$$
\left(\T{k}{i+1}\right)_{r,s} = \sum_{u=1}^{k}\left(\T{k}{i}\right)_{r,u}\left(\C{k}{1}\right)_{u,s} + \left(\C{k}{i}\right)_{r,u}\left(\T{k}{1}\right)_{u,s},
$$
for all $r,s\in\{1,\ldots,k\}$. Hence, for $s<k$, we obtain
$$
\left(\T{k}{i+1}\right)_{r,s} \begin{array}[t]{l}
= \displaystyle\left(\T{k}{i}\right)_{r,s} + \left(\T{k}{i}\right)_{r,s+1} \\ \ \\
= \displaystyle\sum_{\alpha\in\Z}\alpha\binom{i}{\alpha k+r-s} + \sum_{\alpha\in\Z}\alpha\binom{i}{\alpha k+r-s-1} = \sum_{\alpha\in\Z}\alpha\binom{i+1}{\alpha k+r-s}. \\
\end{array}
$$
For $s=k$, we obtain
$$
\left(\T{k}{i+1}\right)_{r,k} \begin{array}[t]{l}
= \left(\T{k}{i}\right)_{r,1} + \left(\T{k}{i}\right)_{r,k} + \left(\C{k}{i}\right)_{r,1} \\ \ \\
= \displaystyle\sum_{\alpha\in\Z}\alpha\binom{i}{\alpha k+r-1} + \sum_{\alpha\in\Z}\alpha\binom{i}{\alpha k+r-k} + \sum_{\alpha\in\Z}\binom{i}{\alpha k+r-1} \\ \ \\
= \displaystyle\sum_{\alpha\in\Z}\left(\alpha+1\right)\binom{i}{\alpha k+r-1} + \sum_{\alpha\in\Z}\alpha\binom{i}{\alpha k+r-k} \\ \ \\
= \displaystyle\sum_{\alpha\in\Z}\alpha\binom{i}{\alpha k+r-k-1} + \sum_{\alpha\in\Z}\alpha\binom{i}{\alpha k+r-k} = \sum_{\alpha\in\Z}\alpha\binom{i+1}{\alpha k+r-k}.
\end{array}
$$
Therefore,
$$
\T{k}{i+1} = \left(\sum_{\alpha\in\Z}\alpha\binom{i+1}{\alpha k+r-s}\right)_{1\le r,s\le k},
$$
as announced. This completes the proof.
\end{proof}

The following result will be useful for manipulating the matrices $\C{k}{i}$ and $\T{k}{i}$ in the sequel.

\begin{prop}\label{prop*2}
For all non-negative integers $i$ and $j$, we have
\begin{enumerate}[i)]
\item
$\C{k}{i+j} = \C{k}{i}\C{k}{j}$,
\item
$\T{k}{i+j} = \T{k}{i}\C{k}{j} + \C{k}{i}\T{k}{j}$.
\end{enumerate}
\end{prop}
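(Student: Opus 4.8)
The plan is not to compute the matrix products entry by entry, but to deduce both identities at once from Proposition~\ref{prop2} and the elementary iteration property $\ider{i+j}{}=\ider{i}{}\circ\ider{j}{}$ of the derivation operator. I would work over $\Z=\Zn{0}$ (which is allowed here, and in which $\C{k}{i}$ and $\T{k}{i}$ are literally their integer versions), and let $A=(a_0,\ldots,a_{k-1})$ and $D=(d_0,\ldots,d_{k-1})$ range over all $k$-tuples of integers. On the one hand, Proposition~\ref{prop2} applied with exponent $i+j$ gives
$$\ider{i+j}{\IAP{A}{D}} = (-1)^{i+j}\,\IAP{A\C{k}{i+j}+D\T{k}{i+j}}{D\C{k}{i+j}}.$$
On the other hand, applying Proposition~\ref{prop2} first with exponent $j$ produces the interlaced arithmetic progression $(-1)^{j}\IAP{A\C{k}{j}+D\T{k}{j}}{D\C{k}{j}}$; applying $\ider{i}{}$ to it, using that derivation is linear to pull out the sign and then Proposition~\ref{prop2} with exponent $i$ (whose ``initial tuple'' is now $A\C{k}{j}+D\T{k}{j}$ and whose ``slope tuple'' is $D\C{k}{j}$), gives
$$\ider{i+j}{\IAP{A}{D}} = (-1)^{i+j}\,\IAP{(A\C{k}{j}+D\T{k}{j})\C{k}{i}+D\C{k}{j}\T{k}{i}}{D\C{k}{j}\C{k}{i}}.$$

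Next I would compare these two interlaced arithmetic progressions. Since the pair $(A,D)$ is recovered from the sequence $\IAP{A}{D}$ by reading off its terms at positions $0,\ldots,k-1$ and their differences with the terms at positions $k,\ldots,2k-1$, two equal interlaced arithmetic progressions must have equal initial tuples and equal slope tuples. Equating the slope tuples yields $D\C{k}{i+j}=D\C{k}{j}\C{k}{i}$ for every $D$; taking $D$ through the standard basis vectors forces $\C{k}{i+j}=\C{k}{j}\C{k}{i}$, and exchanging the roles of $i$ and $j$ (legitimate, since $i+j=j+i$) also gives $\C{k}{i+j}=\C{k}{i}\C{k}{j}$, which is assertion~i) (and, incidentally, the commutativity $\C{k}{i}\C{k}{j}=\C{k}{j}\C{k}{i}$). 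Substituting $\C{k}{i+j}=\C{k}{j}\C{k}{i}$ back into the equality of initial tuples, the terms in $A$ cancel, leaving $D\T{k}{i+j}=D\bigl(\T{k}{j}\C{k}{i}+\C{k}{j}\T{k}{i}\bigr)$ for every $D$, hence $\T{k}{i+j}=\T{k}{j}\C{k}{i}+\C{k}{j}\T{k}{i}$; exchanging $i$ and $j$ once more gives assertion~ii).

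I do not expect any genuine obstacle: the only point needing a word is the passage from ``$A\matN=0$ for all row vectors $A$'' to ``$\matN=0$'', which is immediate over $\Z$. Conceptually, what is being proved is that the block matrices $\mathbf{M}_i=\left(\begin{smallmatrix}\C{k}{i}&0\\[2pt]\T{k}{i}&\C{k}{i}\end{smallmatrix}\right)$ satisfy $\mathbf{M}_{i+j}=\mathbf{M}_i\mathbf{M}_j$, which is exactly the multiplicativity of one derivation step iterated. If one preferred a self-contained combinatorial argument, the same two identities follow from Vandermonde's convolution $\binom{i+j}{n}=\sum_{\ell}\binom{i}{\ell}\binom{j}{n-\ell}$: grouping the summation index $\ell$ according to its residue modulo $k$ turns the right-hand side of i) into $\C{k}{i}\C{k}{j}$, and carrying out the same bookkeeping on $\sum_{\ell}(\text{block weight})\binom{i}{\ell}\binom{j}{n-\ell}$, with the weight of the relevant block index split as $\gamma=\alpha+\beta$, yields ii). I would present the operator proof as the main argument and note the binomial identity as its underlying combinatorial reason.
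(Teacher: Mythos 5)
Your proposal is correct and follows essentially the same route as the paper: compute $\ider{i+j}{\IAP{A}{D}}$ two ways via Proposition~\ref{prop2} and the composition of derivations, then deduce the matrix identities from the fact that the resulting equalities hold for all integer tuples $A$ and $D$. Your extra care in justifying the passage from ``holds for all $A,D$'' to equality of matrices (and the uniqueness of the $(A,D)$-representation of an IAP over $\Z$) is a small refinement of what the paper leaves implicit, and does not change the argument.
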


\begin{proof}
Let $i$ and $j$ be two non-negative integers. Let $A$ and $D$ be two $k$-tuples of integers. First, from Proposition~\ref{prop2}, we know that
$$
\ider{i+j}{\IAP{A}{D}} = {(-1)}^{i+j}\,\IAP{A\C{k}{i+j}+D\T{k}{i+j}}{D\C{k}{i+j}}.
$$
Moreover, since $\ider{i+j}{}=\ider{j}{\ider{i}{}}$ and from Proposition~\ref{prop2} again, we obtain that
\begin{equation*}
\resizebox{\textwidth}{!}{$
\ider{i+j}{\IAP{A}{D}}
\begin{array}[t]{l}
 = \displaystyle\ider{j}{\ider{i}{\IAP{A}{D}}} = {(-1)}^{i}\ider{j}{\IAP{A\C{k}{i}+D\T{k}{i}}{D\C{k}{i}}} \\
 = \displaystyle{(-1)}^{i+j}\,\IAP{\left(A\C{k}{i}+D\T{k}{i}\right)\C{k}{j}+D\C{k}{i}\T{k}{j}}{D\C{k}{i}\C{k}{j}} \\
 = \displaystyle{(-1)}^{i+j}\,\IAP{A\C{k}{i}\C{k}{j}+D\left(\T{k}{i}\C{k}{j}+\C{k}{i}\T{k}{j}\right)}{D\C{k}{i}\C{k}{j}}. \\
\end{array}
$}
\end{equation*}
Therefore,
$$
A\C{k}{i+j}+D\T{k}{i+j} = A\C{k}{i}\C{k}{j}+D\left(\T{k}{i}\C{k}{j}+\C{k}{i}\T{k}{j}\right)
$$
and
$$
D\C{k}{i+j} = D\C{k}{i}\C{k}{j},
$$
for all $k$-tuples of integers $A$ and $D$. This leads to
$$
\C{k}{i+j} = \C{k}{i}\C{k}{j}\quad\text{and}\quad \T{k}{i+j}=\T{k}{i}\C{k}{j}+\C{k}{i}\T{k}{j}.
$$
This completes the proof.
\end{proof}

\begin{cor}\label{cor*2}
For all non-negative integers $i$ and all positive integers $\lambda$, we have
\begin{enumerate}[i)]
\item
$\C{k}{\lambda i} = \C{k}{i}^\lambda$,
\item
$\T{k}{\lambda i} = \displaystyle\sum_{l=0}^{\lambda-1}\C{k}{i}^l\T{k}{i}\C{k}{i}^{\lambda-1-l}$.
\end{enumerate}
\end{cor}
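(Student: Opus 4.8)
The plan is to prove both identities simultaneously by induction on $\lambda$, with Proposition~\ref{prop*2} doing all the work. Fix the non-negative integer $i$, so that $\C{k}{i}$ and $\T{k}{i}$ are fixed square matrices over $\Zn{m}$. Note that these two matrices do not commute in general --- already for $k=2$ one computes $\C{2}{1}\T{2}{1}\neq\T{2}{1}\C{2}{1}$ --- so the order of the factors in ii) is essential, and the right-hand side of ii) is a genuinely non-commutative sum, not $\lambda\,\C{k}{i}^{\lambda-1}\T{k}{i}$.

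The base case $\lambda=1$ is immediate: i) reads $\C{k}{i}=\C{k}{i}$, and ii) reads $\T{k}{i}=\C{k}{i}^{0}\T{k}{i}\C{k}{i}^{0}=\T{k}{i}$. Assume now that i) and ii) hold for some positive integer $\lambda$. Writing $(\lambda+1)i=\lambda i+i$ and applying Proposition~\ref{prop*2}i) together with the induction hypothesis for i), I would get $\C{k}{(\lambda+1)i}=\C{k}{\lambda i}\C{k}{i}=\C{k}{i}^{\lambda}\C{k}{i}=\C{k}{i}^{\lambda+1}$, which is i) for $\lambda+1$.

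For ii), Proposition~\ref{prop*2}ii) gives $\T{k}{(\lambda+1)i}=\T{k}{\lambda i}\C{k}{i}+\C{k}{\lambda i}\T{k}{i}$. Substituting the induction hypotheses for ii) and i), the first summand becomes $\sum_{l=0}^{\lambda-1}\C{k}{i}^{l}\T{k}{i}\C{k}{i}^{\lambda-l}$ and the second becomes $\C{k}{i}^{\lambda}\T{k}{i}$, which is precisely the $l=\lambda$ term of $\sum_{l=0}^{\lambda}\C{k}{i}^{l}\T{k}{i}\C{k}{i}^{\lambda-l}$. Hence $\T{k}{(\lambda+1)i}=\sum_{l=0}^{\lambda}\C{k}{i}^{l}\T{k}{i}\C{k}{i}^{(\lambda+1)-1-l}$, which is ii) for $\lambda+1$, closing the induction.

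There is no real obstacle here: the corollary is simply the iterated form of Proposition~\ref{prop*2}, and the only points needing care are the index shift inside the sum and the non-commutativity of $\C{k}{i}$ and $\T{k}{i}$. An alternative would be to argue directly from Proposition~\ref{prop2}, applying $\ider{\lambda i}{}=(\ider{i}{})^{\lambda}$ to a generic IAP $\IAP{A}{D}$ and matching the ``$A$-part'' and the ``$D$-part'' of the two resulting expressions, but the inductive route via Proposition~\ref{prop*2} is cleaner and avoids handling the circulant and Toeplitz entries directly.
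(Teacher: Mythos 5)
Your proof is correct and follows exactly the paper's own argument: induction on $\lambda$, with the base case trivial and the inductive step given by Proposition~\ref{prop*2}, handling the index shift in the sum precisely as the paper does. The remark on the non-commutativity of $\C{k}{i}$ and $\T{k}{i}$ is accurate and a nice sanity check, but nothing beyond the paper's route is needed or added.
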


\begin{proof}
By induction on $\lambda$. The result is clear for $\lambda=0$ and $\lambda=1$. Suppose that the result is true for a certain integer $\lambda\ge1$, i.e.,
$$
\C{k}{\lambda i}=\C{k}{i}^\lambda\quad\text{and}\quad \T{k}{\lambda i} = \sum_{l=0}^{\lambda-1}\C{k}{i}^l\T{k}{i}\C{k}{i}^{\lambda-1-l}.
$$
Then, by Proposition~\ref{prop*2}, we obtain that
$$
\C{k}{(\lambda+1)i} = \C{k}{\lambda i}\C{k}{i} = \C{k}{i}^\lambda\C{k}{i} = \C{k}{i}^{\lambda+1}
$$
and
$$
\T{k}{(\lambda+1)i} 
\begin{array}[t]{l}
= \T{k}{\lambda i}\C{k}{i} + \C{k}{\lambda i}\T{k}{i} \\ \ \\
= \displaystyle\sum_{l=0}^{\lambda-1}\C{k}{i}^l\T{k}{i}\C{k}{i}^{\lambda-l} + \C{k}{i}^{\lambda}\T{k}{i} \\ \ \\
= \displaystyle\sum_{l=0}^{\lambda}\C{k}{i}^l\T{k}{i}\C{k}{i}^{\lambda-l}.
\end{array}
$$
This completes the proof.
\end{proof}

\section{Periodic orbits of IAP}

In this section, interlaced arithmetic progressions whose orbit is periodic are characterized.

\begin{defn}[Periodicity]
Let $p$ and $q$ be two positive integers. A sequence $S=\left(u_j\right)_{j\in\Z}$ of $\Zn{m}$ is said to be {\em periodic} of {\em period} $p$ (or {\em $p$-periodic}) if
$$
u_{j+p}=u_j,
$$
for all integers $j$. The $p$-tuple $P=(u_j)_{j=0}^{p-1}$ is called {\em the first period} of $S$. For any $p$-tuple $P$ of $\Zn{m}$, the $p$-periodic sequence $S$ with first period $P$ is denoted by
$$
S=P^\infty,
$$
accordingly with the notation of concatenation of tuples. Similarly, a doubly indexed sequence $S=\left(u_{i,j}\right)_{(i,j)\in\N\times\Z}$ of $\Zn{m}$ is said to be {\em periodic} of {\em period} $(p,q)$ (or {\em $(p,q)$-periodic}) if every row $\left(u_{i,j}\right)_{j\in\Z}$ is $p$-periodic, for all $i\in\N$, and every column $\left(u_{i,j}\right)_{i\in\N}$ is $q$-periodic, for all $j\in\Z$, i.e., if
$$
u_{i+q,j}=u_{i,j+p}=u_{i,j},
$$
for all $(i,j)\in\N\times\Z$. The $(q\times p)$-matrix
$$
\matr{\mathrm{P}} = \left(u_{i,j}\right)_{\substack{0\le i\le q-1\\ 0\le j\le p-1}}
$$
is called {\em the first period} of $S$. For any $(q\times p)$-matrix $\matr{\mathrm{P}}$ of $\Zn{m}$, the $(p,q)$-periodic sequence $S$ with first period $\matr{\mathrm{P}}$ is denoted by
$$
S = \matr{\mathrm{P}}^\infty.
$$
\end{defn}

\begin{rem}
When $m\ge1$, a $k$-interlaced arithmetic progression is a periodic sequence of period a divisor of $mk$. Indeed, we have
$$
\IAP{A}{D} = \left(A\cdot(A+D)\cdot(A+2D)\cdots\cdots(A+(m-1)D)\right)^\infty,
$$
for any $k$-tuples $A$ and $D$ of $\Zn{m}$. Moreover, a $p$-periodic sequence can also be seen as a $p$-interlaced arithmetic progression with null common differences. Indeed, we have
$$
P^\infty = \IAP{P}{0\cdots0},
$$
for any $p$-tuple $P$ of $\Zn{m}$.
\end{rem}

First, it is easy to see that the periodicity is preserved under the derivation process.

\begin{prop}
Let $S$ be a $p$-periodic sequence of $\Zn{m}$. Then, its derived sequence $\der{S}$ is also $p$-periodic.
\end{prop}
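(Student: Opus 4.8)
The plan is to argue directly from the definition of the derived sequence. Writing $S=\left(u_j\right)_{j\in\Z}$, the hypothesis is that $u_{j+p}=u_j$ for all $j\in\Z$. Setting $\der{S}=\left(v_j\right)_{j\in\Z}$, so that $v_j=-u_j-u_{j+1}$ by definition, the verification reduces to the single computation
$$
v_{j+p}=-u_{j+p}-u_{j+p+1}=-u_j-u_{j+1}=v_j,
$$
where the middle equality applies the $p$-periodicity of $S$ at the indices $j$ and $j+1$. Since this holds for every $j\in\Z$, the sequence $\der{S}$ is $p$-periodic, as claimed.

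There is essentially no obstacle here; the statement is immediate. Alternatively, one could deduce it from Proposition~\ref{prop1} together with the remark that $P^\infty=\IAP{P}{0\cdots0}$ for the first period $P$ of $S$: by Proposition~\ref{prop1}, the derived sequence of a $k$-interlaced arithmetic progression is again a $k$-interlaced arithmetic progression whose tuple of common differences is $-(d_0+d_1,\ldots,d_{k-1}+d_0)$, which vanishes identically when $D=0\cdots0$, so $\der{P^\infty}$ is again a $p$-periodic sequence (with first period the $\der{}$ of the corresponding tuple). Either way the proof is a line or two, so I would favour the direct index computation for transparency, reserving the IAP viewpoint for the later sections where common differences genuinely interact.
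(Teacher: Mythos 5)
Your proof is correct and is essentially identical to the paper's: both verify $v_{j+p}=-u_{j+p}-u_{j+p+1}=-u_j-u_{j+1}=v_j$ directly from the definition of the derived sequence and the $p$-periodicity of $S$. The alternative route via Proposition~\ref{prop1} that you sketch is also valid, but the paper, like you, opts for the direct computation.
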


\begin{proof}
Let $S=\left(u_j\right)_{j\in\Z}$ and $\der{S}=\left(v_j\right)_{j\in\Z}$. Since $S$ is $p$-periodic, then we have
$$
u_{j+p}=u_j,
$$
for all $j\in\Z$. It follows that,
$$
v_{j+p} = -u_{j+p}-u_{j+p+1} = -u_j-u_{j+1} = v_j,
$$
for all $j\in\Z$. Therefore $\der{S}$ is $p$-periodic.
\end{proof}

\begin{cor}\label{cor1}
Let $S$ be a $p$-periodic sequence of $\Zn{m}$. Then, the iterated derived sequences $\ider{i}{S}$ are $p$-periodic, for all non-negative integers $i$.
\end{cor}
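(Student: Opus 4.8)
The plan is to obtain this as an immediate consequence of the preceding proposition (derivation preserves $p$-periodicity) via a straightforward induction on the index $i$; there is essentially nothing else to do.

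For the base case $i=0$, we have $\ider{0}{S}=S$, which is $p$-periodic by hypothesis. For the inductive step, I would assume that $\ider{i}{S}$ is $p$-periodic for some non-negative integer $i$, and then invoke the recursive definition $\ider{i+1}{S}=\der{\ider{i}{S}}$. Applying the preceding proposition to the $p$-periodic sequence $\ider{i}{S}$ shows that its derived sequence $\der{\ider{i}{S}}=\ider{i+1}{S}$ is again $p$-periodic, which closes the induction.

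The only point worth making explicit is that the class of $p$-periodic sequences of $\Zn{m}$ is stable under the derivation operator $\der{\,\cdot\,}$, which is precisely the content of the preceding proposition; once this stability is in hand, the iteration cannot escape the class, so no genuine obstacle arises.
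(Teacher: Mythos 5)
Your proof is correct and is exactly the intended argument: the paper states this corollary without proof precisely because it follows by the straightforward induction on $i$ you describe, using the preceding proposition that derivation preserves $p$-periodicity. Nothing is missing.
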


Therefore, the orbit of a $p$-periodic sequence only contains $p$-periodic sequences.

\begin{prop}
Let $S$ be a $k$-interlaced arithmetic progression. If $S$ is $p$-periodic, then $S$ is a $\gcd(k,p)$-interlaced arithmetic progression.
\end{prop}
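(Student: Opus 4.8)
The plan is to reformulate the IAP property as a periodicity statement about a difference sequence, and then reduce everything to the elementary fact that the set of periods of a sequence is a subgroup of $\Z$.

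Write $S=\IAP{A}{D}=\left(u_j\right)_{j\in\Z}$ with $A=(a_0,\ldots,a_{k-1})$ and $D=(d_0,\ldots,d_{k-1})$, and set $g=\gcd(k,p)$, so that $1\le g\le\min(k,p)$. The first step is to record the following reformulation: a sequence $\left(u_j\right)_{j\in\Z}$ of $\Zn{m}$ is an $\ell$-interlaced arithmetic progression if and only if its $\ell$-step difference sequence $\left(u_{j+\ell}-u_j\right)_{j\in\Z}$ depends only on $j\bmod\ell$, i.e.\ is $\ell$-periodic. Indeed, the defining identity $u_{q\ell+r}=a_r+qd_r$ gives $u_{(q+1)\ell+r}-u_{q\ell+r}=d_r$, so the difference sequence is $\ell$-periodic; conversely, if $u_{j+\ell}-u_j$ depends only on $j\bmod\ell$, then telescoping over $q\in\Z$ in both directions (legitimate, since it uses only the additive group structure of $\Zn{m}$) yields $u_{q\ell+r}=u_r+q\left(u_{r+\ell}-u_r\right)$, whence $S=\IAP{(u_0,\ldots,u_{\ell-1})}{(u_\ell-u_0,\ldots,u_{2\ell-1}-u_{\ell-1})}$ is an $\ell$-IAP.

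With this in hand I would run the following chain. Since $S$ is a $k$-IAP, the sequence $V=\left(v_j\right)_{j\in\Z}$ with $v_j=u_{j+k}-u_j$ is $k$-periodic; since $S$ is also $p$-periodic, $v_{j+p}=u_{j+k+p}-u_{j+p}=u_{j+k}-u_j=v_j$, so $V$ is $p$-periodic as well, hence $g$-periodic. Now consider $W=\left(w_j\right)_{j\in\Z}$ with $w_j=u_{j+g}-u_j$. It is $p$-periodic because $S$ is, and it is $k$-periodic because
$$
w_{j+k}-w_j=\left(u_{j+g+k}-u_{j+g}\right)-\left(u_{j+k}-u_j\right)=v_{j+g}-v_j=0,
$$
using that $V$ is $g$-periodic. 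Being simultaneously $k$-periodic and $p$-periodic, $W$ is $g$-periodic, so by the reformulation (with $\ell=g$), $S$ is a $g$-interlaced arithmetic progression, explicitly $S=\IAP{(u_0,\ldots,u_{g-1})}{(w_0,\ldots,w_{g-1})}$.

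I do not expect a genuine obstacle. The one ingredient used repeatedly — that a sequence which is at once $a$-periodic and $b$-periodic is $\gcd(a,b)$-periodic — is standard: the set of $n\in\Z$ with $u_{j+n}=u_j$ for all $j$ is closed under addition and negation, hence is a subgroup $c\Z$ of $\Z$, and it contains $a$ and $b$, hence $\gcd(a,b)$. The only points to handle with a little care are that the telescoping in the reformulation must run over all integers $q$, positive and negative, and that $g\ge 1$ so that "$g$-periodic" and "$g$-IAP" make sense. The write-up should be short.
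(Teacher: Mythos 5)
Your proof is correct, but it is organized differently from the paper's. The paper applies B\'ezout's identity directly to the explicit formula $u_{qk+r}=a_r+qd_r$: writing $\gcd(k,p)=\alpha p+\beta k$, it computes $a_{j+\gcd(k,p)}$ in two ways (once shifting by $\alpha p$ and using $p$-periodicity, once shifting by $\beta k$ and using the interlaced structure) to conclude that the common differences satisfy $\beta d_{(j\bmod k)}=\beta d_{(j+\gcd(k,p)\bmod k)}$, and then exhibits $S$ as the $\gcd(k,p)$-IAP with differences $\beta(d_0,\ldots,d_{\gcd(k,p)-1})$. You instead isolate two reusable lemmas --- the characterization of $\ell$-IAPs by $\ell$-periodicity of the $\ell$-step difference sequence, and the fact that the set of periods of a sequence is a subgroup of $\Z$ --- and apply the subgroup lemma twice, first to the $k$-step differences $V$ and then to the $g$-step differences $W$; the B\'ezout coefficients never appear explicitly, being absorbed into the subgroup argument. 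Both arguments are elementary and of comparable length; yours is more modular and makes the underlying mechanism (common periods form a subgroup) transparent, while the paper's has the minor advantage of explicitly identifying the resulting common differences as $\beta D$ truncated to length $\gcd(k,p)$, which your telescoping identity $w_j=u_{j+g}-u_j$ recovers anyway. No gaps: the telescoping in both directions, the positivity of $g$, and the double application of the subgroup lemma are all handled correctly.
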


\begin{proof}
Let $A=\left(a_0,\ldots,a_{k-1}\right)$ and $D=\left(d_0,\ldots,d_{k-1}\right)$ be two $k$-tuples of elements in $\Zn{m}$ such that $S=\IAP{A}{D}=\left(a_{j}\right)_{j\in\Z}$. From B\'{e}zout's identity, we know that there exist two integers $\alpha$ and $\beta$ such that
$$
\alpha p + \beta k = \gcd(k,p).
$$
First, since $S$ is $p$-periodic and $S=\IAP{A}{D}$, we have
$$
a_j = a_{j+\alpha p} = a_{j+\gcd(k,p)-\beta k} = a_{j+\gcd(k,p)}-\beta d_{(j+\gcd(k,p)\bmod{k})},
$$
for all integers $j$. Moreover,
$$
a_{j+\gcd(k,p)} = a_{j+\alpha p + \beta k} = a_{j+\beta k} = a_j + \beta d_{(j\bmod{k})},
$$
for all integers $j$. Therefore,
$$
\beta d_{(j\bmod{k})} = \beta d_{(j+\gcd(k,p)\bmod{k})},
$$
for all integers $j$. Finally, since
$$
a_{j+\gcd(k,p)} = a_j + \beta d_{(j+\gcd(k,p)\bmod{k})} = a_j + \beta d_{(j\bmod{\gcd(k,p)})},
$$
for all integers $j$, we conclude that
$$
S= \IAP{\left(a_0,\ldots,a_{\gcd(k,p)-1}\right)}{\beta\left(d_0,\ldots,d_{\gcd(k,p)-1}\right)}.
$$
This completes the proof.
\end{proof}

\begin{nota}[Tuple of zeroes]
For any positive integer $n$, the tuple of $n$ zeroes is simply denoted by $0$ when $n$ is implicitly known and there is no ambiguity.
\end{nota}

\begin{prop}\label{prop17}
Let $A$ and $D$ be two $k$-tuples of elements in $\Zn{m}$ and let $\lambda$ be a positive integer. Then, the sequence $\IAP{A}{D}$ is $\lambda k$-periodic if and only if $\pi_{\frac{m}{\gcd(\lambda,m)}}\left(D\right)=0$.
\end{prop}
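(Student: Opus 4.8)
The plan is to reduce the periodicity condition to a coordinatewise condition on the common differences $D$, and then translate that into a divisibility statement. Write $S = \IAP{A}{D} = \left(u_j\right)_{j\in\Z}$, so that $u_{qk+r} = a_r + qd_r$ for all $q\in\Z$ and all $r\in\{0,\ldots,k-1\}$. For any index $j$, write $j = qk+r$ with $r\in\{0,\ldots,k-1\}$; then $j+\lambda k = (q+\lambda)k+r$, so
$$
u_{j+\lambda k} = a_r + (q+\lambda)d_r = u_j + \lambda d_r .
$$
Hence $S$ is $\lambda k$-periodic if and only if $\lambda d_r = 0$ in $\Zn{m}$ for every $r\in\{0,\ldots,k-1\}$, i.e., if and only if $\lambda D = 0$.

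Next I would rewrite the condition $\lambda d_r = 0$ in $\Zn{m}$ as a divisibility over $\Z$. Choosing an integer representative of $d_r$, still denoted $d_r$, this condition reads $m \mid \lambda d_r$. Setting $g = \gcd(\lambda,m)$ and writing $\lambda = g\lambda'$, $m = gm'$ with $\gcd(\lambda',m')=1$, we get
$$
m \mid \lambda d_r \iff m' \mid \lambda' d_r \iff m' \mid d_r ,
$$
the last equivalence because $\gcd(\lambda',m')=1$. Since $m' = m/\gcd(\lambda,m)$ is a divisor of $m$, the canonical projective map $\pi_{m'} : \Zn{m} \longrightarrow \Zn{m'}$ is the one introduced earlier, and $m' \mid d_r$ is precisely $\pi_{m'}(d_r) = 0$.

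Combining the two steps, $S$ is $\lambda k$-periodic if and only if $\pi_{m/\gcd(\lambda,m)}(d_r) = 0$ for all $r\in\{0,\ldots,k-1\}$, that is, $\pi_{m/\gcd(\lambda,m)}(D) = 0$, which is the claim.

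There is essentially no hard step: the only point requiring care is the elementary number-theoretic equivalence $m \mid \lambda d \iff (m/\gcd(\lambda,m)) \mid d$, together with the remark that $m/\gcd(\lambda,m)$ divides $m$ so that $\pi_{m/\gcd(\lambda,m)}$ is indeed a legitimate projective map. One should also check the edge case $\gcd(\lambda,m)=m$ (for instance $\lambda$ a multiple of $m$): then $m'=1$, $\Zn{1}$ is trivial, and the condition $\pi_1(D)=0$ holds vacuously, consistently with the fact that $\IAP{A}{D}$ is always $mk$-periodic.
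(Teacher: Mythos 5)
Your proof is correct and follows the same route as the paper, which simply asserts the chain of equivalences $u_{j+\lambda k}=u_j\ \forall j \Leftrightarrow \lambda D=0 \Leftrightarrow \pi_{m/\gcd(\lambda,m)}(D)=0$ without further detail; you have merely filled in the elementary computation $u_{j+\lambda k}=u_j+\lambda d_r$ and the divisibility argument behind the last equivalence. No issues.
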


\begin{proof}
The sequence $\IAP{A}{D}=\left(u_j\right)_{j\in\Z}$ is $\lambda k$-periodic if and only if
$$
u_{j+\lambda k}=u_j,\ \forall j\in\Z\quad\Longleftrightarrow\quad \lambda D = 0\quad\Longleftrightarrow\quad \pi_{\frac{m}{\gcd(\lambda,m)}}\left(D\right)=0.
$$
This completes the proof.
\end{proof}

It is clear that any $k$-interlaced arithmetic progression of $\Zn{m}$ is $mk$-periodic.

\begin{nota}[Zero and identity matrices]
For any positive integers $n_1$ and $n_2$, the {\em zero matrix} of size $n_1\times n_2$ is denoted by $\matr{0_{n_1,n_2}}$ or $\matr{0}$, when the size $n_1\times n_2$ is implicitly known. For any positive integer $n$, the zero square matrix and the identity matrix of order $n$ are denoted by $\matr{0_n}$ and $\matI{n}$, respectively.
\end{nota}

\begin{nota}[Left kernel]
Let $\matM$ be a matrix of size $n_1\times n_2$ over $\Zn{m}$. The {\em left null space}, or {\em left kernel}, of $\matM$ is the $\Zn{m}$--module defined by
$$
\Lker\matM = \left\{ X\in\left(\Zn{m}\right)^{n_1}\ \middle|\ X\matM = \matr{0_{1,n_2}} \right\}.
$$
When $\matM$ is an integer matrix,
$$
\Lker_m\matM = \Lker\pi_m\left(\matM\right),
$$
for all positive integers $m$.
\end{nota}

Now, interlaced arithmetic progressions whose orbit is periodic can be characterized.

\begin{thm}\label{thm*1}
Let $A$ and $D$ be two $k$-tuples of elements in $\Zn{m}$ and let $p_1$ and $p_2$ be two positive integers such that $p_1$ is divisible by $k$. Then, the orbit $\orb{S}$ of the sequence $S=\IAP{A}{D}$ is periodic of period $(p_1,p_2)$ if and only if $\pi_{\frac{m}{\gcd(p_1/k,m)}}(D)=0$ and the block matrix $\left(\begin{array}{c|c} A & D \end{array}\right)$ is in $\Lker_m\Pmat{k}{p_2}$, with
$$
\Pmat{k}{p_2} = \left(\begin{array}{c|c}
\W{k}{p_2} & \matr{0_k} \\
\hline
\T{k}{p_2} &  \W{k}{p_2}
\end{array}\right),
$$
where $\W{k}{p_2}$ is the Wendt matrix $\W{k}{p_2} = \C{k}{p_2} + {(-1)}^{p_2+1}\matI{k}$.
\end{thm}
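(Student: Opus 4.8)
The plan is to split the $(p_1,p_2)$-periodicity of $\orb{S}$ into its two halves — every row $p_1$-periodic and every column $p_2$-periodic — and treat them independently. Write $\orb{S}=(a_{i,j})_{(i,j)\in\N\times\Z}$, so the $i$th row is $\ider{i}{S}$. The row requirement says every $\ider{i}{S}$ is $p_1$-periodic. Since the $0$th row is $S$, this forces $S$ itself to be $p_1$-periodic; conversely, by Corollary~\ref{cor1}, $p_1$-periodicity of $S$ propagates to all iterated derived sequences. Hence the row requirement is equivalent to $S=\IAP{A}{D}$ being $p_1$-periodic, and since $k\mid p_1$ this is, by Proposition~\ref{prop17} applied with $\lambda=p_1/k$, exactly the condition $\pi_{\frac{m}{\gcd(p_1/k,m)}}(D)=0$.

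Next I would collapse the column requirement to a single equation. The columns of $\orb{S}$ are $p_2$-periodic iff $\ider{i+p_2}{S}=\ider{i}{S}$ for every $i\ge0$; specializing to $i=0$ gives $\ider{p_2}{S}=S$, and conversely this one equation propagates to all $i$ because $\ider{i+p_2}{S}=\ider{i}{\ider{p_2}{S}}$. So it remains to turn $\ider{p_2}{S}=S$ into the stated left-kernel membership. By Proposition~\ref{prop2}, $\ider{p_2}{S}=(-1)^{p_2}\IAP{A\C{k}{p_2}+D\T{k}{p_2}}{D\C{k}{p_2}}$, and a $k$-interlaced arithmetic progression determines the pair of $k$-tuples defining it uniquely — the first is read off from indices $0,\ldots,k-1$ and the second from the difference with indices $k,\ldots,2k-1$. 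Comparing with $S=\IAP{A}{D}$, the equation $\ider{p_2}{S}=S$ is therefore equivalent to $(-1)^{p_2}(A\C{k}{p_2}+D\T{k}{p_2})=A$ and $(-1)^{p_2}D\C{k}{p_2}=D$; multiplying through by the unit $(-1)^{p_2}$ and using $\C{k}{p_2}-(-1)^{p_2}\matI{k}=\C{k}{p_2}+(-1)^{p_2+1}\matI{k}=\W{k}{p_2}$, these become $A\W{k}{p_2}+D\T{k}{p_2}=0$ and $D\W{k}{p_2}=0$, which, reading off the block structure of $\Pmat{k}{p_2}$, is precisely $(A\mid D)\Pmat{k}{p_2}=0$, i.e.\ $(A\mid D)\in\Lker_m\Pmat{k}{p_2}$.

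Combining the two equivalences yields the theorem: $\orb{S}$ is $(p_1,p_2)$-periodic iff $\pi_{\frac{m}{\gcd(p_1/k,m)}}(D)=0$ and $(A\mid D)\in\Lker_m\Pmat{k}{p_2}$. I do not expect a genuine difficulty here; the two points calling for some care are the sign bookkeeping with $(-1)^{p_2}$ — cleanly handled by the identity above relating $\C{k}{p_2}$ to the Wendt matrix $\W{k}{p_2}$ — and the reduction of the (a priori infinitely many) column-periodicity equations to the single identity $\ider{p_2}{S}=S$, which works because the derivation operator only moves forward through the orbit.
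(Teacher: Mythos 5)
Your proposal is correct and follows essentially the same route as the paper: split $(p_1,p_2)$-periodicity into row-periodicity (handled by Proposition~\ref{prop17} and Corollary~\ref{cor1}) and column-periodicity reduced to the single equation $\ider{p_2}{S}=S$, then translate that via Proposition~\ref{prop2} into the two matrix identities $A\W{k}{p_2}+D\T{k}{p_2}=\matr{0}$ and $D\W{k}{p_2}=\matr{0}$. Your added remark that an IAP uniquely determines its defining pair of $k$-tuples is a small point the paper leaves implicit, but otherwise the arguments coincide.
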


\begin{proof}
First, we know from Proposition~\ref{prop17} that the sequence $S=\IAP{A}{D}$ is $p_1$-periodic if and only if $\pi_{\frac{m}{\gcd(p_1/k,m)}}(D)=0$. Moreover, from Corollary~\ref{cor1}, we obtain that all the iterated derived sequences $\ider{i}{S}$ are $p_1$-periodic if and only if $\pi_{\frac{m}{\gcd(p_1/k,m)}}(D)=0$. Using Proposition~\ref{prop2}, it follows that the orbit $\orb{S}$ is $(p_1,p_2)$-periodic if and only if $\pi_{\frac{m}{\gcd(p_1/k,m)}}(D)=0$ and
$$
\ider{i+p_2}{S}=\ider{i}{S},\ \forall i\in\N
\begin{array}[t]{l}
\quad\displaystyle\Longleftrightarrow\quad
\ider{p_2}{S}=S \\ \ \\
\quad\displaystyle\Longleftrightarrow\quad
\left\{\begin{array}{l}
{(-1)}^{p_2}\left(A\C{k}{p_2}+D\T{k}{p_2}\right) = A \\
{(-1)}^{p_2}D\C{k}{p_2} = D \\
\end{array}\right. \\ \ \\
\quad\displaystyle\Longleftrightarrow\quad
\left\{\begin{array}{l}
A\left(\C{k}{p_2}+{(-1)}^{p_2+1}\matI{k}\right) + D\T{k}{p_2} = \matr{0} \\
D\left(\C{k}{p_2}+{(-1)}^{p_2+1}\matI{k}\right) = \matr{0} \\
\end{array}\right. \\ \ \\
\quad\displaystyle\Longleftrightarrow\quad
\left\{\begin{array}{l}
A\W{k}{p_2} + D\T{k}{p_2} = \matr{0} \\
D\W{k}{p_2} = \matr{0} \\
\end{array}\right. \\
\end{array}
$$
This completes the proof.
\end{proof}

For example, the orbit of the sequence $\IAP{021}{111}$ of $\Zn{3}$ is periodic of period $(9,9)$ as depicted in Figure~\ref{fig*06}.

\begin{figure}[htbp]
\centering{
\includegraphics{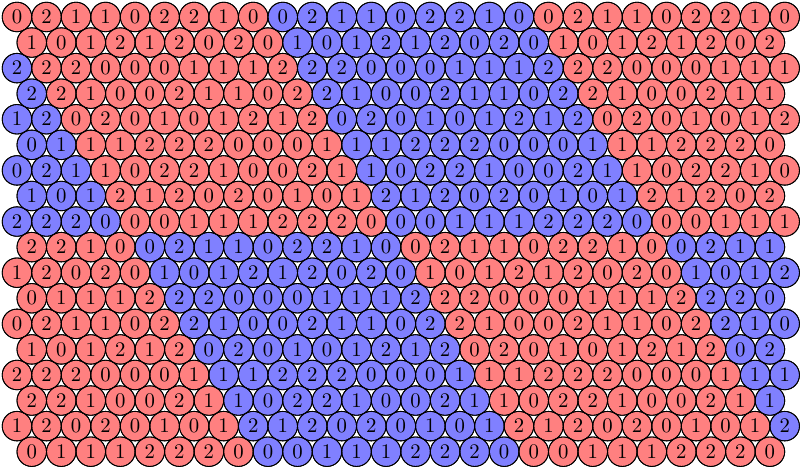}
}
\caption{Periodic orbit of $\IAP{021}{111}$ in $\Zn{3}$}\label{fig*06}
\end{figure}

We end this section by refining this result for antisymmetric sequences.

\begin{nota}[Sum]
For any finite sequence $S=\left(u_j\right)_{j=0}^{n-1}$ of length $n$, the sum of its elements is denoted by $\sigma\left(S\right)=\sum_{j=0}^{n-1}u_j$.
\end{nota}

The antisymmetric structure is preserved under the derivation process.

\begin{prop}\label{prop*1}
The sequence $S$ of length $n$ is antisymmetric if and only if its derived sequence $\der{S}$ is antisymmetric and $2\sigma\left(S\right)=\sigma\left(\der{S}\right)=0$.
\end{prop}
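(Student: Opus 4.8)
The plan is to reduce the statement to a single identity valid for arbitrary finite sequences — expressing $\sigma(\der{S})$ in terms of $\sigma(S)$ and the two extreme entries of $S$ — and then to notice that antisymmetry of $\der{S}$ forces the ``antisymmetry defect'' of $S$ to obey a two-term alternating recurrence. Throughout write $S=(u_j)_{j=0}^{n-1}$ and $\der{S}=(v_j)_{j=0}^{n-2}$, so $v_j=-u_j-u_{j+1}$. The cases $n\le 1$ are trivial (the derived sequence is empty and every sum in sight reduces to $0$ or to $2u_0$), so one may assume $n\ge 2$. The first step is to record the telescoping identity
\[
\sigma(\der{S})=-2\sigma(S)+u_0+u_{n-1},
\]
obtained by splitting $\sum_{j=0}^{n-2}(-u_j-u_{j+1})$ into $-(\sigma(S)-u_{n-1})-(\sigma(S)-u_0)$; it holds whether or not $S$ is antisymmetric and will be the common engine for both directions.

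For the forward implication, assume $u_{n-1-j}=-u_j$ for all $j$. Reversing the order of summation gives $\sigma(S)=-\sigma(S)$, hence $2\sigma(S)=0$, and in particular $u_0+u_{n-1}=0$; the identity above then yields $\sigma(\der{S})=0$. Antisymmetry of $\der{S}$ is a one-line verification: for $0\le j\le n-2$ one has $v_{n-2-j}=-u_{n-2-j}-u_{n-1-j}=u_{j+1}+u_j=-v_j$, using $u_{n-2-j}=u_{n-1-(j+1)}=-u_{j+1}$ and $u_{n-1-j}=-u_j$.

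For the converse, assume $\der{S}$ is antisymmetric and $2\sigma(S)=\sigma(\der{S})=0$, and set $w_j=u_j+u_{n-1-j}$ for $0\le j\le n-1$; the goal is to show every $w_j$ vanishes. Expanding the relation $v_{n-2-j}=-v_j$ in terms of the $u$'s and regrouping the four resulting terms into the reflected pairs $(u_j,u_{n-1-j})$ and $(u_{j+1},u_{n-1-(j+1)})$ gives $w_j+w_{j+1}=0$ for $0\le j\le n-2$, hence $w_j=(-1)^j w_0$ for all $j$. The universal identity together with the two hypotheses then gives $w_0=u_0+u_{n-1}=\sigma(\der{S})+2\sigma(S)=0$, so all $w_j=0$ and $S$ is antisymmetric.

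All the computations are elementary; the only point demanding a little care is the index bookkeeping in the regrouping step — one must check that $u_{n-2-j}$ is the mirror of $u_{j+1}$ (not of $u_j$), which is precisely why the recurrence links consecutive indices — together with setting aside the degenerate small-$n$ cases first so that the telescoping manipulations are legitimate. I do not expect any genuine obstacle here.
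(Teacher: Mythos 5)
Your proof is correct. The forward direction and the telescoping identity $\sigma(\der{S})=-2\sigma(S)+u_0+u_{n-1}$ coincide with what the paper does, but your converse takes a genuinely different and somewhat cleaner route. The paper proves the converse by induction on $j$, computing the partial sums $\sum_{k=j}^{n-2-j}v_k$ in two ways (once via the antisymmetry of $\der{S}$ together with $\sigma(\der{S})=0$, once by telescoping together with the inductive hypothesis and $2\sigma(S)=0$) to extract $u_j+u_{n-1-j}=0$ at each step. You instead observe that the antisymmetry of $\der{S}$ alone yields the purely local recurrence $w_j+w_{j+1}=0$ for the defect $w_j=u_j+u_{n-1-j}$ — this is exactly the forward-direction computation read in reverse — so that $w_j=(-1)^jw_0$ and only the single anchor $w_0=\sigma(\der{S})+2\sigma(S)=0$ is needed. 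What your version buys is a clear separation of roles: the shape of $\der{S}$ controls the alternation of the defect sequence, while the two scalar conditions fix its initial value; the paper's nested partial sums obscure this. Both arguments are elementary and your index bookkeeping (identifying $u_{n-2-j}$ as the mirror of $u_{j+1}$) is handled correctly, as are the degenerate cases $n\le1$.
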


\begin{proof}
The result is clear when $n\le 1$. Suppose that $n\ge2$. Let $S=\left(u_j\right)_{j=0}^{n-1}$ and $\der{S}=\left(v_j\right)_{j=0}^{n-2}=\left(-u_j-u_{j+1}\right)_{j=0}^{n-2}$. First, suppose that $S$ is antisymmetric. Then,
$$
v_j + v_{n-2-j} = -(u_j+u_{j+1})-(u_{n-2-j}+u_{n-1-j}) = -(u_j+u_{n-1-j})-(u_{j+1}+u_{n-1-(j+1)}) = 0,
$$
for all $j\in\{0,\ldots,n-2\}$. Therefore, the sequence $\der{S}$ is antisymmetric. Moreover,
$$
2\sigma\left(S\right) = \sum_{j=0}^{n-1}u_j + \sum_{j=0}^{n-1}u_{n-1-j} = \sum_{j=0}^{n-1}\left(u_j+u_{n-1-j}\right) = 0
$$
and
$$
\sigma\left(\der{S}\right) = \sum_{j=0}^{n-2}v_j = -\sum_{j=0}^{n-2}\left(u_j+u_{j+1}\right) = -\sum_{j=0}^{n-2}u_j-\sum_{j=1}^{n-1}u_j = \left(u_0+u_{n-1}\right)-2\sigma\left(S\right) = 0.
$$
Conversely, suppose that $\der{S}$ is antisymmetric and $2\sigma\left(S\right)=\sigma\left(\der{S}\right)=0$. By induction on $j$. For $j=0$, we already know that
$$
u_0+u_{n-1} = \sigma\left(\der{S}\right) + 2\sigma\left(S\right) = 0.
$$
Suppose that $u_k+u_{n-1-k}=0$ for all $k\in\{0,\ldots,j-1\}$. Since
$$
\sum_{k=j}^{n-2-j}v_k = \sigma\left(\der{S}\right) - \sum_{k=0}^{j-1}\left(v_k+v_{n-2-k}\right) = 0
$$
and
$$
\sum_{k=j}^{n-2-j}v_k \begin{array}[t]{l}
 = -\displaystyle\sum_{k=j}^{n-2-j}\left(u_k+u_{k+1}\right) = -\sum_{k=j}^{n-2-j}u_k-\sum_{k=j+1}^{n-1-j}u_k = (u_j+u_{n-1-j})-2\sum_{k=j}^{n-1-j}u_k \\
 = (u_j+u_{n-1-j})+2\displaystyle\sum_{k=0}^{j-1}\left(u_k+u_{n-1-k}\right) -2\sigma\left(S\right) = u_j+u_{n-1-j},
\end{array}
$$
we obtain that $u_j+u_{n-1-j}=0$. We conclude that the sequence $S$ is antisymmetric.
\end{proof}

The $k$-IAPs with antisymmetric first period are characterized as follows.

\begin{prop}\label{prop3}
Let $A$ and $D$ be two $k$-tuples of elements in $\Zn{m}$, with $m\ge1$, such that the sequence $S=\IAP{A}{D}$ is $dk$-periodic. Its first period $S[dk]$ is antisymmetric if and only if $D=A\matX{k}$, where $\matX{k}$ is the square matrix of size $k$ defined by
$$
\matX{k} = \left(\delta_{r,s}+\delta_{r,k-s+1}\right)_{1\le r,s\le k} = \begin{pmatrix}
1 & & 0 & & & 1 \\
 & \ddots & & & \iddots & \\
 & & \ddots & \iddots & & 0 \\
 0 & & \iddots & \ddots & & \\
 & \iddots & & & \ddots & \\
 1 & & & 0 & & 1 \\ 
\end{pmatrix},
$$
with $\delta_{r,s}$ the Kronecker delta function ($1$ if $r=s$, and $0$ otherwise).
\end{prop}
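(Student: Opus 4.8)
The plan is to reduce the antisymmetry of $S[dk]$ to a coordinatewise linear system relating $A$ and $D$, and then to recognise that system as the matrix identity $D=A\matX{k}$.

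First I would exploit the $dk$-periodicity of $S=(u_j)_{j\in\Z}$. By definition, $S[dk]=(u_0,\ldots,u_{dk-1})$ is antisymmetric if and only if $u_{dk-1-j}=-u_j$ for $j\in\{0,\ldots,dk-1\}$; since $S$ is $dk$-periodic we have $u_{dk-1-j}=u_{-1-j}$, and since both $(u_j)_{j\in\Z}$ and $(u_{-1-j})_{j\in\Z}$ are $dk$-periodic in $j$, this is equivalent to $u_j+u_{-1-j}=0$ for all $j\in\Z$. Now write $j=qk+r$ with $q\in\Z$ and $r\in\{0,\ldots,k-1\}$. The wrap-around of the interlacing gives $-1-j=-(q+1)k+(k-1-r)$ with $k-1-r\in\{0,\ldots,k-1\}$, so $u_j=a_r+qd_r$ while $u_{-1-j}=a_{k-1-r}-(q+1)d_{k-1-r}$. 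Hence the antisymmetry of $S[dk]$ is equivalent to
$$
\bigl(a_r+a_{k-1-r}-d_{k-1-r}\bigr)+q\bigl(d_r-d_{k-1-r}\bigr)=0\quad\text{in }\Zn{m},
$$
for all $q\in\Z$ and all $r\in\{0,\ldots,k-1\}$.

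Next I would observe that an expression affine in the integer parameter $q$ vanishes for every $q\in\Z$ if and only if both its constant term and its coefficient of $q$ vanish (evaluate at $q=0$ and $q=1$). This yields, for each $r$, the two conditions $d_{k-1-r}=a_r+a_{k-1-r}$ and $d_r=d_{k-1-r}$; but the first condition, applied both to $r$ and to $k-1-r$, already implies the second, so the whole system collapses to $d_r=a_r+a_{k-1-r}$ for all $r\in\{0,\ldots,k-1\}$. Finally, a direct expansion of the matrix product shows that, writing $A=(a_0,\ldots,a_{k-1})$ as a row vector and indexing coordinates from $0$ to $k-1$, the $r$th coordinate of $A\matX{k}$ equals $\sum_{s}a_s\bigl(\delta_{s,r}+\delta_{s,k-1-r}\bigr)=a_r+a_{k-1-r}$. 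Thus the system above is precisely $D=A\matX{k}$, which completes the proof.

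There is no genuine obstacle here; the argument is pure bookkeeping. The only points that demand some care are the index computation for $u_{-1-j}$, namely correctly tracking the shift $-1-j=-(q+1)k+(k-1-r)$ forced by the interlaced structure, and the role of the hypothesis that $S$ is $dk$-periodic: it is exactly this hypothesis that allows the antisymmetry of the finite tuple $S[dk]$ to be rewritten as the clean doubly infinite identity $u_j+u_{-1-j}=0$, so that $q$ may be taken to range over all of $\Z$. Without it one would instead keep $q$ in the finite range $\{0,\ldots,d-1\}$ and invoke $dD=0$ from Proposition~\ref{prop17} to simplify the resulting relations, reaching the same conclusion.
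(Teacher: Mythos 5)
Your proof is correct and follows essentially the same route as the paper's: decompose $j=qk+r$, reduce antisymmetry of the period to an expression affine in $q$, split it into its constant and linear parts, and observe that the resulting system collapses to $d_r=a_r+a_{k-1-r}$, i.e.\ $D=A\matX{k}$. The only difference is cosmetic — you let $q$ range over all of $\Z$ via the identity $u_{dk-1-j}=u_{-1-j}$, whereas the paper keeps $q\in\{0,\ldots,d-1\}$ and invokes $dD=0$ — and you correctly note this equivalence yourself.
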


\begin{proof}
Let $S=\IAP{A}{D}=\left(u_j\right)_{j\in\Z}$. The period $\left(u_j\right)_{j=0}^{dk-1}$ is antisymmetric if and only if
$$
\begin{array}{l}
u_j + u_{dk-1-j} = 0,\ \forall j\in\{0,1,\ldots,dk-1\}, \\
\Longleftrightarrow\quad u_{qk+r} + u_{dk-1-(qk+r)} = 0,\ \forall q\in\{0,1,\ldots,d-1\},\ \forall r\in\{0,1,\ldots,k-1\}, \\
\Longleftrightarrow\quad u_{qk+r} + u_{(d-q-1)k+(k-1-r)} = 0,\ \forall q\in\{0,1,\ldots,d-1\},\ \forall r\in\{0,1,\ldots,k-1\}, \\
\Longleftrightarrow\quad \left(a_r+qd_r\right) + \left(a_{k-1-r}+(d-q-1)d_{k-1-r}\right) = 0,\ \begin{array}[t]{l}\forall q\in\{0,\ldots,d-1\},\\ \forall r\in\{0,\ldots,k-1\},\end{array} \\
\stackrel{dD=0}{\Longleftrightarrow}\quad \left(a_r+a_{k-1-r}-d_{k-1-r}\right) + q\left(d_r-d_{k-1-r}\right) = 0,\ \begin{array}[t]{l}\forall q\in\{0,\ldots,d-1\},\\ \forall r\in\{0,\ldots,k-1\},\end{array} \\
\Longleftrightarrow\quad \left\{\begin{array}{l}
a_r+a_{k-1-r}-d_{k-1-r} = 0,\\
d_r-d_{k-1-r} = 0,\\
\end{array}\right.,\quad \forall r\in\{0,\ldots,k-1\}, \\
\Longleftrightarrow\quad d_{k-1-r} = a_{k-1-r}+a_r,\quad \forall r\in\{0,\ldots,k-1\},\\
\Longleftrightarrow\quad d_{r} = a_{r}+a_{k-1-r},\quad \forall r\in\{0,\ldots,k-1\},\\
\Longleftrightarrow\quad D=A\matX{k}.
\end{array}
$$
This completes the proof.
\end{proof}

Finally, we obtain the following refinement of Theorem~\ref{thm*1} for $k$-IAPs with antisymmetric first period.

\begin{thm}\label{thm1}
Let $A$ be a $k$-tuple of elements in $\Zn{m}$ and let $\lambda$ and $p$ be two positive integers such that $p$ is divisible by $k$. Then, the orbit $\orb{S}$ of the sequence $S=\IAP{A}{A\matX{k}}$ is periodic of period $(p,\lambda p)$ if and only if $\pi_{\frac{m}{\gcd(p/k,m)}}(A\matX{k})=0$ and $A\in\Lker_m\M{k}{\lambda p}$, where
$$
\M{k}{\lambda p} = \W{k}{\lambda p}+\matX{k}\T{k}{\lambda p}
$$
with $\W{k}{\lambda p} = \C{k}{\lambda p} + {(-1)}^{\lambda p+1}\matI{k}$.
\end{thm}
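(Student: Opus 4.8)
The plan is to specialize Theorem~\ref{thm*1} to the case $D=A\matX{k}$, $p_1=p$, $p_2=\lambda p$, and then to show that one of the two linear conditions it yields is automatically satisfied, using the periodicity hypothesis together with special identities for $\C{k}{n}$, $\T{k}{n}$ and $\W{k}{n}$ when $k$ divides $n$. Applying Theorem~\ref{thm*1} to $S=\IAP{A}{A\matX{k}}$ (note $k\mid p_1$) gives that $\orb{S}$ is $(p,\lambda p)$-periodic if and only if $\pi_{\frac{m}{\gcd(p/k,m)}}(A\matX{k})=\matr{0}$ and $(A\mid A\matX{k})\in\Lker_m\Pmat{k}{\lambda p}$, the latter unfolding into
$$
A\W{k}{\lambda p}+A\matX{k}\T{k}{\lambda p}=\matr{0}\qquad\text{and}\qquad A\matX{k}\W{k}{\lambda p}=\matr{0}.
$$
Since $\M{k}{\lambda p}=\W{k}{\lambda p}+\matX{k}\T{k}{\lambda p}$, the first equation is precisely $A\in\Lker_m\M{k}{\lambda p}$. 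So everything reduces to proving that $A\matX{k}\W{k}{\lambda p}=\matr{0}$ is implied by the two conditions $\pi_{\frac{m}{\gcd(p/k,m)}}(A\matX{k})=\matr{0}$ and $A\M{k}{\lambda p}=\matr{0}$.

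Write $n=\lambda p$, a multiple of $k$, and let $\matr{J}_k=\matX{k}-\matI{k}=(\delta_{r,k-s+1})_{1\le r,s\le k}$ be the reversal matrix, so that $\matr{J}_k^{\,2}=\matI{k}$, $\matX{k}^2=2\matX{k}$, and $\matr{J}_k\matX{k}=\matX{k}\matr{J}_k=\matX{k}$. The substitution $(r,s)\mapsto(k+1-r,k+1-s)$ in the defining sums of $\C{k}{i}$ and $\T{k}{i}$ gives $\matr{J}_k\C{k}{i}\matr{J}_k={}^{\mathrm t}\C{k}{i}$ and $\matr{J}_k\T{k}{i}\matr{J}_k={}^{\mathrm t}\T{k}{i}$ for every $i$, with ${}^{\mathrm t}\matM$ the transpose of $\matM$. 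I would then establish the two identities, valid whenever $k\mid n$:
$$
\C{k}{n}={}^{\mathrm t}\C{k}{n}\qquad\text{and}\qquad \T{k}{n}+{}^{\mathrm t}\T{k}{n}=\frac{n}{k}\,\C{k}{n}.
$$
The first follows from $\C{k}{n}=\C{k}{k}^{\,n/k}$ (Corollary~\ref{cor*2}) together with the symmetry of $\C{k}{k}$, which is immediate from its defining sum via $\alpha\mapsto-\alpha$ and $\binom{k}{j}=\binom{k}{k-j}$. The second follows from $\T{k}{k}+{}^{\mathrm t}\T{k}{k}=\C{k}{k}$ (a similar one-line check on the defining sums) together with $\T{k}{n}=\sum_{l=0}^{n/k-1}\C{k}{k}^{\,l}\T{k}{k}\C{k}{k}^{\,n/k-1-l}$ from Corollary~\ref{cor*2}: adding this to its transpose, using that $\C{k}{k}$ is symmetric, turns every summand into $\C{k}{k}^{\,n/k}=\C{k}{n}$, giving $\frac{n}{k}\,\C{k}{n}$. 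In particular $\matr{J}_k$, hence $\matX{k}$, commutes with $\C{k}{n}$ and with $\W{k}{n}=\C{k}{n}+(-1)^{n+1}\matI{k}$; and, using $\matr{J}_k\matX{k}=\matX{k}$ and $\T{k}{n}\matr{J}_k=\matr{J}_k\,{}^{\mathrm t}\T{k}{n}$ once more,
$$
\matX{k}\T{k}{n}\matX{k}=\matX{k}\T{k}{n}(\matI{k}+\matr{J}_k)=\matX{k}\bigl(\T{k}{n}+{}^{\mathrm t}\T{k}{n}\bigr)=\frac{n}{k}\,\matX{k}\C{k}{n}=\frac{n}{k}\,\C{k}{n}\matX{k}.
$$

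With these identities in hand, multiplying $A\M{k}{n}=\matr{0}$ on the right by $\matX{k}$ and substituting $\matX{k}\W{k}{n}=\W{k}{n}\matX{k}$, $\matX{k}\C{k}{n}=\C{k}{n}\matX{k}$ and $\matX{k}\T{k}{n}\matX{k}=\frac{n}{k}\,\C{k}{n}\matX{k}$ yields
$$
A\matX{k}\W{k}{n}=A\W{k}{n}\matX{k}=-A\matX{k}\T{k}{n}\matX{k}=-\frac{n}{k}\,A\C{k}{n}\matX{k}=-\frac{n}{k}\,A\matX{k}\C{k}{n}.
$$
Finally, exactly as in Proposition~\ref{prop17}, the hypothesis $\pi_{\frac{m}{\gcd(p/k,m)}}(A\matX{k})=\matr{0}$ is equivalent to $\frac{p}{k}\,A\matX{k}=\matr{0}$ in $\Zn{m}$; since $\frac{n}{k}=\lambda\,\frac{p}{k}$, we obtain $\frac{n}{k}\,A\matX{k}=\lambda\bigl(\frac{p}{k}\,A\matX{k}\bigr)=\matr{0}$, hence $A\matX{k}\W{k}{n}=\matr{0}$. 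This closes the reduction and proves Theorem~\ref{thm1}.

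The step I expect to be the real work is pinning down the two matrix identities for $k\mid n$ — especially the symmetry of $\C{k}{n}$, which is exactly what makes $\matX{k}$ commute with $\C{k}{n}$ and $\W{k}{n}$, and the transpose-sum identity for $\T{k}{n}$ — after which recognizing that the $p$-periodicity hypothesis is precisely the ingredient needed to kill the leftover term $\frac{n}{k}\,A\matX{k}\C{k}{n}$ is short.
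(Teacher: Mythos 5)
Your proof is correct, but it takes a genuinely different route from the paper's. Both arguments begin by specializing Theorem~\ref{thm*1} to $D=A\matX{k}$, $p_1=p$, $p_2=\lambda p$, which leaves exactly the task you identify: showing that $A\matX{k}\W{k}{\lambda p}=\matr{0}$ is redundant given the other two conditions. The paper does this structurally: since $S[p]$ is antisymmetric (Proposition~\ref{prop3}) and antisymmetry of first periods is preserved under $\lambda p$-fold derivation (Lemma~\ref{lem*1}), the common difference of $\ider{\lambda p}{S}$ is forced to equal $(\ider{\lambda p}{S})[k]\matX{k}$, so equality of common differences follows from equality of the first $k$ entries and the condition $\ider{\lambda p}{S}=S$ collapses to the single equation $A\M{k}{\lambda p}=\matr{0}$. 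You instead prove the redundancy by direct matrix algebra: writing $n=\lambda p$, your identities $\C{k}{n}={}^{\mathrm t}\C{k}{n}$ and $\T{k}{n}+{}^{\mathrm t}\T{k}{n}=\tfrac{n}{k}\C{k}{n}$ for $k\mid n$ both check out (the base cases $\C{k}{k}={}^{\mathrm t}\C{k}{k}$ and $\T{k}{k}+{}^{\mathrm t}\T{k}{k}=\C{k}{k}$ follow from $\binom{k}{j}=\binom{k}{k-j}$, and Corollary~\ref{cor*2} propagates them), they give the commutation of $\matX{k}$ with $\C{k}{n}$ and $\W{k}{n}$ together with $\matX{k}\T{k}{n}\matX{k}=\tfrac{n}{k}\C{k}{n}\matX{k}$, and hence $A\M{k}{n}=\matr{0}$ yields $A\matX{k}\W{k}{n}=-\tfrac{n}{k}A\matX{k}\C{k}{n}$, which the horizontal periodicity hypothesis $\tfrac{p}{k}A\matX{k}=\matr{0}$ annihilates. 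The paper's route is shorter given the antisymmetry machinery already in place and explains conceptually why the redundancy occurs; yours is self-contained at the level of the circulant and Toeplitz matrices, produces two transpose identities not stated elsewhere in the paper, and makes explicit that the obstruction to deducing $A\matX{k}\W{k}{\lambda p}=\matr{0}$ from $A\M{k}{\lambda p}=\matr{0}$ alone is precisely the term $\tfrac{\lambda p}{k}A\matX{k}\C{k}{\lambda p}$ that the hypothesis $\pi_{\frac{m}{\gcd(p/k,m)}}(A\matX{k})=0$ is needed to remove.
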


The proof is based on the following

\begin{lem}\label{lem*1}
Let $S$ be a $p$-periodic sequence of $\Zn{m}$. If the first period $S[p]$ is antisymmetric, then $\left(\ider{\lambda p}{S}\right)[\mu p]$ is antisymmetric, for all non-negative integers $\lambda$ and $\mu$.
\end{lem}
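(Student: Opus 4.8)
The plan is to prove Lemma~\ref{lem*1} by first establishing the claim for $\lambda\in\{0,1\}$ and arbitrary $\mu$, and then bootstrapping to all $\lambda$ by an iteration argument. Recall that $S$ is $p$-periodic (hence by Corollary~\ref{cor1} so is every $\ider{i}{S}$), so each $\ider{i}{S}$ is determined by its first period, and ``$\left(\ider{i}{S}\right)[\mu p]$ is antisymmetric'' is a well-posed statement for every $\mu\ge 1$. The base case $\lambda=0$ is precisely the hypothesis together with the observation that if a $p$-periodic sequence has an antisymmetric first period of length $p$, then its first period of length $\mu p$ is also antisymmetric: indeed, writing $S=P^\infty$ with $P=(u_0,\dots,u_{p-1})$ antisymmetric, for $j\in\{0,\dots,\mu p-1\}$ we have $u_j+u_{\mu p-1-j}$, and reducing both indices modulo $p$ sends $j\mapsto (j\bmod p)$ and $\mu p-1-j\mapsto (p-1-(j\bmod p))$, so the sum vanishes by antisymmetry of $P$.

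Next I would treat $\lambda=1$, i.e.\ show that $\left(\ider{p}{S}\right)[\mu p]$ is antisymmetric for all $\mu\ge 1$. By the base case it suffices to do this for $\mu=1$, namely to show $\left(\ider{p}{S}\right)[p]$ is antisymmetric. Here I would invoke Proposition~\ref{prop*1}, which characterizes antisymmetry of a finite sequence $T$ by: $\der{T}$ is antisymmetric and $2\sigma(T)=\sigma(\der{T})=0$. Applying this iteratively down the orbit, $\left(\ider{p}{S}\right)[p]$ is antisymmetric as soon as $\left(\ider{p-1}{S}\right)[p+1]$ is antisymmetric, which in turn follows from antisymmetry of $\left(\ider{p-2}{S}\right)[p+2]$, and so on, back to antisymmetry of $S[2p]$ — which we already have from the $\lambda=0$ case with $\mu=2$. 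One must also check the two scalar conditions $2\sigma=\sigma(\der{\cdot})=0$ at each stage; but these are automatic from Proposition~\ref{prop*1} itself, since the ``only if'' direction gives $2\sigma\left(\ider{i}{S}[n]\right)=\sigma\left(\ider{i+1}{S}[n-1]\right)=0$ whenever $\ider{i}{S}[n]$ is known antisymmetric, and we are descending through a chain of sequences each of which is antisymmetric by the step below it. Concretely: from $S[2p]$ antisymmetric, Proposition~\ref{prop*1} gives $\der{S[2p]}=\ider{1}{S}[2p-1]$ antisymmetric; repeating $p$ times yields $\ider{p}{S}[p]$ antisymmetric.

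Finally, to pass from $\lambda$ and $\lambda+1$, observe that $\ider{(\lambda+1)p}{S}=\ider{p}{\bigl(\ider{\lambda p}{S}\bigr)}$. By the induction hypothesis, $S':=\ider{\lambda p}{S}$ is a $p$-periodic sequence whose first period of any length $\mu p$ is antisymmetric; in particular $S'[p]$ is antisymmetric, so $S'$ satisfies the hypotheses of the lemma. Applying the $\lambda=1$ case already proved to $S'$ gives that $\left(\ider{p}{S'}\right)[\mu p]=\left(\ider{(\lambda+1)p}{S}\right)[\mu p]$ is antisymmetric for all $\mu$, completing the induction on $\lambda$. The main technical point — and the one requiring care — is the descent argument in the $\lambda=1$ step: one must be sure that antisymmetry of $S[2p]$ really does propagate down $p$ levels of derivation to $\ider{p}{S}[p]$, losing exactly one entry of length per derivation, and that Proposition~\ref{prop*1} supplies the needed vanishing-sum side conditions for free at each level rather than as extra hypotheses to be verified. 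Once that descent is set up cleanly, the rest is bookkeeping.
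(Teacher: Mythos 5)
Your proof is correct and rests on the same two ingredients as the paper's: the observation that $S[\mu p]=(S[p])^\mu$ inherits antisymmetry, and repeated application of the forward direction of Proposition~\ref{prop*1} to the identity $\bigl(\ider{i}{S}\bigr)[n-i]=\ider{i}{\bigl(S[n]\bigr)}$. The paper simply descends $\lambda p$ levels at once from $S[(\lambda+\mu)p]$ instead of organizing the descent as an induction on $\lambda$, so the difference is only bookkeeping.
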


\begin{proof}
Suppose that $S[p]$ is antisymmetric. Then, since $S[\mu p]=\left(S[p]\right)^\mu$, we obtain that $S[\mu p]$ is antisymmetric, for all non-negative integers $\mu$. Moreover, since
$$
\left(\ider{\lambda p}{S}\right)[\mu p] = \ider{\lambda p}{\left(S[(\lambda+\mu)p]\right)}
$$
$S[(\lambda+\mu)p]$ is antisymmetric, we deduce from Proposition~\ref{prop*1} that $\left(\ider{\lambda p}{S}\right)[\mu p]$ is antisymmetric, for all non-negative integers $\lambda$ and $\mu$.
\end{proof}

\begin{proof}[Proof of Theorem~\ref{thm1}]
As in the proof of Theorem~\ref{thm*1}, we obtain that the orbit $\orb{S}$ is $(p,\lambda p)$-periodic if and only if $\pi_{\frac{m}{\gcd(p/k,m)}}(A\matX{k})=0$ and $\ider{\lambda p}{S}=S$. Moreover, using Proposition~\ref{prop3} and Lemma~\ref{lem*1}, we know that the first periods $S[p]$ and $\left(\ider{\lambda p}{S}\right)[p]$ are antisymmetric. Therefore, by Proposition~\ref{prop3} again, the common difference of $\ider{\lambda p}{S}$ is $\left(\ider{\lambda p}{S}\right)[k]\matX{k}$. Finally, using Proposition~\ref{prop2}, we obtain that
$$
\ider{\lambda p}{S}=S \begin{array}[t]{l}
\quad\displaystyle\Longleftrightarrow\quad \left(\ider{\lambda p}{S}\right)[k]=S[k]\ \text{and}\ \left(\ider{\lambda p}{S}\right)[k]\matX{k}=S[k]\matX{k} \\ \ \\
\quad\displaystyle\Longleftrightarrow\quad \left(\ider{\lambda p}{S}\right)[k]=S[k] \\ \ \\
\quad\displaystyle\Longleftrightarrow\quad {(-1)}^{\lambda p}\left(A\C{k}{\lambda p}+A\matX{k}\T{k}{\lambda p}\right) = A \\ \ \\
\quad\displaystyle\Longleftrightarrow\quad A\left(\C{k}{\lambda p}+\matX{k}\T{k}{\lambda p}+{(-1)}^{\lambda p+1}\matI{k}\right) = \matr{0}.
\end{array}
$$
This completes the proof.
\end{proof}

\section{Solutions modulo powers of 2}

In this section, we are looking for IAPs of integers with antisymmetric first period such that the projection of their orbit into $\Zn{2^u}$ is periodic and contains infinitely many balanced triangles, for all non-negative integers $u$. We explain how theses integer $k$-IAPs are obtained and justify why $k=24$ is an interesting number of interlaces to consider.

First, we are interested in $p$-periodic sequences $S$ with $(p,p)$-periodic orbit $\orb{S}$ such that the triangles $\ST{S[\lambda p]}$ are balanced, for all non-negative integers $\lambda$. We already know that a necessary condition for having a balanced triangle of $\Zn{m}$ of size $\lambda p$ is that the binomial $\binom{\lambda p+1}{2}$ must be divisible by $m$.

\begin{prop}\label{prop5}
Let $p$ and $m$ be two positive integers. The binomials $\binom{\lambda p + 1}{2}$ are divisible by $m$, for all non-negative integers $\lambda$, if and only if either $p$ is divisible by $m$, when $m$ is odd, or $p$ is divisible by $2m$, when $m$ is even.
\end{prop}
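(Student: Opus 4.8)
The plan is to analyze the $2$-adic valuation of the triangular number $\binom{\lambda p + 1}{2} = \frac{\lambda p(\lambda p + 1)}{2}$ and determine exactly when $m$ divides it for every non-negative integer $\lambda$. Write $m = 2^a m'$ with $m'$ odd, so that the divisibility $m \mid \binom{\lambda p + 1}{2}$ splits, by coprimality, into the two conditions $m' \mid \frac{\lambda p(\lambda p+1)}{2}$ for all $\lambda$ and $2^a \mid \frac{\lambda p(\lambda p+1)}{2}$ for all $\lambda$. The first step is to handle the odd part $m'$: since $\lambda p$ and $\lambda p + 1$ are consecutive, exactly one of them is even, so the factor $2$ in the denominator is absorbed by whichever factor is even, and $m' \mid \frac{\lambda p(\lambda p+1)}{2}$ is equivalent to $m' \mid \lambda p(\lambda p + 1)$. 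Taking $\lambda = 1$ and using $\gcd(m', p+1)\cdot\gcd(m',p)$ considerations (or simply choosing $\lambda$ so that $\lambda p + 1$ is coprime to $m'$, e.g. by CRT, and then $\lambda$ so that $\lambda p$ is coprime to $m'$ when $\gcd(p,m')<m'$), one shows $m' \mid \lambda p(\lambda p+1)$ for all $\lambda$ if and only if $m' \mid p$.

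The second and more delicate step is the $2$-part. I want to show that $2^a \mid \frac{\lambda p(\lambda p + 1)}{2}$ for all $\lambda$ if and only if $2^{a+1} \mid p$; this is exactly what gives the dichotomy in the statement (combined with the odd part, $p$ divisible by $m$ when $a=0$, i.e. $m$ odd, and $p$ divisible by $2m$ when $a \geq 1$, i.e. $m$ even). For the ``if'' direction: if $2^{a+1} \mid p$ then $v_2(\lambda p) \geq a+1$, hence $v_2\bigl(\frac{\lambda p(\lambda p+1)}{2}\bigr) \geq a$ since $\lambda p + 1$ is odd, so $2^a$ divides the quantity regardless of $\lambda$. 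For the ``only if'' direction: suppose $2^{a+1} \nmid p$, so $v_2(p) = b \leq a$. Taking $\lambda$ odd (so $v_2(\lambda p) = b$ and $\lambda p$ is not $\equiv -1 \bmod{2^{a+1}}$ can be arranged — actually one just needs $v_2(\lambda p) = b \le a$, giving $v_2\bigl(\frac{\lambda p(\lambda p + 1)}{2}\bigr) = v_2(\lambda p) + v_2(\lambda p + 1) - 1 = b + 0 - 1 = b - 1 < a$ when $b \le a$), we get a $\lambda$ for which the $2$-adic valuation is exactly $b - 1 < a$, so $2^a$ fails to divide $\binom{\lambda p + 1}{2}$. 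One small subtlety: when $b = 0$ (i.e. $p$ odd), $\lambda p$ is odd, so instead the even factor is $\lambda p + 1$, and I should pick $\lambda$ with $v_2(\lambda p + 1)$ as small as possible, namely $\lambda$ such that $\lambda p \equiv 1 \bmod 4$, giving $v_2(\lambda p + 1) = 1$ and hence valuation $0 < a$ whenever $a \geq 1$; if $a = 0$ the condition is vacuous on the $2$-side and we only need $m = m' \mid p$.

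The main obstacle I anticipate is organizing the case analysis cleanly — separating $m$ odd from $m$ even, and within the even case tracking which of $\lambda p$, $\lambda p + 1$ carries the $2$-adic weight as $\lambda$ varies, so that one can always exhibit a ``bad'' $\lambda$ when $2^{a+1} \nmid p$. The existence of such a $\lambda$ uses only elementary facts: $v_2(\lambda p) = v_2(\lambda) + v_2(p)$, and by choosing $\lambda$ a suitable power of $2$ times an odd number one controls both $v_2(\lambda p)$ and the residue of $\lambda p$ modulo small powers of $2$. Once the $2$-part and the odd part are each characterized, combining them via the Chinese Remainder Theorem (divisibility by $m = 2^a m'$ is divisibility by $2^a$ and by $m'$ simultaneously) yields: $p$ divisible by $m$ if $m$ is odd, and $p$ divisible by $\operatorname{lcm}(2^{a+1}, m') = 2 \cdot 2^a m' = 2m$ if $m$ is even, which is the claim.
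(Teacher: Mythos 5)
Your proof is correct, but it takes a genuinely different route from the paper's. You decompose $m=2^a m'$ into its $2$-part and odd part and characterize each divisibility condition separately, producing for every offending prime power an explicit ``bad'' witness $\lambda$ (chosen via invertibility of $p$ modulo that prime power, or via the $2$-adic valuation bookkeeping in the even case). The paper instead exploits the polynomial identity $\binom{\lambda p+1}{2}=\lambda\binom{p+1}{2}+\binom{\lambda}{2}p^2$, which collapses the infinite family of conditions at once to the two conditions $m\mid\binom{p+1}{2}$ and $m\mid p^2$ (witnessed by $\lambda=1$ and $\lambda=2$), and then finishes with a short congruence computation split according to the parity of $m$; that same identity is reused in the proof of Proposition~\ref{prop4}, which is why the paper sets things up this way. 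Your valuation argument is a little longer to write out in full --- in particular the odd-part step ``$m'\mid\lambda p(\lambda p+1)$ for all $\lambda$ iff $m'\mid p$'' genuinely requires the per-prime-power choice of $\lambda$ you allude to, since $\lambda=1$ alone does not suffice (e.g.\ $m'=3$, $p=2$) --- but it makes transparent exactly where the extra factor $2$ in the even case comes from: the denominator $2$ of the triangular number always absorbs one power of $2$ from whichever of $\lambda p$, $\lambda p+1$ is even, so one needs $2^{a+1}\mid p$ rather than $2^a\mid p$. Both approaches are valid and complete.
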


\begin{proof}
First, as depicted in Figure~\ref{fig01} for $\lambda=5$, it is easy to see that
$$
\binom{\lambda p+1}{2} = \lambda\binom{p+1}{2} + \binom{\lambda}{2}p^2,
$$
for all non-negative integers $\lambda$. Moreover, it is clear that $\lambda\binom{p+1}{2} + \binom{\lambda}{2}p^2$ is divisible by $m$, for all non-negative integers $\lambda$, if and only if $\binom{p+1}{2}$ and $p^2$ are divisible by $m$. Therefore, the binomials $\binom{\lambda p+1}{2}$ are divisible by $m$, for all non-negative integers $\lambda$, if and only if $\binom{p+1}{2}$ and $p^2$ are divisible by $m$.
\begin{case}
Suppose that $m$ is odd. Then,
$$
\left\{\begin{array}{l}
\frac{p(p+1)}{2}\equiv0\pmod{m},\\[1.5ex]
p^2\equiv0\pmod{m},
\end{array}\right.
\ \Longleftrightarrow\ 
\left\{\begin{array}{l}
p^2+p\equiv0\pmod{m},\\
p^2\equiv0\pmod{m},
\end{array}\right.
\ \Longleftrightarrow\  p\equiv0\pmod{m}.
$$
\end{case}
\begin{case}
Suppose that $m$ is even. Then,
$$
\left\{\begin{array}{l}
\frac{p(p+1)}{2}\equiv0\pmod{m},\\[1.5ex]
p^2\equiv0\pmod{m},
\end{array}\right.
\ \Longleftrightarrow\ 
\left\{\begin{array}{l}
p^2+p\equiv0\pmod{2m},\\
p^2\equiv0\pmod{m}.
\end{array}\right.
$$
If $p^2+p\equiv0\bmod{2m}$, then $p^2+p\equiv0\bmod{m}$. Moreover, since $p^2\equiv0\bmod{m}$, we obtain that $p\equiv0\bmod{m}$. Since $m$ is even, we know that $m^2$ is divisible by $2m$. It follows that
$$
p\equiv 0\pmod{m}\ \Longrightarrow\ p^2\equiv0\pmod{m^2}\ \Longrightarrow p^2\equiv0\pmod{2m}.
$$
Combining with $p^2+p\equiv0\bmod{2m}$, we conclude that $p\equiv0\bmod{2m}$. Conversely, if $p\equiv0\bmod{2m}$, it is clear that $p^2+p\equiv0\bmod{2m}$ and $p^2\equiv0\bmod{m}$. 
\end{case}
This completes the proof.
\end{proof}

\begin{figure}[htbp]
\centerline{
\includegraphics{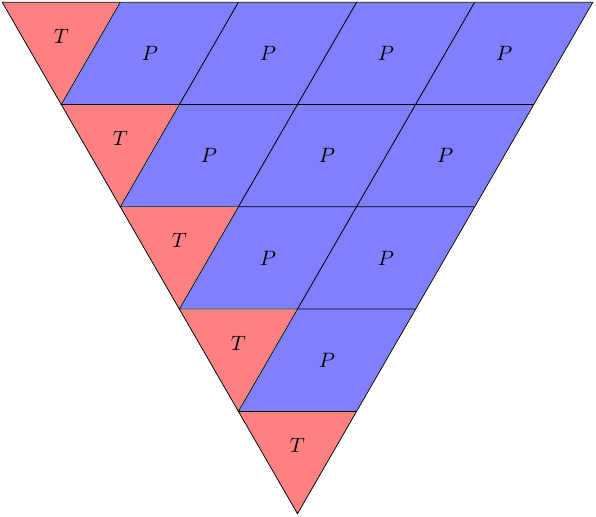}
}
\caption{Structure of periodic triangles $\ST{S[5p]}$}\label{fig01}
\end{figure}

Now, for $p$-periodic sequences $S$ with $(p,p)$-periodic orbit $\orb{S}$, we show that the triangles $\ST{S[\lambda p]}$ are balanced, for all non-negative integers $\lambda$, if and only if it is the case only for two distinct positive values $\lambda_1$ and $\lambda_2$.

\begin{prop}\label{prop4}
Let $S$ be a $p$-periodic sequence of elements in $\Zn{m}$ whose orbit $\orb{S}$ is $(p,p)$-periodic. Moreover, suppose that $m\ge1$ and $p$ is either a multiple of $m$ if $m$ is odd or a multiple of $2m$ if $m$ is even. Then, the triangles $\ST{S[\lambda p]}$ are balanced, for all non-negative integers $\lambda$, if and only if it is the case only for two distinct positive values $\lambda_1$ and $\lambda_2$.
\end{prop}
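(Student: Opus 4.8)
The plan is to reduce the claim about the infinite family $\{\ST{S[\lambda p]}\}_{\lambda\in\N}$ to a statement about a single quadratic polynomial (in $\lambda$) valued in multiplicity functions, and then use the hypothesis at two points to pin that polynomial down. The key observation is the structural identity sketched in Figure~\ref{fig01}: since $\orb{S}$ is $(p,p)$-periodic, the triangle $\ST{S[\lambda p]}$ of size $\lambda p$ decomposes into $\binom{\lambda+1}{2}$ translated copies of the ``fundamental block'' — the $p\times p$ period matrix $\matr{\mathrm P}$ of $\orb{S}$ — together with $\lambda$ copies of the size-$p$ triangle $\ST{S[p]}$ sitting along the diagonal. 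More precisely, counting the blocks, one gets that the multiplicity function splits as
$$
\mf{\ST{S[\lambda p]}} = \lambda\,\mf{\ST{S[p]}} + \binom{\lambda}{2}\,\mf{\matr{\mathrm P}},
$$
for all non-negative integers $\lambda$, where $\mf{\matr{\mathrm P}}$ denotes the multiplicity function of the multiset of the $p^2$ entries of one full fundamental period of the orbit. (This is the multiset analogue of the binomial identity $\binom{\lambda p+1}{2}=\lambda\binom{p+1}{2}+\binom{\lambda}{2}p^2$ used in the proof of Proposition~\ref{prop5}: the ``area'' is the same decomposition, now refined by keeping track of which group element sits in each cell.)

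First I would make the block decomposition precise: number the period so that rows $0,\dots,p-1$ form the first band, rows $p,\dots,2p-1$ the second, etc.; within the triangle $\ST{S[\lambda p]}$ of size $\lambda p$, the cells with $i+j<\lambda p$ and $i$ in a fixed band and $j$ in a fixed column-block either form a complete $p\times p$ period block of $\orb{S}$ (when the block lies strictly inside the triangle) or a size-$p$ triangle $\ST{\,\text{(some row of }\orb S)\,}$ along the hypotenuse; by $(p,p)$-periodicity every interior $p\times p$ block is a translate of $\matr{\mathrm P}$ and hence has multiplicity function $\mf{\matr{\mathrm P}}$, while every diagonal triangle, being the orbit-triangle generated by a $p$-periodic row of $\orb S$ which is a vertical shift of $S$ itself, has the same multiset as $\ST{S[p]}$. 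Counting: there are exactly $\lambda$ diagonal triangles and exactly $\binom{\lambda}{2}$ interior blocks, which yields the displayed identity.

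Granting the identity, the proposition is immediate. One direction is trivial. For the other, suppose $\ST{S[\lambda_1 p]}$ and $\ST{S[\lambda_2 p]}$ are balanced for distinct positive integers $\lambda_1\neq\lambda_2$. Write $c_0=\mf{\ST{S[p]}}$ and $c_1=\mf{\matr{\mathrm P}}$, viewed as functions $\Zn{m}\to\N$, and for each $x\in\Zn{m}$ set $f_x(\lambda)=\lambda\,c_0(x)+\binom{\lambda}{2}c_1(x)$, a quadratic in $\lambda$. Balancedness at $\lambda_i$ says $f_x(\lambda_i)-f_y(\lambda_i)=0$ for all $x,y$ and both $i$. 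But $\lambda\mapsto f_x(\lambda)-f_y(\lambda)=\lambda\,(c_0(x)-c_0(y))+\binom{\lambda}{2}(c_1(x)-c_1(y))$ is a polynomial of degree at most $2$ with no constant term, i.e. it already vanishes at $\lambda=0$; vanishing additionally at the two distinct nonzero values $\lambda_1,\lambda_2$ forces this polynomial to be identically zero (three roots, degree $\le 2$), hence $c_0(x)=c_0(y)$ and $c_1(x)=c_1(y)$ for all $x,y$. Therefore $c_0$ and $c_1$ are each constant, so $f_x(\lambda)$ is independent of $x$ for every $\lambda$, which is exactly the statement that $\ST{S[\lambda p]}$ is balanced for all $\lambda$.

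The only real work is the geometric block-decomposition lemma and the verification that interior blocks are genuine translates of the period matrix while hypotenuse pieces reproduce $\ST{S[p]}$ as a multiset; this is where the hypotheses ``$p$-periodic with $(p,p)$-periodic orbit'' and the divisibility condition on $p$ (ensuring $\lambda p$-sized triangles are the natural ones to look at, via Proposition~\ref{prop5}) are used. Once the identity $\mf{\ST{S[\lambda p]}}=\lambda\,\mf{\ST{S[p]}}+\binom{\lambda}{2}\mf{\matr{\mathrm P}}$ is in hand, the rest is the elementary polynomial-interpolation argument above.
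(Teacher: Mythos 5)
Your proposal is correct and follows essentially the same route as the paper: the same decomposition of $\ST{S[\lambda p]}$ into $\lambda$ copies of $\ST{S[p]}$ and $\binom{\lambda}{2}$ copies of the period block, yielding $\mf{\ST{S[\lambda p]}}=\lambda\,\mf{T}+\binom{\lambda}{2}\mf{P}$. The only difference is cosmetic: you conclude by observing that $f_x-f_y$ is a degree-$\le 2$ polynomial with three roots, whereas the paper solves the resulting $2\times2$ linear system explicitly to show $\mf{P}(x)=p^2/m$ and $\mf{T}(x)=\frac{1}{m}\binom{p+1}{2}$; both arguments are sound.
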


\begin{proof}
First, as depicted in Figure~\ref{fig01} for $\lambda=5$, by $(p,p)$-periodicity of the orbit $\orb{S}=\left(a_{i,j}\right)_{(i,j)\in\N\times\Z}$, we know that each triangle $\ST{S[\lambda p]}$ can be decomposed into $\lambda$ copies of $T=\ST{S[p]}$ and $\binom{\lambda}{2}$ periods $P=\left\{a_{i,j}\ \middle|\ 0\le i\le p-1 , p-i\le j\le 2p-i-1 \right\}$.

Suppose that there exist two distinct positive integers $\lambda_1$ and $\lambda_2$ such that $\ST{S[\lambda_1p]}$ and $\ST{S[\lambda_2p]}$ are balanced. We prove that $T$ and $P$ are then balanced. Since the multiplicity functions $\mf{\ST{S[\lambda_1p]}}$ and $\mf{\ST{S[\lambda_2p]}}$ are constant, we have
$$
\left\{\begin{array}{l}
\displaystyle\lambda_1 \mf{T} + \binom{\lambda_1}{2} \mf{P} = \frac{1}{m}\binom{\lambda_1p+1}{2}, \\ \ \\
\displaystyle\lambda_2 \mf{T} + \binom{\lambda_2}{2} \mf{P} = \frac{1}{m}\binom{\lambda_2p+1}{2}.
\end{array}\right.
$$
This implies that
\begin{equation*}
\resizebox{\textwidth}{!}{$
\left(\lambda_2\binom{\lambda_1}{2}-\lambda_1\binom{\lambda_2}{2}\right)\mf{P}
\begin{array}[t]{l}
= \displaystyle\frac{1}{m} \left(\lambda_2\binom{\lambda_1p+1}{2}-\lambda_1\binom{\lambda_2p+1}{2}\right) \\ \ \\
= \displaystyle\frac{1}{m} \left(\lambda_2\lambda_1\binom{p+1}{2}+\lambda_2\binom{\lambda_1}{2}p^2-\lambda_1\lambda_2\binom{p+1}{2}-\lambda_1\binom{\lambda_2}{2}p^2\right) \\ \ \\
= \displaystyle\frac{1}{m} \left(\lambda_2\binom{\lambda_1}{2}-\lambda_1\binom{\lambda_2}{2}\right)p^2.
\end{array}
$}
\end{equation*}
Since $\lambda_1\neq\lambda_2$ and are positive, we know that 
$$
\lambda_2\binom{\lambda_1}{2}-\lambda_1\binom{\lambda_2}{2} = \frac{\lambda_1\lambda_2(\lambda_1-\lambda_2)}{2} \neq0
$$
and thus
$$
\mf{P}(x) = \frac{p^2}{m},
$$
for all $x\in\Zn{m}$. Therefore $P$ is balanced. It follows that
$$
\lambda_1\mf{T} = \frac{1}{m}\binom{\lambda_1p+1}{2}-\binom{\lambda_1}{2}\mf{P} = \frac{1}{m}\left(\binom{\lambda_1p+1}{2}-\binom{\lambda_1}{2}p^2\right) = \frac{\lambda_1}{m}\binom{p+1}{2}.
$$
Since $\lambda_1\neq0$, we obtain that
$$
\mf{T}(x) = \frac{1}{m}\binom{p+1}{2},
$$
for all $x\in\Zn{m}$. Therefore $T$ is balanced.

Finally, since $T$ and $P$ are balanced, it is clear that $\ST{S[\lambda p]}$ is balanced, for all non-negative integers $\lambda$. This completes the proof.
\end{proof}

In the sequel, we suppose that $m=2^{u}$, for a certain positive integer $u$, and that $k$ is even (the period $mk$ must be divisible by $2m$, see Proposition~\ref{prop5}). We consider the matrix
$$
\M{k}{\lambda k} = \W{k}{\lambda k}+\matX{k}\T{k}{\lambda k} = \C{k}{\lambda k}-\matI{k}+\matX{k}\T{k}{\lambda k},
$$
for all positive integers $\lambda$. We want to determine the set $\B{k}{2^u}$ of $A\in\left(\Zn{2^{u}}\right)^k$ such that, for the $k$-IAP with antisymmetric first period
$$
S=\IAP{A}{A\matX{k}}
$$
and for all $v\in\{0,1,\ldots,u\}$, the orbit of $\pi_{2^v}(S)$ is $(2^vk,2^vk)$-periodic and the triangles $\ST{\pi_{2^v}(S)[\lambda 2^v k]}$ are balanced, for all non-negative integers $\lambda$. Let $v\in\{0,1,\ldots,u\}$. By Theorem~\ref{thm1}, we know that the orbit of $\pi_{2^v}(S)$ is $(2^vk,2^vk)$-periodic if and only if
$$
\pi_{2^v}(A)\in\Lker_{2^v}\M{k}{2^vk}.
$$
Moreover, by Proposition~\ref{prop4}, the triangles $\ST{\pi_{2^v}(S)[\lambda 2^v k]}$ are balanced, for all non-negative integers $\lambda$, if and only if $\ST{\pi_{2^v}(S)[2^v k]}$ and $\ST{\pi_{2^v}(S)[2^{v+1} k]}$ are balanced. Therefore, the set $\B{k}{2^u}$ is
$$
\B{k}{2^{u}} = \left\{ A\in\left(\Zn{2^{u}}\right)^k\ \middle|\ \begin{array}{l}
\forall v\in\{0,1,\ldots,u\}:\\
\quad\quad\pi_{2^v}(A)\in\Lker_{2^v}\M{k}{2^vk},\\
\quad\quad\ST{\pi_{2^v}(S)[2^v k]}\ \text{and}\ \ST{\pi_{2^v}(S)[2^{v+1}k]}\ \text{balanced},\\
\text{where}\ S=\IAP{A}{A\matX{k}}.
\end{array}
\right\},
$$
for all positive integers $u$.

For any positive integer $u$, it is clear that
$$
\pi_{2^u}\!\left(\B{k}{2^{u+1}}\right)\subset\B{k}{2^u}
$$
and
$$
\B{k}{2^{u+1}} = \left\{ A\in\left(\Zn{2^{u+1}}\right)^k\ \middle|\ \begin{array}{l}
\pi_{2^u}(A)\in\B{k}{2^u},\\
A\in\Lker_{2^{u+1}}\M{k}{2^{u+1}k},\\
\ST{S[2^{u+1} k]}\ \text{and}\ \ST{S[2^{u+2}k]}\ \text{balanced},\\
\text{where}\ S=\IAP{A}{A\matX{k}}.
\end{array}
\right\},
$$
Now, we explain how to determine $\B{k}{2^{u+1}}$ by lifting elements of $\B{k}{2^{u}}$ into $\Zn{2^{u+1}}$. Let $A\in\B{k}{2^{u}}$. First, we know that
$$
A\in\Lker_{2^u}\M{k}{2^{u}k}
$$
and the orbit of $S=\IAP{A}{A\matX{k}}$ is $(2^{u}k,2^{u}k)$-periodic. It is clear that $\orb{S}$ is also $(2^{u+1}k,2^{u+1}k)$-periodic and thus
$$
A\in\Lker_{2^{u}}\M{k}{2^{u+1}k},
$$
by Theorem~\ref{thm1}. Let $A'\in\left(\Zn{2^{u+1}}\right)^k$ such that
$$
\pi_{2^u}(A')=A.
$$
Since $A\in\Lker_{2^{u}}\M{k}{2^{u+1}k}$, we obtain that
$$
A'\,\M{k}{2^{u+1}k} = 2^{u}X_2,
$$
where $X_2\in{\{0,1\}}^k$. If there exists $Y_2\in{\{0,1\}}^k$ such that
\begin{equation}\label{eq*1}
Y_2\,\M{k}{2^{u+1}k} \equiv X_2\pmod{2},
\end{equation}
then we have obtained $\tilde{A}=A'-2^{u}Y_2\in\left(\Zn{2^{u+1}}\right)^k$ such that 
$$
\pi_{2^{u}}(\tilde{A})=A\quad \text{and}\quad \tilde{A}\in\Lker_{2^{u+1}}\M{k}{2^{u+1}k}.
$$
If there are two $k$-tuples $\tilde{A_1}$ and $\tilde{A_2}$ of $\Zn{2^{u+1}}$ such that $\pi_{2^{u}}(\tilde{A_1})=\pi_{2^{u}}(\tilde{A_2})=A$  and
$$
\tilde{A_1},\tilde{A_2}\in\Lker_{2^{u+1}}\M{k}{2^{u+1}k},
$$
then
$$
\tilde{A_1} = \tilde{A_2} + 2^{u}Z_2,
$$
where $Z_2\in\Lker_{2}\M{k}{2^{u+1}k}$. We conclude that
$$
\begin{array}{l}
\displaystyle\left\{ \tilde{A}\in\left(\Zn{2^{u+1}}\right)^k\ \middle|\ \pi_{2^{u}}(\tilde{A})=A\ \text{and}\ \tilde{A}\in\Lker_{2^{u+1}}\M{k}{2^{u+1}k}\right\} \\ \ \\
= \left\{\begin{array}{l}
\emptyset \text{, if \eqref{eq*1} has no solution,}\\ \ \\
\left\{A'-2^{u}Y_2\right\} + 2^{u}\Lker_{2}\M{k}{2^{u+1}k}\text{, otherwise.}
\end{array}
\right.
\end{array}
$$
The determination of $\B{k}{2^{u+1}}$ is now clear. First, as explained above, we determine the set 
$$
\mathcal{P} = \left\{ \tilde{A}\in\left(\Zn{2^{u+1}}\right)^k\ \middle|\ \pi_{2^{u}}(\tilde{A})\in\B{k}{2^{u}}\ \text{and}\ \tilde{A}\in\Lker_{2^{u+1}}\M{k}{2^{u+1}k}\right\}.
$$
After that, for each sequence $S=\IAP{\tilde{A}}{\tilde{A}\matX{k}}$ with $\tilde{A}\in\mathcal{P}$, we test if $\ST{S[2^{u+1}k]}$ and $\ST{S[2^{u+2}k]}$ are balanced in $\Zn{2^{u+1}}$. If this is the case, we obtain that $\tilde{A}\in\B{k}{2^{u+1}}$.

\begin{rem}
If $\B{k}{2^{u}}=\emptyset$, for a certain integer $u$, then it is clear that $\B{k}{2^{u+i}}=\emptyset$, for all non-negative integers $i$.
\end{rem}

In the sequel in this section, for the first few even values of $k$, we give the cardinality of $\B{k}{2^{u}}$, for all positive integers $u$, that has been obtained by computer search.

\begin{rem}
For any positive integers $m\ge2$ and $n$, the triangle generated from the $n$-tuple of zeroes is never balanced in $\Zn{m}$ since it is only constituted by zeroes.
\end{rem}

\paragraph{For $\mathbf{k=2}$:}
Since
$$
\pi_2(\M{2}{4}) = \matI{2},
$$
then $\Lker_{2}\M{2}{4}=\left\{ 00 \right\}$ and
$$
\B{2}{2^{u}}=\emptyset,
$$
for all positive integers $u$.

\paragraph{For $\mathbf{k=4}$:}
Since
$$
\pi_2(\M{4}{8}) = \matI{4},
$$
then $\Lker_{2}\M{4}{8}=\left\{ 0000 \right\}$ and
$$
\B{4}{2^{u}}=\emptyset,
$$
for all positive integers $u$.

\paragraph{For $\mathbf{k=6}$:}
Since
$$
\pi_2(\M{6}{12}) = \left(\begin{array}{cccccc}
\un & 0 & 0 & \un & \un & 0 \\
0 & \un & \un & 0 & 0 & \un \\
0 & \un & \un & 0 & 0 & \un \\
\un & 0 & 0 & \un & \un & 0 \\
\un & 0 & 0 & \un & \un & 0 \\
0 & \un & \un & 0 & 0 & \un \\
\end{array}\right),
$$
of rank $2$, then
$$
\Lker_{2}\M{6}{12}=\left\langle 100010 , 010001 , 001001 , 000110 \right\rangle
$$
of dimension $4$. Since $\B{6}{2}=\emptyset$, we have that
$$
\B{6}{2^{u}}=\emptyset,
$$
for all positive integers $u$.

\paragraph{For $\mathbf{k=8}$:}
Since
$$
\pi_2(\M{8}{16}) = \matI{8},
$$
then $\Lker_{2}\M{8}{16}=\left\{ 00000000 \right\}$ and
$$
\B{8}{2^{u}}=\emptyset,
$$
for all positive integers $u$.

\paragraph{For $\mathbf{k=10}$:}
Since
\begin{equation*}
\resizebox{!}{0.1\textheight}{$
\pi_2(\M{10}{20}) = \left(\begin{array}{cccccccccc}
 \un & 0 & 0 & \un & \un & 0 & 0 & 0 & 0 & 0 \\
 0 & \un & \un & 0 & 0 & \un & 0 & 0 & 0 & 0 \\
 0 & \un & \un & 0 & 0 & 0 & \un & 0 & 0 & 0 \\
 \un & 0 & 0 & \un & 0 & 0 & 0 & \un & 0 & 0 \\
 \un & 0 & 0 & 0 & \un & 0 & 0 & 0 & \un & 0 \\
 0 & \un & 0 & 0 & 0 & \un & 0 & 0 & 0 & \un \\
 0 & 0 & \un & 0 & 0 & 0 & \un & 0 & 0 & \un \\
 0 & 0 & 0 & \un & 0 & 0 & 0 & \un & \un & 0 \\
 0 & 0 & 0 & 0 & \un & 0 & 0 & \un & \un & 0 \\
 0 & 0 & 0 & 0 & 0 & \un & \un & 0 & 0 & \un \\
\end{array}\right),
$}
\end{equation*}
of rank $10$, then $\Lker_{2}\M{10}{20}=\left\{ 0000000000 \right\}$ and
$$
\B{10}{2^{u}}=\emptyset,
$$
for all positive integers $u$.

\paragraph{For $\mathbf{k=12}$:}
Since
\begin{equation*}
\resizebox{!}{0.1\textheight}{$
\pi_2(\M{12}{24}) = \left(\begin{array}{cccccccccccc}
 \un & 0 & 0 & 0 & 0 & 0 & 0 & \un & \un & 0 & 0 & 0 \\
 0 & \un & 0 & 0 & 0 & 0 & \un & 0 & 0 & \un & 0 & 0 \\
 0 & 0 & \un & 0 & 0 & \un & 0 & 0 & 0 & 0 & \un & 0 \\
 0 & 0 & 0 & \un & \un & 0 & 0 & 0 & 0 & 0 & 0 & \un \\
 0 & 0 & 0 & \un & \un & 0 & 0 & 0 & 0 & 0 & 0 & \un \\
 0 & 0 & \un & 0 & 0 & \un & 0 & 0 & 0 & 0 & \un & 0 \\
 0 & \un & 0 & 0 & 0 & 0 & \un & 0 & 0 & \un & 0 & 0 \\
 \un & 0 & 0 & 0 & 0 & 0 & 0 & \un & \un & 0 & 0 & 0 \\
 \un & 0 & 0 & 0 & 0 & 0 & 0 & \un & \un & 0 & 0 & 0 \\
 0 & \un & 0 & 0 & 0 & 0 & \un & 0 & 0 & \un & 0 & 0 \\
 0 & 0 & \un & 0 & 0 & \un & 0 & 0 & 0 & 0 & \un & 0 \\
 0 & 0 & 0 & \un & \un & 0 & 0 & 0 & 0 & 0 & 0 & \un \\
\end{array}\right),
$}
\end{equation*}
of rank $4$, then
$$
\Lker_{2}\M{12}{24} = \left\langle
A_1 , A_2 , \ldots\ldots , A_8
\right\rangle,
$$
of dimension $8$, where the generators $A_1,A_2,\ldots,A_8$ are given in Table~\ref{tab*1}.
Table~\ref{tab1} gives $|\B{12}{2^u}|$, up to automorphisms, for first few values of $u$.
Since $\B{12}{2^6}=\emptyset$, we obtain that
$$
\B{12}{2^{u}}=\emptyset,
$$
for all positive integers $u\ge6$.

\begin{table}[htbp]
\begin{center}
\begin{tabular}{|c|c|}
\hline
$A_1$ & $100000001000$ \\
\hline
$A_2$ & $010000000100$ \\
\hline
$A_3$ & $001000000010$ \\
\hline
$A_4$ & $000100000001$ \\
\hline
\end{tabular}
\begin{tabular}{|c|c|}
\hline
$A_5$ & $000010000001$ \\
\hline
$A_6$ & $000001000010$ \\
\hline
$A_7$ & $000000100100$ \\
\hline
$A_8$ & $000000011000$ \\
\hline
\end{tabular}
\caption{Generators of $\Lker_{2}\M{12}{24}$}\label{tab*1}
\end{center}
\end{table}

\begin{table}[htbp]
\begin{center}
\begin{tabular}{|c||c|c|c|c|c|c|c|c|c|c|c|c|}
\hline
 $2^u$ & 1 & 2 & 4 & 8 & 16 & 32 & 64 \\
\hline
$|\B{12}{2^u}|$ & 1 & 8 & 86 & 455 & 80 & 2 & 0 \\
\hline
\end{tabular}
\caption{The first few values of $|\B{12}{2^u}|$}\label{tab1}
\end{center}
\end{table}

\paragraph{For $\mathbf{k=14}$:}
Since
\begin{equation*}
\resizebox{!}{0.1\textheight}{$
\pi_2(\M{14}{28}) = \left(\begin{array}{cccccccccccccc}
\un & 0 & 0 & \un & \un & 0 & 0 & \un & \un & 0 & 0 & \un & \un & 0 \\
0 & \un & \un & 0 & 0 & \un & \un & 0 & 0 & \un & \un & 0 & 0 & \un \\
0 & \un & \un & 0 & 0 & \un & \un & 0 & 0 & \un & \un & 0 & 0 & \un \\
\un & 0 & 0 & \un & \un & 0 & 0 & \un & \un & 0 & 0 & \un & \un & 0 \\
\un & 0 & 0 & \un & \un & 0 & 0 & \un & \un & 0 & 0 & \un & \un & 0 \\
0 & \un & \un & 0 & 0 & \un & \un & 0 & 0 & \un & \un & 0 & 0 & \un \\
0 & \un & \un & 0 & 0 & \un & \un & 0 & 0 & \un & \un & 0 & 0 & \un \\
\un & 0 & 0 & \un & \un & 0 & 0 & \un & \un & 0 & 0 & \un & \un & 0 \\
\un & 0 & 0 & \un & \un & 0 & 0 & \un & \un & 0 & 0 & \un & \un & 0 \\
0 & \un & \un & 0 & 0 & \un & \un & 0 & 0 & \un & \un & 0 & 0 & \un \\
0 & \un & \un & 0 & 0 & \un & \un & 0 & 0 & \un & \un & 0 & 0 & \un \\
\un & 0 & 0 & \un & \un & 0 & 0 & \un & \un & 0 & 0 & \un & \un & 0 \\
\un & 0 & 0 & \un & \un & 0 & 0 & \un & \un & 0 & 0 & \un & \un & 0 \\
0 & \un & \un & 0 & 0 & \un & \un & 0 & 0 & \un & \un & 0 & 0 & \un \\
\end{array}\right),
$}
\end{equation*}
of rank $2$, then
$$
\Lker_{2}\M{14}{28} = \left\langle
A_1 , A_2 , \ldots\ldots , A_{12}
\right\rangle,
$$
of dimension $12$, where the tuples $A_1,A_2,\ldots,A_{12}$ are given in Table~\ref{tab*2}. Since $\B{14}{2}=\emptyset$, we obtain that
$$
\B{14}{2^{u}}=\emptyset,
$$
for all positive integers $u$.

\begin{table}[htbp]
\begin{center}
\begin{tabular}{|c|c|}
\hline
$A_1$ & $10000000000010$ \\
\hline
$A_2$ & $01000000000001$ \\
\hline
$A_3$ & $00100000000001$ \\
\hline
$A_4$ & $00010000000010$ \\
\hline
\end{tabular}
\begin{tabular}{|c|c|}
\hline
$A_5$ & $00001000000010$ \\
\hline
$A_6$ & $00000100000001$ \\
\hline
$A_7$ & $00000010000001$ \\
\hline
$A_8$ & $00000001000010$ \\
\hline
\end{tabular}
\begin{tabular}{|c|c|}
\hline
$A_9$ & $00000000100010$ \\
\hline
$A_{10}$ & $00000000010001$ \\
\hline
$A_{11}$ & $00000000001001$ \\
\hline
$A_{12}$ & $00000000000110$ \\
\hline
\end{tabular}
\caption{Generators of $\Lker_{2}\M{14}{28}$}\label{tab*2}
\end{center}
\end{table}

\paragraph{For $\mathbf{k=16}$:}
Since
$$
\pi_2(\M{16}{32}) = \matI{16},
$$
then $\Lker_{2}\M{16}{32} = \left\{ 0000000000000000 \right\}$ and
$$
\B{16}{2^{u}}=\emptyset,
$$
for all positive integers $u$.

\paragraph{For $\mathbf{k=18}$:}
Since
\begin{equation*}
\resizebox{!}{0.15\textheight}{$
\pi_2(\M{18}{36}) = \left(\begin{array}{cccccccccccccccccc}
\un & 0 & 0 & \un & \un & 0 & 0 & 0 & 0 & 0 & 0 & 0 & 0 & 0 & 0 & 0 & 0 & 0 \\
0 & \un & \un & 0 & 0 & \un & 0 & 0 & 0 & 0 & 0 & 0 & 0 & 0 & 0 & 0 & 0 & 0 \\
0 & \un & \un & 0 & 0 & 0 & \un & 0 & 0 & 0 & 0 & 0 & 0 & 0 & 0 & 0 & 0 & 0 \\
\un & 0 & 0 & \un & 0 & 0 & 0 & \un & 0 & 0 & 0 & 0 & 0 & 0 & 0 & 0 & 0 & 0 \\
\un & 0 & 0 & 0 & \un & 0 & 0 & 0 & \un & 0 & 0 & 0 & 0 & 0 & 0 & 0 & 0 & 0 \\
0 & \un & 0 & 0 & 0 & \un & 0 & 0 & 0 & \un & 0 & 0 & 0 & 0 & 0 & 0 & 0 & 0 \\
0 & 0 & \un & 0 & 0 & 0 & \un & 0 & 0 & 0 & \un & 0 & 0 & 0 & 0 & 0 & 0 & 0 \\
0 & 0 & 0 & \un & 0 & 0 & 0 & \un & 0 & 0 & 0 & \un & 0 & 0 & 0 & 0 & 0 & 0 \\
0 & 0 & 0 & 0 & \un & 0 & 0 & 0 & \un & 0 & 0 & 0 & \un & 0 & 0 & 0 & 0 & 0 \\
0 & 0 & 0 & 0 & 0 & \un & 0 & 0 & 0 & \un & 0 & 0 & 0 & \un & 0 & 0 & 0 & 0 \\
0 & 0 & 0 & 0 & 0 & 0 & \un & 0 & 0 & 0 & \un & 0 & 0 & 0 & \un & 0 & 0 & 0 \\
0 & 0 & 0 & 0 & 0 & 0 & 0 & \un & 0 & 0 & 0 & \un & 0 & 0 & 0 & \un & 0 & 0 \\
0 & 0 & 0 & 0 & 0 & 0 & 0 & 0 & \un & 0 & 0 & 0 & \un & 0 & 0 & 0 & \un & 0 \\
0 & 0 & 0 & 0 & 0 & 0 & 0 & 0 & 0 & \un & 0 & 0 & 0 & \un & 0 & 0 & 0 & \un \\
0 & 0 & 0 & 0 & 0 & 0 & 0 & 0 & 0 & 0 & \un & 0 & 0 & 0 & \un & 0 & 0 & \un \\
0 & 0 & 0 & 0 & 0 & 0 & 0 & 0 & 0 & 0 & 0 & \un & 0 & 0 & 0 & \un & \un & 0 \\
0 & 0 & 0 & 0 & 0 & 0 & 0 & 0 & 0 & 0 & 0 & 0 & \un & 0 & 0 & \un & \un & 0 \\
0 & 0 & 0 & 0 & 0 & 0 & 0 & 0 & 0 & 0 & 0 & 0 & 0 & \un & \un & 0 & 0 & \un \\
\end{array}\right),
$}
\end{equation*}
of rank $14$, then
$$
\Lker_{2}\M{18}{36} = \left\langle 
A_1 , A_2 , A_3 , A_4
\right\rangle
$$
of dimension $4$, where the generators $A_1,A_2,A_3,A_4$ are given in Table~\ref{tab*3}. Since $\B{18}{2}=\emptyset$, we obtain that
$$
\B{18}{2^{u}}=\emptyset,
$$
for all positive integers $u$.

\begin{table}[htbp]
\begin{center}
\begin{tabular}{|c|c|}
\hline
$A_1$ & $100010010001100010$ \\
\hline
$A_2$ & $010001100010010001$ \\
\hline
\end{tabular}
\begin{tabular}{|c|c|}
\hline
$A_3$ & $001001100100001001$ \\
\hline
$A_4$ & $000110011000000110$ \\
\hline
\end{tabular}
\caption{Generators of $\Lker_{2}\M{18}{36}$}\label{tab*3}
\end{center}
\end{table}

\paragraph{For $\mathbf{k=20}$:}
Since
\begin{equation*}
\resizebox{!}{0.15\textheight}{$
\pi_2(\M{20}{40}) = \left(\begin{array}{cccccccccccccccccccc}
\un & 0 & 0 & 0 & 0 & 0 & 0 & \un & \un & 0 & 0 & 0 & 0 & 0 & 0 & 0 & 0 & 0 & 0 & 0 \\
0 & \un & 0 & 0 & 0 & 0 & \un & 0 & 0 & \un & 0 & 0 & 0 & 0 & 0 & 0 & 0 & 0 & 0 & 0 \\
0 & 0 & \un & 0 & 0 & \un & 0 & 0 & 0 & 0 & \un & 0 & 0 & 0 & 0 & 0 & 0 & 0 & 0 & 0 \\
0 & 0 & 0 & \un & \un & 0 & 0 & 0 & 0 & 0 & 0 & \un & 0 & 0 & 0 & 0 & 0 & 0 & 0 & 0 \\
0 & 0 & 0 & \un & \un & 0 & 0 & 0 & 0 & 0 & 0 & 0 & \un & 0 & 0 & 0 & 0 & 0 & 0 & 0 \\
0 & 0 & \un & 0 & 0 & \un & 0 & 0 & 0 & 0 & 0 & 0 & 0 & \un & 0 & 0 & 0 & 0 & 0 & 0 \\
0 & \un & 0 & 0 & 0 & 0 & \un & 0 & 0 & 0 & 0 & 0 & 0 & 0 & \un & 0 & 0 & 0 & 0 & 0 \\
\un & 0 & 0 & 0 & 0 & 0 & 0 & \un & 0 & 0 & 0 & 0 & 0 & 0 & 0 & \un & 0 & 0 & 0 & 0 \\
\un & 0 & 0 & 0 & 0 & 0 & 0 & 0 & \un & 0 & 0 & 0 & 0 & 0 & 0 & 0 & \un & 0 & 0 & 0 \\
0 & \un & 0 & 0 & 0 & 0 & 0 & 0 & 0 & \un & 0 & 0 & 0 & 0 & 0 & 0 & 0 & \un & 0 & 0 \\
0 & 0 & \un & 0 & 0 & 0 & 0 & 0 & 0 & 0 & \un & 0 & 0 & 0 & 0 & 0 & 0 & 0 & \un & 0 \\
0 & 0 & 0 & \un & 0 & 0 & 0 & 0 & 0 & 0 & 0 & \un & 0 & 0 & 0 & 0 & 0 & 0 & 0 & \un \\
0 & 0 & 0 & 0 & \un & 0 & 0 & 0 & 0 & 0 & 0 & 0 & \un & 0 & 0 & 0 & 0 & 0 & 0 & \un \\
0 & 0 & 0 & 0 & 0 & \un & 0 & 0 & 0 & 0 & 0 & 0 & 0 & \un & 0 & 0 & 0 & 0 & \un & 0 \\
0 & 0 & 0 & 0 & 0 & 0 & \un & 0 & 0 & 0 & 0 & 0 & 0 & 0 & \un & 0 & 0 & \un & 0 & 0 \\
0 & 0 & 0 & 0 & 0 & 0 & 0 & \un & 0 & 0 & 0 & 0 & 0 & 0 & 0 & \un & \un & 0 & 0 & 0 \\
0 & 0 & 0 & 0 & 0 & 0 & 0 & 0 & \un & 0 & 0 & 0 & 0 & 0 & 0 & \un & \un & 0 & 0 & 0 \\
0 & 0 & 0 & 0 & 0 & 0 & 0 & 0 & 0 & \un & 0 & 0 & 0 & 0 & \un & 0 & 0 & \un & 0 & 0 \\
0 & 0 & 0 & 0 & 0 & 0 & 0 & 0 & 0 & 0 & \un & 0 & 0 & \un & 0 & 0 & 0 & 0 & \un & 0 \\
0 & 0 & 0 & 0 & 0 & 0 & 0 & 0 & 0 & 0 & 0 & \un & \un & 0 & 0 & 0 & 0 & 0 & 0 & \un \\
\end{array}\right),
$}
\end{equation*}
of rank $20$, then $\Lker_{2}\M{20}{40} = \left\{ 00000000000000000000 \right\}$ and
$$
\B{20}{2^{u}}=\emptyset,
$$
for all positive integers $u$.

\paragraph{For $\mathbf{k=22}$:}
Since
\begin{equation*}
\resizebox{!}{0.15\textheight}{$
\pi_2(\M{22}{44}) = \left(\begin{array}{cccccccccccccccccccccc}
\un & 0 & 0 & \un & \un & 0 & 0 & \un & \un & 0 & 0 & \un & \un & 0 & 0 & 0 & 0 & 0 & 0 & 0 & 0 & 0 \\
0 & \un & \un & 0 & 0 & \un & \un & 0 & 0 & \un & \un & 0 & 0 & \un & 0 & 0 & 0 & 0 & 0 & 0 & 0 & 0 \\
0 & \un & \un & 0 & 0 & \un & \un & 0 & 0 & \un & \un & 0 & 0 & 0 & \un & 0 & 0 & 0 & 0 & 0 & 0 & 0 \\
\un & 0 & 0 & \un & \un & 0 & 0 & \un & \un & 0 & 0 & \un & 0 & 0 & 0 & \un & 0 & 0 & 0 & 0 & 0 & 0 \\
\un & 0 & 0 & \un & \un & 0 & 0 & \un & \un & 0 & 0 & 0 & \un & 0 & 0 & 0 & \un & 0 & 0 & 0 & 0 & 0 \\
0 & \un & \un & 0 & 0 & \un & \un & 0 & 0 & \un & 0 & 0 & 0 & \un & 0 & 0 & 0 & \un & 0 & 0 & 0 & 0 \\
0 & \un & \un & 0 & 0 & \un & \un & 0 & 0 & 0 & \un & 0 & 0 & 0 & \un & 0 & 0 & 0 & \un & 0 & 0 & 0 \\
\un & 0 & 0 & \un & \un & 0 & 0 & \un & 0 & 0 & 0 & \un & 0 & 0 & 0 & \un & 0 & 0 & 0 & \un & 0 & 0 \\
\un & 0 & 0 & \un & \un & 0 & 0 & 0 & \un & 0 & 0 & 0 & \un & 0 & 0 & 0 & \un & 0 & 0 & 0 & \un & 0 \\
0 & \un & \un & 0 & 0 & \un & 0 & 0 & 0 & \un & 0 & 0 & 0 & \un & 0 & 0 & 0 & \un & 0 & 0 & 0 & \un \\
0 & \un & \un & 0 & 0 & 0 & \un & 0 & 0 & 0 & \un & 0 & 0 & 0 & \un & 0 & 0 & 0 & \un & 0 & 0 & \un \\
\un & 0 & 0 & \un & 0 & 0 & 0 & \un & 0 & 0 & 0 & \un & 0 & 0 & 0 & \un & 0 & 0 & 0 & \un & \un & 0 \\
\un & 0 & 0 & 0 & \un & 0 & 0 & 0 & \un & 0 & 0 & 0 & \un & 0 & 0 & 0 & \un & 0 & 0 & \un & \un & 0 \\
0 & \un & 0 & 0 & 0 & \un & 0 & 0 & 0 & \un & 0 & 0 & 0 & \un & 0 & 0 & 0 & \un & \un & 0 & 0 & \un \\
0 & 0 & \un & 0 & 0 & 0 & \un & 0 & 0 & 0 & \un & 0 & 0 & 0 & \un & 0 & 0 & \un & \un & 0 & 0 & \un \\
0 & 0 & 0 & \un & 0 & 0 & 0 & \un & 0 & 0 & 0 & \un & 0 & 0 & 0 & \un & \un & 0 & 0 & \un & \un & 0 \\
0 & 0 & 0 & 0 & \un & 0 & 0 & 0 & \un & 0 & 0 & 0 & \un & 0 & 0 & \un & \un & 0 & 0 & \un & \un & 0 \\
0 & 0 & 0 & 0 & 0 & \un & 0 & 0 & 0 & \un & 0 & 0 & 0 & \un & \un & 0 & 0 & \un & \un & 0 & 0 & \un \\
0 & 0 & 0 & 0 & 0 & 0 & \un & 0 & 0 & 0 & \un & 0 & 0 & \un & \un & 0 & 0 & \un & \un & 0 & 0 & \un \\
0 & 0 & 0 & 0 & 0 & 0 & 0 & \un & 0 & 0 & 0 & \un & \un & 0 & 0 & \un & \un & 0 & 0 & \un & \un & 0 \\
0 & 0 & 0 & 0 & 0 & 0 & 0 & 0 & \un & 0 & 0 & \un & \un & 0 & 0 & \un & \un & 0 & 0 & \un & \un & 0 \\
0 & 0 & 0 & 0 & 0 & 0 & 0 & 0 & 0 & \un & \un & 0 & 0 & \un & \un & 0 & 0 & \un & \un & 0 & 0 & \un \\
\end{array}\right),
$}
\end{equation*}
of rank $22$, then $\Lker_{2}\M{22}{44} = \left\{ 0000000000000000000000 \right\}$ and
$$
\B{22}{2^{u}}=\emptyset,
$$
for all positive integers $u$.

\paragraph{For $\mathbf{k=24}$:}
Since
\begin{equation*}
\resizebox{!}{0.15\textheight}{$
\pi_2(\M{24}{48}) = \left(\begin{array}{cccccccccccccccccccccccc}
\un & 0 & 0 & 0 & 0 & 0 & 0 & 0 & 0 & 0 & 0 & 0 & 0 & 0 & 0 & \un & \un & 0 & 0 & 0 & 0 & 0 & 0 & 0 \\
0 & \un & 0 & 0 & 0 & 0 & 0 & 0 & 0 & 0 & 0 & 0 & 0 & 0 & \un & 0 & 0 & \un & 0 & 0 & 0 & 0 & 0 & 0 \\
0 & 0 & \un & 0 & 0 & 0 & 0 & 0 & 0 & 0 & 0 & 0 & 0 & \un & 0 & 0 & 0 & 0 & \un & 0 & 0 & 0 & 0 & 0 \\
0 & 0 & 0 & \un & 0 & 0 & 0 & 0 & 0 & 0 & 0 & 0 & \un & 0 & 0 & 0 & 0 & 0 & 0 & \un & 0 & 0 & 0 & 0 \\
0 & 0 & 0 & 0 & \un & 0 & 0 & 0 & 0 & 0 & 0 & \un & 0 & 0 & 0 & 0 & 0 & 0 & 0 & 0 & \un & 0 & 0 & 0 \\
0 & 0 & 0 & 0 & 0 & \un & 0 & 0 & 0 & 0 & \un & 0 & 0 & 0 & 0 & 0 & 0 & 0 & 0 & 0 & 0 & \un & 0 & 0 \\
0 & 0 & 0 & 0 & 0 & 0 & \un & 0 & 0 & \un & 0 & 0 & 0 & 0 & 0 & 0 & 0 & 0 & 0 & 0 & 0 & 0 & \un & 0 \\
0 & 0 & 0 & 0 & 0 & 0 & 0 & \un & \un & 0 & 0 & 0 & 0 & 0 & 0 & 0 & 0 & 0 & 0 & 0 & 0 & 0 & 0 & \un \\
0 & 0 & 0 & 0 & 0 & 0 & 0 & \un & \un & 0 & 0 & 0 & 0 & 0 & 0 & 0 & 0 & 0 & 0 & 0 & 0 & 0 & 0 & \un \\
0 & 0 & 0 & 0 & 0 & 0 & \un & 0 & 0 & \un & 0 & 0 & 0 & 0 & 0 & 0 & 0 & 0 & 0 & 0 & 0 & 0 & \un & 0 \\
0 & 0 & 0 & 0 & 0 & \un & 0 & 0 & 0 & 0 & \un & 0 & 0 & 0 & 0 & 0 & 0 & 0 & 0 & 0 & 0 & \un & 0 & 0 \\
0 & 0 & 0 & 0 & \un & 0 & 0 & 0 & 0 & 0 & 0 & \un & 0 & 0 & 0 & 0 & 0 & 0 & 0 & 0 & \un & 0 & 0 & 0 \\
0 & 0 & 0 & \un & 0 & 0 & 0 & 0 & 0 & 0 & 0 & 0 & \un & 0 & 0 & 0 & 0 & 0 & 0 & \un & 0 & 0 & 0 & 0 \\
0 & 0 & \un & 0 & 0 & 0 & 0 & 0 & 0 & 0 & 0 & 0 & 0 & \un & 0 & 0 & 0 & 0 & \un & 0 & 0 & 0 & 0 & 0 \\
0 & \un & 0 & 0 & 0 & 0 & 0 & 0 & 0 & 0 & 0 & 0 & 0 & 0 & \un & 0 & 0 & \un & 0 & 0 & 0 & 0 & 0 & 0 \\
\un & 0 & 0 & 0 & 0 & 0 & 0 & 0 & 0 & 0 & 0 & 0 & 0 & 0 & 0 & \un & \un & 0 & 0 & 0 & 0 & 0 & 0 & 0 \\
\un & 0 & 0 & 0 & 0 & 0 & 0 & 0 & 0 & 0 & 0 & 0 & 0 & 0 & 0 & \un & \un & 0 & 0 & 0 & 0 & 0 & 0 & 0 \\
0 & \un & 0 & 0 & 0 & 0 & 0 & 0 & 0 & 0 & 0 & 0 & 0 & 0 & \un & 0 & 0 & \un & 0 & 0 & 0 & 0 & 0 & 0 \\
0 & 0 & \un & 0 & 0 & 0 & 0 & 0 & 0 & 0 & 0 & 0 & 0 & \un & 0 & 0 & 0 & 0 & \un & 0 & 0 & 0 & 0 & 0 \\
0 & 0 & 0 & \un & 0 & 0 & 0 & 0 & 0 & 0 & 0 & 0 & \un & 0 & 0 & 0 & 0 & 0 & 0 & \un & 0 & 0 & 0 & 0 \\
0 & 0 & 0 & 0 & \un & 0 & 0 & 0 & 0 & 0 & 0 & \un & 0 & 0 & 0 & 0 & 0 & 0 & 0 & 0 & \un & 0 & 0 & 0 \\
0 & 0 & 0 & 0 & 0 & \un & 0 & 0 & 0 & 0 & \un & 0 & 0 & 0 & 0 & 0 & 0 & 0 & 0 & 0 & 0 & \un & 0 & 0 \\
0 & 0 & 0 & 0 & 0 & 0 & \un & 0 & 0 & \un & 0 & 0 & 0 & 0 & 0 & 0 & 0 & 0 & 0 & 0 & 0 & 0 & \un & 0 \\
0 & 0 & 0 & 0 & 0 & 0 & 0 & \un & \un & 0 & 0 & 0 & 0 & 0 & 0 & 0 & 0 & 0 & 0 & 0 & 0 & 0 & 0 & \un \\
\end{array}\right),
$}
\end{equation*}
of rank $8$, then
$$
\Lker_{2}\M{24}{48} = \left\langle
A_1 , A_2 , \ldots\ldots , A_{16}
\right\rangle,
$$
of dimension $16$, where the tuples $A_1,A_2,\ldots,A_{16}$ are given in Table~\ref{tab*4}. Table~\ref{tab2} gives $|\B{24}{2^u}|$, up to automorphisms, for the first few values of $u$.

\begin{table}[htbp]
\begin{center}
\begin{tabular}{|c|c|}
\hline
$A_1$ & $100000000000000010000000$ \\
\hline
$A_2$ & $010000000000000001000000$ \\
\hline
$A_3$ & $001000000000000000100000$ \\
\hline
$A_4$ & $000100000000000000010000$ \\
\hline
$A_5$ & $000010000000000000001000$ \\
\hline
$A_6$ & $000001000000000000000100$ \\
\hline
$A_7$ & $000000100000000000000010$ \\
\hline
$A_8$ & $000000010000000000000001$ \\
\hline
\end{tabular}
\begin{tabular}{|c|c|}
\hline
$A_9$ & $000000001000000000000001$ \\
\hline
$A_{10}$ & $000000000100000000000010$ \\
\hline
$A_{11}$ & $000000000010000000000100$ \\
\hline
$A_{12}$ & $000000000001000000001000$ \\
\hline
$A_{13}$ & $000000000000100000010000$ \\
\hline
$A_{14}$ & $000000000000010000100000$ \\
\hline
$A_{15}$ & $000000000000001001000000$ \\
\hline
$A_{16}$ & $000000000000000110000000$ \\
\hline
\end{tabular}
\caption{Generators of $\Lker_{2}\M{24}{48}$}\label{tab*4}
\end{center}
\end{table}

\begin{table}[htbp]
\begin{center}
\renewcommand{\arraystretch}{1.5}
\begin{tabular}{|c||c|c|c|c|c|c|c|c|c|c|c|c|}
\hline
 $2^u$ & 1 & 2 & 4 & 8 & 16 & 32 \\
\hline
$|\B{24}{2^u}|$ & 1 & 658 & 178102 & 14237227 & ? & ? \\
\hline
$|\BT{24}{2^u}|$ & 0 & 0 & 0 & 896 & $\ge 896\times 2^{15}$
 & $\ge 896\times 2^{30}$ \\
\hline
\end{tabular}
\renewcommand{\arraystretch}{1}
\caption{The first few values of $|\B{24}{2^u}|$ and $|\BT{24}{2^u}|$}\label{tab2}
\end{center}
\end{table}

The computation of $\B{24}{16}$ is too long. We consider particular subsets of $\B{24}{2^{u}}$. As seen above, for any $A\in\B{24}{2^u}$, we know that
$$
\left|\left\{\tilde{A}\in\B{24}{2^{u+1}}\ \middle|\ \pi_{2^u}(\tilde{A})=A \right\}\right| \le \left| \Lker_{2}\M{24}{24.2^{u+1}} \right|.
$$
For the first few values of $u$, it is easy to verify that
$$
\pi_2(\M{24}{24.2^{u}}) = \pi_2(\M{24}{48})
$$
and then
$$
\left| \Lker_{2}\M{24}{24.2^{u+1}} \right|
=
\left|\Lker_{2}\M{24}{48}\right| = 2^{16}.
$$
This result is true for all positive integers $u$ and will be proved in Lemma~\ref{thm3}. For every positive integer $u$, we consider the subset
$$
\BT{24}{2^u} =
\left\{ A\in\B{24}{2^u} \ \middle|\ \ 
\left|\left\{\tilde{A}\in\B{24}{2^{u+1}}\ \middle|\ \pi_{2^u}(\tilde{A})=A \right\}\right| = 2^{16}
\right\}.
$$
Table~\ref{tab2} gives $|\BT{24}{2^u}|$, up to automorphisms, for the first few values of $u$. First, we obtain that
$$
|\BT{24}{8}| = 3584\ (896\ \text{up to automorphisms}).
$$
For each of the $3584\times 2^{16}$ tuples $A\in\B{24}{16}$ such that $\pi_8(A)\in\BT{24}{8}$, we obtain that
$$
\left|\left\{\tilde{A}\in\B{24}{32}\ \middle|\ \pi_{16}(\tilde{A})=A \right\}\right| = 2^{16}.
$$
Therefore
$$
|\BT{24}{16}| \ge 3584\times2^{16}\ (896\times2^{15}\ \text{up to automorphisms}).
$$
Similarly, for each of the $3584\times 2^{32}$ tuples $A\in\B{24}{32}$ such that $\pi_8(A)\in\BT{24}{8}$, we obtain that
$$
\left|\left\{\tilde{A}\in\B{24}{64}\ \middle|\ \pi_{32}(\tilde{A})=A \right\}\right| = 2^{16}.
$$
Therefore
$$
|\BT{24}{32}| \ge 3584\times2^{32}\ (896\times2^{30}\ \text{up to automorphisms}).
$$
And so on. By analyzing these $3584$ elements of $\BT{24}{8}$, we obtain the following result, which is a detailed version of Theorem~\ref{thm*2} when $m$ is a power of two.

\begin{thm}\label{mainthm}
Let $\setE$ be the infinite set of $24$-tuples of integers defined by
$$
\setE = \setX+\setM,
$$
where $\setX$ is the set of cardinality $14$
$$
\setX = \left\{\pm X_i\ \middle|\ i\in\{1,\ldots,7\}\right\}
$$
and $\setM$ is the free $\Z$-module of rank $16$
$$
\setM = \left\langle 4Y_1,\ldots,4Y_8,8Z_9,\ldots,8Z_{16} \right\rangle,
$$
for which the $24$-tuples $X_i$, $Y_i$ and $Z_i$ are given in Table~\ref{tab4}. For any $A\in\setE$ and any non-negative integer $u$, the orbit of the sequence
$$
S=\IAP{\pi_{2^u}(A)}{\pi_{2^u}(A)\matX{24}}
$$
is $(12.2^u,12.2^u)$-periodic and the triangles $\ST{S[12.2^u\lambda]}$ are balanced in $\Zn{2^{u}}$, for all non-negative integers $\lambda$.
\end{thm}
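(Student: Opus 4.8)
This statement splits, for a fixed $A\in\setE$, into two assertions about $S=\IAP{\pi_{2^u}(A)}{\pi_{2^u}(A)\matX{24}}$: that $\orb{S}$ is $(12\cdot 2^u,12\cdot 2^u)$-periodic, and that every $\ST{S[12\cdot 2^u\lambda]}$ is balanced modulo $2^u$. The plan is to prove both by induction on $u$. The case $u=0$ is vacuous, $\Zn{1}$ being trivial, and the cases $1\leqslant u\leqslant 3$ are the finite verifications already recorded; in particular $\pi_8(\setX)\subseteq\BT{24}{8}$, while $\setM\subseteq 4\Z^{24}$ forces $\pi_8(\setM)=\langle 4\pi_8(Y_1),\dots,4\pi_8(Y_8)\rangle$ to have order $2^8$, so that $\pi_8(\setE)=\BT{24}{8}$, of size $3584$ (cf.\ Table~\ref{tab2}).

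\emph{Periodicity.} Since each $X_i$ is antisymmetric modulo $2$ and $\setM\subseteq 2\Z^{24}$, one has $\pi_2(\pi_{2^u}(A)\matX{24})=0$, so Proposition~\ref{prop17} already gives that $S$ is $12\cdot 2^u$-periodic. Theorem~\ref{thm1}, applied with $k=24$, $p=12\cdot 2^u$ and $\lambda=1$, then reduces the $(12\cdot 2^u,12\cdot 2^u)$-periodicity of $\orb{S}$ to the single membership $\pi_{2^u}(A)\in\Lker_{2^u}\M{24}{12\cdot 2^u}$, where $\M{24}{12\cdot 2^u}=\C{24}{12\cdot 2^u}-\matI{24}+\matX{24}\T{24}{12\cdot 2^u}$. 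To establish this for all $A\in\setE$ and all $u$, I would first compute, modulo $2^u$, the entries of $\C{24}{12\cdot 2^u}$ and $\T{24}{12\cdot 2^u}$: each such entry is a section $\sum_{\alpha\in\Z}\binom{12\cdot 2^u}{24\alpha+j}$ or $\sum_{\alpha\in\Z}\alpha\binom{12\cdot 2^u}{24\alpha+j}$ of a binomial row, and the $2$-adic behaviour of these ``arithmetic sums of binomials modulo powers of two'' is governed by Kummer's theorem together with the identities $(1+x)^{2^u}\equiv 1+x^{2^u}\pmod 2$. Using in addition that $\pi_2(\M{24}{12\cdot 2^u})$ does not depend on $u$ (the forthcoming Lemma~\ref{thm3}), so that $\Lker_2\M{24}{12\cdot 2^u}$ is a fixed space of dimension $16$, and that $\setM$ is a free $\Z$-module of the same rank $16$, the task comes down to finitely many congruence checks: that each $\pm X_i$ and each $4Y_i$, $8Z_i$ lies in the relevant left kernel modulo $2^u$, and that left-kernel membership is preserved under addition.

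\emph{Balancedness.} Once $\orb{S}$ is known to be $(12\cdot 2^u,12\cdot 2^u)$-periodic, Proposition~\ref{prop4} (with $m=2^u$ and $p=12\cdot 2^u$, a multiple of $2m$) reduces the balancedness of all the triangles $\ST{S[12\cdot 2^u\lambda]}$ to that of the two triangles $\ST{S[12\cdot 2^u]}$ and $\ST{S[24\cdot 2^u]}$. For each of these I would invoke the Projection Theorem~\ref{thm6}: a triangle over $\Zn{2^u}$ is balanced if and only if its projection modulo $2^{u-1}$ is balanced and its multiplicity function $\mf{\nabla}$ satisfies $\mf{\nabla}(x+2^{u-1})=\mf{\nabla}(x)$ for all $x$. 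The first condition is exactly the induction hypothesis, since $\pi_{2^{u-1}}(\ST{S[12\cdot 2^u]})=\ST{T[12\cdot 2^u]}$ with $T$ the level-$(u-1)$ sequence and $12\cdot 2^u=12\cdot 2^{u-1}\cdot 2$ (and likewise $24\cdot 2^u=12\cdot 2^{u-1}\cdot 4$). The second, ``halving'', condition is the crux of the whole theorem. To prove it I would exploit the \emph{interlaced doubly arithmetic structure} of the orbit of a $24$-IAP restricted to one horizontal period: besides having interlaced arithmetic rows, such an orbit decomposes along the two other triangular directions into a bounded number of honest arithmetic subtriangles, on each of which the multiplicity of a residue is an explicit count of how often an arithmetic progression of prescribed length and common difference meets a prescribed coset of $2^{u-1}\Zn{2^u}$. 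Elementary facts about arithmetic progressions modulo $2^u$ then give, coset by coset, that $x$ and $x+2^{u-1}$ are hit equally often whenever the common differences have the $2$-adic valuations forced by $A\in\setE$. Summing over all the arithmetic subtriangles and checking that the $x$-dependence cancels, leaving a constant that is invariant under $x\mapsto x+2^{u-1}$, is the step I expect to be the main obstacle; it is also why $k=24$ is needed, since the tables show $\B{k}{2^u}=\emptyset$ for every smaller even $k$, so no such cancellation is available there.

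Putting the two parts together completes the inductive step: the binomial-section analysis yields the left-kernel membership and hence the $(12\cdot 2^u,12\cdot 2^u)$-periodicity, the Projection-Theorem argument lifts the balancedness of $\ST{S[12\cdot 2^u]}$ and $\ST{S[24\cdot 2^u]}$ from modulus $2^{u-1}$ to $2^u$, and Proposition~\ref{prop4} then propagates balancedness to every $\lambda$, which is the assertion of the theorem.
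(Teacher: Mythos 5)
Your overall architecture --- induction on $u$, reduction of balancedness modulo $2^u$ to balancedness modulo $2^{u-1}$ plus the halving property $\mf{\nabla}(x+2^{u-1})=\mf{\nabla}(x)$ via the Projection Theorem, and verification of the halving property by cutting the triangle into arithmetic subtriangles whose common differences have controlled $2$-adic valuation --- is exactly the paper's. But the step you flag as ``the main obstacle'' hides the real gap, and the gap sits one level earlier than where you place it. You assert that the orbit of a $24$-IAP, restricted to one horizontal period, ``decomposes along the two other triangular directions into a bounded number of honest arithmetic subtriangles''. That is false for a general periodic IAP: Proposition~\ref{prop2} only makes the \emph{rows} of the orbit interlaced arithmetic; the columns are merely periodic. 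From $(12\cdot2^u,12\cdot2^u)$-periodicity alone, the only available decomposition has period $12\cdot2^u$ and \emph{zero} common differences, i.e.\ constant subtriangles, from which no halving property can be extracted. What is actually needed is that $\orb{S}$ is $(3\cdot2^u,3\cdot2^u)$-\emph{interlaced doubly arithmetic} with the specific common differences $2^{u-2}\Circ{(1,2,1)^{2^u}}$ and $2^{u-2}\Circ{(3,3,2)^{2^u}}$; this is a strictly stronger condition than periodicity, characterized by $\pi_{2^u}(A)\in\Lker_{2^u}\AIA{24}{3\cdot2^u}$ (Corollary~\ref{cor*1}), and proving that every $A\in\setE$ satisfies it for all $u\ge3$ --- together with computing those common differences --- is the technical heart of the paper (Sections~6, 8 and~9, culminating in $\setE\subset\setG{3}$ and Proposition~\ref{prop27}). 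Your proposal never identifies this as something requiring proof.

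A second, smaller defect: for the periodicity you claim the left-kernel membership ``comes down to finitely many congruence checks''. As stated it is one congruence modulo $2^u$ for \emph{each} $u$, hence infinitely many; Kummer's theorem and $(1+x)^{2^u}\equiv1+x^{2^u}\pmod{2}$ control only the mod-$2$ picture, not the entries of $\C{24}{3\cdot2^u}$ and $\T{24}{3\cdot2^u}$ modulo $2^u$. To make the verification finite one needs the stabilization results $3\,\C{24}{3\cdot2^u}\equiv\Cd{0}+2^{u-2}\Cd{1}$ and $9\,\T{24}{3\cdot2^u}\equiv\Td{0}+2^{u-3}\Td{1}\pmod{2^u}$ (Theorems~\ref{prop19} and~\ref{prop18}), proved by induction on $u$ from the doubling identities of Proposition~\ref{prop*2}, and then the identification of $\bigcap_{u}\Lker_{2^u}\M{24}{24\cdot2^u}$ with the left kernel over $\Z$ of the single integer matrix $\Nd{0}$. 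Once these ingredients are supplied, the remainder of your outline (base cases by inspection, Theorem~\ref{thm5} applied to each arithmetic subtriangle --- note it yields the halving property for each subtriangle \emph{individually}, so no cancellation across subtriangles is needed --- and the Projection Theorem) matches the paper's proof.
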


\begin{table}[htbp]
\begin{center}
\includegraphics[width=\textwidth]{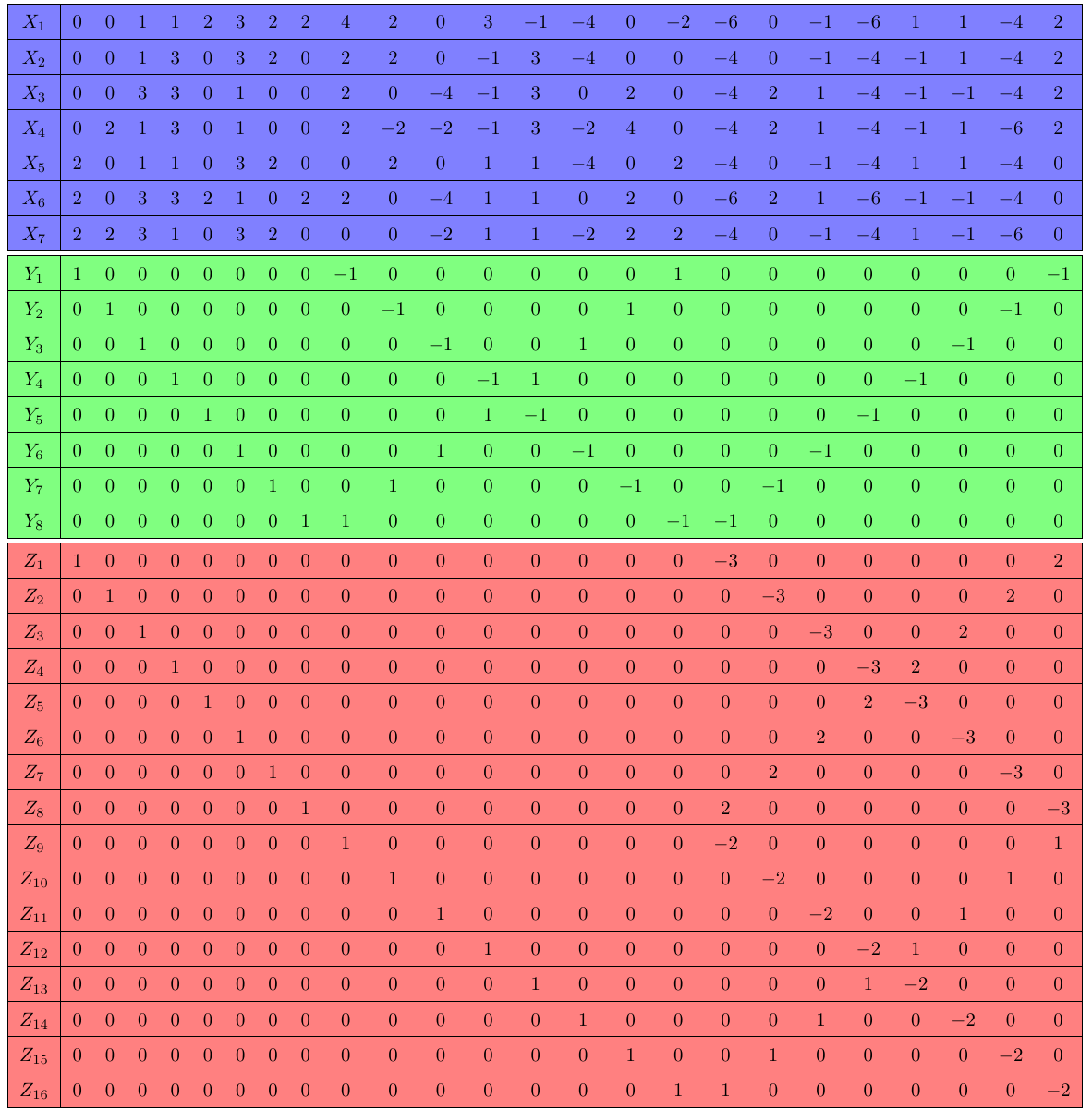}
\caption{The sets $\left\{X_1,\ldots,X_7\right\}$, $\left\{Y_1,\ldots,Y_8\right\}$ and $\left\{Z_1,\ldots,Z_{16}\right\}$}\label{tab4}
\end{center}
\end{table}

The proof of Theorem~\ref{mainthm} is the heart of this paper. We end this section with two results on the set $\setE$ that will be useful in the sequel of this paper. First, it is easy to see that $\setE$ is a subset of the free $\Z$-module $\left\langle Z_1,\ldots,Z_{16}\right\rangle$ of rank $16$.

\begin{prop}\label{prop*3}
For the $24$-tuples of integers $Z_1,\ldots,Z_{16}$ given in Table~\ref{tab4}, we have
$$
\setE\subset\left\langle Z_1,\ldots,Z_{16}\right\rangle.
$$
\end{prop}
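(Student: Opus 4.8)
The plan is to reduce the statement to a short list of explicit membership verifications. Write $L=\left\langle Z_1,\ldots,Z_{16}\right\rangle$ for the ambient rank-$16$ lattice. Since $L$ is a $\Z$-module, it is closed under addition, negation and multiplication by integers; hence it suffices to check that every element produced by the defining recipe $\setE=\setX+\setM$ of Theorem~\ref{mainthm} already lies in $L$. An arbitrary element of $\setE$ has the form $\varepsilon X_i+\mu$ with $\varepsilon\in\{+1,-1\}$, $i\in\{1,\ldots,7\}$ and $\mu\in\setM$, so by closure of $L$ it is enough to prove \textbf{(i)} $X_i\in L$ for all $i\in\{1,\ldots,7\}$, and \textbf{(ii)} $\setM\subseteq L$.

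For \textbf{(ii)}, again because $\setM=\left\langle 4Y_1,\ldots,4Y_8,8Z_9,\ldots,8Z_{16}\right\rangle$ is generated as a $\Z$-module, it suffices to check its generators. The tuples $8Z_9,\ldots,8Z_{16}$ trivially belong to $L$, since each $Z_j$ is itself one of the defining generators of $L$; so the only remaining task is to show $4Y_j\in L$ for $j\in\{1,\ldots,8\}$. Thus the whole proposition comes down to exhibiting, for each of the $7+8=15$ tuples $X_1,\ldots,X_7$ and $4Y_1,\ldots,4Y_8$, an integer linear combination of $Z_1,\ldots,Z_{16}$ equal to it.

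This last point is a routine linear-algebra check against the data of Table~\ref{tab4}: one solves, over $\Z$, the systems $X_i=\sum_{\ell=1}^{16}c_{i,\ell}Z_\ell$ and $4Y_j=\sum_{\ell=1}^{16}c'_{j,\ell}Z_\ell$ for the coefficient vectors $(c_{i,\ell})_\ell$ and $(c'_{j,\ell})_\ell$ in $\Z^{16}$ (these are unique, since the $Z_\ell$ are $\Z$-linearly independent), and one observes that the solutions have integer entries. I expect the arrangement of the columns of Table~\ref{tab4} to make this back-substitution immediate rather than genuinely computational, much as the near-identity shape of the generators of $\Lker_2\M{24}{48}$ in Table~\ref{tab*4} trivialises the corresponding solves there.

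The only step that really needs care — and the main (very mild) obstacle — is confirming that the coefficient vectors are \emph{integral} and not merely rational. This is precisely why one cannot argue softly and must use the specific entries of Table~\ref{tab4}, and it is also the reason the scalars $4$ (on the $Y_j$) and $8$ (on the $Z_j$ with $j\geqslant 9$) appear in the definition of $\setM$: they are exactly what is needed to clear denominators so that each generator of $\setM$ lands inside $L$. Once integrality is verified for these $15$ explicit combinations, closure of $L$ under the module operations completes the proof.
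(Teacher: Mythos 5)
Your proposal is correct and takes essentially the same route as the paper: reduce membership to the generators of $\setE=\setX+\setM$ and exhibit each of $X_1,\ldots,X_7$ and the $Y_j$ as explicit integer combinations of $Z_1,\ldots,Z_{16}$, which is exactly what the paper's proof does (e.g.\ $Y_j = Z_j-Z_{8+j}+Z_{17-j}$ for $j\in\{1,\ldots,8\}$, plus seven explicit expressions for the $X_i$). One small correction to a side remark: the paper shows that $Y_j$ itself already lies in $\left\langle Z_1,\ldots,Z_{16}\right\rangle$, so the scalars $4$ and $8$ in the definition of $\setM$ are not there to clear denominators for lattice membership — they come from the mod-$2^u$ periodicity analysis elsewhere — but this does not affect the validity of your argument.
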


\begin{proof}
It is straightforward to verify that
$$
Y_j = Z_j-Z_{8+j}+Z_{17-j},
$$
for all $j\in\{1,\ldots,8\}$, and
$$
\begin{array}{l}
X_1 = Z_3+Z_4+2Z_5+3Z_6+2Z_7+2Z_8+4Z_9+2Z_{10}+3Z_{12}-Z_{13}-4Z_{14}-2Z_{16}, \\
X_2 = Z_3+3Z_4+3Z_6+2Z_7+2Z_9+2Z_{10}-Z_{12}+3Z_{13}-4Z_{14}, \\
X_3 = 3Z_3+3Z_4+Z_6+2Z_9-4Z_{11}-Z_{12}+3Z_{13}+2Z_{15}, \\
X_4 = 2Z_2+Z_3+3Z_4+Z_6+2Z_9-2Z_{10}-2Z_{11}-Z_{12}+3Z_{13}-2Z_{14}+4Z_{15}, \\
X_5 = 2Z_1+Z_3+Z_4+3Z_6+2Z_7+2Z_{10}+Z_{12}+Z_{13}-4Z_{14}+2Z_{16}, \\
X_6 = 2Z_1+3Z_3+3Z_4+2Z_5+Z_6+2Z_8+2Z_9-4Z_{11}+Z_{12}+Z_{13}+2Z_{15}, \\
X_7 = 2Z_1+2Z_2+3Z_3+Z_4+3Z_6+2Z_7-2Z_{11}+Z_{12}+Z_{13}-2Z_{14}+2Z_{15}+2Z_{16}. \\
\end{array}
$$
This completes the proof.
\end{proof}

\begin{prop}\label{prop31}
For all odd numbers $\mu$, we have
$$
\mu\setE\subset\setE.
$$
\end{prop}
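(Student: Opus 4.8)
The plan is to reduce Proposition~\ref{prop31} to a finite check modulo~$8$. Since $\setM$ is a $\Z$-module, $\mu\setM\subseteq\setM$ for every integer~$\mu$, and since $\mu(-X_i)=-\mu X_i$ while $-\setX=\setX$, it suffices to prove that $\mu X_i\in\setX+\setM$ for each $i\in\{1,\dots,7\}$ and each odd~$\mu$: granting this, $\mu\setE=\mu\setX+\mu\setM\subseteq(\setX+\setM)+\setM=\setE$.

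Next I would show that the finite abelian group $G:=\left\langle Z_1,\dots,Z_{16}\right\rangle/\setM$ is annihilated by~$8$. This is clear for the generators $8Z_9,\dots,8Z_{16}$, and for $j\in\{1,\dots,8\}$ the relation $Y_j=Z_j-Z_{8+j}+Z_{17-j}$ established in the proof of Proposition~\ref{prop*3} gives $8Z_j=2\bigl(4Y_j\bigr)+8Z_{8+j}-8Z_{17-j}$; since $8+j$ and $17-j$ both lie in $\{9,\dots,16\}$, the right-hand side belongs to $\setM$. By Proposition~\ref{prop*3} each $X_i$ lies in $\left\langle Z_1,\dots,Z_{16}\right\rangle$, hence $8X_i\in\setM$. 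Consequently $\mu X_i$ depends, modulo~$\setM$, only on $\mu\bmod 8$, so one may assume $\mu\in\{1,3,5,7\}$; and because $5X_i\equiv-3X_i$ and $7X_i\equiv-X_i\pmod{\setM}$ while $-\setX=\setX$, the cases $\mu=5,7$ follow from the cases $\mu=3,1$. Since $X_i\in\setX$, the case $\mu=1$ is immediate.

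What remains is to verify that $3X_i\in\setX+\setM$ for $i=1,\dots,7$, and this I would carry out by direct substitution of the explicit $24$-tuples listed in Table~\ref{tab4}: for each~$i$ one exhibits a sign $\varepsilon_i\in\{\pm1\}$, an index $j_i\in\{1,\dots,7\}$, and an explicit $\Z$-combination $M_i$ of $4Y_1,\dots,4Y_8,8Z_9,\dots,8Z_{16}$ such that $3X_i=\varepsilon_iX_{j_i}+M_i$. This last computation — purely mechanical, but unavoidable — is the only real content of the proof; everything else is the bookkeeping above. I do not foresee a conceptual obstacle here, only the need to manipulate the concrete coordinates of the tuples $X_i$, $Y_j$ and $Z_\ell$.
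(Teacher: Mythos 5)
Your reduction is sound as far as it goes: the observations that $\mu\setM\subseteq\setM$, that it suffices to treat $\mu X_i$, that $8Z_j\in\setM$ for all $j$ (hence $8X_i\in\setM$ via Proposition~\ref{prop*3}), and that the residues $5,7\bmod 8$ reduce to $3,1$ are all correct. But the step you defer --- exhibiting, for each $i\in\{1,\dots,7\}$, a decomposition $3X_i=\varepsilon_iX_{j_i}+M_i$ with $M_i\in\setM$ --- is the entire substance of the proposition, and as written the proposal does not supply it. A reviewer cannot accept ``I do not foresee a conceptual obstacle'' in place of the one computation on which everything rests, especially since the truth of that computation is exactly what distinguishes $\setE$ from an arbitrary set of the form $\setX+\setM$.

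The paper proceeds differently and avoids the residue class $3\bmod 8$ altogether: it first proves the sharper fact $4\setE\subset\setM$ (Lemma~\ref{lem*2}), by writing each $X_i$ as an integer combination of $Y_1,\dots,Y_8$ plus \emph{twice} an integer combination of $Z_9,\dots,Z_{16}$, so that $4X_i\in\left\langle 4Y_1,\dots,4Y_8,8Z_9,\dots,8Z_{16}\right\rangle=\setM$. With annihilator $4$ instead of $8$, one reduces $\mu$ modulo $4$ to $\pm1$ and no separate case remains. Note that this also supplies your missing verification for free: $3X_i=-X_i+4X_i$, i.e.\ $\varepsilon_i=-1$, $j_i=i$ and $M_i=4X_i\in\setM$. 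So your architecture can be salvaged, but only by proving the $4$-divisibility statement that the paper isolates as its key lemma; the decomposition of $X_i$ over the $Z_\ell$ alone (Proposition~\ref{prop*3}) is too weak for your purposes.
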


\begin{lem}\label{lem*2}
$4\setE\subset\setM$.
\end{lem}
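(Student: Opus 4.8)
The plan is to reduce the inclusion $4\setE\subset\setM$ to a finite parity verification on the coefficients produced in the proof of Proposition~\ref{prop*3}. Since $\setM$ is a $\Z$-module, $4M\in\setM$ for every $M\in\setM$; writing a general element of $\setE=\setX+\setM$ as $\pm X_i+M$ with $i\in\{1,\ldots,7\}$ and $M\in\setM$, $4$ times this element equals $\pm4X_i+4M$ with $4M\in\setM$. Hence it suffices to prove that $4X_i\in\setM$ for each $i\in\{1,\ldots,7\}$ (the case of $-4X_i$ then follows, again because $\setM$ is a module).

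Next I would invoke the identities established in the proof of Proposition~\ref{prop*3}, namely $Y_j=Z_j-Z_{8+j}+Z_{17-j}$ for $j\in\{1,\ldots,8\}$, together with the explicit expansions $X_i=\sum_{l=1}^{16}c_{i,l}Z_l$ with integer coefficients $c_{i,l}$ given there. From $4Y_l=4Z_l-4Z_{8+l}+4Z_{17-l}$ one gets $4Z_l=4Y_l+4Z_{8+l}-4Z_{17-l}$ for $l\in\{1,\ldots,8\}$, and substituting this into $4X_i=\sum_{l=1}^{8}c_{i,l}(4Z_l)+\sum_{j=9}^{16}c_{i,j}(4Z_j)$ and reindexing (for $j\in\{9,\ldots,16\}$ both $j-8$ and $17-j$ lie in $\{1,\ldots,8\}$) yields
\[
4X_i=\sum_{l=1}^{8}c_{i,l}\bigl(4Y_l\bigr)+\sum_{j=9}^{16}\bigl(c_{i,j}+c_{i,j-8}-c_{i,17-j}\bigr)\bigl(4Z_j\bigr).
\]
Since $4Y_1,\ldots,4Y_8\in\setM$ and $8Z_9,\ldots,8Z_{16}\in\setM$, this expression lies in $\setM$ as soon as each integer $c_{i,j}+c_{i,j-8}-c_{i,17-j}$ is even, for then $\bigl(c_{i,j}+c_{i,j-8}-c_{i,17-j}\bigr)\bigl(4Z_j\bigr)$ is an integer multiple of $8Z_j$.

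It therefore remains to check, for each $i\in\{1,\ldots,7\}$ and each $k\in\{1,\ldots,8\}$, the congruence $c_{i,k}+c_{i,9-k}+c_{i,8+k}\equiv0\pmod2$ (equivalently $c_{i,k}-c_{i,9-k}+c_{i,8+k}\equiv0\pmod2$), which is a routine finite computation from the seven explicit expansions of $X_1,\ldots,X_7$ recalled in Proposition~\ref{prop*3}. For instance, from $X_1=Z_3+Z_4+2Z_5+3Z_6+2Z_7+2Z_8+4Z_9+2Z_{10}+3Z_{12}-Z_{13}-4Z_{14}-2Z_{16}$ one reads off $(c_{1,1},\ldots,c_{1,16})=(0,0,1,1,2,3,2,2,4,2,0,3,-1,-4,0,-2)$, and each of the eight sums $c_{1,k}+c_{1,9-k}+c_{1,8+k}$ is even; the remaining six tuples are checked the same way (alternatively, one may simply verify $4X_i\in\setM$ directly from the coordinates in Table~\ref{tab4}).

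There is no conceptual obstacle here: once the reduction in the second paragraph is in place, the lemma is a finite verification, and one does not even need the linear independence of $Z_1,\ldots,Z_{16}$, only the seven polynomial identities of Proposition~\ref{prop*3} and the definition of $\setM$. The one bookkeeping point to be careful about is the index reflection $l\mapsto17-l$, which couples the ``$4Y_l$ part'' of $4Z_l$ (for $l\le8$) back into the top half $Z_9,\ldots,Z_{16}$ and is exactly what forces the parity conditions above; this is also the reason the weaker generators $8Z_9,\ldots,8Z_{16}$ (rather than $4Z_9,\ldots,4Z_{16}$) suffice.
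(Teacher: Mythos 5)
Your proof is correct, and the overall skeleton coincides with the paper's: both arguments first use that $\setM$ is a $\Z$-module (so $4\setM\subset\setM$ and the sign is irrelevant) to reduce the lemma to the finite claim $4X_i\in\setM$ for $i\in\{1,\ldots,7\}$. Where you differ is in how that finite claim is certified. The paper writes down seven \emph{new} explicit identities expressing each $X_i$ as an integer combination of $Y_1,\ldots,Y_8$ plus $2\left(Z_9-Z_{11}+Z_{12}-Z_{14}+Z_{15}\right)$, so that $X_i\in\left\langle Y_1,\ldots,Y_8,2Z_9,\ldots,2Z_{16}\right\rangle$ and hence $4X_i\in\setM$ immediately. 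You instead reuse the identities already recorded in the proof of Proposition~\ref{prop*3} ($Y_j=Z_j-Z_{8+j}+Z_{17-j}$ and the $Z$-expansions of the $X_i$), invert the first to get $4Z_l=4Y_l+4Z_{8+l}-4Z_{17-l}$ for $l\le 8$, and reduce everything to the parity criterion $c_{i,k}+c_{i,9-k}+c_{i,8+k}\equiv 0\pmod 2$; I checked this congruence against the expansions in Proposition~\ref{prop*3} (e.g.\ for $X_1$ and $X_3$) and it holds, and your one-line justification of why evenness suffices (the term becomes an integer multiple of $8Z_j\in\setM$) is the right direction of implication, requiring no independence of the $Z_l$. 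The trade-off is clear: the paper's route needs seven fresh vector identities to be verified against Table~\ref{tab4}, while yours needs no new data beyond Proposition~\ref{prop*3} but pays for it with the index-reflection bookkeeping $l\mapsto 17-l$, which, as you correctly observe, is exactly what makes the generators $8Z_9,\ldots,8Z_{16}$ (rather than $4Z_9,\ldots,4Z_{16}$) sufficient. Both are complete proofs.
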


\begin{proof}
From the definition of $X_i$, $Y_i$ and $Z_i$, we have
$$
\begin{array}{l}
X_1 = \left(Y_3+Y_4+2Y_5+3Y_6+2Y_7+2Y_8\right) + 2\left(Z_9-Z_{11}+Z_{12}-Z_{14}+Z_{15}\right),\\
X_2 = \left(Y_3+3Y_4+3Y_6+2Y_7\right) + 2\left(Z_9-Z_{11}+Z_{12}-Z_{14}+Z_{15}\right),\\
X_3 = \left(3Y_3+3Y_4+Y_6\right) + 2\left(Z_9-Z_{11}+Z_{12}-Z_{14}+Z_{15}\right),\\
X_4 = \left(2Y_2+Y_3+3Y_4+Y_6\right) + 2\left(Z_9-Z_{11}+Z_{12}-Z_{14}+Z_{15}\right),\\
X_5 = \left(2Y_1+Y_3+Y_4+3Y_6+2Y_7\right) + 2\left(Z_9-Z_{11}+Z_{12}-Z_{14}+Z_{15}\right),\\
X_6 = \left(2Y_1+3Y_3+3Y_4+2Y_5+Y_6+2Y_8\right) + 2\left(Z_9-Z_{11}+Z_{12}-Z_{14}+Z_{15}\right),\\
X_7 = \left(2Y_1+2Y_2+3Y_3+Y_4+3Y_6+2Y_7\right) + 2\left(Z_9-Z_{11}+Z_{12}-Z_{14}+Z_{15}\right).\\
\end{array}
$$
Therefore,
$$
X_i \in \left\langle Y_1,\ldots,Y_8,2Z_9,\ldots,2Z_{16}\right\rangle
$$
for all $i\in\{1,\ldots,7\}$. It follows that
$$
4\setX\subset\setM.
$$
Moreover, since $4\setM\subset\setM$ by definition of $\setM$, we obtain that
$$
4\setE = 4\setX + 4\setM \subset \setM+\setM = \setM.
$$
This completes the proof.
\end{proof}

\begin{proof}[Proof of Proposition~\ref{prop31}]
Let $A\in\setE$. By definition of $\setE$, it is clear that $-A\in\setE$. Moreover, by Lemma~\ref{lem*2}, we know that $4A\in\setM$. We consider the Euclidean division of the odd number $\mu$ by $4$, that s, $\mu=4q+r$ where $r\in\{1,3\}$. If $r=1$, then
$$
\mu A = (4q+1)A = A + q(4A) \in \setE+\setM = \setE.
$$
Otherwise, if $r=3$, we have that
$$
\mu A = (4q+3)A = -A + (q+1)(4A) \in \setE+\setM = \setE.
$$
This completes the proof.
\end{proof}

The set $\pi_8\left(\setE\right)$ is constituted by $14\times 2^8=3584$ distinct elements. We retrieve the $3584$ elements of $\BT{24}{8}$. More generally, for all integers $u\ge3$, the set $\pi_{2^u}\left(\setE\right)$ is constituted by
$$
14\times\left(2^{u-2}\right)^8\times \left(2^{u-3}\right)^8=7\times2^{16u-39}
$$
distinct elements, i.e., $7\times 2^{15u-38}$ distinct elements, up to automorphisms. Finally, $\left|\pi_4\left(\setE\right)\right|=14$ and $\left|\pi_2\left(\setE\right)\right|=1$ with
$$
\pi_4\left(\setE\right) = \left\{\pm\pi_4(X_i)\ \middle|\ i\in\{1,\ldots,7\}\right\}
$$
and
$$
\pi_2\left(\setE\right) = \left\{ 001101000001100000101100 \right\}.
$$

\section{Arithmetic sums of binomials modulo powers of 2}

In this section, we study the arithmetic sum and the weighted arithmetic sum of binomials.

\begin{nota}[Arithmetic sums]
Let $k$ and $i$ be two non-negative integers. The {\em arithmetic sum} $\SB{0}{k}{i}{j}$ and the {\em weighted arithmetic sum} $\SB{1}{k}{i}{j}$ are the integers defined by
$$
\SB{0}{k}{i}{j} = \sum_{\alpha\in\Z}\binom{i}{\alpha k+j}\quad\text{and}\quad \SB{1}{k}{i}{j} = \sum_{\alpha\in\Z}\alpha\binom{i}{\alpha k+j},
$$
respectively, for all integers $j$.
\end{nota}

With these notations, we consider the circulant matrices
$$
\C{k}{i} = \left(\SB{0}{k}{i}{r-s}\right)_{1\le r,s\le k}
$$
and the Toeplitz matrices
$$
\T{k}{i} = \left(\SB{1}{k}{i}{r-s}\right)_{1\le r,s\le k},
$$
for all non-negative integers $k$ and $i$. In the sequel of this section, the values of the matrices $\C{24}{3.2^u}$ and $\T{24}{3.2^u}$ modulo $2^u$ will be given, for all positive integers $u$. These values are keys in the proof of Theorem~\ref{mainthm}.

\begin{nota}[Circulant matrices]
Let $S=\left(u_j\right)_{j=0}^{k-1}$ be a $k$-tuple of elements in $\Zn{m}$. The {circulant matrix} associated to $S$ is the matrix
$$
\Circ{S} = \left(u_{(s-r\bmod{k})}\right)_{1\le r,s\le k}.
$$
For instance, the circulant matrix $\Circ{012}$ is
$$
\Circ{012} = \begin{pmatrix}
 0 & 1 & 2 \\
 2 & 0 & 1 \\
 1 & 2 & 0 \\
\end{pmatrix}.
$$
\end{nota}

We begin by determining, for all positive integers $u$, the value of the circulant matrix
$$
\C{24}{3.2^u} = \left(\SB{0}{24}{3.2^u}{r-s}\right)_{1\le r,s\le 24}
$$
modulo $2^u$. For $u\le4$, we have
$$
\begin{array}{l}
\C{24}{6} \equiv \Circ{1, 0, 0, 0, 0, 0, 0, 0, 0, 0, 0, 0, 0, 0, 0, 0, 0, 0, 1, 0, 1, 0, 1, 0} \pmod{2},\\[1.5ex]
\C{24}{12} \equiv \Circ{1, 0, 0, 0, 0, 0, 0, 0, 0, 0, 0, 0, 1, 0, 2, 0, 3, 0, 0, 0, 3, 0, 2, 0} \pmod{4},\\[1.5ex]
\C{24}{24} \equiv \Circ{2, 0, 4, 0, 2, 0, 4, 0, 7, 0, 0, 0, 4, 0, 0, 0, 7, 0, 4, 0, 2, 0, 4, 0} \pmod{8},\\[1.5ex]
\C{24}{48} \equiv \Circ{14, 0, 8, 0, 12, 0, 8, 0, 1, 0, 0, 0, 8, 0, 0, 0, 1, 0, 8, 0, 12, 0, 8, 0} \pmod{16}.
\end{array}
$$
For $u\ge4$, we obtain the following

\begin{thm}\label{prop19}
For all positive integers $u\ge 4$, we have
$$
3\C{24}{3.2^u} \equiv \Cd{0} + 2^{u-2}\Cd{1} \pmod{2^u},
$$
where
$$
\Cd{0} = \Circ{2,0,0,0,0,0,0,0,-1,0,0,0,0,0,0,0,-1,0,0,0,0,0,0,0}
$$
and
$$
\Cd{1} = \Circ{2,0,2,0,1,0,2,0,1,0,0,0,2,0,0,0,1,0,2,0,1,0,2,0}.
$$
\end{thm}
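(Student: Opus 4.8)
The plan is to prove the statement by induction on $u$, the base case $u=4$ being the congruence $3\,\C{24}{48}\equiv\Cd{0}+4\,\Cd{1}\pmod{16}$, which follows at once from the value of $\C{24}{48}$ modulo $16$ displayed just above by multiplying it by $3$. For the inductive step, the structural input is Corollary~\ref{cor*2}, which gives
$$
\C{24}{3.2^{u+1}}=\C{24}{2\cdot(3.2^{u})}=\C{24}{3.2^{u}}^{2},\qquad\text{hence}\qquad 9\,\C{24}{3.2^{u+1}}=\bigl(3\,\C{24}{3.2^{u}}\bigr)^{2}.
$$
All of $\C{24}{3.2^{u}}$, $\Cd{0}$, $\Cd{1}$ are circulant matrices of order $24$ and therefore commute, so if $X,Y$ are circulant integer matrices with $X\equiv Y\pmod{2^{u}}$, writing $X=Y+2^{u}Z$ with $Z$ circulant gives $X^{2}=Y^{2}+2^{u+1}YZ+2^{2u}Z^{2}\equiv Y^{2}\pmod{2^{u+1}}$. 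Consequently the induction hypothesis $3\,\C{24}{3.2^{u}}\equiv\Cd{0}+2^{u-2}\Cd{1}\pmod{2^{u}}$ propagates to
$$
9\,\C{24}{3.2^{u+1}}\equiv\bigl(\Cd{0}+2^{u-2}\Cd{1}\bigr)^{2}\pmod{2^{u+1}}.
$$

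It remains to expand and simplify the right-hand side. Since $\Cd{0}$ and $\Cd{1}$ commute, $\bigl(\Cd{0}+2^{u-2}\Cd{1}\bigr)^{2}=\Cd{0}^{2}+2^{u-1}\Cd{0}\Cd{1}+2^{2u-4}\Cd{1}^{2}$, and the argument reduces to three identities between these fixed $24\times24$ circulants, all of them routine finite computations, and transparent once circulant matrices of order $24$ are identified with the commutative ring $\Z[x]/(x^{24}-1)$, under which $\Cd{0}$ corresponds to $2-x^{8}-x^{16}$. The identities are: (i) $\Cd{0}^{2}=3\,\Cd{0}$ exactly; this is immediate from $\Cd{0}=3\matI{24}-\Omega$ with $\Omega=\Circ{1,0,0,0,0,0,0,0,1,0,0,0,0,0,0,0,1,0,0,0,0,0,0,0}$, whose three $1$'s occupy the positions $0,8,16$, which form a cyclic subgroup of order $3$, so that $\Omega^{2}=3\Omega$ and $\Cd{0}^{2}=9\matI{24}-6\Omega+\Omega^{2}=3\bigl(3\matI{24}-\Omega\bigr)=3\,\Cd{0}$; (ii) $\Cd{0}\Cd{1}\equiv3\,\Cd{1}\pmod{4}$, equivalently $\Omega\Cd{1}\equiv\matr{0}\pmod{4}$, since $\Cd{0}\Cd{1}-3\Cd{1}=-\Omega\Cd{1}$ and a direct computation shows $\Omega\Cd{1}$ equals $4$ times the circulant supported on the twelve even positions; (iii) $\Cd{1}^{2}\equiv\matr{0}\pmod{2}$, because modulo $2$ the circulant $\Cd{1}$ is supported on the positions $4,8,16,20$, and its square, reduced modulo $2$ and then modulo $x^{24}-1$, has only the terms at positions $8$ and $16$, each occurring twice.

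Granting (i)--(iii), the step finishes quickly: modulo $2^{u+1}$ one has $\Cd{0}^{2}=3\,\Cd{0}$; by (ii), $2^{u-1}\Cd{0}\Cd{1}\equiv3\cdot2^{u-1}\Cd{1}$; and $2^{2u-4}\Cd{1}^{2}\equiv\matr{0}$ for every $u\geq4$, trivially when $2u-4\geq u+1$, i.e.\ $u\geq5$, and for $u=4$ because $2^{4}\Cd{1}^{2}\equiv\matr{0}\pmod{2^{5}}$ by (iii). Hence $9\,\C{24}{3.2^{u+1}}\equiv3\,\Cd{0}+3\cdot2^{u-1}\Cd{1}=3\bigl(\Cd{0}+2^{u-1}\Cd{1}\bigr)\pmod{2^{u+1}}$, and multiplying through by the inverse of $3$ modulo $2^{u+1}$ gives $3\,\C{24}{3.2^{u+1}}\equiv\Cd{0}+2^{u-1}\Cd{1}\pmod{2^{u+1}}$, completing the induction. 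I expect the crux to be identity (ii): the vanishing of $\Omega\Cd{1}$ modulo $4$ is exactly what stops lower-order contributions from leaking through the squaring recursion, and it is the one place where the precise shape of $\Cd{1}$ (and not merely of $\Cd{0}$) is needed; it is also why the formula holds only for $u\geq4$, the value of $3\,\C{24}{24}$ modulo $8$ already differing from $\Cd{0}+2\,\Cd{1}$.
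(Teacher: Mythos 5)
Your proof is correct and follows essentially the same route as the paper's: induction on $u\ge4$ with the same base case, the identity $9\,\C{24}{3.2^{u+1}}=\bigl(3\,\C{24}{3.2^{u}}\bigr)^{2}$, the observation that a congruence between commuting circulants modulo $2^{u}$ squares to one modulo $2^{u+1}$, and the same three facts $\Cd{0}^{2}=3\,\Cd{0}$, $\Cd{0}\Cd{1}\equiv3\,\Cd{1}\pmod{4}$ and $\Cd{1}^{2}\equiv\matr{0}\pmod{2}$. The only difference is cosmetic: where the paper verifies these three identities by direct computation, you derive them structurally in $\Z[x]/(x^{24}-1)$ via $\Cd{0}=3\matI{24}-\Omega$ with $\Omega^{2}=3\Omega$, which is a pleasant check but not a different argument.
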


\begin{proof}
By induction on $u\ge4$. For $u=4$, it is clear that
$$
3\C{24}{48}
\begin{array}[t]{l}
\displaystyle\equiv \Circ{10, 0, 8, 0, 4, 0, 8, 0, 3, 0, 0, 0, 8, 0, 0, 0, 3, 0, 8, 0, 4, 0, 8, 0} \pmod{16} \\
\displaystyle\equiv \Cd{0} + 4\Cd{1} \pmod{16},
\end{array}
$$
as announced. Suppose now that the result is true for some positive integer $u\ge4$ and prove it for $u+1$. First, we know from Proposition~\ref{prop*2} that
$$
\C{24}{3.2^{u+1}} = {\C{24}{3.2^u}}^2.
$$
Since
$$
3\C{24}{3.2^u} \equiv \Cd{0}+2^{u-2}\Cd{1} \pmod{2^u},
$$
by induction hypothesis, it follows that there exists a circulant matrix $\Cd{2}$ of integers such that
$$
3\C{24}{3.2^u} = \Cd{0}+2^{u-2}\Cd{1}+2^u\Cd{2}.
$$
Moreover, since the product of circulant matrices is commutative, we obtain that
$$
9\C{24}{3.2^{u+1}} 
\begin{array}[t]{l}
= \displaystyle\left(3\C{24}{3.2^u}\right)^2 \\[2ex]
= \displaystyle\left(\Cd{0}+2^{u-2}\Cd{1}+2^u\Cd{2}\right)^2 \\[2ex]
= \displaystyle\left(\Cd{0}+2^{u-2}\Cd{1}\right)^2+2^{u+1}\left(\Cd{0}+2^{u-2}\Cd{1}\right)\Cd{2} + 2^{2u}{\Cd{2}}^2.\\
\end{array}
$$
Since $u\ge4$, it follows that $2u\ge u+1$ and
$$
9\C{24}{3.2^{u+1}} \equiv \left(\Cd{0}+2^{u-2}\Cd{1}\right)^2 \equiv \Cd{0}^2 + 2^{u-1}\Cd{0}\Cd{1} + 2^{2u-4}\Cd{1}^2 \pmod{2^{u+1}}.
$$
By direct computation, we obtain that
$$
\begin{array}{l}
\Cd{0}^2 = \Circ{6,0,0,0,0,0,0,0,-3,0,0,0,0,0,0,0,-3,0,0,0,0,0,0,0} = 3\Cd{0}, \\ \ \\
\Cd{0}\Cd{1}
\begin{array}[t]{l}
= \Circ{2,0,2,0,-1,0,2,0,-1,0,-4,0,2,0,-4,0,-1,0,2,0,-1,0,2,0}\\[1.5ex]
\equiv 3\Cd{1} \pmod{4},
\end{array} \\ \ \\
\Cd{1}^2
\begin{array}[t]{l}
= \Circ{28,0,20,0,22,0,24,0,18,0,20,0,20,0,20,0,18,0,24,0,22,0,20,0} \\[2ex]
 = 2\Cd{3},
\end{array}
\end{array}
$$
where
$$
\Cd{3} = \Circ{14,0,10,0,11,0,12,0,9,0,10,0,10,0,10,0,9,0,12,0,11,0,10,0}.
$$
Therefore,
$$
9\C{24}{3.2^{u+1}} \equiv 3\Cd{0} + 3.2^{u-1}\Cd{1} + 2^{2u-3}\Cd{3} \pmod{2^{u+1}}.
$$
Since $u\ge4$, it follows that $2u-3\ge u+1$ and
$$
9\C{24}{3.2^{u+1}} \equiv 3\Cd{0} + 3.2^{u-1}\Cd{1} \pmod{2^{u+1}}.
$$
Finally, since $\gcd(3,2^{u+1})=1$, we conclude that
$$
3\C{24}{3.2^{u+1}} \equiv \Cd{0} + 2^{u-1}\Cd{1} \pmod{2^{u+1}}.
$$
This completes the proof.
\end{proof}

\begin{nota}[Toeplitz matrices]
Let $S_1=\left(u_j\right)_{j=0}^{k-1}$ and $S_2=\left(u_{-j}\right)_{j=1}^{k-1}$ be a $k$-tuple and a $(k-1)$-tuple of elements in $\Zn{m}$. The Toeplitz matrix associated with $S_1$ and $S_2$ is the matrix
$$
\Toepl{S_1}{S_2} = \left(u_{r-s}\right)_{1\le r,s\le k}.
$$
For instance, the Toeplitz matrix $\Toepl{012}{34}$ is the matrix
$$
\Toepl{012}{34} =
\begin{pmatrix}
0 & 3 & 4 \\
1 & 0 & 3 \\
2 & 1 & 0 \\
\end{pmatrix}.
$$
\end{nota}

We continue by determining, for all positive integers $u$, the value of the Toeplitz matrix
$$
\T{24}{3.2^u} = \left(\SB{1}{24}{3.2^u}{r-s}\right)_{1\le r,s\le 24}
$$
modulo $2^u$. For $u\le 5$, we have
\begin{equation*}
\resizebox{\textwidth}{!}{$
\begin{array}{l}
\T{24}{6} \equiv \Toepl{(0,\ldots,0)}{(0,0,0,0,0,0,0,0,0,0,0,0,0,0,0,0,0,1,0,1,0,1,0)} \pmod{2}, \\[1.5ex]
\T{24}{12} \equiv \Toepl{(0,\ldots,0)}{(0,0,0,0,0,0,0,0,0,0,0,1,0,2,0,3,0,0,0,3,0,2,0)} \pmod{4}, \\[1.5ex]
\T{24}{24} \equiv \Toepl{(1,0,\ldots,0)}{(0,4,0,2,0,4,0,7,0,0,0,4,0,0,0,7,0,4,0,2,0,4,0)} \pmod{8}, \\[1.5ex]
\T{24}{48} \equiv \mathbf{Toepl}\!
\begin{array}[t]{l}
\!\left( (14,0,0,0,8,0,0,0,15,0,8,0,4,0,8,0,2,0,8,0,4,0,8,0), \right.\\
\ \!\left.(0,0,0,0,0,0,0,3,0,8,0,12,0,8,0,0,0,8,0,4,0,8,0) \right)
\end{array} \pmod{16}, \\[4ex]
\T{24}{96} \equiv \mathbf{Toepl}\!
\begin{array}[t]{l}
\!\left( (12,0,0,0,16,0,0,0,5,0,16,0,24,0,16,0,2,0,16,0,24,0,16,0), \right.\\
\ \!\left.(0,0,0,16,0,0,0,15,0,16,0,8,0,16,0,18,0,16,0,8,0,16,0) \right)
\end{array} \pmod{32}.
\end{array}
$}
\end{equation*}
For $u\ge5$, we obtain the following

\begin{thm}\label{prop18}
For all positive integers $u\ge5$, we have
$$
9\T{24}{3.2^u} \equiv \Td{0} + 2^{u-3}\Td{1} \pmod{2^u},
$$
where
$$
\Td{0} = \mathbf{Toepl}\!
\begin{array}[t]{l}
\!\left((0,0,0,0,0,0,0,0,1,0,0,0,0,0,0,0,2,0,0,0,0,0,0,0), \right.\\
\ \!\left.(0,0,0,0,0,0,0,-1,0,0,0,0,0,0,0,-2,0,0,0,0,0,0,0)\right),
\end{array}
$$
and
$$
\Td{1} = \mathbf{Toepl}\!
\begin{array}[t]{l}
\!\left((3, 0, 0, 0, 4, 0, 0, 0, 3, 0, 4, 0, 6, 0, 4, 0, 4, 0, 4, 0, 6, 0, 4, 0), \right.\\
\ \!\left.(0, 0, 0, 4, 0, 0, 0, 2, 0, 4, 0, 2, 0, 4, 0, 1, 0, 4, 0, 2, 0, 4, 0)\right).
\end{array}
$$
\end{thm}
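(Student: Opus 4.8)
The plan is to prove Theorem~\ref{prop18} by induction on $u\geq 5$, mirroring the scheme used for Theorem~\ref{prop19}. For the base case $u=5$ one simply reads the displayed value of $\T{24}{96}$ modulo $32$ and checks that $9\,\T{24}{96}\equiv\Td{0}+4\Td{1}\pmod{32}$, a finite computation with the given $24\times24$ Toeplitz matrix.

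For the inductive step the key tool is the semigroup law for the Toeplitz matrices. Specialising Proposition~\ref{prop*2}\,ii) (equivalently Corollary~\ref{cor*2}\,ii) with $\lambda=2$) to $i=j=3\cdot2^u$ gives the exact identity
$$
\T{24}{3\cdot2^{u+1}}=\T{24}{3\cdot2^u}\,\C{24}{3\cdot2^u}+\C{24}{3\cdot2^u}\,\T{24}{3\cdot2^u},
$$
so that
$$
27\,\T{24}{3\cdot2^{u+1}}=\bigl(9\,\T{24}{3\cdot2^u}\bigr)\bigl(3\,\C{24}{3\cdot2^u}\bigr)+\bigl(3\,\C{24}{3\cdot2^u}\bigr)\bigl(9\,\T{24}{3\cdot2^u}\bigr).
$$
I would substitute $9\,\T{24}{3\cdot2^u}=\Td{0}+2^{u-3}\Td{1}+2^u\mathrm{T}'$ (the inductive hypothesis, with $\mathrm{T}'$ an integer Toeplitz matrix) and $3\,\C{24}{3\cdot2^u}=\Cd{0}+2^{u-2}\Cd{1}+2^u\mathrm{C}'$ (Theorem~\ref{prop19}, which applies since $u\geq5>4$, with $\mathrm{C}'$ an integer circulant matrix), expand, reduce modulo $2^{u+1}$, and — since $\gcd(27,2)=1$ — verify that the result equals $3\Td{0}+3\cdot2^{u-2}\Td{1}$ modulo $2^{u+1}$.

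The expansion breaks into a constant term $\Td{0}\Cd{0}+\Cd{0}\Td{0}$, a term $2^{u-3}(\Td{1}\Cd{0}+\Cd{0}\Td{1})$, a term $2^{u-2}(\Td{0}\Cd{1}+\Cd{1}\Td{0})$, a ``diagonal'' term $2^{2u-5}(\Td{1}\Cd{1}+\Cd{1}\Td{1})$, and a block $2^u(\mathrm{T}'\Cd{0}+\Cd{0}\mathrm{T}'+\Td{0}\mathrm{C}'+\mathrm{C}'\Td{0})$ carrying the undetermined digits. The matrix arithmetic is greatly lightened by the observation that $\Cd{0}$ and $\Td{0}$ are supported on indices whose difference is a multiple of $8$, so, since $24=3\cdot8$, they lie in the subalgebra of matrices $N\otimes\matI{8}$ with $N$ a $3\times3$ matrix; there $\Cd{0}\leftrightarrow\bigl(\begin{smallmatrix}2&-1&-1\\-1&2&-1\\-1&-1&2\end{smallmatrix}\bigr)$ and $\Td{0}\leftrightarrow\bigl(\begin{smallmatrix}0&-1&-2\\1&0&-1\\2&1&0\end{smallmatrix}\bigr)$, and a one-line $3\times3$ computation yields the clean identity $\Td{0}\Cd{0}+\Cd{0}\Td{0}=3\Td{0}$, the exact analogue of $\Cd{0}^2=3\Cd{0}$ from Theorem~\ref{prop19}. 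Similarly $\Td{1}$, $\Cd{1}$, $\mathrm{C}'$ and $\mathrm{T}'$ are supported on even differences, so everything else takes place in the smaller ``even'' subalgebra. One then evaluates the explicit products $\Td{1}\Cd{0}$, $\Cd{0}\Td{1}$, $\Td{0}\Cd{1}$, $\Cd{1}\Td{0}$ and checks, using the identity above, that the first three terms reduce to $3\Td{0}+3\cdot2^{u-2}\Td{1}$ modulo $2^{u+1}$; one also checks that $\Td{1}\Cd{1}+\Cd{1}\Td{1}$ is even, which removes the $2^{2u-5}$ term modulo $2^{u+1}$ already at $u=5$ (for $u\geq6$ it vanishes because $2u-5\geq u+1$). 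It then remains only to show the $2^u$-block contributes $0$ modulo $2^{u+1}$.

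That $2^u$-block is the crux, and the point where the argument is genuinely harder than for Theorem~\ref{prop19}. There the inductive step is a squaring, so any cross term involving the undetermined digit automatically picks up an extra factor $2$ and drops out modulo $2^{u+1}$; here the product in the doubling identity is not a square, so these terms survive a priori, and neither Theorem~\ref{prop19} nor the inductive hypothesis — both sharp — pins down $\mathrm{C}'$ or $\mathrm{T}'$ beyond its residue modulo $2$. The resolution I would pursue is to carry along in the induction explicit descriptions modulo $2$ of these digit matrices: applying the doubling identity one level lower yields recursions of the form $\overline{\mathrm{C}'_{u+1}}=\overline{\Cd{0}}\,\overline{\mathrm{C}'_{u}}+\overline{D}$ (with $\overline{\Cd{0}}$ idempotent over $\mathbb{F}_2$, being $J+I$ in the $3\times3$ picture) and an analogous recursion for $\overline{\mathrm{T}'_{u}}$; granting these, checking that $\overline{\mathrm{T}'_{u}}\,\overline{\Cd{0}}+\overline{\Cd{0}}\,\overline{\mathrm{T}'_{u}}+\overline{\Td{0}}\,\overline{\mathrm{C}'_{u}}+\overline{\mathrm{C}'_{u}}\,\overline{\Td{0}}$ stabilises to the fixed matrix needed to close the congruence reduces to a finite verification over $\mathbb{F}_2$. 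This bookkeeping of the $2$-adic digits is where the real work lies; everything else is (large but) routine circulant and Toeplitz arithmetic, streamlined throughout by the reduction to $3\times3$ blocks.
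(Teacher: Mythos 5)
You have correctly located the crux --- the cross terms $2^u\bigl(\mathrm{T}'\Cd{0}+\Cd{0}\mathrm{T}'+\Td{0}\mathrm{C}'+\mathrm{C}'\Td{0}\bigr)$ survive because the Toeplitz doubling identity is bilinear rather than a square --- but your proposed resolution does not close. To extract $\mathrm{T}'_{u+1}\bmod 2$ from $\T{24}{3.2^{u+1}}=\T{24}{3.2^u}\C{24}{3.2^u}+\C{24}{3.2^u}\T{24}{3.2^u}$ you must evaluate the right-hand side modulo $2^{u+2}$, hence the bracket $\mathrm{T}'_u\Cd{0}+\Cd{0}\mathrm{T}'_u+\Td{0}\mathrm{C}'_u+\mathrm{C}'_u\Td{0}$ modulo $4$, hence the digit matrices $\mathrm{T}'_u$ and $\mathrm{C}'_u$ modulo $4$; so a mod-$2$ bookkeeping cannot reproduce itself, and tracking the digits modulo $4$ pushes the requirement to modulo $8$ one level down, and so on. The recursion for $\overline{\mathrm{C}'_u}$ alone does close, precisely because $\C{24}{3.2^{u+1}}=\C{24}{3.2^u}^2$ is a square and its cross term carries the factor $2^{u+1}$; there is no ``analogous recursion'' over $\mathbb{F}_2$ for $\overline{\mathrm{T}'_u}$, and this is exactly where your sketch stalls.

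The paper escapes the regress by a structural decomposition rather than by strengthening the induction. Lemma~\ref{lem*5} writes $\T{k}{i}=\CT{k}{i}+\SUT{\C{k}{i}}$, and the induction (Lemma~\ref{lem*7}) is run on the circulant part $\CT{24}{3.2^u}$ only. In the resulting identity \eqref{eq*4} the problematic product appears as $2\,\CT{24}{3.2^u}\C{24}{3.2^u}$ --- circulants commute, so the factor $2$ reappears and kills the unknown digit --- while the non-circulant remainder occurs only in the combination $\matM\SUT{\matM}+\SUT{\matM}\matM$ with $\matM=\C{24}{3.2^u}$, which Lemma~\ref{lem*6} shows is determined modulo $2^{u+1}$ by $\matM$ modulo $2^u$ (each entry is twice a sum of products plus at most one square). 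Your argument is missing an ingredient of exactly this kind. The rest of your plan is sound: the base case $u=5$, the identity $\Td{0}\Cd{0}+\Cd{0}\Td{0}=3\Td{0}$ (correct, and consistent with the paper's computation $\matr{\mathrm{X}_0}=3\CTd{0}$), and the reduction of $\Cd{0}$ and $\Td{0}$ to $3\times3$ blocks supported on index differences divisible by $8$ are all fine, the last being a genuine streamlining not exploited in the paper.
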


The proof of this result is based on the following lemmas.

\begin{lem}\label{lem*4}
Let $k$ and $i$ be two non-negative integers. Then,
$$
\SB{1}{k}{i}{-j} = \SB{1}{k}{i}{k-j}+\SB{0}{k}{i}{k-j},
$$
for all integers $j$.
\end{lem}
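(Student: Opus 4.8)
\textbf{Proof proposal for Lemma~\ref{lem*4}.}
The plan is a direct index shift, entirely analogous to the one used for Lemma~\ref{lem*3}. Recall that $\SB{0}{k}{i}{j}=\sum_{\alpha\in\Z}\binom{i}{\alpha k+j}$ and $\SB{1}{k}{i}{j}=\sum_{\alpha\in\Z}\alpha\binom{i}{\alpha k+j}$; both are finite sums, since $\binom{i}{n}=0$ whenever $n<0$ or $n>i$, so any reindexing over $\Z$ is legitimate. I would start from the right-hand side, where every term has argument $\alpha k+(k-j)=(\alpha+1)k-j$, and substitute $\beta=\alpha+1$ in each of the two sums.

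Concretely, this substitution gives
$$
\SB{1}{k}{i}{k-j}=\sum_{\alpha\in\Z}\alpha\binom{i}{(\alpha+1)k-j}=\sum_{\beta\in\Z}(\beta-1)\binom{i}{\beta k-j}
$$
and
$$
\SB{0}{k}{i}{k-j}=\sum_{\alpha\in\Z}\binom{i}{(\alpha+1)k-j}=\sum_{\beta\in\Z}\binom{i}{\beta k-j}.
$$
Adding these two identities, the contribution $-\sum_{\beta}\binom{i}{\beta k-j}$ coming from the factor $(\beta-1)$ cancels against $\sum_{\beta}\binom{i}{\beta k-j}$, leaving
$$
\SB{1}{k}{i}{k-j}+\SB{0}{k}{i}{k-j}=\sum_{\beta\in\Z}\beta\binom{i}{\beta k-j}=\SB{1}{k}{i}{-j},
$$
which is exactly the claimed equality.

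There is no real obstacle: the whole argument is the single reindexing $\beta=\alpha+1$ together with the observation that it converts the weight $\alpha=\beta-1$ into $\beta$ at the cost of a correcting $\SB{0}{k}{i}{\cdot}$ term. The only point worth stating explicitly is the finiteness of the sums, which justifies term-by-term reindexing; once that is noted, the identity follows for all integers $j$ with no case distinction.
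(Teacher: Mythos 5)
Your proof is correct and is essentially the same single-reindexing argument as the paper's, merely run from the right-hand side to the left (the paper rewrites $\alpha k-j=(\alpha-1)k+(k-j)$ and shifts $\alpha\mapsto\alpha+1$ starting from $\SB{1}{k}{i}{-j}$). Nothing further is needed.
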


\begin{proof}
By definition, we have
$$
\SB{1}{k}{i}{-j}
\begin{array}[t]{l}
= \displaystyle\sum_{\alpha\in\Z}\alpha\binom{i}{\alpha k-j} = \sum_{\alpha\in\Z}\alpha\binom{i}{(\alpha-1)k+(k-j)} \\ \ \\
= \displaystyle\sum_{\alpha\in\Z}(\alpha+1)\binom{i}{\alpha k+(k-j)} = \SB{1}{k}{i}{k-j} + \SB{0}{k}{i}{k-j}. \\
\end{array}
$$
This completes the proof.
\end{proof}

\begin{nota}[SUT]
Let $\matr{\mathrm{M}}=\left(a_{r,s}\right)_{1\le r,s\le n}$ be a square matrix of order $n$ over $\Zn{m}$. The {\em strictly upper triangular part} of $\matr{\mathrm{M}}$ is the matrix $\SUT{\matr{\mathrm{M}}}=\left(b_{r,s}\right)_{1\le r,s\le n}$ defined by
$$
b_{r,s} = \left\{\begin{array}{ll}
a_{r,s} & \text{if}\ s>r,\\
0 & \text{otherwise},
\end{array}\right.
$$
for all integers $r,s\in\{1,\ldots,n\}$.
\end{nota}

It follows that the Toeplitz matrix $\T{k}{i}$ can be seen as the sum of a circulant matrix and the strictly upper triangular part of $\C{k}{i}$.

\begin{lem}\label{lem*5}
Let $k$ and $i$ be two non-negative integers. Then,
$$
\T{k}{i} = \CT{k}{i} + \SUT{\C{k}{i}},
$$
where
$$
\CT{k}{i}
\begin{array}[t]{l}
 = \left(\SB{1}{k}{i}{(r-s\bmod{k})}\right)_{1\le r,s\le k}\\[2ex]
 = \Circ{\SB{1}{k}{i}{0}, \SB{1}{k}{i}{k-1},\ldots\ldots,\SB{1}{k}{i}{1}}.
\end{array}
$$
\end{lem}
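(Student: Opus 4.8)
The plan is to verify the claimed matrix identity entrywise, distinguishing the entries that lie on or below the diagonal from those strictly above it. Recall from the definitions that the $(r,s)$-entry of $\T{k}{i}$ is $\SB{1}{k}{i}{r-s}$ and that of $\C{k}{i}$ is $\SB{0}{k}{i}{r-s}$, where the offset $r-s$ runs over $\{-(k-1),\ldots,k-1\}$; that the $(r,s)$-entry of $\SUT{\C{k}{i}}$ is $\SB{0}{k}{i}{r-s}$ if $s>r$ and $0$ otherwise; and that the $(r,s)$-entry of $\CT{k}{i}$ is $\SB{1}{k}{i}{(r-s\bmod{k})}$. As a preliminary, one should also check that the two expressions for $\CT{k}{i}$ in the statement coincide: by the definition of $\Circ{S}$, the $(r,s)$-entry of the circulant matrix associated to a $k$-tuple $S=(u_0,\ldots,u_{k-1})$ is $u_{(s-r\bmod{k})}$, and with $u_0=\SB{1}{k}{i}{0}$ and $u_t=\SB{1}{k}{i}{k-t}$ for $1\le t\le k-1$ this becomes $\SB{1}{k}{i}{(r-s\bmod{k})}$, since $(s-r\bmod{k})=k-(r-s\bmod{k})$ whenever $r\ne s$ (and both reduce to $0$ when $r=s$). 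This step is purely notational.

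Next I would treat the entries with $s\le r$. There $0\le r-s\le k-1$, so $(r-s\bmod{k})=r-s$; hence the $(r,s)$-entry of $\CT{k}{i}$ is already $\SB{1}{k}{i}{r-s}$, the $(r,s)$-entry of $\T{k}{i}$, while $\SUT{\C{k}{i}}$ contributes $0$ there. So the identity holds on and below the diagonal with no further work.

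Finally, for $s>r$ I would set $j=s-r\in\{1,\ldots,k-1\}$, so $r-s=-j$ and $(r-s\bmod{k})=k-j$. The $(r,s)$-entry of $\CT{k}{i}+\SUT{\C{k}{i}}$ is then $\SB{1}{k}{i}{k-j}+\SB{0}{k}{i}{-j}$. Applying Lemma~\ref{lem*3} with $-j$ in place of $j$ gives $\SB{0}{k}{i}{-j}=\SB{0}{k}{i}{k-j}$, so this entry equals $\SB{1}{k}{i}{k-j}+\SB{0}{k}{i}{k-j}$, which by Lemma~\ref{lem*4} is $\SB{1}{k}{i}{-j}=\SB{1}{k}{i}{r-s}$, the $(r,s)$-entry of $\T{k}{i}$. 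This completes the verification.

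I do not expect a genuine obstacle here: the only substantive input is Lemma~\ref{lem*4}, which is itself a one-line shift of the summation index $\alpha$, and the rest is careful bookkeeping with the residue $r-s\bmod{k}$. The single point deserving a moment of care is the reconciliation of the two descriptions of $\CT{k}{i}$ (via residues and via the circulant list), but as indicated this is entirely a matter of notation.
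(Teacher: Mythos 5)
Your proof is correct and follows the same route as the paper: split the entries according to whether $s\le r$ (where $\SUT{\C{k}{i}}$ vanishes and the residue $(r-s\bmod k)$ equals $r-s$) or $s>r$ (where Lemma~\ref{lem*4}, together with the periodicity from Lemma~\ref{lem*3}, gives $\SB{1}{k}{i}{r-s}=\SB{1}{k}{i}{(r-s\bmod k)}+\SB{0}{k}{i}{r-s}$). Your write-up is simply a more explicit version of the paper's one-line argument, including the purely notational check that the two descriptions of $\CT{k}{i}$ agree.
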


\begin{proof}
From Lemma~\ref{lem*4}, we obtain that
$$
\T{k}{i}
\begin{array}[t]{l}
 = \displaystyle\left(\SB{1}{k}{i}{r-s}\right)_{1\le r,s\le k} \\ \ \\
 = \displaystyle\left(\SB{1}{k}{i}{(r-s\bmod{k})}\right)_{1\le r,s\le k} + \SUT{\left(\SB{0}{k}{i}{r-s}\right)_{1\le r,s\le k}} \\ \ \\
 = \CT{k}{i} + \SUT{\C{k}{i}},
\end{array}
$$
where
$$
\CT{k}{i} = \Circ{\SB{1}{k}{i}{0}, \SB{1}{k}{i}{k-1},\ldots\ldots,\SB{1}{k}{i}{1}}.
$$
This completes the proof.
\end{proof}

\begin{lem}\label{lem*6}
Let $\Md{1}$ and $\Md{2}$ be two circulant matrices of integers such that
$$
\Md{1} \equiv \Md{2} \pmod{2^u},
$$
for a certain positive integer $u$. Then,
$$
\Md{1}\SUT{\Md{1}}+\SUT{\Md{1}}\Md{1} \equiv \Md{2}\SUT{\Md{2}}+\SUT{\Md{2}}\Md{2} \pmod{2^{u+1}}.
$$
\end{lem}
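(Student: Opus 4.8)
The plan is to deduce the lemma from the purely formal matrix identity
$$
\matr{M}\SUT{\matr{M}}+\SUT{\matr{M}}\matr{M} = \matr{M}^2 + \SUT{\matr{M}}^2 - \bigl(\matr{M}-\SUT{\matr{M}}\bigr)^2 ,
$$
valid for every square integer matrix $\matr{M}$, which is just $(\matr{A}+\matr{B})^2=\matr{A}^2+\matr{A}\matr{B}+\matr{B}\matr{A}+\matr{B}^2$ applied with $\matr{A}=\matr{M}-\SUT{\matr{M}}$ and $\matr{B}=\SUT{\matr{M}}$. Writing this identity for both $\Md{1}$ and $\Md{2}$, it then suffices to establish the three congruences
$$
\Md{1}^2\equiv\Md{2}^2,\qquad \SUT{\Md{1}}^2\equiv\SUT{\Md{2}}^2,\qquad \bigl(\Md{1}-\SUT{\Md{1}}\bigr)^2\equiv\bigl(\Md{2}-\SUT{\Md{2}}\bigr)^2 \pmod{2^{u+1}} .
$$

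For these, I would write $\Md{1}=\Md{2}+2^u\matr{N}$ with $\matr{N}$ an integer circulant matrix (the difference of the two congruent integer circulant matrices, divided by $2^u$). Since $\SUT{\cdot}$ is $\Z$-linear, also $\SUT{\Md{1}}=\SUT{\Md{2}}+2^u\SUT{\matr{N}}$ and $\Md{1}-\SUT{\Md{1}}=\bigl(\Md{2}-\SUT{\Md{2}}\bigr)+2^u\bigl(\matr{N}-\SUT{\matr{N}}\bigr)$, so each of the three congruences has the form $(\matr{A}+2^u\matr{B})^2\equiv\matr{A}^2\pmod{2^{u+1}}$ with $\matr{A},\matr{B}$ integer matrices. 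Expanding gives
$$
(\matr{A}+2^u\matr{B})^2 = \matr{A}^2 + 2^u(\matr{A}\matr{B}+\matr{B}\matr{A}) + 2^{2u}\matr{B}^2 ,
$$
and since $u\ge 1$ we have $2u\ge u+1$, so the last term is $\equiv\matr{0}\pmod{2^{u+1}}$. Hence each congruence reduces to showing that $\matr{A}$ and $\matr{B}$ commute, in which case $\matr{A}\matr{B}+\matr{B}\matr{A}=2\matr{A}\matr{B}$ and the middle term is also $\equiv\matr{0}\pmod{2^{u+1}}$.

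This is where the circulant hypothesis is used. For the first pair, $\Md{2}$ and $\matr{N}$ are circulant, hence commute. For the second pair, $\SUT{\Md{2}}$ and $\SUT{\matr{N}}$ are strictly upper triangular Toeplitz matrices (the strictly upper triangular part of a circulant matrix has, for $s>r$, an $(r,s)$-entry depending only on $s-r$), and any two strictly upper triangular Toeplitz matrices commute, being polynomials without constant term in the nilpotent one-step shift. For the third pair, $\Md{2}-\SUT{\Md{2}}$ and $\matr{N}-\SUT{\matr{N}}$ are lower triangular Toeplitz matrices, which likewise commute. Feeding these three commutation facts into the reduction above yields the three congruences, and the identity at the start then gives the lemma.

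I do not expect a genuine obstacle here: the argument is short once the formal identity is noticed. The only points requiring care are that the three squared terms really do recombine, through that identity, into $\Md{i}\SUT{\Md{i}}+\SUT{\Md{i}}\Md{i}$, and that in each of the three summands one invokes the commutation statement for the correct class of matrices (circulant, strictly upper triangular Toeplitz, lower triangular Toeplitz); the hypothesis $u\ge 1$ is used exactly once, to discard the $2^{2u}$ terms.
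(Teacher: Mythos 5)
Your proof is correct, but it takes a genuinely different route from the paper. The paper argues entrywise: it computes the $(r,s)$-entry of $\Md{}\SUT{\Md{}}+\SUT{\Md{}}\Md{}$ explicitly, pairs off the products $a_ia_j$ and $a_ja_i$ to exhibit the entry as twice a sum of products plus (when $k+s-r$ is even) a single square $a_{(k+s-r)/2}^2$, and then invokes the elementary facts that $a\equiv b\pmod{2^u}$ implies $2a\equiv 2b$ and $a^2\equiv b^2\pmod{2^{u+1}}$. You instead work at the matrix level via the identity $\matr{M}\SUT{\matr{M}}+\SUT{\matr{M}}\matr{M}=\matr{M}^2+\SUT{\matr{M}}^2-\left(\matr{M}-\SUT{\matr{M}}\right)^2$, which reduces the claim to the statement that squaring respects the congruence mod $2^{u+1}$ within each of three commuting families (circulants, strictly upper triangular Toeplitz matrices, lower triangular Toeplitz matrices); the circulant hypothesis is used exactly to guarantee that the relevant pairs commute, and the $2^{2u}$ tail is killed by $u\ge1$. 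All three commutation claims check out, as does the initial identity (it is the expansion of $(\matr{A}+\matr{B})^2$ with $\matr{A}=\matr{M}-\SUT{\matr{M}}$, $\matr{B}=\SUT{\matr{M}}$, after adding $2\SUT{\matr{M}}^2$ to both sides). Your version is more conceptual and isolates cleanly where the circulant structure enters, while the paper's computation is more elementary but longer and requires the case split on the parity of $k+s-r$; both use the hypothesis $u\ge1$ in the same essential way, namely that doubled cross-terms and squares are stable one power of $2$ higher.
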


\begin{proof}
Suppose that
$$
\Md{1} = \left(a_{r,s}\right)_{1\le r,s\le k} = \Circ{a_0,a_1,\ldots,a_{k-1}},
$$
with $a_{r,s}=a_{(s-r\bmod{k})}$, for all integers $r,s\in\{1,\ldots,k\}$. Then, by definition, we have
$$
\SUT{\Md{1}} = \left(a_{r,s}'\right)_{1\le r,s\le k},
$$
with
$$
a_{r,s}' =
\left\{\begin{array}{ll}
 a_{r,s} = a_{s-r} & \text{if}\ s>r,\\
 0 & \text{otherwise},
\end{array}\right.
$$
for all integers $r,s\in\{1,\ldots,k\}$. Let
$$
\matr{\mathrm{P}_1} = \Md{1}\SUT{\Md{1}}+\SUT{\Md{1}}\Md{1}
$$
and let $r,s\in\{1,\ldots,k\}$.
Since
$$
\left(\Md{1}\SUT{\Md{1}}\right)_{r,s}
\begin{array}[t]{l}
 = \displaystyle\sum_{i=1}^{k}\left(\Md{1}\right)_{r,i}\left(\SUT{\Md{1}}\right)_{i,s}\\ \ \\
 = \displaystyle\sum_{i=1}^{s-1}a_{(i-r\bmod{k})}a_{s-i} = \sum_{i=1}^{s-1}a_{i}a_{(s-r-i\bmod{k})}
\end{array}
$$
and
$$
\left(\SUT{\Md{1}}\Md{1}\right)_{r,s}
\begin{array}[t]{l}
 = \displaystyle\sum_{i=1}^{k}\left(\SUT{\Md{1}}\right)_{r,i}\left(\Md{1}\right)_{i,s}\\ \ \\
 = \displaystyle\sum_{i=r+1}^{k}a_{i-r}a_{(s-i\bmod{k})} = \sum_{i=1}^{k-r}a_{i}a_{(s-r-i\bmod{k})},
\end{array}
$$
we obtain that
$$
\left(\matr{\mathrm{P}_1}\right)_{r,s} = \sum_{i=1}^{s-1}a_{i}a_{(s-r-i\bmod{k})} + \sum_{i=1}^{k-r}a_{i}a_{(s-r-i\bmod{k})}.
$$
We distinguish the both cases $s-1\le k-r$ and $s-1\ge k-r$.
\setcounter{case}{0}
\begin{case}[$s-1\le k-r$]
Then,
$$
\left(\matr{\mathrm{P}_1}\right)_{r,s} = 2\sum_{i=1}^{s-1}a_{i}a_{(s-r-i\bmod{k})} + \sum_{i=s}^{k-r}a_{i}a_{k+s-r-i}.
$$
If $k+s-r$ is odd, then
$$
\sum_{i=s}^{k-r}a_{i}a_{k+s-r-i} = \sum_{i=s}^{\frac{k+s-r-1}{2}}a_{i}a_{k+s-r-i} + \sum_{i=\frac{k+s-r+1}{2}}^{k-r}a_{i}a_{k+s-r-i} = 2\!\!\!\!\sum_{i=s}^{\frac{k+s-r-1}{2}}a_{i}a_{k+s-r-i}
$$
and
$$
\left(\matr{\mathrm{P}_1}\right)_{r,s} = 2\!\!\!\!\sum_{i=1}^{\frac{k+s-r-1}{2}}a_{i}a_{(s-r-i\bmod{k})}.
$$
Otherwise, if $k+s-r$ is even, then
$$
\sum_{i=s}^{k-r}a_{i}a_{k+s-r-i} \begin{array}[t]{l}
 = \displaystyle\sum_{i=s}^{\frac{k+s-r}{2}-1}a_{i}a_{k+s-r-i} + {a_{\frac{k+s-r}{2}}}^2+\sum_{i=\frac{k+s-r}{2}+1}^{k-r}a_{i}a_{k+s-r-i} \\ \ \\
 = 2\!\!\!\!\displaystyle\sum_{i=s}^{\frac{k+s-r}{2}-1}a_{i}a_{k+s-r-i}+{a_{\frac{k+s-r}{2}}}^2
\end{array}
$$
and
$$
\left(\matr{\mathrm{P}_1}\right)_{r,s} = 2\!\!\!\!\sum_{i=1}^{\frac{k+s-r}{2}-1}a_{i}a_{(s-r-i\bmod{k})}+{a_{\frac{k+s-r}{2}}}^2.
$$
\end{case}
\begin{case}[$s-1\ge k-r$]
Then,
$$
\left(\matr{\mathrm{P}_1}\right)_{r,s} = 2\sum_{i=1}^{k-r}a_{i}a_{(s-r-i\bmod{k})} + \sum_{i=k-r+1}^{s-1}a_{i}a_{k+s-r-i}.
$$
If $k+s-r$ is odd, then
$$
\sum_{i=k-r+1}^{s-1}a_{i}a_{k+s-r-i} = \sum_{i=k-r+1}^{\frac{k+s-r-1}{2}}a_{i}a_{k+s-r-i} + \sum_{i=\frac{k+s-r+1}{2}}^{s-1}a_{i}a_{k+s-r-i} = 2\!\!\!\!\sum_{i=k-r+1}^{\frac{k+s-r-1}{2}}a_{i}a_{k+s-r-i}
$$
and
$$
\left(\matr{\mathrm{P}_1}\right)_{r,s} = 2\!\!\!\!\sum_{i=1}^{\frac{k+s-r-1}{2}}a_{i}a_{(s-r-i\bmod{k})}.
$$
Otherwise, if $k+s-r$ is even, then
$$
\sum_{i=k-r+1}^{s-1}a_{i}a_{k+s-r-i} \begin{array}[t]{l}
 = \displaystyle\sum_{i=k-r+1}^{\frac{k+s-r}{2}-1}a_{i}a_{k+s-r-i} + {a_{\frac{k+s-r}{2}}}^2+\sum_{i=\frac{k+s-r}{2}+1}^{s-1}a_{i}a_{k+s-r-i} \\ \ \\
 = 2\!\!\!\!\displaystyle\sum_{i=k-r+1}^{\frac{k+s-r}{2}-1}a_{i}a_{k+s-r-i}+{a_{\frac{k+s-r}{2}}}^2
\end{array}
$$
and
$$
\left(\matr{\mathrm{P}_1}\right)_{r,s} = 2\!\!\!\!\sum_{i=1}^{\frac{k+s-r}{2}-1}a_{i}a_{(s-r-i\bmod{k})}+{a_{\frac{k+s-r}{2}}}^2.
$$
\end{case}
\noindent In all cases, we obtain that
\begin{equation}\label{eq*2}
\left(\matr{\mathrm{P}_1}\right)_{r,s} = \left\{\begin{array}{ll}
2\!\!\!\!\displaystyle\sum_{i=1}^{\frac{k+s-r-1}{2}}a_{i}a_{(s-r-i\bmod{k})} & \text{if}\ k+s-r\ \text{odd},\\ \ \\
2\!\!\!\!\displaystyle\sum_{i=1}^{\frac{k+s-r}{2}-1}a_{i}a_{(s-r-i\bmod{k})}+{a_{\frac{k+s-r}{2}}}^2 & \text{if}\ k+s-r\ \text{even}.
\end{array}\right.
\end{equation}
Similarly, for
$$
\Md{2} = \left(b_{(s-r\bmod{k})}\right)_{1\le r,s\le k} = \Circ{b_0,b_1,\ldots,b_{k-1}}
$$
and
$$
\matr{\mathrm{P}_2} = \Md{2}\SUT{\Md{2}}+\SUT{\Md{2}}\Md{2},
$$
we obtain that
\begin{equation}\label{eq*3}
\left(\matr{\mathrm{P}_2}\right)_{r,s} = \left\{\begin{array}{ll}
2\!\!\!\!\displaystyle\sum_{i=1}^{\frac{k+s-r-1}{2}}b_{i}b_{(s-r-i\bmod{k})} & \text{if}\ k+s-r\ \text{odd},\\ \ \\
2\!\!\!\!\displaystyle\sum_{i=1}^{\frac{k+s-r}{2}-1}b_{i}b_{(s-r-i\bmod{k})}+{b_{\frac{k+s-r}{2}}}^2 & \text{if}\ k+s-r\ \text{even},
\end{array}\right.
\end{equation}
for all $r,s\in\{1,\ldots,k\}$. Moreover, for any integers $a$ and $b$ such that $a\equiv b\pmod{2^u}$, it is easy to see that
$$
2a\equiv 2b\pmod{2^{u+1}}\quad \text{and}\quad a^2\equiv b^2\pmod{2^{u+1}}
$$
since $u\ge1$. Finally, since $a_i\equiv b_i\pmod{2^u}$ for all $i\in\{1,\ldots,k\}$ by hypothesis, we deduce from \eqref{eq*2} and \eqref{eq*3} that
$$
\left(\matr{\mathrm{P}_1}\right)_{r,s} \equiv \left(\matr{\mathrm{P}_2}\right)_{r,s} \pmod{2^{u+1}},
$$
for all $r,s\in\{1,\ldots,k\}$. This completes the proof.
\end{proof}

\begin{lem}\label{lem*7}
For all positive integers $u\ge5$, we have
$$
9\CT{24}{3.2^u} \equiv \CTd{0} + 2^{u-3}\CTd{1} \pmod{2^u},
$$
where
$$
\CTd{0} = \Circ{0,0,0,0,0,0,0,0,2,0,0,0,0,0,0,0,1,0,0,0,0,0,0,0}
$$
and
$$
\CTd{1} = \Circ{3,0,4,0,6,0,4,0,4,0,4,0,6,0,4,0,3,0,0,0,4,0,0,0}.
$$
\end{lem}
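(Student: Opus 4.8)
The plan is to argue by induction on $u\ge 5$.

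\emph{Base case.} For $u=5$ I would read off $\CT{24}{96}=\CT{24}{3.2^{5}}$ modulo $32$ directly from the value of $\T{24}{96}$ modulo $32$ displayed just before Theorem~\ref{prop18}: by Lemma~\ref{lem*5} the circulant matrix $\CT{24}{96}$ has generating sequence $\left(\SB{1}{24}{96}{0},\SB{1}{24}{96}{23},\SB{1}{24}{96}{22},\ldots,\SB{1}{24}{96}{1}\right)$, and the integers $\SB{1}{24}{96}{0},\ldots,\SB{1}{24}{96}{23}$ are exactly the entries of the first of the two tuples in the $\mathbf{Toepl}$ presentation of $\T{24}{96}$. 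A short direct computation then checks that $9\,\CT{24}{96}\equiv\CTd{0}+2^{2}\CTd{1}\pmod{32}$, which is the statement for $u=5$.

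\emph{Inductive step, the recursion.} Assume the statement for some $u\ge5$ and set $C=\C{24}{3.2^{u}}$. By Proposition~\ref{prop*2}, $\C{24}{3.2^{u+1}}=C^{2}$ and $\T{24}{3.2^{u+1}}=\T{24}{3.2^{u}}\,C+C\,\T{24}{3.2^{u}}$. Substituting the decomposition $\T{24}{3.2^{u}}=\CT{24}{3.2^{u}}+\SUT{C}$ of Lemma~\ref{lem*5} on the right, the analogous one $\T{24}{3.2^{u+1}}=\CT{24}{3.2^{u+1}}+\SUT{C^{2}}$ on the left, and using that the circulant matrices $\CT{24}{3.2^{u}}$ and $C$ commute, I obtain the recursion
$$
\CT{24}{3.2^{u+1}}=2\,C\,\CT{24}{3.2^{u}}+\left(C\,\SUT{C}+\SUT{C}\,C-\SUT{C^{2}}\right).
$$
The parenthesised matrix is again circulant (it equals $\CT{24}{3.2^{u+1}}-2\,C\,\CT{24}{3.2^{u}}$), and each of the three summands is computable modulo $2^{u+1}$ from earlier results: the term $2\,C\,\CT{24}{3.2^{u}}$ depends only on $C$ and $\CT{24}{3.2^{u}}$ modulo $2^{u}$, given by Theorem~\ref{prop19} ($3C\equiv\Cd{0}+2^{u-2}\Cd{1}\pmod{2^{u}}$) and the induction hypothesis ($9\,\CT{24}{3.2^{u}}\equiv\CTd{0}+2^{u-3}\CTd{1}\pmod{2^{u}}$); the term $C\,\SUT{C}+\SUT{C}\,C$ is pinned down modulo $2^{u+1}$ by Lemma~\ref{lem*6} applied to $3C$, noting $9\bigl(C\,\SUT{C}+\SUT{C}\,C\bigr)=(3C)\,\SUT{3C}+\SUT{3C}\,(3C)$; and $\SUT{C^{2}}=\SUT{\C{24}{3.2^{u+1}}}$ is determined modulo $2^{u+1}$ by Theorem~\ref{prop19} itself, valid since $u+1\ge 4$.

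\emph{Inductive step, the conclusion.} Multiply the recursion by $27$, so that every $C$ picks up a factor $3$ and every $\CT{24}{3.2^{u}}$ a factor $9$, and expand the three contributions with the congruences above. Because $u\ge5$ one has $2u-4\ge u+1$, so all terms quadratic in $2^{u-2}$ drop out modulo $2^{u+1}$, and the outcome takes the form $X+2^{u-2}Y\pmod{2^{u+1}}$ for two fixed integer circulant matrices $X$ and $Y$ assembled from $\Cd{0},\Cd{1},\CTd{0},\SUT{\Cd{0}},\SUT{\Cd{1}}$ and their pairwise products. Comparing with the target $3\,\CTd{0}+2^{u-2}(3\,\CTd{1})\pmod{2^{u+1}}$ — which is $3$ times the asserted congruence — and using that this equality must hold for every $u\ge5$ forces the integer identity $X=3\,\CTd{0}$ together with the congruence $Y\equiv 3\,\CTd{1}\pmod{8}$; multiplying back by $3^{-1}$ modulo $2^{u+1}$ then yields $9\,\CT{24}{3.2^{u+1}}\equiv\CTd{0}+2^{u-2}\CTd{1}\pmod{2^{u+1}}$, closing the induction.

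The conceptual skeleton is short; the real work — and the main obstacle — is the explicit computation of the $24\times 24$ circulant and Toeplitz products (such as $\Cd{0}\,\SUT{\Cd{0}}+\SUT{\Cd{0}}\,\Cd{0}$, $\Cd{0}\,\CTd{0}$, and the various cross terms) and of the actions of $\SUT{(\cdot)}$, followed by the verification that after dividing by $27$ they reassemble into exactly $\CTd{0}+2^{u-2}\CTd{1}$. The genuinely delicate ingredient is that $\SUT{(\cdot)}$ does not commute with matrix multiplication, which is precisely why the passage from ``$C$ known modulo $2^{u}$'' to ``$C\,\SUT{C}+\SUT{C}\,C$ and $\SUT{C^{2}}$ known modulo $2^{u+1}$'' requires Lemma~\ref{lem*6} (and the sparsity of $\Cd{0}$, which has only three nonzero generating entries) rather than a naive lift.
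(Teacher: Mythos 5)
Your proposal is correct and follows essentially the same route as the paper: the same base case read off from $\T{24}{96}\bmod 32$ via Lemma~\ref{lem*5}, the same recursion $\CT{24}{3.2^{u+1}}=2\,\C{24}{3.2^u}\CT{24}{3.2^u}+\SUT{\C{24}{3.2^u}}\C{24}{3.2^u}+\C{24}{3.2^u}\SUT{\C{24}{3.2^u}}-\SUT{\C{24}{3.2^{u+1}}}$, the same use of Lemma~\ref{lem*6} and Theorem~\ref{prop19} to control each term modulo $2^{u+1}$, and the same final multiplication by $27$, discarding of the $2^{2u-4}$ terms, direct computation of the two circulant matrices, and division by $3$.
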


\begin{proof}
By induction on $u\ge 5$. For $u=5$, we know that
\begin{equation*}
\resizebox{\textwidth}{!}{$
\CT{24}{96}
\begin{array}[t]{l}
= \left(\SB{1}{24}{96}{(r-s\bmod{24})}\right)_{1\le r,s\le 24}\\[2ex]
\equiv \Circ{12,0,16,0,24,0,16,0,2,0,16,0,24,0,16,0,5,0,0,0,16,0,0,0} \pmod{32}
\end{array}
$}
\end{equation*}
and thus
$$
9\CT{24}{96}
\begin{array}[t]{l}
\equiv \Circ{12, 0, 16, 0, 24, 0, 16, 0, 18, 0, 16, 0, 24, 0, 16, 0, 13, 0, 0, 0, 16, 0, 0, 0} \\[2ex]
\equiv \CTd{0} + 4\CTd{1} \pmod{32},
\end{array}
$$
as announced. Suppose now that the result is true for some positive integer $u\ge5$ and prove it for $u+1$. First, we know from Lemma~\ref{lem*5} and Proposition~\ref{prop*2} that
\begin{equation*}
\resizebox{\textwidth}{!}{$
\CT{24}{3.2^{u+1}}
\begin{array}[t]{l}
 = \T{24}{3.2^{u+1}} - \SUT{\C{24}{3.2^{u+1}}} \\[2ex]
 = \T{24}{3.2^u}\C{24}{3.2^u} + \C{24}{3.2^u}\T{24}{3.2^u} - \SUT{\C{24}{3.2^{u+1}}} \\[2ex]
 = \begin{array}[t]{l}
 \left(\CT{24}{3.2^u}+\SUT{\C{24}{2^u}}\right)\C{24}{3.2^u} \\[2ex]
 + \C{24}{3.2^u}\left(\CT{24}{3.2^u}+\SUT{\C{24}{2^u}}\right)- \SUT{\C{24}{3.2^{u+1}}}.
\end{array} \\[2ex]
\end{array}
$}
\end{equation*}
Moreover, since the product of circulant matrices is commutative, we obtain that
\begin{equation}\label{eq*4}
\CT{24}{3.2^{u+1}} =
\begin{array}[t]{l}
2\CT{24}{3.2^u}\C{24}{3.2^u} + \SUT{\C{24}{3.2^u}}\C{24}{3.2^u} \\[2ex]
+ \C{24}{3.2^u}\SUT{\C{24}{3.2^u}} - \SUT{\C{24}{3.2^{u+1}}}.
\end{array}
\end{equation}
Since
$$
9\CT{24}{3.2^u} \equiv \CTd{0}+2^{u-3}\CTd{1} \pmod{2^u},
$$
by induction hypothesis, and
$$
3\C{24}{3.2^u} \equiv \Cd{0}+2^{u-2}\Cd{1} \pmod{2^u},
$$
by Theorem~\ref{prop19}, we obtain that
\begin{equation*}
\resizebox{\textwidth}{!}{$
27\CT{24}{3.2^u}\C{24}{3.2^u} \equiv \CTd{0}\Cd{0} + 2^{u-3}\left(2\CTd{0}\Cd{1}+\CTd{1}\Cd{0}\right) + 2^{2u-5}\CTd{1}\Cd{1} \pmod{2^u}.
$}
\end{equation*}
Since $2u-5\ge u$ when $u\ge 5$, we have
$$
27\CT{24}{3.2^u}\C{24}{3.2^u} \equiv \CTd{0}\Cd{0} + 2^{u-3}\left(2\CTd{0}\Cd{1}+\CTd{1}\Cd{0}\right) \pmod{2^u}
$$
and thus
\begin{equation}\label{eq*5}
54\CT{24}{3.2^u}\C{24}{3.2^u} \equiv 2\CTd{0}\Cd{0} + 2^{u-2}\left(2\CTd{0}\Cd{1}+\CTd{1}\Cd{0}\right)\pmod{2^{u+1}}.
\end{equation}
Moreover, since
$$
3\C{24}{3.2^u} \equiv \Cd{0}+2^{u-2}\Cd{1} \pmod{2^u},
$$
by Theorem~\ref{prop19}, we obtain that
$$
3\SUT{\C{24}{3.2^u}} \equiv \SUT{\Cd{0}}+2^{u-2}\SUT{\Cd{1}} \pmod{2^u}.
$$
It follows, from Lemma~\ref{lem*6}, that
$$
\begin{array}{l}
9\left(\SUT{\C{24}{3.2^u}}\C{24}{3.2^u}+\C{24}{3.2^u}\SUT{\C{24}{3.2^u}}\right) \\[2ex]
\equiv \begin{array}[t]{l}
\left(\SUT{\Cd{0}}+2^{u-2}\SUT{\Cd{1}}\right)\left(\Cd{0}+2^{u-2}\Cd{1}\right) \\[2ex]
+ \left(\Cd{0}+2^{u-2}\Cd{1}\right)\left(\SUT{\Cd{0}}+2^{u-2}\SUT{\Cd{1}}\right) \\[2ex] 
\end{array}\\
\equiv \begin{array}[t]{l}
\left(\SUT{\Cd{0}}\Cd{0}+\Cd{0}\SUT{\Cd{0}}\right) \\[2ex]
+ 2^{u-2}\left(\SUT{\Cd{0}}\Cd{1}+\Cd{1}\SUT{\Cd{0}}+\SUT{\Cd{1}}\Cd{0}+\Cd{0}\SUT{\Cd{1}}\right) \\[2ex]
+ 2^{2u-4}\left(\SUT{\Cd{1}}\Cd{1}+\Cd{1}\SUT{\Cd{1}}\right) \pmod{2^{u+1}}.
\end{array}
\end{array}
$$
Since $2u-4\ge u+1$ when $u\ge5$, we obtain that
\begin{equation}\label{eq*6}
\resizebox{\textwidth}{!}{$
\begin{array}{l}
9\left(\SUT{\C{24}{3.2^u}}\C{24}{3.2^u}+\C{24}{3.2^u}\SUT{\C{24}{3.2^u}}\right) \\[2ex]
\equiv \begin{array}[t]{l}
\left(\SUT{\Cd{0}}\Cd{0}+\Cd{0}\SUT{\Cd{0}}\right) \\[2ex]
+ 2^{u-2}\left(\SUT{\Cd{0}}\Cd{1}+\Cd{1}\SUT{\Cd{0}}+\SUT{\Cd{1}}\Cd{0}+\Cd{0}\SUT{\Cd{1}}\right) \pmod{2^{u+1}}.
\end{array}
\end{array}
$}
\end{equation}
Finally, since
$$
3\C{24}{3.2^{u+1}} \equiv \Cd{0}+2^{u-1}\Cd{1} \pmod{2^{u+1}},
$$
by Theorem~\ref{prop19}, we obtain that
\begin{equation}\label{eq*7}
3\SUT{\C{24}{3.2^{u+1}}} \equiv \SUT{\Cd{0}}+2^{u-1}\SUT{\Cd{1}} \pmod{2^{u+1}}.
\end{equation}
Combining \eqref{eq*4}, \eqref{eq*5}, \eqref{eq*6} and \eqref{eq*7}, this leads to
$$
27\CT{24}{3.2^{u+1}} \equiv \matr{\mathrm{X}_0} + 2^{u-2}\matr{\mathrm{X}_1} \pmod{2^{u+1}},
$$
where
$$
\matr{\mathrm{X}_0} = 2\CTd{0}\Cd{0}+3\SUT{\Cd{0}}\Cd{0}+3\Cd{0}\SUT{\Cd{0}}-9\SUT{\Cd{0}}
$$
and
$$
\matr{\mathrm{X}_1} =
\begin{array}[t]{l}
2\CTd{0}\Cd{1}+\CTd{1}\Cd{0}+3\SUT{\Cd{0}}\Cd{1}+3\Cd{1}\SUT{\Cd{0}} \\[1ex] +3\SUT{\Cd{1}}\Cd{0}+3\Cd{0}\SUT{\Cd{1}}-18\SUT{\Cd{1}}.
\end{array}
$$
By direct computation, we obtain that
$$
\matr{\mathrm{X}_0} = \Circ{0, 0, 0, 0, 0, 0, 0, 0, 6, 0, 0, 0, 0, 0, 0, 0, 3, 0, 0, 0, 0, 0, 0, 0} = 3\CTd{0}
$$
and
\begin{equation*}
\resizebox{\textwidth}{!}{$
\matr{\mathrm{X}_1} \begin{array}[t]{l}
= \Circ{-7, 0, -12, 0, -14, 0, -12, 0, -12, 0, -20, 0, -22, 0, -20, 0, -23, 0, -40, 0, -36, 0, -40, 0} \\
\equiv 3\CTd{1} \pmod{8}.
\end{array}
$}
\end{equation*}
Therefore,
$$
27\CT{24}{3.2^{u+1}} \equiv 3\CTd{0} + 3.2^{u-2}\CTd{1} \pmod{2^{u+1}}.
$$
Finally, since $\gcd(3,2^{u+1})=1$, we conclude that
$$
9\CT{24}{3.2^{u+1}} \equiv \CTd{0} + 2^{u-2}\CTd{1} \pmod{2^{u+1}}.
$$
This completes the proof.
\end{proof}

We are now ready to prove Theorem~\ref{prop18}.

\begin{proof}[Proof of Theorem~\ref{prop18}]
First, by Lemma~\ref{lem*5}, we have
$$
\T{24}{3.2^u} = \CT{24}{3.2^u} + \SUT{\C{24}{3.2^u}}.
$$
Moreover, we know from Lemma~\ref{lem*7} that
$$
9\CT{24}{3.2^u} \equiv \CTd{0} + 2^{u-3}\CTd{1} \pmod{2^u}
$$
and from Theorem~\ref{prop19} that
$$
3\C{24}{3.2^u} \equiv \Cd{0} + 2^{u-2}\Cd{1} \pmod{2^u}
$$
and thus
$$
3\SUT{\C{24}{3.2^u}} \equiv \SUT{\Cd{0}} + 2^{u-2}\SUT{\Cd{1}} \pmod{2^u}.
$$
It follows that
$$
9\T{24}{3.2^u} \equiv \left(\CTd{0}+3\SUT{\Cd{0}}\right) + 2^{u-3}\left(\CTd{1}+6\SUT{\Cd{1}}\right) \pmod{2^u}.
$$
By direct computation, we obtain that
$$
\begin{array}{l}
\CTd{0}+3\SUT{\Cd{0}} \\[2ex]
= \begin{array}[t]{l}
\Circ{0,0,0,0,0,0,0,0,2,0,0,0,0,0,0,0,1,0,0,0,0,0,0,0} \\
+ 3\Toepl{(0\ldots,0)}{(0,0,0,0,0,0,0,-1,0,0,0,0,0,0,0,-1,0,0,0,0,0,0,0)} \\[2ex]
\end{array}\\
= \mathbf{Toepl}\!
\begin{array}[t]{l}
\!\left((0,0,0,0,0,0,0,0,1,0,0,0,0,0,0,0,2,0,0,0,0,0,0,0), \right.\\
\ \!\left.(0,0,0,0,0,0,0,-1,0,0,0,0,0,0,0,-2,0,0,0,0,0,0,0)\right)
\end{array} \\[2ex]
= \Td{0}
\end{array}
$$
and
$$
\begin{array}{l}
\CTd{1} + 6\SUT{\Cd{1}} \\[2ex]
 = \begin{array}[t]{l}
\Circ{3,0,4,0,6,0,4,0,4,0,4,0,6,0,4,0,3,0,0,0,4,0,0,0} \\
+ 6\Toepl{(0,\ldots,0)}{(0,2,0,1,0,2,0,1,0,0,0,2,0,0,0,1,0,2,0,1,0,2,0)} \\[2ex]
\end{array} \\
= \mathbf{Toepl}\!
\begin{array}[t]{l}
\!\left((3, 0, 0, 0, 4, 0, 0, 0, 3, 0, 4, 0, 6, 0, 4, 0, 4, 0, 4, 0, 6, 0, 4, 0), \right.\\
\ \!\left.(0, 16, 0, 12, 0, 16, 0, 10, 0, 4, 0, 18, 0, 4, 0, 9, 0, 12, 0, 10, 0, 12, 0)\right) \\[2ex]
\end{array} \\
\equiv \mathbf{Toepl}\!
\begin{array}[t]{l}
\!\left((3, 0, 0, 0, 4, 0, 0, 0, 3, 0, 4, 0, 6, 0, 4, 0, 4, 0, 4, 0, 6, 0, 4, 0), \right.\\
\ \!\left.(0, 0, 0, 4, 0, 0, 0, 2, 0, 4, 0, 2, 0, 4, 0, 1, 0, 4, 0, 2, 0, 4, 0)\right) \\[2ex]
\end{array} \\
\equiv \Td{1} \pmod{8}.
\end{array}
$$
Therefore,
$$
9\T{24}{3.2^u} \equiv \Td{0} + 2^{u-3}\Td{1} \pmod{2^u}.
$$
This completes the proof.
\end{proof}

\section{Periodicity of solutions}

For any positive integer $m$, let $\setP{m}$ be the set of $24$-tuples of integers $A$ such that the orbit of the sequence $\IAP{\pi_m(A)}{\pi_m(A)\matX{24}}$ is $(24m,24m)$-periodic in $\Zn{m}$. From Theorem~\ref{thm1}, it is straightforward to see that
$$
\setP{m} = \left\{ A\in\Z^{24}\ \middle|\ \pi_{m}(A)\in\Lker_{m}\M{24}{24m}\right\},
$$
where $\M{24}{24m} = \W{24}{24m}+\matX{24}\T{24}{24m} = \C{24}{24m}-\matI{24}+\matX{24}\T{24}{24m}$.

In this section, we determine the set $\setF$ of $24$-tuples of integers $A$ such that the orbit of $S = \IAP{\pi_{2^u}(A)}{\pi_{2^u}(A)\matX{k}}$ is $(24.2^u,24.2^u)$-periodic, for all non-negative integers $u$, that is,
$$
\setF = \bigcap_{u\in\N}\setP{2^u} = \left\{ A\in\Z^{24}\ \middle|\ \pi_{2^u}(A)\in\Lker_{2^u}\M{24}{24.2^u},\ \forall u\in\N \right\}.
$$
First, by definition, it is clear that $\setF$ is a submodule of the $\Z$-module of $24$-tuples of integers. Moreover, using results of the previous section, we show that $\setF$ is the free $\Z$-module of rank $16$ whose $\left\{Z_1,\ldots,Z_{16}\right\}$ is a basis.

\begin{thm}\label{thm2}
For the $24$-tuples $Z_1,\ldots,Z_{16}$ given in Table~\ref{tab4}, we have
$$
\setF = \left\langle Z_1,\ldots,Z_{16}\right\rangle.
$$
\end{thm}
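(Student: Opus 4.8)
The plan is to collapse the infinite intersection $\setF=\bigcap_{u}\setP{2^u}$ onto the left kernel of a single fixed integer matrix, and then to identify that kernel with $\langle Z_1,\ldots,Z_{16}\rangle$ by elementary lattice arithmetic. Set
$$
\matr{\mathrm{N}} \;=\; 3\Cd{0} - 9\matI{24} + \matX{24}\Td{0},
$$
a fixed $24\times 24$ integer matrix built from the data of Section~6. The first, and main, step is to prove that
$$
9\,\M{24}{24\cdot 2^u} \;\equiv\; \matr{\mathrm{N}} \pmod{2^u}
$$
for every positive integer $u$.

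Since $24\cdot 2^u = 3\cdot 2^{u+3}$ and $24\cdot 2^u$ is even, we have $\M{24}{24\cdot 2^u} = \C{24}{24\cdot 2^u} - \matI{24} + \matX{24}\T{24}{24\cdot 2^u}$. For $u\geq 2$, Theorem~\ref{prop19} applied with exponent $u+3\geq 4$ gives $3\C{24}{3\cdot 2^{u+3}}\equiv \Cd{0}+2^{u+1}\Cd{1}\equiv\Cd{0}\pmod{2^u}$, hence $9\C{24}{24\cdot 2^u}\equiv 3\Cd{0}\pmod{2^u}$; and Theorem~\ref{prop18} applied with exponent $u+3\geq 5$ gives $9\T{24}{3\cdot 2^{u+3}}\equiv\Td{0}+2^{u}\Td{1}\equiv\Td{0}\pmod{2^u}$, hence $9\matX{24}\T{24}{24\cdot 2^u}\equiv\matX{24}\Td{0}\pmod{2^u}$. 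Assembling the three pieces yields the displayed congruence for $u\geq 2$. The case $u=1$ is a finite check: reducing the explicit matrices $\C{24}{48}$ and $\T{24}{48}$ recorded in Section~6 modulo $2$ and comparing with $\Cd{0}$ and $\Td{0}$ modulo $2$ shows $9\M{24}{48}\equiv\matr{\mathrm{N}}\pmod 2$. The case $u=0$ is vacuous since $\Zn{1}$ is trivial.

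Because $9$ is a unit in $\Zn{2^u}$, the congruence above gives $\setP{2^u}=\{A\in\Z^{24}\mid A\matr{\mathrm{N}}\equiv 0\pmod{2^u}\}$ for $u\geq 1$, with $\setP{1}=\Z^{24}$, whence
$$
\setF \;=\; \bigcap_{u\in\N}\setP{2^u} \;=\; \left\{A\in\Z^{24}\ \middle|\ A\matr{\mathrm{N}}=0\right\} \;=:\; \mathcal{K},
$$
since an integer vector divisible by every power of $2$ is $0$. It remains to prove $\mathcal{K}=\langle Z_1,\ldots,Z_{16}\rangle$. One direction is a direct verification: $Z_j\matr{\mathrm{N}}=0$ for each $j$, so $\langle Z_1,\ldots,Z_{16}\rangle\subseteq\mathcal{K}$, and the $Z_j$ are $\Z$-linearly independent, so $\mathcal{K}$ has rank at least $16$. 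Conversely, $\matr{\mathrm{N}}\equiv\M{24}{48}\pmod 2$ and $\pi_2(\M{24}{48})$ has rank $8$ (this matrix is displayed in Section~5), so $\matr{\mathrm{N}}$ has rank at least $8$ and $\mathcal{K}$ has rank at most $24-8=16$; thus $\mathcal{K}$ has rank exactly $16$. Finally, $\mathcal{K}$ is a saturated submodule of $\Z^{24}$ because left kernels over $\Z$ are saturated, and $\langle Z_1,\ldots,Z_{16}\rangle$ is saturated too — which follows from exhibiting a $16\times 16$ minor of the matrix with rows $Z_1,\ldots,Z_{16}$ equal to $\pm 1$ — so the inclusion $\langle Z_1,\ldots,Z_{16}\rangle\subseteq\mathcal{K}$ of saturated submodules of equal rank must be an equality. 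Hence $\setF=\mathcal{K}=\langle Z_1,\ldots,Z_{16}\rangle$.

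The hard part is the first step, namely obtaining uniform control of $\M{24}{24\cdot 2^u}$ modulo $2^u$ as $u\to\infty$; this is exactly what Theorems~\ref{prop19} and~\ref{prop18} deliver, through a careful $2$-adic induction on products of the circulant matrices $\C{24}{3\cdot 2^u}$ and the Toeplitz matrices $\T{24}{3\cdot 2^u}$. Once the problem has been reduced to the single matrix $\matr{\mathrm{N}}$, the rest is bounded, routine linear algebra over $\Z$ and over $\Zn{2^u}$, with no further limiting process involved.
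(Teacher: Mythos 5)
Your proof is correct and takes essentially the same route as the paper: both reduce the infinite intersection to the left kernel of the single integer matrix $\Nd{0}=3\Cd{0}-9\matI{24}+\matX{24}\Td{0}$ via the congruence $9\M{24}{24.2^u}\equiv\Nd{0}\pmod{2^u}$ (deduced from Theorems~\ref{prop19} and~\ref{prop18} for $u\ge2$, with the cases $u\in\{0,1\}$ checked directly), then identify that kernel with $\left\langle Z_1,\ldots,Z_{16}\right\rangle$. The only minor difference is in the last step, where the paper extracts the $Z_i$ from explicit row relations of $\Nd{0}$ and a rank-nullity count, while you verify $Z_i\Nd{0}=0$ directly and close with a rank bound from the mod-$2$ reduction plus a saturation argument; both are routine and valid.
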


The proof is based on the following three lemmas.

\begin{lem}\label{thm3}
For all non-negative integers $u$, we have
$$
9\M{24}{24.2^u} \equiv \Nd{0} \pmod{2^u},
$$
where $\Nd{0}$ is the integer matrix defined by
\begin{center}
\includegraphics[width=\textwidth]{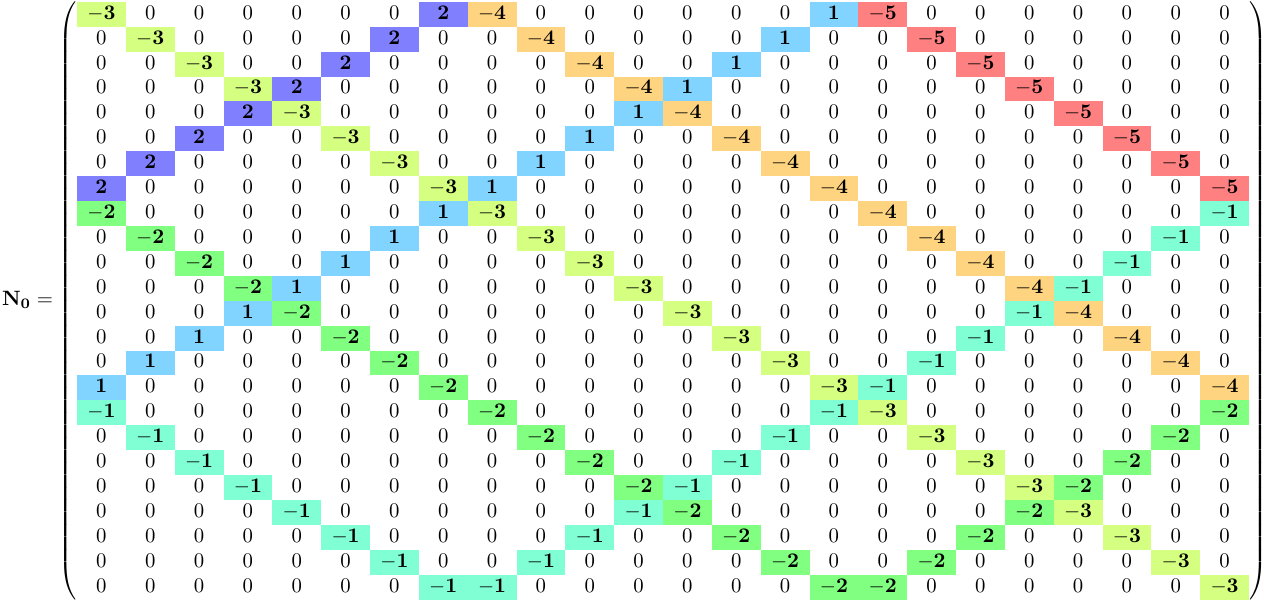}
\end{center}
\end{lem}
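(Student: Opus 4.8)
The plan is to rewrite $\M{24}{24.2^u}$ in terms of the circulant and Toeplitz matrices whose shape modulo powers of two was pinned down earlier, and then read off the stable matrix $\Nd{0}$. Since $24.2^u$ is even we have $\W{24}{24.2^u}=\C{24}{24.2^u}-\matI{24}$, so
$$
\M{24}{24.2^u}=\C{24}{24.2^u}-\matI{24}+\matX{24}\T{24}{24.2^u},
$$
and hence $9\M{24}{24.2^u}=9\C{24}{24.2^u}-9\matI{24}+\matX{24}\bigl(9\T{24}{24.2^u}\bigr)$. Writing $24.2^u=3.2^{u+3}$, everything is controlled by $\C{24}{3.2^{u+3}}$ and $\T{24}{3.2^{u+3}}$.

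\emph{Case $u\ge 2$.} Then $u+3\ge 5$, so both Theorem~\ref{prop19} (valid for exponent $\ge 4$) and Theorem~\ref{prop18} (valid for exponent $\ge 5$) apply. Theorem~\ref{prop19} yields $3\C{24}{3.2^{u+3}}\equiv\Cd{0}+2^{u+1}\Cd{1}\pmod{2^{u+3}}$; multiplying by $3$ and reducing modulo $2^u$ annihilates the $2^{u+1}$-term, so $9\C{24}{24.2^u}\equiv 3\Cd{0}\pmod{2^u}$. Theorem~\ref{prop18} yields $9\T{24}{3.2^{u+3}}\equiv\Td{0}+2^{u}\Td{1}\pmod{2^{u+3}}$, so $9\T{24}{24.2^u}\equiv\Td{0}\pmod{2^u}$. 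Combining these, $9\M{24}{24.2^u}\equiv 3\Cd{0}-9\matI{24}+\matX{24}\Td{0}\pmod{2^u}$, and a direct $24\times 24$ matrix computation confirms that $3\Cd{0}-9\matI{24}+\matX{24}\Td{0}$ is exactly $\Nd{0}$.

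\emph{Cases $u\in\{0,1\}$.} For $u=0$ the congruence is modulo $1$ and vacuous. For $u=1$, $9$ is odd, so $9\M{24}{48}\equiv\M{24}{48}\pmod 2$, and it remains to check $\pi_2(\M{24}{48})=\pi_2(\Nd{0})$: the left-hand side is the matrix written out explicitly in Section~5, and the right-hand side is the reduction modulo $2$ of the matrix $\Nd{0}$ displayed in the statement.

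The argument carries no real content beyond the exponent translation $24.2^u=3.2^{u+3}$ and the remark that a congruence modulo $2^{u+3}$ holds a fortiori modulo $2^u$, which makes the correction terms $2^{u+1}\Cd{1}$ and $2^{u}\Td{1}$ disappear. The effort is all computational: the $24\times 24$ matrix bookkeeping identifying $3\Cd{0}-9\matI{24}+\matX{24}\Td{0}$ with $\Nd{0}$, keeping track of which of Theorems~\ref{prop19} and~\ref{prop18} applies for which exponent, and the small hand-checks for $u\in\{0,1\}$. That routine matrix arithmetic is the only mild obstacle.
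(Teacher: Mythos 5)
Your proof is correct and follows essentially the same route as the paper: translate $24\cdot 2^u=3\cdot 2^{u+3}$, apply Theorems~\ref{prop19} and~\ref{prop18}, drop the correction terms $2^{u+1}\Cd{1}$ and $2^{u}\Td{1}$ upon reducing modulo $2^u$, and identify $3\Cd{0}-9\matI{24}+\matX{24}\Td{0}$ with $\Nd{0}$ by direct computation. The only (immaterial) difference is at $u=1$, where the paper verifies $9\T{24}{48}\equiv\Td{0}\pmod{2}$ from the tabulated value of $\T{24}{48}$ modulo $16$, while you instead check $\pi_2(\M{24}{48})=\pi_2(\Nd{0})$ directly against the matrix displayed in Section~5.
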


\begin{proof}
First, from Theorem~\ref{prop19}, we know that
$$
3\C{24}{24.2^u} = 3\C{24}{3.2^{u+3}} \equiv \Cd{0}+2^{u+1}\Cd{1} \pmod{2^{u+3}}
$$
and thus
$$
3\C{24}{24.2^u} \equiv \Cd{0} \pmod{2^u},
$$
for all $u\ge 1$. The result is clear for $u=0$. Moreover, From Theorem~\ref{prop18}, we know that
$$
9\T{24}{24.2^u} = 9\T{24}{3.2^{u+3}} \equiv \Td{0}+2^{u}\Td{1} \pmod{2^{u+3}}
$$
and thus
$$
9\T{24}{24.2^u} \equiv \Td{0} \pmod{2^u},
$$
for all $u\ge 2$. The result is also true for $u=1$, since
$$
\T{24}{48} \equiv \mathbf{Toepl}\!
\begin{array}[t]{l}
\!\left( (14,0,0,0,8,0,0,0,15,0,8,0,4,0,8,0,2,0,8,0,4,0,8,0), \right.\\
\ \!\left.(0,0,0,0,0,0,0,3,0,8,0,12,0,8,0,0,0,8,0,4,0,8,0) \right)
\end{array} \pmod{16},
$$
and is trivial for $u=0$. Therefore,
$$
9\M{24}{24.2^u}
\begin{array}[t]{l}
= 9\C{24}{24.2^u}-9\matI{24}+9\matX{24}\T{24}{24.2^u} \\[2ex]
\equiv 3\Cd{0}-9\matI{24}+\matX{24}\Td{0} \pmod{2^u}. \\
\end{array}
$$
By direct computation, we obtain that
$$
3\Cd{0}-9\matI{24}+\matX{24}\Td{0} = \Nd{0}.
$$
This completes the proof.
\end{proof}

\begin{lem}
$\setF=\Lker\Nd{0}$.
\end{lem}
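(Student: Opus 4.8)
The plan is to derive the equality from Lemma~\ref{thm3} alone, together with the elementary observation that an integer which is divisible by $2^u$ for every $u\in\N$ must be zero. First I would unwind both sides. By definition, $A\in\setF$ exactly when $\pi_{2^u}(A)\in\Lker_{2^u}\M{24}{24.2^u}$ for all $u\in\N$, i.e.\ when $A\,\M{24}{24.2^u}\equiv\matr{0}\pmod{2^u}$ for all $u\in\N$, where $\M{24}{24.2^u}$ is regarded with its integer entries. Since $\Nd{0}$ is an integer matrix, $\Lker\Nd{0}$ is the set of $A\in\Z^{24}$ with $A\,\Nd{0}=\matr{0}$ over $\Z$.

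For the inclusion $\Lker\Nd{0}\subseteq\setF$, I would take $A\in\Z^{24}$ with $A\,\Nd{0}=\matr{0}$, so that $A\,\Nd{0}\equiv\matr{0}\pmod{2^u}$ for every $u$. Lemma~\ref{thm3} gives $9\,\M{24}{24.2^u}\equiv\Nd{0}\pmod{2^u}$, hence $9\,A\,\M{24}{24.2^u}\equiv A\,\Nd{0}\equiv\matr{0}\pmod{2^u}$. As $\gcd(9,2^u)=1$, the integer $9$ is invertible modulo $2^u$, so $A\,\M{24}{24.2^u}\equiv\matr{0}\pmod{2^u}$, that is, $\pi_{2^u}(A)\in\Lker_{2^u}\M{24}{24.2^u}$; since $u$ was arbitrary, $A\in\setF$.

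For the reverse inclusion $\setF\subseteq\Lker\Nd{0}$, I would take $A\in\setF$. Then $A\,\M{24}{24.2^u}\equiv\matr{0}\pmod{2^u}$ for every $u$, and multiplying by $9$ and invoking Lemma~\ref{thm3} once more yields $A\,\Nd{0}\equiv 9\,A\,\M{24}{24.2^u}\equiv\matr{0}\pmod{2^u}$ for all $u\in\N$. Every coordinate of the fixed integer vector $A\,\Nd{0}$ is therefore divisible by $2^u$ for all $u$, hence equals $0$, so $A\,\Nd{0}=\matr{0}$ and $A\in\Lker\Nd{0}$.

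Combining the two inclusions gives $\setF=\Lker\Nd{0}$. I do not expect any genuine obstacle here: all the arithmetic content has been absorbed into Lemma~\ref{thm3} (which in turn rests on Theorems~\ref{prop19} and~\ref{prop18}), and the only point worth stating explicitly is that multiplying by the unit $9$ lets one replace the whole family $\bigl(\M{24}{24.2^u}\bmod 2^u\bigr)_{u\in\N}$ by the single integer matrix $\Nd{0}$, while the compatibility of the congruences modulo all powers of $2$ pins down the integer vector $A\,\Nd{0}$ uniquely as $\matr{0}$.
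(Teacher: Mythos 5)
Your proposal is correct and follows essentially the same route as the paper's proof: both reduce membership in $\setF$ to the single condition $A\,\Nd{0}\equiv\matr{0}\pmod{2^u}$ for all $u$ via Lemma~\ref{thm3} and the invertibility of $9$ modulo $2^u$, and then conclude $A\,\Nd{0}=\matr{0}$ because an integer divisible by every power of $2$ is zero. The only cosmetic difference is that you argue by two separate inclusions where the paper writes a single chain of equivalences.
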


\begin{proof}
By definition of $\setF$, we have
$$
\setF = \left\{ A\in\Z^{24}\ \middle|\ A\M{24}{24.2^u}\equiv\matr{0}\pmod{2^u},\ \forall u\in\N \right\}.
$$
From Lemma~\ref{thm3}, we know that
$$
9\M{24}{24.2^u} \equiv \Nd{0} \pmod{2^u},
$$
for all non-negative integers $u$. Moreover, for any $A\in\Z^{24}$, since $\gcd(9,2^u)=1$, we have
$$
A\M{24}{24.2^u}\equiv\matr{0}\pmod{2^u}\quad\Longleftrightarrow\quad 9A\M{24}{24.2^u}\equiv\matr{0}\pmod{2^u},
$$
for all non-negative integers $u$. This leads to
$$
\setF = \left\{ A\in\Z^{24}\ \middle|\ A\Nd{0}\equiv\matr{0}\pmod{2^u},\ \forall u\in\N \right\}.
$$
Finally, for any $A\in\Z^{24}$, since
$$
A\Nd{0}\equiv\matr{0}\pmod{2^u},\ \forall u\in\N\quad\Longleftrightarrow\quad A\Nd{0} = \matr{0},
$$
we conclude that
$$
\setF = \left\{ A\in\Z^{24}\ \middle|\ A\Nd{0} = \matr{0}\right\}=\Lker\Nd{0}.
$$
This completes the proof.
\end{proof}

\begin{nota}[Rows and columns of a matrix]
Let $\matM$ be a matrix of size $n_1\times n_2$ over $\Zn{m}$. The $i$\textsuperscript{th} row of $\matM$ is denoted by $R_i(\matM)$, for all $i\in\{1,\ldots,n_1\}$, and the $j$\textsuperscript{th} column of $\matM$ is denoted by $C_j(\matM)$, for all $j\in\{1,\ldots,n_2\}$.
\end{nota}

\begin{lem}\label{lem*8}
$\Lker\Nd{0} = \left\langle Z_1,\ldots,Z_{16}\right\rangle$.
\end{lem}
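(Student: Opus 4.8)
The plan is to treat Lemma~\ref{lem*8} as an explicit finite computation over $\Z$: an easy containment obtained by evaluating products, and a short structural argument for the reverse containment. First I would check $\left\langle Z_1,\ldots,Z_{16}\right\rangle\subseteq\Lker\Nd{0}$ by computing, directly from Table~\ref{tab4} and the matrix $\Nd{0}$ of Lemma~\ref{thm3}, the sixteen products $Z_i\,\Nd{0}$ and verifying that each equals $\matr{0}$; since $\Lker\Nd{0}$ is a $\Z$-module, this gives the inclusion.

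For the reverse containment, the crucial point is that $Z_1,\ldots,Z_{16}$ span a saturated submodule of $\Z^{24}$ of rank $16$. I would establish this by choosing a set $J\subseteq\{1,\ldots,24\}$ of sixteen column indices and verifying that the $16\times16$ submatrix of the matrix with rows $Z_1,\ldots,Z_{16}$ obtained by keeping the columns in $J$ has determinant $\pm1$. This single determinant shows at once that $Z_1,\ldots,Z_{16}$ are $\Z$-linearly independent and that every $A\in\Z^{24}$ can be written (in fact uniquely) as $A=B+A'$ with $B\in\left\langle Z_1,\ldots,Z_{16}\right\rangle$ and $A'$ supported on the eight coordinates of $\{1,\ldots,24\}\setminus J$: one solves the unimodular system on the coordinates in $J$ so that $B$ matches $A$ there, and sets $A'=A-B$.

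Next I would verify that the $8\times24$ submatrix of $\Nd{0}$ formed by the rows indexed by $\{1,\ldots,24\}\setminus J$ has rank $8$, by exhibiting one nonvanishing $8\times8$ minor among those rows. Granting this, take any $A\in\Lker\Nd{0}$ and decompose $A=B+A'$ as above. By the first step $B\in\Lker\Nd{0}$, hence $A'=A-B\in\Lker\Nd{0}$, and $A'$ is supported on $\{1,\ldots,24\}\setminus J$. The identity $A'\,\Nd{0}=\matr{0}$ is exactly the relation $\sum_{j\notin J}A'_j\,R_j(\Nd{0})=\matr{0}$ among the eight rows $R_j(\Nd{0})$ with $j\notin J$; as those rows are linearly independent over $\mathbb{Q}$, every $A'_j$ vanishes, so $A'=\matr{0}$ and $A=B\in\left\langle Z_1,\ldots,Z_{16}\right\rangle$. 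Together with the first step this yields $\Lker\Nd{0}=\left\langle Z_1,\ldots,Z_{16}\right\rangle$.

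I expect no conceptual difficulty: the lemma reduces entirely to arithmetic with the explicit $24$-tuples $Z_1,\ldots,Z_{16}$ and the explicit matrix $\Nd{0}$. The only point that genuinely requires care is that the decomposition used in the main argument must be valid over $\Z$, not merely over $\mathbb{Q}$; this is why the $16\times16$ minor is required to equal $\pm1$ rather than only to be nonzero, i.e.\ why $\left\langle Z_1,\ldots,Z_{16}\right\rangle$ must be shown to be saturated (a direct summand) in $\Z^{24}$, so that no proper overlattice can slip between it and $\Lker\Nd{0}$. As an alternative ending one may argue by ranks: the first step together with any nonzero $16\times16$ minor of the matrix with rows $Z_1,\ldots,Z_{16}$ forces $\mathrm{rank}_{\mathbb{Q}}\Nd{0}\leq8$, while $\Nd{0}\equiv\M{24}{48}\pmod2$ by Lemma~\ref{thm3} has rank $8$ over $\Zn{2}$ (as recorded in Section~5), so $\mathrm{rank}_{\mathbb{Q}}\Nd{0}=8$ and $\Lker\Nd{0}$ has rank $16$; being of the same rank as the saturated submodule $\left\langle Z_1,\ldots,Z_{16}\right\rangle$, it must coincide with it, since $\Lker\Nd{0}/\left\langle Z_1,\ldots,Z_{16}\right\rangle$ is then a finite subgroup of the torsion-free group $\Z^{24}/\left\langle Z_1,\ldots,Z_{16}\right\rangle$.
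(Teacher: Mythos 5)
Your proposal is correct and follows essentially the same route as the paper: membership of the $Z_i$ in $\Lker\Nd{0}$ by explicit evaluation (which the paper packages as the row relations $R_i(\Nd{0})=3R_{16+i}(\Nd{0})-2R_{25-i}(\Nd{0})$ and $R_{8+i}(\Nd{0})=2R_{16+i}(\Nd{0})-R_{25-i}(\Nd{0})$), a rank count giving $\rank\Nd{0}=8$ from the $-\matI{8}$ block in the last eight rows, and a conclusion exploiting that the first sixteen coordinates of $Z_1,\ldots,Z_{16}$ form an identity block. Your explicit requirement that the $16\times16$ minor be $\pm1$ (saturation of $\left\langle Z_1,\ldots,Z_{16}\right\rangle$ in $\Z^{24}$) is a worthwhile sharpening of the paper's final step, which only invokes linear independence plus equal rank and is therefore, strictly speaking, an argument over $\mathbb{Q}$ rather than over $\Z$.
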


\begin{proof}
First, all the rows of $\Nd{0}$ are linear combination of its last eight rows. Indeed, for all $i\in\{1,\ldots,8\}$, we have
\begin{equation}\label{eq*8}
R_i(\Nd{0}) = 3R_{16+i}(\Nd{0})-2R_{25-i}(\Nd{0}),
\end{equation}
and
\begin{equation}\label{eq*9}
R_{8+i}(\Nd{0}) = 2R_{16+i}(\Nd{0})-R_{25-i}(\Nd{0}).
\end{equation}
Therefore the rank of $\Nd{0}$ is at most $8$. Moreover, since the submatrix obtained by removing the first $16$ rows and the last $16$ columns is the matrix $-\matI{8}$ of rank $8$, we obtain that $\Nd{0}$ is of rank $8$. From the rank-nullity theorem, we deduce that
$$
\dim\Lker\Nd{0} = 24-\rank\Nd{0} = 16.
$$
Finally, from \eqref{eq*8} and \eqref{eq*9}, we deduce that,
$$
Z_i = E_i-3E_{16+i}+2E_{25-i} \in\Lker\Nd{0}
$$
and
$$
Z_{8+i} = E_{8+i}-2E_{16+i}+E_{25-i} \in\Lker\Nd{0},
$$
for all $i\in\{1,\ldots,8\}$. Since the family $\left\{Z_1,\ldots,Z_{16}\right\}$ is clearly linearly independent (by considering the $16$ first elements of each one of them) and since $\Lker\Nd{0}$ is of dimension $16$, we conclude that $\left\{Z_1,\ldots,Z_{16}\right\}$ is a basis of $\Lker\Nd{0}$. Therefore,
$$
\Lker\Nd{0} = \left\langle Z_1,\ldots,Z_{16}\right\rangle,
$$
as announced.
\end{proof}

It follows that
$$
\setE\subset\setF
$$
from Proposition~\ref{prop*3}. Therefore, for all $A\in\setE$, the orbit of the sequence
$$
S = \IAP{\pi_{2^u}(A)}{\pi_{2^u}(A)\matX{k}}
$$
is $(24.2^u,24.2^u)$-periodic, for all non-negative integers $u$. To enhance this result and establish Theorems~\ref{mainthm} and \ref{thm*2}, the interlaced doubly arithmetic orbit structure is introduced in the following section.

\section{Interlaced doubly arithmetic orbits of IAP}

In this section, the interlaced doubly arithmetic orbit structure is introduced and conditions for obtaining interlaced doubly arithmetic orbits of IAP are given.

\begin{defn}[DAP]
Let $a$, $d_1$ and $d_2$ be three elements in $\Zn{m}$. The {\em doubly arithmetic progression} $\DAP{a}{d_1}{d_2}$ is the sequence $\left(u_{i,j}\right)_{(i,j)\in\N\times\Z}$ of $\Zn{m}$ where $u_{0,0}=a$, each row $\left(u_{i,j}\right)_{j\in\Z}$ is an arithmetic progression with common difference $d_1$, for all $i\in\N$, and each column $\left(u_{i,j}\right)_{i\in\N}$ is an arithmetic progression with common difference $d_2$, for all $j\in\Z$, i.e., the sequence $\left(u_{i,j}\right)_{(i,j)\in\N\times\Z}$ defined by
$$
u_{i,j} = a + id_2 + jd_1,
$$
for all $(i,j)\in\N\times\Z$. A doubly infinite sequence $S=\left(u_{i,j}\right)_{(i,j)\in\N\times\Z}$ of $\Zn{m}$ is said to be {\em doubly arithmetic} if $S=\DAP{u_{0,0}}{u_{0,1}-u_{0,0}}{u_{1,0}-u_{0,0}}$, i.e.,  if
$$
u_{i,j} = u_{0,0} + i\left(u_{1,0}-u_{0,0}\right) + j\left(u_{0,1}-u_{0,0}\right),
$$
for all $(i,j)\in\N\times\Z$.
\end{defn}

For instance, the doubly arithmetic progression $\DAP{0}{1}{3}$ of $\Zn{8}$ is depicted in Figure~\ref{fig*07}.

\begin{figure}[htbp]
\centering{
\includegraphics{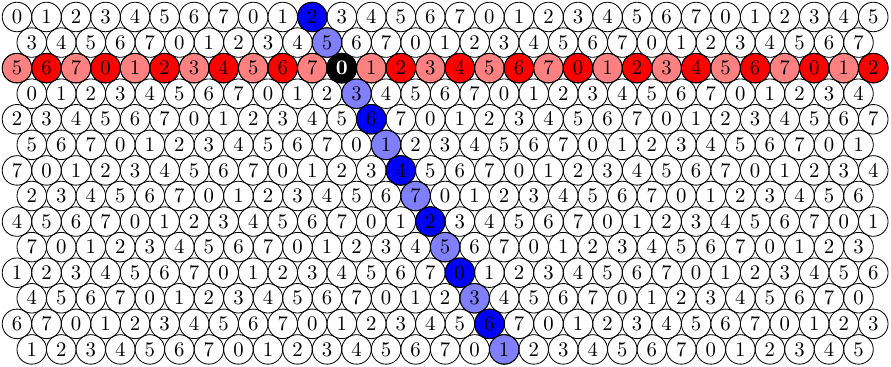}
}
\caption{The doubly arithmetic progression $\DAP{0}{1}{3}$ of $\Zn{8}$}\label{fig*07}
\end{figure}

\begin{prop}\label{prop15}
The doubly infinite sequence $\left(u_{i,j}\right)_{(i,j)\in\N\times\Z}$ of $\Zn{m}$ is doubly arithmetic if and only if
\begin{enumerate}[i)]
\item\label{prop15item1}
each row $\left(u_{i,j}\right)_{j\in\Z}$ is arithmetic with common difference $u_{0,1}-u_{0,0}$, for all $i\in\N$,
\item\label{prop15item2}
the column $\left(u_{i,0}\right)_{i\in\N}$ is arithmetic.
\end{enumerate}
\end{prop}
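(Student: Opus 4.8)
The plan is to deduce both implications directly from the defining identity of a doubly arithmetic sequence, using the fact that condition \ref{prop15item1} pins down the common difference of \emph{every} row to be the single value $u_{0,1}-u_{0,0}$.

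First I would treat the (easy) direct implication. Assume $S=\left(u_{i,j}\right)_{(i,j)\in\N\times\Z}$ is doubly arithmetic, so $u_{i,j}=u_{0,0}+i\left(u_{1,0}-u_{0,0}\right)+j\left(u_{0,1}-u_{0,0}\right)$ for all $(i,j)\in\N\times\Z$. Fixing $i$ and varying $j$ gives $u_{i,j+1}-u_{i,j}=u_{0,1}-u_{0,0}$, which is exactly \ref{prop15item1}; specializing to $j=0$ gives $u_{i+1,0}-u_{i,0}=u_{1,0}-u_{0,0}$, which is \ref{prop15item2}.

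For the converse, assume \ref{prop15item1} and \ref{prop15item2}. Applying \ref{prop15item1} to the row of index $i$ yields $u_{i,j}=u_{i,0}+j\left(u_{0,1}-u_{0,0}\right)$ for all $j\in\Z$ and every $i\in\N$. Applying \ref{prop15item2}, the column $\left(u_{i,0}\right)_{i\in\N}$ is arithmetic, and its common difference is necessarily $u_{1,0}-u_{0,0}$ (read off from $i=0,1$), so $u_{i,0}=u_{0,0}+i\left(u_{1,0}-u_{0,0}\right)$ for all $i\in\N$. Substituting the second identity into the first gives $u_{i,j}=u_{0,0}+i\left(u_{1,0}-u_{0,0}\right)+j\left(u_{0,1}-u_{0,0}\right)$ for all $(i,j)\in\N\times\Z$, i.e.\ $S=\DAP{u_{0,0}}{u_{0,1}-u_{0,0}}{u_{1,0}-u_{0,0}}$, which is precisely the statement that $S$ is doubly arithmetic.

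There is no real obstacle here; the proof is a two-line computation. The only subtle point worth flagging is that \ref{prop15item1} must assert the common difference is the specific value $u_{0,1}-u_{0,0}$ for all rows simultaneously: merely knowing that each row is arithmetic (with an a priori $i$-dependent step) together with \ref{prop15item2} would \emph{not} force the two one-dimensional arithmetic structures to be compatible, so this is exactly the hypothesis that lets them combine into the two-dimensional one.
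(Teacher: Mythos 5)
Your proof is correct and follows essentially the same route as the paper's: the direct implication is read off from the defining formula, and the converse composes the row identity $u_{i,j}=u_{i,0}+j\left(u_{0,1}-u_{0,0}\right)$ with the column identity $u_{i,0}=u_{0,0}+i\left(u_{1,0}-u_{0,0}\right)$. Your closing remark about why the common difference in condition i) must be the fixed value $u_{0,1}-u_{0,0}$ for all rows is a correct and worthwhile observation, but the argument itself matches the paper's.
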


\begin{proof}
First, if the sequence $\left(u_{i,j}\right)_{(i,j)\in\N\times\Z}$ is doubly arithmetic, then
$$
\left(u_{i,j}\right)_{(i,j)\in\N\times\Z}=\DAP{u_{0,0}}{u_{0,1}-u_{0,0}}{u_{1,0}-u_{0,0}}
$$
and it is clear that each row $\left(u_{i,j}\right)_{j\in\Z}$ is arithmetic with common difference $u_{0,1}-u_{0,0}$, for all $i\in\N$, and each column $\left(u_{i,j}\right)_{i\in\N}$ is arithmetic with common difference $u_{1,0}-u_{0,0}$, for all $j\in\Z$.

Conversely, suppose that \ref{prop15item1}) and \ref{prop15item2}) are verified. Let $(r,s)\in\N\times\Z$. Since the row $\left(u_{r,j}\right)_{j\in\Z}$ is arithmetic with common difference $u_{0,1}-u_{0,0}$, we have
$$
u_{r,s} = u_{r,0} + s\left(u_{0,1}-u_{0,0}\right).
$$
Moreover, since the column $\left(u_{i,0}\right)_{i\in\N}$ is arithmetic, we have
$$
u_{r,0} = u_{0,0} + r\left(u_{1,0}-u_{0,0}\right).
$$
Therefore,
$$
u_{r,s} = u_{0,0} + r\left(u_{1,0}-u_{0,0}\right) + s\left(u_{0,1}-u_{0,0}\right),
$$
for all $(r,s)\in\N\times\Z$, i.e., the sequence $\left(u_{i,j}\right)_{(i,j)\in\N\times\Z}$ is doubly arithmetic.
\end{proof}

\begin{defn}[IDAP]
Let $k_1$ and $k_2$ be two positive integers and let
$$
\matA=\left(a_{i,j}\right)_{\substack{0\le i\le k_2-1\\ 0\le j\le k_1-1}},\quad \matD{1}=\left(d^{\,(1)}_{i,j}\right)_{\substack{0\le i\le k_2-1\\ 0\le j\le k_1-1}}\quad\text{and}\quad \matD{2}=\left(d^{\,(2)}_{i,j}\right)_{\substack{0\le i\le k_2-1\\ 0\le j\le k_1-1}}
$$
be three $(k_2\times k_1)$-matrices of elements in $\Zn{m}$. The {\em interlaced doubly arithmetic progression} $\IDAP{\matA}{\matD{1}}{\matD{2}}$ is the sequence $\left(u_{i,j}\right)_{(i,j)\in\N\times\Z}$ of $\Zn{m}$ where each subsequence $\left(u_{i_0+ik_2,j_0+jk_1}\right)_{(i,j)\in\N\times\Z}$ verifies
$$
\left(u_{i_0+ik_2,j_0+jk_1}\right)_{(i,j)\in\N\times\Z} = \DAP{a_{i_0,j_0}}{d^{\,(1)}_{i_0,j_0}}{d^{\,(2)}_{i_0,j_0}},
$$
for all $(i_0,j_0)\in\{0,\ldots,k_2-1\}\times\{0,\ldots,k_1-1\}$, i.e., the sequence $\left(u_{i,j}\right)_{(i,j)\in\N\times\Z}$ defined by
$$
u_{i_0+ik_2,j_0+jk_1} = a_{i_0,j_0} + id^{\,(2)}_{i_0,j_0} + jd^{\,(1)}_{i_0,j_0}
$$
for all $(i_0,j_0)\in\{0,\ldots,k_2-1\}\times\{0,\ldots,k_1-1\}$ and all $(i,j)\in\N\times\Z$.
A doubly infinite sequence $\left(u_{i,j}\right)_{(i,j)\in\N\times\Z}$ of $\Zn{m}$ is said to be {\em $(k_1,k_2)$-interlaced doubly arithmetic} if
$$
\left(u_{i,j}\right)_{(i,j)\in\N\times\Z} = \IDAP{\left(u_{i,j}\right)_{\substack{0\le i\le k_2-1\\ 0\le j\le k_1-1}}}{\left(u_{i+k_2,j}-u_{i,j}\right)_{\substack{0\le i\le k_2-1\\ 0\le j\le k_1-1}}}{\left(u_{i,j+k_1}-u_{i,j}\right)_{\substack{0\le i\le k_2-1\\ 0\le j\le k_1-1}}},
$$
i.e., if
$$
u_{i_0+ik_2,j_0+jk_1} = u_{i_0,j_0}+i\left(u_{i_0+k_2,j_0}-u_{i_0,j_0}\right)+j\left(u_{i_0,j_0+k_1}-u_{i_0,j_0}\right),
$$
for all $(i_0,j_0)\in\{0,\ldots,k_2-1\}\times\{0,\ldots,k_1-1\}$ and all $(i,j)\in\N\times\Z$.
\end{defn}

For instance, the interlaced doubly arithmetic progression
$$
\IDAP{
\begin{pmatrix}
2 & 0 & 1 \\
3 & 2 & 2 \\
\end{pmatrix}
}{
\begin{pmatrix}
1 & 0 & 2 \\
2 & 3 & 1 \\
\end{pmatrix}
}{
\begin{pmatrix}
3 & 1 & 2 \\
0 & 3 & 1 \\
\end{pmatrix}
}
$$
of $\Zn{4}$ is depicted in Figure~\ref{fig*08}.

\begin{figure}[htbp]
\centering{
\includegraphics{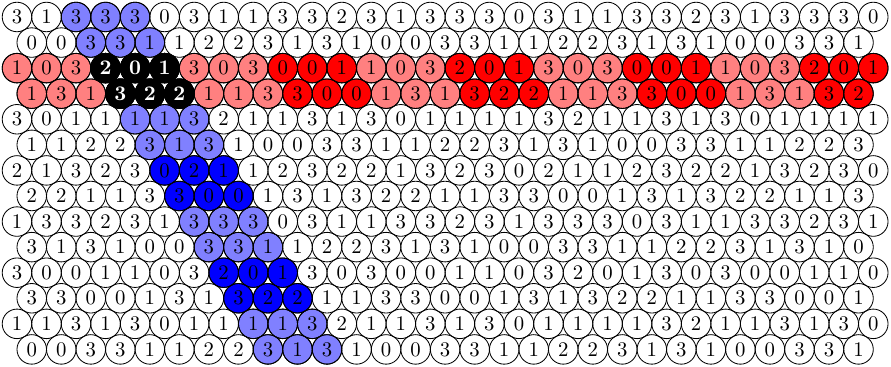}
}
\caption{$\IDAP{
\begin{pmatrix}
2 & 0 & 1 \\
3 & 2 & 2 \\
\end{pmatrix}
}{
\begin{pmatrix}
1 & 0 & 2 \\
2 & 3 & 1 \\
\end{pmatrix}
}{
\begin{pmatrix}
3 & 1 & 2 \\
0 & 3 & 1 \\
\end{pmatrix}
}$ of $\Zn{4}$}\label{fig*08}
\end{figure}

\begin{prop}\label{prop16}
The sequence $\left(u_{i,j}\right)_{(i,j)\in\N\times\Z}$ of $\Zn{m}$ is $(k_1,k_2)$-interlaced doubly arithmetic if and only if
\begin{enumerate}[i)]
\item
each row $\left(u_{i,j}\right)_{j\in\Z}$ is $k_1$-interlaced arithmetic, for all $i\in\N$, and the sequence of their common differences $\left(\left(u_{i,j+k_1}-u_{i,j}\right)_{j=0}^{k_1-1}\right)_{i\in\N}$ is $k_2$-periodic,
\item
the sequence $\left(\left(u_{i,j}\right)_{j=0}^{k_1-1}\right)_{i\in\N}$ is $k_2$-interlaced arithmetic.
\end{enumerate} 
\end{prop}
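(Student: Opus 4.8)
The plan is to mirror the proof of Proposition~\ref{prop15}, replacing ``arithmetic'' by ``$k_1$- or $k_2$-interlaced arithmetic'' throughout while keeping careful track of the interlacing residues. The forward implication is immediate: if $\left(u_{i,j}\right)_{(i,j)\in\N\times\Z}$ is $(k_1,k_2)$-interlaced doubly arithmetic, then by definition every subsequence $\left(u_{i_0+ik_2,j_0+jk_1}\right)_{(i,j)\in\N\times\Z}$ is a doubly arithmetic progression, and reading off the formula $u_{i_0+ik_2,j_0+jk_1}=a_{i_0,j_0}+id^{\,(2)}_{i_0,j_0}+jd^{\,(1)}_{i_0,j_0}$ (or invoking Proposition~\ref{prop15}) shows at once that each row is $k_1$-interlaced arithmetic with a $k_2$-periodic sequence of common differences, and that the sequence of truncated rows $\left(\left(u_{i,j}\right)_{j=0}^{k_1-1}\right)_{i\in\N}$ is $k_2$-interlaced arithmetic.

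For the converse, I would fix $(i_0,j_0)\in\{0,\ldots,k_2-1\}\times\{0,\ldots,k_1-1\}$ and $(i,j)\in\N\times\Z$ and compute $u_{i_0+ik_2,\,j_0+jk_1}$ in two steps. First, apply hypothesis i) to the row of index $i_0+ik_2$: since this row is $k_1$-interlaced arithmetic and $j_0+jk_1\equiv j_0\pmod{k_1}$,
$$
u_{i_0+ik_2,\,j_0+jk_1} = u_{i_0+ik_2,\,j_0} + j\left(u_{i_0+ik_2,\,j_0+k_1}-u_{i_0+ik_2,\,j_0}\right),
$$
and the $k_2$-periodicity of the common differences together with $i_0+ik_2\equiv i_0\pmod{k_2}$ lets me replace $u_{i_0+ik_2,\,j_0+k_1}-u_{i_0+ik_2,\,j_0}$ by $u_{i_0,\,j_0+k_1}-u_{i_0,\,j_0}$. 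Second, apply hypothesis ii): the sequence $\left(\left(u_{i,j}\right)_{j=0}^{k_1-1}\right)_{i\in\N}$ is $k_2$-interlaced arithmetic, so restricting to its $j_0$-th coordinate gives $u_{i_0+ik_2,\,j_0}=u_{i_0,\,j_0}+i\left(u_{i_0+k_2,\,j_0}-u_{i_0,\,j_0}\right)$. Substituting the second relation into the first yields
$$
u_{i_0+ik_2,\,j_0+jk_1} = u_{i_0,\,j_0}+i\left(u_{i_0+k_2,\,j_0}-u_{i_0,\,j_0}\right)+j\left(u_{i_0,\,j_0+k_1}-u_{i_0,\,j_0}\right),
$$
which is exactly the defining relation of a $(k_1,k_2)$-interlaced doubly arithmetic sequence.

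There is no real obstacle here; the argument is a routine unwinding of the definitions, and the only point requiring genuine care is the use of the $k_2$-periodicity in hypothesis i) to pass from $u_{i_0+ik_2,\,j_0+k_1}-u_{i_0+ik_2,\,j_0}$ to $u_{i_0,\,j_0+k_1}-u_{i_0,\,j_0}$ --- without it the two steps would not combine, which is precisely why that periodicity clause is built into the statement. I would keep the indices $i_0,j_0,i,j$ explicit throughout so that it is always clear which interlacing class each index belongs to.
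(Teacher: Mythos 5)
Your proof is correct and follows essentially the same route as the paper: the paper reduces to Proposition~\ref{prop15} applied to each subsequence $\left(u_{i_0+ik_2,j_0+jk_1}\right)_{(i,j)}$ and then translates the resulting conditions, which amounts to exactly your two-step computation (row condition plus $k_2$-periodicity of the common differences, then the column condition). The point you flag as the one requiring care --- using the periodicity clause to pass from $u_{i_0+ik_2,\,j_0+k_1}-u_{i_0+ik_2,\,j_0}$ to $u_{i_0,\,j_0+k_1}-u_{i_0,\,j_0}$ --- is precisely the corresponding step in the paper's equivalence chain.
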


\begin{proof}
Since, by definition, the sequence $\left(u_{i,j}\right)_{(i,j)\in\N\times\Z}$ is $(k_1,k_2)$-interlaced arithmetic if and only each subsequence $\left(u_{i_0+ik_2,j_0+jk_1}\right)_{(i,j)\in\N\times\Z}$ is doubly arithmetic, for all $(i_0,j_0)\in\{0,\ldots,k_2-1\}\times\{0,\ldots,k_1-1\}$. For any $(i_0,j_0)\in\{0,\ldots,k_2-1\}\times\{0,\ldots,k_1-1\}$, we know from Proposition~\ref{prop15} that the subsequence $\left(u_{i_0+ik_2,j_0+jk_1}\right)_{(i,j)\in\N\times\Z}$ is doubly arithmetic if and only if
\begin{itemize}
\item
the sequence $\left(u_{i_0+ik_2,j_0+jk_1}\right)_{j\in\Z}$ is arithmetic with common difference $u_{i_0,j_0+k_1}-u_{i_0,j_0}$, for all $i\in\N$,
\item
the sequence $\left(u_{i_0+ik_2,j_0}\right)_{i\in\N}$ is arithmetic.
\end{itemize}
Since
$$
\begin{array}{l}
\displaystyle\left(u_{i_0+ik_2,j_0+jk_1}\right)_{j\in\Z}\ \text{arithmetic},\ \forall j_0\in\{0,\ldots,k_1-1\},\\[2ex]
\displaystyle\Longleftrightarrow\ \left(u_{i_0+ik_2,j}\right)_{j\in\Z}\ k_1\text{-interlaced arithmetic}, \\
\end{array}
$$
for all $i_0\in\{0,\ldots,k_2-1\}$ and $i\in\N$,
\begin{equation*}
\resizebox{\textwidth}{!}{$
\begin{array}{l}
u_{i_0+ik_2,j_0+k_1}-u_{i_0+ik_2,j_0} = u_{i_0,j_0+k_1}-u_{i_0,j_0},\ \forall i\in\N,\ \forall i_0\in\{0,\ldots,k_2-1\},\ \forall j_0\in\{0,\ldots,k_1-1\},\\[2ex]
\Longleftrightarrow\ \left(u_{i,j_0+k_1}-u_{i,j_0}\right)_{i\in\N}\ k_2\text{-periodic},\ \forall j_0\in\{0,\ldots,k_1-1\},\\[2ex]
\Longleftrightarrow\ \left(\left(u_{i,j+k_1}-u_{i,j}\right)_{j=0}^{k_1-1}\right)_{i\in\N}\ k_2\text{-periodic},\\
\end{array}
$}
\end{equation*}
and
$$
\begin{array}{l}
\left(u_{i_0+ik_2,j_0}\right)_{i\in\N}\ \text{arithmetic},\ \forall i_0\in\{0,\ldots,k_2-1\},\ \forall j_0\in\{0,\ldots,k_1-1\},\\[2ex]
\Longleftrightarrow\ \left(u_{i,j_0}\right)_{i\in\N}\ k_2\text{-interlaced arithmetic},\ \forall j_0\in\{0,\ldots,k_1-1\},\\[2ex]
\Longleftrightarrow\ \left(\left(u_{i,j}\right)_{j=0}^{k_1-1}\right)_{i\in\N}\ k_2\text{-interlaced arithmetic},\\
\end{array}
$$
the result follows.
\end{proof}

Now, the sequences $S$ whose orbit $\orb{S}$ is $(k_1,k_2)$-doubly arithmetic are characterized. First, it is clear that the sequence $S$ is necessarily a $k_1$-interlaced arithmetic progression and we already know from Proposition~\ref{prop2} that the orbit $\orb{S}$ only contains $k_1$-interlaced arithmetic progressions.

\begin{thm}\label{thm4}
Let $A$ and $D$ be two $k_1$-tuples of elements in $\Zn{m}$. Then, the orbit $\orb{S}$ of $S=\IAP{A}{D}$ is $(k_1,k_2)$-interlaced doubly arithmetic if and only if the block matrix $\left(\begin{array}{c|c} A & D \end{array}\right)$ is in $\Lker_{m}\IA{k_1}{k_2}$, with
$$
\IA{k_1}{k_2} = 
\left(\begin{array}{c|c}
{\W{k_1}{k_2}}^2 & \matr{0_{k_1}} \\
\hline
\T{k_1}{k_2}\W{k_1}{k_2} & \W{k_1}{k_2} \\
\end{array}\right)
$$
where $\W{k_1}{k_2}$ is the Wendt matrix
$$
\W{k_1}{k_2} = \C{k_1}{k_2}+{(-1)}^{k_2+1}\matI{k_1}.
$$
If the orbit $\orb{S}$ of $S=\IAP{A}{D}$ is $(k_1,k_2)$-interlaced doubly arithmetic, then we have $\orb{S} = \IDAP{\matA}{\matD{1}}{\matD{2}}$, where the rows of $\matA$, $\matD{1}$ and $\matD{2}$ satisfy
$$
\begin{array}{l}
\displaystyle R_i(\matA) = {(-1)}^{i}\left(A\C{k_1}{i}+D\T{k_1}{i}\right), \\ \ \\
\displaystyle R_i(\matD{1}) = {(-1)}^{i}D\C{k_1}{i}, \\  \ \\
\displaystyle R_i(\matD{2}) = {(-1)}^{i+k_2}\left(A\W{k_1}{k_2}+D\T{k_1}{k_2}\right)\C{k_1}{i}, \\
\end{array}
$$
for all $i\in\left\{0,\ldots,k_2-1\right\}$.
\end{thm}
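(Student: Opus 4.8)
The plan is to use Proposition~\ref{prop16} to rephrase ``$\orb{S}$ is $(k_1,k_2)$-interlaced doubly arithmetic'' in terms of the rows $\ider{i}{S}$ of $\orb{S}$, and then to translate everything into linear conditions on $\left(\begin{array}{c|c}A&D\end{array}\right)$ using Propositions~\ref{prop2} and~\ref{prop*2}. By Proposition~\ref{prop2} the $i$th row of $\orb{S}$ is the $k_1$-interlaced arithmetic progression $\ider{i}{S}={(-1)}^i\IAP{A\C{k_1}{i}+D\T{k_1}{i}}{D\C{k_1}{i}}$; I would write $A_i:={(-1)}^i\!\left(A\C{k_1}{i}+D\T{k_1}{i}\right)$ for its first period and $D_i:={(-1)}^i D\C{k_1}{i}$ for its tuple of common differences. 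Then Proposition~\ref{prop16} says $\orb{S}$ is $(k_1,k_2)$-interlaced doubly arithmetic if and only if (a) the sequence $(D_i)_{i\in\N}$ is $k_2$-periodic and (b) the sequence $(A_i)_{i\in\N}$ is $k_2$-interlaced arithmetic, the remaining hypothesis of Proposition~\ref{prop16} (that each row be $k_1$-interlaced arithmetic) being automatic here. Throughout I abbreviate $\epsilon:={(-1)}^{k_2}$ and use repeatedly that $\C{k_1}{i}$ and $\W{k_1}{k_2}=\C{k_1}{k_2}+{(-1)}^{k_2+1}\matI{k_1}$ are circulant, hence commute, together with the relations $\C{k_1}{i+k_2}=\C{k_1}{i}\C{k_1}{k_2}$ and $\T{k_1}{i+k_2}=\T{k_1}{i}\C{k_1}{k_2}+\C{k_1}{i}\T{k_1}{k_2}$ of Proposition~\ref{prop*2}.

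For (a): since $\epsilon\C{k_1}{k_2}-\matI{k_1}=\epsilon\W{k_1}{k_2}$ and $\C{k_1}{i}$ commutes with $\W{k_1}{k_2}$, one computes $D_{i+k_2}-D_i={(-1)}^{i+k_2}\!\left(D\W{k_1}{k_2}\right)\C{k_1}{i}$, so (a) holds for every $i$ iff $D\W{k_1}{k_2}=0$ (necessity from $i=0$, sufficiency by right-multiplication by $\C{k_1}{i}$). For (b): a sequence of tuples indexed by $\N$ is $k_2$-interlaced arithmetic precisely when within each residue class modulo $k_2$ the consecutive differences are constant, i.e.\ when its second $k_2$-shift difference $\Delta_i:=A_{i+2k_2}-2A_{i+k_2}+A_i$ vanishes for all $i$. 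Expanding $\Delta_i$ by Proposition~\ref{prop*2} and using ${\C{k_1}{k_2}}^2-2\epsilon\C{k_1}{k_2}+\matI{k_1}={\W{k_1}{k_2}}^2$ and $\T{k_1}{k_2}\C{k_1}{k_2}+\C{k_1}{k_2}\T{k_1}{k_2}-2\epsilon\T{k_1}{k_2}=\T{k_1}{k_2}\W{k_1}{k_2}+\W{k_1}{k_2}\T{k_1}{k_2}$, I would obtain
$$
\Delta_i={(-1)}^i\!\left(A{\W{k_1}{k_2}}^2\C{k_1}{i}+D\T{k_1}{i}{\W{k_1}{k_2}}^2+D\C{k_1}{i}\!\left(\T{k_1}{k_2}\W{k_1}{k_2}+\W{k_1}{k_2}\T{k_1}{k_2}\right)\right).
$$
Assuming (a), the last summand collapses to $D\C{k_1}{i}\T{k_1}{k_2}\W{k_1}{k_2}$, and taking $i=0$ then forces $A{\W{k_1}{k_2}}^2+D\T{k_1}{k_2}\W{k_1}{k_2}=0$. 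Conversely, granting $D\W{k_1}{k_2}=0$ and $A{\W{k_1}{k_2}}^2+D\T{k_1}{k_2}\W{k_1}{k_2}=0$, I would substitute $A{\W{k_1}{k_2}}^2=-D\T{k_1}{k_2}\W{k_1}{k_2}$ into $\Delta_i$, move $\C{k_1}{i}$ past $\W{k_1}{k_2}$, and invoke the identity $\C{k_1}{i}\T{k_1}{k_2}-\T{k_1}{k_2}\C{k_1}{i}=\C{k_1}{k_2}\T{k_1}{i}-\T{k_1}{i}\C{k_1}{k_2}$ (the two expansions of $\T{k_1}{i+k_2}=\T{k_1}{k_2+i}$ in Proposition~\ref{prop*2}) together with $\C{k_1}{k_2}\W{k_1}{k_2}={\W{k_1}{k_2}}^2+\epsilon\W{k_1}{k_2}$ to reduce $\Delta_i$ to ${(-1)}^i\,D\W{k_1}{k_2}\,\T{k_1}{i}\,\W{k_1}{k_2}=0$. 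Hence (b), given (a), is equivalent to $A{\W{k_1}{k_2}}^2+D\T{k_1}{k_2}\W{k_1}{k_2}=0$. Combining, $\orb{S}$ is $(k_1,k_2)$-interlaced doubly arithmetic iff $D\W{k_1}{k_2}=0$ and $A{\W{k_1}{k_2}}^2+D\T{k_1}{k_2}\W{k_1}{k_2}=0$; since the block form of $\IA{k_1}{k_2}$ gives $\left(\begin{array}{c|c}A&D\end{array}\right)\IA{k_1}{k_2}=\left(\begin{array}{c|c}A{\W{k_1}{k_2}}^2+D\T{k_1}{k_2}\W{k_1}{k_2}&D\W{k_1}{k_2}\end{array}\right)$, this is exactly $\left(\begin{array}{c|c}A&D\end{array}\right)\in\Lker_m\IA{k_1}{k_2}$.

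For the explicit description: when $\orb{S}=\IDAP{\matA}{\matD{1}}{\matD{2}}$, the matrix $\matA$ is the top-left $(k_2\times k_1)$ block of $\orb{S}$, whose $i$th row is $A_i={(-1)}^i\!\left(A\C{k_1}{i}+D\T{k_1}{i}\right)$ by Proposition~\ref{prop2}; $\matD{1}$ is the block of horizontal common differences $u_{i,j+k_1}-u_{i,j}$, whose $i$th row is $D_i={(-1)}^i D\C{k_1}{i}$ by the same proposition; and $\matD{2}$ is the block of vertical $k_2$-step common differences $u_{i+k_2,j}-u_{i,j}$, whose $i$th row is $A_{i+k_2}-A_i$. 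Expanding $A_{i+k_2}-A_i$ with Proposition~\ref{prop*2} as for $\Delta_i$, using $\epsilon\C{k_1}{k_2}-\matI{k_1}=\epsilon\W{k_1}{k_2}$ and then the commutation identity above with $D\W{k_1}{k_2}=0$ (valid since $\orb{S}$ is interlaced doubly arithmetic) to absorb $\C{k_1}{i}$ on the right, yields $R_i(\matD{2})=A_{i+k_2}-A_i={(-1)}^{i+k_2}\!\left(A\W{k_1}{k_2}+D\T{k_1}{k_2}\right)\C{k_1}{i}$, which is the claimed formula.

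The hard part will be the computation underlying (b): not the closed form for $\Delta_i$, which is a routine induction from Proposition~\ref{prop*2}, but the passage from ``$\Delta_i=0$ for all $i$'' to the single equation ``$\Delta_0=0$''. This relies entirely on the commutation rules among $\C{k_1}{i}$, $\W{k_1}{k_2}$ and $\T{k_1}{k_2}$, and in particular on isolating $\C{k_1}{i}\T{k_1}{k_2}-\T{k_1}{k_2}\C{k_1}{i}=\C{k_1}{k_2}\T{k_1}{i}-\T{k_1}{i}\C{k_1}{k_2}$; once that identity is in hand, the remainder is bookkeeping.
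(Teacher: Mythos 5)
Your proposal is correct and follows essentially the same route as the paper: reduce via Proposition~\ref{prop16} to the $k_2$-periodicity of the common differences and the $k_2$-interlaced arithmeticity of the first periods, then translate both into the two matrix equations $D\W{k_1}{k_2}=\matr{0}$ and $A{\W{k_1}{k_2}}^2+D\T{k_1}{k_2}\W{k_1}{k_2}=\matr{0}$ using Propositions~\ref{prop2} and~\ref{prop*2}. The only divergence is that you expand $\T{k_1}{i+k_2}$ and $\T{k_1}{i+2k_2}$ with $\C{k_1}{i}$ on the left of $\T{k_1}{k_2}$, so you need the (correct) commutation identity $\C{k_1}{i}\T{k_1}{k_2}-\T{k_1}{k_2}\C{k_1}{i}=\C{k_1}{k_2}\T{k_1}{i}-\T{k_1}{i}\C{k_1}{k_2}$ to factor $\C{k_1}{i}$ out on the right, whereas the paper expands the other way and gets the factorization $\left(A{\W{k_1}{k_2}}^2+D\T{k_1}{k_2}\W{k_1}{k_2}\right)\C{k_1}{i}$ immediately.
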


\begin{proof}
Let $S=\IAP{A}{D}$ and $\orb{S}=\left(u_{i,j}\right)_{(i,j)\in\N\times\Z}$. From Proposition~\ref{prop2}, we know that each row of $\orb{S}$ is the $k_1$-interlaced arithmetic progression
$$
\left(u_{i,j}\right)_{j\in\Z} = \IAP{{(-1)}^{i}\left(A\C{k_1}{i}+D\T{k_1}{i}\right)}{{(-1)}^{i}D\C{k_1}{i}},
$$
for all $i\in\N$. It follows, from Proposition~\ref{prop16}, that $\orb{S}$ is $(k_1,k_2)$-interlaced doubly arithmetic if and only if
\begin{itemize}
\item
the sequence $\left(\left(u_{i,j+k_1}-u_{i,j}\right)_{j=0}^{k_1-1}\right)_{i\in\N} = \left({(-1)}^{i}D\C{k_1}{i}\right)_{i\in\N}$ is $k_2$-periodic,
\item
the sequence $\left(\left(u_{i,j}\right)_{j=0}^{k_1-1}\right)_{i\in\N} = \left({(-1)}^{i}\left(A\C{k_1}{i}+D\T{k_1}{i}\right)\right)_{i\in\N}$ is $k_2$-interlaced arithmetic.
\end{itemize}
First, the sequence $\left({(-1)}^{i}D\C{k_1}{i}\right)_{i\in\N}$ is $k_2$-periodic if and only if
$$
\begin{array}{l}
{(-1)}^{i+k_2}D\C{k_1}{i+k_2} = {(-1)}^{i}D\C{k_1}{i},\ \forall i\in\N, \\[2ex]
\Longleftrightarrow\ D\left(\C{k_1}{i+k_2}+{(-1)}^{k_2+1}\C{k_1}{i}\right) = \matr{0},\ \forall i \in\N.\\
\end{array}
$$
Since $\C{k_1}{i+k_2}=\C{k_1}{k_2}\C{k_1}{i}$ by Proposition~\ref{prop*2}, we have
$$
\C{k_1}{i+k_2}+{(-1)}^{k_2+1}\C{k_1}{i} = \left(\C{k_1}{k_2}+{(-1)}^{k_2+1}\matI{k_1}\right)\C{k_1}{i} = \W{k_1}{k_2}\C{k_1}{i},
$$
for all non-negative integers $i$. Therefore, the sequence $\left({(-1)}^{i}D\C{k_1}{i}\right)_{i\in\N}$ is $k_2$-periodic if and only if
$$
D\W{k_1}{k_2}\C{k_1}{i} = \matr{0},\ \forall i\in\N\quad\stackrel{\C{k_1}{0}=\matr{I_{k_1}}}{\Longleftrightarrow}\quad D\W{k_1}{k_2}= \matr{0}.
$$
Note that,
$$
D\W{k_1}{k_2} = \matr{0}\ \Longleftrightarrow\ D\C{k_1}{k_2} = {(-1)}^{k_2}D.
$$
Moreover, the sequence
$$
\left(\left(u_{i,j}\right)_{j=0}^{k_1-1}\right)_{i\in\N}=\left({(-1)}^{i}\left(A\C{k_1}{i}+D\T{k_1}{i}\right)\right)_{i\in\N}
$$
is $k_2$-interlaced arithmetic if and only if
$$
\begin{array}{l}
\begin{array}{lcl}
{(-1)}^{i+2k_2}\displaystyle\left(A\C{k_1}{i+2k_2}+D\T{k_1}{i+2k_2}\right)\\[2ex]
-2{(-1)}^{i+k_2}\displaystyle\left(A\C{k_1}{i+k_2}+D\T{k_1}{i+k_2}\right)\\[2ex]
+{(-1)}^{i}\displaystyle\left(A\C{k_1}{i}+D\T{k_1}{i}\right) & = & \matr{0_{1,k_1}},\ \forall i\in\N,\\[2ex]
\end{array}\\
\displaystyle\Longleftrightarrow\ 
\begin{array}[t]{l}
A\displaystyle\left(\C{k_1}{i+2k_2}+2{(-1)}^{k_2+1}\C{k_1}{i+k_2}+\C{k_1}{i}\right)\\[2ex]
+ D\displaystyle\left(\T{k_1}{i+2k_2}+2{(-1)}^{k_2+1}\T{k_1}{i+k_2}+\T{k_1}{i}\right) = \matr{0},\ \forall i\in\N.\\
\end{array}
\end{array}
$$
Since
$$
\C{k_1}{i+2k_2}={\C{k_1}{k_2}}^2\C{k_1}{i}\quad\text{and}\quad\C{k_1}{i+k_2}=\C{k_1}{k_2}\C{k_1}{i}
$$
by Proposition~\ref{prop*2}, we obtain that
$$
\begin{array}{l}
A\displaystyle\left(\C{k_1}{i+2k_2}+2{(-1)}^{k_2+1}\C{k_1}{i+k_2}+\C{k_1}{i}\right)\\[2ex]
= A\displaystyle\left({\C{k_1}{k_2}}^2+2{(-1)}^{k_2+1}\C{k_1}{k_2}+\matr{I_{k_1}}\right)^2\C{k_1}{i}\\[2ex]
= A\displaystyle\left(\C{k_1}{k_2}+{(-1)}^{k_2+1}\matr{I_{k_1}}\right)^2\C{k_1}{i}\\[2ex]
= A\displaystyle{\W{k_1}{k_2}}^2\C{k_1}{i},
\end{array}
$$
for all non-negative integers $i$. Since
$$\
\T{k_1}{i+2k_2} 
\begin{array}[t]{l}
= \T{k_1}{2k_2}\C{k_1}{i} + \C{k_1}{2k_2}\T{k_1}{i} \\[2ex]
= \left(\T{k_1}{k_2}\C{k_1}{k_2}+\C{k_1}{k_2}\T{k_1}{k_2}\right)\C{k_1}{i} + \left(\C{k_1}{k_2}\right)^2\T{k_1}{i} \\
\end{array}
$$
and
$$
\T{k_1}{i+k_2} = \T{k_1}{k_2}\C{k_1}{i} + \C{k_1}{k_2}\T{k_1}{i}
$$
by Proposition~\ref{prop*2} again, it follows that
$$
\begin{array}{l}
\displaystyle\T{k_1}{i+2k_2}+2{(-1)}^{k_2+1}\T{k_1}{i+k_2}+\T{k_1}{i} \\[2ex]
= 
\begin{array}[t]{l}
\displaystyle\left(\T{k_1}{k_2}\C{k_1}{k_2}+\C{k_1}{k_2}\T{k_1}{k_2}+2{(-1)}^{k_2+1}\T{k_1}{k_2}\right)\C{k_1}{i}\\[2ex]
+ \displaystyle\left(\left(\C{k_1}{k_2}\right)^2+2{(-1)}^{k_2+1}\C{k_1}{k_2}+\matr{I_{k_1}}\right)\T{k_1}{i}\\[2ex]
\end{array}\\[2ex]
= 
\begin{array}[t]{l}
\displaystyle\left(\T{k_1}{k_2}\left(\C{k_1}{k_2}+{(-1)}^{k_2+1}\matr{I_{k_1}}\right) + \left(\C{k_1}{k_2}+{(-1)}^{k_2+1}\matr{I_{k_1}}\right)\T{k_1}{k_2}\right)\C{k_1}{i}\\[2ex]
+ \displaystyle\left(\C{k_1}{k_2}+{(-1)}^{k_2+1}\matr{I_{k_1}}\right)^2\T{k_1}{i}\\[2ex]
\end{array}\\
= \displaystyle\left(\T{k_1}{k_2}\W{k_1}{k_2}+\W{k_1}{k_2}\T{k_1}{k_2}\right)\C{k_1}{i} + {\W{k_1}{k_2}}^2\T{k_1}{i},\\
\end{array}
$$
for all non-negative integers $i$. If $D\W{k_1}{k_2}=\matr{0}$, we obtain that
$$
D\left(\T{k_1}{i+2k_2}+2{(-1)}^{k_2+1}\T{k_1}{i+k_2}+\T{k_1}{i}\right) = D\T{k_1}{k_2}\W{k_1}{k_2}\C{k_1}{i},
$$
and thus
$$
\begin{array}{l}
\begin{array}{l}
A\displaystyle\left(\C{k_1}{i+2k_2}+2{(-1)}^{k_2+1}\C{k_1}{i+k_2}+\C{k_1}{i}\right)\\[2ex]
+ D\displaystyle\left(\T{k_1}{i+2k_2}+2{(-1)}^{k_2+1}\T{k_1}{i+k_2}+\T{k_1}{i}\right)\\[2ex]
\end{array} \\
= \left(A\displaystyle{\W{k_1}{k_2}}^2+D\T{k_1}{k_2}\W{k_1}{k_2}\right)\C{k_1}{i},
\end{array}
$$
for all non-negative integers $i$. Since $\C{k_1}{0}=\matr{I_{k_1}}$, we know that
$$
\begin{array}{l}
\left(A\displaystyle{\W{k_1}{k_2}}^2+D\T{k_1}{k_2}\W{k_1}{k_2}\right)\C{k_1}{i} = \matr{0},\ \forall i\in\N,\\[2ex]
\Longleftrightarrow\ A\displaystyle{\W{k_1}{k_2}}^2+D\T{k_1}{k_2}\W{k_1}{k_2} = \matr{0}.
\end{array}
$$
We conclude that the orbit $\orb{S}$ is $(k_1,k_2)$-interlaced doubly arithmetic if and only if
$$
\left\{\begin{array}{l}
D\displaystyle\W{k_1}{k_2} = \matr{0}, \\[2ex]
A\displaystyle{\W{k_1}{k_2}}^2+D\T{k_1}{k_2}\W{k_1}{k_2} = \matr{0}, \\
\end{array}\right.
$$
i.e., if and only if the block matrix $\left(\begin{array}{c|c} A & D \end{array}\right)$ is in $\Lker_{m}\IA{k_1}{k_2}$.

Suppose now that the orbit of $S=\IAP{A}{D}$ is $(k_1,k_2)$-interlaced doubly arithmetic. Let
$$
\orb{S} = \IDAP{\matA}{\matD{1}}{\matD{2}} = \left(u_{i,j}\right)_{(i,j)\in\N\times\Z}.
$$
We know from Proposition~\ref{prop2} that
$$
\left(u_{i,j}\right)_{j\in\Z} = \ider{i}{\IAP{A}{D}} = {(-1)}^{i}\,\IAP{A\C{k_1}{i}+D\T{k_1}{i}}{D\C{k_1}{i}},
$$
for all non-negative integers $i$. Therefore, for all $i\in\{0,\ldots,k_2-1\}$, we have
$$
R_i(\matA) = {(-1)}^{i}\left(A\C{k_1}{i}+D\T{k_1}{i}\right),
$$
$$
R_i(\matD{1}) = {(-1)}^{i}D\C{k_1}{i},
$$
and
\begin{equation*}
\resizebox{\textwidth}{!}{$
R_i(\matD{2}) 
\begin{array}[t]{l}
= {(-1)}^{i+k_2}\left(A\C{k_1}{i+k_2}+D\T{k_1}{i+k_2}\right) - {(-1)}^{i}\left(A\C{k_1}{i}+D\T{k_1}{i}\right) \\[2ex]
= {(-1)}^{i+k_2}\left(A\left(\C{k_1}{i+k_2}+{(-1)}^{k_2+1}\C{k_1}{i}\right)+D\left(\T{k_1}{i+k_2}+{(-1)}^{k_2+1}\T{k_1}{i}\right)\right).
\end{array}
$}
\end{equation*}
First, since
$$
\C{k_1}{i+k_2} = \C{k_1}{k_2}\C{k_1}{i}
$$
by Proposition~\ref{prop*2}, we have
$$
\C{k_1}{i+k_2}+{(-1)}^{k_2+1}\C{k_1}{i} = \W{k_1}{k_2}\C{k_1}{i}.
$$
Moreover, since
$$
\T{k_1}{i+k_2} = \T{k_1}{k_2}\C{k_1}{i}+\C{k_1}{k_2}\T{k_1}{i}
$$
by Proposition~\ref{prop*2} again, we have
$$
\T{k_1}{i+k_2} + {(-1)}^{k_2+1}\T{k_1}{i} = \T{k_1}{k_2}\C{k_1}{i} + \W{k_1}{k_2}\T{k_1}{i}.
$$
Finally, since $\orb{S}$ is $(k_1,k_2)$-interlaced doubly arithmetic, we know that
$$
D\W{k_1}{k_2} = \matr{0}
$$
and we conclude that
$$
R_i(\matD{2}) = {(-1)}^{i+k_2}\left(A\W{k_1}{k_2}+D\T{k_1}{k_2}\right)\C{k_1}{i},
$$
for all $i\in\{0,\ldots,k_2-1\}$. This completes the proof.
\end{proof}

For example, as depicted in Figure~\ref{fig*09}, the orbit of the sequence $S=\IAP{021}{111}$ of $\Zn{3}$ is $(3,3)$-interlaced doubly arithmetic
$$
\orb{S} = \IDAP{
\begin{pmatrix}
0 & 2 & 1 \\
1 & 0 & 1 \\
2 & 2 & 0 \\
\end{pmatrix}
}{
\begin{pmatrix}
1 & 1 & 1 \\
1 & 1 & 1 \\
1 & 1 & 1 \\
\end{pmatrix}
}{
\begin{pmatrix}
2 & 2 & 2 \\
2 & 2 & 2 \\
2 & 2 & 2 \\
\end{pmatrix}
}.
$$

\begin{figure}[htbp]
\centering{
\includegraphics{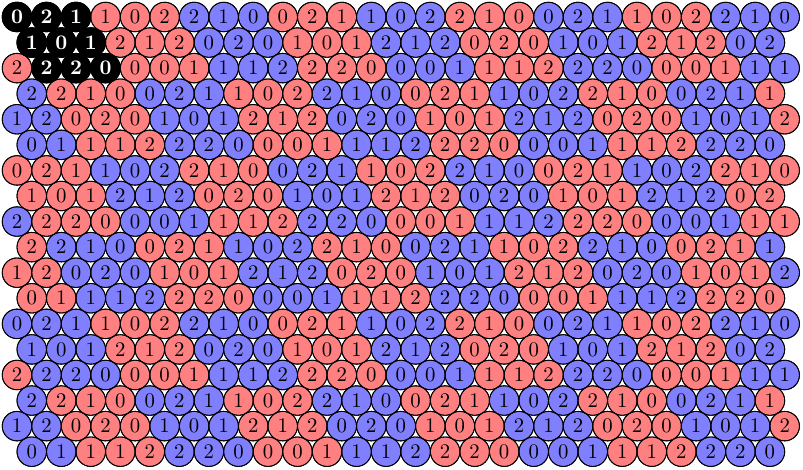}
}
\caption{$(3,3)$-interlaced doubly arithmetic orbit of $\IAP{021}{111}$ in $\Zn{3}$}\label{fig*09}
\end{figure}

Now, we refine Theorem~\ref{thm4} for interlaced arithmetic progressions with antisymmetric first periods.

\begin{cor}\label{cor*1}
Let $A$ be a $k_1$-tuple of $\Zn{m}$. Then, the orbit of $\IAP{A}{A\matX{k_1}}$ is $(k_1,k_2)$-interlaced doubly arithmetic if and only if $A$ is in $\Lker_m\AIA{k_1}{k_2}$, where
$$
\AIA{k_1}{k_2} = 
\left(\begin{array}{c|c}
\M{k_1}{k_2}\W{k_1}{k_2}
&
\matX{k_1}\W{k_1}{k_2}
\end{array}\right),
$$
with
$$
\W{k_1}{k_2} = \C{k_1}{k_2}+{(-1)}^{k_2+1}\matI{k_1}
$$
and
$$
\M{k_1}{k_2} = \W{k_1}{k_2}+\matX{k_1}\T{k_1}{k_2}.
$$
If the orbit $\orb{S}$ of $S=\IAP{A}{A\matX{k_1}}$ is $(k_1,k_2)$-interlaced doubly arithmetic, then we have $\orb{S} = \IDAP{\matA}{\matD{1}}{\matD{2}}$, where the rows of $\matA$, $\matD{1}$ and $\matD{2}$ satisfy
$$
\begin{array}{l}
\displaystyle R_i(\matA) = {(-1)}^{i}A\left(\C{k_1}{i}+\matX{k_1}\T{k_1}{i}\right), \\[2ex]
\displaystyle R_i(\matD{1}) = {(-1)}^{i}A\matX{k_1}\C{k_1}{i}, \\[2ex]
\displaystyle R_i(\matD{2}) = {(-1)}^{i+k_2}A\M{k_1}{k_2}\C{k_1}{i}, \\
\end{array}
$$
for all $i\in\left\{0,\ldots,k_2-1\right\}$.
\end{cor}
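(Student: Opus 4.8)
The plan is to obtain Corollary~\ref{cor*1} as a direct specialization of Theorem~\ref{thm4} to the case $D = A\matX{k_1}$. First I would write the first row of $\IAP{A}{A\matX{k_1}}$ as a block row vector $\left(\begin{array}{c|c} A & A\matX{k_1}\end{array}\right) = A\left(\begin{array}{c|c}\matI{k_1} & \matX{k_1}\end{array}\right)$, so that, by Theorem~\ref{thm4}, the orbit of $\IAP{A}{A\matX{k_1}}$ is $(k_1,k_2)$-interlaced doubly arithmetic if and only if $A\left(\begin{array}{c|c}\matI{k_1} & \matX{k_1}\end{array}\right)\IA{k_1}{k_2} = \matr{0}$ over $\Zn{m}$.

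The only real computation is then the block product. Carrying out the multiplication with the Wendt matrix $\W{k_1}{k_2}$, the Toeplitz matrix $\T{k_1}{k_2}$ and $\matX{k_1}$, one gets
$$\left(\begin{array}{c|c}\matI{k_1} & \matX{k_1}\end{array}\right)\left(\begin{array}{c|c}{\W{k_1}{k_2}}^2 & \matr{0}\\\hline\T{k_1}{k_2}\W{k_1}{k_2} & \W{k_1}{k_2}\end{array}\right) = \left(\begin{array}{c|c}{\W{k_1}{k_2}}^2 + \matX{k_1}\T{k_1}{k_2}\W{k_1}{k_2} & \matX{k_1}\W{k_1}{k_2}\end{array}\right).$$
The key observation is to factor $\W{k_1}{k_2}$ out on the right of the $(1,1)$-block, namely ${\W{k_1}{k_2}}^2 + \matX{k_1}\T{k_1}{k_2}\W{k_1}{k_2} = \left(\W{k_1}{k_2} + \matX{k_1}\T{k_1}{k_2}\right)\W{k_1}{k_2} = \M{k_1}{k_2}\W{k_1}{k_2}$, using the definition $\M{k_1}{k_2} = \W{k_1}{k_2}+\matX{k_1}\T{k_1}{k_2}$. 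This identifies the product with $\AIA{k_1}{k_2}$, so the condition $A\left(\begin{array}{c|c}\matI{k_1} & \matX{k_1}\end{array}\right)\IA{k_1}{k_2}=\matr{0}$ becomes precisely $A\in\Lker_m\AIA{k_1}{k_2}$, which is the asserted characterization.

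For the explicit description of $\orb{S}=\IDAP{\matA}{\matD{1}}{\matD{2}}$, I would substitute $D = A\matX{k_1}$ into the three row formulas furnished by Theorem~\ref{thm4}: the first two reduce at once to $R_i(\matA) = {(-1)}^{i}A\left(\C{k_1}{i}+\matX{k_1}\T{k_1}{i}\right)$ and $R_i(\matD{1}) = {(-1)}^{i}A\matX{k_1}\C{k_1}{i}$, while for the third $R_i(\matD{2}) = {(-1)}^{i+k_2}\left(A\W{k_1}{k_2}+A\matX{k_1}\T{k_1}{k_2}\right)\C{k_1}{i} = {(-1)}^{i+k_2}A\M{k_1}{k_2}\C{k_1}{i}$, again by the definition of $\M{k_1}{k_2}$, valid for all $i\in\{0,\ldots,k_2-1\}$. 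There is no genuine obstacle here once Theorem~\ref{thm4} is in hand; the only points demanding care are respecting the paper's row-vector/left-multiplication convention $X\matM$ throughout the block manipulations, and correctly pulling $\W{k_1}{k_2}$ out of the $(1,1)$-block to produce $\M{k_1}{k_2}$.
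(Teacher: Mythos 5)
Your proposal is correct and is essentially the paper's own argument: the paper proves this corollary simply by citing Theorem~\ref{thm4} (with the substitution $D=A\matX{k_1}$ coming from Proposition~\ref{prop3}), and your block computation $\left(\begin{array}{c|c}\matI{k_1} & \matX{k_1}\end{array}\right)\IA{k_1}{k_2}=\AIA{k_1}{k_2}$, including the factorization ${\W{k_1}{k_2}}^2+\matX{k_1}\T{k_1}{k_2}\W{k_1}{k_2}=\M{k_1}{k_2}\W{k_1}{k_2}$, is exactly the verification left implicit there. The substitution into the three row formulas is likewise correct.
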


\begin{proof}
From Theorem~\ref{thm4} and Proposition~\ref{prop3}.
\end{proof}

\section{Interlaced doubly arithmeticity of solutions}

In this section, we show that, for any $A\in\setE$, the orbit of the sequence
$$
S = \IAP{\pi_{2^u}(A)}{\pi_{2^u}(A)\matX{24}}
$$
is $(24,3.2^u)$-interlaced doubly arithmetic in $\Zn{2^u}$, for all integers $u\ge3$. Moreover, the common differences $\matD{1}$ and $\matD{2}$ of
$$
\orb{S} = \IDAP{\matA}{\matD{1}}{\matD{2}}
$$
are determined, for all integers $u\ge3$.

\begin{defn}[$\setG{u_0}$]
For any positive integers $u_0$, let $\mathcal{G}_{u_0}$ be the set of $24$-tuples of integers $A$ such that the orbit of the sequence
$$
S=\IAP{\pi_{2^u}(A)}{\pi_{2^u}(A)\matr{X_{24}}}
$$
is $(24,3.2^u)$-interlaced arithmetic in $\Zn{2^u}$, for all integers $u\ge u_0$, that is,
$$
\mathcal{G}_{u_0} = \left\{ A\in\Z^{24}\ \middle|\ A\AIA{24}{3.2^u}\equiv\matr{0_{1,48}}\pmod{2^u},\ \forall u\ge u_0  \right\}.
$$
\end{defn}

In the sequel, the sets $\setG{5}$, $\setG{4}$ and $\setG{3}$ are determined and we will show that
$$
\mathcal{E} \subset \setG{3} \subset \setG{4} \subset \setG{5} \subset \mathcal{F}.
$$
First, using Theorems~\ref{prop19} and \ref{prop18}, we obtain the following expressions of the matrices $\M{24}{3.2^u}$ and $\AIA{24}{3.2^u}$ modulo $2^u$, for all integers $u\ge 5$.

\begin{prop}\label{prop*5}
For all integers $u\ge5$, we have
$$
9\M{24}{3.2^u} \equiv \Nd{0} + 2^{u-3}\Nd{1} \pmod{2^u},
$$
where
\begin{center}
\includegraphics[width=\textwidth]{MAT_01}
\end{center}
and
\begin{center}
\includegraphics[width=\textwidth]{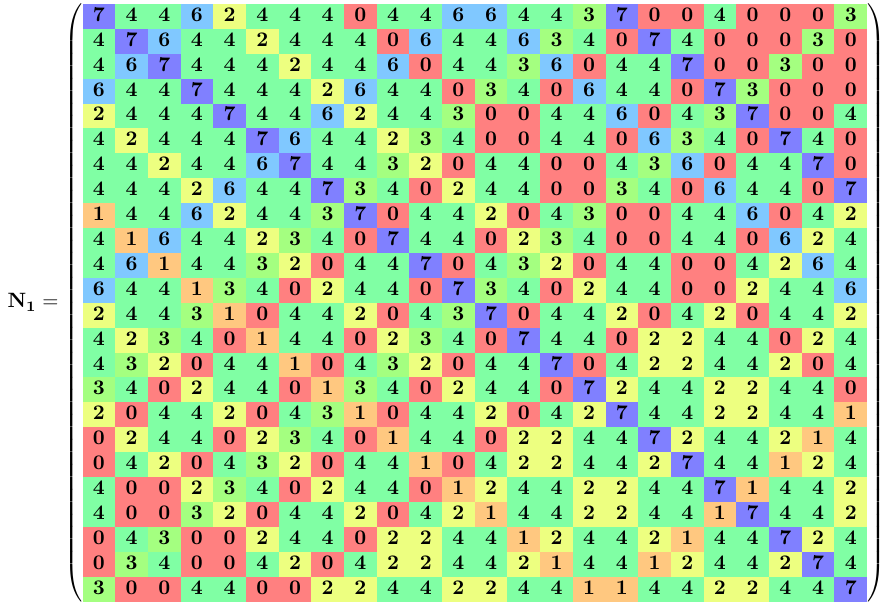}
\end{center}
\end{prop}

\begin{proof}
By definition,
$$
\M{24}{3.2^u} = \W{24}{3.2^u} + \matX{24}\T{24}{3.2^u},
$$
with
$$
\W{24}{3.2^u} = \C{24}{3.2^u}-\matI{24}.
$$
Let $u$ be an integer such that $u\ge5$. From Theorem~\ref{prop19}, we know that
$$
3\C{24}{3.2^u} \equiv \Cd{0} + 2^{u-2}\Cd{1} \pmod{2^u}
$$
and thus
\begin{equation}\label{eq*10}
3\W{24}{3.2^u} \equiv \left(\Cd{0}-3\matI{24}\right)+2^{u-2}\Cd{1}\pmod{2^u}.
\end{equation}
Moreover, from Theorem~\ref{prop18}, we know that
$$
9\T{24}{3.2^u} \equiv \Td{0} + 2^{u-3}\Td{1} \pmod{2^u}.
$$
It follows that
$$
9\M{24}{3.2^u} \equiv \left(3\Cd{0}-9\matI{24}+\matX{24}\Td{0}\right) + 2^{u-3}\left(6\Cd{1}+\matX{24}\Td{1}\right) \pmod{2^u}.
$$
By direct computation, we obtain that
$$
3\Cd{0}-9\matI{24}+\matX{24}\Td{0} = \Nd{0}
$$
and
$$
6\Cd{1}+\matX{24}\Td{1} \equiv \Nd{1} \pmod{8}.
$$
This completes the proof.
\end{proof}

\begin{prop}\label{prop21}
For all integers $u\ge5$, we have
$$
9\AIA{24}{3.2^u} \equiv \Md{0} + 2^{u-2}\Md{1} \pmod{2^u},
$$
where
\begin{center}
\includegraphics[width=\textwidth]{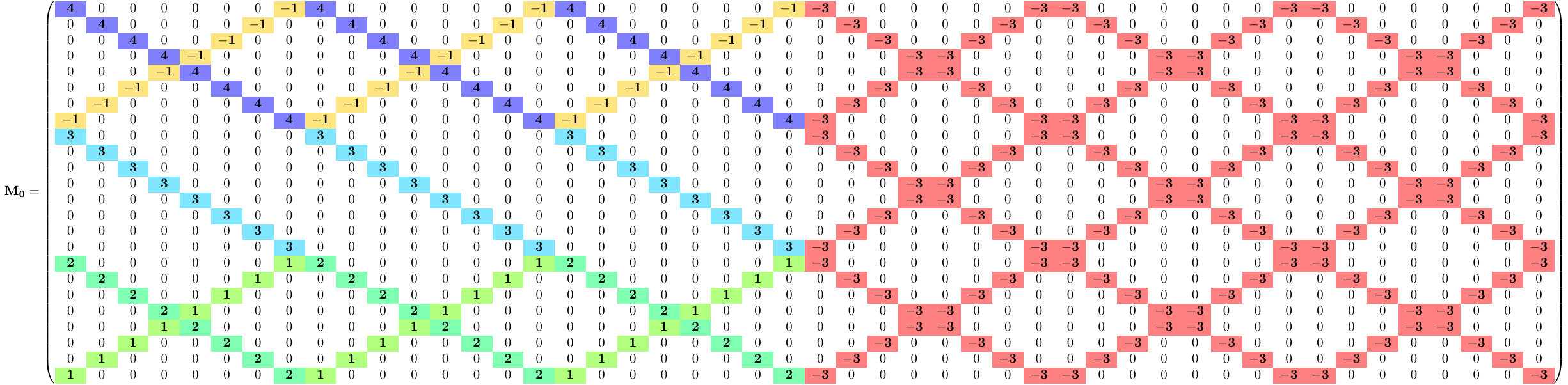}
\end{center}
and
\begin{center}
\includegraphics[width=\textwidth]{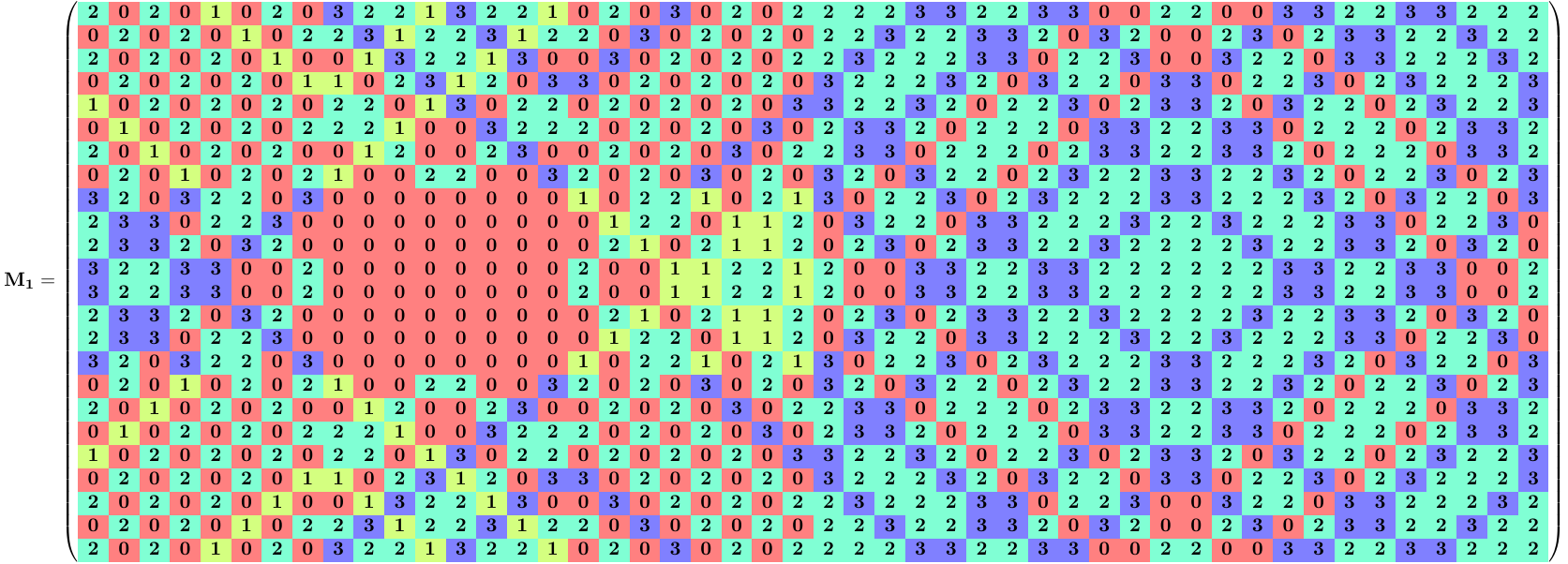}
\end{center}
\end{prop}

\begin{proof}
By definition,
$$
\AIA{24}{3.2^u} = 
\left(\begin{array}{c|c}
\M{24}{3.2^u}\W{24}{3.2^u}
&
\matr{X_{24}}\W{24}{3.2^u}
\end{array}\right).
$$
Let $u$ be an integer such that $u\ge5$. From Proposition~\ref{prop*5} and \eqref{eq*10}, we know that
$$
\begin{array}[t]{l}
27\M{24}{3.2^u}\W{24}{3.2^u} \\[2ex]
\equiv \left(\Nd{0}+2^{u-3}\Nd{1}\right)\left(\left(\Cd{0}-3\matI{24}\right)+2^{u-2}\Cd{1}\right) \\[2ex]
\equiv \Nd{0}\left(\Cd{0}-3\matI{24}\right) + 2^{u-3}\left(\Nd{1}\left(\Cd{0}-3\matI{24}\right)+2\Nd{0}\Cd{1}\right) + 2^{2u-5}\Nd{1}\Cd{1} \\[2ex]
\stackrel{2u-5\ge u}{\equiv} \Nd{0}\left(\Cd{0}-3\matI{24}\right) + 2^{u-3}\left(\Nd{1}\left(\Cd{0}-3\matI{24}\right)+2\Nd{0}\Cd{1}\right) \pmod{2^u}.
\end{array}
$$
Since
$$
\Cd{0}^2=3\Cd{0},\quad \Cd{0}\Cd{1}=\Cd{1}\Cd{0}\equiv\Cd{1} \pmod{2},\quad\text{and}\quad \Cd{1}^2 \equiv \matr{0_{24}} \pmod{2},
$$
as already seen in the proof of Theorem~\ref{prop19}, we obtain that
$$
\Nd{0}\left(\Cd{0}-3\matI{24}\right)
\begin{array}[t]{l}
= \left(3\Cd{0}-9\matI{24}+\matX{24}\Td{0}\right)\left(\Cd{0}-3\matI{24}\right) \\[2ex]
= \left(\matX{24}\Td{0}-9\matI{24}\right)\left(\Cd{0}-3\matI{24}\right)
\end{array}
$$
and
$$
\Nd{1}\left(\Cd{0}-3\matI{24}\right)+2\Nd{0}\Cd{1}
\begin{array}[t]{l}
\equiv \left(6\Cd{1}+\matX{24}\Td{1}\right)\left(\Cd{0}-3\matI{24}\right) + 2\left(3\Cd{0}-9\matI{24}+\matX{24}\Td{0}\right)\Cd{1} \\[2ex]
\equiv 12\Cd{1}\left(\Cd{0}-3\matI{24}\right) + \matX{24}\left(\Td{1}\left(\Cd{0}-3\matI{24}\right)+2\Td{0}\Cd{1}\right) \\[2ex]
\equiv \matX{24}\left(\Td{1}\left(\Cd{0}-3\matI{24}\right)+2\Td{0}\Cd{1}\right) \pmod{8}.
\end{array}
$$
Therefore, we have
$$
\begin{array}{l}
27\M{24}{3.2^u}\W{24}{3.2^u} \\[2ex]
\equiv \left(\matX{24}\Td{0}-9\matI{24}\right)\left(\Cd{0}-3\matI{24}\right) + 2^{u-3}\matX{24}\left(2\Td{0}\Cd{1}+\Td{1}\left(\Cd{0}-3\matI{24}\right)\right) \pmod{2^u}
\end{array}
$$
and
$$
27\matX{24}\W{24}{3.2^u} \equiv 9\matX{24}\left(\left(\Cd{0}-3\matI{24}\right)+2^{u-2}\Cd{1}\right) \pmod{2^u}.
$$
Finally, by direct computation, we obtain that
$$
\left(\begin{array}{c|c}
\left(\matr{X_{24}}\Td{0}-9\matI{24}\right)\left(\Cd{0}-3\matr{I_{24}}\right) & 9\matr{X_{24}}\left(\Cd{0}-3\matr{I_{24}}\right)
\end{array}\right)
 = 3\Md{0}
$$
and
$$
\left(\begin{array}{c|c}
\matr{X_{24}}\left( 2\Td{0}\Cd{1}+\Td{1}\left(\Cd{0}-3\matr{I_{24}}\right)\right) & 18\matr{X_{24}}\Cd{1}
\end{array}\right)
\equiv 6\Md{1} \pmod{8}.
$$
Therefore
$$
27\AIA{24}{3.2^u} \equiv 3\Md{0}+3.2^{u-2}\Md{1} \pmod{2^u}.
$$
Since $\gcd(3,2^{u+1})=1$, we conclude that
$$
9\AIA{24}{3.2^{u}} \equiv \Md{0}+2^{u-2}\Md{1}\pmod{2^{u}},
$$
for all $u\ge5$. This completes the proof.
\end{proof}

\begin{prop}\label{prop20}
Let $U_1,\ldots,U_{16}$ be the $24$-tuples of integers given in Table~\ref{tab3}. Then, the free $\Z$-module $\setG{5}$ is of rank $16$ and admits $\{U_1,\ldots,U_{16}\}$ as basis, i.e.,
$$
\setG{5} = \left\langle U_1,\ldots,U_{16}\right\rangle.
$$
\end{prop}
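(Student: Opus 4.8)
The plan is to collapse the infinitely many congruences defining $\setG{5}$ into two conditions over $\Z$ by means of Proposition~\ref{prop21}, and then to settle the resulting finite linear‑algebra problem explicitly.

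By Proposition~\ref{prop21}, for every integer $u\ge5$ we have $9\AIA{24}{3.2^u}\equiv\Md{0}+2^{u-2}\Md{1}\pmod{2^u}$, and since $\gcd(9,2^u)=1$ a $24$‑tuple $A\in\Z^{24}$ satisfies $A\AIA{24}{3.2^u}\equiv\matr{0_{1,48}}\pmod{2^u}$ if and only if $A\Md{0}+2^{u-2}A\Md{1}\equiv\matr{0_{1,48}}\pmod{2^u}$. Hence $\setG{5}=\left\{A\in\Z^{24}\ \middle|\ A\Md{0}+2^{u-2}A\Md{1}\equiv\matr{0_{1,48}}\pmod{2^u},\ \forall u\ge5\right\}$. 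The next step is a short $2$‑adic argument: if this family of congruences holds, then for every $u\ge5$ each coordinate of the fixed integer vector $A\Md{0}$ is divisible by $2^{u-2}$, so letting $u\to\infty$ forces $A\Md{0}=\matr{0}$, and the remaining requirement $2^{u-2}A\Md{1}\equiv\matr{0}\pmod{2^u}$ for all $u\ge5$ is then equivalent to $A\Md{1}\equiv\matr{0}\pmod{4}$. The converse being immediate, we obtain the clean description $\setG{5}=\left\{A\in\Z^{24}\ \middle|\ A\Md{0}=\matr{0}\ \text{and}\ A\Md{1}\equiv\matr{0}\pmod{4}\right\}$, which depends only on $\Md{0}$ (exactly) and on $\Md{1}$ modulo $4$; in particular $\rank\Md{0}=8$ is forced, since cutting the congruence $A\Md{1}\equiv\matr{0}\pmod4$ out of the free $\Z$‑module $\Lker\Md{0}$ keeps a finite‑index sublattice and the answer is to have rank $16$.

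To prove that $\{U_1,\ldots,U_{16}\}$ is a basis of this module I would carry out three checks. First, a direct verification that $U_i\Md{0}=\matr{0}$ and $U_i\Md{1}\equiv\matr{0}\pmod{4}$ for each $i$, which gives $\left\langle U_1,\ldots,U_{16}\right\rangle\subseteq\setG{5}$ (and, by also checking that each $U_i$ is a $\Z$‑combination of $Z_1,\ldots,Z_{16}$, the announced inclusion $\setG{5}\subseteq\setF$ once completeness is known). Second, linear independence over $\Z$, obtained by exhibiting $16$ coordinate positions on which the corresponding $16\times16$ minor is nonzero, exactly as for $\{Z_1,\ldots,Z_{16}\}$ in Lemma~\ref{lem*8}. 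Third, completeness: every $A\in\setG{5}$ lies in $\left\langle U_1,\ldots,U_{16}\right\rangle$. For this I would compute the integer left kernel $\Lker\Md{0}=\{A\in\Z^{24}\mid A\Md{0}=\matr{0}\}$ explicitly, producing a Hermite basis of this rank‑$16$ lattice, and then intersect it with the congruence $A\Md{1}\equiv\matr{0}\pmod4$, which amounts to solving a linear system modulo $4$ in $16$ unknowns; one checks this yields precisely $\left\langle U_1,\ldots,U_{16}\right\rangle$. The factorization $3\Md{0}=\left(\begin{array}{c|c}\Nd{0}\left(\Cd{0}-3\matI{24}\right)&9\matX{24}\left(\Cd{0}-3\matI{24}\right)\end{array}\right)$ obtained inside the proof of Proposition~\ref{prop21} (using $\Cd{0}^2=3\Cd{0}$), together with Lemma~\ref{lem*8}, can be used to relate $\Lker\Md{0}$ to $\setF=\Lker\Nd{0}$ and so to shorten the basis computation.

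The main obstacle is the completeness step, i.e.\ showing there are no solutions of $A\Md{0}=\matr{0}$, $A\Md{1}\equiv\matr{0}\pmod4$ outside $\left\langle U_1,\ldots,U_{16}\right\rangle$: this requires pinning down the Smith/Hermite normal form of $\Md{0}$ and then a careful accounting of the torsion condition $\Md{1}\bmod4$ on the rank‑$16$ lattice $\Lker\Md{0}$. By contrast, the reduction through Proposition~\ref{prop21} and the $2$‑adic step are immediate, while $\left\langle U_1,\ldots,U_{16}\right\rangle\subseteq\setG{5}$ and the linear independence are finite direct verifications.
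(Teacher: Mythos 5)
Your proposal is correct and follows essentially the same route as the paper: the paper's Lemma~\ref{lem10} is exactly your $2$-adic collapse to the two conditions $A\Md{0}=\matr{0}$ and $A\Md{1}\equiv\matr{0}\pmod{4}$, its Lemma~\ref{lem9} is your explicit computation of the rank-$16$ lattice $\Lker\Md{0}=\left\langle Z_1,\ldots,Z_{16}\right\rangle$, and the final step of the paper carries out your ``linear system modulo $4$ in $16$ unknowns'' via the matrix $\matZ\Md{1}\bmod 4$ and its row relations. The only cosmetic difference is that you infer $\rank\Md{0}=8$ from the expected answer as a sanity check, whereas the paper establishes it directly from the row relations and an explicit $8\times 8$ submatrix.
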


\begin{table}[htbp]
\begin{center}
\includegraphics[width=\textwidth]{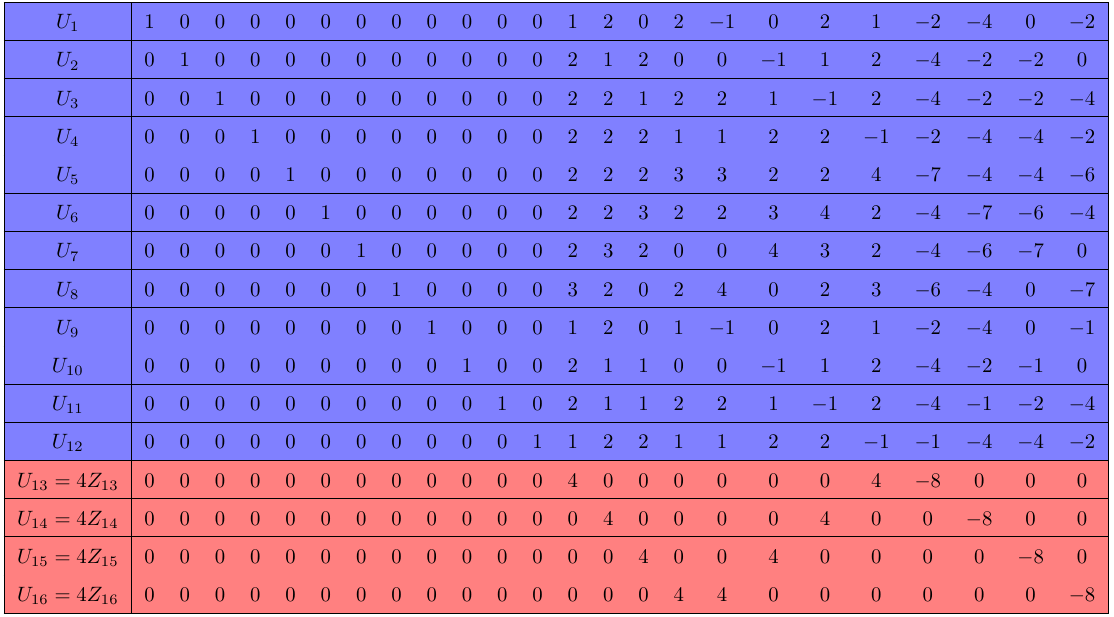}
\caption{The basis $\left\{U_1,\ldots,U_{16}\right\}$ of $\setG{5}$}\label{tab3}
\end{center}
\end{table}

The proof is based on the following two lemmas.

\begin{lem}\label{lem10}
$$
\setG{5} = \left\{ A\in\Lker\Md{0}\ \middle|\ A\Md{1}\equiv\matr{0}\pmod{4} \right\}.
$$
\end{lem}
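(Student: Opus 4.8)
The plan is to obtain the lemma as a direct consequence of Proposition~\ref{prop21}, which pins down $\AIA{24}{3.2^u}$ modulo $2^u$ for all $u\ge5$. Recall that, by definition,
$$
\setG{5} = \left\{ A\in\Z^{24}\ \middle|\ A\AIA{24}{3.2^u}\equiv\matr{0}\pmod{2^u},\ \forall u\ge 5 \right\}.
$$
Since $\gcd(9,2^u)=1$, the element $9$ is invertible modulo $2^u$, so for each fixed $u\ge5$ the condition $A\AIA{24}{3.2^u}\equiv\matr{0}\pmod{2^u}$ is equivalent to $9A\AIA{24}{3.2^u}\equiv\matr{0}\pmod{2^u}$, which by Proposition~\ref{prop21} is in turn equivalent to
$$
A\Md{0}+2^{u-2}\,A\Md{1}\equiv\matr{0}\pmod{2^u}.
$$
Thus the first step is simply to rewrite $\setG{5}$ as the set of $A\in\Z^{24}$ satisfying this family of congruences over all $u\ge5$, and the whole proof reduces to analysing it.

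For the inclusion $\supseteq$, I would take $A$ with $A\Md{0}=\matr{0}$ and $A\Md{1}\equiv\matr{0}\pmod{4}$; then for every $u\ge5$ the vector $2^{u-2}A\Md{1}$ is divisible by $2^{u-2}\cdot4=2^{u}$, so $A\Md{0}+2^{u-2}A\Md{1}\equiv\matr{0}\pmod{2^u}$, giving $A\in\setG{5}$. For the inclusion $\subseteq$, I would take $A\in\setG{5}$ and rewrite the level-$u$ congruence as $A\Md{0}=2^{u}B_u-2^{u-2}A\Md{1}=2^{u-2}\!\left(4B_u-A\Md{1}\right)$ for a suitable integer vector $B_u$; this shows $A\Md{0}$ is divisible by $2^{u-2}$ for every $u\ge5$. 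Since $A\Md{0}$ is a fixed integer vector divisible by arbitrarily large powers of $2$, it must be $\matr{0}$, i.e. $A\in\Lker\Md{0}$. Substituting $A\Md{0}=\matr{0}$ back, the level-$u$ congruence collapses to $2^{u-2}A\Md{1}\equiv\matr{0}\pmod{2^u}$, which is exactly $A\Md{1}\equiv\matr{0}\pmod{4}$ — the same condition for every $u\ge5$. Combining the two inclusions yields the claimed equality.

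The only mildly delicate point is the step where the intersection over all $u\ge5$ of the conditions ``$2^{u-2}$ divides $A\Md{0}$'' collapses to ``$A\Md{0}=\matr{0}$''; this is the standard observation that a fixed integer divisible by all powers of $2$ is zero. Apart from that, everything is a mechanical translation of Proposition~\ref{prop21}, so there is no genuine obstacle in this lemma: its content has already been front-loaded into the computation of $\C{24}{3.2^u}$, $\T{24}{3.2^u}$ and hence $\AIA{24}{3.2^u}$ modulo $2^u$ carried out in Theorems~\ref{prop19} and \ref{prop18} and in Proposition~\ref{prop21}.
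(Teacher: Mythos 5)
Your proposal is correct and follows essentially the same route as the paper: both reduce to Proposition~\ref{prop21} via the invertibility of $9$ modulo $2^u$, deduce $A\Md{0}=\matr{0}$ from divisibility by arbitrarily large powers of $2$, and then observe that the residual condition is $A\Md{1}\equiv\matr{0}\pmod{4}$ uniformly in $u$. No gaps.
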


\begin{proof}
From Proposition~\ref{prop21}, we know that
$$
9\AIA{24}{3.2^u} \equiv \Md{0} + 2^{u-2}\Md{1} \pmod{2^u},
$$
for all $u\ge5$. Moreover, since $\gcd(9,2^u)=1$, we have
$$
A\AIA{24}{3.2^u} \equiv \matr{0}\pmod{2^u}\ \Longleftrightarrow\ 9A\AIA{24}{3.2^u} \equiv \matr{0}\pmod{2^u},
$$
for all $24$-tuples of integers $A$. Therefore,
$$
\setG{5} = \left\{ A\in\Z^{24}\ \middle|\ A\Md{0}+2^{u-2}A\Md{1}\equiv \matr{0}\pmod{2^u},\ \forall u\ge5 \right\}.
$$
First, if we suppose that
$$
A\Md{0}+2^{u-2}A\Md{1}\equiv \matr{0}\pmod{2^u},
$$
for all $u\ge 5$, then
$$
A\Md{0}\equiv \matr{0}\pmod{2^{u-2}},
$$
for all $u\ge5$. This leads to $A\Md{0}=\matr{0}$ and thus $A\in\Lker\Md{0}$. If $A\in\Lker\Md{0}$, then,
$$
A\Md{0}+2^{u-2}A\Md{1}\equiv \matr{0}\pmod{2^u}\ \Longleftrightarrow\ A\Md{1}\equiv\matr{0}\pmod{4},
$$
for all $u\ge5$. Conversely, if we suppose that $A\in\Lker\Md{0}$ and $A\Md{1}\equiv\matr{0}\pmod{4}$, we obtain that
$$
A\Md{0}+2^{u-2}A\Md{1}\equiv \matr{0}\pmod{2^u},
$$
for all $u\ge 5$. Therefore,
$$
\setG{5} = \left\{ A\in\Lker\Md{0}\ \middle|\ A\Md{1}\equiv\matr{0}\pmod{4} \right\},
$$
as announced.
\end{proof}

\begin{lem}\label{lem9}
The left kernel of $\Md{0}$ is
$$
\Lker\Md{0} = \left\langle Z_1,\ldots,Z_{16}\right\rangle = \setF.
$$
\end{lem}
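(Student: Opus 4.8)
The plan is to prove the equality $\Lker\Md{0}=\langle Z_1,\ldots,Z_{16}\rangle$; the remaining equality $\langle Z_1,\ldots,Z_{16}\rangle=\setF$ is then immediate, because $\setF=\Lker\Nd{0}$ by the lemma so asserting and $\Lker\Nd{0}=\langle Z_1,\ldots,Z_{16}\rangle$ by Lemma~\ref{lem*8}. Exactly as in the proof of Lemma~\ref{lem*8}, I would argue in two steps: first that each $Z_j$ lies in $\Lker\Md{0}$, and then that $\dim\Lker\Md{0}\le 16$. Since $\{Z_1,\ldots,Z_{16}\}$ is visibly linearly independent (look at the first $16$ coordinates), these two facts force it to be a basis of $\Lker\Md{0}$.

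For the first step I would begin by simplifying $3\Md{0}$. From the computation inside the proof of Proposition~\ref{prop21}, the first $24$ columns of $3\Md{0}$ are $(\matX{24}\Td{0}-9\matI{24})(\Cd{0}-3\matI{24})$ and the last $24$ are $9\matX{24}(\Cd{0}-3\matI{24})$. Using $\matX{24}\Td{0}-9\matI{24}=\Nd{0}-3\Cd{0}$ (read off from $\Nd{0}=3\Cd{0}-9\matI{24}+\matX{24}\Td{0}$, as in the proof of Lemma~\ref{thm3}) together with $\Cd{0}^2=3\Cd{0}$ (from the proof of Theorem~\ref{prop19}), the first block collapses to $\Nd{0}(\Cd{0}-3\matI{24})$. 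Thus for every $j$ the vector $3\,Z_j\Md{0}$ is the concatenation of $Z_j\Nd{0}(\Cd{0}-3\matI{24})$ and $9\,Z_j\matX{24}(\Cd{0}-3\matI{24})$. The first block is $\matr{0}$ since $Z_j\in\Lker\Nd{0}$ by Lemma~\ref{lem*8}. For the second block, note that $\Cd{0}$ is the symmetric circulant matrix whose generating tuple has nonzero entries $2,-1,-1$ in positions $0,8,16$, so $R_r(\Cd{0}-3\matI{24})=-(E_r+E_{r+8}+E_{r+16})$ with indices read modulo $24$; hence a row vector $w$ satisfies $w(\Cd{0}-3\matI{24})=\matr{0}$ if and only if $w_c+w_{c+8}+w_{c+16}=0$ for every $c$, that is, if and only if the entries of $w$ sum to zero on each residue class modulo $8$. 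Since $(Z_j\matX{24})_s=(Z_j)_s+(Z_j)_{25-s}$, a short inspection of the tuples of Table~\ref{tab4} shows that each $Z_j\matX{24}$ has precisely this property, so $Z_j\matX{24}(\Cd{0}-3\matI{24})=\matr{0}$. Therefore $3\,Z_j\Md{0}=\matr{0}$, and $Z_j\Md{0}=\matr{0}$ because $\Z^{48}$ is torsion free.

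For the second step it suffices to show $\rank\Md{0}\ge 8$, whence $\dim\Lker\Md{0}=24-\rank\Md{0}\le 16$. I would establish this in the same way as the corresponding rank assertion in Lemma~\ref{lem*8}, namely by exhibiting a nonsingular $8\times 8$ submatrix of the explicit matrix $\Md{0}$. Conceptually, the first $24$ columns of $3\Md{0}$ equal $\Nd{0}(\Cd{0}-3\matI{24})$, and this product has rank $8$ precisely because the row space of $\Nd{0}$, of dimension $8$, meets $\Lker(\Cd{0}-3\matI{24})$ only in $\matr{0}$; that point in turn amounts to checking that the $24\times 8$ matrix obtained from $\Nd{0}$ by summing its columns $c$, $c+8$, $c+16$ for each residue $c$ modulo $8$ has rank $8$. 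Putting the two steps together, $\{Z_1,\ldots,Z_{16}\}$ is a basis of $\Lker\Md{0}$, so $\Lker\Md{0}=\langle Z_1,\ldots,Z_{16}\rangle=\Lker\Nd{0}=\setF$.

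The only genuinely non-formal ingredient is the rank bound $\rank\Md{0}\ge 8$: unlike the reduction of the first block, it cannot be deduced from the algebraic identities $\Cd{0}^2=3\Cd{0}$ and $\Nd{0}=3\Cd{0}-9\matI{24}+\matX{24}\Td{0}$ alone, and must be read off the explicit entries of $\Md{0}$ (equivalently of $\Nd{0}$), exactly as the rank of $\Nd{0}$ was obtained in Lemma~\ref{lem*8}.
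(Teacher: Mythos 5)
Your proposal is correct, but it reaches the key step by a genuinely different route than the paper. The paper treats $\Md{0}$ exactly as it treated $\Nd{0}$ in Lemma~\ref{lem*8}: it exhibits the explicit row relations $R_i(\Md{0})=3R_{16+i}(\Md{0})-2R_{25-i}(\Md{0})$ and $R_{8+i}(\Md{0})=2R_{16+i}(\Md{0})-R_{25-i}(\Md{0})$ for $i\in\{1,\ldots,8\}$, which in one stroke give both $\rank\Md{0}\le 8$ and $Z_i=E_i-3E_{16+i}+2E_{25-i}\in\Lker\Md{0}$, $Z_{8+i}=E_{8+i}-2E_{16+i}+E_{25-i}\in\Lker\Md{0}$; it then checks that an explicit $8\times 8$ submatrix (first $8$ columns, last $8$ rows) is nonsingular and concludes by rank--nullity. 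You instead \emph{reuse} Lemma~\ref{lem*8}: writing $3\Md{0}=\left(\begin{array}{c|c}\Nd{0}(\Cd{0}-3\matI{24}) & 9\matX{24}(\Cd{0}-3\matI{24})\end{array}\right)$ via the identities $\matX{24}\Td{0}-9\matI{24}=\Nd{0}-3\Cd{0}$ and $\Cd{0}^2=3\Cd{0}$, the first block is killed by $Z_j\Nd{0}=\matr{0}$, and the second by your (correct, and easily checked from the form $Z_i=E_i-3E_{16+i}+2E_{25-i}$, $Z_{8+i}=E_{8+i}-2E_{16+i}+E_{25-i}$) observation that each $Z_j\matX{24}$ sums to zero on every residue class modulo $8$, which is exactly the description of $\Lker(\Cd{0}-3\matI{24})$. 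Your approach buys a structural explanation of why $\Lker\Md{0}$ coincides with $\Lker\Nd{0}$ and avoids re-deriving row relations from the displayed entries of $\Md{0}$; the paper's approach is more self-contained for this lemma and obtains the rank upper bound for free from the same relations (whereas you get it implicitly from the sixteen independent kernel vectors). Both arguments ultimately rest on one unavoidable explicit numeric verification for the lower bound $\rank\Md{0}\ge 8$, which you correctly isolate and reduce to a rank computation on a $24\times 8$ compression of $\Nd{0}$, while the paper reads it off a displayed $8\times 8$ submatrix.
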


\begin{proof}
First, all the rows of $\Md{0}$ are linear combination of its last eight rows. Indeed, for all $i\in\{1,\ldots,8\}$, we have
\begin{equation}\label{eq1}
R_i(\Md{0}) = 3R_{16+i}(\Md{0})-2R_{25-i}(\Md{0}),
\end{equation}
and
\begin{equation}\label{eq2}
R_{8+i}(\Md{0}) = 2R_{16+i}(\Md{0})-R_{25-i}(\Md{0}).
\end{equation}
Therefore the rank of $\Md{0}$ is at most $8$. Moreover, since the submatrix obtained by removing the first $16$ rows and the last $40$ columns, i.e., the matrix
\begin{center}
\includegraphics{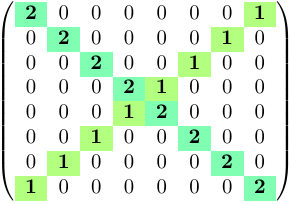}
\end{center}
is of rank $8$ in $\Z$, we obtain that $\Md{0}$ is of rank $8$. From the rank-nullity theorem, we deduce that
$$
\dim\Lker\Md{0} = 24-\rank\Md{0} = 16.
$$
Finally, from \eqref{eq1} and \eqref{eq2}, we deduce that,
$$
Z_i = E_i-3E_{16+i}+2E_{25-i} \in\Lker\Md{0}
$$
and
$$
Z_{8+i} = E_{8+i}-2E_{16+i}+E_{25-i} \in\Lker\Md{0},
$$
for all $i\in\{1,\ldots,8\}$. Since the family $\left\{Z_1,\ldots,Z_{16}\right\}$ is clearly linearly independent (just considering the $16$ first elements of each one of them) and since $\Lker\Md{0}$ is of dimension $16$, we conclude that $\left\{Z_1,\ldots,Z_{16}\right\}$ is a basis of $\Lker\Md{0}$. Therefore,
$$
\Lker\Md{0} = \left\langle Z_1,\ldots,Z_{16}\right\rangle = \setF,
$$
as announced.
\end{proof}

\begin{rem}
From Lemmas~\ref{lem10} and \ref{lem9}, it is straightforward that
$$
\setG{5}\subset\setF.
$$
\end{rem}

We are now ready to prove Proposition~\ref{prop20}.

\begin{proof}[Proof of Proposition~\ref{prop20}]
From Lemma~\ref{lem10}, we know that
\begin{equation}\label{eq4}
\setG{5} = \left\{ A\in\Lker\Md{0}\ \middle|\ A\Md{1}\equiv\matr{0}\pmod{4} \right\}.
\end{equation}
Let $\matZ$ be the $16\times 24$ integer matrix whose $i$th row is $Z_i$, for all $i\in\{1,\ldots,16\}$, i.e.,
\begin{center}
\includegraphics[width=0.75\textwidth]{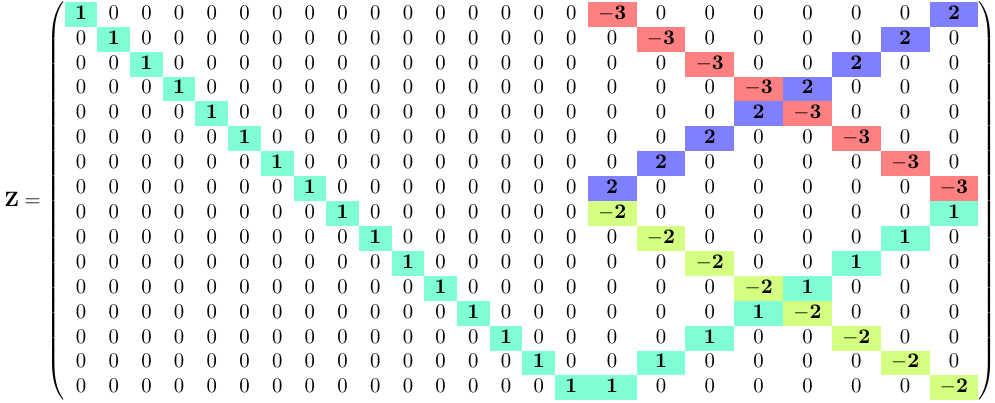}
\end{center}
Now, we consider the $\Zn{4}$-matrix $\Md{2}=(\matZ\Md{1}\bmod{4})$, i.e.,
\begin{center}
\includegraphics[width=\textwidth]{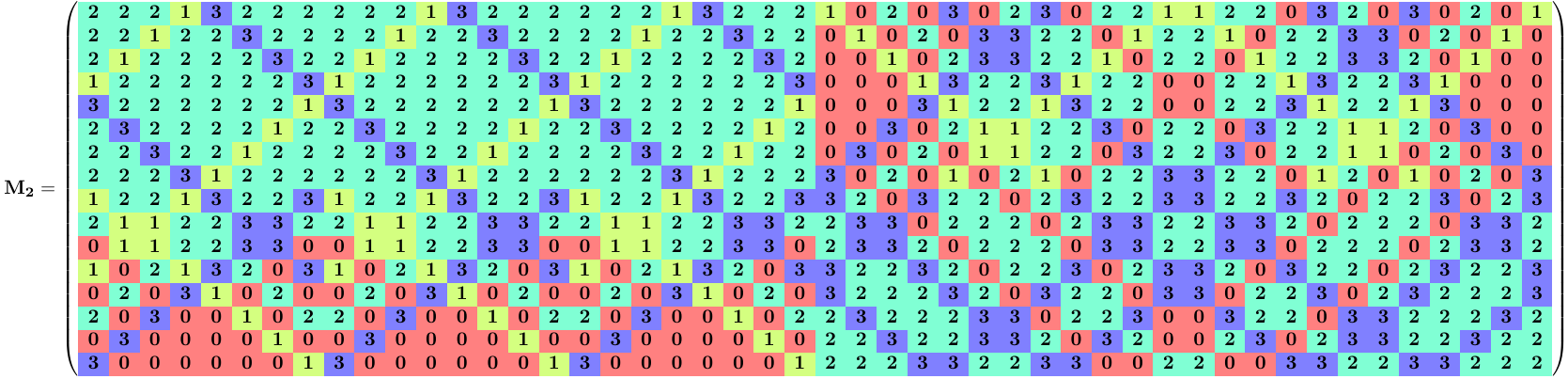}
\end{center}
First, all the rows of $\Md{2}$ are linear combination of its four last rows. Indeed, for all $i\in\{1,\ldots,12\}$, there exists $(\alpha_i,\beta_i,\gamma_i,\delta_i)\in\{0,1,2,3\}^4$ such that
\begin{equation}\label{eq3}
R_i(\Md{2}) + \alpha_i R_{13}(\Md{2}) + \beta_i R_{14}(\Md{2}) + \gamma_i R_{15}(\Md{2}) + \delta_i R_{16}(\Md{2}) = \matr{0}.
\end{equation}
The coefficients $(\alpha_i,\beta_i,\gamma_i,\delta_i)$ are given in Table~\ref{tab5}.
\begin{table}[htbp]
\begin{center}
\begin{tabular}{ccccc}
\begin{tabular}{|c||c|c|c|c|}
\hline
 $i$ & $\alpha_i$ & $\beta_i$ & $\gamma_i$ & $\delta_i$ \\
\hline
 $1$ & $1$ & $2$ & $0$ & $2$ \\
\hline
 $2$ & $2$ & $1$ & $2$ & $0$ \\
\hline
 $3$ & $2$ & $2$ & $1$ & $2$ \\
\hline
 $4$ & $2$ & $2$ & $2$ & $1$ \\
\hline
\end{tabular}
&
&
\begin{tabular}{|c||c|c|c|c|}
\hline
 $i$ & $\alpha_i$ & $\beta_i$ & $\gamma_i$ & $\delta_i$ \\
\hline
 $5$ & $2$ & $2$ & $2$ & $3$ \\
\hline
 $6$ & $2$ & $2$ & $3$ & $2$ \\
\hline
 $7$ & $2$ & $3$ & $2$ & $0$ \\
\hline
 $8$ & $3$ & $2$ & $0$ & $2$ \\
\hline
\end{tabular}
&
&
\begin{tabular}{|c||c|c|c|c|}
\hline
 $i$ & $\alpha_i$ & $\beta_i$ & $\gamma_i$ & $\delta_i$ \\
\hline
 $9$ & $1$ & $2$ & $0$ & $1$ \\
\hline
 $10$ & $2$ & $1$ & $1$ & $0$ \\
\hline
 $11$ & $2$ & $1$ & $1$ & $2$ \\
\hline
 $12$ & $1$ & $2$ & $2$ & $1$ \\
\hline
\end{tabular}
\end{tabular}
\caption{The coefficients $(\alpha_i,\beta_i,\gamma_i,\delta_i)$}\label{tab5}
\end{center}
\end{table}
From \eqref{eq3}, we deduce that
$$
U_i = Z_i+\alpha_i Z_{13}+\beta_i Z_{14}+\gamma_i Z_{15}+\delta_i Z_{16} \in\setG{5},
$$
for all $i\in\{1,\ldots,12\}$. Moreover, from \eqref{eq4}, we have
$$
U_i = 4Z_i\in\setG{5},
$$
for all $i\in\{13,14,15,16\}$. This leads to
$$
\left\langle U_1,\ldots,U_{16}\right\rangle \subset\setG{5}.
$$


Now, we prove that $\{U_1,\ldots,U_{16}\}$ is a generating set of $\setG{5}$. From \eqref{eq4}, we know that $\setG{5}\subset\setF$. Let
$$
A = \sum_{i=1}^{16}\alpha_i Z_i \in\setG{5},
$$
with $(\alpha_1,\ldots,\alpha_{16})\in\Z^{16}$. From the definitions of $Z_i$ and $U_i$, it is clear that
$$
A - \sum_{i=1}^{12}\alpha_i U_i = \sum_{i=13}^{16}\beta_i Z_i,
$$
where
$$
\begin{array}{l}
\beta_{13} = \alpha_{13}-\alpha_1-2\alpha_2-2\alpha_3-2\alpha_4-2\alpha_5-2\alpha_6-2\alpha_7-3\alpha_8-\alpha_9-2\alpha_{10}-2\alpha_{11}-\alpha_{12}, \\
\beta_{14} = \alpha_{14}-2\alpha_1-\alpha_2-2\alpha_3-2\alpha_4-2\alpha_5-2\alpha_6-3\alpha_7-2\alpha_8-2\alpha_9-\alpha_{10}-\alpha_{11}-2\alpha_{12}, \\
\beta_{15} = \alpha_{15}-2\alpha_2-\alpha_3-2\alpha_4-2\alpha_5-3\alpha_6-2\alpha_7-\alpha_{10}-\alpha_{11}-2\alpha_{12}, \\
\beta_{16} = \alpha_{16}-2\alpha_1-2\alpha_3-\alpha_4-3\alpha_5-2\alpha_6-2\alpha_8-\alpha_9-2\alpha_{11}-\alpha_{12}. \\
\end{array}
$$
Since $A,U_1,\ldots,U_{12}\in\setG{5}$, we deduce that $\sum_{i=13}^{16}\beta_i Z_i\in\setG{5}$. Therefore
$$
\left(\sum_{i=13}^{16}\beta_i Z_i\right)\Md{1} \equiv \matr{0} \pmod{4}
$$
and
$$
\sum_{i=13}^{16}\beta_i R_i(\Md{2}) \equiv \sum_{i=13}^{16}\beta_i Z_i\Md{1} \equiv \left(\sum_{i=13}^{16}\beta_i Z_i\right)\Md{1} \equiv \matr{0} \pmod{4}.
$$
If we consider the four first columns of $\Md{2}$, we obtain that
$$
\left\{\begin{array}{ll}
2\beta_{14} + 3\beta_{16} \equiv 0 & \pmod{4},\\
2\beta_{13} + 3\beta_{15} \equiv 0 & \pmod{4},\\
3\beta_{14}\equiv 0 & \pmod{4},\\
3\beta_{13}\equiv 0 & \pmod{4},\\
\end{array}\right.
\quad \Longleftrightarrow\quad \beta_i\equiv0\pmod{4},\ \forall i\in\{13,14,15,16\}.
$$
Therefore $\frac{\beta_i}{4}\in\Z$, for all $i\in\{13,14,15,16\}$, and
$$
A = \sum_{i=1}^{12}\alpha_i U_i + \sum_{i=13}^{16}\frac{\beta_i}{4}(4Z_i) = \sum_{i=1}^{12}\alpha_i U_i + \sum_{i=13}^{16}\frac{\beta_i}{4}U_i.
$$
This leads to $\setG{5}\subset\left\langle U_1,\ldots,U_{16}\right\rangle$ and thus,
$$
\setG{5} = \left\langle U_1,\ldots,U_{16}\right\rangle.
$$
Since $\{U_1,\ldots,U_{16}\}$ is a generating set of $\setG{5}$ and is clearly an independent set, by definitions of the $U_i$, we deduce that $\setG{5}$ is a free $\Z$-module of rank $16$ for which $\{U_1,\ldots,U_{16}\}$ is a basis.
\end{proof}

For obtaining $\setG{4}$, we need to consider the matrix $\AIA{24}{48}$ modulo $16$. By direct computation, we obtain that $\left(\AIA{24}{48}\bmod{16}\right)$ is equal to
\begin{center}
\includegraphics[width=\textwidth]{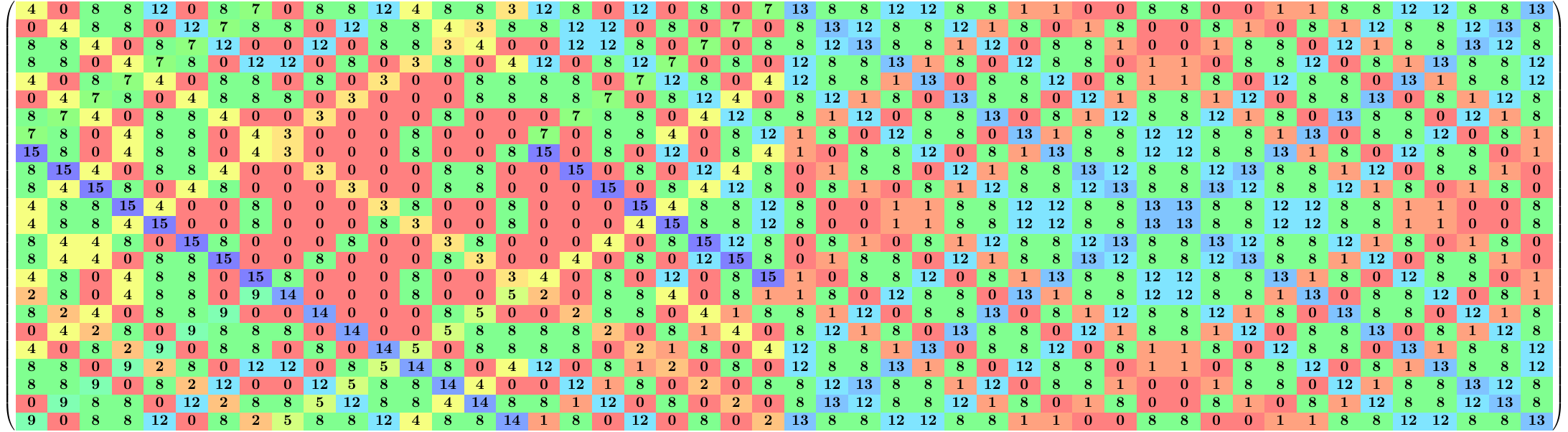}
\end{center}

\begin{prop}\label{prop22}
We have
$$
\setG{4} = \setG{5} = \displaystyle\left\langle U_1,\ldots,U_{16}\right\rangle.
$$
\end{prop}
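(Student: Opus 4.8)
The plan is to leverage the explicit description $\setG{5}=\langle U_1,\ldots,U_{16}\rangle$ obtained in Proposition~\ref{prop20}, together with the observation that the systems of congruences defining $\setG{4}$ and $\setG{5}$ differ by exactly one condition.

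First I would note that $\setG{4}\subset\setG{5}$ is immediate from the definition of $\setG{u_0}$: membership in $\setG{4}$ asks $A\AIA{24}{3.2^u}\equiv\matr{0}\pmod{2^u}$ for all $u\geq4$, which is precisely the condition defining $\setG{5}$ (the cases $u\geq5$) together with the extra requirement at $u=4$, namely $A\AIA{24}{48}\equiv\matr{0}\pmod{16}$. Hence
$$
\setG{4} = \setG{5}\cap\left\{ A\in\Z^{24}\ \middle|\ A\AIA{24}{48}\equiv\matr{0}\pmod{16} \right\},
$$
and the proposition will follow once I establish the reverse inclusion $\setG{5}\subset\setG{4}$, i.e.\ that every element of $\setG{5}$ already satisfies $A\AIA{24}{48}\equiv\matr{0}\pmod{16}$.

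Since the latter congruence is $\Z$-linear in $A$ modulo $16$ and $\setG{5}$ is generated over $\Z$ by $U_1,\ldots,U_{16}$ (Proposition~\ref{prop20}), it suffices to check it on these generators: I would verify $U_i\AIA{24}{48}\equiv\matr{0_{1,48}}\pmod{16}$ for each $i\in\{1,\ldots,16\}$. This is a finite direct computation, using the explicit matrix $\left(\AIA{24}{48}\bmod{16}\right)$ displayed just before the statement --- which can, if desired, be recomputed from $\AIA{24}{48}=\left(\M{24}{48}\W{24}{48}\mid\matX{24}\W{24}{48}\right)$ together with the values of $\C{24}{48}$ and $\T{24}{48}$ modulo $16$ recorded in Section~6 --- and the explicit $24$-tuples $U_i$ of Table~\ref{tab3}. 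Once all sixteen products are seen to vanish modulo $16$, one gets $\setG{5}\subset\setG{4}$, hence $\setG{4}=\setG{5}=\langle U_1,\ldots,U_{16}\rangle$.

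There is no conceptual obstacle: the only work is the bookkeeping of these sixteen vector--matrix products modulo $16$. The substantive point is simply that lowering the threshold from $u\geq5$ to $u\geq4$ imposes exactly one new linear condition on $A$, and that condition happens to be automatically satisfied on the module $\setG{5}$ --- by contrast with the subsequent step, where one expects the condition at $u=3$ to genuinely restrict $\setG{4}$ to a smaller submodule.
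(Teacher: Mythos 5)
Your proposal is correct and follows exactly the paper's argument: observe that $\setG{4}=\left\{A\in\setG{5}\ \middle|\ A\AIA{24}{48}\equiv\matr{0}\pmod{16}\right\}$, then reduce the extra condition to the generators $U_1,\ldots,U_{16}$ by linearity and verify it by direct computation. No difference in approach worth noting.
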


\begin{proof}
By definition, we know that $\setG{4}\subset\setG{5}$ and
$$
\setG{4} = \left\{ A\in\setG{5}\ \middle|\ A\AIA{24}{48}\equiv\matr{0}\pmod{16} \right\}.
$$
Since, by direct computation, the $24$-tuples of integers $U_i$ verify that
$$
U_i\AIA{24}{48}\equiv\matr{0} \pmod{16},
$$
for all $i\in\{1,\ldots,16\}$, we obtain that
$$
\setG{5}\subset\setG{4}.
$$
This completes the proof.
\end{proof}

For obtaining $\setG{3}$, we need to consider the matrix $\AIA{24}{24}$ modulo $8$. By direct computation, we obtain that $\left(\AIA{24}{24}\bmod{8}\right)$ is equal to
\begin{center}
\includegraphics[width=\textwidth]{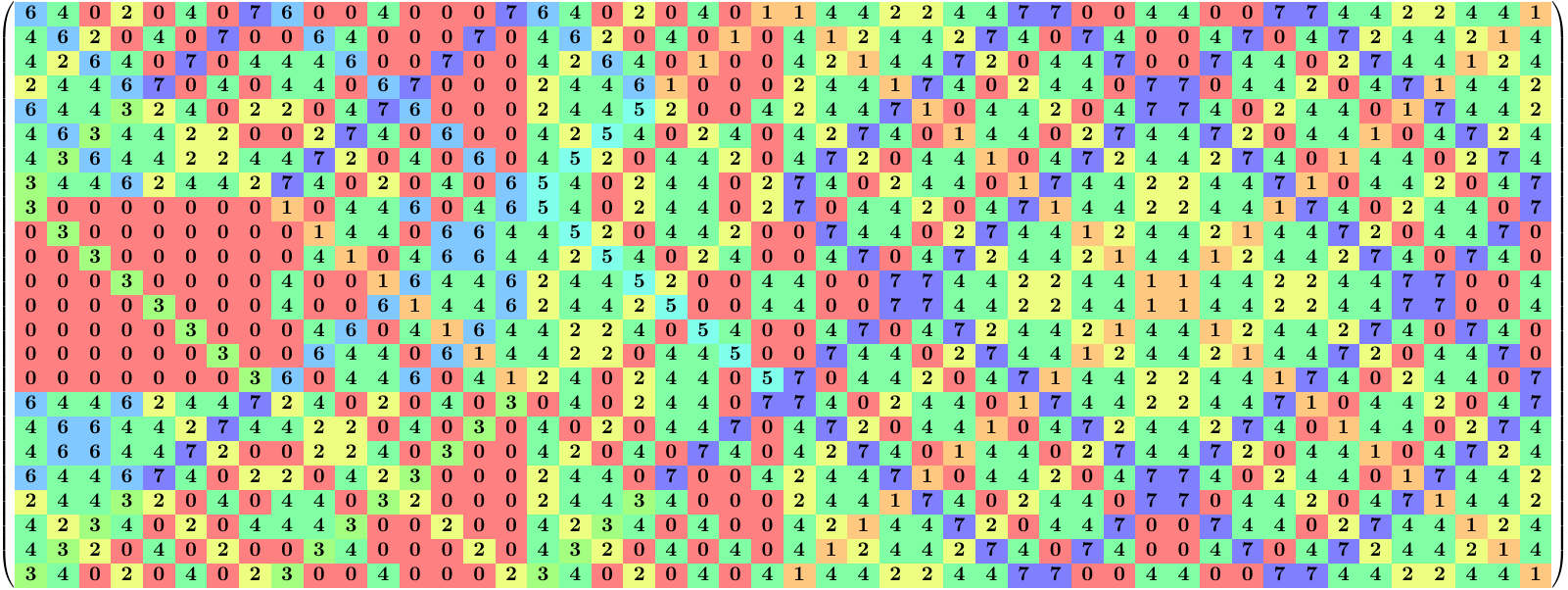}
\end{center}

\begin{prop}\label{prop24}
Let $V_1,\ldots,V_{16}$ be the $24$-tuples of integers given in Table~\ref{tab6}. Then, the free $\Z$-module $\setG{3}$ is of rank $16$ and admits $\{V_1,\ldots,V_{16}\}$ as basis, i.e.,
$$
\setG{3} = \left\langle V_1,\ldots,V_{16}\right\rangle.
$$
\end{prop}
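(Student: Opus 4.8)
The plan is to argue exactly as in the proofs of Propositions~\ref{prop20} and \ref{prop22}, starting from the already established identity $\setG{4}=\setG{5}=\left\langle U_1,\ldots,U_{16}\right\rangle$. Since $\setG{3}\subset\setG{4}$ by definition, and since for $A\in\setG{4}$ all the congruences $A\AIA{24}{3.2^u}\equiv\matr{0}\pmod{2^u}$ with $u\ge4$ already hold, one has
$$
\setG{3} = \left\{ A\in\setG{4}\ \middle|\ A\AIA{24}{24}\equiv\matr{0}\pmod{8} \right\},
$$
so the whole statement reduces to solving a single $\Zn{8}$-linear system on the free rank-$16$ module $\setG{4}$.

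First I would let $\matr{U}$ be the $16\times24$ integer matrix whose rows are $U_1,\ldots,U_{16}$ and form the $\Zn{8}$-matrix $\matr{M_3}=\bigl(\matr{U}\,\AIA{24}{24}\bmod 8\bigr)$ using the explicit reduction of $\AIA{24}{24}$ modulo $8$ displayed just above. Writing a generic element of $\setG{4}$ as $A=\sum_{i=1}^{16}\alpha_iU_i$ with $(\alpha_1,\ldots,\alpha_{16})\in\Z^{16}$, membership $A\in\setG{3}$ is equivalent to $(\alpha_1,\ldots,\alpha_{16})\,\matr{M_3}\equiv\matr{0}\pmod{8}$. The core of the proof is the resolution of this system: I expect a suitable pivot subset of the indices, together with, for each non-pivot index $i$, an explicit $\Zn{8}$-relation expressing $R_i(\matr{M_3})$ as a combination of the pivot rows (the analogue of \eqref{eq3}). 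These relations yield the tuples $V_i=U_i+\sum(\text{coeff})\,U_j$ for non-pivot $i$, while for each pivot index $i$ the corresponding $V_i$ is $2U_i$, $4U_i$ or $8U_i$, the exponent being forced by the $2$-adic valuations read off the pivot columns. A direct check that $V_i\AIA{24}{24}\equiv\matr{0}\pmod{8}$ then gives $\left\langle V_1,\ldots,V_{16}\right\rangle\subset\setG{3}$.

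For the reverse inclusion I would mimic the end of the proof of Proposition~\ref{prop20}: given $A=\sum\alpha_iU_i\in\setG{3}$, subtract the integer combination $\sum\alpha_iV_i$ over the non-pivot indices to reduce to some $A'=\sum\beta_jU_j$ over the pivot indices; reading off the pivot columns of $\matr{M_3}$ then forces each $\beta_j$ to be divisible by the exact power of $2$ occurring in $V_j$, so dividing through exhibits $A'$, hence $A$, as an integer combination of the $V_i$. Thus $\setG{3}\subset\left\langle V_1,\ldots,V_{16}\right\rangle$. Finally, $\{V_1,\ldots,V_{16}\}$ is $\Z$-linearly independent by inspection of the first $16$ coordinates, as for the $U_i$, so it is a basis and $\setG{3}$ is free of rank $16$.

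The main obstacle will be the $\Zn{8}$ linear algebra. Because $\Zn{8}$ is not a field one cannot simply invoke rank; instead the $2$-adic valuations must be tracked throughout, and it is precisely these valuations that decide whether each $V_i$ is a ``$U_i$-plus-correction'' vector or a multiple $2U_i$, $4U_i$, $8U_i$. Organising this bookkeeping via a Smith-normal-form-type reduction of $\matr{M_3}$ over $\Z$, read modulo $8$, is the cleanest route; in any event the verification that the chosen $V_i$ both lie in $\setG{3}$ and generate it is an unavoidable explicit computation with the displayed $24$-column matrices.
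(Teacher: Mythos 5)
Your proposal follows essentially the same route as the paper: reduce membership in $\setG{3}$ to the single condition $A\AIA{24}{24}\equiv\matr{0}\pmod{8}$ on $\setG{4}=\left\langle U_1,\ldots,U_{16}\right\rangle$, analyse the matrix $\matU\AIA{24}{24}\bmod 8$ row by row to produce the $V_i$, and prove the reverse inclusion by reading off suitable columns to force divisibility of the remaining coefficients, with independence checked on the first $16$ coordinates. The only differences are cosmetic (the paper exploits the explicit symmetries $R_i=R_{9-i}=R_{8+i}$ of the reduced matrix rather than a generic pivot bookkeeping, and the ``pivot'' generators turn out to be $U_{13},\ldots,U_{16}$ themselves and $2U_9,\ldots,2U_{12}$ with corrections), so the approach is correct and matches the paper's proof.
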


\begin{table}[htbp]
\begin{center}
\includegraphics[width=\textwidth]{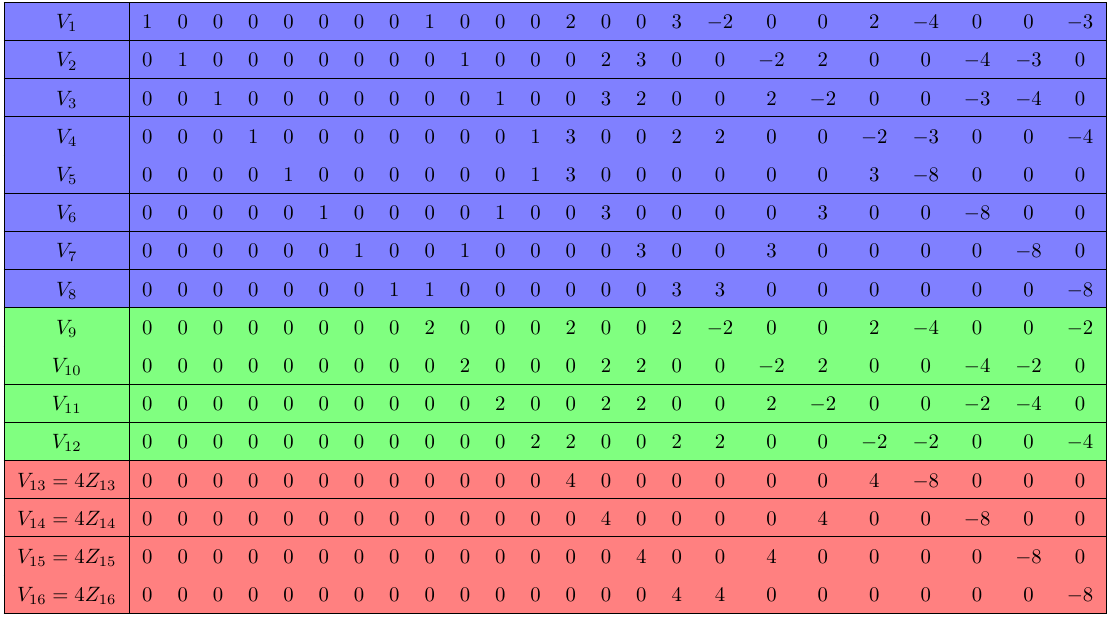}
\caption{The basis $\left\{V_1,\ldots,V_{16}\right\}$ of $\setG{3}$}\label{tab6}
\end{center}
\end{table}

\begin{proof}
By definition, we know that $\setG{3}\subset\setG{4}$ and
$$
\setG{3} = \left\{ A\in\setG{4}\ \middle|\ A\AIA{24}{24}\equiv\matr{0}\pmod{8} \right\}.
$$
Moreover, from Proposition~\ref{prop22}, we have
$$
\mathcal{G}_4 = \displaystyle\left\langle U_1,\ldots,U_{16}\right\rangle.
$$
Let $\matU$ be the $16\times 24$ integer matrix whose $i$th row is $U_i$, for all $i\in\{1,\ldots,16\}$, i.e.,
\begin{center}
\includegraphics[width=0.75\textwidth]{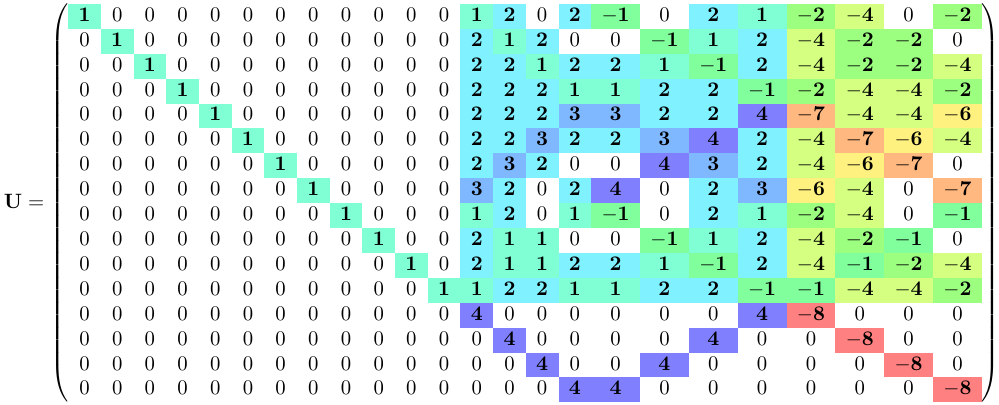}
\end{center}
Now, we consider the $\Zn{8}$-matrix $\Md{3}=(\matU\AIA{24}{24}\bmod{8})$, i.e.,
\begin{center}
\includegraphics[width=\textwidth]{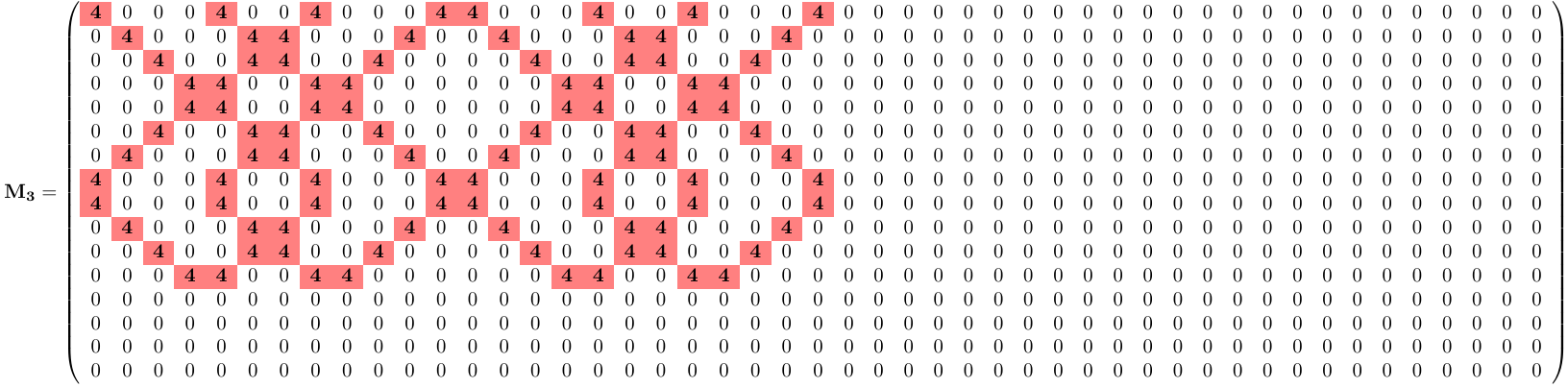}
\end{center}
First, it is clear that $R_i(\Md{3})=\matr{0}$, for all $i\in\{13,14,15,16\}$. Therefore,
$$
V_i = U_i = 4Z_i \in\setG{3},\ \forall i\in\{13,14,15,16\}.
$$
Moreover, it is clear that
$$
R_i(\Md{3}) = R_{9-i}(\Md{3}) = R_{8+i}(\Md{3}),
$$
for all $i\in\{1,2,3,4\}$, and thus
$$
R_i(\Md{3})+R_{8+i}(\Md{3}) = R_{4+i}(\Md{3})+R_{13-i}(\Md{3}) = \matr{0},
$$
for all $i\in\{1,2,3,4\}$. We deduce that
$$
V_i' = U_i+U_{8+i} \in\setG{3}\quad\text{and}\quad V_{i+4}' = U_{i+4}+U_{13-i} \in\setG{3},
$$
for all $i\in\{1,2,3,4\}$. The $24$-tuples of integers $V_1',\ldots,V_{8}'$ are depicted in Table~\ref{tab7}.
\begin{table}[htbp]
\begin{center}
\includegraphics[width=\textwidth]{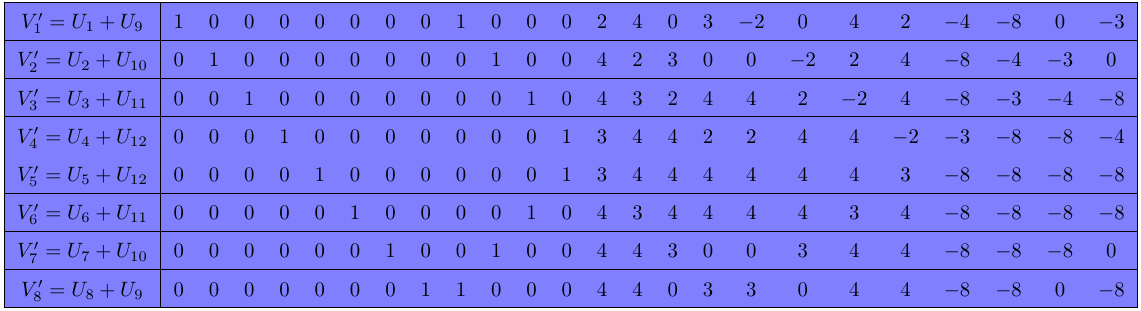}
\caption{The $24$-tuples $\left\{V_1',\ldots,V_{8}'\right\}$}\label{tab7}
\end{center}
\end{table}
Since
$$
\begin{array}{l}
V_1 = V_1'-V_{14},\ V_2 = V_2'-V_{13},\ V_3 = V_3'-V_{13}-V_{16},\ V_4 = V_4'-V_{14}-V_{15}, \\
V_5=V_5'-V_{14}-V_{15}-V_{16},\ V_6=V_6'-V_{13}-V_{15}-V_{16},\ V_7=V_7'-V_{13}-V_{14}, \\
V_8=V_8'-V_{13}-V_{14}, \\
\end{array}
$$
we deduce that $V_i\in\setG{3}$, for all $i\in\{1,\ldots,8\}$. It is clear that $2R_i(\Md3)=\matr{0}$ and thus,
$$
2U_i \in\setG{3},
$$
for all $i\in\{9,10,11,12\}$. Since
\begin{equation}\label{eq5}
V_9 = 2U_9-V_{14},\ V_{10} = 2U_{10}-V_{13},\ V_{11} = 2U_{11}-V_{13}-V_{16},\ V_{12} = 2U_{12}-V_{14}-V_{15},
\end{equation}
we obtain that $V_i\in\setG{3}$, for all $i\in\{9,10,11,12\}$. Therefore, we have $V_i\in\setG{3}$, for all $i\in\{1,\ldots,16\}$, and
$$
\left\langle V_1,\ldots,V_{16}\right\rangle \subset\setG{3}.
$$

Now, we prove that $\{V_1,\ldots,V_{16}\}$ is a generating set of $\setG{3}$. First, we know that $\setG{3}\subset\setG{4}$. Let
$$
A = \sum_{i=1}^{16}\alpha_i U_i \in\setG{3},
$$
with $(\alpha_1,\ldots,\alpha_{16})\in\Z^{16}$. From the definitions of $U_i$ and $V_i$, it is clear that
$$
A - \sum_{i=1}^{8}\alpha_i V_i = \sum_{i=9}^{16}\beta_i U_i,
$$
where
$$
\beta_i = \alpha_i-\alpha_{i-8}-\alpha_{17-i},
$$
for all $i\in\{9,10,11,12\}$, and
$$
\begin{array}{l}
\beta_{13} = \alpha_{13}+\alpha_2+\alpha_3+\alpha_6+\alpha_7+\alpha_8,\\
\beta_{14} = \alpha_{14}+\alpha_1+\alpha_4+\alpha_5+\alpha_7+\alpha_8,\\
\beta_{15} = \alpha_{15}+\alpha_4+\alpha_5+\alpha_6,\\
\beta_{16} = \alpha_{16}+\alpha_3+\alpha_5+\alpha_6.
\end{array}
$$
Since $V_i=U_i$, for all $i\in\{13,14,15,16\}$, we have
$$
A - \sum_{i=1}^{8}\alpha_i V_i - \sum_{i=13}^{16}\beta_iV_i = \sum_{i=9}^{12}\beta_i U_i,
$$
Since $A,V_1,\ldots,V_{8},V_{13},\ldots,V_{16}\in\setG{3}$, we deduce that $\sum_{i=9}^{12}\beta_i U_i\in\setG{3}$. Therefore
$$
\left(\sum_{i=9}^{12}\beta_i U_i\right)\AIA{24}{24} \equiv \matr{0} \pmod{8}
$$
and
$$
\sum_{i=9}^{12}\beta_i R_i(\Md{3}) \equiv \sum_{i=9}^{12}\beta_i U_i\AIA{24}{24} \equiv \left(\sum_{i=9}^{12}\beta_i U_i\right)\AIA{24}{24} \equiv \matr{0} \pmod{8}.
$$
If we consider the four fist columns of $\Md{3}$, we obtain that
$$
4\beta_i\equiv0\pmod{8},\ \forall i\in\{9,10,11,12\},\quad \Longleftrightarrow\quad \beta_i\equiv0\pmod{2},\ \forall i\in\{9,10,11,12\}.
$$
Therefore $\frac{\beta_i}{2}\in\Z$, for all $i\in\{9,10,11,12\}$, and
$$
A = \sum_{i=1}^{8}\alpha_i V_i + \sum_{i=9}^{12}\frac{\beta_i}{2}(2U_i) + \sum_{i=13}^{16}\beta_i V_i.
$$
Moreover, from \eqref{eq5}, we have
$$
A = \sum_{i=1}^{8}\alpha_i V_i + \sum_{i=9}^{12}\frac{\beta_i}{2}V_i + \sum_{i=13}^{16}\gamma_i V_i,
$$
where
$$
\gamma_{13}=\beta_{13}+\frac{\beta_{10}}{2}+\frac{\beta_{11}}{2},\ \gamma_{14}=\beta_{14}+\frac{\beta_{9}}{2}+\frac{\beta_{12}}{2},\ \gamma_{15}=\beta_{15}+\frac{\beta_{12}}{2},\ \gamma_{16}=\beta_{16}+\frac{\beta_{11}}{2}.
$$
This leads to $\setG{3}\subset\left\langle V_1,\ldots,V_{16}\right\rangle$ and thus,
$$
\setG{3} = \left\langle V_1,\ldots,V_{16}\right\rangle.
$$
Since $\{V_1,\ldots,V_{16}\}$ is a generating set of $\setG{3}$ and is clearly an independent set, by definitions of the $V_i$, we deduce that $\setG{3}$ is a free $\Z$-module of rank $16$ for which $\{V_1,\ldots,V_{16}\}$ is a basis.
\end{proof}

We are now ready to show the main result of this section, that for any $A\in\setE$, the orbit of the sequence
$$
S = \IAP{\pi_{2^u}(A)}{\pi_{2^u}(A)\matX{k}}
$$
is $(24,3.2^u)$-interlaced doubly arithmetic in $\Zn{2^u}$, for all integers $u\ge3$, that is the following

\begin{thm}\label{prop23}
$$
\mathcal{E} \subset \setG{3} \subset \setF.
$$
\end{thm}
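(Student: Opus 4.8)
The plan is to treat the two inclusions separately. The inclusion $\setG{3}\subset\setF$ is essentially already available: by the very definition of the nested sets $\setG{u_0}$ one has $\setG{3}\subset\setG{4}\subset\setG{5}$, while the remark following Lemma~\ref{lem9} gives $\setG{5}\subset\setF$. Chaining these yields $\setG{3}\subset\setF$, so no new work is needed for this half.

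The substance lies in $\setE\subset\setG{3}$. Here I would use that $\setG{3}=\langle V_1,\dots,V_{16}\rangle$ is a $\Z$-module (Proposition~\ref{prop24}) together with the description $\setE=\setX+\setM$, where $\setX=\{\pm X_i\mid i\in\{1,\dots,7\}\}$ and $\setM=\langle 4Y_1,\dots,4Y_8,8Z_9,\dots,8Z_{16}\rangle$. A $\Z$-module containing a generating family of $\setM$ contains $\setM$, and $\setG{3}$ is stable under negation; hence $\setE\subset\setG{3}$ will follow once the $23$ explicit integer $24$-tuples $X_1,\dots,X_7$, $4Y_1,\dots,4Y_8$, $8Z_9,\dots,8Z_{16}$ are each shown to lie in $\setG{3}$.

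To check membership in $\setG{3}$ I would use the finite description obtained from Lemma~\ref{lem10} and the definitions of $\setG{4}$ and $\setG{3}$ in the proofs of Propositions~\ref{prop22} and~\ref{prop24}: a $24$-tuple $A$ lies in $\setG{3}$ if and only if $A\in\Lker\Md{0}$, $A\Md{1}\equiv\matr{0}\pmod{4}$, $A\AIA{24}{48}\equiv\matr{0}\pmod{16}$ and $A\AIA{24}{24}\equiv\matr{0}\pmod{8}$. For every element of $\setE$ the first condition is already granted by Proposition~\ref{prop*3}, since $\Lker\Md{0}=\setF=\langle Z_1,\dots,Z_{16}\rangle$ by Lemma~\ref{lem9}. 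For the generators $4Y_j$ and $8Z_j$ the congruence against $\Md{1}$ modulo $4$ is automatic from the factor $4$ (resp.\ $8$), and the two remaining congruences lose a power of $2$ for free; for the $X_i$ all three congruences must be checked directly, using the displayed values of $\Md{1}$, of $\AIA{24}{48}\bmod 16$ and of $\AIA{24}{24}\bmod 8$ together with the entries of $X_i$, $Y_i$, $Z_i$ from Table~\ref{tab4}. Equivalently, one may exhibit integer coefficients expressing each of the $23$ tuples in the basis $\{V_1,\dots,V_{16}\}$ of $\setG{3}$, which I would record in a short table.

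There is no conceptual obstacle: everything reduces to finite linear algebra over $\Zn{2^u}$ for small $u$. The only delicate point — the reduction of the infinitely many periodicity congruences defining $\setG{u_0}$ to finitely many matrix identities — has already been carried out via the stabilisation $9\AIA{24}{3\cdot 2^u}\equiv\Md{0}+2^{u-2}\Md{1}\pmod{2^u}$ for $u\ge 5$ from Proposition~\ref{prop21}. The remaining effort is bookkeeping: laying out the $23$ verifications (or the $23$ coordinate vectors) so the reader can confirm them.
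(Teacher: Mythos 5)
Your proposal is correct and follows essentially the same route as the paper: the inclusion $\setG{3}\subset\setF$ is the chain $\setG{3}\subset\setG{5}\subset\setF$, and $\setE\subset\setG{3}$ reduces, via the module structure and the finite characterization of $\setG{3}$ coming from Lemma~\ref{lem10} and Propositions~\ref{prop22} and~\ref{prop24}, to finitely many explicit verifications for the generators (the paper records these as integer coordinates in the basis $\{V_1,\ldots,V_{16}\}$, exactly as you suggest in your last alternative). The only organizational difference is that the paper disposes of $\setM$ all at once through the intermediate inclusion $4\setF\subset\setG{3}$ (Lemma~\ref{lem11}) rather than checking the sixteen generators $4Y_j$, $8Z_j$ individually, which changes nothing of substance.
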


The proof is based on the following

\begin{lem}\label{lem11}
$$
4\setF \subset \setG{3} \subset \setF.
$$
\end{lem}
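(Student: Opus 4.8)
The plan is to prove the two inclusions of Lemma~\ref{lem11} separately; the inclusion $\setG{3}\subset\setF$ will be a short chain of already-established facts, while $4\setF\subset\setG{3}$ will be reduced, generator by generator, to a parity observation that is essentially contained in the proof of Proposition~\ref{prop24}.

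For $\setG{3}\subset\setF$: by the definition of $\setG{u_0}$ as the set of $A\in\Z^{24}$ with $A\AIA{24}{3.2^u}\equiv\matr{0}\pmod{2^u}$ for \emph{all} $u\ge u_0$, enlarging $u_0$ drops constraints, so $\setG{3}\subset\setG{4}$. By Proposition~\ref{prop22}, $\setG{4}=\setG{5}$, and by Lemmas~\ref{lem10} and \ref{lem9} one has $\setG{5}=\{A\in\Lker\Md{0}\mid A\Md{1}\equiv\matr{0}\pmod{4}\}\subset\Lker\Md{0}=\setF$. Chaining these gives $\setG{3}\subset\setF$.

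For $4\setF\subset\setG{3}$: since $\setF=\langle Z_1,\ldots,Z_{16}\rangle$ and $\setG{3}$ is a $\Z$-module, it suffices to prove $4Z_i\in\setG{3}$ for every $i\in\{1,\ldots,16\}$. For $i\in\{13,14,15,16\}$ this is immediate, because $4Z_i=U_i=V_i\in\setG{3}$ by the construction in the proof of Proposition~\ref{prop24}. For $i\in\{1,\ldots,12\}$, I would use the identity $U_i=Z_i+\alpha_i Z_{13}+\beta_i Z_{14}+\gamma_i Z_{15}+\delta_i Z_{16}$ obtained in the proof of Proposition~\ref{prop20} (with $(\alpha_i,\beta_i,\gamma_i,\delta_i)$ read off Table~\ref{tab5}), together with $U_j=4Z_j$ for $j\in\{13,14,15,16\}$, to write $4Z_i=4U_i-\alpha_i U_{13}-\beta_i U_{14}-\gamma_i U_{15}-\delta_i U_{16}$. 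As $U_{13},\ldots,U_{16}$ equal $V_{13},\ldots,V_{16}$ and hence lie in $\setG{3}$, the whole claim reduces to showing $4U_i\in\setG{3}$ for all $i$.

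The one substantive step is thus $4U_i\in\setG{3}$. Recall from the proof of Proposition~\ref{prop24} that $\setG{3}=\{A\in\setG{4}\mid A\AIA{24}{24}\equiv\matr{0}\pmod{8}\}$. Since $U_i\in\setG{4}=\langle U_1,\ldots,U_{16}\rangle$ and $\setG{4}$ is a module, $4U_i\in\setG{4}$, so it only remains to check $4U_i\AIA{24}{24}\equiv\matr{0}\pmod{8}$, i.e.\ that the $i$-th row of $\Md{3}=(\matU\AIA{24}{24}\bmod 8)$ is even. But the row relations already recorded in the proof of Proposition~\ref{prop24} — $R_i(\Md{3})=R_{8+i}(\Md{3})$ with $R_i(\Md{3})+R_{8+i}(\Md{3})=\matr{0}$ for $i\le 8$, $2R_i(\Md{3})=\matr{0}$ for $9\le i\le 12$, and $R_i(\Md{3})=\matr{0}$ for $i\ge 13$ — force $2\Md{3}\equiv\matr{0}\pmod{8}$, so every entry of $\Md{3}$ lies in $\{0,4\}$ and is in particular even. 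Hence $\matU\AIA{24}{24}\equiv\matr{0}\pmod{2}$, which gives $4U_i\AIA{24}{24}\equiv\matr{0}\pmod{8}$ and completes the argument. I do not expect a genuine obstacle: the only delicate point is that $\setG{u_0}$ is cut out by infinitely many congruences, but their collapse to finitely many conditions involving $\Md{0},\Md{1},\AIA{24}{48},\AIA{24}{24}$ is precisely the content of Propositions~\ref{prop20}, \ref{prop22} and \ref{prop24}, which may be invoked; everything else is linear bookkeeping with the explicit generators $Z_i$, $U_i$, $V_i$ and the matrix $\Md{3}$.
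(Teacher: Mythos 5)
Your proof is correct, and the second inclusion is argued by a genuinely different route from the paper's. For $\setG{3}\subset\setF$ you follow the same chain the paper uses ($\setG{3}\subset\setG{4}=\setG{5}\subset\Lker\Md{0}=\setF$). For $4\setF\subset\setG{3}$, the paper exhibits explicit $\Z$-linear combinations expressing each $4Z_i$ in the basis $\{V_1,\ldots,V_{16}\}$ of $\setG{3}$ (e.g.\ $4Z_i=4V_i-2V_{8+i}-V_{12+i}-2V_{17-i}$), which requires the full determination of the $V_j$ and sixteen identities to verify. You instead reduce, via $U_i=Z_i+\alpha_iZ_{13}+\beta_iZ_{14}+\gamma_iZ_{15}+\delta_iZ_{16}$ and $U_{13},\ldots,U_{16}=V_{13},\ldots,V_{16}\in\setG{3}$, to the single claim $4U_i\in\setG{3}$, which you obtain from the characterization $\setG{3}=\{A\in\setG{4}\mid A\AIA{24}{24}\equiv\matr{0}\pmod 8\}$ together with the observation that every row of $\Md{3}$ is $2$-torsion in $\Zn{8}$. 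That observation is indeed available from the relations recorded in the proof of Proposition~\ref{prop24}, and $2U_i\AIA{24}{24}\equiv\matr{0}\pmod 8$ gives $4U_i\AIA{24}{24}\equiv\matr{0}\pmod 8$ as needed. Your version is lighter on explicit bookkeeping but leans on the same computed data; the paper's version has the advantage of being a direct membership certificate in the basis of $\setG{3}$.

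One small inaccuracy: you quote the relations as ``$R_i(\Md{3})=R_{8+i}(\Md{3})$ with $R_i(\Md{3})+R_{8+i}(\Md{3})=\matr{0}$ for $i\le 8$.'' The paper asserts $R_i(\Md{3})=R_{9-i}(\Md{3})=R_{8+i}(\Md{3})$ and $R_i(\Md{3})+R_{8+i}(\Md{3})=\matr{0}$ only for $i\in\{1,2,3,4\}$; for $i\in\{5,\ldots,8\}$ the identity $R_i(\Md{3})=R_{8+i}(\Md{3})$ is false in general (rows $13$--$16$ vanish while rows $5$--$8$ need not), and one must instead use $R_i(\Md{3})=R_{9-i}(\Md{3})$ to transfer the $2$-torsion from rows $1$--$4$ to rows $5$--$8$. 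With that correction the conclusion $2\Md{3}\equiv\matr{0}\pmod 8$, hence your whole argument, stands.
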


\begin{proof}
First, from Lemmas~\ref{lem10} and \ref{lem9}, it is clear that
$$
\setG{3}\subset\setG{5}\subset\setF.
$$
Moreover, since, for all $i\in\{1,2,3,4\}$, we have
$$
\begin{array}{l}
4Z_i = 4V_i-2V_{8+i}-V_{12+i}-2V_{17-i}, \\
4Z_{4+i} = 4V_{4+i}-2V_{13-i}-2V_{12+i}+V_{17-i}, \\
4Z_{8+i} = 2V_{8+i}-V_{12+i}-V_{17-i}, \\
4Z_{12+i} = V_{12+i},
\end{array}
$$
we deduce from Proposition~\ref{prop24} that $4Z_i\in\setG{3}$, for all $i\in\{1,\ldots,16\}$, and thus
$$
4\setF = \left\langle 4Z_1,\ldots,4Z_{16}\right\rangle \subset \setG{3},
$$
as announced.
\end{proof}

\begin{proof}[Proof of Proposition~\ref{prop23}]
It is clear that $\setG{3}\subset\setF$. For $\setE\subset\setG{3}$, since $4\setF\subset\setG{3}$ from Lemma~\ref{lem11}, it follows that
$$
\setM\subset 4\setF\subset\setG{3}.
$$
Therefore, since $\setE=\setX+\setM$, we have

$$
\setE\subset \setG{3}\quad\Longleftrightarrow\quad \setX\subset\setG{3}.
$$
Moreover, by Proposition~\ref{prop24}, since
$$
-X_i\in\setG{3}\quad\Longleftrightarrow\quad X_i\in\setG{3},
$$
for all $i\in\{1,\ldots,7\}$, it is sufficient to prove that
$$
X_i\in\setG{3},\ \forall i\in\{1,\ldots,7\}.
$$
Finally, since $\setG{3} = \left\langle V_1,\ldots,V_{16}\right\rangle$ by Proposition~\ref{prop24} and
$$
\begin{array}{l}
X_1 = V_{3}+V_{4}+2V_{5}+3V_{6}+2V_{7}+2V_{8}+V_{9}-2V_{11}-3V_{13}-3V_{14}-V_{15}-3V_{16},\\
X_2 = V_{3}+3V_{4}+3V_{6}+2V_{7}+V_{9}-2V_{11}-2V_{12}-V_{13}-3V_{14}-V_{15}-V_{16},\\
X_3 = 3V_{3}+3V_{4}+V_{6}+V_{9}-4V_{11}-2V_{12}-V_{13}-V_{14}+V_{15}-V_{16},\\
X_4 = 2V_{2}+V_{3}+3V_{4}+V_{6}+V_{9}-2V_{10}-2V_{11}-2V_{12}-V_{13}-V_{14}+V_{15}-V_{16},\\
X_5 = 2V_{1}+V_{3}+V_{4}+3V_{6}+2V_{7}-V_{9}-2V_{11}-V_{13}-3V_{14}-V_{15}-V_{16},\\
X_6 = 2V_{1}+3V_{3}+3V_{4}+2V_{5}+V_{6}+2V_{8}-V_{9}-4V_{11}-2V_{12}-3V_{13}-V_{14}+V_{15}-3V_{16},\\
X_7 = 2V_{1}+2V_{2}+3V_{3}+V_{4}+3V_{6}+2V_{7}-V_{9}-2V_{10}-4V_{11}-V_{13}-3V_{14}-V_{15}-V_{16},\\
\end{array}
$$
the result follows.
\end{proof}

For any $A\in\setE$, we know from Theorem~\ref{prop23} that the orbit of the sequence
$$
S=\IAP{\pi_{2^u}(A)}{\pi_{2^u}(A)\matX{24}}
$$
is $(24,3.2^u)$-interlaced doubly arithmetic in $\Zn{2^u}$, for all integers $u\ge3$. In the last part of this section,  the common differences, that are the $(3.2^u\times24)$-matrices $\matD{1}$ and $\matD{2}$ of $\Zn{2^u}$, in
$$
\orb{S} = \IDAP{\matA}{\matD{1}}{\matD{2}}
$$
are determined, for all integers $u\ge3$.

\begin{nota}[$\matr{\mathrm{\Delta}}$ matrices]
Let $k_1$ and $k_2$ be two positive integers. For any $A\in\left(\Zn{m}\right)^{k_1}$, let $\Dmat{k_2}{A}$ be the $(k_2\times k_1)$-matrix of $\Zn{m}$ defined by
$$
R_i\left(\Dmat{k_2}{A}\right) = (-1)^{i-1}A\C{k_1}{i-1},
$$
for all $i\in\{1,\ldots,k_2\}$.
\end{nota}

\begin{rem}
Since $\C{k_1}{i}=\C{k_1}{i-1}\C{k_1}{1}$ from Proposition~\ref{prop*2}, the rows of $\Dmat{k_2}{A}$ are recursively defined as
$$
R_{i+1}\left(\Dmat{k_2}{A}\right) = -R_i\left(\Dmat{k_2}{A}\right)\C{k_1}{1},
$$
for all $i\in\{1,\ldots,k_2-1\}$, with $R_1\left(\Dmat{k_2}{A}\right)=A$.
\end{rem}

For instance, for the $3$-tupe $A=112$ of $\Zn{3}$, we obtain
$$
\Dmat{4}{A} =
\begin{pmatrix}
1 & 1 & 2 \\
1 & 0 & 0 \\
2 & 0 & 2 \\
1 & 1 & 2 \\
\end{pmatrix}
$$
since, for $\C{3}{1}=\begin{pmatrix}
1 & 0 & 1 \\
1 & 1 & 0 \\
0 & 1 & 1 \\
\end{pmatrix}$, we have
$$
\begin{array}{l}
-\begin{pmatrix} 1 & 1 & 2 \end{pmatrix}\C{3}{1} = \begin{pmatrix} 1 & 0 & 0 \end{pmatrix}, \\[2ex]
-\begin{pmatrix} 1 & 0 & 0 \end{pmatrix}\C{3}{1} = \begin{pmatrix} 2 & 0 & 2 \end{pmatrix}, \\[2ex]
-\begin{pmatrix} 2 & 0 & 2 \end{pmatrix}\C{3}{1} = \begin{pmatrix} 1 & 1 & 2 \end{pmatrix}. \\
\end{array}
$$

Using this notation, it is straightforward from Corollary~\ref{cor*1} to obtain the following

\begin{prop}\label{prop*6}
Let $A$ be a $k_1$-tuple of $\Zn{m}$. If the orbit of $S=\IAP{A}{A\matX{k_1}}$ is $(k_1,k_2)$-interlaced doubly arithmetic, then we have $\orb{S} = \IDAP{\matA}{\matD{1}}{\matD{2}}$, where the matrices $\matD{1}$ and $\matD{2}$ satisfy
$$
\matD{1} = \Dmat{k_2}{A\matX{k_1}}\quad\text{and}\quad \matD{2} = \Dmat{k_2}{{(-1)}^{k_2}A\M{k_1}{k_2}}.
$$
\end{prop}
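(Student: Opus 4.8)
The plan is to transcribe, essentially verbatim, the row formulas already furnished by Corollary~\ref{cor*1} into the language of the $\matr{\mathrm{\Delta}}$ notation. Since $\orb{S}$ is assumed $(k_1,k_2)$-interlaced doubly arithmetic, Corollary~\ref{cor*1} already yields $\orb{S}=\IDAP{\matA}{\matD{1}}{\matD{2}}$ together with
$$
R_i(\matD{1}) = {(-1)}^{i}A\matX{k_1}\C{k_1}{i}\quad\text{and}\quad R_i(\matD{2}) = {(-1)}^{i+k_2}A\M{k_1}{k_2}\C{k_1}{i},
$$
for all $i\in\{0,\ldots,k_2-1\}$, where the rows of an IDAP matrix are indexed starting from $0$. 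It therefore remains only to recognize each of these two families of rows as the list of rows of an explicit $\matr{\mathrm{\Delta}}$ matrix.

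For the first common difference, I would compare with $\Dmat{k_2}{A\matX{k_1}}$: by definition its $\ell$\textsuperscript{th} row (indexed from $1$) is ${(-1)}^{\ell-1}(A\matX{k_1})\C{k_1}{\ell-1}$, so putting $\ell=i+1$ gives ${(-1)}^{i}A\matX{k_1}\C{k_1}{i}$, which is exactly $R_i(\matD{1})$; hence $\matD{1}=\Dmat{k_2}{A\matX{k_1}}$. For the second common difference, I would compare with $\Dmat{k_2}{{(-1)}^{k_2}A\M{k_1}{k_2}}$: its $\ell$\textsuperscript{th} row is ${(-1)}^{\ell-1}\bigl({(-1)}^{k_2}A\M{k_1}{k_2}\bigr)\C{k_1}{\ell-1}$, and with $\ell=i+1$ this equals ${(-1)}^{i+k_2}A\M{k_1}{k_2}\C{k_1}{i}=R_i(\matD{2})$; hence $\matD{2}=\Dmat{k_2}{{(-1)}^{k_2}A\M{k_1}{k_2}}$.

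The only point demanding attention is the clerical one of reconciling the two indexing conventions — rows of $\matD{1}$ and $\matD{2}$ running over $\{0,\ldots,k_2-1\}$ versus rows of a $\matr{\mathrm{\Delta}}$ matrix running over $\{1,\ldots,k_2\}$ — together with the accompanying shift of the alternating sign from $(-1)^i$ to $(-1)^{\ell-1}$, and the absorption of the constant factor $(-1)^{k_2}$ into the argument of $\Dmat{k_2}{\cdot}$ in the case of $\matD{2}$. There is no genuine obstacle here: once the index shift is made explicit, the two identities are literally the formulas of Corollary~\ref{cor*1}. If desired, one could instead verify the claim by induction on the row index using the recursion $R_{\ell+1}(\Dmat{k_2}{X})=-R_\ell(\Dmat{k_2}{X})\C{k_1}{1}$, valid by Proposition~\ref{prop*2}, but the direct matching above is the most economical route.
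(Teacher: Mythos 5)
Your proof is correct and takes exactly the route the paper intends: Proposition~\ref{prop*6} is stated in the paper as an immediate consequence of Corollary~\ref{cor*1} with no further detail, and your verification amounts to the intended reading of that corollary. The index reconciliation you carry out — matching the $0$-based rows $R_i(\matD{1})={(-1)}^{i}A\matX{k_1}\C{k_1}{i}$ and $R_i(\matD{2})={(-1)}^{i+k_2}A\M{k_1}{k_2}\C{k_1}{i}$ with the $1$-based rows ${(-1)}^{\ell-1}X\C{k_1}{\ell-1}$ of $\Dmat{k_2}{X}$, and absorbing the constant sign ${(-1)}^{k_2}$ into the argument of $\Dmat{k_2}{\cdot}$ — is precisely the bookkeeping the paper leaves implicit.
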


\begin{prop}\label{prop*4}
Let $i_0\in\{1,\ldots,7\}$ and let $(\alpha_1,\ldots,\alpha_{16})\in\Z^{16}$. Then, for the $24$-tuple of integers
$$
A = X_{i_0} + 4\sum_{i=1}^{8}\alpha_i Y_i + 8\sum_{i=9}^{16}\alpha_i Z_i \in\setE,
$$
the orbit of the sequence
$$
S=\IAP{\pi_{2^u}(A)}{\pi_{2^u}(A)\matX{24}}
$$
is $(24,3.2^u)$-interlaced doubly arithmetic in $\Zn{2^u}$, with common differences $\matD{1}$ and $\matD{2}$ verifying
$$
\matD{1} = \Dmat{3.2^u}{\pi_{2^u}\left((2,-4,2)^8+8\sum_{i=9}^{16}\alpha_iZ_i\matX{24}\right)}
$$
and
$$
\matD{2} = \Dmat{3.2^u}{2^{u-2}(3,3,2)^8},
$$
for all positive integers $u\ge3$.
\end{prop}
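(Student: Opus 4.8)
The plan is to combine the structural results of Sections~7--9 with a handful of explicit residue computations. First, $A$ lies in $\setE$ by construction: $4\sum_{i=1}^{8}\alpha_iY_i+8\sum_{i=9}^{16}\alpha_iZ_i\in\setM$ and $X_{i_0}\in\setX$, so $A\in\setX+\setM=\setE$. By Theorem~\ref{prop23} we have $\setE\subset\setG{3}$, and $\setG{3}$ is, by its definition together with Corollary~\ref{cor*1}, exactly the set of $24$-tuples for which the orbit of $S=\IAP{\pi_{2^u}(A)}{\pi_{2^u}(A)\matX{24}}$ is $(24,3.2^u)$-interlaced doubly arithmetic in $\Zn{2^u}$ for every $u\ge3$. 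Hence Proposition~\ref{prop*6} applies for each $u\ge3$ and yields $\orb{S}=\IDAP{\matA}{\matD{1}}{\matD{2}}$ with
$$
\matD{1}=\Dmat{3.2^u}{\pi_{2^u}(A)\matX{24}}\qquad\text{and}\qquad\matD{2}=\Dmat{3.2^u}{\pi_{2^u}(A)\M{24}{3.2^u}},
$$
where $(-1)^{3.2^u}=1$ has been used. Since $\Dmat{3.2^u}{\cdot}$ depends only on the residue modulo $2^u$ of its argument, it remains to evaluate $A\matX{24}$ and $A\M{24}{3.2^u}$ modulo $2^u$.

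For $\matD{1}$ I would expand
$$
A\matX{24}=X_{i_0}\matX{24}+4\sum_{i=1}^{8}\alpha_i\bigl(Y_i\matX{24}\bigr)+8\sum_{i=9}^{16}\alpha_i\bigl(Z_i\matX{24}\bigr)
$$
and check, directly from Table~\ref{tab4}, the two integer identities $X_{i_0}\matX{24}=(2,-4,2)^8$ for all $i_0\in\{1,\ldots,7\}$ and $Y_i\matX{24}=\matr{0}$ for all $i\in\{1,\ldots,8\}$ (equivalently, each $Y_i$ is antisymmetric). This gives $A\matX{24}=(2,-4,2)^8+8\sum_{i=9}^{16}\alpha_iZ_i\matX{24}$ as integer tuples, and applying $\pi_{2^u}$ followed by $\Dmat{3.2^u}{\cdot}$ produces exactly the claimed formula for $\matD{1}$.

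For $\matD{2}$ when $u\ge5$, Proposition~\ref{prop*5} gives $9\M{24}{3.2^u}\equiv\Nd{0}+2^{u-3}\Nd{1}\pmod{2^u}$. Since $A\in\setE\subset\setF=\Lker\Nd{0}$ by Theorem~\ref{thm2}, we have $A\Nd{0}=\matr{0}$, hence $9A\M{24}{3.2^u}\equiv2^{u-3}(A\Nd{1})\pmod{2^u}$; as $9\equiv1\pmod8$ and $2^{u-3}c\bmod 2^u$ depends only on $c\bmod8$, this yields $A\M{24}{3.2^u}\equiv2^{u-3}(A\Nd{1})\pmod{2^u}$. Expanding $A\Nd{1}$ the same way and reducing modulo $8$, one checks from Table~\ref{tab4} that $X_{i_0}\Nd{1}\equiv(6,6,4)^8\pmod8$ for all $i_0$ and $Y_i\Nd{1}\equiv\matr{0}\pmod2$ for all $i$, so $A\Nd{1}\equiv(6,6,4)^8\pmod8$ and therefore $A\M{24}{3.2^u}\equiv2^{u-3}(6,6,4)^8=2^{u-2}(3,3,2)^8\pmod{2^u}$, which is the asserted formula for $\matD{2}$.

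Finally, $u=3$ and $u=4$ must be handled separately, since Proposition~\ref{prop*5} is only available for $u\ge5$. For these I would compute $\M{24}{24}$ modulo $8$ and $\M{24}{48}$ modulo $16$ directly from the explicit values of $\C{24}{24},\T{24}{24},\C{24}{48},\T{24}{48}$ recorded in Section~6 via $\M{24}{j}=\C{24}{j}-\matI{24}+\matX{24}\T{24}{j}$, and then verify $A\M{24}{24}\equiv(6,6,4)^8\pmod8$ and $A\M{24}{48}\equiv(12,12,8)^8\pmod{16}$ using the same decomposition of $A$ into its $X_{i_0}$, $Y_i$ and $Z_i$ parts. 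The main obstacle I anticipate is precisely this low-modulus bookkeeping for $u=4$: one must control $4\alpha_i\,Y_i\M{24}{48}$ and $8\alpha_i\,Z_i\M{24}{48}$ modulo $16$, i.e.\ establish $Y_i\M{24}{48}\equiv\matr{0}\pmod4$ and $Z_i\M{24}{48}\equiv\matr{0}\pmod2$, and because the individual $Y_i$ and $Z_i$ need not lie in $\setG{5}$ this cannot be deduced formally and must be read off the explicit matrices. All the remaining ingredients --- the membership $\setE\subset\setG{3}\subset\setF$, the identification of $\matD{1}$ and $\matD{2}$ through $\Dmat{\cdot}{\cdot}$, and the asymptotic shape of $\M{24}{3.2^u}$ --- follow immediately from the results of Sections~6--9.
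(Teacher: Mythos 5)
Your proposal is correct and follows essentially the same route as the paper's proof: Theorem~\ref{prop23} plus Proposition~\ref{prop*6} to reduce everything to evaluating $A\matX{24}$ and $A\M{24}{3.2^u}$ modulo $2^u$, the identities $X_{i_0}\matX{24}=(2,-4,2)^8$, $Y_i\matX{24}=\matr{0}$, $A\Nd{0}=\matr{0}$, $X_{i_0}\Nd{1}\equiv(6,6,4)^8\pmod{8}$ and $Y_i\Nd{1}\equiv\matr{0}\pmod{2}$ for $u\ge5$, and separate direct computations of $\M{24}{48}\bmod{16}$ and $\M{24}{24}\bmod{8}$ for $u=4$ and $u=3$. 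The low-modulus congruences you flag as the main obstacle ($Y_i\M{24}{48}\equiv\matr{0}\pmod{4}$, $Z_i\M{24}{48}\equiv\matr{0}\pmod{2}$) are exactly the ones the paper verifies by explicit computation.
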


\begin{proof}
By Theorem~\ref{prop23}, the orbit $\orb{S}$ is $(24,3.2^u)$-interlaced doubly arithmetic and by Proposition~\ref{prop*6}, we know that $\orb{S} = \IDAP{\matA}{\matD{1}}{\matD{2}}$, where the matrices $\matD{1}$ and $\matD{2}$ satisfy
$$
\matD{1} = \Dmat{3.2^u}{\pi_{2^u}(A\matX{24})}\quad\text{and}\quad \matD{2} = \Dmat{3.2^u}{\pi_{2^u}(A\M{24}{3.2^u})}.
$$
First, the common differences of $S$ are
$$
A\matr{X_{24}} = X_{i_0}\matX{24} + 4\sum_{i=1}^{8}\alpha_iY_i\matX{24} + 8\sum_{i=9}^{16}\alpha_iZ_i\matX{24}.
$$
It is easy to verify that
$$
X_{i_0}\matX{24} = (2,-4,2)^8,
$$
for all $i_0\in\{1,\ldots,7\}$, and
$$
Y_i\matX{24} = \matr{0},
$$
for all $i\in\{1,\ldots,8\}$. Therefore,
$$
A\matX{24} = (2,-4,2)^8 + 8\sum_{i=9}^{16}\alpha_iZ_i\matX{24}
$$
and
$$
\matD{1} = \Dmat{3.2^u}{\pi_{2^u}(A\matX{24})} = \Dmat{3.2^u}{\pi_{2^u}\left((2,-4,2)^8+8\sum_{i=9}^{16}\alpha_iZ_i\matX{24}\right)},
$$
for all positive integers $u\ge3$.

Moreover, from Proposition~\ref{prop*5}, we have that
$$
9\M{24}{3.2^u}\equiv\Nd{0}+2^{u-3}\Nd{1}\pmod{2^u},
$$
for all integers $u\ge5$. Since $\setE\subset\setF=\Lker\Nd{0}$ by Proposition~\ref{prop*3} and Lemma~\ref{lem*8}, we obtain that
$$
A\Nd{0} = \matr{0}.
$$
Since
$$
X_{i_0}\Nd{1} \equiv (6,6,4)^8 \pmod{8},
$$
for all $i_0\in\{1,\ldots,7\}$, and
$$
Y_i\Nd{1} \equiv \matr{0} \pmod{2},
$$
for all $i\in\{1,\ldots,8\}$,
we have that
$$
A\left(\Nd{0}+2^{u-3}\Nd{1}\right) \equiv 2^{u-3}(6,6,4)^8 \pmod{2^u}.
$$
Therefore,
$$
9A\M{24}{3.2^u} \equiv 2^{u-2}(3,3,2)^8 \equiv 9.2^{u-2}(3,3,2)^8 \pmod{2^{u}}.
$$
Since $\gcd(9,2^u)=1$, we conclude that
$$
A\M{24}{3.2^u} \equiv 2^{u-2}(3,3,2)^8 \pmod{2^{u}}
$$
and thus,
$$
\matD{2} = \Dmat{3.2^u}{2^{u-2}(3,3,2)^8},
$$
for all integers $u\ge 5$. For $u=4$, we consider the matrix $\M{24}{48}$ modulo $16$. By direct computation, we obtain that $\left(\M{24}{48}\bmod{16}\right)$ is equal to
\begin{center}
\includegraphics[width=0.75\textwidth]{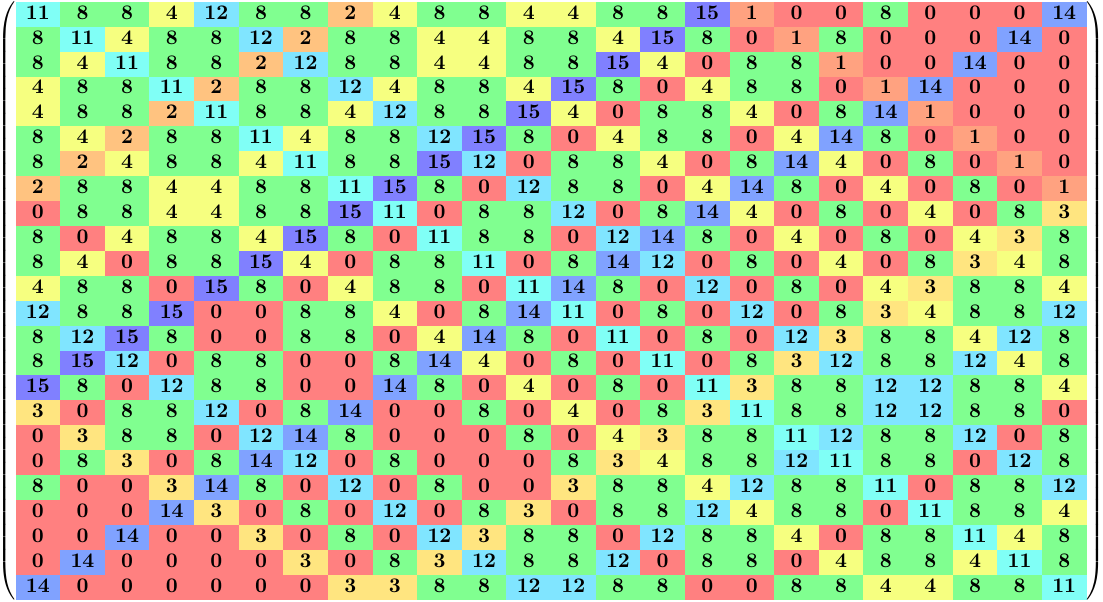}
\end{center}
Since
$$
X_{i_0}\M{24}{48} \equiv (12,12,8)^8 \pmod{16},
$$
for all $i_0\in\{1,\ldots,7\}$,
$$
Y_{i}\M{24}{48} \equiv \matr{0} \pmod{4},
$$
for all $i\in\{1,\ldots,8\}$, and
$$
Z_{i}\M{24}{48} \equiv \matr{0} \pmod{2},
$$
for all $i\in\{1,\ldots,16\}$, we deduce that
$$
A\M{24}{48} \equiv 4(3,3,2)^8 \pmod{16},
$$
and thus,
$$
\matD{2} = \Dmat{48}{4(3,3,2)^8},
$$
in this case. Finally, for $u=3$, we consider the matrix $\M{24}{24}$ modulo $8$. By direct computation, we obtain that $\left(\M{24}{24}\bmod{8}\right)$ is equal to
\begin{center}
\includegraphics[width=0.65\textwidth]{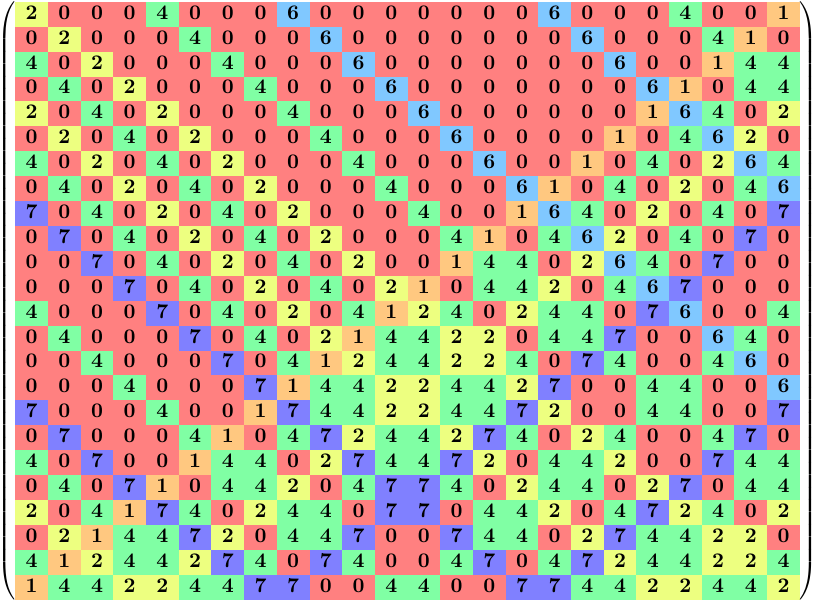}
\end{center}
Since
$$
X_{i_0}\M{24}{24} \equiv (6,6,4)^8 \pmod{8},
$$
for all $i_0\in\{1,\ldots,7\}$, and
$$
Y_{i}\M{24}{24} \equiv \matr{0} \pmod{2},
$$
for all $i\in\{1,\ldots,8\}$, we deduce that
$$
A\M{24}{24} \equiv 2(3,3,2)^8 \pmod{8},
$$
and thus,
$$
\matD{2} = \Dmat{24}{2(3,3,2)^8},
$$
in this case. This completes the proof.
\end{proof}

In the end of this section, we show that a $(k_1,k_2)$-interlaced doubly arithmetic progression can also be seen as a $(\lambda k_1,\mu k_2)$-interlaced arithmetic progression and the associated common differences are determined. Using this, for any $A\in\setE$, the common differences of the $(3.2^u,3.2^u)$-interlaced doubly arithmetic orbit of the sequence $\IAP{\pi_{2^u}(A)}{\pi_{2^u}(A)\matX{24}}$ are determined, for all integers $u\ge3$. 

\begin{prop}\label{prop25}
Let $k_1$ and $k_2$ be two positive integers. Let $\matA$, $\matD{1}$ and $\matD{2}$ be three $(k_2\times k_1)$-matrices of elements in $\Zn{m}$. For any positive integers $\lambda$ and $\mu$, the interlaced doubly arithmetic progression $\IDAP{\matA}{\matD{1}}{\matD{2}}$ is also $(\lambda k_1,\mu k_2)$-interlaced doubly arithmetic. There exist three $(\mu k_2\times \lambda k_1)$-matrices $\matAp$, $\matDp{1}$ and $\matDp{2}$ such that
$$
\IDAP{\matA}{\matD{1}}{\matD{2}} = \IDAP{\matAp}{\matDp{1}}{\matDp{2}},
$$
where
$$
R_{ik_2+j}(\matDp{1}) = \lambda\left(R_j(\matD{1})\right)^{\lambda}
$$
and
$$
R_{ik_2+j}(\matDp{2}) = \mu\left(R_j(\matD{2})\right)^{\lambda}
$$
for all $i\in\{0,\ldots,\mu-1\}$ and $j\in\{0,\ldots,k_2-1\}$.
\end{prop}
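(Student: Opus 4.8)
The plan is to prove the statement by unfolding the definition of an IDAP and carefully relabelling the two nested levels of interlacing indices; beyond that, only elementary index bookkeeping is needed. Write $S=\left(u_{i,j}\right)_{(i,j)\in\N\times\Z}=\IDAP{\matA}{\matD{1}}{\matD{2}}$, so that by definition
$$
u_{i_0+ik_2,\,j_0+jk_1}=a_{i_0,j_0}+i\,d^{\,(2)}_{i_0,j_0}+j\,d^{\,(1)}_{i_0,j_0}
$$
for all $(i_0,j_0)\in\{0,\ldots,k_2-1\}\times\{0,\ldots,k_1-1\}$ and all $(i,j)\in\N\times\Z$. By the definition of a $(\lambda k_1,\mu k_2)$-interlaced doubly arithmetic sequence it suffices to show that, for every $(r,s)\in\{0,\ldots,\mu k_2-1\}\times\{0,\ldots,\lambda k_1-1\}$, the subsequence $\left(u_{r+i\mu k_2,\,s+j\lambda k_1}\right)_{(i,j)\in\N\times\Z}$ is a doubly arithmetic progression, and then to identify the resulting period matrices.

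First I would write $r=i_0+ak_2$ and $s=j_0+bk_1$ with $i_0\in\{0,\ldots,k_2-1\}$, $a\in\{0,\ldots,\mu-1\}$, $j_0\in\{0,\ldots,k_1-1\}$, $b\in\{0,\ldots,\lambda-1\}$, and substitute into the displayed formula using $r+i\mu k_2=i_0+(a+i\mu)k_2$ and $s+j\lambda k_1=j_0+(b+j\lambda)k_1$:
$$
u_{r+i\mu k_2,\,s+j\lambda k_1}=a_{i_0,j_0}+(a+i\mu)\,d^{\,(2)}_{i_0,j_0}+(b+j\lambda)\,d^{\,(1)}_{i_0,j_0}.
$$
This is an affine function of $(i,j)$ whose constant term is $a_{i_0,j_0}+a\,d^{\,(2)}_{i_0,j_0}+b\,d^{\,(1)}_{i_0,j_0}=u_{r,s}$, whose coefficient of $i$ is $\mu\,d^{\,(2)}_{i_0,j_0}$ and whose coefficient of $j$ is $\lambda\,d^{\,(1)}_{i_0,j_0}$; hence, directly from the definition of a DAP (equivalently by Proposition~\ref{prop15}), this subsequence equals $\DAP{u_{r,s}}{\lambda d^{\,(1)}_{i_0,j_0}}{\mu d^{\,(2)}_{i_0,j_0}}$. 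As $(r,s)$ runs over the $(\mu k_2\times\lambda k_1)$ fundamental block this proves that $S$ is $(\lambda k_1,\mu k_2)$-interlaced doubly arithmetic, so that, by definition again, $S=\IDAP{\matAp}{\matDp{1}}{\matDp{2}}$ with $\matAp=\left(u_{i,j}\right)_{0\le i\le\mu k_2-1,\,0\le j\le\lambda k_1-1}$, with $(\rho,\gamma)$-entry of $\matDp{1}$ equal to $u_{\rho,\,\gamma+\lambda k_1}-u_{\rho,\gamma}$ and $(\rho,\gamma)$-entry of $\matDp{2}$ equal to $u_{\rho+\mu k_2,\,\gamma}-u_{\rho,\gamma}$. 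Evaluating these two differences with the IDAP formula (again writing $\rho=i_0+ak_2$, $\gamma=j_0+bk_1$) gives $\lambda\,d^{\,(1)}_{i_0,j_0}$ and $\mu\,d^{\,(2)}_{i_0,j_0}$ respectively.

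It then remains to recast $\matDp{1}$ and $\matDp{2}$ in the announced block-repeated form. Since $d^{\,(1)}_{i_0,j_0}$ and $d^{\,(2)}_{i_0,j_0}$ depend on the row index $\rho$ only through $i_0=\rho\bmod k_2$ and on the column index $\gamma$ only through $j_0=\gamma\bmod k_1$, the $\rho$-th row of $\matDp{1}$ — a tuple of length $\lambda k_1$ — is the concatenation of $\lambda$ copies of the $k_1$-tuple $\lambda R_{i_0}(\matD{1})$, and, scalar multiplication commuting with concatenation, this is exactly $\lambda\bigl(R_{i_0}(\matD{1})\bigr)^{\lambda}$ and depends on $\rho$ only through $i_0$. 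Writing $\rho=ik_2+j$ with $i\in\{0,\ldots,\mu-1\}$ and $j=i_0\in\{0,\ldots,k_2-1\}$ yields $R_{ik_2+j}(\matDp{1})=\lambda\bigl(R_j(\matD{1})\bigr)^{\lambda}$, and the identical computation (with $\mu$ replacing the outer $\lambda$) yields $R_{ik_2+j}(\matDp{2})=\mu\bigl(R_j(\matD{2})\bigr)^{\lambda}$, as claimed.

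I do not expect any genuine obstacle here: the whole argument is a substitution followed by a relabelling. The only point requiring care is keeping apart the three kinds of indices — the inner residues $(i_0,j_0)$, the block offsets $(a,b)$ inside one $(\lambda k_1,\mu k_2)$-period, and the global iteration $(i,j)$ — and checking that the common-difference matrices $\matDp{1},\matDp{2}$ have exactly the block-periodicity required of the defining data of an IDAP, which here is automatic because they inherit it from $\matD{1}$ and $\matD{2}$. Alternatively, the first half of the argument can be deduced from Proposition~\ref{prop16}, using that a $k_1$-interlaced arithmetic row is a fortiori $\lambda k_1$-interlaced arithmetic and that a $k_2$-periodic, resp. $k_2$-interlaced arithmetic, vertical sequence is a fortiori $\mu k_2$-periodic, resp. $\mu k_2$-interlaced arithmetic; but the direct substitution has the advantage of also delivering the explicit matrices $\matDp{1}$ and $\matDp{2}$.
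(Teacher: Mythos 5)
Your proposal is correct and follows essentially the same route as the paper's proof: both unfold the defining identity of an IDAP, substitute $i\mapsto i\mu$, $j\mapsto j\lambda$ (equivalently, decompose the new base indices into inner residues plus block offsets), and read off the new common-difference rows as $\lambda$-fold repetitions of the old ones scaled by $\lambda$ and $\mu$ respectively. Your explicit decomposition $r=i_0+ak_2$, $s=j_0+bk_1$ just makes transparent the step the paper handles by asserting the IDAP identity for all base points $(i_0,j_0)\in\N\times\Z$ rather than only the fundamental block.
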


\begin{proof}
Let $S=\IDAP{\matA}{\matD{1}}{\matD{2}} = \left(u_{i,j}\right)_{(i,j)\in\N\times\Z}$. Since $S$ is $(k_1,k_2)$-interlaced doubly arithmetic, we know that
$$
u_{i_0+ik_2,j_0+jk_1} = u_{i_0,j_0} + i\left(u_{i_0+k_2,j_0}-u_{i_0,j_0}\right) + j\left(u_{i_0,j_0+k_1}-u_{i_0,j_0}\right),
$$
for all $(i_0,j_0)\in\N\times\Z$ and $(i,j)\in\N\times\Z$. Then,
$$
u_{i_0+i\mu k_2,j_0+j\lambda k_1}
\begin{array}[t]{l}
 = u_{i_0,j_0} + i\mu\left(u_{i_0+k_2,j_0}-u_{i_0,j_0}\right) + j\lambda\left(u_{i_0,j_0+k_1}-u_{i_0,j_0}\right) \\[2ex]
 = u_{i_0,j_0} + i\left(u_{i_0+\mu k_2,j_0}-u_{i_0,j_0}\right) + j\left(u_{i_0,j_0+\lambda k_1}-u_{i_0,j_0}\right),
\end{array}
$$
for all $(i_0,j_0)\in\N\times\Z$ and $(i,j)\in\N\times\Z$. Therefore, $S$ is also $(\lambda k_1,\mu k_2)$-interlaced doubly arithmetic.

Let $i\in\{0,\ldots,\mu-1\}$. For all $j\in\{0,\ldots,k_2-1\}$, we have
$$
R_{ik_2+j}(\matDp{1})
\begin{array}[t]{l}
 = \left(u_{ik_2+j,r+\lambda k_1}-u_{ik_2+j,r}\right)_{0\le r\le \lambda k_1-1} = \left(\lambda\left(u_{ik_2+j,r+k_1}-u_{ik_2+j,r}\right)\right)_{0\le r\le \lambda k_1-1} \\[2ex]
 = \lambda\left(u_{ik_2+j,r+k_1}-u_{ik_2+j,r}\right)_{0\le r\le \lambda k_1-1} = \lambda\left(u_{j,r+k_1}-u_{j,r}\right)_{0\le r\le \lambda k_1-1} \\[2ex]
 = \lambda\left(R_j(\matD{1})\right)^{\lambda},
\end{array}
$$
and
$$
R_{ik_2+j}(\matDp{2})
\begin{array}[t]{l}
 = \left(u_{(\mu+i)k_2+j,r}-u_{ik_2+j,r}\right)_{0\le r\le \lambda k_1-1} = \left(\mu\left(u_{(i+1)k_2+j,r}-u_{ik_2+j,r}\right)\right)_{0\le r\le \lambda k_1-1}  \\[2ex]
 = \mu\left(u_{(i+1)k_2+j,r}-u_{ik_2+j,r}\right)_{0\le r\le \lambda k_1-1} = \mu\left(u_{k_2+j,r}-u_{j,r}\right)_{0\le r\le \lambda k_1-1} \\[2ex]
 = \mu\left(R_j(\matD{2})\right)^{\lambda}.
\end{array}
$$
This completes the proof.
\end{proof}

\begin{prop}\label{prop26}
Let $k_1$ and $k_2$ be two positive integers. Let $A$ and $D$ be two $k_1$-tuples of $\Zn{m}$ such that the orbit $\orb{S}$ of $S=\IAP{A}{D}$ is $(k_1,k_2)$-interlaced doubly arithmetic. Let $\matA$, $\matD{1}$ and $\matD{2}$ be the three $(k_2\times k_1)$-matrices of $\Zn{m}$ such that
$$
\orb{S} = \IDAP{\matA}{\matD{1}}{\matD{2}},
$$
with
$$
\matD{1} = \Dmat{k_2}{D}\quad\text{and}\quad \matD{2} = (-1)^{k_2}\Dmat{k_2}{A\W{k_1}{k_2}+D\T{k_1}{k_2}}.
$$
Then, for any positive integers $\lambda$ and $\mu$, the orbit $\orb{S}$ is also $(\lambda k_1,\mu k_2)$-interlaced doubly arithmetic. There exist three $(\mu k_2\times \lambda k_1)$-matrices $\matAp$, $\matDp{1}$ and $\matDp{2}$ such that
$$
\IDAP{\matA}{\matD{1}}{\matD{2}} = \IDAP{\matAp}{\matDp{1}}{\matDp{2}},
$$
where
$$
\matDp{1} = \lambda\Dmat{\mu k_2}{D^{\lambda}}
$$
and
$$
\matDp{2} = (-1)^{k_2}\mu\Dmat{\mu k_2}{\left(A\W{k_1}{k_2}+D\T{k_1}{k_2}\right)^{\lambda}}.
$$
\end{prop}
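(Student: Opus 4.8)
The plan is to deduce Proposition~\ref{prop26} from Proposition~\ref{prop25} together with the explicit description of $\matD{1},\matD{2}$ furnished by Theorem~\ref{thm4}; the single genuinely new ingredient is an identity relating the $\lambda$-th concatenation power of a tuple to the circulant matrices $\C{k_1}{i}$ and $\C{\lambda k_1}{i}$. First I would record that, since $\orb{S}$ is $(k_1,k_2)$-interlaced doubly arithmetic, Theorem~\ref{thm4} gives $\left(\begin{array}{c|c}A&D\end{array}\right)\in\Lker_m\IA{k_1}{k_2}$, which unfolds into
$$
D\W{k_1}{k_2}=\matr{0}\qquad\text{and}\qquad A{\W{k_1}{k_2}}^2+D\T{k_1}{k_2}\W{k_1}{k_2}=\matr{0};
$$
writing $B=A\W{k_1}{k_2}+D\T{k_1}{k_2}$, the second relation reads $B\W{k_1}{k_2}=\matr{0}$. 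Since $\orb{S}=\IDAP{\matA}{\matD{1}}{\matD{2}}$, Proposition~\ref{prop25} applies and already shows that $\orb{S}$ is $(\lambda k_1,\mu k_2)$-interlaced doubly arithmetic, with common-difference matrices $\matDp{1},\matDp{2}$ whose rows satisfy $R_{ik_2+j}\!\left(\matDp{1}\right)=\lambda\left(R_j\!\left(\matD{1}\right)\right)^{\lambda}$ and $R_{ik_2+j}\!\left(\matDp{2}\right)=\mu\left(R_j\!\left(\matD{2}\right)\right)^{\lambda}$ for $i\in\{0,\ldots,\mu-1\}$, $j\in\{0,\ldots,k_2-1\}$. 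From $\matD{1}=\Dmat{k_2}{D}$ and $\matD{2}=(-1)^{k_2}\Dmat{k_2}{B}$ one reads off $R_j(\matD{1})=(-1)^{j}D\C{k_1}{j}$ and $R_j(\matD{2})=(-1)^{j+k_2}B\C{k_1}{j}$, so it only remains to rewrite the powers $(R_j(\matD{1}))^{\lambda}$ and $(R_j(\matD{2}))^{\lambda}$ in the form $\Dmat{\mu k_2}{\cdot}$.

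The key auxiliary claim is that for every $k_1$-tuple $E$ of $\Zn{m}$ and every $i\in\N$,
$$
\left(E\C{k_1}{i}\right)^{\lambda}=E^{\lambda}\C{\lambda k_1}{i}.
$$
Indeed, the sequence $E^{\infty}$ can be written both as $\IAP{E}{\matr{0}}$ (period $k_1$) and as $\IAP{E^{\lambda}}{\matr{0}}$ (period $\lambda k_1$). By Proposition~\ref{prop2}, its $i$-th iterated derived sequence $\ider{i}{E^{\infty}}$ equals $(-1)^{i}\IAP{E\C{k_1}{i}}{\matr{0}}$ on the one hand and $(-1)^{i}\IAP{E^{\lambda}\C{\lambda k_1}{i}}{\matr{0}}$ on the other; comparing the $\lambda k_1$-tuples formed by their terms of indices $0,\ldots,\lambda k_1-1$ gives $(-1)^{i}\left(E\C{k_1}{i}\right)^{\lambda}=(-1)^{i}E^{\lambda}\C{\lambda k_1}{i}$, hence the identity.

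Next I would use the kernel relations to get a periodicity. From $D\C{k_1}{k_2}=(-1)^{k_2}D$ (that is, $D\W{k_1}{k_2}=\matr{0}$) and $\C{k_1}{n+k_2}=\C{k_1}{k_2}\C{k_1}{n}$ (Proposition~\ref{prop*2}), the map $n\mapsto(-1)^{n}D\C{k_1}{n}$ is $k_2$-periodic; applying $(\,\cdot\,)^{\lambda}$ and the auxiliary identity, $n\mapsto(-1)^{n}D^{\lambda}\C{\lambda k_1}{n}$ is $k_2$-periodic as well. Hence, for $n\in\{0,\ldots,\mu k_2-1\}$ with $j=n\bmod k_2$,
$$
R_n\!\left(\matDp{1}\right)=\lambda\left((-1)^{j}D\C{k_1}{j}\right)^{\lambda}=\lambda(-1)^{j}D^{\lambda}\C{\lambda k_1}{j}=\lambda(-1)^{n}D^{\lambda}\C{\lambda k_1}{n},
$$
which is exactly $\lambda\,R_n\!\left(\Dmat{\mu k_2}{D^{\lambda}}\right)$; thus $\matDp{1}=\lambda\Dmat{\mu k_2}{D^{\lambda}}$. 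Running the same argument with $B$ in place of $D$ — using $B\W{k_1}{k_2}=\matr{0}$, the formula $R_j(\matD{2})=(-1)^{j+k_2}B\C{k_1}{j}$, and pulling the factor $(-1)^{k_2}$ out front — yields $\matDp{2}=(-1)^{k_2}\mu\Dmat{\mu k_2}{B^{\lambda}}=(-1)^{k_2}\mu\Dmat{\mu k_2}{\left(A\W{k_1}{k_2}+D\T{k_1}{k_2}\right)^{\lambda}}$, as desired.

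The only non-formal point is the power identity above; everything else is tracking signs and the one-index offset between the $1$-indexed rows of $\Dmat{\cdot}{\cdot}$ and the $0$-indexed rows used in the IDAP notation (the convention already adopted in Proposition~\ref{prop*6}). I would be especially careful in the $\matDp{2}$ computation, where $(-1)^{j+k_2}=(-1)^{k_2}(-1)^{j}$ and the $k_2$-periodicity of $n\mapsto(-1)^{n}B^{\lambda}\C{\lambda k_1}{n}$ must be invoked to replace $(-1)^{j}$ by $(-1)^{n}$ before the constant $(-1)^{k_2}$ is factored out.
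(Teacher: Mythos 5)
Your proposal is correct and follows exactly the route the paper takes: its entire proof of this proposition is the citation ``From Theorem~\ref{thm4} and Proposition~\ref{prop25}'', and you have simply supplied the omitted bookkeeping (the identity $\left(E\C{k_1}{i}\right)^{\lambda}=E^{\lambda}\C{\lambda k_1}{i}$, the $k_2$-periodicity of $n\mapsto(-1)^{n}D\C{k_1}{n}$ coming from $D\W{k_1}{k_2}=\matr{0}$, and the analogous facts for $B=A\W{k_1}{k_2}+D\T{k_1}{k_2}$), all of which check out.
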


\begin{proof}
From Theorem~\ref{thm4} and Proposition~\ref{prop25}.
\end{proof}

\begin{prop}\label{prop27}
Let $i_0\in\{1,\ldots,7\}$ and let $(\alpha_1,\ldots,\alpha_{16})\in\Z^{16}$. Then, for the $24$-tuple of integers
$$
A = X_{i_0} + 4\sum_{i=1}^{8}\alpha_i Y_i + 8\sum_{i=9}^{16}\alpha_i Z_i \in\setE,
$$
the orbit of the sequence
$$
S=\IAP{\pi_{2^u}(A)}{\pi_{2^u}(A)\matX{24}}
$$
is $(3.2^u,3.2^u)$-interlaced doubly arithmetic in $\Zn{2^u}$, with common differences $\matD{1}$ and $\matD{2}$ defined by
$$
\matD{1} = 2^{u-2}\Circ{(1,2,1)^{2^u}}
$$
and
$$
\matD{2} = 2^{u-2}\Circ{(3,3,2)^{2^u}},
$$
for all positive integers $u\ge3$.
\end{prop}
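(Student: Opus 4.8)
The plan is to bootstrap the $(24,3.2^u)$-interlaced doubly arithmetic structure already available from Proposition~\ref{prop*4} up to a $(3.2^u,3.2^u)$ one, and then to recognize the two resulting common differences as the announced circulant matrices. Write $S=\IAP{\pi_{2^u}(A)}{\pi_{2^u}(A)\matX{24}}$, so here the number of interlaces is $k_1=24$ and the common difference tuple is $D=\pi_{2^u}(A)\matX{24}$. By Proposition~\ref{prop*4}, for every $u\ge3$ the orbit $\orb{S}$ is $(24,3.2^u)$-interlaced doubly arithmetic, say $\orb{S}=\IDAP{\matA}{\matD{1}}{\matD{2}}$ with $\matD{1}=\Dmat{3.2^u}{\pi_{2^u}(A\matX{24})}$ and $\matD{2}=\Dmat{3.2^u}{\pi_{2^u}(A\M{24}{3.2^u})}$, and reading off first rows we have the integral identity $A\matX{24}=(2,-4,2)^{8}+8\sum_{i=9}^{16}\alpha_i Z_i\matX{24}$ together with $A\M{24}{3.2^u}\equiv 2^{u-2}(3,3,2)^{8}\pmod{2^u}$ for all $u\ge3$ (the cases $u=3,4$ using the explicit matrices $\M{24}{24}$ and $\M{24}{48}$ displayed in the proof of Proposition~\ref{prop*4}). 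Since $D=A\matX{24}$ we also have $A\W{24}{3.2^u}+D\T{24}{3.2^u}=A\M{24}{3.2^u}$, so the hypotheses of Proposition~\ref{prop26} are met.

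Next I would apply Proposition~\ref{prop26} with $\lambda=2^{u-3}$ and $\mu=1$, so that $\lambda k_1=\mu k_2=3.2^u$. It gives that $\orb{S}$ is also $(3.2^u,3.2^u)$-interlaced doubly arithmetic, with common differences $\matD{1}=2^{u-3}\Dmat{3.2^u}{D^{\,2^{u-3}}}=\Dmat{3.2^u}{(2^{u-3}D)^{2^{u-3}}}$ and $\matD{2}=\Dmat{3.2^u}{\bigl(A\M{24}{3.2^u}\bigr)^{2^{u-3}}}$, using the linearity of $B\mapsto\Dmat{k_2}{B}$ and the fact that scalars distribute over concatenation powers of tuples. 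Now $2^{u-3}\cdot 8=2^u$, so the second summand of $2^{u-3}D$ vanishes modulo $2^u$, leaving $2^{u-3}D\equiv 2^{u-3}(2,-4,2)^{8}=2^{u-2}(1,-2,1)^{8}\equiv 2^{u-2}(1,2,1)^{8}\pmod{2^u}$ (since $-2^{u-1}\equiv 2^{u-1}$); concatenating $2^{u-3}$ copies yields $(2^{u-3}D)^{2^{u-3}}\equiv 2^{u-2}(1,2,1)^{2^u}\pmod{2^u}$, and likewise $\bigl(A\M{24}{3.2^u}\bigr)^{2^{u-3}}\equiv 2^{u-2}(3,3,2)^{2^u}\pmod{2^u}$. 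Hence the proposition reduces to the two congruences $\Dmat{3.2^u}{2^{u-2}(1,2,1)^{2^u}}\equiv 2^{u-2}\Circ{(1,2,1)^{2^u}}$ and $\Dmat{3.2^u}{2^{u-2}(3,3,2)^{2^u}}\equiv 2^{u-2}\Circ{(3,3,2)^{2^u}}$, both modulo $2^u$.

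For this last step, by linearity of $\Dmat{}{}$ (and of $\Circ{}$) it suffices to prove, for $p=3.2^u$ and $C\in\{(1,2,1)^{2^u},(3,3,2)^{2^u}\}$, that $\Dmat{p}{C}\equiv\Circ{C}\pmod 4$, a congruence that depends only on $C\bmod 4$. Recall that the rows of $\Dmat{p}{C}$ satisfy $R_1(\Dmat{p}{C})=C$ and $R_{i+1}(\Dmat{p}{C})=-R_i(\Dmat{p}{C})\,\C{p}{1}$, while the rows of $\Circ{C}$ satisfy $R_1(\Circ{C})=C$ and $R_{i+1}(\Circ{C})=\sigma R_i(\Circ{C})$, where $\sigma$ denotes the cyclic shift $(u_1,\dots,u_p)\mapsto(u_p,u_1,\dots,u_{p-1})$. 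A one-line computation of adjacent sums $C_s+C_{s+1}$ shows $-C\,\C{p}{1}\equiv(1,1,2)^{2^u}=\sigma C\pmod 4$ when $C=(1,2,1)^{2^u}$, and $-C\,\C{p}{1}\equiv(2,3,3)^{2^u}=\sigma C\pmod 4$ when $C=(3,3,2)^{2^u}$; in either case $R_2(\Dmat{p}{C})\equiv R_2(\Circ{C})\pmod 4$. Since $\C{p}{1}$ is circulant it commutes with $\sigma$, so an immediate induction gives $R_i(\Dmat{p}{C})\equiv-R_{i-1}(\Dmat{p}{C})\,\C{p}{1}\equiv-\sigma^{i-2}(C)\,\C{p}{1}=\sigma^{i-2}(-C\,\C{p}{1})\equiv\sigma^{i-1}C=R_i(\Circ{C})\pmod 4$ for all $i$, as wanted.

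The main obstacle is not any single deep step but the bookkeeping: correctly matching $(24,3.2^u)$ to $(3.2^u,3.2^u)$ by choosing $\lambda=2^{u-3}$, $\mu=1$ in Proposition~\ref{prop26}; tracking the scalar $2^{u-2}$ and the concatenation powers through the reductions modulo $2^u$; handling the small cases $u=3,4$ where $A\M{24}{3.2^u}$ must be read off the explicit matrices rather than the asymptotic formula of Theorems~\ref{prop19} and \ref{prop18}; and pinning down the cyclic-shift convention so that the row recursion for $\Dmat{p}{C}$ matches that for $\Circ{C}$ exactly, rather than up to a sign or a shift direction, which is where the numerical coincidence $-3\equiv1\pmod4$ for the pattern $(1,2,1)$ (and $-3\equiv1$, $-2\equiv2$ for $(3,3,2)$) does the real work.
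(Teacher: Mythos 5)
Your proposal is correct and follows essentially the same route as the paper: Proposition~\ref{prop*4} to get the $(24,3.2^u)$-interlaced doubly arithmetic structure, then Proposition~\ref{prop26} with $\lambda=2^{u-3}$, $\mu=1$, then identification of the resulting $\matr{\mathrm{\Delta}}$ matrices with circulant matrices. The only (cosmetic) difference is in the last step, where the paper invokes Lemma~\ref{lem*9} exactly, after observing that $2^{u-2}(1,2,1)$ and $2^{u-2}(3,3,2)$ are of the zero-sum form $(\alpha,\beta,-\alpha-\beta)$ modulo $2^u$, whereas you factor out $2^{u-2}$ and verify the residual congruence modulo $4$ by the shift-commutation argument -- the same computation in a different wrapper.
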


The proof is based on the following


\begin{lem}\label{lem*9}
Let $\alpha$ and $\beta$ be in $\Zn{m}$. Then, we have
$$
\Dmat{3\lambda}{(\alpha,\beta,-\alpha-\beta)^{\lambda}} = \Circ{(\alpha,\beta,-\alpha-\beta)^{\lambda}},
$$
for all positive integers $\lambda$.
\end{lem}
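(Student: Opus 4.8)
The plan is to prove the equality one row at a time, exploiting the fact that $\C{3\lambda}{1}$ is a circulant matrix and therefore commutes with the cyclic shift of tuples. Write $A=(\alpha,\beta,-\alpha-\beta)^{\lambda}$, a $3\lambda$-tuple, and let $\matr{P}$ be the $3\lambda\times 3\lambda$ circulant permutation matrix whose first row is $(0,1,0,\ldots,0)$, so that $v\matr{P}$ is the right cyclic shift of any $3\lambda$-tuple $v$. By the definition of the circulant matrix, the $r$th row of $\Circ{(\alpha,\beta,-\alpha-\beta)^{\lambda}}$ is $A\matr{P}^{r-1}$, i.e. the tuple $A$ cyclically shifted $r-1$ times. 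By the recursive description recalled in the remark following the definition of $\Dmat{k_2}{A}$, the rows of $\Dmat{3\lambda}{A}$ satisfy $R_1\!\left(\Dmat{3\lambda}{A}\right)=A$ and $R_{i+1}\!\left(\Dmat{3\lambda}{A}\right)=-R_i\!\left(\Dmat{3\lambda}{A}\right)\C{3\lambda}{1}$.

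First I would check the single identity $-A\,\C{3\lambda}{1}=A\matr{P}$, which is the only place where the hypothesis on the entries is used. From the explicit shape of $\C{3\lambda}{1}$ (ones on the diagonal and on the subdiagonal, plus a $1$ in the top-right corner), the tuple $A\,\C{3\lambda}{1}$ is the tuple of consecutive sums $(a_1+a_2,\ a_2+a_3,\ \ldots,\ a_{3\lambda}+a_1)$ of $A=(a_1,\ldots,a_{3\lambda})$; since every cyclically consecutive triple of $A$ is $(\alpha,\beta,-\alpha-\beta)$ and $\alpha+\beta+(-\alpha-\beta)=0$, this equals $(\alpha+\beta,-\alpha,-\beta)^{\lambda}$, whose opposite is $(-\alpha-\beta,\alpha,\beta)^{\lambda}$, and the latter is precisely the right cyclic shift $A\matr{P}$ of $A$. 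The only mild care needed is the wrap-around entry $a_{3\lambda}+a_1=(-\alpha-\beta)+\alpha=-\beta$, which is consistent with the periodic pattern.

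I would then conclude by induction on $i$ that $R_i\!\left(\Dmat{3\lambda}{A}\right)=R_i\!\left(\Circ{(\alpha,\beta,-\alpha-\beta)^{\lambda}}\right)=A\matr{P}^{i-1}$. The case $i=1$ is immediate. For the inductive step, since $\matr{P}$ and $\C{3\lambda}{1}$ are both circulant and hence commute,
$$
R_{i+1}\!\left(\Dmat{3\lambda}{A}\right)=-R_i\!\left(\Dmat{3\lambda}{A}\right)\C{3\lambda}{1}=-A\matr{P}^{i-1}\C{3\lambda}{1}=-A\,\C{3\lambda}{1}\,\matr{P}^{i-1}=(A\matr{P})\matr{P}^{i-1}=A\matr{P}^{i}=R_{i+1}\!\left(\Circ{(\alpha,\beta,-\alpha-\beta)^{\lambda}}\right),
$$
which closes the induction and proves the lemma. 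I do not expect any genuine obstacle here: once one observes that $\C{3\lambda}{1}$ is circulant, the statement is essentially bookkeeping, the only substantive input being the short computation $-A\,\C{3\lambda}{1}=A\matr{P}$ above; the sole points requiring attention are keeping the indexing conventions of $\Circ{\cdot}$ and $\Dmat{\cdot}{\cdot}$ straight and handling the cyclic wrap-around correctly.
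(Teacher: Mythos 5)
Your proof is correct and follows essentially the same route as the paper: the paper's own proof consists of computing that $-(\alpha,\beta,-\alpha-\beta)^{\lambda}\,\C{3\lambda}{1}$, and its two further derivations, cycle through the three cyclic shifts of the first row, which is exactly your identity $-A\,\C{3\lambda}{1}=A\matr{P}$ iterated. Your version merely packages the iteration as an induction using the commutativity of circulant matrices instead of writing out the three-step cycle explicitly, and all the index/wrap-around checks you perform are consistent with the paper's conventions for $\Circ{\cdot}$ and $\Dmat{\cdot}{\cdot}$.
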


\begin{proof}
Since
$$
\left\{\begin{array}{l}
-(\alpha,\beta,-\alpha-\beta)^{\lambda} .\C{3\lambda}{1} = (-\alpha-\beta,\alpha,\beta)^{\lambda}, \\
-(-\alpha-\beta,\alpha,\beta)^{\lambda} .\C{3\lambda}{1} = (\beta,-\alpha-\beta,\alpha)^{\lambda}, \\
-(\beta,-\alpha-\beta,\alpha)^{\lambda} .\C{3\lambda}{1} = (\alpha,\beta,-\alpha-\beta)^{\lambda}, \\
\end{array}\right.
$$
the result follows.
\end{proof}

\begin{proof}[Proof of Proposition~\ref{prop27}]
Let $u\ge3$ be an integer. From Proposition~\ref{prop*4}, we know that the orbit $\orb{S}$ is $(24,3.2^u)$-interlaced doubly arithmetic with common differences $\matDp{1}$ and $\matDp{2}$ defined by
$$
\matDp{1} = \Dmat{3.2^u}{\pi_{2^u}\left((2,-4,2)^8+8\sum_{i=9}^{16}\alpha_iZ_i\matX{24}\right)}
$$
and
$$
\matDp{2} = \Dmat{3.2^u}{2^{u-2}(3,3,2)^8}.
$$
Moreover, from Proposition~\ref{prop26}, we know that $\orb{S}$ is also $(3.2^u,3.2^u)$-interlaced doubly arithmetic with common differences $\matD{1}$ and $\matD{2}$ defined by
\begin{equation*}
\resizebox{\textwidth}{!}{$
\matD{1} = 2^{u-3}\Dmat{3.2^u}{\pi_{2^u}\left((2,-4,2)^8+8\sum_{i=9}^{16}\alpha_iZ_i\matX{24}\right)^{2^{u-3}}} = \Dmat{3.2^u}{2^{u-2}(1,2,1)^{2^u}},
$}
\end{equation*}
and
$$
\matD{2} = \Dmat{3.2^u}{\left(2^{u-2}(3,3,2)^8\right)^{2^{u-3}}} = \Dmat{3.2^u}{2^{u-2}(3,3,2)^{2^u}}.
$$
Since, from Lemma~\ref{lem*9}, we have
$$
\Dmat{3.2^u}{2^{u-2}(1,2,1)^{2^u}} = 2^{u-2}\mathbf{Circ}\left((1,2,1)^{2^u}\right)
$$
and
$$
\Dmat{3.2^u}{2^{u-2}(3,3,2)^{2^u}} = 2^{u-2}\mathbf{Circ}\left((3,3,2)^{2^u}\right),
$$
this completes the proof.
\end{proof}

\section{Arithmetic triangles}

In this section, we define and analyze {\em arithmetic triangles}, that are finite triangles appearing in doubly arithmetic progressions. The interest of arithmetic triangles is that there are elementary balanced structures that can be used in the proof of the main theorem of this paper.

\begin{defn}[Arithmetic triangles]
Let $a$, $d_1$ and $d_2$ be three elements of $\Zn{m}$ and let $n$ be a positive integer. The {\em arithmetic triangle} $\AT{a}{d_1}{d_2}{n}$ is the triangle of size $n$ of elements of $\Zn{m}$, with first element $a$ and where each row and each diagonal are arithmetic progressions with respective common differences $d_1$ and $d_2$, i.e., the triangle of $t_{n}$ elements of $\Zn{m}$ defined by
$$
\AT{a}{d_1}{d_2}{n} = \left( a+id_2+jd_1\right)_{(i,j)\in\Tn{n}}.
$$
Note that the anti-diagonals of $\AT{a}{d_1}{d_2}{n}$ are also arithmetic progressions with common difference $d_2-d_1$.
\end{defn}

For instance, the arithmetic triangle $\AT{0}{2}{3}{5}$ of $\Zn{5}$ is depicted in Figure~\ref{fig02}.

\begin{figure}[htbp]
\centering{
\includegraphics{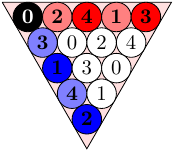}
}
\caption{$\AT{0}{2}{3}{5}$ in $\Zn{5}$}\label{fig02}
\end{figure}

The main result of this section is the following

\begin{thm}\label{thm5}
Let $a$, $d_1$ and $d_2$ be three elements of $\Zn{m}$ such that $\gcd(d_1,m)=\gcd(d_2,m)$ and let $n$ be a positive integer such that $n\equiv 0$ or $-1\pmod{\frac{m}{\gcd(d_1,m)}}$. Then, in the arithmetic triangle $\nabla=\AT{a}{d_1}{d_2}{n}$, we have
$$
\mf{\nabla}(x+\gcd(d_2-d_1,m)) = \mf{\nabla}(x),
$$
for all $x\in\Zn{m}$.
\end{thm}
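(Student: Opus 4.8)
The plan is as follows. Set $c=d_2-d_1$ and $g=\gcd(d_1,m)=\gcd(d_2,m)$, so that $\langle d_1\rangle=\langle d_2\rangle$ is the subgroup of $\Zn{m}$ of order $p:=m/g$, and, since $d_1,d_2$ both lie in it, also $c\in\langle d_1\rangle$. The translations $x\mapsto x+\gcd(c,m)$ and $x\mapsto x+c$ generate the same group (namely translation by an arbitrary element of $\langle c\rangle=\gcd(c,m)\Zn{m}$), so it is enough to prove $\mf{\nabla}(x+c)=\mf{\nabla}(x)$ for every $x\in\Zn{m}$ and then iterate.

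To establish this, I would compute the finite difference $\mf{\nabla}(\cdot+c)-\mf{\nabla}(\cdot)$ by slicing $\nabla$ into its anti-diagonals. The $k$th anti-diagonal ($0\le k\le n-1$) is the multiset $M_k=\{a+kd_1+ic : i=0,\dots,k\}$, an arithmetic progression of length $k+1$ and common difference $c$, which runs cyclically through the coset $a+kd_1+\langle c\rangle$ of size $P:=m/\gcd(c,m)$. Writing $r_k:=(k+1)\bmod P$, its multiplicity function is a multiple of the indicator of that coset plus $\sum_{i=0}^{r_k-1}[\,y\equiv a+kd_1+ic\pmod m\,]$, where $[\,\cdot\,]$ denotes the Iverson bracket. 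Evaluating at $y$ and at $y+c$ and subtracting, the coset-indicator part cancels (because $c\in\langle c\rangle$), and the remaining sum telescopes — the indices $-1,0,\dots,r_k-1$ being pairwise distinct modulo $P$ since $r_k\le P-1$ — to the single boundary term
$$
\mf{M_k}(y+c)-\mf{M_k}(y)=[\,y\equiv a+kd_1-c\pmod m\,]-[\,y\equiv a+kd_1+(r_k-1)c\pmod m\,].
$$

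Next I would sum over $k$, obtaining $\mf{\nabla}(y+c)-\mf{\nabla}(y)=A(y)-B(y)$, where $A(y)=\#\{k\in\{0,\dots,n-1\}: kd_1\equiv y-a+c\}$ and $B(y)=\#\{k\in\{0,\dots,n-1\}: kd_1+(r_k-1)c\equiv y-a\}$. The crucial observation is that $r_k-1\equiv k\pmod P$, hence $(r_k-1)c\equiv kc\pmod m$, so $kd_1+(r_k-1)c\equiv k(d_1+c)=kd_2$ and $B(y)=\#\{k\in\{0,\dots,n-1\}: kd_2\equiv y-a\}$. Because $\gcd(d_1,m)=\gcd(d_2,m)=g$, both $k\mapsto kd_1$ and $k\mapsto kd_2$ are $p$-periodic and restrict to bijections $\{0,\dots,p-1\}\to\langle d_1\rangle=\langle d_2\rangle$. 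Using the hypothesis $n\equiv 0$ or $-1\pmod p$ — i.e. $k$ ranges over a whole number of periods, possibly missing one final term — I would evaluate both counts explicitly: for $n=\lambda p$ they are $\lambda[\,y-a+c\in\langle d_1\rangle\,]$ and $\lambda[\,y-a\in\langle d_1\rangle\,]$, which agree since $c\in\langle d_1\rangle$; for $n=\lambda p-1$ each is that same main term minus one correction indicator, $[\,y\equiv a-c-d_1\,]$ for $A$ and $[\,y\equiv a-d_2\,]$ for $B$, and these coincide because $a-c-d_1=a-d_2$. In every case $A(y)=B(y)$, so $\mf{\nabla}(y+c)=\mf{\nabla}(y)$, and translating repeatedly yields $\mf{\nabla}(y+\gcd(c,m))=\mf{\nabla}(y)$.

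The step I expect to be the main obstacle is the anti-diagonal bookkeeping in the second paragraph: one must verify carefully that the correction sum telescopes to exactly one pair of indicator terms, and handle the degenerate case $r_k=0$, where $M_k$ is a union of full cosets and the two boundary indicators collapse to the same term, so the difference is $0$ — consistent with the general formula. Once that is in place the rest is elementary counting, and the two hypotheses enter precisely where indicated: $\gcd(d_1,m)=\gcd(d_2,m)$ to place $c$ in the common subgroup $\langle d_1\rangle=\langle d_2\rangle$ and to give the two maps the common period $p$, and $n\equiv 0,-1\pmod p$ to collapse $A$ and $B$ to matching expressions.
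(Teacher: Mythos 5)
Your proof is correct, and it takes a genuinely different route from the paper's. The paper's core mechanism is a ``peeling'' identity (its Lemma~\ref{lem13}): deleting the first row of $\nabla=\AT{a}{d_1}{d_2}{n}$ leaves the translate by $d_2$ of $\AT{a}{d_1}{d_2}{n-1}$, while deleting the first diagonal leaves the translate by $d_1$ of the same subtriangle, so $\mf{\nabla}(x+d_1)-\mf{\nabla}(x+d_2)$ equals a difference of multiplicity functions of two arithmetic progressions of length $n$; a separate lemma on full-period progressions (Lemma~\ref{lem12}) then kills that difference when $\frac{m}{\gcd(d_1,m)}\mid n$, and the case $n\equiv-1$ is handled by stripping one anti-diagonal off the triangle of size $n+1$. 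You instead slice $\nabla$ into \emph{all} of its anti-diagonals $M_k$ --- arithmetic progressions with common difference exactly $c=d_2-d_1$, the translation under scrutiny --- compute the discrete derivative $\mf{M_k}(\cdot+c)-\mf{M_k}(\cdot)$ of each slice explicitly as a difference of two indicator functions, and match the resulting boundary sums $A(y)$ and $B(y)$ globally via the identity $kd_1+(r_k-1)c\equiv kd_2\pmod{m}$. Both arguments are elementary and invoke the hypotheses at the same junctures ($\gcd(d_1,m)=\gcd(d_2,m)$ to place $c$ in the common subgroup and equalize the periods, $n\equiv 0,-1\pmod{m/\gcd(d_1,m)}$ to make the counts agree); what yours buys is a uniform treatment of the two congruence classes of $n$ inside a single counting framework and a completely self-contained computation, at the price of more bookkeeping, whereas the paper's version is more modular (Lemma~\ref{lem12} is reused elsewhere) and isolates the boundary contribution in one structural step. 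Your handling of the degenerate cases ($r_k=0$, and $c=0$, where $P=1$ forces every difference to vanish) is consistent, so no repair is needed.
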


The proof is based on the following two lemmas.

\begin{lem}\label{lem13}
Let $a$, $d_1$ and $d_2$ be three elements of $\Zn{m}$ and let $n$ be a positive integer. Then, in the arithmetic triangle $\nabla=\AT{a}{d_1}{d_2}{n}$, we have
$$
\mf{\nabla}(x+d_1)-\mf{\nabla}(x+d_2) = \mf{\APf{a}{d_2}{n}}(x+d_1) - \mf{\APf{a}{d_1}{n}}(x+d_2),
$$
for all $x\in\Zn{m}$.
\end{lem}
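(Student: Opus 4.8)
The plan is a direct double-counting argument, identifying both sides of the identity as the ``boundary contribution'' obtained by shifting the triangle by one step.

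Write $v(i,j)=a+id_2+jd_1$ for the entry of $\nabla=\AT{a}{d_1}{d_2}{n}$ at $(i,j)\in\Tn{n}$, extending this formula to all of $\Z^2$. First I would rewrite $\mf{\nabla}(x+d_1)$: a cell $(i,j)\in\Tn{n}$ satisfies $v(i,j)=x+d_1$ if and only if $v(i,j-1)=x$. The map $(i,j)\mapsto(i,j-1)$ is injective and sends $\Tn{n}=\{(i,j)\in\N^2 : i+j\le n-1\}$ bijectively onto the disjoint union $\Tn{n-1}\sqcup\{(i,-1) : 0\le i\le n-1\}$; since $v(i,-1)=a+id_2-d_1$, this yields
$$
\mf{\nabla}(x+d_1)=\#\{(i,j)\in\Tn{n-1} : v(i,j)=x\}+\#\{\,i\in\{0,\ldots,n-1\} : a+id_2-d_1=x\,\}.
$$
Running the symmetric argument with $(i,j)\mapsto(i-1,j)$, whose image is $\Tn{n-1}\sqcup\{(-1,j) : 0\le j\le n-1\}$ and where $v(-1,j)=a-d_2+jd_1$, gives
$$
\mf{\nabla}(x+d_2)=\#\{(i,j)\in\Tn{n-1} : v(i,j)=x\}+\#\{\,j\in\{0,\ldots,n-1\} : a-d_2+jd_1=x\,\}.
$$

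Then I would subtract the two identities: the shared count over $\Tn{n-1}$ cancels, and what remains is
$$
\mf{\nabla}(x+d_1)-\mf{\nabla}(x+d_2)=\#\{\,i : a+id_2=x+d_1\,\}-\#\{\,j : a+jd_1=x+d_2\,\},
$$
with $i,j$ running over $\{0,\ldots,n-1\}$. Reading the length-$n$ arithmetic progression $\APf{a}{d_2}{n}$ as the multiset $\{a+id_2 : 0\le i\le n-1\}$, and $\APf{a}{d_1}{n}$ as $\{a+jd_1 : 0\le j\le n-1\}$, the first count on the right is $\mf{\APf{a}{d_2}{n}}(x+d_1)$ and the second is $\mf{\APf{a}{d_1}{n}}(x+d_2)$, which is precisely the claimed equality; the case $n=1$, with $\Tn{0}=\emptyset$, is included.

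There is no substantial obstacle here -- the whole argument is bookkeeping. The one point that needs care is the description of the images of $\Tn{n}$ under the two shift maps: one must check that the ``interior'' part is in \emph{both} cases exactly $\Tn{n-1}$, so that it cancels on subtraction, and that each leftover piece is a segment of exactly $n$ cells forming an arithmetic progression with the stated starting point and common difference. I would also double-check the bookkeeping of the shifts $x+d_1$ versus $x+d_2$ so that the two progressions end up on the correct sides of the final identity.
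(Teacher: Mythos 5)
Your proof is correct and is essentially the paper's argument in different clothing: your two shift maps implement exactly the paper's decompositions of $\nabla$ into its first diagonal (resp.\ first row) plus a $d_1$- (resp.\ $d_2$-) translate of $\AT{a}{d_1}{d_2}{n-1}$, and the common count over $\Tn{n-1}$ that you cancel is precisely the paper's $\mf{\AT{a}{d_1}{d_2}{n-1}}(x)$. No gaps.
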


\begin{proof}
The first row of $\nabla$ is the sequence $\APf{a}{d_1}{n}$ and, if we remove this row from $\nabla$, we obtain the arithmetic triangle
$$
\nabla\setminus\APf{a}{d_1}{n} = \AT{a+d_2}{d_1}{d_2}{n-1},
$$
which is the translate of $\AT{a}{d_1}{d_2}{n-1}$ by $d_2$. Moreover, the first diagonal of $\nabla$ is the sequence $\APf{a}{d_2}{n}$ and, if we remove this diagonal from $\nabla$, we obtain the arithmetic triangle
$$
\nabla\setminus\APf{a}{d_2}{n} = \AT{a+d_1}{d_1}{d_2}{n-1},
$$
which is the translate of $\AT{a}{d_1}{d_2}{n-1}$ by $d_1$. This leads to
$$
\mf{\nabla}(x+d_1)-\mf{\APf{a}{d_2}{n}}(x+d_1)
\begin{array}[t]{l}
 = \mf{\nabla\setminus\APf{a}{d_2}{n}}(x+d_1) \\[1.5ex]
 = \mf{\AT{a+d_1}{d_1}{d_2}{n-1}}(x+d_1) \\[1.5ex]
 = \mf{\AT{a}{d_1}{d_2}{n-1}}(x) \\[1.5ex]
 = \mf{\AT{a+d_2}{d_1}{d_2}{n-1}}(x+d_2) \\[1.5ex]
 = \mf{\nabla\setminus\APf{a}{d_1}{n}}(x+d_2) \\[1.5ex]
 = \mf{\nabla}(x+d_2)-\mf{\APf{a}{d_1}{n}}(x+d_2),
\end{array} 
$$
for all $x\in\Zn{m}$. This completes the proof.
\end{proof}

\begin{lem}\label{lem12}
Let $a$ and $d$ be two elements of $\Zn{m}$ and let $n$ be a positive integer divisible by $\frac{m}{\gcd(d,m)}$. Then, in the arithmetic progression $S=\APf{a}{d}{n}$, we have
$$
\mf{S}(x+\gcd(d,m)) = \mf{S}(x),
$$
for all $x\in\Zn{m}$. More precisely, we have
$$
\mf{S}(x) = \left\{\begin{array}{ll}
\displaystyle\frac{\gcd(d,m)n}{m} & \text{if}\ \pi_{\gcd(d,m)}(x)=\pi_{\gcd(d,m)}(a),\\[2ex]
0 & \text{otherwise}.
\end{array}\right.
$$
\end{lem}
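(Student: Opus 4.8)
The plan is to compute the multiset $S=\APf{a}{d}{n}=(a+jd)_{j=0}^{n-1}$ directly, by analysing the cyclic subgroup of $\Zn{m}$ generated by $d$. Write $g=\gcd(d,m)$, so that $\pi_g$ is the canonical projection onto $\Zn{g}$, and use the hypothesis that $\frac{m}{g}$ divides $n$ to write $n=\lambda\frac{m}{g}$; then $\lambda=\frac{gn}{m}=\frac{\gcd(d,m)n}{m}$ is a positive integer, which is already the constant appearing in the statement.

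First I would recall the elementary fact that the additive order of $d$ in $\Zn{m}$ equals $\frac{m}{g}$ and that the subgroup $\langle d\rangle$ it generates is exactly $g\Zn{m}=\{0,g,2g,\dots,m-g\}$, a set of $\frac{m}{g}$ elements. Writing $d=gd'$ with $\gcd(d',\frac{m}{g})=1$ makes this transparent: as $k$ runs over a complete residue system modulo $\frac{m}{g}$, the value $kd'$ does too, hence $kd=gkd'$ runs bijectively over $g\Zn{m}$.

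Next I would partition the index set $\{0,1,\dots,n-1\}$ into $\lambda$ consecutive blocks of length $\frac{m}{g}$. By the previous step, on each such block the map $j\mapsto a+jd$ is a bijection onto the coset $a+g\Zn{m}$. Consequently every element of $a+g\Zn{m}$ occurs exactly $\lambda$ times in $S$, while every element outside that coset occurs $0$ times. Since $a+g\Zn{m}=\{x\in\Zn{m}\ :\ \pi_{g}(x)=\pi_{g}(a)\}$ and $\lambda=\frac{\gcd(d,m)n}{m}$, this is precisely the claimed explicit formula for $\mf{S}$.

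Finally, the periodicity $\mf{S}(x+\gcd(d,m))=\mf{S}(x)$ follows at once from the formula: adding $g=\gcd(d,m)$ to $x$ does not change $\pi_{g}(x)$, so $x$ and $x+g$ lie in the same coset modulo $g\Zn{m}$ and receive the same multiplicity. There is essentially no obstacle in this lemma; the only point deserving a little care is the bookkeeping that $\{0,\dots,n-1\}$ splits into exactly $\lambda$ complete residue systems modulo $\frac{m}{g}$, combined with the identification $\langle d\rangle=g\Zn{m}$, and I would spell out just these two points.
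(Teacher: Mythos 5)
Your proof is correct and follows essentially the same route as the paper: both identify the value set of $\APf{a}{d}{n}$ with the coset $a+\gcd(d,m)\Zn{m}$ and count each element with multiplicity $\frac{\gcd(d,m)n}{m}$ via the period $\frac{m}{\gcd(d,m)}$. The only cosmetic difference is that you justify $\langle d\rangle=\gcd(d,m)\Zn{m}$ by writing $d=\gcd(d,m)d'$ with $\gcd(d',m/\gcd(d,m))=1$, whereas the paper invokes B\'ezout's identity to the same effect.
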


\begin{proof}
From the Bézout's identity, we know that there exists an integer $\alpha$ such that
$$
\alpha d \equiv \gcd(d,m)\pmod{m}.
$$
Then, since $n$ is a multiple of $\frac{m}{\gcd(d,m)}$, we have
$$
\left\{a+id\ \middle|\ i\in\{0,\ldots,n-1\}\right\} = \left\{a+i\gcd(d,m)\ \middle|\ i\in\left\{0,\ldots,\frac{m}{\gcd(d,m)}-1\right\}\right\}.
$$
Moreover, it is clear that
$$
a+i_1\gcd(d,m) \equiv a+i_2\gcd(d,m)\pmod{m}\quad\Longleftrightarrow\quad i_1\equiv i_2\pmod{\frac{m}{\gcd(d,m)}},
$$
for all integers $i_1$ and $i_2$. Therefore,
$$
\mf{S}(x) = \left\{\begin{array}{ll}
\displaystyle\frac{\gcd(d,m)n}{m} & \text{if}\ \pi_{\gcd(d,m)}(x)=\pi_{\gcd(d,m)}(a),\\[2ex]
0 & \text{otherwise},
\end{array}\right.
$$
as announced. It follows that
$$
\mf{S}(x+\gcd(d,m)) = \mf{S}(x),
$$
for all $x\in\Zn{m}$.
\end{proof}

We are now ready to prove Theorem~\ref{thm5}, the main result of this section.

\begin{proof}[Proof of Theorem~\ref{thm5}]
First, suppose that $n$ is divisible by $\frac{m}{\gcd(d_1,m)}$. Since $\gcd(d_1,m)=\gcd(d_2,m)$, we have
$$
\pi_{\gcd(d_2,m)}(x+d_1)=\pi_{\gcd(d_1,m)}(x+d_2)
$$
Moreover, since $n$ is divisible by $\frac{m}{\gcd(d_1,m)}$, we obtain from Lemma~\ref{lem12} that
$$
\mf{\APf{a}{d_2}{n}}(x+d_1) = \mf{\APf{a}{d_1}{n}}(x+d_2),
$$
for all $x\in\Zn{m}$. It follows, from Lemma~\ref{lem13} that
$$
\mf{\nabla}(x+d_1)=\mf{\nabla}(x+d_2),
$$
for all $x\in\Zn{m}$. Therefore,
$$
\mf{\nabla}(x) = \mf{\nabla}(x+d_2-d_1) = \mf{\nabla}(x+\gcd(d_2-d_1,m)),
$$
for all $x\in\Zn{m}$.

Now, suppose that $n\equiv -1\pmod{\frac{m}{\gcd(d_1,m)}}$. We already know, from above, that the arithmetic triangle $\nabla'=\AT{a}{d_1}{d_2}{n+1}$ is such that
$$
\mf{\nabla'}(x) = \mf{\nabla'}(x+\gcd(d_2-d_1,m)),
$$
for all $x\in\Zn{m}$. Moreover, the arithmetic triangle $\nabla=\AT{a}{d_1}{d_2}{n}$ can be obtained from $\nabla'=\AT{a}{d_1}{d_2}{n+1}$ by removing its first antidiagonal, that is the sequence $\APf{a-d_1}{d_2-d_1}{n+1}$,
$$
\AT{a}{d_1}{d_2}{n} = \AT{a}{d_1}{d_2}{n+1}\setminus\APf{a-d_1}{d_2-d_1}{n+1}.
$$
Since $\gcd(d_1,m)$ divides $\gcd(d_2-d_1,m)$, we have that $n+1$ is divisible by $\frac{m}{\gcd(d_1-d_2)}$. It follows, by Lemma~\ref{lem12}, that the sequence $S=\APf{a-d_1}{d_2-d_1}{n+1}$ is such that
$$
\mf{S}(x) = \mf{S}(x+\gcd(d_2-d_1,m)),
$$
for all $x\in\Zn{m}$. We conclude that
$$
\mf{\nabla}(x+\gcd(d_2-d_1,m))
\begin{array}[t]{l}
= \mf{\nabla'}(x+\gcd(d_2-d_1,m))-\mf{S}(x+\gcd(d_2-d_1,m)) \\
= \mf{\nabla'}(x)-\mf{S}(x) \\
= \mf{\nabla}(x),
\end{array}
$$
for all $x\in\Zn{m}$.
\end{proof}

Now, a necessary condition for having a balanced arithmetic triangle is determined.

\begin{prop}
Let $a$, $d_1$ and $d_2$ be three elements of $\Zn{m}$ and let $n$ be a positive integer. If the arithmetic triangle $\AT{a}{d_1}{d_2}{n}$ is balanced, then the common differences $d_1$, $d_2$ and $d_2-d_1$ are all invertible.
\end{prop}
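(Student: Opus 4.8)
The plan is to reduce the three required invertibilities to a single one --- that of the row-common-difference $d_1$ --- by exploiting the rotational symmetry of the triangular index set, and then to establish that single statement by projecting modulo a common divisor.

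\emph{Reduction.} We may assume $m\geq 2$, the case $m=1$ being trivial. The map $\sigma\colon(i,j)\mapsto(j,n-1-i-j)$ is a bijection of $\Tn{n}$ onto itself, and a direct check shows that the $(i,j)$-entry of $\AT{a+(n-1)d_1}{d_2-d_1}{-d_1}{n}$ equals the $\sigma(i,j)$-entry of $\AT{a}{d_1}{d_2}{n}$. Hence these two arithmetic triangles carry the same multiset of elements of $\Zn{m}$, so one is balanced if and only if the other is. Iterating $\sigma$ once more, the triangles $\AT{a}{d_1}{d_2}{n}$, $\AT{a+(n-1)d_1}{d_2-d_1}{-d_1}{n}$ and $\AT{a+(n-1)d_2}{-d_2}{d_1-d_2}{n}$ are simultaneously balanced or not, and their row-common-differences are $d_1$, $d_2-d_1$ and $-d_2$. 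Since an element of $\Zn{m}$ is invertible if and only if its opposite is, it therefore suffices to prove: \emph{if an arithmetic triangle $\nabla=\AT{a}{d_1}{d_2}{n}$ is balanced, then $d_1$ is invertible modulo $m$.}

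\emph{The key step.} Suppose not, and set $g=\gcd(d_1,m)\geq 2$. Since $g\mid d_1$, the $(i,j)$-entry $a+id_2+jd_1$ is congruent to $a+id_2$ modulo $g$, independently of $j$, so in $\pi_g(\nabla)$ row $i$ contributes $n-i$ copies of the single element $\overline{a+id_2}\in\Zn{g}$. By the Projection Theorem (Theorem~\ref{thm6}), $\pi_g(\nabla)$ is balanced in $\Zn{g}$, i.e.\ $\mf{\pi_g(\nabla)}$ is constant. Now two cases arise. If $d_2$ is not invertible modulo $g$, every element $\overline{a+id_2}$ lies in the proper coset $\overline{a}+\gcd(d_2,g)\,\Zn{g}$, so $\pi_g(\nabla)$ is supported on fewer than $g$ elements and cannot be balanced. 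If $d_2$ is invertible modulo $g$, then $i\mapsto\overline{a+id_2}$ restricts to a bijection of $\{0,\dots,g-1\}$ onto $\Zn{g}$, and $\overline{a+id_2}=\overline{a+cd_2}$ is equivalent to $i\equiv c\pmod g$; hence constancy of $\mf{\pi_g(\nabla)}$ forces $f(c):=\sum_{i\equiv c\,(g),\ 0\le i\le n-1}(n-i)$ to be independent of $c\in\{0,\dots,g-1\}$. Comparing $f(0)$ and $f(1)$ term by term gives $f(0)-f(1)\geq 1$, so $f$ is not constant --- a contradiction since $g\geq 2$. This proves $d_1$ invertible, and the reduction then yields invertibility of $d_1$, $d_2$ and $d_2-d_1$.

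The routine parts of this argument are the entry identity under $\sigma$ and the elementary estimate $f(0)>f(1)$. The point that requires genuine care is the case distinction in the key step: when $d_2$ happens to be invertible modulo $g$, the projection $\pi_g(\nabla)$ may well meet every element of $\Zn{g}$, so the failure of balancedness is no longer visible from the support and must instead be extracted from the multiplicities --- this is exactly what the strict inequality $f(0)>f(1)$ accomplishes. (Alternatively, both cases can be handled uniformly by evaluating a nontrivial character of $\Zn{g}$ on $\pi_g(\nabla)$ and checking that $\sum_{i=0}^{n-1}(n-i)\,\zeta^{\,i}=\dfrac{\zeta^{\,n+1}-(n+1)\zeta+n}{(\zeta-1)^2}\neq 0$ for every $g$-th root of unity $\zeta\neq 1$, but the elementary monotonicity argument is shorter.)
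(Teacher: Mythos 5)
Your proof is correct and follows essentially the same route as the paper: reduce to showing that the row common difference is invertible, project modulo $\gcd(d_1,m)$ so that each row of the projected triangle becomes constant, and then show the resulting multiplicity function cannot be constant. The only notable difference is that you make the reduction explicit via the $120$-degree rotation of the index set (which the paper leaves as a ``without loss of generality''), and you split the key step by invertibility of $d_2$ modulo $g$ rather than by whether $\pi_g(d_2)$ vanishes; both variants are sound.
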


\begin{proof}
Without loss of generality, suppose that $d_1$ is not invertible. Then, we consider the projection of $\nabla=\AT{a}{d_1}{d_2}{n}$ into $\Zn{\alpha}$, where $\alpha=\gcd(d_1,m)\ge2$, i.e.,
$$
\pi_{\alpha}(\nabla) = \AT{\pi_{\alpha}(a)}{0}{\pi_{\alpha}(d_2)}{n}.
$$
Then, the row $R_{i}\left(\pi_{\alpha}(\nabla)\right)$ is constituted by $n-i$ copies of $\pi_{\alpha}(a)+i\pi_{\alpha}(d_2)$, for all $i\in\{0,1,\ldots,n-1\}$.

If $\pi_{\alpha}(d_2)\neq 0$, it is easy to see that $R_{i}\left(\pi_{\alpha}(\nabla)\right)$ is constituted by $n-i$ copies of $\pi_{\alpha}(a)$ if and only if $R_{i+1}\left(\pi_{\alpha}(\nabla)\right)$ is constituted by $n-i-1$ copies of $\pi_{\alpha}(a)+\pi_{\alpha}(d_2)$, for all $i\in\{0,1,\ldots,n-1\}$, where we suppose that the $n$\textsuperscript{th} row is defined as being empty.
Therefore,
$$
\mf{\pi_{\alpha}(\nabla)}(\pi_{\alpha}(a)) > \mf{\pi_{\alpha}(\nabla)}(\pi_{\alpha}(a)+\pi_{\alpha}(d_2)).
$$
It follows that $\pi_{\alpha}(\nabla)$ is not balanced in $\Zn{\alpha}$ and thus, by Theorem~\ref{thm6}, the triangle $\nabla$ is not balanced in $\Zn{m}$.

Otherwise, if $\pi_{\alpha}(d_2)=0$, we have
$$
\pi_{\alpha}(\nabla) = \AT{\pi_{\alpha}(a)}{0}{0}{n},
$$
which is the constant triangle uniquely composed by elements $\pi_{\alpha}(a)$. It follows that $\pi_{\alpha}(\nabla)$ is not balanced in $\Zn{\alpha}$ and thus, by Theorem~\ref{thm6}, the triangle $\nabla$ is not balanced in $\Zn{m}$.
\end{proof}

\begin{rem}
If $\gcd(d_1,m)=\gcd(d_2,m)$ and $\gcd(d_2-d_1,m)=1$, it is easy to see that $d_1$, $d_2$ and $d_2-d_1$ are all invertible.
\end{rem}

\begin{rem}
When $m$ is even, there is no balanced arithmetic triangles in $\Zn{m}$ since $d_1$, $d_2$ and $d_1-d_2$ cannot be all invertible in $\Zn{m}$.
\end{rem}

When $m$ is odd, Theorem~\ref{thm5} can then be refined in the case where $d_1$, $d_2$ and $d_1-d_2$ are all invertible.

\begin{thm}\label{thm7}
Let $m$ be an odd number and let $a$, $d_1$ and $d_2$ be three elements of $\Zn{m}$ such that $d_1$, $d_2$ and $d_2-d_1$ are all invertible. Then, the arithmetic triangle $\AT{a}{d_1}{d_2}{n}$ is balanced for all non-negative integers $n\equiv 0$ or $-1\pmod{m}$.
\end{thm}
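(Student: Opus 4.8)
The plan is to obtain Theorem~\ref{thm7} as an immediate consequence of Theorem~\ref{thm5}. Write $\nabla = \AT{a}{d_1}{d_2}{n}$. First I would record that, since $d_1$ and $d_2$ are invertible in $\Zn{m}$, we have $\gcd(d_1,m) = \gcd(d_2,m) = 1$; in particular the equality $\gcd(d_1,m) = \gcd(d_2,m)$ required in Theorem~\ref{thm5} holds, and $\frac{m}{\gcd(d_1,m)} = m$. Therefore the hypothesis ``$n \equiv 0$ or $-1 \pmod{m}$'' of Theorem~\ref{thm7} is exactly the hypothesis ``$n \equiv 0$ or $-1 \pmod{\frac{m}{\gcd(d_1,m)}}$'' of Theorem~\ref{thm5}, so that theorem applies and yields
$$
\mf{\nabla}(x+\gcd(d_2-d_1,m)) = \mf{\nabla}(x),
$$
for all $x \in \Zn{m}$.

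Next I would invoke the third invertibility hypothesis: since $d_2 - d_1$ is invertible, $\gcd(d_2-d_1,m) = 1$, so the displayed identity reads $\mf{\nabla}(x+1) = \mf{\nabla}(x)$ for every $x \in \Zn{m}$. As $1$ generates the cyclic group $\Zn{m}$, an immediate induction gives $\mf{\nabla}(x) = \mf{\nabla}(y)$ for all $x, y \in \Zn{m}$, which is precisely the definition of $\nabla$ being balanced. (The compatibility condition $m \mid t_{n}$ is then automatic; alternatively one checks directly that $\frac{q(qm\pm 1)}{2}$ is an integer when $m$ is odd, but this is not needed once the multiplicity function is known to be constant.)

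There is essentially no obstacle here: the argument is a two-line deduction from Theorem~\ref{thm5}. The only points requiring a moment's care are (i) verifying that invertibility of $d_1$ and of $d_2$ does force $\gcd(d_1,m)=\gcd(d_2,m)$, so that Theorem~\ref{thm5} is applicable, and (ii) noticing that it is invertibility of $d_2 - d_1$ — rather than of $d_1$ or $d_2$ — that collapses the periodicity statement of Theorem~\ref{thm5} into genuine constancy of $\mf{\nabla}$. Both are immediate.
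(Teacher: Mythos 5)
Your proposal is correct and follows exactly the paper's own argument: invertibility of $d_1$, $d_2$ and $d_2-d_1$ gives $\gcd(d_1,m)=\gcd(d_2,m)=\gcd(d_2-d_1,m)=1$, Theorem~\ref{thm5} then yields $\mf{\nabla}(x+1)=\mf{\nabla}(x)$ for all $x\in\Zn{m}$, and constancy of the multiplicity function follows since $1$ generates $\Zn{m}$. No gap.
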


\begin{proof}
Since $\gcd(d_1,m)=\gcd(d_2,m)=\gcd(d_2-d_1,m)=1$, we deduce from Theorem~\ref{thm5} that
$$
\mf{\AT{a}{d_1}{d_2}{n}}(x) = \mf{\AT{a}{d_1}{d_2}{n}}(x+1),
$$
for all $x\in\Zn{m}$. Therefore, the arithmetic triangle $\AT{a}{d_1}{d_2}{n}$ is balanced in $\Zn{m}$.
\end{proof}

For instance, the arithmetic triangle $\AT{0}{2}{3}{5}$ of $\Zn{5}$ depicted in Figure~\ref{fig02} is balanced since $n=5$ is divisible by $5$ and $d_1=2$, $d_2=3$ and $d_2-d_1=1$ are all invertible in $\Zn{5}$. Indeed, this triangle contains three times each element of $\Zn{5}$. 

We conclude this section by explaining the relationship between arithmetic triangles and the triangles appearing in interlaced doubly arithmetic orbits. Indeed, it is possible to decompose a triangle from an interlaced doubly arithmetic orbit into subtriangles that are arithmetic triangles.

\begin{prop}\label{prop28}
Let $k$ be a positive integer and let
$$
\matA=\left(a_{i,j}\right)_{0\le i,j\le k-1},\quad \matD{1}=\left(d^{\,(1)}_{i,j}\right)_{0\le i,j\le k-1}\quad\text{and}\quad\matD{2}=\left(d^{\,(2)}_{i,j}\right)_{0\le i,j\le k-1}
$$
be three square matrices of order $k$ of elements in $\Zn{m}$. Let $n$ be a positive integer and let $n=qk+r$ be the Euclidean division of $n$ by $k$. Then the triangle $\nabla=(u_{i,j})_{(i,j)\in T_n}$, of size $n$, appearing in the $(k,k)$-interlaced doubly arithmetic progression
$$
\IDAP{\matA}{\matD{1}}{\matD{2}}=\left(u_{i,j}\right)_{(i,j)\in\N\times\Z},
$$
can be decomposed into $k^2$ subtriangles
$$
\nabla_{i_0,j_0} = \left(u_{i_0+ik,j_0+jk}\right)_{(i,j)\in \mathrm{I}_{i_0,j_0}},
$$
where
$$
\mathrm{I}_{i_0,j_0} = \left\{ (i,j)\in\N\times\Z\ \middle|\ (i_0+ik,j_0+jk)\in\Tn{n} \right\},
$$
for all $(i_0,j_0)\in\{0,\ldots,k-1\}^2$. Moreover, these subtriangles are arithmetic triangles
$$
\nabla_{i_0,j_0} = \AT{u_{i_0,j_0}}{d^{\,(1)}_{i_0,j_0}}{d^{\,(2)}_{i_0,j_0}}{q+\varepsilon},
$$
of size $q+\varepsilon$, for all $(i_0,j_0)\in\{0,\ldots,k-1\}^2$, where
$$
\varepsilon = \left\{\begin{array}{cl}
 1 & \text{if}\ 0\le i_0+j_0\le r-1,\\
 0 & \text{if}\ r\le i_0+j_0\le k+r-1,\\
 -1 & \text{if}\ k+r\le i_0+j_0\le 2k-2.\\
\end{array}\right.
$$
\end{prop}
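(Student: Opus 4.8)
The plan is to reduce the statement to two elementary facts together with one finite bookkeeping computation. The two facts are: (a) the triangle $\Tn{n}$ splits according to the residues modulo $k$ of its two coordinates, and (b) on each such residue class the defining formula of an interlaced doubly arithmetic progression is verbatim the defining formula of an arithmetic triangle; the bookkeeping is the determination of the size of each class in terms of the Euclidean division $n=qk+r$.

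First I would record the decomposition. Each $(i,j)\in\Tn{n}$ is written uniquely as $(i_0+i'k,\,j_0+j'k)$ with $i_0,j_0\in\{0,\ldots,k-1\}$ and $i',j'\in\N$, by Euclidean division of each coordinate by $k$ — here $0\le i_0,j_0\le k-1$ and $k\ge1$ force $i'\ge0$ and $j'\ge0$. Read backwards, for a fixed $(i_0,j_0)$ the pairs $(i_0+i'k,\,j_0+j'k)$ with $(i',j')\in\mathrm{I}_{i_0,j_0}$ are exactly the elements of $\Tn{n}$ in that class, so $\Tn{n}$ is the disjoint union of these $k^2$ classes and $\nabla$ is the disjoint union of the $\nabla_{i_0,j_0}$, with $(i',j')$ serving as the running index of $\nabla_{i_0,j_0}$. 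Then I would invoke the definition of $\IDAP{\matA}{\matD{1}}{\matD{2}}$, which gives $u_{i_0+i'k,\,j_0+j'k}=a_{i_0,j_0}+i'd^{\,(2)}_{i_0,j_0}+j'd^{\,(1)}_{i_0,j_0}$; putting $i'=j'=0$ identifies $a_{i_0,j_0}=u_{i_0,j_0}$. Comparing with $\AT{a}{d_1}{d_2}{N}=(a+id_2+jd_1)_{(i,j)\in\Tn{N}}$, it suffices to prove that $\mathrm{I}_{i_0,j_0}=\Tn{q+\varepsilon}$ for the stated $\varepsilon$, since then $\nabla_{i_0,j_0}=\AT{u_{i_0,j_0}}{d^{\,(1)}_{i_0,j_0}}{d^{\,(2)}_{i_0,j_0}}{q+\varepsilon}$ entry by entry.

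The core of the argument is this last identification. Unwinding the definition, $\mathrm{I}_{i_0,j_0}=\{(i',j')\in\N^2\ |\ (i_0+i'k)+(j_0+j'k)<n\}$, which upon writing $n=qk+r$ with $0\le r\le k-1$ and setting $s=i_0+j_0\in\{0,\ldots,2k-2\}$ becomes $\{(i',j')\in\N^2\ |\ i'+j'<q+\frac{r-s}{k}\}$. Since $i'+j'$ is a non-negative integer, the strict real inequality $i'+j'<q+\frac{r-s}{k}$ is equivalent to $i'+j'\le q+\big\lceil\frac{r-s}{k}\big\rceil-1$, that is, to $(i',j')\in\Tn{q+\varepsilon}$ with $\varepsilon=\big\lceil\frac{r-s}{k}\big\rceil$. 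It then remains only to evaluate the ceiling: if $0\le s\le r-1$ then $\frac{r-s}{k}\in(0,1)$ so $\varepsilon=1$; if $r\le s\le k+r-1$ then $\frac{r-s}{k}\in(-1,0]$ so $\varepsilon=0$; and if $k+r\le s\le 2k-2$ then $\frac{r-s}{k}\in(-2,-1]$ (using $s-r\le 2k-2<2k$) so $\varepsilon=-1$. This reproduces the trichotomy in the statement, and it also subsumes the degenerate case $q+\varepsilon\le0$, in which $\mathrm{I}_{i_0,j_0}$ and $\Tn{q+\varepsilon}$ are both empty and nothing further is needed. The step I would scrutinise is precisely this passage from the real inequality to the integer bound together with the boundary values $s=r$ and $s=k+r$, which is the only place an off-by-one error could creep in; I would check those two cases explicitly, and confirm the whole count on a small example such as $k=3$, $n=7$.
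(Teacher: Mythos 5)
Your proposal is correct and follows essentially the same route as the paper's proof: decompose $\Tn{n}$ by residue classes of the two coordinates modulo $k$, observe that the IDAP defining formula makes each class an arithmetic triangle, and determine its size from the inequality $(i_0+ik)+(j_0+jk)\le n-1$. Your size computation via $\varepsilon=\bigl\lceil\frac{r-s}{k}\bigr\rceil$ is equivalent to the paper's $n_{i_0,j_0}=q+1+\bigl\lfloor\frac{r-1-s}{k}\bigr\rfloor$, and your case analysis of the ceiling is accurate.
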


\begin{proof}
First, since $\IDAP{\matA}{\matD{1}}{\matD{2}}=\left(u_{i,j}\right)_{(i,j)\in\N\times\Z}$, we know that
$$
u_{i_0+ik,j_0+jk} = u_{i_0,j_0} + id^{\,(2)}_{i_0,j_0} + jd^{\,(1)}_{i_0,j_0},
$$
for all $(i_0,j_0)\in\{0,\ldots,k-1\}^2$ and all $(i,j)\in\N\times\Z$. Therefore, the subtriangle $\nabla_{i_0,j_0}$ is a subtriangle of the form
$$
\nabla_{i_0,j_0} = \AT{u_{i_0,j_0}}{d^{\,(1)}_{i_0,j_0}}{d^{\,(2)}_{i_0,j_0}}{n_{i_0,j_0}},
$$
where $n_{i_0,j_0}$ is a positive integer to be determined, for all $(i_0,j_0)\in\{0,\ldots,k-1\}^2$. Let $n=qk+r$, with $0\le r\le k-1$, and let $(i_0,j_0)\in\{0,\ldots,k-1\}^2$. Then, for all $(i,j)\in\N^2$, we have
$$
(i_0+ik)+(j_0+jk)\le n-1 \Longleftrightarrow i+j\le q+\frac{r-1-(i_0+j_0)}{k}.
$$
Therefore,
$$
n_{i_0,j_0} = q+1+\left\lfloor\frac{r-1-(i_0+j_0)}{k}\right\rfloor = \left\{\begin{array}{cl}
 q+1 & \text{if}\ 0\le i_0+j_0\le r-1,\\
 q & \text{if}\ r\le i_0+j_0\le k+r-1,\\
 q-1 & \text{if}\ k+r\le i_0+j_0\le 2k-2.\\
\end{array}\right.
$$
This completes the proof.
\end{proof}

\section{Proof for the powers of 2}

In this section, a proof of Theorem~\ref{mainthm} is given. For each $24$-tuples of integers
$$
A = \pm X_{i_0} + 4\sum_{i=1}^{8}\alpha_i Y_i + 8\sum_{i=9}^{16}\alpha_i Z_i \in\mathcal{E},
$$
where $i_0\in\{1,\ldots,7\}$ and $\alpha_i\in\Z$ for all $i\in\{1,\ldots,16\}$, we will prove that, for all non-negative integers $u$, the orbit of the sequence
$$
S = \IAP{\pi_{2^u}(A)}{\pi_{2^u}(A)\matX{24}}
$$
is $(12.2^u,12.2^u)$-periodic and the triangles $\ST{S[12.2^u\lambda]}$ are balanced in $\Zn{2^{u}}$, for all non-negative integers $\lambda$. This then proves Theorem~\ref{thm*2}, the main result of this paper, when $m$ is a power of two.

First, it is straightforward to see that the results of Theorem~\ref{mainthm} are verified for $A\in\setE$ if and only if it is also the case for $-A\in\setE$. Indeed, for all non-negative integers $u$, for the sequences
$$
S = \IAP{\pi_{2^u}(A)}{\pi_{2^u}(A)\matX{24}}\quad\text{and}\quad -S = \IAP{\pi_{2^u}(-A)}{\pi_{2^u}(-A)\matX{24}},
$$
it is clear that the orbit $\orb{S}$ is $(12.2^u,12.2^u)$-periodic if and only if the orbit $\orb{-S}=-\orb{S}$ is also $(12.2^u,12.2^u)$-periodic, and the triangles $\ST{S[12.2^u\lambda]}$ are balanced if and only if $\ST{(-S)[12.2^u\lambda]}=-\ST{S[12.2^u\lambda]}$ are as well, for all non-negative integers $\lambda$. Therefore, we only consider
$$
A = X_{i_0} + 4\sum_{i=1}^{8}\alpha_i Y_i + 8\sum_{i=9}^{16}\alpha_i Z_i \in\mathcal{E},
$$
with $i_0\in\{1,\ldots,7\}$ and $\alpha_i\in\Z$ for all $i\in\{1,\ldots,16\}$, and the sequence
$$
S = \IAP{\pi_{2^u}(A)}{\pi_{2^u}(A)\matX{24}},
$$
for all non-negative integers $u$, in the sequel of this proof.

We proceed by induction on $u$. For $u=0$, the result is clear, since any orbit of $\Zn{1}$ is $(p,p)$-periodic, for all positive integers $p$, and any triangle is balanced in $\Zn{1}$.

For $u=1$, we have
$$
\pi_2(A) = A_2 = 001101000001100000101100.
$$
Since $A_2\matX{24}=\matr{0}$, we know that $S={A_2}^{\infty}$ is $24$-periodic and thus
$$
\pi_2(S)[24\lambda] = {A_2}^{\lambda},
$$
for all non-negative integers $\lambda$. It follows that all its derived sequences $\ider{i}{S}$ are also $24$-periodic, for all non-negative integers $i$, by Corollary~\ref{cor1}. It is easy to verify that
$$
\ider{24}{\left({A_2}^2\right)} = A_2\quad \text{and thus}\quad \ider{24}{S}=S.
$$
Therefore the orbit $\orb{S}$ is $(24,24)$-periodic. Finally, from Proposition~\ref{prop4}, we know that the triangles $\ST{S[24\lambda]}$ are balanced for all non-negative integers $\lambda$ if and only if it is the case only for two distinct positive values $\lambda_1$ and $\lambda_2$. As showed in Figures~\ref{fig03} and \ref{fig04}, the triangles $\ST{S[24]}=\ST{A_2}$ and $\ST{S[48]}=\ST{{A_2}^2}$ are balanced in $\Zn{2}$. This completes the proof for $u=1$.

\begin{figure}[htbp]
\centering{
\includegraphics{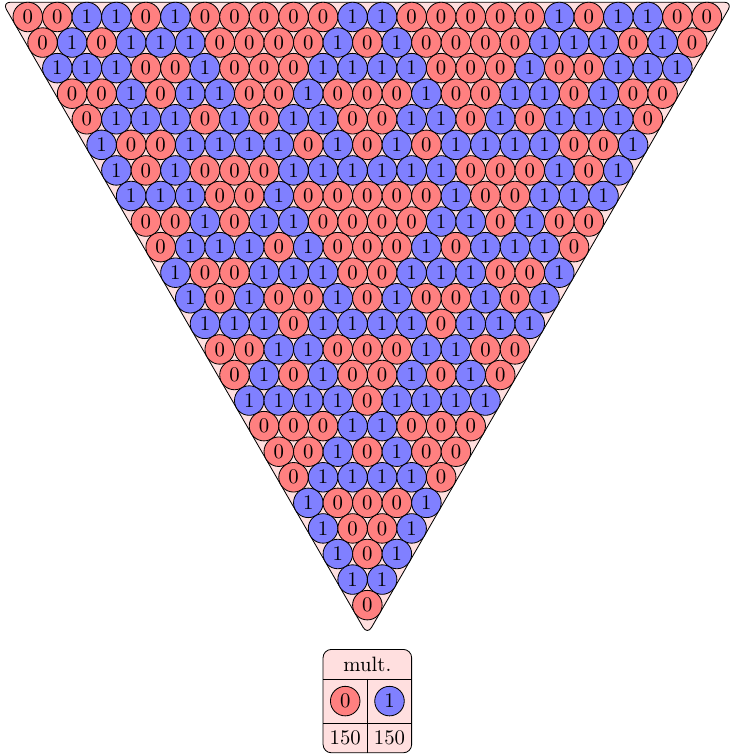}
}
\caption{$\ST{A_2}$ in $\Zn{2}$}\label{fig03}
\end{figure}

\begin{figure}[htbp]
\centering{
\begin{tabular}{c}
\includegraphics[width=0.974\textwidth]{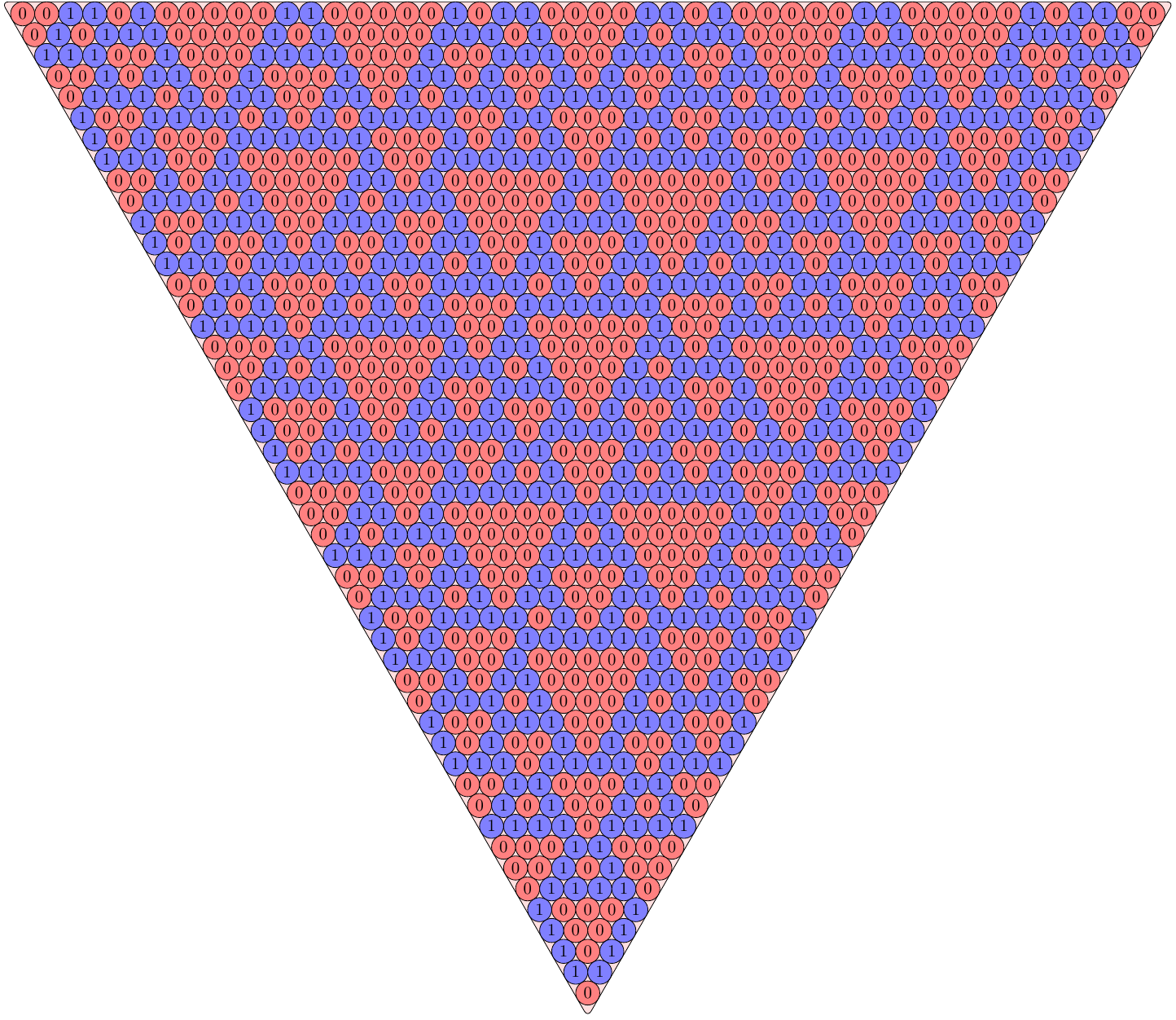} \\
\includegraphics{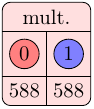} \\
\end{tabular}
}
\caption{$\ST{{A_2}^2}$ in $\Zn{2}$}\label{fig04}
\end{figure}

For $u=2$, we have
$$
\pi_4(A) \in \left\{ \pi_4(X_i)\ \middle|\ i\in\{1,\ldots,7\}\right\} = \left\{
\begin{array}{l}
001123220203300220321102,\\
001303202203300000303102,\\
003301002003302002103302,\\
021301002223320002103122,\\
201103200201100200301100,\\
203321022001102022123300,\\
223103200021122200301320\\
\end{array}\right\}.
$$
Since
$$
\pi_4(X_i)\matX{24} = (2,0,2)^8,
$$
for all $i\in\{1,\ldots,7\}$, we know that $S$ is $48$-periodic and thus
$$
S = {A_4}^{\infty}\quad\text{and}\quad S[48\lambda] = {A_4}^{\lambda},
$$
for all positive integers $\lambda$, where $A_4$ is the $48$-tuple $A_4 = S[48]$.
It follows that all its derived sequences $\ider{i}{S}$ are $48$-periodic, for all non-negative integers $i$, by Corollary~\ref{cor1}. It is easy to verify that
$$
\ider{48}{{A_4}^2} = A_4\quad \text{and thus}\quad \ider{48}{S}=S.
$$
Therefore, the orbit $\orb{S}$ is $(48,48)$-periodic. Finally, from Proposition~\ref{prop4}, we know that the triangles $\ST{S[48\lambda]}$ are balanced for all non-negative integers $\lambda$ if and only if it is the case only for two distinct positive values $\lambda_1$ and $\lambda_2$. As showed in Figures~\ref{fig05} and \ref{fig06} when $\pi_4(A)=\pi_4(X_1)$, i.e.,
$$
A_4 = S[48] = 001123220203300220321102203321022001102022123300,
$$
the triangles $\ST{S[48]}=\ST{A_4}$ and $\ST{S[96]}=\ST{{A_4}^2}$ are balanced in $\Zn{4}$. The six other cases, when $\pi_4(A)=\pi_4(X_i)$ for $i\in\{2,\ldots,7\}$, can be treated by a similar way. This completes the proof for $u=2$.

\begin{figure}[htbp]
\centering{
\begin{tabular}{c}
\includegraphics[width=0.974\textwidth]{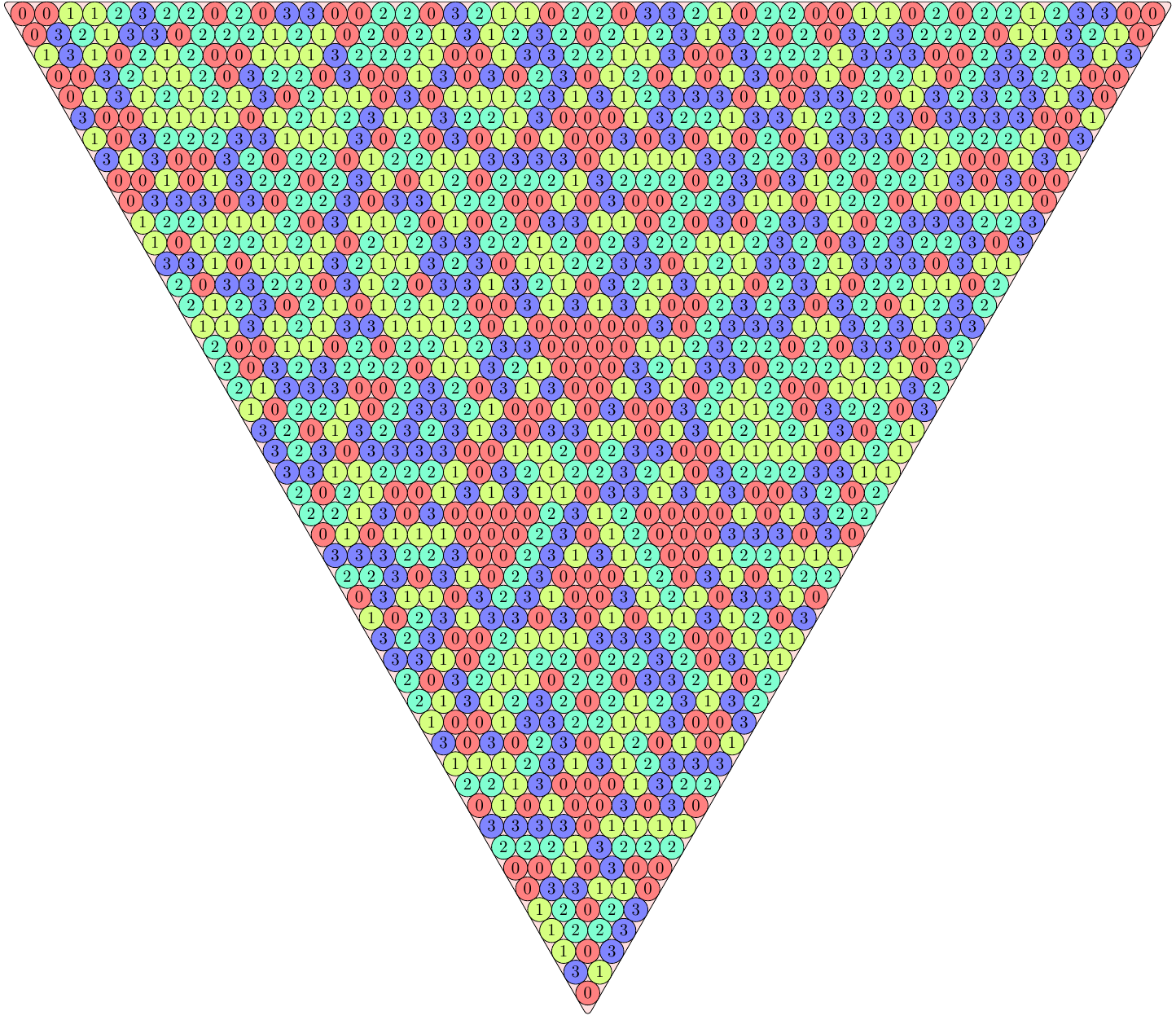} \\
\includegraphics{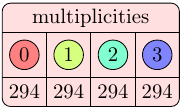} \\
\end{tabular}
}
\caption{$\ST{A_4}$ in $\Zn{4}$}\label{fig05}
\end{figure}

\begin{figure}[htbp]
\centering{
\begin{tabular}{c}
\includegraphics[width=0.974\textwidth]{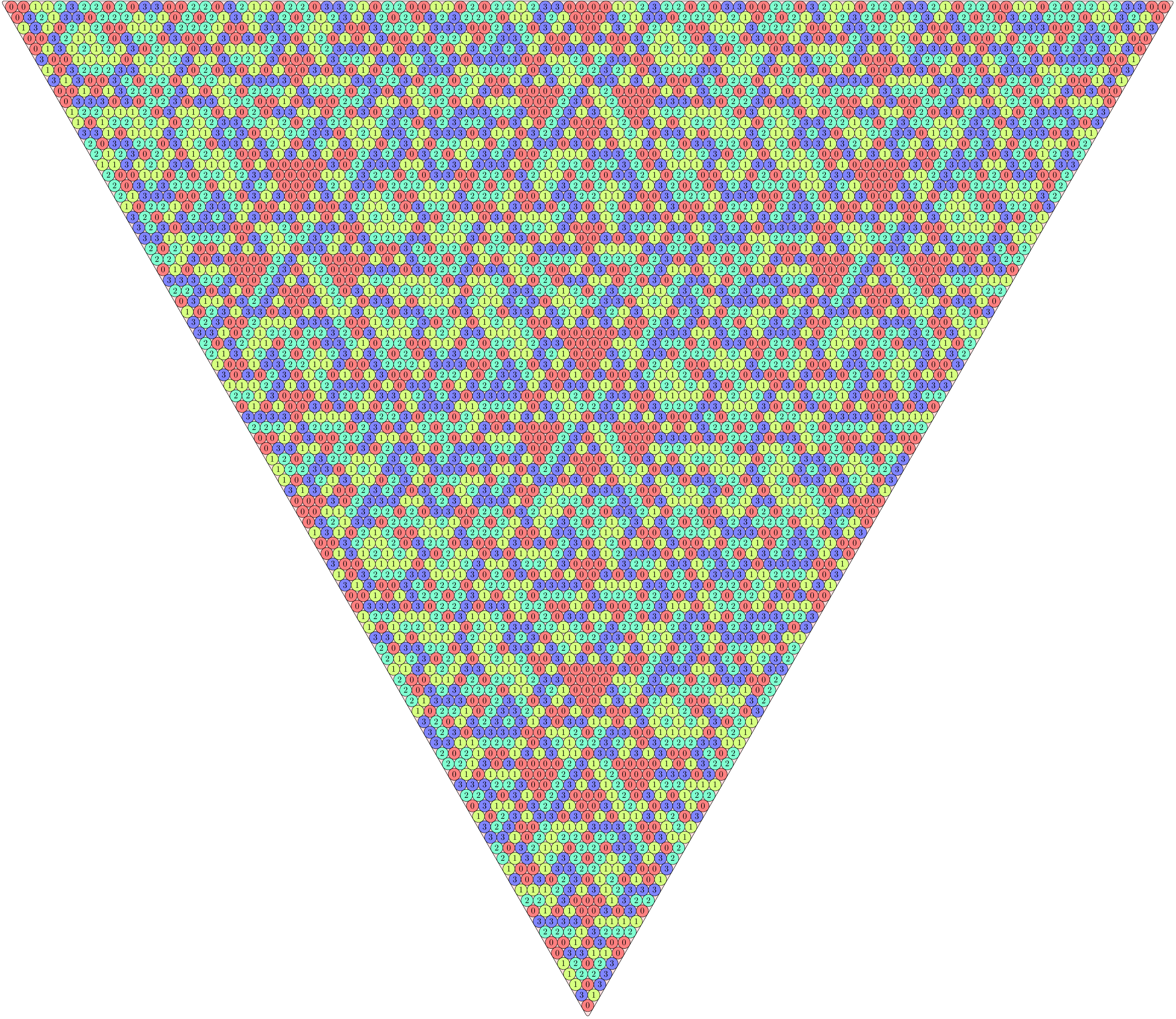} \\
\includegraphics{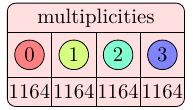} \\
\end{tabular}
}
\caption{$\ST{{A_4}^2}$ in $\Zn{4}$}\label{fig06}
\end{figure}

Suppose now that the result is true for some positive integer $u-1\ge 2$ and prove it for $u$. First, we know from Proposition~\ref{prop27} that $\orb{S}$ is $(3.2^u,3.2^u)$-interlaced doubly arithmetic with common differences
\begin{equation}\label{eq*11}
\matD{1} = 2^{u-2}\Circ{(1,2,1)^{2^u}}\quad\text{and}\quad \matD{2} = 2^{u-2}\Circ{(3,3,2)^{2^u}}.
\end{equation}
Since
$$
4\matD{1} \equiv \matr{0} \pmod{2^{u}}\quad\text{and}\quad 4\matD{2} \equiv \matr{0} \pmod{2^{u}},
$$
we obtain that $\orb{S}$ is $(12.2^u,12.2^u)$-periodic, by Proposition~\ref{prop17}. Let $\lambda$ be a positive integer. We consider the triangle
$$
\nabla = \ST{S[12.2^u\lambda]} = \left(a_{i,j}\right)_{(i,j)\in T_{12.2^u\lambda}}.
$$
From Proposition~\ref{prop28}, we know that $\nabla$ can be decomposed into $9.2^{2u}$ subtriangles $\nabla_{r,s}$ that are the arithmetic triangles
$$
\nabla_{r,s} = \left(a_{r+3.2^ui,s+3.2^uj}\right)_{(i,j)\in T_{n_{r,s}}} = \AT{a_{r,s}}{\left(\matD{1}\right)_{r,s}}{\left(\matD{2}\right)_{r,s}}{n_{r,s}},
$$
where
$$
n_{r,s} = \left\{\begin{array}{ll}
 4\lambda & \text{if}\ 0\le r+s\le 3.2^u-1,\\
 4\lambda-1 & \text{if}\ 3.2^u\le r+s\le 6.2^u-2,\\
\end{array}\right.
$$
for all $(r,s)\in\{0,\ldots,3.2^u-1\}^2$ and with
$$
\matD{1} = \left(d^{\,(1)}_{r,s}\right)_{0\le r,s\le 3.2^u-1}\quad\text{and}\quad\matD{2} = \left(d^{\,(2)}_{r,s}\right)_{0\le r,s\le 3.2^u-1}.
$$
Moreover, from \eqref{eq*11}, we deduce that
$$
\left(d^{\,(1)}_{r,s},
d^{\,(2)}_{r,s}\right) = 
\left\{\begin{array}{cl}
 2^{u-2}(1,3) & \text{if}\ r-s\equiv0\pmod{3}, \\
 2^{u-2}(1,2) & \text{if}\ r-s\equiv1\pmod{3}, \\
 2^{u-2}(2,3) & \text{if}\ r-s\equiv2\pmod{3}, \\
\end{array}\right.
$$
for all $(r,s)\in\{0,\ldots,3.2^u-1\}^2$. Let $(r,s)\in\{0,\ldots,3.2^u-1\}^2$. We distinguish different cases according to the residue class of $r-s$ modulo $3$. 

\setcounter{case}{0}
\begin{case}
If $r-s\equiv0\pmod{3}$, then
$$
d^{\,(1)}_{r,s} = 2^{u-2},\ d^{\,(2)}_{r,s} = 3.2^{u-2}\ \text{and}\ d^{\,(2)}_{r,s}-d^{\,(1)}_{r,s} = 2^{u-1}.
$$
Since
$$
\gcd(d^{\,(1)}_{r,s},2^u) = \gcd(d^{\,(2)}_{r,s},2^u) = 2^{u-2},\ \gcd(d^{\,(2)}_{r,s}-d^{\,(1)}_{r,s},2^u) = 2^{u-1},\ \frac{2^u}{\gcd(d^{\,(1)}_{r,s},2^u)}=4
$$
and $n_{r,s}\in\left\{4\lambda-1,4\lambda\right\}$, we deduce from Theorem~\ref{thm5} that
$$
\mf{\nabla_{r,s}}(x+2^{u-1}) = \mf{\nabla_{r,s}}(x),
$$
for all $x\in\Zn{2^u}$, in this case.
\end{case}

\begin{case}
If $r-s\equiv1\pmod{3}$, then
$$
d^{\,(1)}_{r,s} = 2^{u-2},\ d^{\,(2)}_{r,s} = 2^{u-1}\ \text{and}\ d^{\,(2)}_{r,s}-d^{\,(1)}_{r,s} = 2^{u-2}.
$$
Since
$$
\gcd(d^{\,(1)}_{r,s},2^u) = \gcd(d^{\,(2)}_{r,s}-d^{\,(1)}_{r,s},2^u) = 2^{u-2},\ \gcd(d^{\,(2)}_{r,s},2^u) = 2^{u-1},\ \frac{2^u}{\gcd(d^{\,(1)}_{r,s},2^u)}=4
$$
and $n_{r,s}\in\left\{4\lambda-1,4\lambda\right\}$, we deduce from Theorem~\ref{thm5} that
$$
\mf{\nabla_{r,s}}(x+2^{u-1}) = \mf{\nabla_{r,s}}(x),
$$
for all $x\in\Zn{2^u}$, in this case.
\end{case}

\begin{case}
If $r-s\equiv2\pmod{3}$, then
$$
d^{\,(1)}_{r,s} = 2^{u-1},\ d^{\,(2)}_{r,s} = 3.2^{u-2}\ \text{and}\ d^{\,(2)}_{r,s}-d^{\,(1)}_{r,s} = 2^{u-2}.
$$
Since
$$
\gcd(d^{\,(2)}_{r,s},2^u) = \gcd(d^{\,(2)}_{r,s}-d^{\,(1)}_{r,s},2^u) = 2^{u-2},\ \gcd(d^{\,(1)}_{r,s},2^u) = 2^{u-1},\ \frac{2^u}{\gcd(d^{\,(2)}_{r,s},2^u)}=4
$$
and $n_{r,s}\in\left\{4\lambda-1,4\lambda\right\}$, we deduce from Theorem~\ref{thm5} that
$$
\mf{\nabla_{r,s}}(x+2^{u-1}) = \mf{\nabla_{r,s}}(x),
$$
for all $x\in\Zn{2^u}$, in this case.
\end{case}

Therefore, in all cases, we have
$$
\mf{\nabla_{r,s}}(x+2^{u-1}) = \mf{\nabla_{r,s}}(x),
$$
for all $x\in\Zn{2^u}$, in any subtriangle $\nabla_{r,s}$, for all $(r,s)\in\{0,\ldots,3.2^u-1\}^2$. Since
$$
\nabla = \bigsqcup_{(r,s)\in\{0,\ldots,3.2^u-1\}^2} \nabla_{r,s},
$$
we deduce that
$$
\mf{\nabla}(x+2^{u-1}) = \mf{\nabla}(x),
$$
for all $x\in\Zn{2^u}$.

Finally, since $\pi_{2^{u-1}}(\nabla)$ is balanced in $\Zn{2^{u-1}}$, by induction hypothesis, and since $\mf{\nabla}(x+2^{u-1}) = \mf{\nabla}(x)$, for all $x\in\Zn{2^u}$, by the previous result, we obtain from Theorem~\ref{thm6} that $\nabla$ is balanced in $\Zn{2^u}$.

This completes the proof of Theorem~\ref{mainthm} and Theorem~\ref{thm*2}, the main result of this paper, when $m$ is a power of two. \qed

\

We shall conclude this section by exhibiting a few simple elements of the set $\setE$.
We search in the set $\setE$ for the $24$-tuples $A=(a_1,\ldots,a_{24})$ whose elements $a_i$ are all, in absolute value, smaller than a certain $\alpha\in\N$, i.e., the subset
$$
\setE_\alpha = \left\{ A=(a_1,\ldots,a_{24})\in\setE\ \middle|\ |a_i|\le\alpha,\ \forall i\in\{1,\ldots,24\} \right\}.
$$
By exhaustive search for $\alpha\le 4$, we obtain that $\setE_\alpha=\emptyset$ for $\alpha\le 3$ and $|\setE_4|=330$. For instance, $A= X_1-4Y_5-4Y_8\in\setE_4$. In details, we have
$$
A = (0, 0, 1, 1, -2, 3, 2, -2, 0, 2, 0, -1, 3, -4, 0, 2, -2, 0, -1, -2, 1, 1, -4, 2)
$$
with
$$
A\matX{24} = (2,-4,2)^8.
$$
For all positive integers $u$, we know that the sequence $S_{2^u}=\IAP{\pi_{2^u}(A)}{\pi_{2^u}(A)\matX{24}}$ is $(12.2^u)$-periodic, i.e.,
$$
S_{2^u}=\IAP{\pi_{2^u}(A)}{\pi_{2^u}(A)\matX{24}} = {A_{2^u}}^\infty,
$$
where $A_{2^u}$ is the first period of $S_{2^u}$. From Theorem~\ref{mainthm}, we know that $\orb{S_{2^u}}$ is $(12.2^u,12.2^u)$-periodic and that the triangles $\ST{{A_{2^u}}^\lambda}$, of size $12.2^u\lambda$, are balanced in $\Zn{2^u}$, for all non-negative integers $\lambda$. For $u\in\{1,2,3\}$, we obtain that
\begin{equation*}
\resizebox{\textwidth}{!}{$
\begin{array}{l}
A_2 = 001101000001100000101100, \\
A_4 = 001123220203300220321102203321022001102022123300, \\
A_8 = 001163260207340260761142243325422441502422123304405567664603744664365546647721026045106026527700. \\
\end{array}
$}
\end{equation*}
The triangles $\ST{A_2}$ and $\ST{{A_2}^2}$ in $\Zn{2}$ are depicted in Figures~\ref{fig03} and \ref{fig04}, respectively, the triangles $\ST{A_4}$ and $\ST{{A_4}^2}$ in $\Zn{4}$ in Figures~\ref{fig05} and \ref{fig06}, respectively, and the triangles $\ST{A_8}$ and $\ST{{A_8}^2}$ in $\Zn{8}$ in Figures~\ref{fig07} and \ref{fig08}, respectively. 

\begin{figure}[htbp]
\centering{
\begin{tabular}{c}
\includegraphics[width=0.974\textwidth]{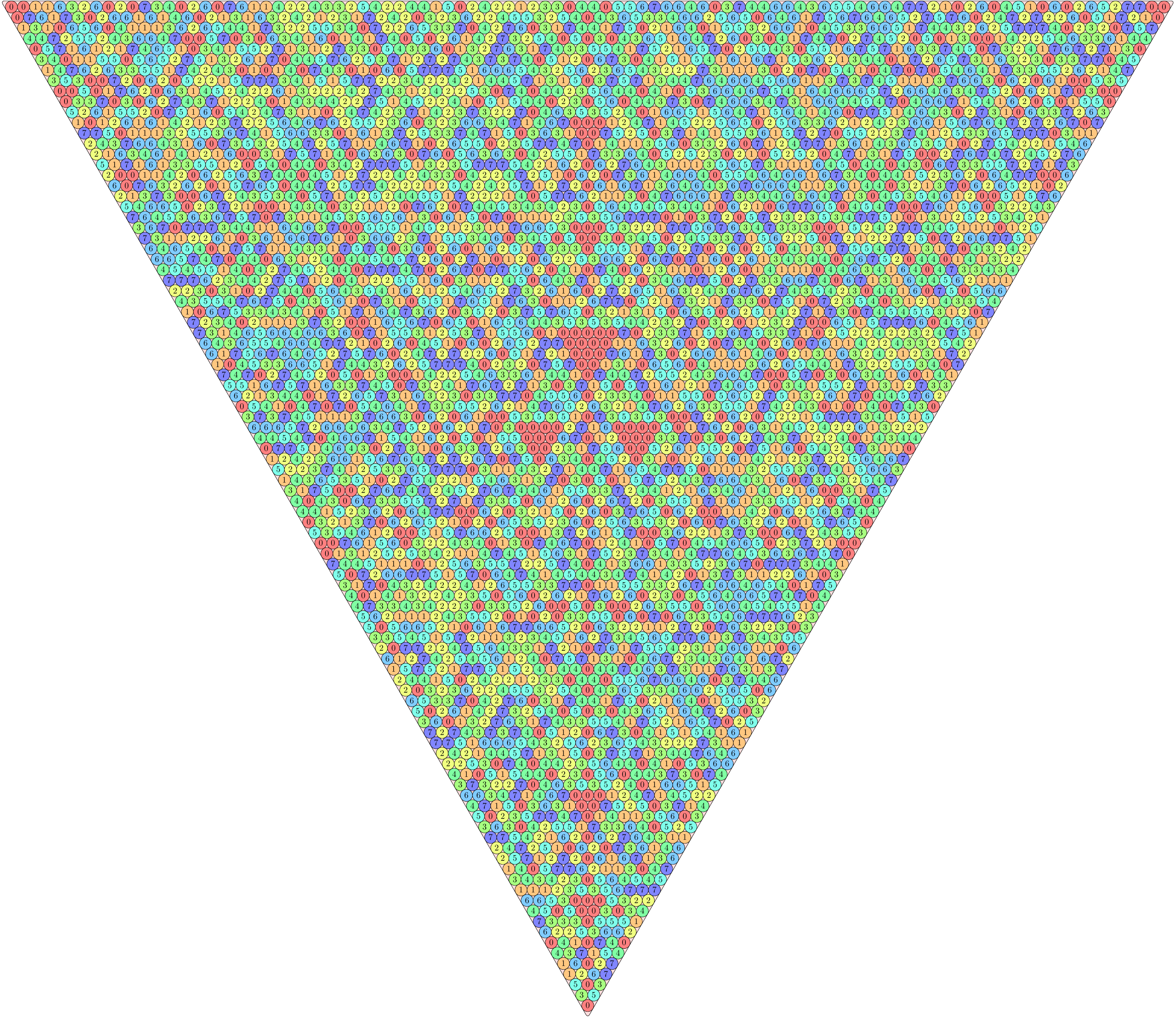} \\
\includegraphics{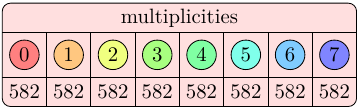} \\
\end{tabular}
}
\caption{$\ST{A_8}$ in $\Zn{8}$}\label{fig07}
\end{figure}

\begin{figure}[htbp]
\centering{
\begin{tabular}{c}
\includegraphics[width=0.974\textwidth]{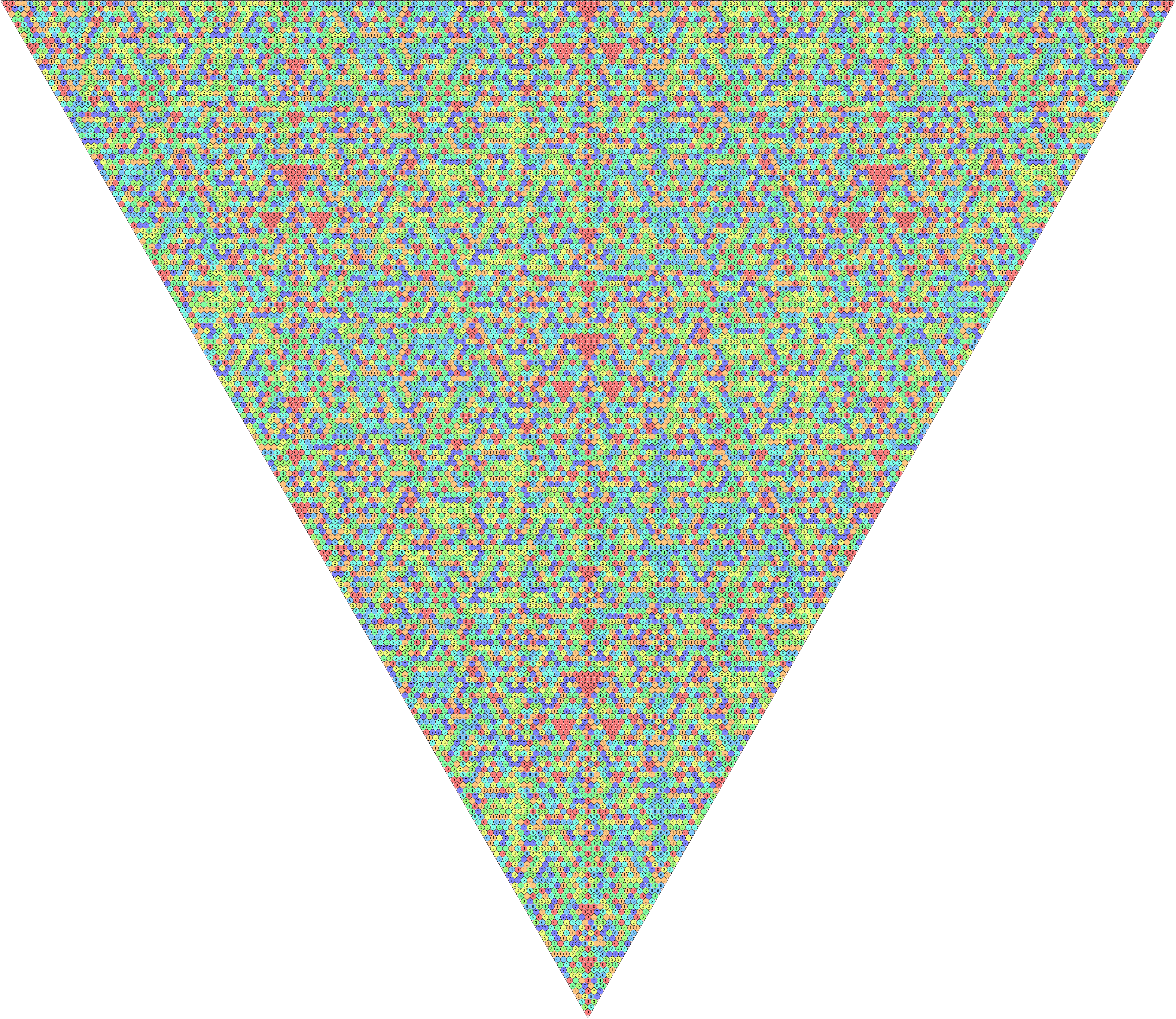} \\
\includegraphics{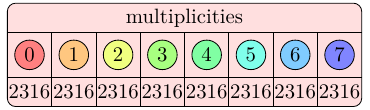} \\
\end{tabular}
}
\caption{$\ST{{A_8}^2}$ in $\Zn{8}$}\label{fig08}
\end{figure}

\section{The odd case}

In this section, we determine a set $\setO$ of 24-tuples of integers $A$ such that, for every odd number $m$, the orbit of the sequence
$$
S=\IAP{\pi_m(A)}{\pi_m(A)\matX{24}}
$$
is $(3m,3m)$-periodic in $\Zn{m}$ and the triangles
$$
\ST{S[3\lambda m]}
$$
are balanced in $\Zn{m}$, for all non-negative integers $\lambda$.

Recall that, for every positive integer $m$, the set $\setP{m}$ of $24$-tuples of integers $A$ such that the orbit of the sequence $\IAP{\pi_m(A)}{\pi_m(A)\matr{X_{24}}}$ is $(24m,24m)$-periodic in $\Zn{m}$ is
$$
\setP{m} = \left\{ A\in\Z^{24}\ \middle|\ \pi_m(A)\in\Lker_{m}\M{24}{24m}\right\},
$$
where $\M{24}{24m} = \W{24}{24m}+\matX{24}\T{24}{24m} = \C{24}{24m}-\matI{24}+\matX{24}\T{24}{24m}$.

We begin by showing that
$$
\bigcap_{m\in\N^*}\setP{m} = \bigcap_{\stackrel{m\in\N^*}{m\ \text{odd}}}\setP{m} = \left\langle A_1 , A_2 \right\rangle,
$$
where $A_1$ and $A_2$ are the $24$-tuples of integers
\begin{equation*}
\resizebox{\textwidth}{!}{$
\begin{array}{l}
A_1 = (1,0,-1)^8, \\[2ex]
A_2 = (0,1,-1,-1,3,-2,-2,5,-3,-3,7,-4,-4,9,-5,-5,11,-6,-6,13,-7,-7,15,-8). \\
\end{array}
$}
\end{equation*}
First, for all positive integers $m$, the inclusion $\left\langle A_1 , A_2 \right\rangle \subset \setP{m}$ holds.

\begin{prop}\label{prop30}
For all positive integers $m$, we have
$$
\left\langle A_1 , A_2 \right\rangle \subset \setP{m}.
$$ 
\end{prop}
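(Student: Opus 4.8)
The plan is to exploit that $\setP{m}$ is a $\Z$-submodule of $\Z^{24}$. Indeed, $\setP{m}=\pi_m^{-1}\bigl(\Lker_m\M{24}{24m}\bigr)$ is the preimage of a $\Zn{m}$-submodule under the group homomorphism $\pi_m$, hence closed under addition and integer scalar multiplication, so it suffices to prove separately that $A_1\in\setP{m}$ and $A_2\in\setP{m}$ for every positive integer $m$. In each case I will identify $\IAP{\pi_m(A_i)}{\pi_m(A_i)\matX{24}}$ with the projection modulo $m$ of an explicit integer $3$-IAP whose orbit can be computed by hand, rather than evaluating the $24\times24$ matrix $\M{24}{24m}$ modulo $m$ directly.

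For $A_1=(1,0,-1)^8$, note that $A_1$ is antisymmetric, so $A_1\matX{24}=0$ and the sequence $\IAP{\pi_m(A_1)}{\pi_m(A_1)\matX{24}}$ is the $3$-periodic sequence $S=\pi_m(W_1)$ with $W_1=(1,0,-1)^\infty$. Applying the local rule~\eqref{eqLR} (equivalently Proposition~\ref{prop1}) three times over $\Z$ gives $\der{W_1}=(-1,1,0)^\infty$, $\ider{2}{W_1}=(0,-1,1)^\infty$ and $\ider{3}{W_1}=W_1$; since $W_1$ is $3$-periodic, Corollary~\ref{cor1} shows each row of $\orb{W_1}$ is $3$-periodic, so $\orb{W_1}$ is $(3,3)$-periodic. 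Projecting modulo $m$, the orbit of $S=\pi_m(W_1)$ is $(3,3)$-periodic, hence a fortiori $(24m,24m)$-periodic, so $A_1\in\setP{m}$.

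For $A_2$, a direct index computation shows $A_2\matX{24}=(-8,16,-8)^8$ and, more precisely, that over $\Z$ one has $\IAP{A_2}{A_2\matX{24}}=U'$, where $U'=\IAP{(0,1,-1)}{(-1,2,-1)}$ is the integer $3$-IAP; consequently $\IAP{\pi_m(A_2)}{\pi_m(A_2)\matX{24}}=\pi_m(U')$. Applying Proposition~\ref{prop1} three times one computes $\ider{3}{U'}=\IAP{(1,2,-3)}{(-1,2,-1)}$, that is, $\ider{3}{U'}=U'+W_2$ with $W_2=(1,1,-2)^\infty$, and a computation analogous to the one for $W_1$ gives $\ider{3}{W_2}=W_2$. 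By induction $\ider{3j}{U'}=U'+jW_2$ for all $j\in\N$, so in $\Zn{m}$ we obtain $\ider{3m}{\pi_m(U')}=\pi_m(U')+m\,\pi_m(W_2)=\pi_m(U')$. Moreover $\pi_m(U')$ is a $3$-IAP of $\Zn{m}$, hence $3m$-periodic (Proposition~\ref{prop17} with $\lambda=m$), so by Corollary~\ref{cor1} every row of $\orb{\pi_m(U')}$ is $3m$-periodic; together with the vertical relation this makes $\orb{\pi_m(U')}$ $(3m,3m)$-periodic, a fortiori $(24m,24m)$-periodic, so $A_2\in\setP{m}$.

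The computations involved are short, so there is no serious obstacle; the one point that needs care is that $\orb{U'}$ is \emph{not} periodic over $\Z$ (its entries grow linearly in the row index), so the periodicity of $\orb{\pi_m(U')}$ genuinely relies on the ``$+\,jW_2$'' analysis together with the fact that the constant correction $W_2$ has a $(3,3)$-periodic orbit, and cannot be obtained by reducing a $\Z$-periodic orbit modulo $m$ as for $A_1$. A secondary bookkeeping issue is to keep straight the descriptions of $U'$ as a $3$-IAP and as a $24$-IAP, which is exactly what makes $A_2\matX{24}$ the correct common difference so that $\IAP{\pi_m(A_2)}{\pi_m(A_2)\matX{24}}=\pi_m(U')$; this is routine but worth spelling out.
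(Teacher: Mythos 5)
Your proof is correct, and for $A_2$ it takes a genuinely different (more hands-on) route than the paper. Both arguments start the same way: reduce the $24$-IAP $\IAP{A_i}{A_i\matX{24}}$ to a $3$-IAP, namely $\IAP{(1,0,-1)}{(0,0,0)}$ and $\IAP{(0,1,-1)}{(-1,2,-1)}$ (your bookkeeping identifying $A_2\matX{24}=(-8,16,-8)^8$ with $8$ times the $3$-IAP common difference is exactly the verification the paper also needs). The paper then invokes Theorem~\ref{thm4}: it computes $\C{3}{3}$ and $\T{3}{3}$, checks over $\Z$ that $\left(\begin{array}{c|c}A_i'&A_i'\matX{3}\end{array}\right)\IA{3}{3}=\matr{0}$, and concludes that the orbit is $(3,3)$-interlaced doubly arithmetic, hence $(3m,3m)$-periodic modulo $m$. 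You instead use the additivity of $\der{}$ to show $\ider{3j}{U'}=U'+jW_2$ with $W_2=(1,1,-2)^\infty$ and $\ider{3}{W_2}=W_2$, then kill the drift modulo $m$ after $m$ blocks of three rows. The two arguments are two descriptions of the same structure: your drift $W_2$ is precisely the vertical common difference $\matD{2}$ of the paper's IDAP decomposition (compare $(a_2,a_2,-2a_2)$ in the proof of Theorem~\ref{thm8}). What the paper's version buys is that the $(3,3)$-IDAP structure and its common differences are recorded explicitly, which is reused verbatim in Theorem~\ref{thm8}; what yours buys is independence from Theorem~\ref{thm4} and the $6\times6$ matrix $\IA{3}{3}$, at the cost of re-deriving the drift by hand. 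Your preliminary reduction (that $\setP{m}=\pi_m^{-1}(\Lker_m\M{24}{24m})$ is a $\Z$-submodule, so it suffices to treat the two generators) is also correct and is implicit in the paper's proof.
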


\begin{proof}
First, it is easy to see that $\IAP{A_i}{A_i\matr{X_{24}}}$ is a $3$-interlaced arithmetic progression, for all $i\in\{1,2\}$. Indeed, we have
$$
\IAP{A_1}{A_1\matr{X_{24}}} = \IAP{(1,0,-1)}{(0,0,0)}
$$
and
$$
\IAP{A_2}{A_2\matr{X_{24}}} = \IAP{(0,1,-1)}{(-1,2,-1)}.
$$
Let $A_1'=(1,0,-1)$ and $A_2'=(0,1,-1)$ in this proof. Moreover, since
$$
\C{3}{3} =
\begin{pmatrix}
2 & 3 & 3 \\
3 & 2 & 3 \\
3 & 3 & 2 \\
\end{pmatrix}
\quad\text{and}\quad
\T{3}{3} =
\begin{pmatrix}
1 & 3 & 3 \\
0 & 1 & 3 \\
0 & 0 & 1 \\
\end{pmatrix},
$$
we obtain that
$$
\IA{3}{3} = 
\begin{pmatrix}
\left(\C{3}{3}+\matr{I_{3}}\right)^2 & \matr{0} \\
\T{3}{3}\left(\C{3}{3}+\matr{I_{3}}\right) &  \C{3}{3}+\matr{I_{3}}
\end{pmatrix} = 
\begin{pmatrix}
27 & 27 & 27 & 0 & 0 & 0 \\
27 & 27 & 27 & 0 & 0 & 0 \\
27 & 27 & 27 & 0 & 0 & 0 \\
21 & 21 & 21 & 3 & 3 & 3 \\
12 & 12 & 12 & 3 & 3 & 3 \\
3 & 3 & 3 & 3 & 3 & 3 \\
\end{pmatrix}
$$
and thus
$$
\begin{pmatrix}
A_i' & A_i'\matr{X_{3}}
\end{pmatrix}
\IA{3}{3} = \matr{0},
$$
for all $i\in\{1,2\}$. It follows from Theorem~\ref{thm4} that the orbit of $\IAP{A_i'}{A_i'\matr{X_{3}}}$ is $(3,3)$-interlaced doubly arithmetic, for all $i\in\{1,2\}$. Therefore, for all $i\in\{1,2\}$, the orbit of the sequence $\IAP{\pi_m(A_i')}{\pi_m(A_i')\matr{X_{3}}}$ is $(3m,3m)$-periodic and thus $(24m,24m)$-periodic. This leads to
$$
A_i \M{24}{24m} \equiv \matr{0}\pmod{m}\quad\text{and}\quad A_i\in\setP{m},
$$
for all $i\in\{1,2\}$. This completes the proof.
\end{proof}

Next, with the aid of a computer, we determine $\Lker_p\M{24}{24p}$ for the first prime values of $p$.

\begin{comp}\label{comp1}
For all prime number $p<3000$, we have
$$
\Lker_p\M{24}{24p} = \left\langle \pi_p(A_1) , \pi_p(A_2) \right\rangle,
$$
except for the eight values $p\in\{2, 3, 5, 7, 13, 17, 73, 241\}$, where $\dim\Lker_p\M{24}{24p}>2$ is given in Table~\ref{tab8}.
\end{comp}

\begin{table}[htbp]
\begin{center}
\renewcommand{\arraystretch}{1.5}
\begin{tabular}{|c||c|c|c|c|c|c|c|c|c|c|c|c|c|}
\hline
 $p$ & $2$ & $3$ & $5$ & $7$ & $13$ & $17$ & $73$ & $241$ \\
\hline
$\dim\Lker_p\M{24}{24p}$ & $16$ & $21$ & $23$ & $11$ & $11$ & $5$ & $8$ & $5$ \\
\hline
\end{tabular}
\renewcommand{\arraystretch}{1}
\caption{The values of $\dim\Lker_p\M{24}{24p}$ for $p\in\{2, 3, 5, 7, 13, 17, 73, 241\}$}\label{tab8}
\end{center}
\end{table}

Now, we consider $p=11$, the least prime number such that
$$
\Lker_p\M{24}{24p} = \left\langle \pi_p(A_1) , \pi_p(A_2) \right\rangle
$$
and we show the following

\begin{prop}\label{prop29}
For $p=11$, we have
$$
\bigcap_{u\in\N}\setP{{11}^u} = \left\langle A_1 , A_2 \right\rangle.
$$
\end{prop}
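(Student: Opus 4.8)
The plan is to establish the two inclusions separately. The inclusion $\left\langle A_1,A_2\right\rangle\subseteq\bigcap_{u\in\N}\setP{11^u}$ is immediate from Proposition~\ref{prop30}. For the reverse inclusion I would exploit that $p=11$ was chosen as the smallest prime at which $\Lker_p\M{24}{24p}$ has its generic dimension $2$, and run a Hensel-type descent whose base case is exactly Computation~\ref{comp1} at $p=11$.

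First, two preliminary observations reduce everything to a rank count. (i) The submodule $\left\langle A_1,A_2\right\rangle$ is a saturated (unimodular) direct summand of $\Z^{24}$: the $2\times2$ submatrix of the matrix with rows $A_1,A_2$ formed by its first two columns is the identity matrix, so any $B\in\Z^{24}$ with $11^{t}B\in\left\langle A_1,A_2\right\rangle$ for some $t$ already lies in $\left\langle A_1,A_2\right\rangle$. (ii) The module $K:=\bigcap_{u\in\N}\setP{11^u}$ is saturated away from $11$: if $nB\in K$ with $\gcd(n,11)=1$, then $B\in K$, since $n$ is invertible modulo every $11^u$ and each $\setP{11^u}$ is a $\Zn{11^u}$-module. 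Combining these with the inclusion already proved, it suffices to show $\rank_\Z K\le2$: then $K\otimes\Q=\left\langle A_1,A_2\right\rangle\otimes\Q$, so every $A\in K$ satisfies $NA\in\left\langle A_1,A_2\right\rangle$ for some nonzero $N=11^{t}n$ with $\gcd(n,11)=1$, and (i) together with (ii) give $A\in\left\langle A_1,A_2\right\rangle$.

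To bound $\rank_\Z K$ I would iterate the following descent. Given $A\in K$, reducing modulo $11$ and using Computation~\ref{comp1} gives $\pi_{11}(A)\in\Lker_{11}\M{24}{264}=\left\langle\pi_{11}(A_1),\pi_{11}(A_2)\right\rangle$, so $A=c_1A_1+c_2A_2+11B$ with $c_1,c_2\in\Z$ and $B\in\Z^{24}$; then $11B=A-c_1A_1-c_2A_2\in K$, and the crucial claim is that this forces $B\in K$. Granting the claim, iterating yields $A\in\bigcap_{t\ge0}\bigl(\left\langle A_1,A_2\right\rangle+11^{t}\Z^{24}\bigr)=\left\langle A_1,A_2\right\rangle$, and in particular $\rank_\Z K\le 2$. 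The claim says: if $11B\,\M{24}{24\cdot11^u}\equiv\matr{0}\pmod{11^u}$ for all $u$, then $B\,\M{24}{24\cdot11^u}\equiv\matr{0}\pmod{11^u}$ for all $u$. In the $2$-adic part of the paper the analogous step was trivialised by the stabilisation $9\M{24}{24\cdot2^u}\equiv\Nd{0}\pmod{2^u}$ (Lemma~\ref{thm3}), resting on the explicit evaluations of Theorems~\ref{prop19} and~\ref{prop18}. Here there is no single stable matrix: on the commutative ring $\Z[X]/(X^{24}-1)$ the circulant $\C{24}{24\cdot11^u}$ is obtained from $\C{24}{24}$ by the substitution $X\mapsto X^{\,11^{u}\bmod 24}$, and since $11^2\equiv1\pmod{24}$ this alternates between two values with the parity of $u$ (for $u$ even one has $\C{24}{24\cdot11^u}\equiv\C{24}{24}\pmod{11}$). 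I would therefore treat the two parities of $u$ separately: for $u$ odd the relevant mod-$11$ left kernel is the rank-$2$ module of Computation~\ref{comp1}, and for $u$ even one checks that the left kernel of $\bigl(\M{24}{24\cdot11^{2}}\bmod 11\bigr)$ is again $\left\langle\pi_{11}(A_1),\pi_{11}(A_2)\right\rangle$ — the inclusion $\supseteq$ holds because $A_1,A_2\in\setP{11^u}$ for every $u$ by Proposition~\ref{prop30}, and the dimension is bounded by $2$ either via the twist automorphism $X\mapsto X^{11}$ (which preserves ranks of left kernels and carries one parity to the other) or by a direct evaluation of $\M{24}{24\cdot11^{2}}$ modulo $11^{2}$. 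With the mod-$11$ left kernel pinned to a rank-$2$ module for every $u$, Nakayama's lemma forces $\Lker_{11^u}\M{24}{24\cdot11^u}$ to be generated over $\Zn{11^u}$ by the reductions of $A_1$ and $A_2$, and the descent claim follows by the usual bookkeeping with powers of $11$.

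The main obstacle is precisely this descent step. Because $11^2\equiv1\pmod{24}$ there is no single stabilised matrix as in the powers-of-$2$ case, so one must juggle the two $X\mapsto X^{11}$-twisted incarnations of $\C{24}{24\cdot11^u}$ and $\T{24}{24\cdot11^u}$ modulo powers of $11$ and verify, at $p=11$, that neither incarnation enlarges the left kernel beyond $\left\langle A_1,A_2\right\rangle$. Everything else — the two saturation arguments and the reduction to a rank count — is routine.
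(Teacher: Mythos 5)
Your first inclusion, the two saturation observations, and the reduction to pinning down the left kernels $\Lker_{11^u}\M{24}{24\cdot 11^u}$ are all sound, and your overall architecture (a Hensel-type descent anchored at Computational Result~\ref{comp1}) is the same as the paper's. The paper's version is more direct: for $A=(a_1,\ldots,a_{24})\in\bigcap_u\setP{11^u}$ it sets $T=A-a_1A_1-a_2A_2$, notes that the first two coordinates of $T$ vanish (your observation that the first two columns of the rows $A_1,A_2$ form the identity), and shows by induction on $u$ that $11^u$ divides $T$, whence $T=0$; the induction step needs exactly one input, namely that $\M{24}{24\cdot 11^u}\equiv\M{24}{24\cdot 11}\pmod{11}$ for all $u\ge1$ (Lemma~\ref{lem*10}).

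The genuine gap in your proposal is the mechanism you offer for controlling the mod-$11$ left kernel for \emph{all} $u$. The Frobenius substitution $X\mapsto X^{11}$ on $\Z[X]/(X^{24}-1)$ does govern the circulant part, giving $\C{24}{24\cdot 11^u}\equiv(1+X^{11^u\bmod 24})^{24}\pmod{11}$, but $\T{24}{24\cdot 11^u}$ is not a circulant and does not transform by any such substitution: it obeys the derivation-type rule $\T{k}{i+j}=\T{k}{i}\C{k}{j}+\C{k}{i}\T{k}{j}$ of Proposition~\ref{prop*2}, not a multiplicative one. Consequently your assertion that the twist automorphism ``preserves ranks of left kernels and carries one parity to the other'' is unsupported for the full matrix $\M{24}{24\cdot 11^u}=\C{24}{24\cdot 11^u}-\matI{24}+\matX{24}\T{24}{24\cdot 11^u}$, which does not lie in the circulant algebra. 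The paper closes this by direct computation at $u\in\{1,2\}$ showing $\C{24}{24\cdot 11^2}\equiv\C{24}{24\cdot 11}$ and $\T{24}{24\cdot 11^2}\equiv\T{24}{24\cdot 11}\pmod{11}$ --- so the two ``parity incarnations'' you worry about in fact coincide --- and then propagates this genuine stabilisation to all $u$ by induction via Corollary~\ref{cor*2}. Separately, note that your ``crucial claim'' ($11B\in K\Rightarrow B\in K$) compares $B\,\M{24}{24\cdot 11^{u+1}}\bmod 11^{u}$ with $B\,\M{24}{24\cdot 11^{u}}\bmod 11^{u}$, i.e.\ different matrices at the same modulus, so it is not mere bookkeeping; it is, however, rendered unnecessary once your Nakayama step yields $\setP{11^u}=\left\langle A_1,A_2\right\rangle+11^u\Z^{24}$ for every $u$, from which the conclusion follows by your saturation argument alone.
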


The proof is based on the following lemmas.

\begin{lem}\label{lem14}
For all positive integers $u$, we have
$$
\C{24}{24.11^u} \equiv \C{24}{24.11} \pmod{11}.
$$
\end{lem}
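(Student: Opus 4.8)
The natural move is to pass from circulant matrices to the polynomial model. By Corollary~\ref{cor*2} we have $\C{24}{i}={\C{24}{1}}^{i}$ for every $i$, and $\C{24}{1}=\Circ{1,0,\ldots,0,1}$ is the circulant attached to the polynomial $1+X^{23}$. Sending a $24\times24$ circulant over $\Zn{m}$ to its defining polynomial is a ring isomorphism onto $R_m:=\Zn{m}[X]/(X^{24}-1)$, so under it $\C{24}{i}$ corresponds to $(1+X^{23})^{i}$ in $R_m$. Hence Lemma~\ref{lem14} is equivalent to the assertion that $(1+X^{23})^{24\cdot11^{u}}=(1+X^{23})^{24\cdot11}$ in $R_{11}=\Zn{11}[X]/(X^{24}-1)$ for all $u\geqslant1$; and since by Corollary~\ref{cor*2} the left-hand matrix is ${\C{24}{24}}^{11^{u}}$, it even suffices to show that the sequence $u\mapsto(1+X^{23})^{24\cdot11^{u}}$ is constant in $R_{11}$.

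\textbf{Main step.} Two elementary arithmetic facts drive this. First, $\gcd(11,24)=1$, so $R_{11}$ is a separable $\Zn{11}$-algebra on which the Frobenius $\psi\colon f(X)\mapsto f(X)^{11}=f(X^{11})$ is a ring automorphism. Second, $11^{2}=121\equiv1\pmod{24}$, so $\psi^{2}(f(X))=f(X^{121})=f(X)$ in $R_{11}$, i.e. $\psi$ is an involution. Writing $M:=\C{24}{24}\bmod 11\in R_{11}$, we get $\C{24}{24\cdot11^{u}}\equiv M^{11^{u}}=\psi^{u}(M)\pmod{11}$, which is $M$ for $u$ even and $\psi(M)\equiv\C{24}{24\cdot11}\pmod{11}$ for $u$ odd. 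So the whole lemma reduces to the single identity $\psi(M)=M$, i.e. $\C{24}{24}\equiv\C{24}{24\cdot11}\pmod{11}$. In $R_{11}$ this reads $(1+X^{23})^{24}=(1+X^{23})^{264}$, and since $(1+X^{23})^{264}=\bigl((1+X^{23})^{11}\bigr)^{24}=(1+X^{253})^{24}=(1+X^{13})^{24}$ (using $253\equiv13\pmod{24}$), it is enough to check $(1+X^{23})^{24}=(1+X^{13})^{24}$ in $R_{11}$. This holds already at the level of twelfth powers: in characteristic $11$,
\[
(1+X^{23})^{12}=(1+X^{23})(1+X^{23})^{11}=(1+X^{23})(1+X^{253})=(1+X^{23})(1+X^{13}),
\]
and symmetrically $(1+X^{13})^{12}=(1+X^{13})(1+X^{13})^{11}=(1+X^{13})(1+X^{143})=(1+X^{13})(1+X^{23})$; these agree, and squaring gives the claim.

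\textbf{Alternative route and expected obstacle.} Equivalently, one may just expand $(1+X^{23})^{24}=\sum_{k=0}^{24}\binom{24}{k}X^{23k}$ in $R_{11}$: by Lucas' theorem (with $24=(2,2)_{11}$) only $k\in\{0,1,2,11,12,13,22,23,24\}$ survive, and reducing the exponents $23k$ modulo $24$ one finds $(1+X^{23})^{24}\equiv 2+2X+X^{2}+2X^{11}+4X^{12}+2X^{13}+X^{22}+2X^{23}\pmod{11}$, a polynomial visibly fixed by $X\mapsto X^{11}$ modulo $X^{24}-1$ — which is again $\psi(M)=M$. Nothing here is deep: the entire content is the dictionary $\C{24}{i}\leftrightarrow(1+X^{23})^{i}$ together with $11^{2}\equiv1\pmod{24}$, and the only point demanding care is the repeated reduction of exponents modulo $24$. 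This lemma then plays the role that Theorem~\ref{prop19} played for the powers of $2$: it pins down $\C{24}{24\cdot11^{u}}\bmod 11$, and combined with the companion computation for $\T{24}{24\cdot11^{u}}$ it lets one control $\Lker_{11^u}\M{24}{24\cdot11^u}$ and establish Proposition~\ref{prop29}.
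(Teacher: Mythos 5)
Your proof is correct, and it replaces the computational core of the paper's argument with a hand-checkable conceptual one. The paper also exploits $\C{24}{\lambda i}=\C{24}{i}^{\lambda}$ from Corollary~\ref{cor*2}: it argues by induction on $u$, the inductive step being $\C{24}{24.11^{u+1}}=\C{24}{24.11^{u}}^{11}\equiv\C{24}{24.11}^{11}=\C{24}{24.11^{2}}\pmod{11}$, so that the whole lemma rests on the single congruence $\C{24}{24.11^{2}}\equiv\C{24}{24.11}\pmod{11}$, which the paper settles by direct computation of both circulants. In your language that base case is exactly $\psi^{2}(M)=\psi(M)$, i.e.\ $\psi(M)=M$, and you prove this pivotal identity by hand: under the dictionary $\C{24}{i}\leftrightarrow(1+X^{23})^{i}$ in $\Zn{11}[X]/(X^{24}-1)$, the Frobenius $f\mapsto f^{11}=f(X^{11})$ is an involution because $11^{2}\equiv1\pmod{24}$, and the computation $(1+X^{23})^{12}=(1+X^{23})(1+X^{13})=(1+X^{13})^{12}$ (using $23\cdot11\equiv13$ and $13\cdot11\equiv23\pmod{24}$) shows the relevant element is Frobenius-fixed. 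All the individual steps check out, including the explicit expansion in your alternative route, whose coefficients agree with the circulant the paper records. Your version buys slightly more than the lemma states, namely $\C{24}{24.11^{u}}\equiv\C{24}{24}\pmod{11}$ for all $u\ge0$, and it exposes the structural reason for the stabilization; the paper's route is shorter to write but leaves the base case as an unexplained machine computation. Both are valid.
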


\begin{proof}
By induction on $u\ge1$. By direct computation, we obtain that
\begin{equation*}
\resizebox{\textwidth}{!}{$
\C{24}{24.11^2} \equiv \C{24}{24.11} \equiv \mathbf{Circ}\left((2, 2, 1, 0, 0, 0, 0, 0, 0, 0, 0, 2, 4, 2, 0, 0, 0, 0, 0, 0, 0, 0, 1, 2)\right) \pmod{11}
$}
\end{equation*}
and the equivalence is true for $u\in\{1,2\}$. Suppose now that
\begin{equation}\label{eq*12}
\C{24}{24.11^u} \equiv \C{24}{24.11} \pmod{11},
\end{equation}
for some positive integer $u\ge1$. Then, for $u+1$, from Corollary~\ref{cor*2}, we know that
$$
\C{24}{24.11^{u+1}} = \C{24}{24.11^u}^{11}\quad\text{and}\quad \C{24}{24.11^2}=\C{24}{24.11}^2.
$$
Using \eqref{eq*12}, this leads to
$$
\C{24}{24.11^{u+1}} = \C{24}{24.11^u}^{11}
\begin{array}[t]{l}
\equiv \C{24}{24.11}^{11} \\[1.5ex]
= \C{24}{24.11^2} \equiv \C{24}{24.11} \pmod{11}.
\end{array}
$$
This completes the proof.
\end{proof}

\begin{lem}\label{lem15}
For all positive integers $u$, we have
$$
\T{24}{24.11^u} \equiv \T{24}{24.11} \pmod{11}.
$$
\end{lem}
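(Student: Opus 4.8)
The plan is to follow the proof of Lemma~\ref{lem14} almost verbatim, replacing the multiplicative identity $\C{24}{\lambda i}=\C{24}{i}^{\lambda}$ by its Toeplitz counterpart from Corollary~\ref{cor*2} ii). I would argue by induction on $u\geqslant 1$. The statement is trivial for $u=1$, and I would record separately, by a direct $24\times 24$ computation modulo $11$, the single fact
$$
\T{24}{24.11^{2}} \equiv \T{24}{24.11} \pmod{11}.
$$

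For the inductive step, assume $\T{24}{24.11^{u}}\equiv\T{24}{24.11}\pmod{11}$ for some $u\geqslant 1$. Since $24.11^{u+1}=11\cdot\bigl(24.11^{u}\bigr)$, Corollary~\ref{cor*2} ii) applied with $i=24.11^{u}$ and $\lambda=11$ gives
$$
\T{24}{24.11^{u+1}} = \sum_{l=0}^{10}\C{24}{24.11^{u}}^{l}\,\T{24}{24.11^{u}}\,\C{24}{24.11^{u}}^{10-l}.
$$
I would then reduce both kinds of factors modulo $11$: Lemma~\ref{lem14} gives $\C{24}{24.11^{u}}\equiv\C{24}{24.11}\pmod{11}$ and the induction hypothesis gives $\T{24}{24.11^{u}}\equiv\T{24}{24.11}\pmod{11}$, hence
$$
\T{24}{24.11^{u+1}} \equiv \sum_{l=0}^{10}\C{24}{24.11}^{l}\,\T{24}{24.11}\,\C{24}{24.11}^{10-l} \pmod{11}.
$$
Applying Corollary~\ref{cor*2} ii) once more, now with $i=24.11$ and $\lambda=11$, the right-hand side is exactly $\T{24}{11\cdot(24.11)}=\T{24}{24.11^{2}}$. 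Combining with the base computation then yields $\T{24}{24.11^{u+1}}\equiv\T{24}{24.11}\pmod{11}$, which closes the induction.

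The only genuinely computational ingredient, and thus the main (though still elementary) obstacle, is the verification of $\T{24}{24.11^{2}}\equiv\T{24}{24.11}\pmod{11}$; everything else is formal bookkeeping with the composition formulas of Corollary~\ref{cor*2}. It is worth noting why the argument cannot simply imitate the $11$th-power trick of Lemma~\ref{lem14}: the matrices $\T{24}{\cdot}$ satisfy no clean power rule, only the mixed sum $\sum_{l}\C{24}{\cdot}^{l}\T{24}{\cdot}\C{24}{\cdot}^{10-l}$, so one must carry this whole expression through the reduction. This is legitimate precisely because both $\C{24}{24.11^{u}}$ and $\T{24}{24.11^{u}}$ stabilise modulo $11$, by Lemma~\ref{lem14} and the induction hypothesis respectively.
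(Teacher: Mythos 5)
Your proposal is correct and follows essentially the same route as the paper: induction on $u$ with the base case $\T{24}{24.11^{2}}\equiv\T{24}{24.11}\pmod{11}$ verified by direct computation, the expansion of $\T{24}{24.11^{u+1}}$ via Corollary~\ref{cor*2}~ii), reduction of the factors using Lemma~\ref{lem14} and the induction hypothesis, and recognition of the resulting sum as $\T{24}{24.11^{2}}$. Your closing remark about why the naive power trick fails for the Toeplitz matrices is a fair observation, but the argument itself is the paper's.
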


\begin{proof}
By induction on $u\ge1$. By direct computation, we obtain that
$$
\T{24}{24.11^2} \equiv \T{24}{24.11} \equiv \mathbf{Toepl}\left(S_1,S_2\right) \pmod{11},
$$
where
$$
\begin{array}{l}
S_1 = (0, 10, 10, 0, 0, 0, 0, 0, 0, 0, 0, 0, 9, 9, 0, 0, 0, 0, 0, 0, 0, 0, 0, 10),\\
S_2 = ( 1, 1, 0, 0, 0, 0, 0, 0, 0, 0, 0, 2, 2, 0, 0, 0, 0, 0, 0, 0, 0, 0, 1), \\
\end{array}
$$
and the equivalence is true for $u\in\{1,2\}$.
Suppose now that
\begin{equation}\label{eq*13}
\T{24}{24.11^u} \equiv \T{24}{24.11} \pmod{11},
\end{equation}
for some positive integer $u\ge1$. Then, for $u+1$, from Corollary~\ref{cor*2}, we know that
$$
\T{24}{24.11^{u+1}} = \sum_{i=0}^{10}\C{24}{24.11^u}^i\T{24}{24.11^u}\C{24}{24.11^u}^{10-i}
$$
and
$$
\T{24}{24.11^{2}} = \sum_{i=0}^{10}\C{24}{24.11}^i\T{24}{24.11}\C{24}{24.11}^{10-i}.
$$
Using \eqref{eq*13} and Lemma~\ref{lem14}, we obtain that
$$
\T{24}{24.11^{u+1}}
\begin{array}[t]{l}
= \displaystyle\sum_{i=0}^{10}\C{24}{24.11^u}^{i}\T{24}{24.11^u}\C{24}{24.11^u}^{10-i} \\ \ \\
\equiv \displaystyle\sum_{i=0}^{10}\C{24}{24.11}^{i}\T{24}{24.11}\C{24}{24.11}^{10-i} \\ \ \\
= \T{24}{24.11^2} \equiv \T{24}{24.11} \pmod{11}. \\
\end{array}
$$
This completes the proof.
\end{proof}

\begin{lem}\label{lem*10}
For all positive integers $u$, we have
$$
\M{24}{24.11^u} \equiv \M{24}{24.11} \pmod{11}.
$$
\end{lem}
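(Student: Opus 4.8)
The plan is to reduce the statement directly to Lemmas~\ref{lem14} and~\ref{lem15}, which already establish the stabilization modulo $11$ of the circulant and Toeplitz building blocks. First I would recall the definition
$$
\M{24}{24.11^u} = \W{24}{24.11^u} + \matX{24}\T{24}{24.11^u},
$$
and note that since $24.11^u$ is even we have $\W{24}{24.11^u} = \C{24}{24.11^u} + (-1)^{24.11^u+1}\matI{24} = \C{24}{24.11^u} - \matI{24}$, so that
$$
\M{24}{24.11^u} = \C{24}{24.11^u} - \matI{24} + \matX{24}\T{24}{24.11^u}.
$$
The only $u$-dependent terms on the right are $\C{24}{24.11^u}$ and $\T{24}{24.11^u}$; the matrices $\matI{24}$ and $\matX{24}$ are fixed integer matrices.

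Next I would invoke Lemma~\ref{lem14} to obtain $\C{24}{24.11^u} \equiv \C{24}{24.11} \pmod{11}$ and Lemma~\ref{lem15} to obtain $\T{24}{24.11^u} \equiv \T{24}{24.11} \pmod{11}$. Reduction modulo $11$ is a ring homomorphism on the ring of $24\times 24$ integer matrices, hence it commutes with addition, subtraction, and left multiplication by the fixed matrix $\matX{24}$. Substituting the two congruences therefore gives
$$
\M{24}{24.11^u} \equiv \C{24}{24.11} - \matI{24} + \matX{24}\T{24}{24.11} = \M{24}{24.11} \pmod{11},
$$
which is exactly the assertion.

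There is essentially no obstacle in this step: all the genuine content has been front-loaded into Lemmas~\ref{lem14} and~\ref{lem15}, whose proofs rely on the semigroup identities of Corollary~\ref{cor*2} together with the numerical coincidences $\C{24}{24.11^2}\equiv\C{24}{24.11}$ and $\T{24}{24.11^2}\equiv\T{24}{24.11}$ modulo $11$. The only point worth making explicit is the homomorphism remark above, after which the conclusion is immediate.
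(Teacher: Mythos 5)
Your proof is correct and follows exactly the paper's own argument: the paper likewise writes $\M{24}{24.11^u}=\C{24}{24.11^u}-\matI{24}+\matX{24}\T{24}{24.11^u}$ and substitutes the congruences from Lemmas~\ref{lem14} and~\ref{lem15}. Your explicit remark that reduction modulo $11$ commutes with the matrix operations is a harmless elaboration of what the paper leaves implicit.
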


\begin{proof}
From Lemmas~\ref{lem14} and \ref{lem15}, we obtain that
$$
\M{24}{24.11^{u}}
\begin{array}[t]{l}
= \C{24}{24.11^{u}}-\matI{24}+\matX{24}\T{24}{24.11^{u}} \\
\equiv \C{24}{24.11}-\matI{24}+\matX{24}\T{24}{24.11} = \M{24}{24.11} \pmod{11}. \\
\end{array}
$$
This completes the proof.
\end{proof}

We are now ready to prove Proposition~\ref{prop29}.

\begin{proof}[Proof of Proposition~\ref{prop29}]
First, we know from Proposition~\ref{prop30} that
$$
\left\langle A_1,A_2 \right\rangle \subset \bigcap_{u\in\N}\setP{11^u}.
$$
Now, let $A=(a_1,\ldots,a_{24})\in\bigcap_{u\in\N}\setP{11^u}$ and consider
$$
T = A-a_1A_1-a_2A_2 \in\bigcap_{u\in\N}\setP{11^u}.
$$
If $T=(t_1,\ldots,t_{24})$, it is easy to see that $t_1=t_2=0$, by definition of $T$. We will show, by induction on $u\ge1$, that $T$ is divisible by $11^u$. For $u=1$, since $T\in\setP{11}$, we have
$$
T\M{24}{24.11} \equiv \matr{0}\pmod{11}.
$$
Moreover, by Computational Result~\ref{comp1}, we have
$$
\Lker_{11}\M{24}{24.11} = \left\langle \pi_{11}(A_1) , \pi_{11}(A_2) \right\rangle.
$$
Since $t_1=t_2=0$, we deduce that
$$
T\equiv \matr{0}\pmod{11}.
$$
Suppose now that $T$ is divisible by $11^u$, for some positive integer $u\ge1$. Let
$$
T = 11^uT_{11^u},
$$
with $T_{11^u}\in\Z^{24}$. Since $T\in\setP{11^{u+1}}$, we have
$$
11^uT_{11^u}\M{24}{24.11^{u+1}} \equiv \matr{0}\pmod{11^{u+1}}
$$
and thus
$$
T_{11^u}\M{24}{24.11^{u+1}} \equiv \matr{0}\pmod{11}.
$$
Moreover, from Lemmas~\ref{lem*10}, we know that
$$
\M{24}{24.11^{u+1}} \equiv \M{24}{24.11} \pmod{11}.
$$
It follows that
$$
T_{11^u}\M{24}{24.11} \equiv \matr{0}\pmod{11}
$$
and thus
$$
\pi_{11}(T_{11^u})\in\Lker_{11}\M{24}{24.11} = \left\langle \pi_{11}(A_1) , \pi_{11}(A_2) \right\rangle.
$$
Since $t_1=t_2=0$, we deduce that
$$
T_{11^u}\equiv\matr{0}\pmod{11}\quad\text{and}\quad T\equiv\matr{0}\pmod{11^{u+1}}.
$$
Finally, since $T$ is divisible by $11^u$, for all positive integers $u$, we deduce that $T=\matr{0}$. Therefore,
$$
A = a_1A_1+a_2A_2 \in\left\langle A_1,A_2 \right\rangle.
$$
This completes the proof.
\end{proof}

From Propositions~\ref{prop30} and \ref{prop29}, it is straightforward to obtain the following

\begin{thm}\label{thm*3}
$$
\bigcap_{m\in\N^*}\setP{m} = \bigcap_{\stackrel{m\in\N^*}{m\ \text{odd}}}\setP{m} = \left\langle A_1 , A_2 \right\rangle.
$$
\end{thm}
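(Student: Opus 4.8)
The plan is to establish a cycle of inclusions among the sets $\langle A_1,A_2\rangle$, $\bigcap_{m\in\N^*}\setP{m}$, and $\bigcap_{m\in\N^*,\,m\text{ odd}}\setP{m}$, which forces all of them to coincide. The entire mathematical content has already been isolated in Propositions~\ref{prop30} and \ref{prop29}, so what remains is purely a bookkeeping argument with set intersections.

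First I would use Proposition~\ref{prop30}: since $\langle A_1,A_2\rangle\subset\setP{m}$ for \emph{every} positive integer $m$, intersecting over all $m$ gives $\langle A_1,A_2\rangle\subset\bigcap_{m\in\N^*}\setP{m}$. Restricting the index set to odd $m$ can only enlarge the intersection, so $\bigcap_{m\in\N^*}\setP{m}\subset\bigcap_{m\text{ odd}}\setP{m}$. This handles the two forward inclusions in the claimed chain of equalities.

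Next I would close the loop using Proposition~\ref{prop29}. Every power $11^u$ with $u\in\N$ is an odd positive integer, so intersecting over all odd $m$ is contained in intersecting over the particular subfamily $\{11^u : u\in\N\}$; that is, $\bigcap_{m\text{ odd}}\setP{m}\subset\bigcap_{u\in\N}\setP{11^u}$. By Proposition~\ref{prop29} the latter set is exactly $\langle A_1,A_2\rangle$. Stringing the inclusions together yields
$$
\langle A_1,A_2\rangle\ \subset\ \bigcap_{m\in\N^*}\setP{m}\ \subset\ \bigcap_{m\text{ odd}}\setP{m}\ \subset\ \bigcap_{u\in\N}\setP{11^u}\ =\ \langle A_1,A_2\rangle,
$$
whence all of these sets are equal, which is precisely the statement of Theorem~\ref{thm*3}.

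I do not expect any genuine obstacle at this stage: the substantive work lies upstream — Proposition~\ref{prop30} (the doubly arithmetic orbit computation for the two explicit $3$-interlaced progressions via Theorem~\ref{thm4}), Proposition~\ref{prop29} (the $11$-adic lifting argument), Computational Result~\ref{comp1} (the rank of $\Lker_p\M{24}{24p}$ for small primes $p$, in particular $p=11$), and the stabilization Lemmas~\ref{lem14}--\ref{lem*10}. The present theorem merely records that choosing $p=11$ — the smallest prime outside the exceptional list $\{2,3,5,7,13,17,73,241\}$ — suffices to pin down the full intersection, and that nothing is lost by discarding the even moduli.
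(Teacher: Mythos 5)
Your argument is correct and coincides with the paper's own proof: both establish the cycle of inclusions $\left\langle A_1,A_2\right\rangle\subset\bigcap_{m}\setP{m}\subset\bigcap_{m\ \text{odd}}\setP{m}\subset\bigcap_{u}\setP{11^u}=\left\langle A_1,A_2\right\rangle$ using Proposition~\ref{prop30} for the first inclusion and Proposition~\ref{prop29} for the final equality. Nothing further is needed.
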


\begin{proof}
First, we know from Proposition~\ref{prop30} that
$$
\left\langle A_1,A_2 \right\rangle \subset \bigcap_{m\in\N^*}\setP{m}.
$$
Moreover, from Proposition~\ref{prop29}, we have
$$
\bigcap_{m\in\N^*}\setP{m} \subset \bigcap_{\stackrel{m\in\N^*}{m\ \text{odd}}}\setP{m} \subset \bigcap_{u\in\N}\setP{{11}^u} = \left\langle A_1 , A_2 \right\rangle.
$$
This completes the proof.
\end{proof}

\begin{rem}\label{rem*1}
For any $A\in\left\langle A_1 , A_2 \right\rangle$ and any positive integer $m$, the orbit of
$$
\IAP{\pi_m(A)}{\pi_m(A)\matr{X_{24}}}
$$
is $(3m,3m)$-periodic in $\Zn{m}$.
\end{rem}

Now, we show that for any $A=a_1A_1+a_2A_2\in\left\langle A_1 , A_2 \right\rangle$ and any even number $m$, the triangles $\ST{S[3\lambda m]}$, where $S=\IAP{\pi_m(A)}{\pi_m(A)\matX{24}}$, are never balanced in $\Zn{m}$, for all positive integers $\lambda$.

\begin{prop}\label{prop*7}
Let $A=a_1A_1+a_2A_2\in\left\langle A_1 , A_2 \right\rangle$. For all positive integers $\lambda$, the triangle
$$
\ST{\IAP{\pi_{2}(A)}{\pi_2(A)\matX{24}}[6\lambda]}
$$
is never balanced in $\Zn{2}$.
\end{prop}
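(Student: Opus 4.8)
The plan is to push everything down to $\Zn2$, where the problem collapses to a finite check. First I would invoke Remark~\ref{rem*1} with $m=2$: for $A\in\left\langle A_1,A_2\right\rangle$ the orbit of $S=\IAP{\pi_2(A)}{\pi_2(A)\matX{24}}$ is $(6,6)$-periodic in $\Zn2$. In particular $S$ is $6$-periodic, say $S=P^\infty$ with $P=S[0,5]\in(\Zn2)^6$, so that $S[6\lambda]=P^{\lambda}$ and the triangle in question is $\ST{P^{\lambda}}$. Since $\pi_2(A)$ depends only on the parities of $a_1$ and $a_2$, and since a direct reduction gives $\pi_2(A_1)=(1,0,1)^8$ and $\pi_2(A_2)=(0,1,1,1,1,0)^4$, there are exactly four possibilities for the first period, namely $P\in\{000000,\,101101,\,011110,\,110011\}$. (Each of these is antisymmetric mod $2$, which is consistent with $\pi_2(A)\matX{24}=0$; if one prefers not to quote Remark~\ref{rem*1}, one checks the $(6,6)$-periodicity directly by verifying that the seventh row of $\orb{S}$ equals the first in each of the four cases.)

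Next I would apply the periodic decomposition illustrated in Figure~\ref{fig01}: since $\orb{S}$ is $(6,6)$-periodic, the triangle $\ST{S[6\lambda]}$ of size $6\lambda$ splits into $\lambda$ copies of the triangle $T=\ST{P}$ of size $6$ (which has $t_6=21$ cells) together with $\binom{\lambda}{2}$ copies of the $6\times6$ parallelogram $P'$ cut out of the orbit (which has $36$ cells); this is the same decomposition as in the first paragraph of the proof of Proposition~\ref{prop4}, and it only uses $(6,6)$-periodicity, not divisibility of the period. Writing $\delta_T=\mf{T}(1)-\mf{T}(0)$ and $\delta_{P'}=\mf{P'}(1)-\mf{P'}(0)$, one gets
$$
\mf{\ST{S[6\lambda]}}(1)-\mf{\ST{S[6\lambda]}}(0)=\lambda\,\delta_T+\binom{\lambda}{2}\,\delta_{P'},
$$
and $\ST{S[6\lambda]}$ is balanced in $\Zn2$ exactly when this quantity vanishes. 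Observe already that $\delta_T$ is always odd, hence nonzero, simply because $\mf{T}(1)+\mf{T}(0)=21$ is odd.

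Finally I would carry out the four elementary computations: writing down the small binary triangle $\ST{P}$ and the six rows of a vertical period of $\orb{S}$ in each case yields $\delta_T=-21,\,7,\,3,\,-1$ and $\delta_{P'}=-36,\,12,\,0,\,0$ for $P=000000,\,101101,\,011110,\,110011$ respectively. In the last two cases $\delta_{P'}=0$ while $\delta_T\neq0$, so the displayed difference equals $\lambda\delta_T\neq0$ for all $\lambda\ge1$; in the first two cases $\delta_T$ and $\delta_{P'}$ have the same sign, so the difference is again nonzero for every $\lambda\ge1$. Hence $\ST{S[6\lambda]}$ is never balanced in $\Zn2$, for any $A\in\left\langle A_1,A_2\right\rangle$ and any positive integer $\lambda$, which is the claim.

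There is no genuine obstacle: after the mod-$2$ reduction the proof is a finite verification with $6\times6$ binary patterns. The only points requiring mild care are justifying the Figure~\ref{fig01} decomposition (together with the identity $\binom{6\lambda+1}{2}=21\lambda+36\binom{\lambda}{2}$ underlying it, a special case of the identity in the proof of Proposition~\ref{prop5}) and correctly pinning down the four mod-$2$ first periods $P$; beyond that everything is routine.
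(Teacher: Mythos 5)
Your proposal is correct and follows essentially the same route as the paper: reduce mod $2$ to the four possible $6$-periods, use the $(6,6)$-periodic decomposition of $\ST{S[6\lambda]}$ into $\lambda$ copies of $\ST{S[6]}$ plus $\binom{\lambda}{2}$ parallelogram periods, and check in each case that the imbalance $\lambda\delta_T+\binom{\lambda}{2}\delta_{P'}$ never vanishes; your computed values of $\delta_T$ and $\delta_{P'}$ agree with the multiplicity relations stated in the paper's proof. The only differences are cosmetic (working with the difference $\delta$ and noting $\delta_T$ is odd), so nothing further is needed.
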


\begin{proof}
First, we consider the sequence
$$
S=\IAP{\pi_2(A)}{\pi_2(A)\matX{24}}.
$$
Let $\lambda$ be a positive integer. As mentioned in Remark~\ref{rem*1}, the orbit $\orb{S}$ is $(6,6)$-periodic. Therefore, as in the proof of Proposition~\ref{prop4}, the triangle $\ST{S[6\lambda]}$ can be decomposed into $\lambda$ copies of $T=\ST{S[6]}$ and $\binom{\lambda}{2}$ periods $P$ (see Figure~\ref{fig01}). Moreover, it is clear that there are only four possibilities for the projection of $A$ into $\Zn{2}$:
$$
\begin{array}{ll}
 & \pi_2(A) = (0\cdots0) ,\\
\text{or} & \pi_2(A) = \pi_2(A_1) = (101)^8, \\
\text{or} & \pi_2(A) = \pi_2(A_2) = (011110)^4, \\
\text{or} & \pi_2(A) = \pi_2(A_1+A_2) = (110011)^4. \\
\end{array}
$$
\setcounter{case}{0}
\begin{case}
If $\pi_2(A) = (0\cdots0)$, then it is clear that the triangle $\ST{S[6\lambda]}$ uniquely contains $0$'s and cannot be balanced in $\Zn{2}$.
\end{case}
\begin{case}
If $\pi_2(A) = \pi_2(A_1) = (101)^8$, then $T=\ST{S[6]}$ and the period $P$ are
\begin{center}
\includegraphics{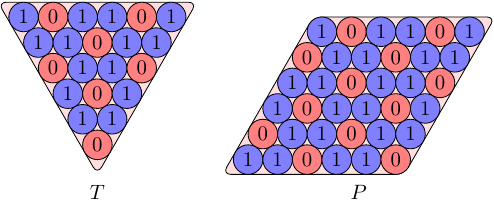}
\end{center}
Since $\mf{T}(1)=2\mf{T}(0)$ and $\mf{P}(1)=2\mf{P}(0)$, we obtain that the multiplicity function of $\nabla=\ST{S[6\lambda]}$ verifies
$$
\mf{\nabla}(1) = 2\mf{\nabla}(0).
$$
Therefore, the triangle $\ST{S[6\lambda]}$ is not balanced in $\Zn{2}$, in this case.
\end{case}
\begin{case}
If $\pi_2(A) = \pi_2(A_2) = (011110)^4$, then $T=\ST{S[6]}$ and the period $P$ are
\begin{center}
\includegraphics{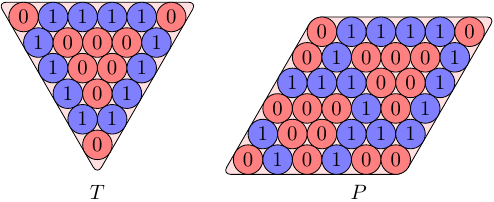}
\end{center}
Since $\mf{T}(1)=\mf{T}(0)+3$ and $\mf{P}(1)=\mf{P}(0)$, we obtain that the multiplicity function of $\nabla=\ST{S[6\lambda]}$ verifies
$$
\mf{\nabla}(1) = \mf{\nabla}(0)+3\lambda > \mf{\nabla}(0).
$$
Therefore, the triangle $\ST{S[6\lambda]}$ is not balanced in $\Zn{2}$, in this case.
\end{case}
\begin{case}
If $\pi_2(A) = \pi_2(A_1+A_2) = (110011)^4$, then $T=\ST{S[6]}$ and the period $P$ are
\begin{center}
\includegraphics{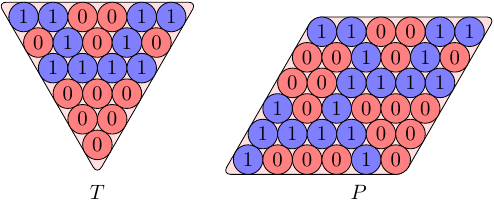}
\end{center}
Since $\mf{T}(0)=\mf{T}(1)+1$ and $\mf{P}(0)=\mf{P}(1)$, we obtain that the multiplicity function of $\nabla=\ST{S[6\lambda]}$ verifies
$$
\mf{\nabla}(0) = \mf{\nabla}(1)+\lambda > \mf{\nabla}(1).
$$
Therefore, the triangle $\ST{S[6\lambda]}$ is not balanced in $\Zn{2}$, in this case.
\end{case}
This completes the proof.
\end{proof}

\begin{cor}\label{cor*3}
Let $A=a_1A_1+a_2A_2\in\left\langle A_1 , A_2 \right\rangle$ and let $m$ be an even number. For all positive integers $\lambda$, the triangle
$$
\ST{\IAP{\pi_{m}(A)}{\pi_{m}(A)\matX{24}}[3\lambda m]}
$$
is never balanced in $\Zn{m}$.
\end{cor}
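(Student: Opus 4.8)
The plan is to reduce everything modulo $2$ and combine Proposition~\ref{prop*7} with the Projection Theorem. Fix $A=a_1A_1+a_2A_2\in\left\langle A_1,A_2\right\rangle$, an even number $m$, a positive integer $\lambda$, and set
$$
\nabla = \ST{\IAP{\pi_{m}(A)}{\pi_{m}(A)\matX{24}}[3\lambda m]}.
$$
Suppose, for contradiction, that $\nabla$ is balanced in $\Zn{m}$. Since $m$ is even, $2$ is a divisor of $m$, so by Theorem~\ref{thm6} the projection $\pi_2(\nabla)$ is balanced in $\Zn{2}$.

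The next step is to identify $\pi_2(\nabla)$ explicitly. Because $2\mid m$, we have $\pi_2\circ\pi_m=\pi_2$ on $\Zn{m}$, and since $\pi_2$ is additive it commutes with the termwise operations defining an interlaced arithmetic progression, with the local rule \eqref{eqLR} used to build the triangle $\ST{\cdot}$, and with taking an initial segment $[\,\cdot\,]$. Hence
$$
\pi_2(\nabla) = \ST{\IAP{\pi_{2}(A)}{\pi_{2}(A)\matX{24}}[3\lambda m]}.
$$
Writing $m=2\mu$ with $\mu$ a positive integer, the size $3\lambda m$ equals $6(\lambda\mu)$, so
$$
\pi_2(\nabla) = \ST{\IAP{\pi_{2}(A)}{\pi_{2}(A)\matX{24}}[6(\lambda\mu)]}.
$$

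Now I would apply Proposition~\ref{prop*7} to $A\in\left\langle A_1,A_2\right\rangle$ with the positive integer $\lambda\mu$ in place of $\lambda$: it asserts precisely that this triangle is never balanced in $\Zn{2}$. This contradicts the conclusion drawn from Theorem~\ref{thm6}. Therefore $\nabla$ is not balanced in $\Zn{m}$, for every positive integer $\lambda$, which is the claim. The argument is short and essentially mechanical; the only point that deserves a word of care is the commutation of $\pi_2$ with all the constructions involved (the IAP, the orbit/triangle built via \eqref{eqLR}, and the initial-segment operation), and this is immediate from $\pi_2$ being a group homomorphism, so I do not anticipate any real obstacle.
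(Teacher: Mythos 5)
Your proof is correct and is essentially identical to the paper's: both argue by contradiction, project to $\Zn{2}$ via Theorem~\ref{thm6}, note that the projected triangle is $\ST{\IAP{\pi_{2}(A)}{\pi_{2}(A)\matX{24}}[6\lambda\mu]}$ with $m=2\mu$, and invoke Proposition~\ref{prop*7}. Your write-up is just a slightly more explicit version of the paper's two-line proof, spelling out the commutation of $\pi_2$ with the constructions and the substitution $\lambda\mapsto\lambda\mu$.
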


\begin{proof}
If $\nabla=\ST{\IAP{\pi_{m}(A)}{\pi_{m}(A)\matX{24}}[3\lambda m]}$ is balanced in $\Zn{m}$, with $m$ even, we know from Theorem~\ref{thm6} that
$$
\pi_2(\nabla) = \ST{\IAP{\pi_{2}(A)}{\pi_{2}(A)\matX{24}}[3\lambda m]}
$$
is balanced in $\Zn{2}$, in contradiction with Proposition~\ref{prop*7}.
\end{proof}

\begin{cor}\label{cor*4}
There does not exist $A\in\Z^{24}$ such that, for every positive integer $m$, the orbit of the sequence
$$
S=\IAP{\pi_m(A)}{\pi_m(A)\matX{24}}
$$
is $(24m,24m)$-periodic in $\Zn{m}$ and the triangles $\ST{S[24\lambda m]}$ are balanced in $\Zn{m}$, for all non-negative integers $\lambda$.
\end{cor}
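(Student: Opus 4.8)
The plan is to argue by contradiction, using only the two structural results already established for the ``odd-case'' sequences, namely Theorem~\ref{thm*3} and Corollary~\ref{cor*3}. Suppose some $A\in\Z^{24}$ had the stated property. The periodicity requirement, imposed for every positive integer $m$, is by definition exactly the assertion that $A\in\setP{m}$ for all $m\geqslant 1$; hence $A$ would lie in $\bigcap_{m\geqslant 1}\setP{m}$, which by Theorem~\ref{thm*3} equals $\left\langle A_1,A_2\right\rangle$. So the first step is simply to record that $A=a_1A_1+a_2A_2$ for some integers $a_1,a_2$, i.e.\ $A\in\left\langle A_1,A_2\right\rangle$.

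Next I would specialize to an even modulus, for concreteness $m=2$. The balancedness requirement, applied with $\lambda=1$, says that $\ST{S[24m]}$ is balanced in $\Zn{m}$, where $S=\IAP{\pi_m(A)}{\pi_m(A)\matX{24}}$. The only thing to notice here is a bookkeeping identity on the size: $24m=3\cdot 8\cdot m$, so a balanced triangle of size $24m$ is the same object as a balanced triangle of size $3\lambda' m$ with $\lambda'=8$. But Corollary~\ref{cor*3}, which applies precisely because $A\in\left\langle A_1,A_2\right\rangle$ and $m$ is even, states that $\ST{\IAP{\pi_m(A)}{\pi_m(A)\matX{24}}[3\lambda' m]}$ is never balanced in $\Zn{m}$, for every positive integer $\lambda'$. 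This contradiction finishes the argument.

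Since both Theorem~\ref{thm*3} and Corollary~\ref{cor*3} are already in hand, there is no genuine obstacle: the result is a one-line deduction once the size identity $24\lambda m=3(8\lambda)m$ is noted, which is exactly what lets the ``never balanced'' conclusion for sizes divisible by $3m$ cover all the sizes $24\lambda m$ occurring in the hypothesis. If one preferred to avoid committing to a specific modulus, the same argument runs verbatim at any even $m$; the choice $m=2$ is merely the cleanest.
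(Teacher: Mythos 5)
Your proposal is correct and follows essentially the same route as the paper: both deduce from Theorem~\ref{thm*3} that the universal periodicity hypothesis forces $A\in\left\langle A_1,A_2\right\rangle$, and both then invoke Corollary~\ref{cor*3} at an even modulus to contradict balancedness, the only (correct) bookkeeping point being that $24\lambda m=3(8\lambda)m$ puts the sizes in the scope of that corollary.
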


\begin{proof}
Let $A\in\Z^{24}$ such that, for every positive integer $m$, the orbit of the sequence
$$
S=\IAP{\pi_m(A)}{\pi_m(A)\matX{24}}
$$
is $(24m,24m)$-periodic in $\Zn{m}$, i.e.,
$$
A \in \bigcap_{m\in\N^*}\setP{m}.
$$
From Theorem~\ref{thm*3}, we obtain that
$$
A \in \bigcap_{m\in\N^*}\setP{m} = \left\langle A_1,A_2\right\rangle.
$$
We deduce that $\orb{S}$ is in fact $(3m,3m)$-periodic, for all positive integers $m$. Finally, when $m$ is even, we know from Corollary~\ref{cor*3} that the triangles $\ST{S[3\lambda m]}$ are never balanced in $\Zn{m}$, for all positive integers $\lambda$.
\end{proof}

Let $A=a_1A_1+a_2A_2\in\left\langle A_1 , A_2 \right\rangle$. We continue by determining a necessary condition on integers $a_1$ and $a_2$ such that the triangles $\ST{S[3\lambda m]}$, where $S=\IAP{\pi_m(A)}{\pi_m(A)\matX{24}}$, are balanced in $\Zn{m}$, for all non-negative integers $\lambda$ and all odd numbers $m$.

\begin{prop}\label{prop*8}
Let $m$ be an odd number. For all positive integers $\lambda$, the triangle
$$
\ST{\IAP{\pi_{m}(A_1)}{\pi_m(A_1)\matX{24}}[3\lambda]}
$$
is never balanced in $\Zn{m}$, except for $m\in\{1,3\}$.
\end{prop}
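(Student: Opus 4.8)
The approach is to make the orbit completely explicit. By the identity recorded in the proof of Proposition~\ref{prop30}, $\IAP{A_1}{A_1\matX{24}} = \IAP{(1,0,-1)}{(0,0,0)}$, which is the $3$-periodic integer sequence with first period $(1,0,-1)$. Since projection commutes with the IAP construction, the sequence $S = \IAP{\pi_m(A_1)}{\pi_m(A_1)\matX{24}}$ is $3$-periodic with first period $(\pi_m(1),\pi_m(0),\pi_m(-1))$, so $S[3\lambda]$ is this first period repeated $\lambda$ times and $\nabla := \ST{S[3\lambda]}$ has size $3\lambda$. Everything then reduces to counting multiplicities in a $(3,3)$-periodic triangle.

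First I would pin down the orbit $\orb{S}$. Applying the local rule \eqref{eqLR} gives $\der{S}$, $\ider{2}{S}$ and $\ider{3}{S}$ equal to the $3$-periodic sequences with first periods $(-1,1,0)$, $(0,-1,1)$ and $(1,0,-1)$ respectively; since $\ider{3}{S} = S$ and each iterated derived sequence is $3$-periodic by Corollary~\ref{cor1}, the orbit $\orb{S}$ is $(3,3)$-periodic. Exactly as in the decomposition argument used in the proof of Proposition~\ref{prop4} (and reused in the proof of Proposition~\ref{prop*7}), the triangle $\nabla$ then splits into $\lambda$ disjoint copies of $T := \ST{S[3]}$ together with $\binom{\lambda}{2}$ disjoint copies of the fundamental period $P$ of $\orb{S}$.

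Next I would compute the relevant multiplicity functions. The triangle $T = \ST{(\pi_m(1),\pi_m(0),\pi_m(-1))}$ has rows $(\pi_m(1),\pi_m(0),\pi_m(-1))$, $(\pi_m(-1),\pi_m(1))$ and $(\pi_m(0))$, so $\mf{T}$ takes the value $2$ on each of $\pi_m(0),\pi_m(1),\pi_m(-1)$ and the value $0$ on every other element of $\Zn{m}$. Similarly the period $P$ contains exactly one entry from each of the $9$ classes $(i \bmod 3,\, j \bmod 3)$ of the orbit, whence $\mf{P}$ takes the value $3$ on each of $\pi_m(0),\pi_m(1),\pi_m(-1)$ and $0$ elsewhere. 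Adding up, $\mf{\nabla}(x) = 2\lambda + 3\binom{\lambda}{2}$ when $x \in \{\pi_m(0),\pi_m(1),\pi_m(-1)\}$ and $\mf{\nabla}(x) = 0$ otherwise, and $2\lambda + 3\binom{\lambda}{2} > 0$ for every positive integer $\lambda$.

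Finally I would read off the conclusion and dispose of the exceptional moduli. If $m = 1$ the group $\Zn{1}$ is trivial and every triangle is balanced; if $m = 3$ then $\{\pi_3(0),\pi_3(1),\pi_3(-1)\} = \Zn{3}$, so $\mf{\nabla}$ is the constant function $2\lambda + 3\binom{\lambda}{2}$ and $\nabla$ is balanced. For every odd $m \ge 5$, the three classes $\pi_m(0),\pi_m(1),\pi_m(-1)$ are pairwise distinct yet do not exhaust $\Zn{m}$: for instance $\pi_m(2)$ is none of them, so $\mf{\nabla}(\pi_m(2)) = 0 \ne 2\lambda + 3\binom{\lambda}{2} = \mf{\nabla}(\pi_m(0))$ and $\nabla$ is not balanced, for every positive integer $\lambda$. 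There is no serious difficulty in this argument; the only step needing care is this last boundary analysis — determining exactly when the residues $0,1,-1$ fail to cover all of $\Zn{m}$ — which is precisely what singles out the exceptions $m \in \{1,3\}$.
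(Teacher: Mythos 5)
Your proof is correct and follows essentially the same route as the paper's: make the $(3,3)$-periodic orbit of $(1,0,-1)^\infty$ explicit, note its values lie in $\{\pi_m(0),\pi_m(1),\pi_m(-1)\}$ so no triangle can be balanced for $m\ge 5$, and use the $T$/$P$ decomposition of Proposition~\ref{prop4} to settle $m\in\{1,3\}$. The only difference is that you compute the exact multiplicity $2\lambda+3\binom{\lambda}{2}$ where the paper merely observes that the support of the multiplicity function has at most three elements; this is a harmless (and correctly verified) refinement.
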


\begin{proof}
We consider the sequence
$$
S=\IAP{A_1}{A_1\matX{24}} = {A_1}^\infty = (1,0,-1)^\infty.
$$
By induction on $\alpha\ge0$, it is easy to see that
$$
\begin{array}{l}
\ider{3\alpha}{S} = S = (1,0,-1)^\infty, \\
\ider{3\alpha+1}{S} = (-1,1,0)^\infty, \\
\ider{3\alpha+2}{S} = (0,-1,1)^\infty. \\
\end{array}
$$
Therefore, the orbit $\orb{S}$ is $(3,3)$-periodic.

When $m\ge5$, since any element of the orbit of $\pi_m(S)$ is in $\{\pi_m(-1),\pi_m(0),\pi_m(1)\}$, we deduce that there is no balanced triangle in $\orb{\pi_m(S)}$.

%
When $m=3$, for any positive integer $\lambda$, since the orbit $\orb{\pi_3(S)}$ is $(3,3)$-periodic, we know from the proof of Proposition~\ref{prop4} that the triangle $\ST{\pi_3(S)[3\lambda]}$ can be decomposed into $\lambda$ copies of $T=\ST{\pi_3(S)[3]}$ and $\binom{\lambda}{2}$ periods $P$, as depicted in Figure~\ref{fig01}.
\begin{center}
\includegraphics{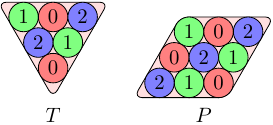}
\end{center}
Since $\mf{T}(0)=\mf{T}(1)=\mf{T}(2)$ and $\mf{P}(0)=\mf{P}(1)=\mf{P}(2)$, we obtain that the multiplicity function of $\nabla=\ST{\pi_{3}(S)[3\lambda]}$ verifies
$$
\mf{\nabla}(0) = \mf{\nabla}(1) = \mf{\nabla}(2).
$$
Therefore, the triangle $\ST{\pi_{3}(S)[3\lambda]}$ is balanced in $\Zn{3}$, for all positive integers $\lambda$.

Finally, for $m=1$, the result is clear since any triangle is balanced in $\Zn{1}$.
\end{proof}

\begin{cor}
Let $A=a_1A_1+a_2A_2\in\left\langle A_1,A_2\right\rangle$ and let $m$ be an odd number. If the triangle
$$
\ST{\IAP{\pi_m(A)}{\pi_m(A)\matX{24}}[3\lambda]}
$$
is balanced in $\Zn{m}$, for a certain positive integer $\lambda$, then we have $\gcd(a_1,a_2,m)=1$ and $\gcd(a_2,m)\in\{1,3\}$.
\end{cor}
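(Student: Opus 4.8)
The plan is to argue by contradiction through the Projection Theorem (Theorem~\ref{thm6}). Write $\nabla=\ST{\IAP{\pi_m(A)}{\pi_m(A)\matX{24}}[3\lambda]}$. Since $\pi_d$ is a ring homomorphism, it commutes with taking the first $3\lambda$ terms of a sequence, with the IAP construction (so that $\pi_d\left(\IAP{\pi_m(A)}{\pi_m(A)\matX{24}}\right)=\IAP{\pi_d(A)}{\pi_d(A)\matX{24}}$, using $\pi_d\circ\pi_m=\pi_d$), and with the local rule \eqref{eqLR}; hence, for every divisor $d$ of $m$, the projection $\pi_d(\nabla)$ is again a triangle of the same shape, namely $\ST{\IAP{\pi_d(A)}{\pi_d(A)\matX{24}}[3\lambda]}$. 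By Theorem~\ref{thm6}, if $\nabla$ is balanced in $\Zn{m}$ then $\pi_d(\nabla)$ is balanced in $\Zn{d}$. The whole proof consists in choosing $d$ cleverly in two cases and reaching a contradiction with earlier results.

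For the first claim, set $g=\gcd(a_1,a_2,m)$ and suppose $g\geqslant 2$. Since $g$ divides both $a_1$ and $a_2$, every entry of $A=a_1A_1+a_2A_2$ is divisible by $g$, so $\pi_g(A)$ is the zero $24$-tuple, $\IAP{\pi_g(A)}{\pi_g(A)\matX{24}}$ is the zero sequence, and $\pi_g(\nabla)$ is the triangle of zeroes of size $3\lambda\geqslant 1$. As recalled in Section~5, such a triangle is never balanced in $\Zn{g}$ when $g\geqslant 2$, contradicting the consequence of Theorem~\ref{thm6} above. Therefore $\gcd(a_1,a_2,m)=1$.

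For the second claim, set $d=\gcd(a_2,m)$, an odd divisor of $m$. Because $d\mid a_2$, we have $\pi_d(A)=\pi_d(a_1)\,\pi_d(A_1)$; and since $\gcd(a_1,a_2,m)=1$ together with $d\mid a_2$ and $d\mid m$ forces $\gcd(a_1,d)=1$, the element $\pi_d(a_1)$ lies in $\left(\Zn{d}\right)^*$. Recalling from the proof of Proposition~\ref{prop30} that $A_1\matX{24}=\matr{0}$ (equivalently, $\IAP{A_1}{A_1\matX{24}}=\IAP{(1,0,-1)}{(0,0,0)}$), scaling behaves well: $\IAP{\pi_d(A)}{\pi_d(A)\matX{24}}=\pi_d(a_1)\,\IAP{\pi_d(A_1)}{\pi_d(A_1)\matX{24}}$, hence $\pi_d(\nabla)=\pi_d(a_1)\,\ST{\IAP{\pi_d(A_1)}{\pi_d(A_1)\matX{24}}[3\lambda]}$. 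Since $\pi_d(a_1)$ is invertible, the proposition stating that a triangle is balanced if and only if its multiple by a unit is balanced shows that $\ST{\IAP{\pi_d(A_1)}{\pi_d(A_1)\matX{24}}[3\lambda]}$ is itself balanced in $\Zn{d}$. Applying Proposition~\ref{prop*8} to the odd number $d$, this is impossible unless $d\in\{1,3\}$; thus $\gcd(a_2,m)\in\{1,3\}$.

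The argument is short once the ingredients are aligned; the only delicate point is the bookkeeping in the last paragraph, namely confirming that $\pi_d$ and the scalar $a_1$ interact correctly with the common-difference tuple — which is exactly where $A_1\matX{24}=\matr{0}$ is needed — and then using unit-invariance of balancedness to replace the $\pi_d(a_1)$-scaled triangle by the $A_1$-triangle so that Proposition~\ref{prop*8} becomes applicable. I do not expect a genuine obstacle beyond this verification.
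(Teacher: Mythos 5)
Your proof is correct and follows essentially the same route as the paper: project via Theorem~\ref{thm6} onto $\Zn{\gcd(a_1,a_2,m)}$ to rule out the all-zero triangle, then onto $\Zn{\gcd(a_2,m)}$, factor out the unit $\pi_d(a_1)$, and invoke Proposition~\ref{prop*8}. (The appeal to $A_1\matX{24}=\matr{0}$ is harmless but not needed, since $\IAP{cA}{cA\matX{24}}=c\,\IAP{A}{A\matX{24}}$ already follows from linearity of the IAP construction.)
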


\begin{proof}
Suppose that the triangle 
$$
\nabla = \ST{\IAP{\pi_m(A)}{\pi_m(A)\matX{24}}[3\lambda]}
$$
is balanced and let $q=\gcd(a_1,a_2,m)$. Since
$$
\pi_q(A) = (0\cdots0)
$$
we deduce that the triangle $\pi_q(\nabla)$ uniquely contains $0$'s. Moreover, since $\nabla$ is balanced in $\Zn{m}$, we know from Theorem~\ref{thm6} that $\pi_q(\nabla)$ is balanced in $\Zn{q}$. The only possibility is that $q=\gcd(a_1,a_2,m)=1$.

Let $q'=\gcd(a_2,m)$. Similarly, from Theorem~\ref{thm6}, since $\nabla$ is balanced in $\Zn{m}$, we know that $\pi_{q'}(\nabla)$ is balanced in $\Zn{q'}$. Moreover,
$$
\pi_{q'}(A) = \pi_{q'}(a_1)\pi_{q'}(A_1)
$$
and thus
$$
\pi_{q'}(\nabla) = \pi_{q'}(a_1)\ST{\IAP{\pi_{q'}(A_1)}{\pi_{q'}(A_1)\matX{24}}[3\lambda]}.
$$
Since $\gcd(a_1,q')=1$, we know that $\pi_{q'}(a_1)$ is invertible in $\Zn{q'}$. Therefore, since $\pi_{q'}(\nabla)$ is balanced, we deduce that the triangle
$$
\ST{\IAP{\pi_{q'}(A_1)}{\pi_{q'}(A_1)\matX{24}}[3\lambda]}
$$
is balanced in $\Zn{q'}$. Using Proposition~\ref{prop*8}, we conclude that $q'=\gcd(a_2,m)\in\{1,3\}$, as announced.
\end{proof}

\begin{thm}\label{thm8}
Let $A=a_1A_1+a_2A_2\in\left\langle A_1,A_2\right\rangle$ and let $m$ be an odd number such that $\gcd(a_1,a_2,m)=1$ and $\gcd(a_2,m)\in\{1,3\}$. Then, the triangle
$$
\ST{\IAP{\pi_m(A)}{\pi_m(A)\matX{24}}[3\lambda m]}
$$
is balanced in $\Zn{m}$, for all non-negative integers $\lambda$.
\end{thm}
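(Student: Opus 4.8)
The plan is to reduce everything to a statement about $3$-interlaced arithmetic progressions and then to cut the triangle into arithmetic triangles handled by Theorem~\ref{thm5}. As $24$-tuples, $A_1$ is the initial segment of the $3$-interlaced arithmetic progression $\IAP{(1,0,-1)}{(0,0,0)}$ and $A_2$ that of $\IAP{(0,1,-1)}{(-1,2,-1)}$, so for $B=a_1(1,0,-1)+a_2(0,1,-1)=(a_1,a_2,-a_1-a_2)$ one has, as sequences,
$$
S:=\IAP{\pi_m(A)}{\pi_m(A)\matX{24}}=\IAP{\pi_m(B)}{\pi_m(B)\matX{3}},\qquad B\matX{3}=a_2(-1,2,-1).
$$
From the computation carried out in the proof of Proposition~\ref{prop30} and the $\Z$-linearity of left-kernel membership, the block $\left(\begin{array}{c|c} B & B\matX{3}\end{array}\right)$ lies in $\Lker\IA{3}{3}$, hence by Theorem~\ref{thm4} the orbit $\orb{S}$ is $(3,3)$-interlaced doubly arithmetic over $\Zn{m}$. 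Then, using Corollary~\ref{cor*1} together with the explicit matrices $\C{3}{1}$, $\C{3}{2}$ and $\M{3}{3}$, a direct computation gives $\orb{S}=\IDAP{\matA}{\matD{1}}{\matD{2}}$ with $\matD{1}=a_2\Circ{(-1,2,-1)}$ and $\matD{2}=a_2\Circ{(1,1,-2)}$; in particular every entry of $\matD{1}$, of $\matD{2}$ and of $\matD{2}-\matD{1}$ equals $a_2$ times an element of $\{-2,-1,1,2\}$.

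Next I would fix a positive integer $\lambda$ (for $\lambda=0$ the triangle is empty, hence balanced, and for $m=1$ every triangle is balanced, so assume $m\geqslant 3$) and set $\nabla=\ST{S[3\lambda m]}$, of size $3\lambda m=3(\lambda m)$. Applying Proposition~\ref{prop28} with $k=3$, $q=\lambda m$, $r=0$, the triangle $\nabla$ is the disjoint union of nine arithmetic subtriangles
$$
\nabla_{r_0,s_0}=\AT{\left(\matA\right)_{r_0,s_0}}{\left(\matD{1}\right)_{r_0,s_0}}{\left(\matD{2}\right)_{r_0,s_0}}{n_{r_0,s_0}},\qquad n_{r_0,s_0}\in\{\lambda m,\lambda m-1\},
$$
for $(r_0,s_0)\in\{0,1,2\}^2$. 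Since $m$ is odd, the elements of $\{-2,-1,1,2\}$ are units of $\Zn{m}$, so for every $(r_0,s_0)$,
$$
\gcd\!\left(\left(\matD{1}\right)_{r_0,s_0},m\right)=\gcd\!\left(\left(\matD{2}\right)_{r_0,s_0},m\right)=\gcd\!\left(\left(\matD{2}-\matD{1}\right)_{r_0,s_0},m\right)=\gcd(a_2,m).
$$
As $n_{r_0,s_0}\equiv 0$ or $-1\pmod{m}$, a fortiori modulo $m/\gcd(a_2,m)$, Theorem~\ref{thm5} applies to each subtriangle and gives $\mf{\nabla_{r_0,s_0}}(x+\gcd(a_2,m))=\mf{\nabla_{r_0,s_0}}(x)$ for all $x\in\Zn{m}$; summing over the nine subtriangles yields $\mf{\nabla}(x+\gcd(a_2,m))=\mf{\nabla}(x)$ for all $x\in\Zn{m}$.

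It then remains to split on $\gcd(a_2,m)\in\{1,3\}$. If $\gcd(a_2,m)=1$, the identity above says precisely that $\nabla$ is balanced in $\Zn{m}$. If $\gcd(a_2,m)=3$, then $3\mid m$ and $3\mid a_2$, so $\mf{\nabla}(x+3)=\mf{\nabla}(x)$, and by the Projection Theorem~\ref{thm6} it suffices to show $\pi_3(\nabla)$ is balanced in $\Zn{3}$. Since $3\mid a_2$, one has $\pi_3(A)=\pi_3(a_1)\pi_3(A_1)$ and $\pi_3(A)\matX{24}=\matr{0}$ (because $A_1\matX{24}=\matr{0}$), whence
$$
\pi_3(\nabla)=\pi_3(a_1)\,\ST{\IAP{\pi_3(A_1)}{\pi_3(A_1)\matX{24}}[3\lambda m]}.
$$
By Proposition~\ref{prop*8} this last triangle is balanced in $\Zn{3}$ (here $3\lambda m=3(\lambda m)$ with $\lambda m\geqslant 1$), and since $\gcd(a_1,a_2,m)=1$ together with $3\mid a_2$ and $3\mid m$ forces $3\nmid a_1$, the scalar $\pi_3(a_1)$ is invertible in $\Zn{3}$; multiplying a balanced triangle by a unit keeps it balanced, so $\pi_3(\nabla)$ is balanced in $\Zn{3}$ and hence $\nabla$ is balanced in $\Zn{m}$.

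The main obstacle is the explicit determination of the common differences $\matD{1}$ and $\matD{2}$ of the $(3,3)$-interlaced doubly arithmetic orbit through Corollary~\ref{cor*1} and the matrices $\C{3}{1}$, $\C{3}{2}$, $\M{3}{3}$; after that, everything reduces to bookkeeping of greatest common divisors modulo the odd number $m$ and citations of Theorems~\ref{thm5} and \ref{thm6} and Proposition~\ref{prop*8}. The one genuinely delicate point is the branch $\gcd(a_2,m)=3$, where Theorem~\ref{thm5} only yields $3$-periodicity of the multiplicity function, forcing a descent to $\Zn{3}$; it is exactly there that the hypotheses $\gcd(a_1,a_2,m)=1$ and $\gcd(a_2,m)\in\{1,3\}$ are used.
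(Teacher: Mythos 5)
Your proposal is correct and follows essentially the same route as the paper: reduce to the $3$-IAP $\IAP{(a_1,a_2,-a_1-a_2)}{a_2(-1,2,-1)}$, invoke the kernel computation of Proposition~\ref{prop30} and Theorem~\ref{thm4} to get the $(3,3)$-interlaced doubly arithmetic orbit with the same $\matD{1}$ and $\matD{2}$, decompose via Proposition~\ref{prop28} into nine arithmetic triangles, apply Theorem~\ref{thm5} to get $\gcd(a_2,m)$-periodicity of the multiplicity function, and finish with Proposition~\ref{prop*8} and the Projection Theorem. The only cosmetic differences are that you treat the three residue classes of $r-s$ uniformly (noting all differences are $a_2$ times a unit mod the odd $m$) where the paper does an explicit three-case check, and you split the final step on $\gcd(a_2,m)\in\{1,3\}$ where the paper projects to $\Zn{q}$ uniformly.
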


\begin{proof}
Let $S=\IAP{A}{A\matX{24}}$. As already seen in the proof of Proposition~\ref{prop30}, we have
$$
S=\IAP{(a_1,a_2,-a_1-a_2)}{(-a_2,2a_2,-a_2)}
$$
and we know that its orbit is $(3,3)$-interlaced doubly arithmetic. Moreover, since
\begin{equation*}
\resizebox{\textwidth}{!}{$
\C{3}{3}+\matI{3}+\matX{3}\T{3}{3} = 
\begin{pmatrix}
 2 & 3 & 3 \\
 3 & 2 & 3 \\
 3 & 3 & 2 \\
\end{pmatrix}
+
\begin{pmatrix}
1 & 0 & 0 \\
0 & 1 & 0 \\
0 & 0 & 1 \\
\end{pmatrix}
+
\begin{pmatrix}
1 & 0 & 1 \\
0 & 2 & 0 \\
1 & 0 & 1 \\
\end{pmatrix}
\begin{pmatrix}
1 & 3 & 3 \\
0 & 1 & 3 \\
0 & 0 & 1 \\
\end{pmatrix}
=
\begin{pmatrix}
4 & 6 & 7 \\
3 & 5 & 9 \\
4 & 6 & 7 \\
\end{pmatrix}
$}
\end{equation*}
and thus
$$
\begin{pmatrix}
a_1 & a_2 & -a_1-a_2 \\
\end{pmatrix}
\left(\C{3}{3}+\matI{3}+\matX{3}\T{3}{3}\right) =
\begin{pmatrix}
-a_2 & -a_2 & 2a_2 \\
\end{pmatrix},
$$
we deduce, from Theorem~\ref{thm4} and Lemma~\ref{lem*9}, that 
$$
\orb{S} = \IDAP{\matA}{\matD{1}}{\matD{2}},
$$
where
$$
\matD{1} = \Dmat{3}{(-a_2,2a_2,-a_2)} = \Circ{-a_2,2a_2,-a_2}
$$
and
$$
\matD{2} = \Dmat{3}{(a_2,a_2,-2a_2)} = \Circ{a_2,a_2,-2a_2}.
$$
Let $\lambda$ be a positive integer. We consider the Steinhaus triangle
$$
\nabla = \ST{\pi_m(S)}[3\lambda m]= \left(a_{i,j}\right)_{(i,j)\in T_{3\lambda m}}.
$$
From Proposition~\ref{prop28}, we know that $\nabla$ can be decomposed into $9$ subtriangles $\nabla_{r,s}$ that are the arithmetic triangles
$$
\nabla_{r,s} = \left(a_{r+3i,s+3j}\right)_{(i,j)\in T_{n_{r,s}}} = \AT{a_{r,s}}{d^{\,(1)}_{r,s}}{d^{\,(2)}_{r,s}}{n_{r,s}},
$$
where
$$
n_{r,s} = \left\{\begin{array}{ll}
 \lambda m & \text{if}\ r+s\in\{0,1,2\},\\
 \lambda m-1 & \text{if}\ r+s\in\{3,4\},\\
\end{array}\right.
$$
for all $(r,s)\in\{0,1,2\}^2$. Moreover, since
$$
\matr{D_1} = \left(d^{\,(1)}_{r,s}\right)_{0\le r,s\le 2} = \Circ{-a_2,2a_2,-a_2}
$$
and
$$
\matr{D_2} = \left(d^{\,(2)}_{r,s}\right)_{0\le r,s\le 2} = \Circ{a_2,a_2,-2a_2}),
$$
we deduce that
$$
\left(d^{\,(1)}_{r,s},d^{\,(2)}_{r,s}\right) = 
\left\{\begin{array}{cl}
 (-a_2,a_2) & \text{if}\ r-s\equiv0\pmod{3}, \\
 (-a_2,-2a_2) & \text{if}\ r-s\equiv1\pmod{3}, \\
 (2a_2,a_2) & \text{if}\ r-s\equiv2\pmod{3}, \\
\end{array}\right.
$$
for all $(r,s)\in\{0,1,2\}^2$. We distinguish different cases according to the residue class of $r-s$ modulo $3$.

\setcounter{case}{0}
\begin{case}
If $r-s\equiv0\pmod{3}$, then
$$
d^{\,(1)}_{r,s} = -a_2,\ d^{\,(2)}_{r,s} = a_2\ \text{and}\ d^{\,(2)}_{r,s}-d^{\,(1)}_{r,s} = 2a_2.
$$
Since
$$
\gcd(d^{\,(1)}_{r,s},m) = \gcd(d^{\,(2)}_{r,s},m) = \gcd(d^{\,(2)}_{r,s}-d^{\,(1)}_{r,s},m) = \gcd(a_2,m)
$$
and $n_{r,s}\in\left\{\lambda m-1,\lambda m\right\}$, we deduce from Theorem~\ref{thm5} that
$$
\mf{\nabla_{r,s}}(x+\gcd(a_2,m)) = \mf{\nabla_{r,s}}(x),
$$
for all $x\in\Zn{m}$, in this case.
\end{case}

\begin{case}
If $r-s\equiv1\pmod{3}$, then
$$
d^{\,(1)}_{r,s} = -a_2,\ d^{\,(2)}_{r,s} = -2a_2\ \text{and}\ d^{\,(2)}_{r,s}-d^{\,(1)}_{r,s} = -a_2.
$$
Since
$$
\gcd(d^{\,(1)}_{r,s},m) = \gcd(d^{\,(2)}_{r,s},m) = \gcd(d^{\,(2)}_{r,s}-d^{\,(1)}_{r,s},m) = \gcd(a_2,m)
$$
and $n_{r,s}\in\left\{\lambda m-1,\lambda m\right\}$, we deduce from Theorem~\ref{thm5} that
$$
\mf{\nabla_{r,s}}(x+\gcd(a_2,m)) = \mf{\nabla_{r,s}}(x),
$$
for all $x\in\Zn{m}$, in this case.
\end{case}

\begin{case}
If $r-s\equiv2\pmod{3}$, then
$$
d^{\,(1)}_{r,s} = 2a_2,\ d^{\,(2)}_{r,s} = a_2\ \text{and}\ d^{\,(2)}_{r,s}-d^{\,(1)}_{r,s} = -a_2.
$$
Since
$$
\gcd(d^{\,(1)}_{r,s},m) = \gcd(d^{\,(2)}_{r,s},m) = \gcd(d^{\,(2)}_{r,s}-d^{\,(1)}_{r,s},m) = \gcd(a_2,m)
$$
and $n_{r,s}\in\left\{\lambda m-1,\lambda m\right\}$, we deduce from Theorem~\ref{thm5} that
$$
\mf{\nabla_{r,s}}(x+\gcd(a_2,m)) = \mf{\nabla_{r,s}}(x),
$$
for all $x\in\Zn{m}$, in this case.
\end{case}

Therefore, we have
$$
\mf{\nabla_{r,s}}(x+\gcd(a_2,m)) = \mf{\nabla_{r,s}}(x),
$$
for all $x\in\Zn{m}$, in any subtriangle $\nabla_{r,s}$, for all $(r,s)\in\{0,1,2\}^2$. Since
$$
\nabla = \bigsqcup_{(r,s)\in\{0,1,2\}^2} \nabla_{r,s},
$$
we deduce that
$$
\mf{\nabla}(x+\gcd(a_2,m)) = \mf{\nabla}(x),
$$
for all $x\in\Zn{m}$.

Let $q=\gcd(a_2,m)$. Since
$$
\pi_q(A) = \pi_q(a_1)\pi_q(A_1),
$$
we have
$$
\pi_q(\nabla) = \pi_q(a_1)\ST{\IAP{\pi_q(A_1)}{\pi_q(A_1)\matX{24}}[3\lambda m]}.
$$
Since $q\in\{1,3\}$, we know from Proposition~\ref{prop*8} that
$$
\ST{\IAP{\pi_q(A_1)}{\pi_q(A_1)\matX{24}}[3\lambda m]}
$$
is balanced $\Zn{q}$. Moreover, $\pi_q(a_1)$ is invertible since $\gcd(a_1,q)=\gcd(a_1,a_2,m)=1$. It follows that $\pi_q(\nabla)$ is balanced in $\Zn{q}$.
 
Finally, since $\pi_q(\nabla)$ is balanced in $\Zn{q}$ and since $\mf{\nabla}(x+q)=\mf{\nabla}(x)$, for all $x\in\Zn{m}$, by the previous result, we obtain from Theorem~\ref{thm6} that the triangle $\nabla$ is balanced in $\Zn{m}$.
\end{proof}

\begin{cor}\label{cor*5}
Let $\setO$ be the infinite set of $24$-tuples of integers defined by
$$
\setO = \left\{ a_1A_1+a_2A_2\ \middle|\ a_1\in\Z\ \text{and}\ a_2=\pm 3^\alpha2^\beta,\text{with}\ \alpha\in\{0,1\},\beta\in\N,\text{s.t.}\ 3\nmid\gcd(a_1,a_2)\right\}.
$$
Then, for any $A\in\mathcal{O}$ and for all odd numbers $m$, the orbit of the sequence
$$
S=\IAP{\pi_m(A)}{\pi_m(A)\matX{24}}
$$
is $(3m,3m)$-periodic and the triangles $\ST{S[3\lambda m]}$ are balanced in $\Zn{m}$, for all non-negative integers $\lambda$.
\end{cor}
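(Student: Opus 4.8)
The plan is to obtain Corollary~\ref{cor*5} as an immediate consequence of Theorem~\ref{thm8} together with Remark~\ref{rem*1}; the only genuine content is an elementary number‑theoretic check that every element of $\setO$ automatically satisfies the divisibility hypotheses of Theorem~\ref{thm8} whenever $m$ is odd. So first I would fix $A=a_1A_1+a_2A_2\in\setO$, with $a_2=\pm 3^{\alpha}2^{\beta}$, $\alpha\in\{0,1\}$, $\beta\in\N$ and $3\nmid\gcd(a_1,a_2)$, and fix an odd number $m$. Since $A\in\left\langle A_1,A_2\right\rangle$, Remark~\ref{rem*1} (which rests on Theorem~\ref{thm*3}) already tells us that the orbit of $S=\IAP{\pi_m(A)}{\pi_m(A)\matX{24}}$ is $(3m,3m)$-periodic in $\Zn{m}$, for every positive integer $m$ and in particular for the odd ones; this disposes of the periodicity assertion.

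Next I would verify the two $\gcd$ conditions. Because $m$ is odd we have $\gcd(2^{\beta},m)=1$, hence $\gcd(a_2,m)=\gcd(\pm 3^{\alpha}2^{\beta},m)=\gcd(3^{\alpha},m)$, and since $\alpha\in\{0,1\}$ this common divisor is either $1$ or $3$; thus $\gcd(a_2,m)\in\{1,3\}$. For the triple $\gcd$, observe that $\gcd(a_1,a_2,m)$ divides $\gcd(a_2,m)\in\{1,3\}$, so it is $1$ or $3$; if it were $3$ then $3\mid a_1$ and $3\mid a_2$, forcing $3\mid\gcd(a_1,a_2)$, which contradicts the defining condition $3\nmid\gcd(a_1,a_2)$ of $\setO$. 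Therefore $\gcd(a_1,a_2,m)=1$. At this point the hypotheses of Theorem~\ref{thm8} hold verbatim, and I would invoke it to conclude that the triangle $\ST{S[3\lambda m]}$ is balanced in $\Zn{m}$ for every non‑negative integer $\lambda$ (the value $\lambda=0$ giving the trivially balanced empty triangle).

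Finally, to justify that $\setO$ is indeed infinite as asserted, I would note that taking $a_2=1$ (i.e.\ $\alpha=\beta=0$) makes the membership condition $3\nmid\gcd(a_1,a_2)=\gcd(a_1,1)=1$ true for every $a_1\in\Z$, and the $24$-tuples $a_1A_1+A_2$ are pairwise distinct because $A_1$ and $A_2$ are $\Z$-linearly independent (compare, say, their first coordinates, or the fact that $A_1=(1,0,-1)^8$ while $A_2$ begins with $0$). I do not anticipate any real obstacle here: the argument is a direct citation of Theorem~\ref{thm8} and Remark~\ref{rem*1}, and the only point requiring a moment's care is the above observation that oddness of $m$ removes the power‑of‑two part of $a_2$ while the hypothesis $3\nmid\gcd(a_1,a_2)$ is precisely what forbids the triple $\gcd$ from equalling $3$.
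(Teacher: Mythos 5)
Your proposal is correct and follows exactly the paper's own argument: periodicity via Remark~\ref{rem*1}, the two elementary $\gcd$ verifications (which you spell out in slightly more detail than the paper), and then a direct application of Theorem~\ref{thm8}. No issues.
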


\begin{proof}
Let $A\in\setO$ and let $m$ be an odd number. As mentioned in Remark~\ref{rem*1}, the orbit $\orb{S}$ is $(3m,3m)$-periodic in $\Zn{m}$. Since $a_2=\pm 3^\alpha2^\beta$, for some $\alpha\in\{0,1\}$ and $\beta\in\N$, it is clear that
$$
\gcd(a_2,m)\in\{1,3\}.
$$
Moreover, since $3\nmid\gcd(a_1,a_2)$ and $\gcd(a_2,m)\in\{1,3\}$, we obtain that
$$
\gcd(a_1,a_2,m)=1.
$$
Therefore, by Theorem~\ref{thm8}, we obtain that the triangles $\ST{S[3\lambda m]}$ are balanced in $\Zn{m}$, for all non-negative integers $\lambda$.
\end{proof}

\section{The even case}

From Corollary~\ref{cor*4}, we know that there does not exist $A\in\Z^{24}$ such that, for every positive integer $m$, the orbit of the sequence
$$
S=\IAP{\pi_m(A)}{\pi_m(A)\matX{24}}
$$
is $(24m,24m)$-periodic in $\Zn{m}$ and the triangles $\ST{S[24\lambda m]}$ are balanced in $\Zn{m}$, for all positive integers $\lambda$. Conversely, for powers of two or for odd numbers, we know that this result works when $A\in\setE$ (see Theorem~\ref{mainthm}) or when $A\in\setO$ (see Corollary~\ref{cor*5}), respectively. In this section, we show a similar result for even numbers with the same odd part.

\begin{thm}\label{thm9}
Let $m$ be an even number and let $\mu$ be its odd part, i.e., the odd number $\mu$ such that $m=2^u\mu$ for a certain positive integer $u$. Let $\setU{\mu}$ be the set defined by
$$
\setU{\mu} = \mu\setE+4\setO',
$$
where
$$
\setO' = \left\{ a_1A_1+a_2A_2\ \middle|\ a_1\in\Z\ \text{and}\ a_2=\pm2^{\alpha},\text{with}\ \alpha\in\N \right\} \subset \setO.
$$
Then, for any $A\in\setU{\mu}$, the orbit of the sequence
$$
S=\IAP{\pi_{m}(A)}{\pi_{m}(A)\matX{24}}
$$
is $(12m,12m)$-periodic and the triangles $\ST{S[12\lambda m]}$ are balanced in $\Zn{m}$, for all non-negative integers $\lambda$.
\end{thm}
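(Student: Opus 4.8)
The plan is to combine the already-established results for powers of two (Theorem~\ref{mainthm}) and for odd numbers (Corollary~\ref{cor*5}) via the Chinese Remainder Theorem together with the Projection Theorem (Theorem~\ref{thm6}). Write $m=2^u\mu$ with $\mu$ odd. First I would fix $A\in\setU{\mu}$, say $A=\mu B+4C$ with $B\in\setE$ and $C\in\setO'$, and set $S=\IAP{\pi_m(A)}{\pi_m(A)\matX{24}}$. The first step is periodicity of the orbit. Since $\IAP{A}{A\matX{24}}$ is a $24$-IAP, its orbit is certainly $(24m,24m)$-periodic; to improve $24$ to $12$, note that $\pi_{2^u}(A)=\pi_{2^u}(\mu B)$ because $4C$ contributes a multiple of~$4$ and $\mu$ is a unit mod $2^u$, so by Proposition~\ref{prop31} ($\mu\setE\subset\setE$ for odd $\mu$) we have $\pi_{2^u}(A)\equiv\pi_{2^u}(B')$ for some $B'\in\setE$, whence the orbit of $\pi_{2^u}(S)$ is $(12\cdot2^u,12\cdot2^u)$-periodic by Theorem~\ref{mainthm}. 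Similarly $\pi_\mu(A)=\pi_\mu(4C)$ since $\mu B\equiv 0\pmod\mu$, and $4\in\setO'$ scaling keeps us in $\setO$ (indeed $4C\in\setO$ as one checks the gcd conditions: $a_2=\pm 2^{\alpha+2}$ is of the allowed form and $3\nmid\gcd(4a_1,4a_2)$ reduces to $3\nmid\gcd(a_1,a_2)$ which holds for $C\in\setO'$), so by Corollary~\ref{cor*5} the orbit of $\pi_\mu(S)$ is $(3\mu,3\mu)$-periodic, hence $(12\mu,12\mu)$-periodic. Since $m$-periodicity of each row (resp.\ column) is equivalent to simultaneous $2^u$- and $\mu$-periodicity by CRT, the orbit of $S$ is $(12m,12m)$-periodic.

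The second step is balancedness of $\nabla:=\ST{S[12\lambda m]}$ in $\Zn{m}$. Here I would apply the Projection Theorem with the divisor $d=2^u$: $\nabla$ is balanced in $\Zn{m}$ if and only if $\pi_{2^u}(\nabla)$ is balanced in $\Zn{2^u}$ and $\mf{\nabla}(x+2^u)=\mf{\nabla}(x)$ for all $x\in\Zn{m}$. The first condition is exactly Theorem~\ref{mainthm} applied to $B'\in\setE$: $\pi_{2^u}(\nabla)=\ST{\pi_{2^u}(S)[12\cdot 2^u\cdot(\lambda\mu)]}$ is balanced in $\Zn{2^u}$, since $12\lambda m = 12\cdot 2^u\cdot(\lambda\mu)$ is of the required form $12\cdot 2^u\cdot\lambda'$. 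For the second (translation-invariance) condition I would again project, this time modulo $\mu$: by CRT the map $x\mapsto x+2^u$ on $\Zn{m}$ corresponds, via the isomorphism $\Zn{m}\cong\Zn{2^u}\times\Zn{\mu}$, to $(a,b)\mapsto(a,b+2^u\bmod\mu)$ where $2^u$ is a \emph{unit} mod $\mu$; so asking $\mf{\nabla}(x+2^u)=\mf{\nabla}(x)$ for all $x$ is equivalent to $\mf{\nabla}$ being constant on each fibre $\{x: \pi_{2^u}(x)=a\}$, i.e.\ to $\pi_\mu$-balancedness of each such fibre. This in turn follows once we know $\pi_\mu(\nabla)$ is balanced in $\Zn{\mu}$ \emph{and} $\mf{\nabla}(x+\mu')=\mf{\nabla}(x)$ for $\mu'\mid\mu$ appropriately — but it is cleaner to invoke Corollary~\ref{cor*5} directly: $\pi_\mu(\nabla)=\ST{\pi_\mu(S)[3\mu\cdot(4\lambda 2^u)]}$ is balanced in $\Zn{\mu}$ since $12\lambda m=3\mu\cdot(4\lambda 2^u)$ has the form $3\mu\lambda'$. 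One then threads the two projections together: a standard CRT/Projection-Theorem argument shows that if $\pi_{2^u}(\nabla)$ is balanced in $\Zn{2^u}$ and $\pi_\mu(\nabla)$ is balanced in $\Zn{\mu}$ \emph{and} moreover the stronger uniformity ``$\mf{\nabla}$ depends only on $\pi_{2^u}(x)$'' holds, then $\nabla$ is balanced in $\Zn m$.

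The genuine content — and the main obstacle — lies in bridging that last gap: balancedness of $\pi_{2^u}(\nabla)$ and of $\pi_\mu(\nabla)$ separately does \emph{not} by itself give balancedness of $\nabla$ in $\Zn m$; one needs the arithmetic-triangle uniformity of Theorem~\ref{thm5} applied to the subtriangles of the interlaced doubly arithmetic decomposition, mirroring exactly the argument in Section~11. Concretely, by Theorem~\ref{prop23} (for the $\mu B$ part, via $\setE\subset\setG3$) and the $(3\mu,3\mu)$-interlaced doubly arithmetic structure of the odd part (from the proof of Theorem~\ref{thm8}), the orbit of $S$ is $(12m,12m)$-interlaced doubly arithmetic in $\Zn m$ with common differences that are explicit periodic tuples built from $(1,2,1)$, $(3,3,2)$ (the $2$-adic part) and $\pm a_2(1,2,-1)$-type patterns (the odd part). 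Then $\nabla$ decomposes by Proposition~\ref{prop28} into arithmetic subtriangles $\nabla_{r,s}=\AT{a_{r,s}}{d^{(1)}_{r,s}}{d^{(2)}_{r,s}}{n_{r,s}}$ with $n_{r,s}\in\{4\lambda 2^u\mu-1,\,4\lambda 2^u\mu\}$ and the common differences satisfying $\gcd(d^{(1)}_{r,s},m)=\gcd(d^{(2)}_{r,s},m)\in\{2^{u-2},2^{u-1}\}\cdot\{1,\ldots\}$-type relations compatible with $n_{r,s}\equiv 0,-1\pmod{m/\gcd(d^{(1)}_{r,s},m)}$. Applying Theorem~\ref{thm5} to each $\nabla_{r,s}$ gives $\mf{\nabla_{r,s}}(x+\gcd(d^{(2)}_{r,s}-d^{(1)}_{r,s},m))=\mf{\nabla_{r,s}}(x)$, and summing over the disjoint subtriangles yields $\mf{\nabla}(x+2^{u-1})=\mf{\nabla}(x)$ for all $x\in\Zn m$. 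Combined with the already-known balancedness of $\pi_{2^{u-1}}(\nabla)$ — which follows by an internal induction on $u$ exactly as in Section~11, using the odd-case base $u=0$ from Corollary~\ref{cor*5} — Theorem~\ref{thm6} with $d=2^{u-1}$ closes the induction and shows $\nabla$ is balanced in $\Zn m$. The delicate bookkeeping is checking that for \emph{every} residue class of $r-s$ the gcd and congruence hypotheses of Theorem~\ref{thm5} are met simultaneously for the $2^u$-part and the $\mu$-part; this is where the precise choice $\setU{\mu}=\mu\setE+4\setO'$ (rather than $\mu\setE+\setO$) is used, since the factor $4$ guarantees the odd-part common differences of the $4\setO'$ summand vanish modulo the relevant power of~$2$, keeping the two decompositions compatible.
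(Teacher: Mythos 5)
Your overall architecture is the same as the paper's: reduce the $2$-adic part to Theorem~\ref{mainthm} and the odd part to the interlaced doubly arithmetic structure from Theorem~\ref{thm8}, glue the two by CRT into a single interlaced doubly arithmetic structure for $\orb{S}$ in $\Zn{m}$, decompose $\ST{S[12\lambda m]}$ into arithmetic subtriangles via Proposition~\ref{prop28}, apply Theorem~\ref{thm5} to get translation-invariance of $\mf{\nabla}$ by a power of two, and finish with Theorem~\ref{thm6}. However, three of your concrete steps are wrong as stated and need repair. First, $\pi_{2^u}(A)=\pi_{2^u}(\mu B)$ is false for $u\ge 3$, since $4C$ is not zero modulo $2^u$. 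What is actually needed (and what the paper proves as Proposition~\ref{prop33}) is that $4\setO'\subset\setM$ --- because $A_1$ and $A_2$ lie in $\left\langle Y_1,\ldots,Y_8,2Z_9,\ldots,2Z_{16}\right\rangle$ --- so that $\setU{\mu}=\mu\setE+4\setO'\subset\setE+\setM=\setE$ and Theorem~\ref{mainthm} applies to $A$ itself; mere divisibility by $4$ does not keep you inside $\setE$.

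Second, your decomposition scale is off. The usable interlaced doubly arithmetic period for $\orb{S}$ in $\Zn{m}$ is $(12.2^u,12.2^u)$, obtained by CRT (Corollary~\ref{cor9}) from zero common differences modulo $2^u$ and the rescaled odd-part differences $2^{u+2}a_2\Circ{(-1,2,-1)^{2^{u+2}}}$ and $2^{u+2}a_2\Circ{(1,1,-2)^{2^{u+2}}}$; this yields subtriangles of size $\lambda\mu$ or $\lambda\mu-1$. At your scale $(12m,12m)$ the common differences become $\equiv 0\pmod{m}$ (Proposition~\ref{prop26} multiplies them by $\mu$), so the subtriangles are constant and Theorem~\ref{thm5} gives nothing, while the sizes $4\lambda 2^u\mu$ you quote correspond to a $(3,3)$-decomposition that is not available in $\Zn{m}$. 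Third, the shift you can actually extract from Theorem~\ref{thm5} is $2^u$, not $2^{u-1}$: every difference $d^{\,(2)}_{r,s}-d^{\,(1)}_{r,s}$ has the form $\pm 2^{u+2}a_2$ or $\pm 2^{u+3}a_2$ with $a_2=\pm 2^{\alpha}$, whose gcd with $m=2^u\mu$ is exactly $2^u$. This also makes your proposed ``internal induction on $u$'' unnecessary: Theorem~\ref{mainthm} already gives that $\pi_{2^u}(\nabla)$ is balanced in $\Zn{2^u}$, so Theorem~\ref{thm6} with $d=2^u$ closes the argument in one step.
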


The proof is based on the following results.

\begin{prop}\label{prop33}
For any odd number $\mu$, we have
$$
\setU{\mu}\subset\setE.
$$
\end{prop}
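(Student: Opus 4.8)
The plan is to leverage the $\Z$-module structure of $\setM$ together with Proposition~\ref{prop31}. Since $\mu$ is odd, Proposition~\ref{prop31} gives $\mu\setE\subset\setE$. Moreover, because $\setM$ is a $\Z$-module we have $\setM+\setM=\setM$, and since $\setE=\setX+\setM$ this yields $\setE+\setM=\setX+\setM+\setM=\setX+\setM=\setE$. Hence, once we know that
$$
4\setO'\subset\setM,
$$
we may conclude $\setU{\mu}=\mu\setE+4\setO'\subset\setE+\setM=\setE$, which is the claim. So everything reduces to proving $4\setO'\subset\setM$.

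To prove this last inclusion, recall $\setO'=\left\{a_1A_1+a_2A_2\ \middle|\ a_1\in\Z,\ a_2=\pm2^{\alpha},\ \alpha\in\N\right\}$, so a generic element of $4\setO'$ has the form $4a_1A_1+4a_2A_2=a_1\,(4A_1)\pm2^{\alpha}\,(4A_2)$. As $\setM$ is a $\Z$-module, this lies in $\setM$ provided both $4A_1\in\setM$ and $4A_2\in\setM$; thus the whole statement reduces to these two membership relations. To verify them, first note that by Theorem~\ref{thm*3} and Theorem~\ref{thm2} we have $A_1,A_2\in\langle A_1,A_2\rangle=\bigcap_{m\in\N^*}\setP{m}\subset\bigcap_{u\in\N}\setP{2^u}=\setF=\langle Z_1,\dots,Z_{16}\rangle$, so we may write $A_1=\sum_{i=1}^{16}c_iZ_i$ and $A_2=\sum_{i=1}^{16}d_iZ_i$ with integer coordinates read off from Table~\ref{tab4}. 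Using the identities $Y_j=Z_j-Z_{8+j}+Z_{17-j}$ for $j\in\{1,\dots,8\}$ (established in the proof of Proposition~\ref{prop*3}) together with the generators $8Z_9,\dots,8Z_{16}$ of $\setM$, the membership $4A_1\in\setM$ is equivalent to the parity conditions $c_j+c_{8+j}+c_{9-j}\equiv0\pmod{2}$ for all $j\in\{1,\dots,8\}$, and likewise for $A_2$ with the $d_i$; equivalently, one exhibits directly the integer combinations $4A_1=\sum_{j=1}^{8}c_j(4Y_j)+\sum_{i=9}^{16}b_i(8Z_i)$ and $4A_2=\sum_{j=1}^{8}d_j(4Y_j)+\sum_{i=9}^{16}b_i'(8Z_i)$. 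This is a finite, routine computation from the explicit entries of $A_1$, $A_2$, the $Y_i$, and the $Z_i$.

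The conceptual content lies entirely in the first paragraph; the only remaining work is the finite check that $4A_1$ and $4A_2$ belong to $\setM$, which is bookkeeping rather than a genuine obstacle. The one point to handle carefully is that $\setM$ is generated by the $4Y_i$ and the $8Z_i$ (not by the $4Z_i$), so the inclusion $4A_1\in\setM$ does \emph{not} follow formally from $A_1\in\setF$ and genuinely uses the specific coordinates of $A_1$ and $A_2$ in the basis $\{Z_1,\dots,Z_{16}\}$.
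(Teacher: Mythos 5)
Your proposal is correct and follows essentially the same route as the paper: reduce via $\mu\setE\subset\setE$ (Proposition~\ref{prop31}) and $\setE+\setM=\setE$ to the single inclusion $4\setO'\subset\setM$, which in turn reduces to $4A_1,4A_2\in\setM$, and you rightly flag that this does \emph{not} follow formally from $A_1,A_2\in\setF$ since $4\setF\not\subset\setM$. The paper finishes the same reduction by exhibiting the explicit decompositions $A_1=Y_1-Y_3+Y_4-Y_6+Y_7$ and $A_2=Y_2-Y_3-Y_4+3Y_5-2Y_6-2Y_7+5Y_8-8Z_9+8Z_{11}-8Z_{12}+8Z_{14}-8Z_{15}$, giving $\setO'\subset\left\langle Y_1,\ldots,Y_8,2Z_9,\ldots,2Z_{16}\right\rangle$ and hence $4\setO'\subset\setM$; your parity criterion on the $Z$-coordinates is an equivalent formulation of that same finite check, and it does hold for $A_1$ and $A_2$.
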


\begin{proof}
First, from Proposition~\ref{prop31}, we know that
$$
\mu\setE\subset\setE.
$$
Moreover, since
$$
\begin{array}{l}
A_1 = Y_1-Y_3+Y_4-Y_6+Y_7, \\[1.5ex]
A_2 = Y_2-Y_3-Y_4+3Y_5-2Y_6-2Y_7+5Y_8-8Z_9+8Z_{11}-8Z_{12}+8Z_{14}-8Z_{15},
\end{array}
$$
we have
$$
\setO' \subset \left\langle A_1,A_2\right\rangle \subset \left\langle Y_1,\ldots,Y_8,2Z_9,\ldots,2Z_{16} \right\rangle.
$$
Therefore,
$$
4\setO'\subset\left\langle 4Y_1,\ldots,4Y_8,8Z_9,\ldots,8Z_{16} \right\rangle = \setM.
$$
This leads to
$$
\setU{\mu} = \mu\setE+4\setO' \subset \setE+\setM = \setE.
$$
This completes the proof.
\end{proof}

\begin{prop}\label{prop32}
Let $p$ and $q$ be two positive integers such that $\gcd(p,q)=1$. Then, a sequence $S$ of $\Zn{pq}$ is arithmetic if and only if $\pi_p(S)$ and $\pi_q(S)$ are arithmetic in $\Zn{p}$ and $\Zn{q}$, respectively. Moreover, we have
$$
S=\AP{a}{d}\quad\Longleftrightarrow\quad\left\{\begin{array}{l}
\pi_p(S)=\AP{\pi_p(a)}{\pi_p(d)},\\
\pi_q(S)=\AP{\pi_q(a)}{\pi_q(d)}.
\end{array}\right.
$$
\end{prop}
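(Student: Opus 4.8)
The plan is to reduce the statement to the Chinese Remainder Theorem. Since $\gcd(p,q)=1$, the map $\chi\colon\Zn{pq}\longrightarrow\Zn{p}\times\Zn{q}$ defined by $\chi(x)=(\pi_p(x),\pi_q(x))$ is a ring isomorphism; in particular $\chi$ is bijective, and both $\pi_p$ and $\pi_q$ are ring homomorphisms, hence compatible with affine expressions of the form $a+jd$.

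First I would dispatch the direct implication. If $S=\AP{a}{d}=\left(u_j\right)_{j\in\Z}$, so that $u_j=a+jd$ for all $j$, then applying $\pi_p$ termwise gives $\pi_p(u_j)=\pi_p(a)+j\,\pi_p(d)$, i.e.\ $\pi_p(S)=\AP{\pi_p(a)}{\pi_p(d)}$, and likewise $\pi_q(S)=\AP{\pi_q(a)}{\pi_q(d)}$; in particular both projections are arithmetic. For the converse, assume $\pi_p(S)=\AP{a_p}{d_p}$ and $\pi_q(S)=\AP{a_q}{d_q}$. By surjectivity of $\chi$ there exist (unique) elements $a,d\in\Zn{pq}$ with $\chi(a)=(a_p,a_q)$ and $\chi(d)=(d_p,d_q)$. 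Writing $S=\left(u_j\right)_{j\in\Z}$, for every index $j$ we have $\pi_p(u_j)=a_p+jd_p=\pi_p(a+jd)$ and $\pi_q(u_j)=a_q+jd_q=\pi_q(a+jd)$, that is $\chi(u_j)=\chi(a+jd)$; injectivity of $\chi$ then forces $u_j=a+jd$, so $S=\AP{a}{d}$. The uniqueness of $a$ and $d$ above is exactly what yields the displayed equivalence between the two parametrizations.

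There is no real obstacle here: the single substantive ingredient is the bijectivity of $\chi$, which is the Chinese Remainder Theorem for coprime moduli, and everything else is termwise bookkeeping, so the argument is insensitive to whether $S$ is indexed by $\Z$ or is a finite tuple. If one prefers to avoid naming $\chi$, the same reasoning can be phrased by lifting each term $u_j$ via the Chinese Remainder Theorem and observing that the common difference $u_1-u_0$ is independent of the chosen lift because it is determined modulo $p$ and modulo $q$.
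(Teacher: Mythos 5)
Your proposal is correct and follows essentially the same route as the paper: the forward direction is termwise application of $\pi_p$ and $\pi_q$, and the converse uses the Chinese Remainder Theorem to produce $a$ and $d$ from their residues modulo $p$ and $q$ and then CRT (injectivity of $\chi$) again to conclude $u_j=a+jd$. Packaging the argument through the isomorphism $\chi$ is only a cosmetic difference from the paper's two separate CRT invocations.
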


\begin{proof}
Let $S=\left(u_j\right)_{j\in\Z}\in\Zn{pq}$. First, suppose that $S$ is arithmetic, i.e.,
$$
S=\AP{a}{d}\quad\text{where}\quad a=u_0,\ d=u_1-u_0.
$$
Then,
$$
u_j = a+jd,
$$
for all $j\in\Z$. Let $i\in\{p,q\}$. We have
$$
\pi_i(u_j) = \pi_i(a+jd) = \pi_i(a)+j\pi_i(d),
$$
for all $j\in\Z$. It follows that
$$
\pi_p(S)=\AP{\pi_p(a)}{\pi_p(d)}\quad\text{and}\quad\pi_q(S)=\AP{\pi_q(a)}{\pi_q(d)}.
$$
Conversely, suppose that $\pi_p(S)$ and $\pi_q(S)$ are arithmetic, i.e.,
$$
\pi_p(S)=\AP{a_p}{d_p}\quad\text{and}\quad\pi_q(S)=\AP{a_q}{d_q},
$$
with $(a_p,d_p)\in\left(\Zn{p}\right)^2$ and $(a_q,d_q)\in\left(\Zn{q}\right)^2$. By the Chinese Remainder Theorem, we know that there uniquely exists $(a,d)\in\left(\Zn{pq}\right)^2$ such that
$$
\left\{\begin{array}{l}
 \pi_p(a)=a_p, \\
 \pi_q(a)=a_q, \\
\end{array}\right.
\quad\text{and}\quad
\left\{\begin{array}{l}
 \pi_p(d)=d_p, \\
 \pi_q(d)=d_q. \\
\end{array}\right.
$$
Then, since
$$
\left\{\begin{array}{l}
\pi_p(a+jd)=\pi_p(a)+j\pi_p(d)=a_p+jd_p=\pi_p(u_j),\\
\pi_q(a+jd)=\pi_q(a)+j\pi_q(d)=a_q+jd_q=\pi_q(u_j),
\end{array}\right.
$$
for all $j\in\Z$, we obtain by the CRT again that
$$
u_j = a+jd,
$$
for all $j\in\Z$. Therefore $S=\AP{a}{d}$.
\end{proof}

\begin{cor}\label{cor9}
Let $p$ and $q$ be two positive integers such that $\gcd(p,q)=1$ and let $k_1$ and $k_2$ be two positive integers. Then, a doubly indexed sequence $S$ of $\Zn{pq}$ is $(k_1,k_2)$-interlaced doubly arithmetic if and only if $\pi_p(S)$ and $\pi_q(S)$ are $(k_1,k_2)$-interlaced doubly arithmetic in $\Zn{p}$ and $\Zn{q}$, respectively. Moreover, we have
$$
S=\IDAP{\matA}{\matD{1}}{\matD{2}}\quad\Longleftrightarrow\quad\left\{\begin{array}{l}
\pi_p(S)=\IDAP{\pi_p(\matA)}{\pi_p(\matD{1})}{\pi_p(\matD{2})},\\
\pi_q(S)=\IDAP{\pi_q(\matA)}{\pi_q(\matD{1})}{\pi_q(\matD{2})}.\\
\end{array}\right.
$$
\end{cor}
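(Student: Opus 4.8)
The statement to prove is Corollary~\ref{cor9}, which asserts that interlaced doubly arithmeticity of a doubly indexed sequence over $\Zn{pq}$ (with $\gcd(p,q)=1$) is equivalent to the same property componentwise over $\Zn{p}$ and $\Zn{q}$, together with a compatible decomposition of the defining matrices $\matA$, $\matD{1}$, $\matD{2}$.

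The plan is to reduce everything to Proposition~\ref{prop32} via the definition of IDAP. Recall from the definition of an interlaced doubly arithmetic progression that a doubly indexed sequence $S=\left(u_{i,j}\right)_{(i,j)\in\N\times\Z}$ is $(k_1,k_2)$-interlaced doubly arithmetic if and only if each of the $k_1k_2$ extracted subsequences $\left(u_{i_0+ik_2,j_0+jk_1}\right)_{(i,j)\in\N\times\Z}$, for $(i_0,j_0)\in\{0,\ldots,k_2-1\}\times\{0,\ldots,k_1-1\}$, is doubly arithmetic. So the first step is to state that reduction and observe that the projection maps $\pi_p$ and $\pi_q$ commute with extracting these subsequences: $\pi_p\left(\left(u_{i_0+ik_2,j_0+jk_1}\right)_{(i,j)}\right) = \left(\pi_p(u_{i_0+ik_2,j_0+jk_1})\right)_{(i,j)}$. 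Hence the problem is reduced to the single-subsequence case, i.e.\ to proving that a doubly indexed sequence over $\Zn{pq}$ is doubly arithmetic if and only if its two projections are doubly arithmetic over $\Zn{p}$ and $\Zn{q}$, with the matching parameter decomposition.

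Second, I would handle that doubly-arithmetic case itself. By Proposition~\ref{prop15}, a doubly infinite sequence $\left(u_{i,j}\right)_{(i,j)\in\N\times\Z}$ is doubly arithmetic if and only if each row $\left(u_{i,j}\right)_{j\in\Z}$ is arithmetic with the fixed common difference $u_{0,1}-u_{0,0}$ and the column $\left(u_{i,0}\right)_{i\in\N}$ is arithmetic. Both of these are statements about ordinary (singly indexed) arithmetic sequences, so Proposition~\ref{prop32} applies directly to each row and to the distinguished column: a sequence over $\Zn{pq}$ is arithmetic (with given first term and common difference) iff both projections are arithmetic (with the projected first term and common difference). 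Combining these row-by-row and for the column, and invoking the Chinese Remainder Theorem to recombine the projected first terms and common differences into the unique element of $\Zn{pq}$, yields the claimed equivalence $u_{i,j}=a+id_2+jd_1$ over $\Zn{pq}$ from the two projected doubly arithmetic progressions. The matrix identity $S=\IDAP{\matA}{\matD{1}}{\matD{2}} \Leftrightarrow \{\pi_p(S)=\IDAP{\pi_p(\matA)}{\ldots}{\ldots},\ \pi_q(S)=\IDAP{\pi_q(\matA)}{\ldots}{\ldots}\}$ then follows by reading off, for each $(i_0,j_0)$, that $a_{i_0,j_0}=u_{i_0,j_0}$, $d^{(1)}_{i_0,j_0}=u_{i_0,j_0+k_1}-u_{i_0,j_0}$, $d^{(2)}_{i_0,j_0}=u_{i_0+k_2,j_0}-u_{i_0,j_0}$, and that the projection maps act entrywise on these matrices while commuting with subtraction.

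There is no serious obstacle here; the corollary is essentially a bookkeeping consequence of Proposition~\ref{prop32}, Proposition~\ref{prop15}, the CRT, and the fact that $\pi_p$, $\pi_q$ are ring homomorphisms commuting with the relevant index extractions. The only point demanding a little care is making the quantifiers line up correctly: one must check that the common difference $u_{0,1}-u_{0,0}$ appearing in Proposition~\ref{prop15}\ref{prop15item1}) is the same for all rows \emph{and} that this property is preserved and reflected under projection, which is immediate since $\pi_i$ is additive. Accordingly, I would write the proof as: invoke the subsequence characterization of IDAP; note projections commute with subsequence extraction; apply Proposition~\ref{prop15} to reduce to rows and one column; apply Proposition~\ref{prop32} to each; recombine via CRT; and finally translate the resulting pointwise identities into the displayed matrix equivalence.

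\begin{proof}
By definition, a doubly indexed sequence $S=\left(u_{i,j}\right)_{(i,j)\in\N\times\Z}$ of $\Zn{pq}$ is $(k_1,k_2)$-interlaced doubly arithmetic if and only if each subsequence $\left(u_{i_0+ik_2,j_0+jk_1}\right)_{(i,j)\in\N\times\Z}$ is doubly arithmetic, for all $(i_0,j_0)\in\{0,\ldots,k_2-1\}\times\{0,\ldots,k_1-1\}$. Since $\pi_p$ and $\pi_q$ act entrywise, we have
$$
\pi_p\left(\left(u_{i_0+ik_2,j_0+jk_1}\right)_{(i,j)\in\N\times\Z}\right) = \left(\pi_p(u_{i_0+ik_2,j_0+jk_1})\right)_{(i,j)\in\N\times\Z},
$$
and similarly for $\pi_q$. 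Hence $S$ is $(k_1,k_2)$-interlaced doubly arithmetic if and only if, for every $(i_0,j_0)$, the subsequence $\left(u_{i_0+ik_2,j_0+jk_1}\right)_{(i,j)}$ is doubly arithmetic, which by the argument below is equivalent to both $\pi_p$- and $\pi_q$-projections of that subsequence being doubly arithmetic, i.e., equivalent to $\pi_p(S)$ and $\pi_q(S)$ being $(k_1,k_2)$-interlaced doubly arithmetic in $\Zn{p}$ and $\Zn{q}$, respectively.

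It remains to treat the doubly arithmetic case. Let $V=\left(v_{i,j}\right)_{(i,j)\in\N\times\Z}$ be a doubly indexed sequence of $\Zn{pq}$. By Proposition~\ref{prop15}, $V$ is doubly arithmetic if and only if each row $\left(v_{i,j}\right)_{j\in\Z}$ is arithmetic with common difference $v_{0,1}-v_{0,0}$, for all $i\in\N$, and the column $\left(v_{i,0}\right)_{i\in\N}$ is arithmetic. Applying Proposition~\ref{prop32} to each row and to the column $\left(v_{i,0}\right)_{i\in\N}$, and using that $\pi_p$ and $\pi_q$ are additive (so they commute with the differences $v_{0,1}-v_{0,0}$ and $v_{1,0}-v_{0,0}$), we see that these conditions hold over $\Zn{pq}$ if and only if the corresponding conditions hold for $\pi_p(V)$ over $\Zn{p}$ and for $\pi_q(V)$ over $\Zn{q}$. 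Again by Proposition~\ref{prop15}, this is equivalent to $\pi_p(V)$ and $\pi_q(V)$ being doubly arithmetic. Moreover, when $V$ is doubly arithmetic we have $v_{i,j}=v_{0,0}+i\left(v_{1,0}-v_{0,0}\right)+j\left(v_{0,1}-v_{0,0}\right)$, and the Chinese Remainder Theorem recovers $v_{0,0}$, $v_{1,0}-v_{0,0}$ and $v_{0,1}-v_{0,0}$ uniquely in $\Zn{pq}$ from their images in $\Zn{p}$ and $\Zn{q}$; this yields the stated equivalence for $\DAP{a}{d_1}{d_2}$ and, applied to each extracted subsequence, for $\IDAP{\matA}{\matD{1}}{\matD{2}}$.

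Finally, the matrix version follows by reading off, for $S=\IDAP{\matA}{\matD{1}}{\matD{2}}=\left(u_{i,j}\right)_{(i,j)\in\N\times\Z}$ and each $(i_0,j_0)\in\{0,\ldots,k_2-1\}\times\{0,\ldots,k_1-1\}$, that
$$
\matA = \left(u_{i_0,j_0}\right)_{i_0,j_0},\quad \matD{1} = \left(u_{i_0,j_0+k_1}-u_{i_0,j_0}\right)_{i_0,j_0},\quad \matD{2} = \left(u_{i_0+k_2,j_0}-u_{i_0,j_0}\right)_{i_0,j_0},
$$
together with the fact that $\pi_p$ and $\pi_q$ act entrywise on these matrices and commute with subtraction. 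Hence
$$
S=\IDAP{\matA}{\matD{1}}{\matD{2}}\quad\Longleftrightarrow\quad\left\{\begin{array}{l}
\pi_p(S)=\IDAP{\pi_p(\matA)}{\pi_p(\matD{1})}{\pi_p(\matD{2})},\\
\pi_q(S)=\IDAP{\pi_q(\matA)}{\pi_q(\matD{1})}{\pi_q(\matD{2})},\\
\end{array}\right.
$$
as announced.
\end{proof}
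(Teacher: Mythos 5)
Your proof is correct and follows essentially the same route as the paper, which simply applies Proposition~\ref{prop32} to each interlaced row $\left(u_{i,j_0+jk_1}\right)_{j\in\Z}$ and each interlaced column $\left(u_{i_0+ik_2,j}\right)_{i\in\N}$; your more detailed detour through the DAP subsequence characterization and Proposition~\ref{prop15} is just an expanded version of the same bookkeeping.
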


\begin{proof}
Let $S=\left(u_{i,j}\right)_{(i,j)\in\N\times\Z}$. The result is obtained by applying  Proposition~\ref{prop32} in each row
$$
\left(u_{i,j_0+jk_1}\right)_{j\in\Z},
$$
for all $i\in\N$ and all $j_0\in\{0,\ldots,k_1-1\}$, and in each column
$$
\left(u_{i_0+ik_2,j}\right)_{i\in\N},
$$
for all $j\in\Z$ and all $i_0\in\{0,\ldots,k_2-1\}$.
\end{proof}

We are now ready to prove the main theorem of this section.

\begin{proof}[Proof of Theorem~\ref{thm9}]
Let $m$ be an even number with odd part $\mu$ and let $u$ be the positive integer such that $m=2^u\mu$. Let $A\in\setU{\mu}$ and let $S$ be the sequence of $\Zn{m}$
$$
S=\IAP{\pi_{m}(A)}{\pi_{m}(A)\matX{24}}.
$$
First, from Proposition~\ref{prop33}, we know that $A\in\setE$. Thus, from Theorem~\ref{mainthm}, we know that the orbit of $\pi_{2^u}(S)$ is $(12.2^u,12.2^u)$-periodic. Therefore, $\pi_{2^u}\!\left(\orb{S}\right)$ is $(12.2^u,12.2^u)$-interlaced doubly arithmetic with common differences
$$
\matD{e_1} = \matD{e_2} = \matr{0_{12.2^u}}.
$$
Moreover, by definition of $\setU{\mu}$ and since $4\setO'\subset\setO'$, we have
$$
\pi_\mu(A)\in \pi_\mu(4\setO')\subset\pi_\mu(\setO').
$$
Let $a_1$ and $a_2$ be two integers such that
$$
\pi_\mu(A) = \pi_\mu(a_1A_1+a_2A_2).
$$
We know from Theorem~\ref{thm8} and its proof that the orbit of $\pi_\mu(S)$ is $(3,3)$-interlaced doubly arithmetic with common differences
$$
\matD{o_1} = a_2\Circ{-1,2,-1}\quad\text{and}\quad \matD{o_2} = a_2\Circ{1,1,-2}.
$$
Thus, using Proposition~\ref{prop26}, we have that $\pi_\mu\!\left(\orb{S}\right)$ is also $(12.2^u,12.2^u)$-interlaced doubly arithmetic with common differences
$$
\matD{o_1}' = 2^{u+2}a_2\Circ{(-1,2,-1)^{2^{u+2}}}\quad\text{and}\quad \matD{o_2}' = 2^{u+2}a_2\Circ{(1,1,-2)^{2^{u+2}}}.
$$
Since $\pi_{2^u}\!\left(\orb{S}\right)$ and $\pi_{\mu}\!\left(\orb{S}\right)$ are both $(12.2^u,12.2^u)$-interlaced doubly arithmetic in $\Zn{2^u}$ and $\Zn{\mu}$, respectively, we deduce from Corollary~\ref{cor9} that the orbit $\orb{S}$ is $(12.2^u,12.2^u)$-interlaced doubly arithmetic in $\Zn{m}$, with common differences $\matD{1}$ and $\matD{2}$ verifying
$$
\left\{\begin{array}{l}
\pi_{2^u}\left(\matD{1}\right) = \matD{e_1} = \matr{0},\\
\pi_{\mu}\left(\matD{1}\right) = \matD{o_1}' = 2^{u+2}a_2\Circ{(-1,2,-1)^{2^{u+2}}},
\end{array}\right.
$$
and
$$
\left\{\begin{array}{l}
\pi_{2^u}\left(\matD{2}\right) = \matD{e_2} = \matr{0},\\
\pi_{\mu}\left(\matD{2}\right) = \matD{o_2}' = 2^{u+2}a_2\Circ{(1,1,-2)^{2^{u+2}}}.
\end{array}\right.
$$
By the Chinese Remainder Theorem, it is easy to see that
$$
\matD{1} = 2^{u+2}a_2\Circ{(-1,2,-1)^{2^{u+2}}}\quad\text{and}\quad \matD{2} = 2^{u+2}a_2\Circ{(1,1,-2)^{2^{u+2}}}.
$$
Since
$$
\mu\matD{1} \equiv \mu\matD{2} \equiv \matr{0} \pmod{m},
$$
we obtain that $\orb{S}$ is $(12m,12m)$-periodic.

Let $\lambda$ be a positive integer. We consider the triangle
$$
\nabla = \ST{S[12\lambda m]} = \left(a_{i,j}\right)_{(i,j)\in T_{12\lambda m}}.
$$
From Proposition~\ref{prop28}, we know that $\nabla$ can be decomposed into $9.2^{2u+4}$ subtriangles $\nabla_{r,s}$ that are the arithmetic triangles
$$
\nabla_{r,s} = \left(a_{r+12.2^ui,s+12.2^uj}\right)_{(i,j)\in T_{n_{r,s}}} = \AT{a_{r,s}}{d^{\,(1)}_{r,s}}{d^{\,(2)}_{r,s}}{n_{r,s}},
$$
where
$$
n_{r,s} = \left\{\begin{array}{ll}
 \lambda \mu & \text{if}\ 0\le r+s\le 12.2^u-1,\\
 \lambda \mu-1 & \text{if}\ 12.2^u\le r+s\le 24.2^u-2,\\
\end{array}\right.
$$
for all $(r,s)\in\{0,\ldots,12.2^u-1\}^2$. Moreover, since
$$
\matD{1} = \left(d^{\,(1)}_{r,s}\right)_{0\le r,s\le 12.2^u-1} = 2^{u+2}a_2\Circ{(-1,2,-1)^{2^{u+2}}}
$$
and
$$
\matD{2} = \left(d^{\,(2)}_{r,s}\right)_{0\le r,s\le 12.2^u-1} = 2^{u+2}a_2\Circ{(1,1,-2)^{2^{u+2}}},
$$
we deduce that
$$
\left(d^{\,(1)}_{r,s},d^{\,(2)}_{r,s}\right) = 
\left\{\begin{array}{cl}
 2^{u+2}a_2(-1,1) & \text{if}\ r-s\equiv0\pmod{3}, \\
 2^{u+2}a_2(-1,-2) & \text{if}\ r-s\equiv1\pmod{3}, \\
 2^{u+2}a_2(2,1) & \text{if}\ r-s\equiv2\pmod{3}, \\
\end{array}\right.
$$
for all $(r,s)\in\{0,\ldots,12.2^u-1\}^2$. We distinguish different cases according to the residue class of $r-s$ modulo $3$. Recall, by definition, that $\gcd(a_2,\mu)=1$.

\setcounter{case}{0}
\begin{case}
If $r-s\equiv0\pmod{3}$, then
$$
d^{\,(1)}_{r,s} = -2^{u+2}a_2,\ d^{\,(2)}_{r,s} = 2^{u+2}a_2\ \text{and}\ d^{\,(2)}_{r,s}-d^{\,(1)}_{r,s} = 2^{u+3}a_2.
$$
Since
$$
\gcd(d^{\,(1)}_{r,s},m) = \gcd(d^{\,(2)}_{r,s},m) = \gcd(d^{\,(2)}_{r,s}-d^{\,(1)}_{r,s},m) = 2^{u},\ \frac{m}{\gcd(d^{\,(1)}_{r,s},m)}=\mu
$$
and $n_{r,s}\in\left\{\lambda\mu-1,\lambda\mu\right\}$, we deduce from Theorem~\ref{thm5} that
$$
\mf{\nabla_{r,s}}(x+2^{u}) = \mf{\nabla_{r,s}}(x),
$$
for all $x\in\Zn{m}$, in this case.
\end{case}

\begin{case}
If $r-s\equiv1\pmod{3}$, then
$$
d^{\,(1)}_{r,s} = -2^{u+2}a_2,\ d^{\,(2)}_{r,s} = -2^{u+3}a_2\ \text{and}\ d^{\,(2)}_{r,s}-d^{\,(1)}_{r,s} = -2^{u+2}a_2.
$$
Since
$$
\gcd(d^{\,(1)}_{r,s},m) = \gcd(d^{\,(2)}_{r,s},m) = \gcd(d^{\,(2)}_{r,s}-d^{\,(1)}_{r,s},m) = 2^{u},\ \frac{m}{\gcd(d^{\,(1)}_{r,s},m)}=\mu
$$
and $n_{r,s}\in\left\{\lambda\mu-1,\lambda\mu\right\}$, we deduce from Theorem~\ref{thm5} that
$$
\mf{\nabla_{r,s}}(x+2^{u}) = \mf{\nabla_{r,s}}(x),
$$
for all $x\in\Zn{m}$, in this case.
\end{case}

\begin{case}
If $r-s\equiv2\pmod{3}$, then
$$
d^{\,(1)}_{r,s} = 2^{u+3}a_2,\ d^{\,(2)}_{r,s} = 2^{u+2}a_2\ \text{and}\ d^{\,(2)}_{r,s}-d^{\,(1)}_{r,s} = -2^{u+2}a_2.
$$
Since
$$
\gcd(d^{\,(1)}_{r,s},m) = \gcd(d^{\,(2)}_{r,s},m) = \gcd(d^{\,(2)}_{r,s}-d^{\,(1)}_{r,s},m) = 2^{u},\ \frac{m}{\gcd(d^{\,(1)}_{r,s},m)}=\mu
$$
and $n_{r,s}\in\left\{\lambda\mu-1,\lambda\mu\right\}$, we deduce from Theorem~\ref{thm5} that
$$
\mf{\nabla_{r,s}}(x+2^{u}) = \mf{\nabla_{r,s}}(x),
$$
for all $x\in\Zn{m}$, in this case.
\end{case}

Therefore, we have
$$
\mf{\nabla_{r,s}}(x+2^{u}) = \mf{\nabla_{r,s}}(x),
$$
for all $x\in\Zn{m}$, in any subtriangle $\nabla_{r,s}$, for all $(r,s)\in\{0,\ldots,12.2^u-1\}^2$. Since
$$
\nabla = \bigsqcup_{(r,s)\in\{0,\ldots,12.2^u-1\}^2} \nabla_{r,s},
$$
we deduce that
$$
\mf{\nabla}(x+2^{u}) = \mf{\nabla}(x),
$$
for all $x\in\Zn{m}$.

Finally, since $\pi_{2^{u}}(\nabla)=\ST{\pi_{2^u}(S)[12\lambda m]}$ is balanced in $\Zn{2^{u}}$ by Theorem~\ref{mainthm} and since $\mf{\nabla}(x+2^{u}) = \mf{\nabla}(x)$, for all $x\in\Zn{m}$, by the previous result, we obtain from Theorem~\ref{thm6} that $\nabla$ is balanced in $\Zn{m}$.

This completes the proof.
\end{proof}

\begin{cor}
Let $m$ be a positive integer and let $\mu$ its odd part, i.e., the odd number $\mu$ such that $m=2^u\mu$ for a certain non-negative integer $u$. Then, for the $24$-tuple of integers
\begin{equation*}
\resizebox{\textwidth}{!}{$
A =
\begin{array}[t]{l}
\left(0,4,\mu-4,\mu-4,12-2\mu,3\mu-8,2\mu-8,20-2\mu,-12,2\mu-12,28,-\mu-12,3\mu-12,\right. \\
\left.36-4\mu,-20,2\mu-20,44-2\mu,-24,-\mu-24,52-2\mu,\mu-28,\mu-28,60-4\mu,2\mu-32\right), \\
\end{array}
$}
\end{equation*}
the orbit of the sequence $S=\IAP{\pi_{m}(A)}{\pi_{m}(A)\matX{24}}$ is
\begin{itemize}
\item
$(12m,12m)$-periodic and the triangles $\ST{S[12\lambda m]}$ are balanced in $\Zn{m}$, for all non-negative integers $\lambda$, if $m$ is even ($u\ge1$),
\item
$(3m,3m)$-periodic and the triangles $\ST{S[3\lambda m]}$ are balanced in $\Zn{m}$, for all non-negative integers $\lambda$, if $m$ is odd ($u=0$).
\end{itemize}
\end{cor}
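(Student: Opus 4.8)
This corollary is the explicit specialization of Theorems~\ref{mainthm} and~\ref{thm9} and of Corollary~\ref{cor*5}, so the plan is simply to locate the given $24$-tuple $A$ inside the sets to which those results apply and then quote them. Writing $m=2^u\mu$ with $\mu$ odd, I would first exhibit an explicit decomposition
$$
A = \mu B + 4C, \qquad B\in\setX,\quad C = a_1A_1 + a_2A_2,
$$
where $B$ is one of the tuples $\pm X_i$ of Table~\ref{tab4} (hence $B\in\setE$) and $a_1,a_2\in\Z$ with $a_2$ a power of $2$. Since $a_2=\pm 2^{\alpha}$ automatically forces $3\nmid\gcd(a_1,a_2)$, this gives $C\in\setO'$, whence $A\in\mu\setE+4\setO'=\setU{\mu}$. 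Checking this decomposition is a finite componentwise computation: verify the $24$ coordinates of $\mu B+4C$ against those of $A$ using the explicit entries of $X_i,Y_i,Z_i$ in Table~\ref{tab4}, confirm $B\in\setX\subset\setE$, and confirm $C\in\langle A_1,A_2\rangle$ (the latter is immediate from the first three coordinates, since $A_1,A_2$ have distinct leading triples).

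With the decomposition in hand the two cases follow at once. If $m$ is even ($u\geqslant 1$), then $A\in\setU{\mu}$ and Theorem~\ref{thm9} directly yields that the orbit of $S=\IAP{\pi_m(A)}{\pi_m(A)\matX{24}}$ is $(12m,12m)$-periodic and that the triangles $\ST{S[12\lambda m]}$ are balanced in $\Zn{m}$, for all non-negative integers $\lambda$. If $m$ is odd ($u=0$, so $\mu=m$), then $\mu B\equiv\matr{0}\pmod m$, hence $\pi_m(A)=\pi_m(4C)=\pi_m\!\left((4a_1)A_1+(4a_2)A_2\right)$; since $4a_2=\pm 2^{\alpha+2}$ and $3\nmid\gcd(4a_1,4a_2)=4\gcd(a_1,a_2)$, the tuple $4C$ belongs to $\setO$, and because $\pi_m(A)=\pi_m(4C)$ the sequence $S$ coincides with $\IAP{\pi_m(4C)}{\pi_m(4C)\matX{24}}$. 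Corollary~\ref{cor*5} then gives that the orbit of $S$ is $(3m,3m)$-periodic and that $\ST{S[3\lambda m]}$ is balanced in $\Zn{m}$, for all non-negative integers $\lambda$.

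The only genuine work is the first step, producing the explicit decomposition $A=\mu B+4C$ with the stated membership properties; the one point requiring care is that the part absorbed into $4\setO'$ must use a common-difference coefficient $a_2$ that is a \emph{power of $2$} (not merely of the form $\pm 3^{\alpha}2^{\beta}$), since the even-case argument of Theorem~\ref{thm9} relies on $\gcd(a_2,\mu)=1$ for every odd $\mu$. Everything else — periodicity and balancedness in both parities, including the trivial base cases $u=0$, $\mu=1$ — is quoted verbatim from Theorem~\ref{thm9}, Corollary~\ref{cor*5}, and Theorem~\ref{mainthm}, and the required inclusions $\setU{\mu}\subset\setE$ and $\mu\setE\subset\setE$ are already available from Propositions~\ref{prop33} and~\ref{prop31}. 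No new estimate or structural result is needed.
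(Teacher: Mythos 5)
Your proposal follows exactly the paper's route: decompose $A=\mu B+4C$ with $B\in\setE$ and $C\in\setO'$, then invoke Theorem~\ref{thm9} for even $m$ and (via $\pi_m(A)=\pi_m(4C)$) Corollary~\ref{cor*5} for odd $m$. One correction: the $\mu$-coefficient tuple $B=(0,0,1,1,-2,3,2,-2,\ldots)$ is \emph{not} one of the $\pm X_i$ of $\setX$; it is $X_1-4Y_5-4Y_8$, which lies in $\setE=\setX+\setM$ but not in $\setX$ — this is harmless, since $\setU{\mu}=\mu\setE+4\setO'$ only requires $B\in\setE$, and the paper takes $C=A_2$ itself (i.e.\ $a_1=0$, $a_2=1$).
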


\begin{proof}
If we consider the $24$-tuples of integers
$$
A_0
\begin{array}[t]{l}
= X_1-4Y_5-4Y_8 \\
= (0, 0, 1, 1, -2, 3, 2, -2, 0, 2, 0, -1, 3, -4, 0, 2, -2, 0, -1, -2, 1, 1, -4, 2) \in\setE_4\subset\setE
\end{array}
$$
and
\begin{equation*}
\resizebox{\textwidth}{!}{$
A_2 = (0,1,-1,-1,3,-2,-2,5,-3,-3,7,-4,-4,9,-5,-5,11,-6,-6,13,-7,-7,15,-8) \in\setO',
$}
\end{equation*}
we obtain that
$$
A = \mu A_0 + 4A_2 \in \mu\setE+4\setO'=\setU{\mu}.
$$
Therefore, when $m$ is even ($u\ge1$ and $\mu<m$), by Theorem~\ref{thm9}, we obtained that $\orb{S}$ is  $(12m,12m)$-periodic and the triangles $\ST{S[12\lambda m]}$ are balanced in $\Zn{m}$, for all non-negative integers $\lambda$. Finally, when $m$ is odd ($u=0$ and $\mu=m$), we know that $\pi_m(A)=\pi_m(4A_2)$ and the result directly comes from Corollary~\ref{cor*5}.
\end{proof}

\begin{cor}\label{cor*6}
Let $m_0$ be an even number and let $\mu$ be the least common multiple of the first $\frac{m_0}{2}$ even numbers, i.e., $\mu=\lcm(1,3,\ldots,m_0-1)$. Then, for any $A\in\setU{\mu}$ and any positive integer $m\le m_0$, the orbit of the sequence $S=\IAP{\pi_{m}(A)}{\pi_{m}(A)\matX{24}}$ is
\begin{itemize}
\item
$(12m,12m)$-periodic and the triangles $\ST{S[12\lambda m]}$ are balanced in $\Zn{m}$, for all non-negative integers $\lambda$, if $m$ is even,
\item
$(3m,3m)$-periodic and the triangles $\ST{S[3\lambda m]}$ are balanced in $\Zn{m}$, for all non-negative integers $\lambda$, if $m$ is odd.
\end{itemize}
\end{cor}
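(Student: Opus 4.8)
The plan is to derive Corollary~\ref{cor*6} from the two uniform results already established — Corollary~\ref{cor*5} for odd moduli and Theorem~\ref{thm9} for even moduli — so that the only genuinely new ingredient is a short divisibility bookkeeping about $\mu=\lcm(1,3,\ldots,m_0-1)$, together with the stability of the sets $\setE$ and $\setO'$ under multiplication by an odd integer and by $4$, respectively. So first I would fix $A\in\setU{\mu}$ and a positive integer $m\le m_0$, write $A=\mu A'+4B$ with $A'\in\setE$ and $B\in\setO'$ (possible by the very definition $\setU{\mu}=\mu\setE+4\setO'$), and then split the argument according to the parity of $m$.

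If $m$ is odd: since $m_0$ is even and $m\le m_0$, in fact $m\le m_0-1$, so $m$ is one of the odd numbers $1,3,\ldots,m_0-1$ and hence $m\mid\mu$. Consequently $\pi_m(\mu A')=0$, so $\pi_m(A)=\pi_m(4B)$. I would then check the elementary inclusion $4\setO'\subseteq\setO'$ (multiplying the coefficient $a_2=\pm2^{\alpha}$ by $4$ keeps it of the form $\pm2^{\beta}$) and recall $\setO'\subseteq\setO$; therefore $C:=4B\in\setO$ satisfies $\pi_m(C)=\pi_m(A)$, and so the sequence $S=\IAP{\pi_m(A)}{\pi_m(A)\matX{24}}$ coincides with $\IAP{\pi_m(C)}{\pi_m(C)\matX{24}}$. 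Corollary~\ref{cor*5} then yields that $\orb{S}$ is $(3m,3m)$-periodic and that $\ST{S[3\lambda m]}$ is balanced in $\Zn{m}$ for every non-negative integer $\lambda$.

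If $m$ is even: write $m=2^v\mu_m$ with $v\ge1$ and $\mu_m$ the odd part of $m$. Then $\mu_m$ is odd and $\mu_m\le m\le m_0$, hence $\mu_m\le m_0-1$, so $\mu_m\mid\mu$. Writing $\mu=\mu_m t$ with $t$ odd, Proposition~\ref{prop31} gives $t\setE\subseteq\setE$, whence $\mu\setE=\mu_m(t\setE)\subseteq\mu_m\setE$ and therefore $\setU{\mu}=\mu\setE+4\setO'\subseteq\mu_m\setE+4\setO'=\setU{\mu_m}$. Thus $A\in\setU{\mu_m}$ with $\mu_m$ the odd part of $m$, and Theorem~\ref{thm9} applies directly, giving that $\orb{S}$ is $(12m,12m)$-periodic and that $\ST{S[12\lambda m]}$ is balanced in $\Zn{m}$ for every non-negative integer $\lambda$. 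This exhausts both cases and proves the corollary.

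I do not expect a real obstacle here: the corollary is essentially bookkeeping, with all the substance residing in Theorems~\ref{mainthm} and \ref{thm9} and in Corollary~\ref{cor*5}. The only points demanding a little care are the two inclusions $4\setO'\subseteq\setO'$ and $t\setE\subseteq\setE$, and the observation — which crucially uses that $m_0$ is even — that every odd $m\le m_0$, and likewise the odd part of every $m\le m_0$, is at most $m_0-1$ and hence divides $\mu=\lcm(1,3,\ldots,m_0-1)$.
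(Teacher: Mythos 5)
Your proof is correct and follows essentially the same route as the paper: in the even case you reduce $\setU{\mu}\subset\setU{\nu}$ (with $\nu$ the odd part of $m$) via Proposition~\ref{prop31} and invoke Theorem~\ref{thm9}, and in the odd case you use $m\mid\mu$ to replace $A$ by an element of $\setO$ with the same projection and invoke Corollary~\ref{cor*5}. The two small inclusions you flag ($4\setO'\subseteq\setO'$ and $t\setE\subseteq\setE$ for $t$ odd) are exactly the ones the paper relies on, so nothing is missing.
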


\begin{proof}
Let $m$ be a even integer such that $m\le m_0$ and let $\nu$ be its odd part. Since $\nu\le m_0-1$, it is clear that $\nu$ divides $\mu=\lcm(1,3,\ldots,m_0-1)$. Since $\mu = \nu\times\frac{\mu}{\nu}$, where $\frac{\mu}{\nu}$ is odd, and since $\frac{\mu}{\nu}\setE\subset\setE$ by Proposition~\ref{prop31}, we obtain that
$$
\setU{\mu} = \mu\setE+4\setO' = \nu\left(\frac{\mu}{\nu}\setE\right)+4\setO' \subset \nu\setE+4\setO' = \setU{\nu}.
$$
Since $A\in\setU{\nu}$, we conclude by Theorem~\ref{thm9} that $\orb{S}$ is  $(12m,12m)$-periodic and the triangles $\ST{S[12\lambda m]}$ are balanced in $\Zn{m}$, for all non-negative integers $\lambda$.

Finally, if $m$ is odd, since $m$ divides $\mu$, we know that $\pi_m(A)\in\pi_m(4\setO')\subset\pi_m(\setO')\subset\pi_m(\setO)$ and the result directly comes from Corollary~\ref{cor*5}.
\end{proof}

For instance, for $m_0=10$, we have
$$
\mu = \lcm(1,3,5,7,9) = 3^2.5.7 = 315.
$$
If we consider
\begin{equation*}
\resizebox{\textwidth}{!}{$
A
\begin{array}[t]{l}
=
\begin{array}[t]{l}
\left(0,4,\mu-4,\mu-4,12-2\mu,3\mu-8,2\mu-8,20-2\mu,-12,2\mu-12,28,-\mu-12,3\mu-12,\right. \\
\left.36-4\mu,-20,2\mu-20,44-2\mu,-24,-\mu-24,52-2\mu,\mu-28,\mu-28,60-4\mu,2\mu-32\right), \\
\end{array} \\ \ \\
=
\begin{array}[t]{l}
(0, 4, 311, 311, -618, 937, 622, -610, -12, 618, 28, -331, 929, -1224, -20, 610,  -586,\\
 -24, -339, -578, 287, 287, -1200, 598) \\
\end{array} \\
\end{array}
$}
\end{equation*}
of $\setU{315}$, we know from Corollary~\ref{cor*6} that, for all positive integers $m\le 10$, the orbit of the sequence $S=\IAP{\pi_{m}(A)}{\pi_{m}(A)\matX{24}}$ is
\begin{itemize}
\item
$(12m,12m)$-periodic and the triangles $\ST{S[12\lambda m]}$ are balanced in $\Zn{m}$, for all non-negative integers $\lambda$, if $m$ is even,
\item
$(3m,3m)$-periodic and the triangles $\ST{S[3\lambda m]}$ are balanced in $\Zn{m}$, for all non-negative integers $\lambda$, if $m$ is odd.
\end{itemize}
Moreover, we know that
$$
S=\IAP{\pi_{m}(A)}{\pi_{m}(A)\matX{24}} = {A_{m}}^\infty,
$$
where $A_{m}$ is the first period of $S$ of length $12m$ if $m$ is even, or of length $3m$ if $m$ is odd. For all $m\in\{1,\ldots,10\}$, we obtain that
\begin{equation*}
\resizebox{\textwidth}{!}{$
\begin{array}{l}
A_1 = 000, \\
A_2 = 001101000001100000101100, \\
A_3 = 012201120, \\
A_4 = 003321220201100220123302201123022003302022321100, \\
A_5 = 041122203334410, \\
A_6 = 045501420045504420345504423345204423342204123342201123042201120042501120, \\
A_7 = 043356662205511124430, \\
A_8 = 047761664245104260567706605527422003742026325544443365260641500664163302201123026407346422721140, \\
A_9 = 045531126612207783378864450, \\
A_{10} = 041127208889960046127708889965046627708884965546627703884465546622703384465541622203384460541122203389460041122208389960.
\end{array}
$}
\end{equation*}
We know thus that, for all $m\in\{1,\ldots,10\}$, the triangle $\ST{{A_m}^\lambda}$ is a balanced triangle in $\Zn{m}$, of size $12\lambda m$ when $m$ is even or of size $3\lambda m$ when $m$ is odd, for all non-negative integers $\lambda$. The balanced triangles $\ST{{A_m}^\lambda}$ in $\Zn{m}$ are depicted for $\lambda\in\{1,2\}$ and for $m=2$ in Figures~\ref{fig03} and \ref{fig04}, for $m\in\{3,4\}$ in Figure~\ref{fig*10}, for $m\in\{5,6\}$ in Figure~\ref{fig*11}, for $m\in\{7,8\}$ in Figure~\ref{fig*12} and for $m\in\{9,10\}$ in Figure~\ref{fig*13}.

\begin{figure}[htbp]
\centering{
\resizebox{\textwidth}{!}{
\begin{tabular}{cc}
\includegraphics[width=0.5\textwidth]{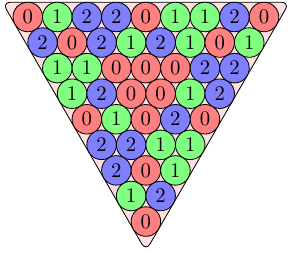}
&
\includegraphics[width=0.5\textwidth]{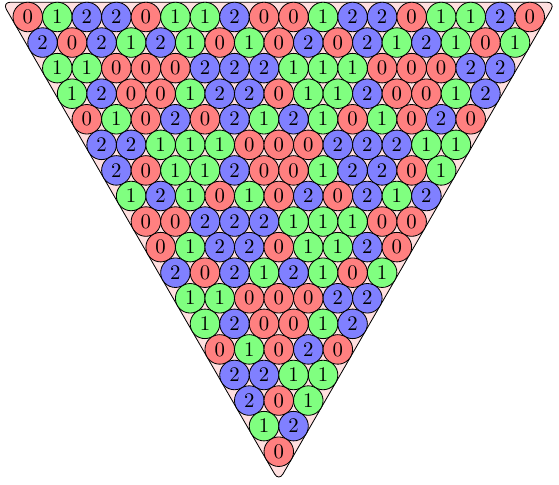} \\
\includegraphics{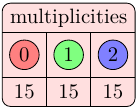}
&
\includegraphics{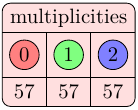} \\
$\ST{A_3}$ & $\ST{{A_3}^2}$ \\ \ \\
\includegraphics[width=0.5\textwidth]{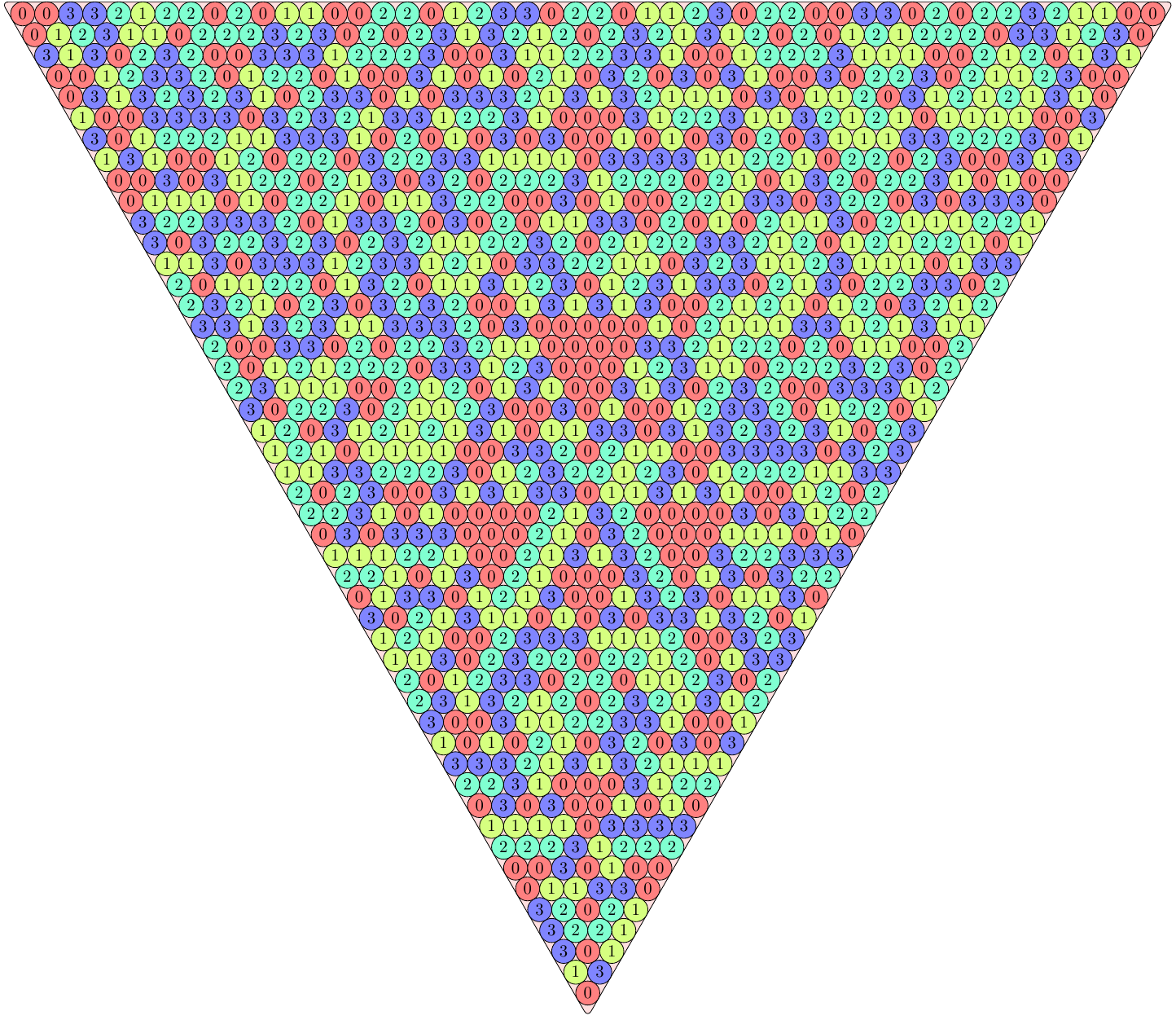}
&
\includegraphics[width=0.5\textwidth]{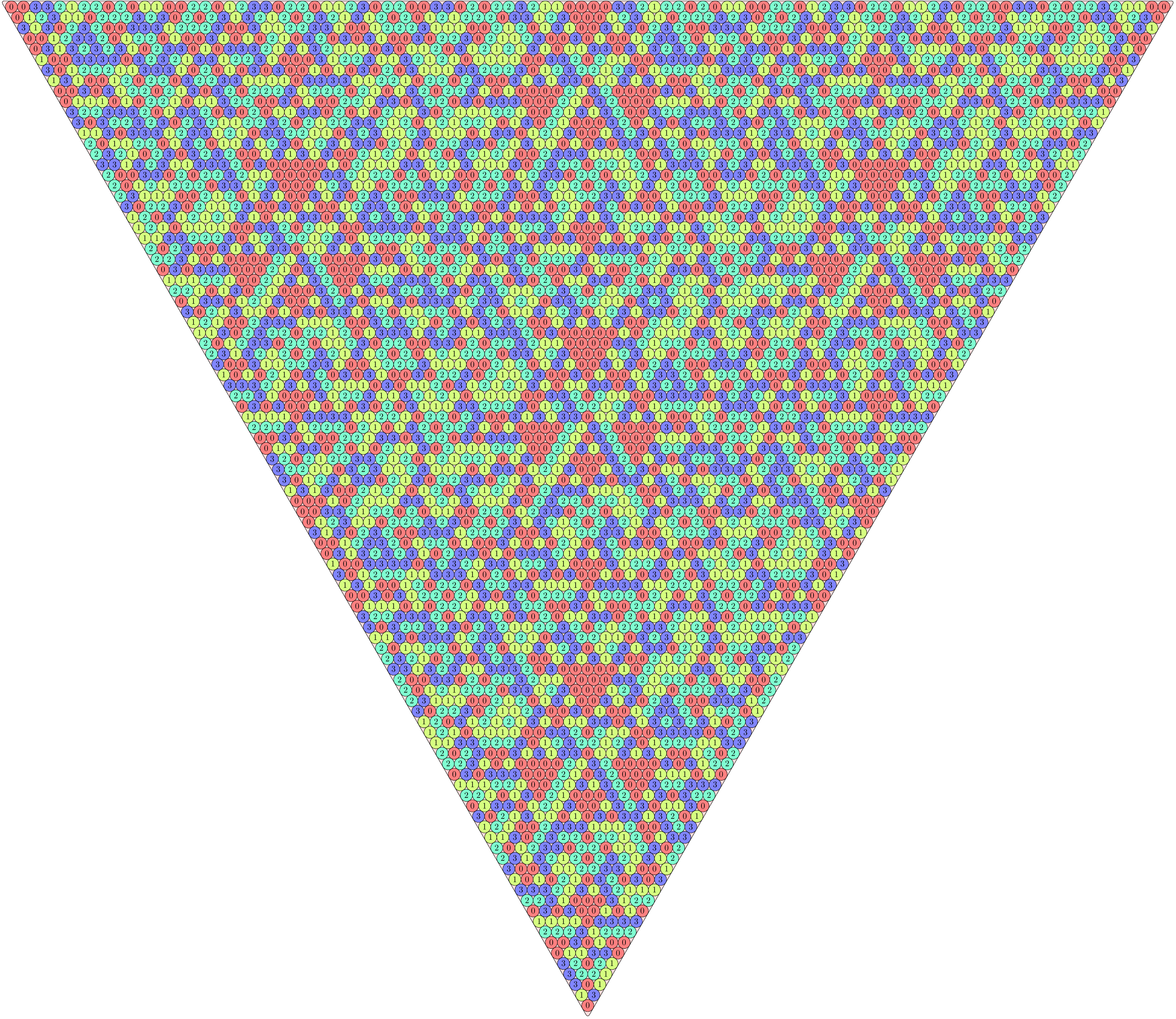} \\
\includegraphics{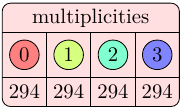}
&
\includegraphics{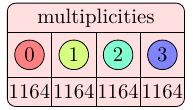} \\
$\ST{A_4}$ & $\ST{{A_4}^2}$ \\
\end{tabular}
}}
\caption{The balanced triangles $\ST{{A_m}^{\lambda}}$ in $\Zn{m}$ for $\lambda\in\{1,2\}$ and $m\in\{3,4\}$}\label{fig*10}
\end{figure}

\begin{figure}[htbp]
\centering{
\resizebox{\textwidth}{!}{
\begin{tabular}{cc}
\includegraphics[width=0.5\textwidth]{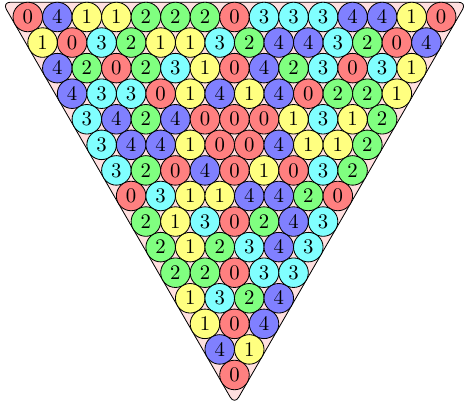}
&
\includegraphics[width=0.5\textwidth]{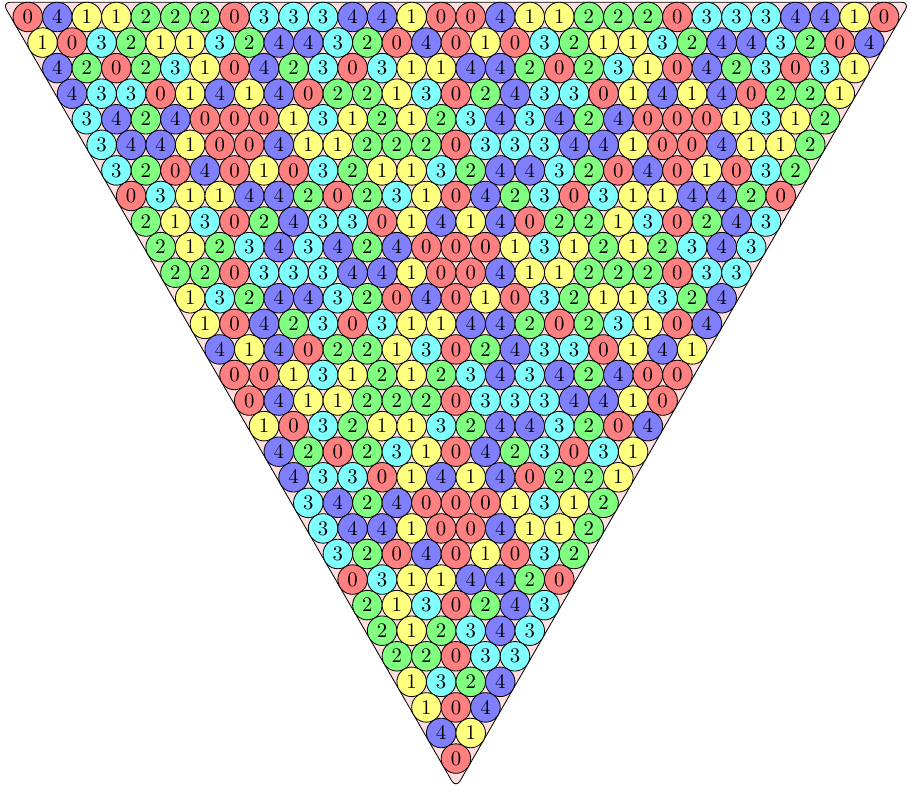} \\
\includegraphics{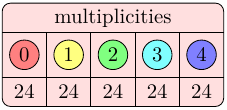}
&
\includegraphics{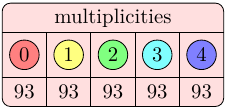} \\
$\ST{A_5}$ & $\ST{{A_5}^2}$ \\ \ \\
\includegraphics[width=0.5\textwidth]{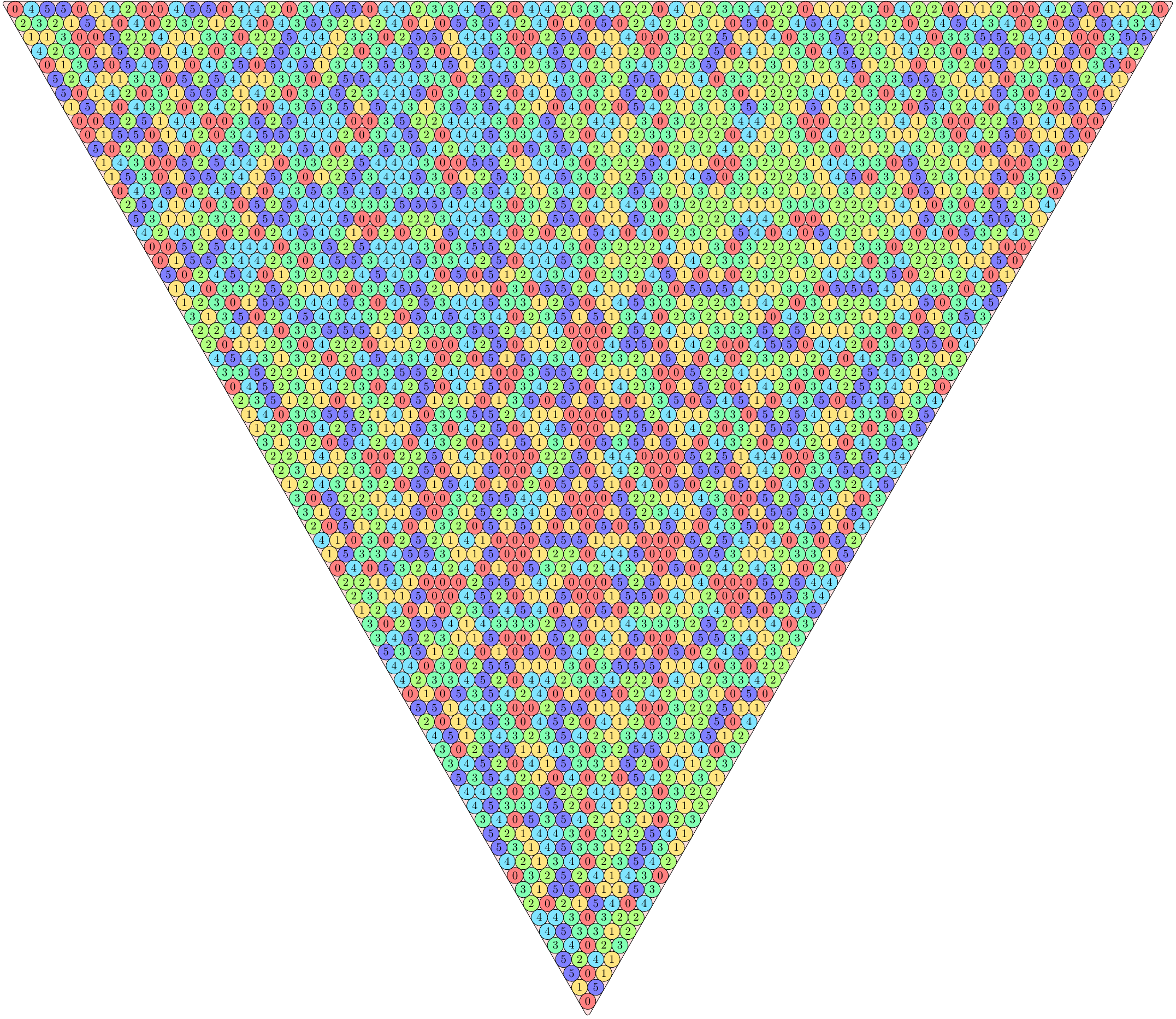}
&
\includegraphics[width=0.5\textwidth]{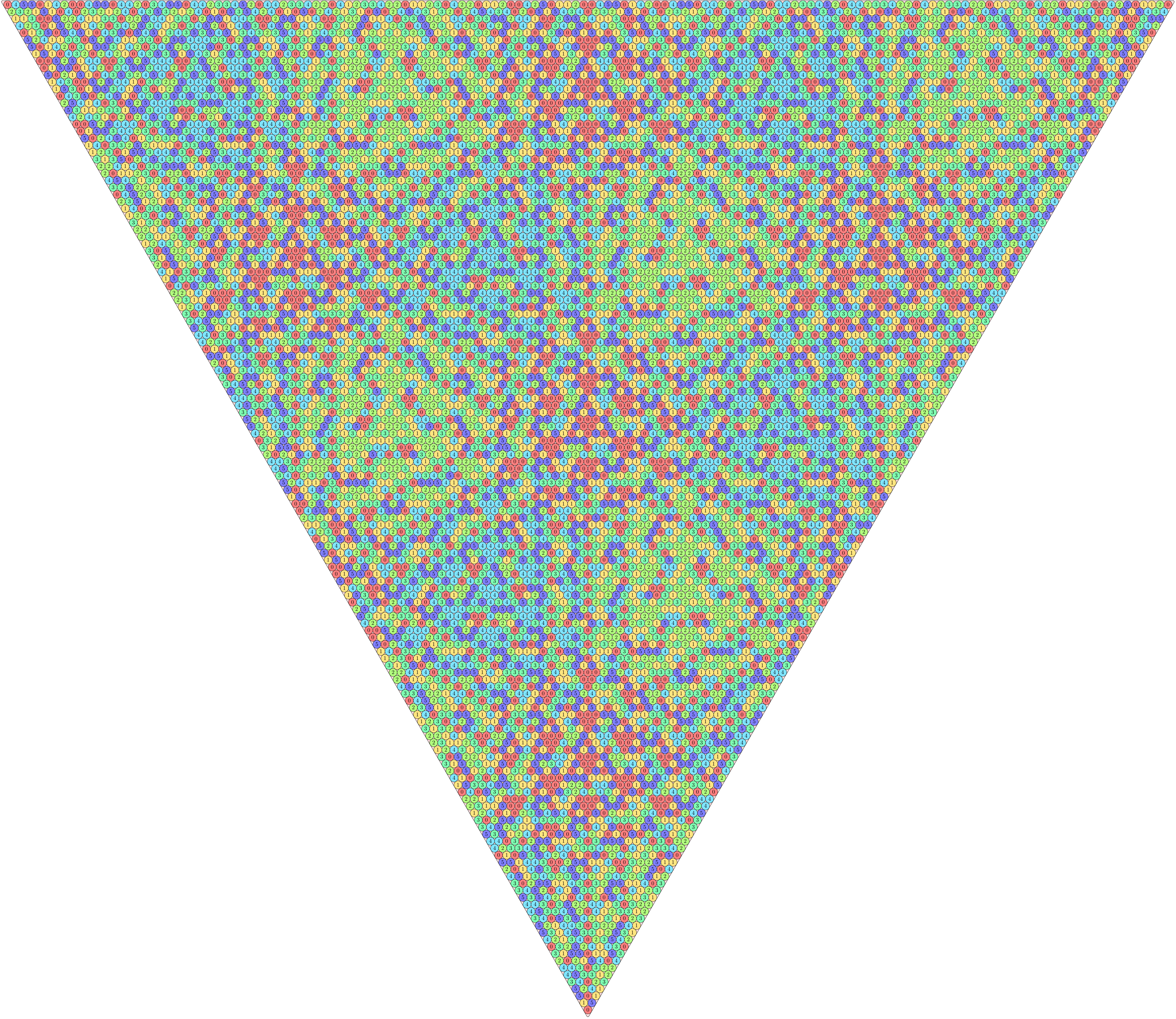} \\
\includegraphics{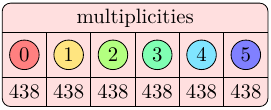}
&
\includegraphics{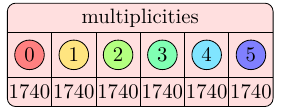} \\
$\ST{A_6}$ & $\ST{{A_6}^2}$ \\
\end{tabular}
}}
\caption{The balanced triangles $\ST{{A_m}^{\lambda}}$ in $\Zn{m}$ for $\lambda\in\{1,2\}$ and $m\in\{5,6\}$}\label{fig*11}
\end{figure}

\begin{figure}[htbp]
\centering{
\resizebox{\textwidth}{!}{
\begin{tabular}{cc}
\includegraphics[width=0.5\textwidth]{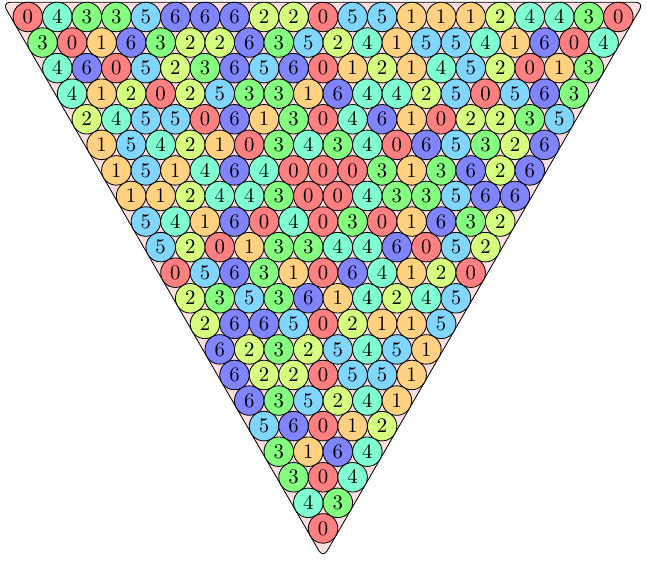}
&
\includegraphics[width=0.5\textwidth]{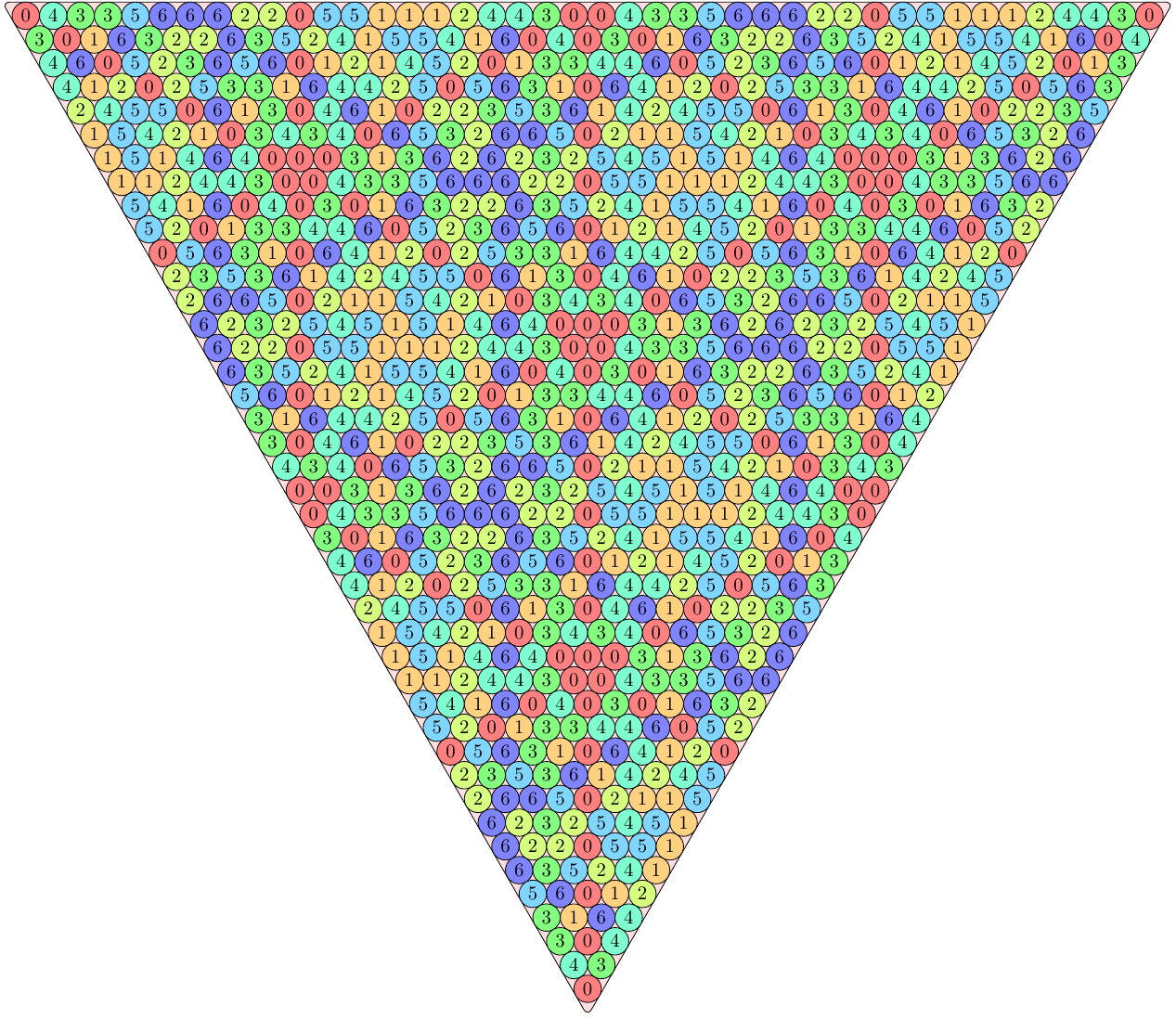} \\
\includegraphics{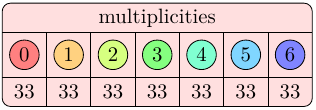}
&
\includegraphics{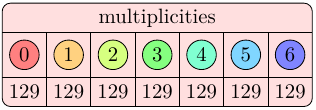} \\
$\ST{A_7}$ & $\ST{{A_7}^2}$ \\ \ \\
\includegraphics[width=0.5\textwidth]{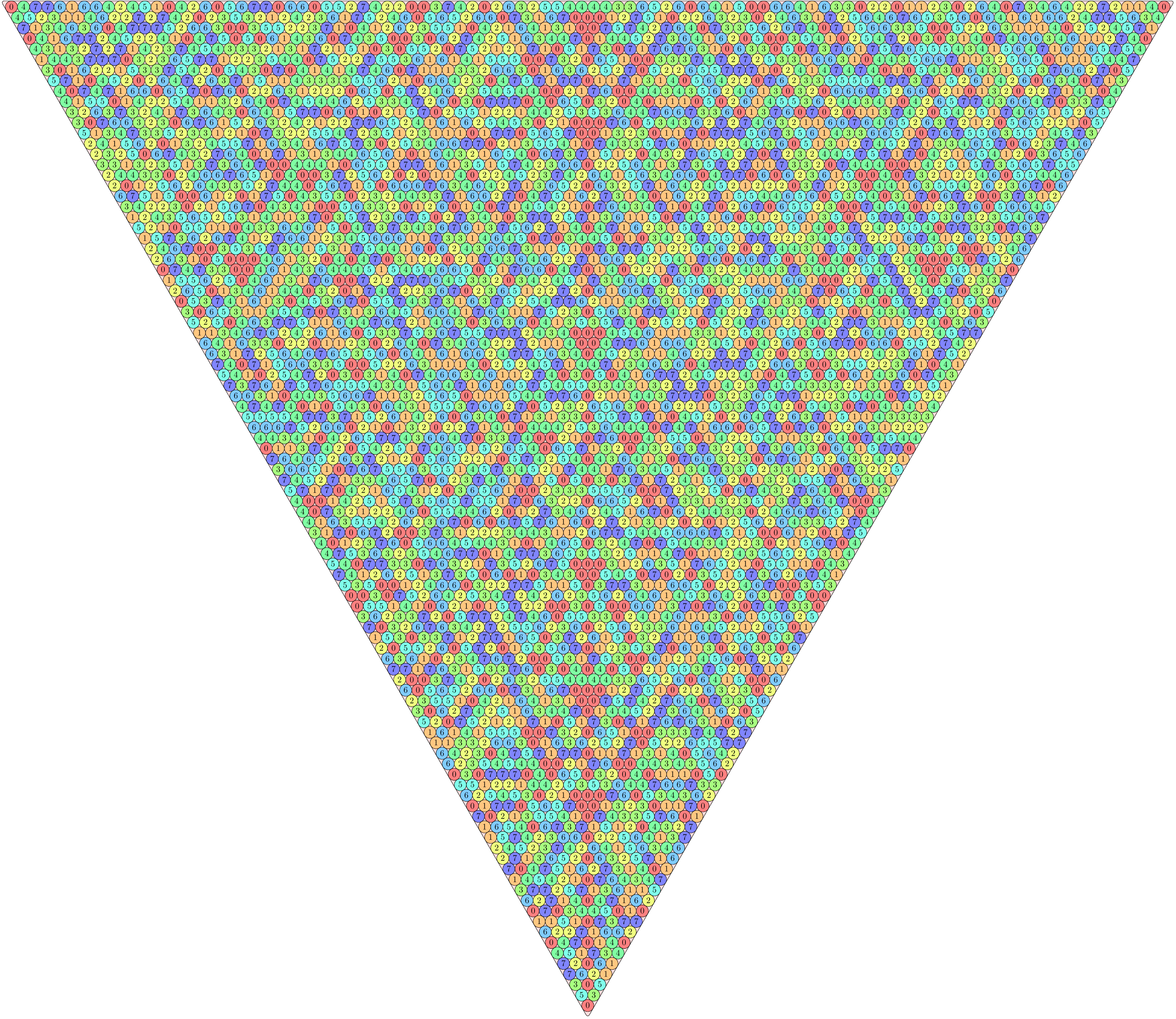}
&
\includegraphics[width=0.5\textwidth]{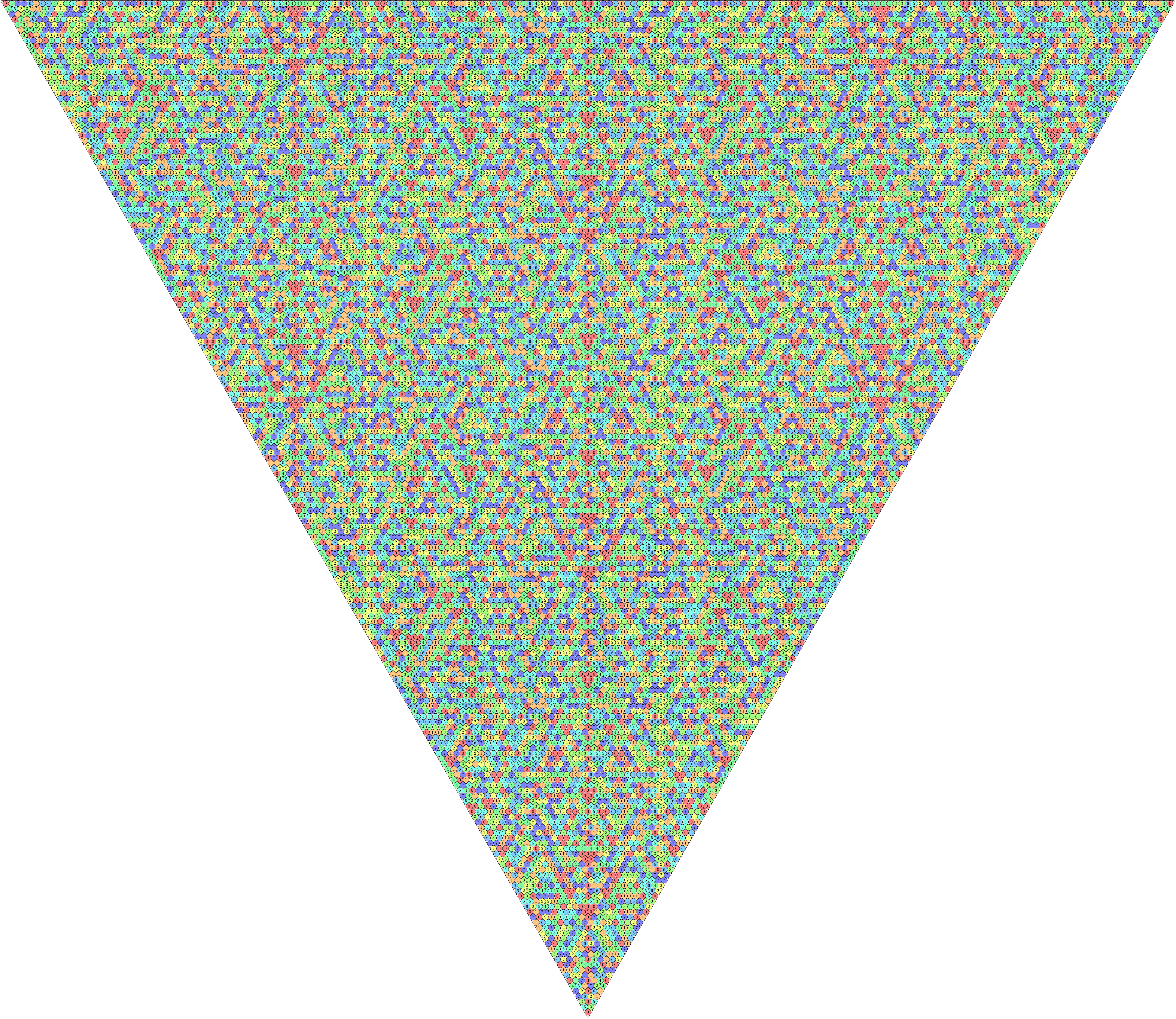} \\
\includegraphics{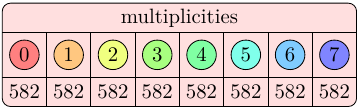}
&
\includegraphics{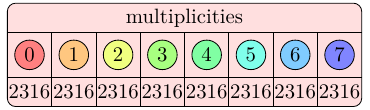} \\
$\ST{A_8}$ & $\ST{{A_8}^2}$ \\
\end{tabular}
}}
\caption{The balanced triangles $\ST{{A_m}^{\lambda}}$ in $\Zn{m}$ for $\lambda\in\{1,2\}$ and $m\in\{7,8\}$}\label{fig*12}
\end{figure}

\begin{figure}[htbp]
\centering{
\resizebox{\textwidth}{!}{
\begin{tabular}{cc}
\includegraphics[width=0.5\textwidth]{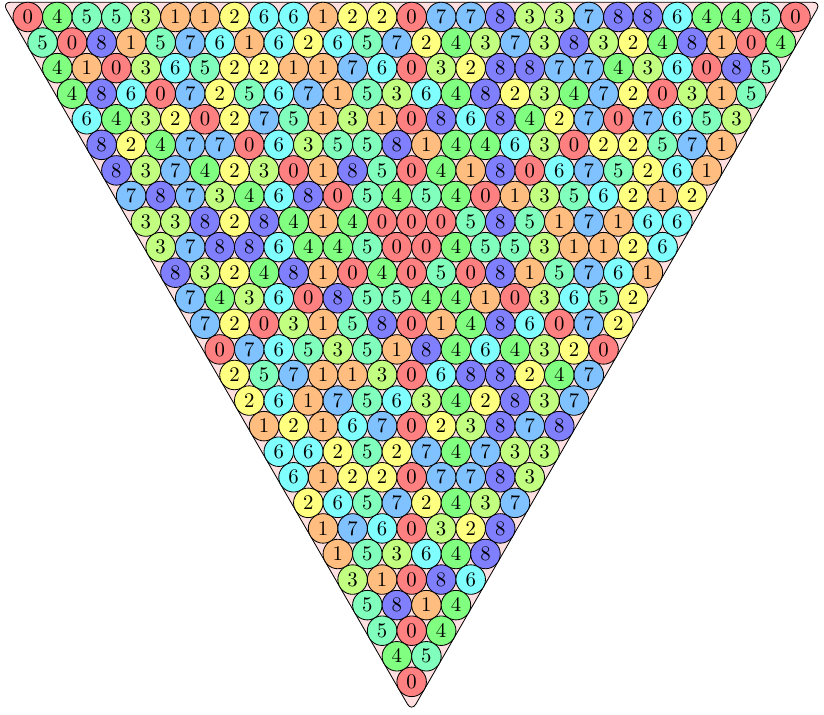}
&
\includegraphics[width=0.5\textwidth]{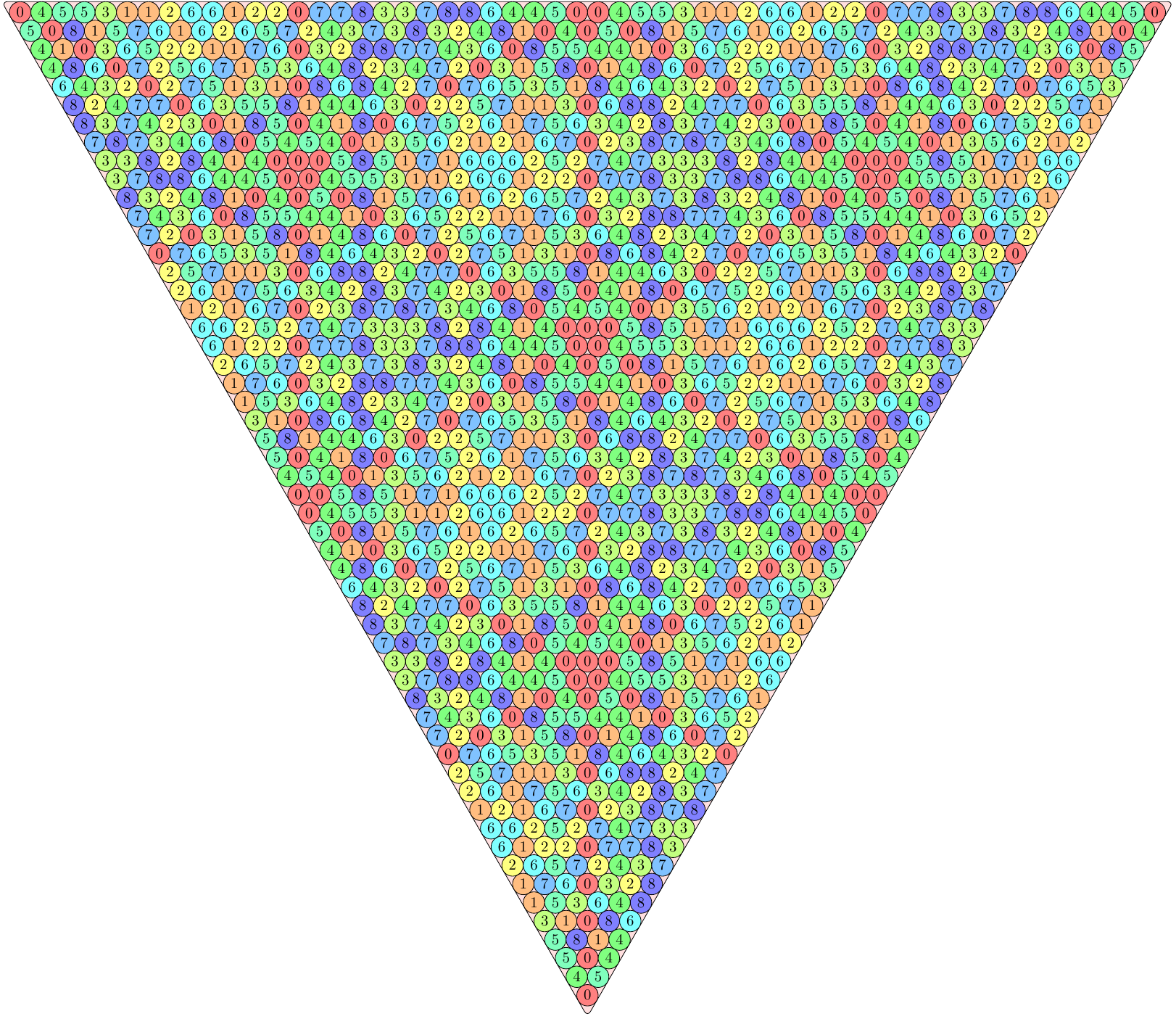} \\
\includegraphics{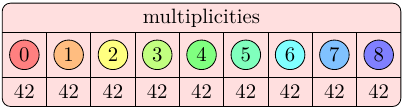}
&
\includegraphics{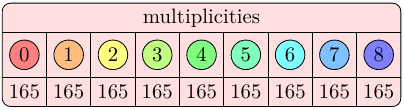} \\
$\ST{A_9}$ & $\ST{{A_9}^2}$ \\ \ \\
\includegraphics[width=0.5\textwidth]{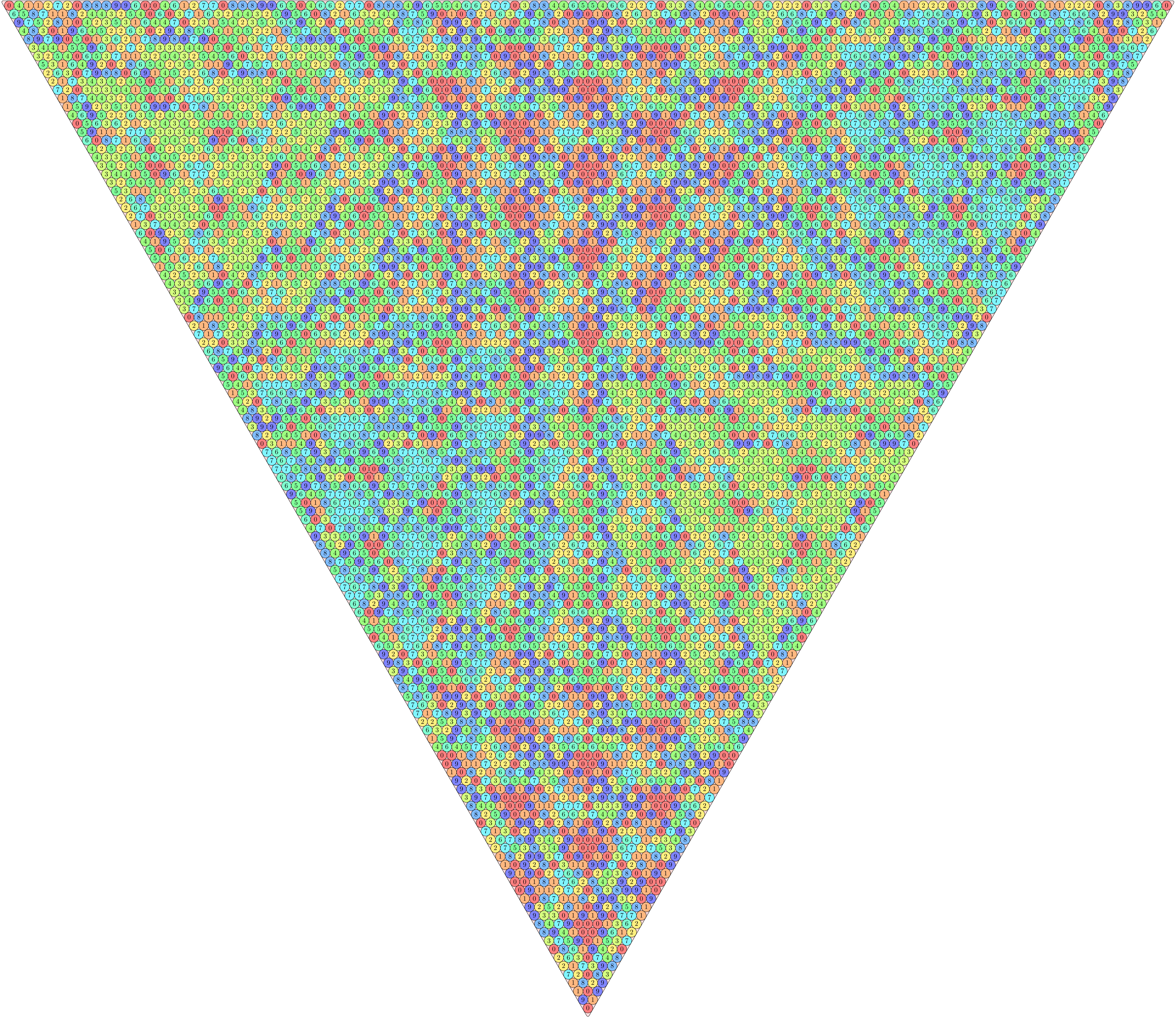}
&
\includegraphics[width=0.5\textwidth]{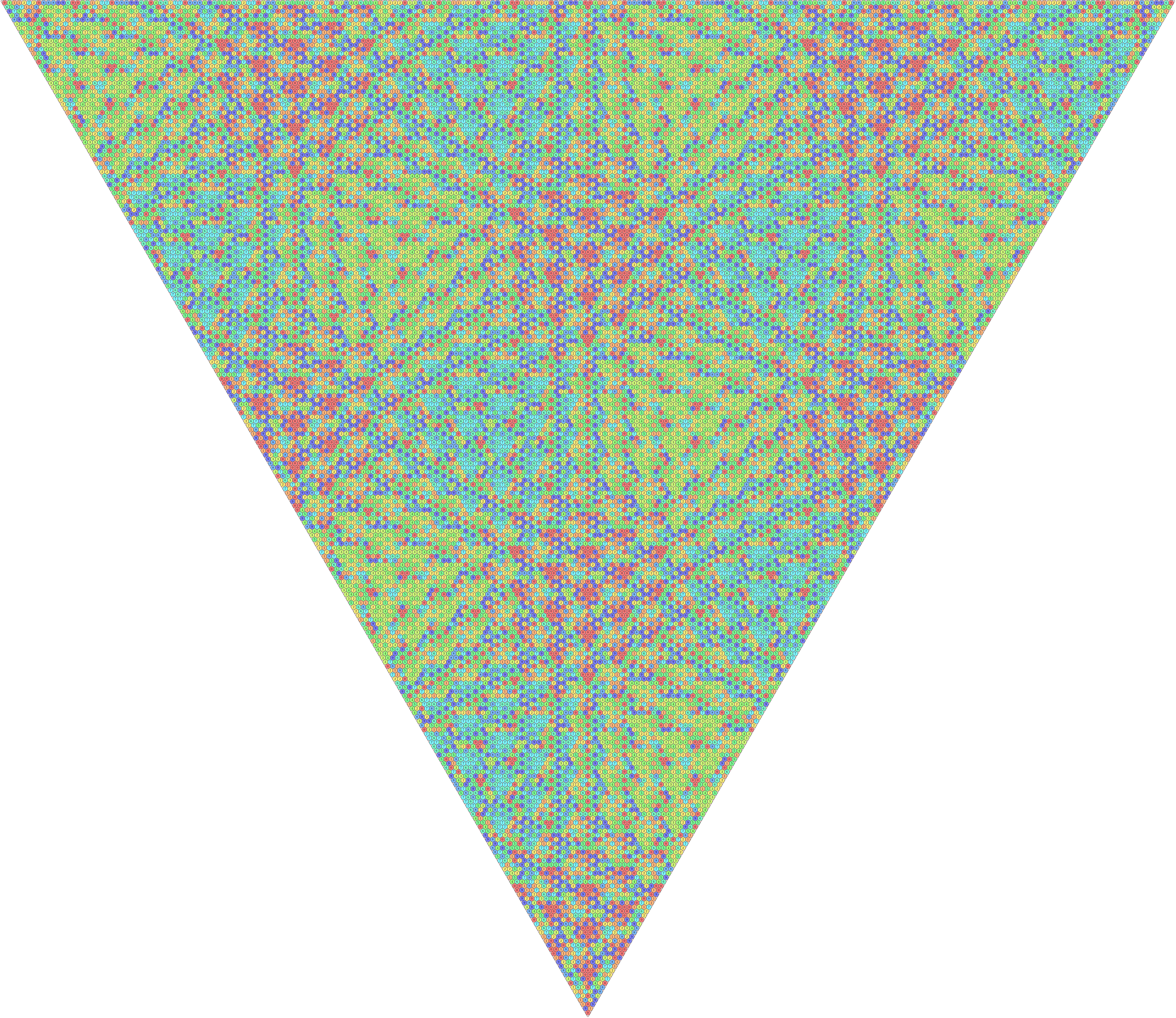} \\
\includegraphics{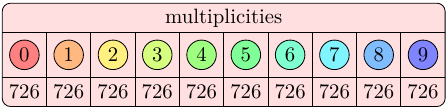}
&
\includegraphics{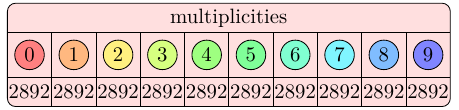} \\
$\ST{A_{10}}$ & $\ST{{A_{10}}^2}$ \\
\end{tabular}
}}
\caption{The balanced triangles $\ST{{A_m}^{\lambda}}$ in $\Zn{m}$ for $\lambda\in\{1,2\}$ and $m\in\{9,10\}$}\label{fig*13}
\end{figure}



\section*{Acknowledgements}

This work has been realized with the support of the ISDM-MESO Platform at the University of Montpellier funded under the CPER by the French Government, the Occitanie Region, the Metropole of Montpellier and the University of Montpellier.

\addcontentsline{toc}{section}{References}
\bibliographystyle{plain}


\end{document}